\newcommand{\mycomment}[1]{}
\newcommand\scalemath[2]{\scalebox{#1}{\mbox{\ensuremath{\displaystyle #2}}}}
\newcommand*\bigcdot{\mathpalette\bigcdot@{.43}}
\newcommand*\bigcdot@[2]{\h{1pt}\mathbin{\vcenter{\hbox{\scalebox{#2}{$\m@th#1\bullet$}}}}\h{1pt}}
\newcommand{\AlignFootnote}[1]{%
	\ifmeasuring@
	\else
	\iffirstchoice@
	\footnote{#1}%
	\fi
	\fi}
\theoremstyle{plain}
\newtheorem{theorem}{Theorem}[section]
\newtheorem{lemma}[theorem]{Lemma}
\newtheorem{problem}[theorem]{Problem}
\theoremstyle{definition}
\theoremstyle{plain}
\newtheorem{remark}[theorem]{Remark}
\theoremstyle{definition}
\newtheorem{example}[theorem]{Example}
\newcommand{\p}{\partial}
\numberwithin{equation}{section}
\newcommand\pig[1]{\scalerel*[5pt]{\big#1}{ 
		\ensurestackMath{\addstackgap[1pt]{\big#1}}}}
\newcommand\pigl[1]{\mathopen{\pig{#1}}}
\newcommand\pigr[1]{\mathclose{\pig{#1}}}
\def\big#1{{\hbox{$\left#1\vbox to 9pt{}\right.\n@space$}}}
\newcommand{\pushright}[1]{\ifmeasuring@#1\else\omit\hfill$\displaystyle#1$\fi\ignorespaces}
\newcommand{\pushleft}[1]{\ifmeasuring@#1\else\omit$\displaystyle#1$\hfill\fi\ignorespaces}
\newcommand{\h}{\hspace}
\newcommand{\diff}{\nabla} 
\newcommand{\indep}{\perp\!\!\!\perp}
\newcommand{\Filtration}{\mathcal{F}}
\newcommand{\R}{\mathbb{R}}
\DeclareMathOperator{\E}{\mathbb{E}}
\DeclareMathOperator{\Prob}{\mathbb{P}}
\begin{document}                        
	
	
	\title{A Control Theoretical Approach to Mean Field Games and Associated Master Equations}
	

	
	

		\author[,1]{\normalsize Alain Bensoussan\footnote{E-mail: axb046100@utdallas.edu}}
	\author[,2]{\normalsize Ho Man Tai\footnote{E-mail: homan.tai@dcu.ie}}
	\author[,3]{\normalsize Tak Kwong Wong\footnote{E-mail: takkwong@maths.hku.hk}}
	\author[,4]{\normalsize Sheung Chi Phillip Yam\footnote{E-mail: scpyam@sta.cuhk.edu.hk}}
	\affil[1]{\small\it International Center for Decision and Risk Analysis, Naveen Jindal School of Management, University of Texas at Dallas, U.S.A.}
	\affil[2]{\small\it School of Mathematical Sciences, Dublin City University, Ireland}
	\affil[3]{\small\it Department of Mathematics, The University of Hong Kong, Hong Kong}
	\affil[4]{\small\it Department of Statistics, The Chinese University of Hong Kong, Hong Kong}
	
	\date{}
	\maketitle   
	
	\begin{abstract}
		We introduce a theory of global-in-time well-posedness for a broad class of mean field game problems, which is beyond the special linear-quadratic setting, as long as the mean field sensitivity is not too large. Through the stochastic maximum principle, we adopt the forward backward stochastic differential equation (FBSDE) approach to investigate the unique existence of the corresponding equilibrium strategies. The corresponding FBSDEs are first solved locally in time, then by controlling the sensitivity of the backward solutions with respect to the initial condition via some suitable \textit{apriori} estimates for the corresponding Jacobian flows, the global-in-time solution is warranted. Further analysis on these Jacobian flows will be discussed so as to establish the regularities, such as linear functional differentiability, of the respective value functions that leads to the ultimate classical well-posedness of the master equation on $\mathbb{R}^d$. {\color{black}To the best of our knowledge, it is the first article to deal with the mean field game problem, as well as its associated master equation, with general cost functionals having quadratic growth under the small mean field effect.} {\color{black}In this current approach, we directly impose the structural conditions (importantly, the small mean field effect) on the cost functionals, rather than conditions on the Hamiltonian. The advantages of using the framework in this work are threefold: (i) compared with imposing conditions on Hamiltonian, the structural conditions imposed in this work are easily verified, and less demanding on the regularity requirements of the cost functionals while solving the master equation; (ii) the displacement monotonicity is basically just a direct consequence of small mean field effect in the structural conditions; and (iii) when the mean field effect is not that small, we can still provide an accurate lifespan for the local existence, which may be not that small in many cases. The method used in this work can be readily extended to the case with nonlinear drift and non-separable cost functionals.}

	\end{abstract}
	
	\noindent\textbf{Keywords:} mean field games, Wasserstein space, forward-backward stochastic differential equations, difference quotient, weak convergence, Jacobian flow, linear functional derivatives, HJB equation, master equation, non-linear-quadratic examples.
	
	\noindent\textbf{AMS subject classifications (2020):} 35Q89, 35R15, 49N70, 49N80, 60H10, 60H30, 91A16, 93E20.
	
	
	
	\tableofcontents

	

	\section{Introduction}\label{s:Intro}
	Modeling the collective behavior of a group of agents (a.k.a. decision makers or players) in physical or sociological dynamical systems has posed a major challenge throughout mankind history. With the number of agents usually being very large, the computational cost of this microscopic approach is prohibitively high. To remedy this issue, a macroscopic approach, first introduced by Huang-Malham\'e-Caines \cite{HMC06} under the name ``large population stochastic dynamic games'' and Lasry-Lions \cite{LL07} under another name ``mean field games'' independently, has emerged in the recent two decades. In this approach, agents interact through a medium, called the {\it mean field term}, which represents the aggregate collective mind of decisions of all agent; more precisely, by passing the number of agents to infinity, the mean field term becomes a functional of the density function that characterizes the entire population distribution of agents, and this renewed modeling reduces the overall computational complexity. We refer to  Bensoussan-Frehse-Yam \cite{BFY13},
	Carmona-Delarue \cite{CD18}, Gomes-Pimentel-Voskanyan \cite{GPV16} for an account of the beginning of the mean field theory.
	
	%
	%
	%
	\subsection{A Brief Review of Mean Field Games}\label{s:MFG}

	We supplement with a concise formulation of mean field games. Let $n_x,n_v,n_w \in \mathbb{N}$ be the respective dimensions of the state, the control, the Wiener processes, and also $(\Omega,\Filtration,\Prob)$ be a completed probability space over an arbitrary fixed time horizon  $T>0$, in which we define $N$ independent $n_w$-dimensional standard Wiener processes $W_s^1,\ldots,W_s^N$ and $N$ independent, identically distributed random variables $\xi^1,\ldots,\xi^N \in\mathbb{R}^{n_x}$. Meanwhile, the probability space $(\Omega,\Filtration,\Prob)$ is equipped with the filtration $\mathcal{F}_0:=\big\{\mathcal{F}_0^s\big\}_{s\in [0,T]}$ with $\mathcal{F}_0^s:=\mathcal{W}^s_0\bigvee \sigma(\xi^1,\ldots,\xi^N)\bigvee \mathcal{N}$ in which all these $\xi_i$'s are considered as initial conditions at time $0$ and they are independent of all these $W^j_s$'s, where $\mathcal{W}^s_0$ is the $\sigma$-algebra $\sigma\pig(W^1_\tau,\ldots,W^N_\tau;\tau \in [0,s]\pig)$ and $\mathcal{N}$ is the collection of all $\mathbb{P}$-null sets in $\mathcal{F}$; in the rest of this article, we shall often deal with the truncated filtrations, for the sake of convenience, we denote $\mathcal{W}_{t\xi}:=\{\mathcal{W}^s_t\bigvee\sigma(\xi^1,\ldots,\xi^N)\}_{s\in [t,T]}$, but here $\xi^1,\ldots,\xi^N$  will then be considered as initial conditions at time $t$, and also denote the truncated filtrations by  $\mathcal{W}_t:=\{\mathcal{W}^s_t\}_{s\in [t,T]}$ being generated by the $\sigma$-algebra $\mathcal{W}^s_t:=\sigma\pig(W^1_\tau-W^1_t,\ldots,W^N_\tau-W^N_t;\tau \in [t,s]\pig)$ for $s \in[t,T]$, of all the future Wiener increments beyond the time $t$. 

	We consider a $N$-agent stochastic differential game over the time interval $[0,T]$. The state evolution of the $i$-th agent is described by the following stochastic differential equation:
	\begin{equation}\label{E_SDE}
		dx^{i,v^i_{\bigcdot}}_s=f\left(x^{i,v^i_{\bigcdot}}_s,\frac{1}{N-1}\sum_{j=1, j\neq i}^N\delta_{x^{j,v^j_{\bigcdot}}_s},v^i_s\right)\;ds+\eta dW^i_s,\h{20pt} \text{for $s \in [0,T]$,}\h{20pt}
		x^{i,v^i_{\bigcdot}}_0=\xi^i,
	\end{equation}
	where the volatility matrix $\eta\in \mathbb{R}^{n_x \times n_w}$ is a given full-ranked constant matrix so that $\eta^\top \eta$ is positive definite, \mycomment{+ve definite is eq. to full ranked, which is only required to ensure the existence of density after time 0} the $\mathbb{R}^{n_x}$-valued function $f$ is the drift function, $\frac{1}{N}\sum_{j=1, j \neq i}^N\delta_{x^{j,v^j_{\bigcdot}}_s}$ is the empirical probability distribution of other agents' states, and $v^i_s \in L^2_{\mathcal{W}_{0\xi}}(0,T;L^2(\Omega;\mathbb{R}^{n_v}))$ is the admissible control which is adapted to the filtration $\mathcal{W}_{0\xi}$, i.e. $v^i_s$ is measurable to the $\sigma$-algebra $\mathcal{F}^s_0$ for $s\in [0,T]$,  and $\int^T_0 \mathbb{E}|v^i_s|^2ds<\infty$. The corresponding objective functional of the $i$-th agent is
	\begin{equation}\label{E_J}
		\mathcal{J}^i_{\xi}(v^1,v^2,\ldots,v^N)=\mathbb{E}\left[\int_0^T g\left(x^{i,v^i_{\bigcdot}}_s,\frac{1}{N-1}\sum_{j=1, j \neq i}^N\delta_{x^{j,v^j_{\bigcdot}}_s},v^i_s\right)\;ds + h\left(x^{i,v^i_{\bigcdot}}_T,\frac{1}{N-1}\sum_{j=1, j\neq i}^N\delta_{x^{j,v^j_{\bigcdot}}_T}\right)\right],
	\end{equation}
	where $g$ is the given running cost functional and $h$ is the penalty functional of the final position. The domains and codomains of the coefficient functionals are specified as follows:
	$$
	f(x,\mathbb{L},v):\mathbb{R}^{n_x}\times\mathcal{P}_2(\mathbb{R}^{n_x})\times\mathbb{R}^{n_v}\rightarrow \mathbb{R}^{n_x}\h{1pt},\h{10pt}	g(x,\mathbb{L},v):\mathbb{R}^{n_x}\times\mathcal{P}_2(\mathbb{R}^{n_x})\times\mathbb{R}^{n_v}\rightarrow \mathbb{R}\h{1pt},\h{10pt}
	h(x,\mathbb{L}):\mathbb{R}^{n_x}\times\mathcal{P}_2(\mathbb{R}^{n_x})\rightarrow \mathbb{R},
	$$
	where $\mathcal{P}_2(\mathbb{R}^{n_x})$ is the space of Borel probability measures on $\mathbb{R}^{n_x}$ so that each has a finite second moment and it is equipped with the $2$-Wasserstein metric. The principal objective of each agent is to minimize his own cost functional in response to the actions taken by the others. In this classical nonzero-sum stochastic differential game framework, we aim to establish a Nash equilibrium $(u^1_s,u^2_s,\ldots,u^N_s)$.
	\begin{problem}
		We find the Nash equilibrium $(u^1_s,u^2_s,\ldots,u^N_s)$ with each of them belonging to $L^2_{\mathcal{W}_{0\xi}}(0,T;L^2(\Omega;\mathbb{R}^{n_v}))$ such that for each $i=1,2,\ldots,N$, it holds that $
			\mathcal{J}^i_{\xi}(u^1,u^2,\ldots,u^N)
			\leq\mathcal{J}^i_{\xi}(u^1,\ldots,u^{i-1},v^i,u^{i+1},\ldots,u^N)$ for any admissible control $v^i_s \in L^2_{\mathcal{W}_{0\xi}}(0,T;L^2(\Omega;\mathbb{R}^{n_v}))$.
		\label{prob. N players Nash}
	\end{problem}
	\noindent The discussion of $N$-agent stochastic differential games can be found in Bensoussan-Frehse \cite{BF84, BF00} and the references therein.

	As $N$ becomes large, studying the $N$-agent stochastic differential game directly is both analytically infeasible and computationally expensive. To address this issue, there is a recent popularity in considering its limiting counterpart; see \cite{BFY13,CD18,HMC06,LL07} for details. To consider this limiting scenario, we first take the state dynamics $x_s \in \mathbb{R}^{n_x}$ of a typical agent satisfying the following dynamics:
	\begin{equation}\label{e:State_Dynamics}
		dx_s = f(x_s,\mathbb{L}_s,v_s)\;ds+\eta\h{.7pt} dW_s,\h{20pt} \text{for $s \in [0,T]$,}
		\h{20pt}
		x_0 = \xi,
	\end{equation}
	under a given exogenous measure $\mathbb{L}_s \in C(0,T;\mathcal{P}_2(\mathbb{R}^{n_x}))$, where $v_s \in L^2_{\mathcal{W}_{0\xi}}(0,T;L^2(\Omega;\mathbb{R}^{n_v}))$ is still the admissible control, $\xi \in \mathbb{R}^{n_x}$ is still the initial data and $W_s \in \mathbb{R}^{n_w}$ is still the Wiener process, but all of these are single variate standing for the representative agent only. Moreover, we similarly define the truncated filtrations $\mathcal{W}_t$, $\mathcal{W}_{t\xi}$ and the truncated $\sigma$-algebra $\mathcal{W}^s_t$ as before.  Under this dynamics, the objective functional is given by
	\begin{equation}
		\mathcal{J}_{\xi}(v,\mathbb{L}):=\E\left[\int_0^{T} g(x_s,\mathbb{L}_s,v_s)\;ds+h(x_T,\mathbb{L}_T)\right].
		\label{def. functional J}
	\end{equation}
	\begin{problem}
		We find the equilibrium $u_s\in L^2_{\mathcal{W}_{0\xi}}(0,T;L^2(\Omega;\mathbb{R}^{n_v}))$ and the equilibrium mean field term $\mathbb{L}_s\in C(0,T;\mathcal{P}_2(\mathbb{R}^{n_x}))$ such that 
		\begin{enumerate}[(a).]
			\item for this mean field measure $\mathbb{L}_s$, $ \mathcal{J}_{\xi}(u,\mathbb{L})
			\leq \mathcal{J}_{\xi}(v,\mathbb{L})$ for all admissible control $v_s \in L^2_{\mathcal{W}_{0\xi}}(0,T;L^2(\Omega;\mathbb{R}^{n_v}))$; 
			\item $\mathbb{L}_s$ is the law of the state corresponding to $u_s$, that means $\mathbb{L}_s$ is the law of $y_s=\xi+\displaystyle\int^s_0f(y_\tau,\mathbb{L}_\tau,u_\tau)\;d\tau+\displaystyle\int^s_0 \eta\h{.7pt} dW_\tau$ at every time $s\in [0,T]$.
		\end{enumerate}
		\label{prob. MFG Nash}
	\end{problem}
	One of the major goals of this work is to resolve Problem \ref{prob. MFG Nash}, as general as possible, under some mild assumptions on $f$, $g$ and $h$. In the existing literature, the problem is commonly addressed through three primary approaches. The first approach involves solving the forward-backward Fokker-Planck and Hamilton-Jacobi-Bellman (FP-HJB) system.  Without loss of generality, we may assume that the initial condition $\xi$ has a probability density function $m_0\in L^2(\R^{n_x})$, due to the non-degeneracy of $\eta$ and H\"ormander's theorem. In accordance with (i) in Problem \ref{prob. MFG Nash}, the probability density function of the mean field term $\mathbb{L}_s$, denoted by $m_s \in L^2(\R^{n_x})$, has to satisfy the following forward Fokker-Planck (FP) equation
	\begin{equation}
		\h{-5pt}\left\{
		\begin{aligned}
			&
			\p_s m_s(y) - \dfrac{1}{2}  \sum^{n_x}_{i,j=1}(\eta\eta^\top)_{ij}\p_{y_i}\p_{y_j} m_s(y)
			+\nabla_y\cdot\left[\nabla_pH\pig(y,m_s,p\pig)\Big|_{p=\nabla_y V(y,s)}m_s(y)\right]=0,\h{10pt} \text{in $\mathbb{R}^{n_x} \times[0,T]$,}\\
			&m_s(y)|_{s=0}=m_0(y);
		\end{aligned}\right.
		\label{eq. FP intro}
	\end{equation}
	the backward Hamilton-Jacobi-Bellman (HJB) equation is satisfied by the value function $V(x,s)$ is 
	\begin{equation}
		\h{-5pt}\left\{
		\begin{aligned}
			&
			\p_s V(x,s) + \dfrac{1}{2} \sum^{n_x}_{i,j=1}(\eta\eta^\top)_{ij} \p_{x_i}\p_{x_j} V(x,s)
			+H\pig(x,m_s, \nabla_x V(x,s) \pig)=0,\h{20pt} \text{in $\mathbb{R}^{n_x} \times[0,T]$;}\\
			&V(x,T)=h\pig(x,m_T\pig).
		\end{aligned}\right.
		\label{eq. HJB intro}
	\end{equation} 
	Cardaliaguet \cite{C15} and Graber \cite{G14} studied the existence and uniqueness of weak solution of the first-order ($\eta=0$) forward-backward FP-HJB system given by \eqref{eq. FP intro}-\eqref{eq. HJB intro}, each employing different structures of the Hamiltonian. Building upon their works, Cardaliaguet-Graber \cite{CG15} extended their these results under more general conditions and analyzed their long-term average behavior. 
	For the second-order case, when the coupling of the equations involves a nonlocal smoothing operator, Lasry-Lions \cite{LL07} established the classical well-posedness of the system \eqref{eq. FP intro}-\eqref{eq. HJB intro} under the Lasry-Lions monotonicity. Porretta \cite{P15} showed the existence and uniqueness of weak solution to \eqref{eq. FP intro}-\eqref{eq. HJB intro} for more general cost functionals. Notably, all these studies focused on separable Hamiltonians. Ambrose \cite{A18} addressed the well-posedness of strong solutions to \eqref{eq. FP intro}-\eqref{eq. HJB intro} with a non-separable Hamiltonian, given that the initial and terminal data or the Hamiltonian is small. All the mentioned works conducted their investigations within the framework of a torus.

	The second approach has recently gained popularity, focusing on solving an infinite-dimensional PDE referred to as the {\it master equation}. This idea was initially proposed by P.-L. Lions in his lecture \cite{L14} (see also Cardaliaguet \cite{C13}),  and it has since attracted significant attention from researchers. The master equation can be introduced by augmenting and extending the backward HJB equation \eqref{eq. HJB intro} by dummifying the forward FP equation \eqref{eq. FP intro}  (refer to the derivations in Bensoussan-Frehse-Yam \cite{BFY15,BFY17},  Carmona-Delarue \cite{CD18}). To be more precise, the master equation is expressed as follows:
	
	\begin{equation}\label{eq. master eq. intro}
		\left\{\begin{aligned}
			& \p_t U(x,m,t)+\sum^{n_x}_{i,j=1}(\eta\eta^\top)_{ij}\p_{x_i}\p_{x_j}U(x,m,t)
			+H\pig(x,m,\nabla_x U(x,m,t)\pig)\\
			&+\int_{\R^{n_x}}\left[\sum^{n_x}_{i,j=1}(\eta\eta^\top)_{ij}\p_{y_i}\p_{y_j}\frac{d}{d\nu}U(x,m,t)(y)
			+\nabla_y\frac{d}{d\nu}U(y,m,t)\cdot \nabla_p H \pig(y,m,p\pig)\Big|_{p=\nabla_y U(y,m,t)}\right]m(y) dy =0; \\
			& U(x,m,T)=h(x,m),
		\end{aligned}\right.
	\end{equation}
	where $\frac{d}{d\nu}U(y,m,t)$ denotes the linear functional derivative of $U$ with respect to the measure variable $m$; see also the definition in Section 1.3. In the realm of the first-order master equation (where Wiener processes are disregarded, $\eta=0$), a local (in time) classical solution on a compact torus was first established by Gangbo-{\'S}wi{\k{e}}ch \cite{GS15} for a quadratic Hamiltonian; Mayorga \cite{M20} extended this work to a generic separable Hamiltonian on $\mathbb{R}^{n_x}$. Recently, Bensoussan-Wong-Yam-Yuan \cite{BWYY23game} established the global (in time) well-posedness of classical solutions of the first-order master equation on $\mathbb{R}^{n_x}$ with non-separable Hamiltonians under some convexity assumptions and decaying conditions, provided that the nonlinear drift function is Lipschitz continuous and its second-order derivatives decay appropriately; the mean field game problem was first solved by tackling the forward-backward ODEs, then the well-posedness of the master equation was tackled by handling the forward and backward processes. Note that the regularity of the coefficient functions is less stringent when we solve the forward-backward ODEs rather than resolving the master equation directly. For the second-order case (with the presence of individual Brownian noise), Ambrose-{\'S}wi{\k{e}}ch \cite{AM23} constructed local solutions on a compact torus with a non-separable Hamiltonian locally depending on the measure argument. In Cardaliaguet-Cirant-Porretta \cite{CCP22}, local solutions were constructed on the entire space $\mathbb{R}^{n_x}$ with a non-separable Hamiltonian that also includes common noise via a novel splitting method. Buckdahn-Li-Peng-Rainer \cite{BLPR17} employed probabilistic techniques to investigate the global well-posedness of a linear master equation with only the terminal cost functional considered in the objective functional. However, it is noteworthy that, based on current knowledge, certain monotonicity assumptions on the Hamiltonian are required to extend to the global well-posedness of a generic problem. The first type of monotonicity, known as {\it Lasry-Lions monotonicity} (LLM), was introduced by Lasry-Lions \cite{LL07}. Under this condition, Cardaliaguet-Delarue-Lasry-Lions \cite{CDLL19} established the global well-posedness of the master equation on a compact torus with a generic separable Hamiltonian and a common noise, using a PDE approach and H\"older estimates. Chassagneux-Crisan-Delarue \cite{CCD22}, building upon the LLM, utilized a probabilistic approach and the associated FBSDEs, to establish the global well-posedness of the master equation on $\mathbb{R}^{n_x}$ with a separable Hamiltonian. Moreover, the global well-posedness of different senses of weak solutions, with a separable Hamiltonian and the presence of a common noise, were established in Mou-Zhang \cite{MZ19} (on the $\mathbb{R}^{n_x}$), Bertucci \cite{B21} (on a torus) and Cardaliaguet-Souganidis \cite{CS21} (on $\mathbb{R}^{n_x}$, without individual noise, i.e. $\eta=0$, while still including common noise). Also, see the LLM for master equations in mean field type control problems in Bensoussan-Tai-Yam \cite{BTY23}; the problem was solved by the variational methods, then the well-posedness of the master equation was obtained by inheriting the regularity of the solution of the associated FBSDEs; besides the flexibility of the model also allows the cost functionals to be of  quadratic growth. Two examples presented in Cecchin-Pra-Fischer-Pelino \cite{CPFP19} (two-state model) and Delarue-Tchuendom \cite{DT20} (linear quadratic model) demonstrated  the non-unique existence of equilibrium in the absence of LLM. Another monotonicity assumption, more recently received, is called the {\it displacement monotonicity} (DM), see Assumption 3.5 in \cite{GMMZ22}. The concept of DM in the context of mean field games was initially introduced as ``weak monotonicity'' by Ahuja \cite{A16} which employed this concept to examine the well-posedness of solutions to mean field games with common noise using simple cost functions via the FBSDE approach. The DM was first used in Gangbo-M{\'e}sz{\'a}ros \cite{GM22} for the global well-posedness of first-order master equation with separable Hamiltonians on $\mathbb{R}^{n_x}$, then in Gangbo-M{\'e}sz{\'a}ros-Mou-Zhang \cite{GMMZ22} for second-order master equations with a presence of common noise and non-separable Hamiltonian with bounded derivatives up to fifth order. \mycomment{, further their work only allows linear drift function under their setting bdd derivatives imply linear drift? if yes, is it too offensive of using these words?} Also, see the role of DM for master equations in mean field type control problems in Bensoussan-Graber-Yam \cite{BGY20}, Bensoussan-Tai-Yam \cite{BTY23} and also Bensoussan-Wong-Yam-Yuan \cite{BWYY23}.  It is essential to note that neither DM nor LLM implies the other, one can read Section 2.1 in this article, Remark 3.1 in \cite{BTY23}, Section 5.3 in \cite{BWYY23} and Section 4.3 in \cite{BWYY23game} for further insights into these two monotonicity assumptions. In addition to these two monotonicity assumptions, we also recommend  readers to consult Mou-Zhang \cite{MZ22} for an alternative concept called anti-monotonicity, which differs from the two forms of monotonicity mentioned previously, under which the master equation can also be proven to have a classical solution. Also, Cecchin-Delarue \cite{CD22weak} established the existence and uniqueness of the weak solution to the master equation on a compact torus for a  separable Hamiltonian, without imposing any monotonicity on the coefficients by generalizing the theory of Kru\^zkov.

	The third approach to addressing mean field games involves the application of probabilistic tools. Carmona-Lacker \cite{CL15} employed this methodology by first weakly solving the optimal control problem and subsequently using the modified Kakutani's theorem to identify the fixed-point equilibrium measure. Carmona-Delarue-Lacker \cite{CDL16} determined the weak equilibria of mean field games subject to common noise by applying the Kakutani-Fan-Glicksberg fixed-point theorem for set-valued functions, as well as a discretization procedure for the common noise.  The weak equilibria can be shown to be strong under additional assumptions. An alternative approach involves using the stochastic maximum principle to characterize the optimal control problem with the use of a BSDE. By governing this BSDE with the forward state dynamics, Problem \ref{prob. MFG Nash} is related to the following  mean field (or McKean-Vlasov) forward-backward stochastic differential equations (FBSDE):
	\begin{equation}\label{eq. FBSDE, intro}
		\left\{
		\begin{aligned}
			dx_{\xi}(s) &=  f\pig(x_{\xi}(s),\mathcal{L}\big(x_{\xi}(s)\big),v_{\xi}(s)\pig)ds+\eta\h{.7pt} dW_s;\\
			-dp_{\xi}(s) &= 
			\Big[\nabla_x f \pig(x_{\xi}(s),\mathcal{L}\big(x_{\xi}(s)\big),v_{\xi}(s)\pig)\Big]^\top p_\xi(s)\;ds
			+\nabla_x g \pig(x_{\xi}(s),\mathcal{L}\big(x_{\xi}(s)\big),v_{\xi}(s)\pig)\;ds
			-q_{\xi}(s) dW_s;\\
			x_\xi(0)&=\xi;\h{30pt} p_\xi(T)=\nabla_x h\pig(x_{\xi}(T),\mathcal{L}(x_{\xi}(T))\pig),
		\end{aligned}
		\right.
	\end{equation}
	\begin{flalign} \label{eq. 1st order condition, intro}
		\text{subject to}&& 
		\Big[\nabla_v f \pig(x_{\xi}(s),\mathcal{L}\big(x_{\xi}(s)\big),v_{\xi}(s)\pig)\Big]^\top p_\xi(s) + \nabla_v g\pig(x_{\xi}(s),\mathcal{L}(x_{\xi}(s)),v_{\xi}(s)\pig) = 0.&&
	\end{flalign} 
	An essential advantage of this approach is that the system \eqref{eq. FBSDE, intro}-\eqref{eq. 1st order condition, intro}  constitutes a finite-dimensional problem, contrasting with solving the master equation which is in-born infinite dimensional. The mean field stochastic differential equations were studied in Buckdahn-Li-Peng-Rainer \cite{BLPR17}. while the investigation of mean field backward SDEs was first undertaken by Buckdahn-Djehiche-Li-Peng \cite{BDLP09} and Buckdahn-Li-Peng \cite{BLP09}. The well-posedness of the coupled mean field FBSDEs was first examined by Carmona-Delarue \cite{CD13} in which they did not require convexity of the coefficients but imposed certain boundedness on the coefficients with respect to the state variable. Subsequently, Carmona-Delarue \cite{CD15} (see also \cite{CD13mfg}) eliminated this boundedness assumption and established the well-posedness of FBSDEs under a framework of linear forward dynamics and convex coefficients of the backward dynamics. \mycomment{for a more comprehensive description of their general approach.} Notably, a key distinction between the present article and the works \cite{CD13mfg,CCD22} lies in our set of conditions on the cost functionals, resembling displacement monotonicity on the Hamiltonian, whereas theirs are based on Lasry-Lions monotonicity. We have to provide a brand new set of estimates for establishing the globally well-posedness of the FBSDEs and hereto new set of estimates for showing the regularity of the value function under a new set of assumptions, all of these are absent in the existing literature. This set of assumptions also allows more flexibility of the cost functionals that include the commonly used linear-quadratic one as a very special case. Furthermore, our proposed analysis can be readily extended to the case of nonlinear drift function and non-separable Hamiltonian. Bensoussan-Yam-Zhang \cite{BYZ15} also addressed the well-posedness of FBSDEs with another set of generic assumptions on the coefficients. Last but not least, for a more comprehensive study of mean field games, readers may refer to the books Bensoussan-Frehse-Yam \cite{BFY13}, Carmona-Delarue \cite{CD18} and Gomes-Pimentel-Voskanyan \cite{GPV16}.

	\subsection{Our Methodology and Main Results}
	Under mild and natural assumptions, the primary objectives of this work are to determine the equilibrium from the corresponding FBSDE due to the maximum principle, and then to establish the global well-posedness of the associated master equation. For the first objective, we investigate the FBSDE in \eqref{eq. FBSDE, intro}-\eqref{eq. 1st order condition, intro}, by first establishing the existence of a local (in time) solution. To construct the global solution over an arbitrary time horizon, we then need to control the stability of the sensitivity of the backward dynamics with respect to the initial condition of the forward dynamics, which is equivalent to controlling the corresponding Lipschitz constant of the backward dynamics. To achieve this, we estimate the bound of the Jacobian flow of the FBSDE solution, which is the G\^ateaux derivative of the solution with respect to the initial random variable. This estimate can be facilitated by incorporating the convexity of the cost functionals, the smallness assumption (called the small mean field effect) of the measure derivative of the cost functional, and the Schur complement of the Hessian matrix of the running cost. These assumptions exhibit similarity with the displacement monotonicity used in \cite{GMMZ22}. To the best of our knowledge, our current work is the first using these kinds of assumptions in solving the mean field games directly via probabilistic method rather than relying on master equations. Besides, we also find that solving the FBSDE requires less stringent regularity assumptions on the coefficient functions than coping with the master equation directly which demands the $C^3$-coefficient functions; also note the $C^5$-regularity requirements in \cite{GMMZ22} though they claimed that it can still be a bit improved. For a more detailed description of the global well-posedness of the FBSDE, we refer readers to Theorem \ref{thm. global existence}. In addition, we provide a counter-example to illustrate the insolvability of the mean field games when the assumption of the small mean field effect is violated.

	For the second objective, we characterize the value function using the backward process and investigate its regularity with respect to the initial condition of the forward processes. Using It\^o's lemma, it is standard to show that the value function satisfies the HJB equation for a fixed exogenous measure term. To establish the master equation, we analyze the value function at equilibrium $U(x,m,t)$, along with its linear functional derivative $\frac{d}{d\nu}U(x,m,t)(x')$. Furthermore, due to the representation of $\frac{d}{d\nu}U(x,m,t)(x')$ in \eqref{eq. linear functional d of U}, its first-order spatial regularity with respect to $x'$ is based on the existence of spatial derivatives of the linear functional derivative of the solution to the FBSDE \eqref{eq. FBSDE, with m_0 and start at x}-\eqref{eq. 1st order condition, with m_0 and start at x}, which is built upon the convexity conditions on the measure derivative of the cost functions and the small mean field effect. Particularly, in Section \ref{sec. comparsion with other mono}, we shall discuss these assumptions and the commonly used monotonicity conditions in the study of master equations. While establishing the second-order spatial regularity of $\frac{d}{d\nu}U(x,m,t)(x')$, it is necessary to assume the third-order differentiability of the cost functions in order to derive the Hessian flow of the solution to the FBSDE \eqref{eq. FBSDE, intro}-\eqref{eq. 1st order condition, intro}. Nevertheless, as indicated above, in order to accomplish the first objective of finding the equilibrium, it actually does not require one level higher of regularity. It is also important to note that we work in the domain of $\mathbb{R}^{n_x}$, as opposed to some of literature that concentrates on a compact torus only \cite{AM23,B21,CDLL19,CD22weak,GS15}. For a comprehensive statement of the global well-posedness of the classical solution to the master equation, we direct readers to Theorem \ref{thm. master eq.} for further details.

	There are three main contributions of the present article. First, the structural conditions imposed here are simpler to verify compared to traditional conditions on the Hamiltonian for solving the master equations in which the nature of equilibrium control has not yet been systematically studied. It is noted that we demand $C^3$-coefficient functions for solving the master equations, while we only require them to be $C^2$ just for the sake of unique existence of the equilibrium solution of the mean field games. We also note that the assumptions on the Hamiltonian in \cite{GMMZ22} only allow cost functions with linear growth, whereas this current work permits quadratic growth. 
	
Second, we illustrate how the displacement monotonicity should be formulated when the assumptions are imposed on the cost functions instead of the Hamiltonian. It is essentially the small mean field effect in \eqref{ass. Cii} of Assumption {\bf (Cii)} which visualizes how the convexity of cost functions counteracts the size of the measure derivative of the running cost function. 
	 
 Third, even when the mean field sensitivity effect is not small, the coefficient functions are not convex in the state or we do not have the monotonicity of cost functions, we can still provide a precise estimate for the lifespan $T_0$ of local existence in \eqref{time}, which in many cases may not be small. The less restrictive assumptions introduced in this paper, such as permitting non-convex running cost functions with respect to the state variable, are vital in practical applications. Let us further illustrate this point by using the models of crowd dynamics as an example. As demonstrated in~\cite{lachapelle2011mean}, the drift function for the forward state dynamics, which describes pedestrian movements, is just the control variable representing the pedestrian's velocity. On the other hand, the running cost functions with aversion preferences are typically non-convex in the state variable~$x$. A prevailing example is the separable running cost functions with nonlocal coupling introduced by equation (3.6) of \cite{aurell2018mean}, which is the sum of the pedestrian's kinetic energy (the square of the velocity) and the smooth approximation of the characteristic function of the ball centred at the pedestrian's position. This class of running cost functions is non-convex in the state variable $x$ as the Hessian matrix of the regularized characteristic function is not positive semi-definite on the whole space. The existence result in this work can be applied to a class of mean field game problems arising from crowd dynamics models that closely align with the description above. Moreover, as indicated in Section \ref{sec. comparsion with other mono}, when the Lasry-Lions monotonicity or the displacement monotonicity is violated, \eqref{ass. Cii} of Assumption {\bf (Cii)} can still be valid for $T=T_0$ satisfying \eqref{time}, so that, the local existence is still possible within $[0,T_0]$ under our proposed theory, which is new in the literature. The counterexample in Section \ref{sec. counterex} also demonstrates numerically that this lifespan should be sharp, indeed.

	\subsection{Calculus on Space of Measures and Space of Random Variables}
	We denote $\mathcal{P}_2(\mathbb{R}^{d})$ the space of Borel probability measures with a finite second moment in $\mathbb{R}^{d}$ equipped with the $2$-Wasserstein metric: for any Borel probability measures $\mu$ and $\mu'$ with finite second moments, the 2-Wasserstein metric of them is defined by 
	\begin{equation}
		\mathcal{W}_2(\mu,\mu') := \inf_{\pi \in \Pi_2(\mu,\mu')}
		\left[ \int_{\mathbb{R}^d \times \mathbb{R}^d} |x-x'|^2\pi(dx,dx')\right]^{1/2},
		\label{def. 2-Wasserstein metric}
	\end{equation} 
	where $\Pi_2(\mu,\mu')$ represents the set of joint probability measures with respective marginals $\mu$ and $\mu'$. The infimum is attainable such that there are random variables $\hat{X}_{\mu}$ and $\hat{X}_{\mu'}$ (which may be dependent) associated with $\mu$ and $\mu'$ respectively, so that 
	$
	\mathcal{W}_2^2(\mu,\mu'):=\mathbb{E}\pig|\hat{X}_{\mu}-\hat{X}_{\mu'}\pigr|^{2}.
	$ 
	Consider a continuous functional $f(\mu)$ on $\mathcal{P}_{2}(\mathbb{R}^{d})$, we study the concept of several derivatives of $f(\mu)$. Firstly, the linear functional derivative of $f(\mu)$ at $\mu$ is a function
	$(\mu,x) \in \mathcal{P}_{2}(\mathbb{R}^{d})\times \mathbb{R}^{d} \longmapsto \dfrac{d\h{0.01pt}f}{d\nu}(\mu)(x)$, being continuous under the product topology, satisfying $
	\left|\dfrac{d\h{0.01pt}f}{d\nu}(\mu)(x)\right|\leq c(\mu)\pig(1+|x|^2\pig)$ for some positive constant $c(\mu)$ which is bounded for $\mu$ in some compact set of $\mathcal{P}_{2}(\mathbb{R}^{d})$, and
	\begin{equation}
		\left| \dfrac{f\pig(\mu+\epsilon(\mu'-\mu)\pig)-f(\mu)}{\epsilon} -\int_{\mathbb{R}^d}\dfrac{d\h{0.01pt}f}{d\nu}(\mu)(x)\pig[d\mu'(x)-d\mu(x)\pig]\right| \longrightarrow 0\h{5pt} \text{ as $\epsilon \to 0$ for any $\mu$, $\mu'\in\mathcal{P}_{2}(\mathbb{R}^{d})$}.
		\label{Frechet derviative of F}
	\end{equation} 
	The linear functional derivative $\dfrac{d\h{0.01pt}f}{d\nu}(\mu)(x)$ is actually the linear functional derivative $\dfrac{\delta \h{0.01pt}f}{\delta m}(m)(x)$ as defined in \cite{CD18}.
	Furthermore, (\ref{Frechet derviative of F}) gives
	\begin{equation}
		\dfrac{d\h{0.01pt}f}{d\theta}\pig(\mu+\theta(\mu'-\mu)\pig)
		=\int_{\mathbb{R}^{d}}\dfrac{d\h{0.01pt}f}{d\nu}\pig(\mu+\theta(\mu'-\mu)\pig)(x)\pig[d\mu'(x)-d\mu(x)\pig].\label{eq:2-210}
	\end{equation}
	We can associate the functional $f$ with the lifted functional $F:\mathcal{H}:=L^2(\Omega,\mathcal{F},\mathbb{P};\mathbb{R}^d)\to\R$ that is defined as $F(X) := f(\mathbb{L}_X)$ where $\mathbb{L}_X$ is the probability law of $X$. The inner product over $\mathcal{H}$ is defined by 
	\begin{equation}
		\langle X,Y\rangle_{\mathcal{H}}
		:=\mathbb{E}\left[
		X\cdot Y\right] < \infty
		\h{1pt},\h{10pt}
		\text{for any $X$, $Y \in \mathcal{H}$.}
		\label{eq:2-215}
	\end{equation} 
	Since the mapping $X\mapsto\mathbb{L}_X$ is continuous from $(\mathcal{H},\langle\cdot,\cdot\rangle_{\mathcal{H}})$ to $(\mathcal{P}_2(\mathbb{R}^d),\mathcal{W}_2)$, the lifted functional $F$ would be continuous over $\mathcal{H}$ if the original function $f$ were. If the G\^ateaux derivative $D_X F\pig(X)$ exists and the mapping $\theta \longmapsto D_X F\pig(X+\theta Y\pig)$ is continuous for $\theta \in [0,1]$, then
	
	\begin{equation}
		\dfrac{d}{d\theta}F(X+\theta Y)  
		=\pigl\langle D_{X}F(X+\theta Y)  ,Y\pigr\rangle_{\mathcal{H}}.
		\label{d_theta F = D_X F}
	\end{equation}
	Suppose that $f(\mu)$ has a linear functional derivative $\dfrac{df}{d\nu}(\mu)(x)$ which is differentiable in $x$ for each $\mu \in \mathcal{P}_2(\mathbb{R}^d)$. Then it is easy to convince oneself
	that when $(\mu,x) \longmapsto \nabla_x \dfrac{df}{d\nu}(\mu)(x)$ is continuous
	with the growth condition 
	$\left|\diff_x \dfrac{df}{d\nu}(\mu)(x)\right|\leq c(\mu)(1+|x|),$
	where $c(\mu)$ is bounded on any compacta of $\mathcal{P}_2(\mathbb{R}^d)$, then 
	
	\begin{equation}
		D_{X}F(X )=\diff_x \dfrac{df}{d\nu}(\pi_X)(x)\bigg|_{x=X},
		\label{eq:2-217}
	\end{equation}
	which is an element of $\mathcal{H}$; we refer readers to Section 2.1 in \cite{BFY17} (see also \cite{BFY13,BFY15,BGY19,BGY20}) for a rigorous proof about the above equality, it is identical to the $L$-derivative $\partial_{m}F(m)(x)$ in \cite{CD18}. 
	
	Now, let us consider the case that $F:\mathcal{H}\rightarrow \mathcal{H}$ depends on $X\in\mathcal{H}$ through both its state and law. We note that $F(X)$ is a random variable in $\mathcal{H}$ for any $X\in\mathcal{H}$, instead of a deterministic number as in the previous case. Since $F$ depends on $X\in\mathcal{H}$ both through its state and law, we may write 
	$F(X)=f(X,\mathbb{L}_X)=f(x,\mathbb{L}_X)|_{x=X}$
	for some $f:\mathbb{R}^d\times\mathcal{P}_2(\mathbb{R}^d)\rightarrow \mathbb{R}^d$. Let $X,Y \in \mathcal{H}$ and $\theta >0$. If the G\^ateaux derivative $D_X F\pig(X)(Y)$ exists and the mapping $\theta \longmapsto D_X F\pig(X+\theta(Y-X)\pig)(Y-X)$ is continuous for $\theta \in [0,1]$, then
	$ f(X,\mathbb{L}_X)-f(Y,\mathbb{L}_Y)=F(Y) - F(X) = \int_0^1  D_X F\pig(X+\theta(Y-X)\pig)(Y-X) d\theta.$ The G\^ateaux derivative $D_X F(X)(Y)$ is given by
	\begin{equation}\label{f_statelaw}
		\begin{aligned}
			D_XF(X)(Y)= \nabla_x f(x,\pi_X)\pigr|_{x=X}  Y
			+\widetilde{\mathbb{E}}\left[\left.\nabla_{\tilde{x}}\frac{d }{d\nu} f(X,\pi_X)(\widetilde{x})\right|_{\tilde{x}=\widetilde{X}}\cdot\widetilde{Y}\right],
		\end{aligned}
	\end{equation}
	where $(\widetilde{X},\widetilde{Y})$ is a pair of independent copy of $(X,Y)$, and $\widetilde{\mathbb{E}}$ takes an expectation of the random variable in the bracket by integrating against the joint distribution of $\widetilde{X}$ and $\widetilde{Y}$ only. The derivative $\nabla_{\tilde{x}}\frac{d }{d\nu} f(x,\mathbb{L})(\widetilde{x})$ is called the $L$-derivative of $f(x,\mathbb{L})$.
	
	\subsection{Organization of the Article}
	This article is organized as follows. Section 2 outlines our assumptions about the setting and formulates the mean field games problem. We then proceed to solve the mean field game problems in Section 3 by demonstrating the global unique existence of the associated FBSDE. In Section 4, we establish the regularity of the value function, which confirms that the value function is a classical solution to the HJB equation in \eqref{eq. bellman eq.}. We also investigate the linear functional differentiability of the FBSDE solution and their regularity of these derivatives in Section 5. Finally, in Section 6, we explore the classical solvability of the HJB and the celebrated master equations in \eqref{eq. master eq.}. The appendix provides proofs of various statements claimed throughout this article.

	\mycomment{
		\subsection{Second-Order G\^ateaux Derivative in the Hilbert Space}
		
		A functional $F(X)$ is said to have a second-order G\^ateaux derivative
		in $\mathcal{H}$, denoted by $D_{X}^{2}F(X)\in\mathcal{L}(\mathcal{H};\mathcal{H})$,
		if for any $Y \in\mathcal{H}$, it holds that
		\begin{equation}
			\dfrac{\pigl\langle D_{X}F(X+\epsilon Y)-D_{X}F(X),Y \pigr\rangle_{\mathcal{H}}}{\epsilon}
			\longrightarrow
			\pigl\langle D_{X}^{2}F(X)(Y),Y\pigr\rangle_{\mathcal{H}}\h{1pt},\h{10pt}
			\text{as $\epsilon\rightarrow 0$}.
			\label{def second G derivative}
		\end{equation}
		For any $Z$ and $W \in \mathcal{H}$, $D_{X}^{2}F(X  \otimes  m)(Z)$ is generally defined by
		\begin{equation}
			\pigl\langle D_{X}^{2}F(X  \otimes  m)(Z),W\pigr\rangle_{\mathcal{H}}
			=\dfrac{1}{4}\left[\pigl\langle D_{X}^{2}F(X  \otimes  m)(Z+W),Z+W\pigr\rangle_{\mathcal{H}}
			-\pigl\langle D_{X}^{2}F(X  \otimes  m)(Z-W),Z-W\pigr\rangle_{\mathcal{H}}\right].
			\label{eq:2-1}
		\end{equation}
		Therefore, $D_{X}^{2}F(X  \otimes  m)$ is clearly self-adjoint by definition. 
		Also 
		\begin{equation}
			\dfrac{d}{d\theta}\pigl\langle D_{X}F((X+\theta Z)  \otimes  m),W\pigr\rangle_{\mathcal{H}}
			=\pigl\langle D_{X}^{2}F((X+\theta Z)  \otimes  m)(Z),W\pigr\rangle_{\mathcal{H}}.
			\label{eq:2-228}
		\end{equation}
		Then, (\ref{eq:2-228}) and  (\ref{d_theta F = D_X F}) imply
		\begin{equation}
			\dfrac{d^{\h{.5pt}2}\h{-.7pt} F}{d\theta^{2}}\pig((X+\theta Y)  \otimes  m\pig)
			=\pigl\langle D_{X}^{2}F((X+\theta Y)  \otimes  m)(Y),Y\pigr\rangle_{\mathcal{H}}.
			\label{eq:2-229}
		\end{equation}
		Hence, we can rewrite the Taylor's expansion for $F$ as follows:
		
		\begin{equation}
			F\pig((X+Y)  \otimes  m)\pig)
			=F(X  \otimes  m)
			+\pigl\langle D_{X}F(X  \otimes  m),Y\pigr\rangle_{\mathcal{H}}
			+\int_{0}^{1}\int_{0}^{1}\theta\pigl\langle D_{X}^{2}F((X+\theta\lambda Y)  \otimes  m)(Y),Y\pigr\rangle_{\mathcal{H}}
			d\theta d\lambda.
			\label{eq:2-230}
		\end{equation}
		From (\ref{eq:2-217}), it yields $\pigl\langle D_{X}F(X  \otimes  m),W\pigr\rangle_{\mathcal{H}}=\mathbb{E}\left[\displaystyle\int_{\mathbb{R}^{d}}\diff_x \dfrac{dF}{d\nu}(X  \otimes  m)(X_{x}) \cdot W_{x}dm(x)\right]$, it follows that
		\begin{align*}
			&\dfrac{1}{\epsilon}
			\pigl\langle D_{X}F((X+\epsilon Z)  \otimes  m) - D_{X}F(X\otimes  m)
			,W\pigr\rangle_{\mathcal{H}}\\
			&\h{100pt}=\dfrac{1}{\epsilon}
			\mathbb{E}\left[\displaystyle\int_{\mathbb{R}^{d}}
			\left(\diff_x \dfrac{dF}{d\nu}((X+\epsilon Z)  \otimes  m)(X_{x}+\epsilon Z_{x})
			-\diff_x \dfrac{dF}{d\nu}(X  \otimes  m)(X_{x})\right) \cdot W_{x}dm(x)\right],
		\end{align*}
		further, by assuming the existences of $\dfrac{d^{\h{.5pt}2}\h{-.7pt}  F}{d\nu^{2}}(m)(x,\tilde{x})$, its derivatives $\diff_x \diff_{\tilde{x}}\dfrac{d^{\h{.5pt}2}\h{-.7pt}  F}{d\nu^{2}}(m)(x,\tilde{x})$ and $\diff^2_x \dfrac{dF}{d\nu}(m)(x)$, together with continuity
		properties, taking limits of both sides will give 
		\begin{equation}
			\begin{aligned}
				\pigl\langle D_{X}^{2}F(X  \otimes  m)(Z), W\pigr\rangle_{\mathcal{H}}
				=\:&\mathbb{E}\left[\int_{\mathbb{R}^d} \diff^2_x \dfrac{dF}{d\nu}(X  \otimes  m)(X_{x})Z_{x} \cdot W_{x} dm(x)\right]\\
				&+\mathbb{E}\left\{\widetilde{\mathbb{E}}
				\left[\int_{\mathbb{R}^d}\int_{\mathbb{R}^d}
				\diff_x \diff_{\tilde{x}}\dfrac{d^{\h{.5pt}2}\h{-.7pt}  F}{d\nu^{2}}(X  \otimes  m)(X_{x},\widetilde{X}_{\tilde{x}})\widetilde{Z}_{\tilde{x}}\cdot W_{x}dm(\tilde{x})dm(x)\right]\right\},
			\end{aligned}
			\label{eq:2-231}
		\end{equation}
		in which $\pig(\widetilde{X}_{\tilde{x}},\widetilde{Z}_{\tilde{x}}\pig)$ are
		independent copies of $\big(X_{x},Z_{x}\big)$; also refer the detailed discussions in Section 2.1 of \cite{BFY17}. We can write, consequently, that
		
		\begin{equation}
			D_{X}^{2}F(X  \otimes  m)(Z)
			=\diff^2_x \dfrac{dF}{d\nu}(X  \otimes  m)(X_{x})Z_{x}
			+\widetilde{\mathbb{E}}\left[\int_{\mathbb{R}^{d}}\diff_x \diff_{\tilde{x}}\dfrac{d^{\h{.5pt}2}\h{-.7pt}  F}{d\nu^{2}}(X  \otimes  m)(X_{x},\widetilde{X}_{\tilde{x}})\widetilde{Z}_{\tilde{x}}dm(\tilde{x})\right].
			\label{eq:2-232}
		\end{equation}
		We notice the following measurability property of $D_{X}^{2}F(X  \otimes  m)(Z)$:
		
		\begin{equation}
			D_{X}^{2}F(X  \otimes  m)(Z)=M(X)Z+L(X,Z),
			\label{eq:2-2320}
		\end{equation}
		where $M(X)$ is a matrix-valued $\sigma(X(\cdot,\cdot))$-measurable random element,
		and $L(X,Z)$ is a vector-valued $\sigma(X(\cdot,\cdot))$-measurable random element, accounted in the augmented probability space $(  \mathbb{R}^d \times \Omega,  \mathcal{B}\times\mathcal{F},m \otimes \mathbb{P})$, which depends functionally on $Z$ in a linear manner. Likewise if we take $X_{x}=\mathcal{I}_{x}=x$, then we obtain
		
		\begin{equation}
			D_{X}^{2}F(m)(Z)=\diff^2_x \dfrac{dF}{d\nu}(m)(x)Z_x
			+\widetilde{\mathbb{E}}\left[\int_{\mathbb{R}^{d}}\diff_x \diff_{\tilde{x}}\dfrac{d^{\h{.5pt}2}\h{-.7pt}  F}{d\nu^{2}}(m)(x,\tilde{x})\widetilde{Z}_{\tilde{x}}dm(\tilde{x})\right].
			\label{eq:2-233}
		\end{equation}
		From Formula (\ref{eq:2-232}), it follows immediately that if $Z$
		is independent of $X$ and $\mathbb{E}(Z)=0$, then the second term vanishes and so
		
		\begin{equation}
			D_{X}^{2}F(X  \otimes  m)(Z)=\diff^2_x \dfrac{dF}{d\nu}(X  \otimes  m)(X_{x})Z_{x}.
			\label{eq:2-234}
		\end{equation}
		In order to get $D_{X}^{2}F(X  \otimes  m)(X)\in\mathcal{H}$, we
		need to assume 
		\begin{equation}
			\left|\diff_x \dfrac{dF}{d\nu}(m)(x)\right|\leq c(m)\h{1pt},\h{8pt}
			\left|\diff_x \diff_{\tilde{x}}\dfrac{d^{\h{.5pt}2}\h{-.7pt}  F}{d\nu^{2}}(m)(x,\tilde{x})\right|
			\leq c(m),
			\label{eq:2-235}
		\end{equation}
		where $|\cdot|$ in (\ref{eq:2-235}) is the matrix norm defined by $|A|:=\displaystyle\sup_{x \in \mathbb{R}^d,|x|=1}|Ax|$ for any $A \in \mathbb{R}^{d \times d}$. 
	}
	\mycomment{
		\subsection{Examples}
		
		Let us end this section with two examples as follows.
		
		\begin{example}
			Let $K:\R^n\times\R^n\to\R$ be a smooth kernel such that $K(x,\xi)=K(\xi,x)$. Define $f:\R^n\times L^2(\R^n)\to\R$ by
			\begin{equation}\label{e:f(x,m):=1/2intKm}
				f(x,m) := \frac{1}{2} \int_{\R^n} K(x,\xi) m(\xi) \;d\xi.
			\end{equation}
			Using this $f$, we can define a functional
			\begin{equation}\label{e:F(X):=f(X,pi_X)}
				F(X) := f(X,\pi_X) = \frac{1}{2} \int_{\R^n} K(X,\xi) \pi_X(\xi) \;d\xi,
			\end{equation}
			for all $X\in\mathcal{H}$ with {\color{black} smooth} probability density $\pi_X$. 
			
			It follows from a direct checking that the functional derivative
			\[
			\dfrac{d f}{d\nu}(\xi,m) (x)?
			=\dfrac{\partial f}{\partial m}(\xi,m) (x) = \frac{1}{2} K(\xi,x) = \frac{1}{2} K(x,\xi),
			\]
			which is independent of $m$. Thus, using Formula~\eqref{f_statelaw}, we know that the G\^ateaux derivative of $F$ at $X$ acting on $Y$ is 
			\[
			D_X F (X) (Y) = \left. \frac{1}{2} \int_{\R^n} \nabla_x K(x,\xi) \pi_X(\xi) \;d\xi \right|_{x=X}  Y
			+\frac{1}{2}\;\mathbb{E}_{\widetilde{x}\tilde{Y}}\left[\left.\nabla_{\tilde{y}} K(\widetilde{x},X)\right|_{\tilde{x}=\tilde{X}}\cdot\widetilde{Y}\right],
			\]
			for all $X\in\mathcal{H}$ with smooth probability density $\pi_X$, where $(\widetilde{X},\widetilde{Y})$ is a pair of independent copy of $(X,Y)$. Hence, by \eqref{e:def_D_LF},
			\[
			D_{\mathbb{L}}F(X)(Y) = \frac{1}{2}\;\mathbb{E}_{\widetilde{x}\tilde{Y}}\left[\left.\nabla_{\tilde{y}} K(\widetilde{x},X)\right|_{\tilde{x}=\tilde{X}}\cdot\widetilde{Y}\right].
			\]
			\hfill $\blacksquare$
			
		\end{example}
		
		\begin{example}
			Consider a functional $f:\R^n\times\mathcal{P}_2(\mathbb{R}^d)\to\R$ such that for any fixed $x\in\R^n$, the mapping $m\mapsto f(x,m)$ is L-differentiable in the sense of Definition~\ref{def:L-Differentiability_of_Functional}. Viewing $x$ as a parameter, we can compute the L-derivative of $f$ with respect to $m$ as follows. First of all, we can lift the functional $f(x,\cdot)$, and then compute the Fr\'echet derivative of the lifted functional by using the fact that the Fr\'echet and G\^ateaux derivatives coincide, as well as Identity~\eqref{f_law},
			and obtain, for any $m\in L^2(\R^n)$,
			\[
			\dfrac{\delta f}{\delta m} (x,m) = \left.\nabla_{\tilde{y}}\dfrac{\partial f}{\partial m}(x,m)(\widetilde{y})\right|_{\tilde{x}=X},
			\]
			where $X\in\mathcal{H}$ is any random variable such that $m=\pi_X$. Here, $\dfrac{\partial f}{\partial m}$ is the functional derivative of $f$ with respect to $m$.
			
			For any general $m\in\mathcal{P}_2(\mathbb{R}^d)$, we can define $h:\mathcal{P}_2(\mathbb{R}^d)\to\R$ by
			\[
			h(\mu) := \int_{\R^n} f(x,\mu) \;d\mu(x).
			\]
			Then we can lift $h$ and define its lifted functional $H:\mathcal{H}\to\R$ by
			\[
			H(X) := h(\mathbb{L}_X) = \int_{\R^n} f(x,\mathbb{L}_X) \;d\mathbb{L}_X(x).
			\]
			In this case, the first G\^ateaux derivative of $H$ is
			\[
			D_X H (X) = f(x,\pi_X) + \int_{\R^{n}} \dfrac{\delta f}{\delta m} (x,\pi_X) (\xi) \; d\pi_X(\xi)
			\]
			for all $X\in\mathcal{H}$ with a smooth $L^2$ density $\pi_X$.
			\hfill $\blacksquare$
	\end{example}}

	\section{Problem Setting and Preliminary}\label{sec. Problem Setting and Preliminary}
	In this section, we introduce the main problem that we aim to address and the corresponding assumptions. The primary objective of this paper is to tackle Problem \ref{prob. MFG Nash} while also demonstrating the classical well-posedness of the associated master equation \eqref{eq. master eq. intro} within the framework of a drift function $f(y,\mathbb{L},v) \equiv v$, a running cost function $g(y,\mathbb{L},v)\equiv g_1(y,v) + g_2(y,\mathbb{L})$, and a terminal cost function $h(y,\mathbb{L})\equiv h_1(y)+h_2(\mathbb{L})$. Anyhow, by combining the techniques in this article and that in \cite{BWYY23} which studies the techniques developed\mycomment{delete?} in the first-order mean field control problem with non-linear drift, we can extend the present approach to the case with a forward process driven by a nonlinear drift and more general cost functions which can be non-separable in both state and measure variables. To reduce the complexity of notations, we also assume $d=n_x=n_v=n_w$. For distinct $n_x,n_v,n_w$, the arguments of this article can be easily modified by using a linear drift function $\widetilde{f}(y,\mathbb{L},v)=bv$ for some $\mathbb{R}^{n_x\times n_v}$ matrix $b$. Recalling the notation used in Section 1.1, for any given terminal time $T>0$ and $t\in [0,T)$, we write $\mathcal{H} := L^2(\Omega,\mathcal{F},\mathbb{P};\mathbb{R}^d)$, and use $L^2(\Omega,\mathcal{W}^t_0,\mathbb{P};\mathbb{R}^d)$ to denote the subspace of $\mathcal{H}$ consisting of all elements measurable to $\mathcal{W}^t_0$. Note that the elements in $L^2(\Omega,\mathcal{W}^t_0,\mathbb{P};\mathbb{R}^d)$ are independent of $\mathcal{W}_t^T$. We also denote $L^2_{\mathcal{W}_{t\xi}}(t,T;\mathcal{H})$ the subspace of $L^2(t,T;\mathcal{H})$ of all elements adapted to the
	filtration $\mathcal{W}_{t\xi}=\{\mathcal{W}_{t\xi}^s\}_{s\in [t,T]}$ and $L^2_{\mathcal{W}_{t\xi\Psi}}(t,T;\mathcal{H})$ the subspace of $L^2(t,T;\mathcal{H})$ of all elements adapted to the
	filtration $\mathcal{W}_{t\xi\Psi}$ for any two $\xi$, $\Psi \in L^2(\Omega,\mathcal{W}^t_0,\mathbb{P};\mathbb{R}^d)$. Here $\mathcal{W}_{t\xi\Psi}$ is the filtration generated by the $\sigma$-algebra $\mathcal{W}_{t\xi\Psi}^s=	\mathcal{W}^s_t\bigvee\sigma(\xi,\Psi)\bigvee \mathcal{N}$. We consider the objective/cost functional 
	\begin{equation}
		\mathcal{J}_{t\xi}(v_{t\xi},\mathbb{L})
		:=\E\left[\int_{t}^{T}
		g_1\pig(x_{t\xi}(s),v_{t\xi}(s) \pig)
		+g_2\pig(x_{t\xi}(s),\mathbb{L}(s)\pig)\;ds
		+h_1\pig(x_{t\xi}(T)\pig)
		+h_2\pig(\mathbb{L}(T)\pig)\right],
		\label{def. functional J_t xi(v,L)}
	\end{equation}
	for any $\xi \in L^2(\Omega,\mathcal{W}^t_0,\mathbb{P};\mathbb{R}^d)$, $v_{t\xi} \in L^2_{\mathcal{W}_{t\xi}}\pig(t,T;\mathcal{H}\pig)$ and a measure-valued process $\mathbb{L}(s)\in C\pig(t,T;\mathcal{P}_2(\mathbb{R}^d)\pig)$ for $s\in[t,T]$, where $x_{t\xi}(s)=x_{t\xi}(s;v_{t\xi})$ satisfies the dynamics
	\begin{equation}\label{eq. state x_s}
		dx_{t\xi}(s) = v_{t\xi}(s)\;ds
		+\eta \h{.7pt} dW_s \h{10pt} \text{for $s \in [t,T]$,}
		\h{20pt} x_{t\xi}(t) = \xi,
	\end{equation}
	for some full-ranked constant matrix $\eta$.
	\begin{remark}
		Our present approach can immediately include the result for general non-separable $h(y,\mathbb{L})$ as the techniques employed to handle the term $g_2(y,\mathbb{L})$ can be extended to the case involving this non-separable $h(y,\mathbb{L})$. For the sake of simplicity, we use the separable terminal cost function $h_1(y)+h_2(\mathbb{L})$ in this paper.
		\label{rem on h} 
	\end{remark}
	We are going to solve Problem \ref{prob. MFG Nash} by considering the following sub-problem: 
	\begin{problem}
		Let $t \in [0,T)$, and fix an arbitrary random variable $\xi\in L^2(\Omega,\mathcal{W}^t_0,\mathbb{P};\mathbb{R}^d)$. We aim at finding a pair $(v^*_{t\xi},\mathbb{L}) \in L^2_{\mathcal{W}_{t\xi}}\pig(t,T;\mathcal{H}\pig) \times C\pig(t,T;\mathcal{P}_2(\mathbb{R}^d)\pig)$ satisfying :
		\begin{enumerate}[(a).]
			\item $\mathcal{J}_{t\xi}(v^*_{t\xi},\mathbb{L})\leq \mathcal{J}_{t\xi}(v_{t\xi},\mathbb{L})$ for all admissible control $v_{t\xi}(s) \in L^2_{\mathcal{W}_{t\xi}}\pig(t,T;\mathcal{H}\pig)$;
			\item $\mathbb{L}(s) = \mathcal{L}(x^*_{t\xi}(s))$ where $x^*_{t\xi}(s)=x_{t\xi}\pig(s;v^*_{t\xi}\pig)$ satisfies \eqref{eq. state x_s}.
		\end{enumerate}
		\label{problem, MFG no lifting}
	\end{problem}
	\noindent In the following, for instance, the notation $\nabla_{vy}g_1(y,v)$ means $\nabla_v\pig[\nabla_y g_1(y,v) \pig]$, and the matrix norm is taken as the operator $2$-norm. Let $\Lambda_{g_1}$, $C_{g_1}$, $C_{g_2}$, $c_{g_2}$ be some positive real numbers and $\lambda_{g_1}$, $\lambda_{g_2}$ be real numbers (allowed to be  non-negative), we assume that the running cost functions $g_1(y,v)$ and $g_2(y,\mathbb{L})$ satisfy the following conditions:
	\begin{align}
		\textbf{(Ai).} &\text{ } g_1(y,v):\mathbb{R}^{d} \times \mathbb{R}^{d} \longmapsto \mathbb{R} \text{ is twice differentiable in $y$ and $v$, its second
			order derivatives are}\h{400pt}\nonumber\\
		&\text{   jointly continuous in their respective arguments;} \label{ass. cts and diff of g1}\\[5pt]
		\textbf{(Aii).} &\text{ } g_2(y,\mathbb{L}):\mathbb{R}^{d} \times \mathcal{P}_2(\mathbb{R}^{d}) \longmapsto \mathbb{R} \text{ is twice differentiable in $y$ and  $\nabla_{\tilde{y}}\dfrac{d }{d\nu}\nabla_y g_2(y,\mathbb{L})(\widetilde{y})$ exists,}\nonumber\\
		&\text{ these derivatives of $g_2$ are jointly continuous in their respective arguments;} \label{ass. cts and diff of g2}\\[5pt]
		\textbf{(Aiii).} 
		&\text{ }\big|g_1(y,v)\big|  \leq  C_{g_1}\big(1+|y|^2+|v|^2\bigr) 
		\text{, for any $y,v\in \mathbb{R}^{d}$;}
		\label{ass. bdd of g1}\\[5pt]
		\textbf{(Aiv).} 
		&\text{ }\big| g_2(y,\mathbb{L})\big| 
		\leq C_{g_2}\left(1+|y|^2+\int_{\mathbb{R}^d}|z|^2d\mathbb{L}(z)\right)
		\text{, for any $y\in \mathbb{R}^{d}$ and $\mathbb{L}\in \mathcal{P}_2(\mathbb{R}^{d})$;}
		\label{ass. bdd of g2}\\[5pt]
		\textbf{(Av).} 
		&\text{ }\pig|\nabla_y g_1(y,v)\pig| \h{1pt},\h{5pt}
		\pig|\nabla_v g_1(y,v)\pig| \leq  C_{g_1}\pig(1+|y|^2+|v|^2\pigr)^{1/2} \text{, for any $y,v\in \mathbb{R}^{d}$;}\label{ass. bdd of Dg1}\\[5pt]
		\textbf{(Avi).} 
		&\text{ }\text{ for any $y,\widetilde{y}\in \mathbb{R}^{d}$ and}\text{  $\mathbb{L}\in \mathcal{P}_2(\mathbb{R}^{d})$, it holds that } \pig|\nabla_y g_2(y,\mathbb{L})\pig|
		\leq C_{g_2}\left(1+|y|^2+\int_{\mathbb{R}^d}|z|^2d\mathbb{L}(z)\right)^{1/2}\nonumber\\
		&\text{ }\text{ and}\h{5pt}
		\left|\nabla_{\tilde{y}} \dfrac{dg_2}{d\nu} (y,\mathbb{L})(\widetilde{y})\right|
		\leq
		\text{ }
		C_{g_2}\left(1+|y|^2+|\widetilde{y}|^2+\int_{\mathbb{R}^d}|z|^2d\mathbb{L}(z)\right)^{1/2};
		\label{ass. bdd of Dg2}\\[5pt]
		\textbf{(Avii).} 
		&\text{ }\pig|\nabla_{yy} g_1(y,v)\pig|   \h{1pt},\h{3pt}
		\pig|\nabla_{yv} g_1(y,v) \pig|\h{1pt},\h{3pt}
		\pig|\nabla_{vv} g_1(y,v) \pig| \leq C_{g_1}
		\text{, for any $y,v \in \mathbb{R}^{d}$;}
		\label{ass. bdd of D^2g1}\\[5pt]
		\textbf{(Aviii).} 
		&\text{ } 
		\pig|\nabla_{yy} g_2(y,\mathbb{L})\pig|\leq C_{g_2} 
		\text{, for any $y \in \mathbb{R}^{d}$ and $\mathbb{L}\in \mathcal{P}_2(\mathbb{R}^{d})$;}\label{ass. bdd of D^2g2}\\[5pt]
		\textbf{(Aix).} 
		&\text{ } 
		\left|\nabla_{\tilde{y}}\dfrac{d }{d\nu}\nabla_y g_2(y,\mathbb{L})(\widetilde{y})\right|
		\leq c_{g_2}
		\text{, for any $y,\widetilde{y} \in \mathbb{R}^{d}$ and $\mathbb{L}\in \mathcal{P}_2(\mathbb{R}^{d})$;}\label{ass. bdd of D dnu D g2}\\[5pt]
		\textbf{(Ax).} &\text{ $\pig[\nabla_{vv} g_1(y,v)p_1\pig]\cdot p_1
			+2\pig[\nabla_{yv} g_1(y,v)p_1\pig]\cdot p_2
			+\pig[\nabla_{yy} g_1(y,v)p_2\pig]\cdot p_2
			\geq \Lambda_{g_1} |p_1|^2 - \lambda_{g_1} |p_2|^2$, for }\nonumber\\
		&\text{ any $p_1,p_2 \in \mathbb{R}^{d}$ and $y,v\in \mathbb{R}^{d}$;}
		\label{ass. convexity of g1}\\[5pt]
		\textbf{(Axi).} &\text{ $\pig[\nabla_{yy} g_2(y,\mathbb{L})p_1\pig]\cdot p_1 \geq -\lambda_{g_2} |p_1|^2$, for any $y,p_1 \in \mathbb{R}^{d}$ and $\mathbb{L}\in \mathcal{P}_2(\mathbb{R}^{d})$.}
		\label{ass. convexity of g2}
	\end{align}

	Let $C_{h_1}$ and $C_{h_2}$ be positive numbers and $\lambda_{h_1}$ be a real number (allowed to be non-negative), we assume that the terminal cost functions $h_1(y)$ and $h_2(\mathbb{L})$ satisfy the following conditions
	\begin{align}
		\textbf{(Bi).} &\text{ } h_1(y):\mathbb{R}^{d}\longmapsto \mathbb{R} \text{ is twice continuously differentiable in $y$;}\h{130pt}\label{ass. cts and diff of h1}\\[5pt]
		\textbf{(Bii).} 
		&\text{ }\pig| h_1(y)\pig| \leq C_{h_1}\pig(1+|y|^2\pigr)  \text{, for any $y\in \mathbb{R}^{d}$;}\label{ass. bdd of h1}\\[5pt]
		\textbf{(Biii).} 
		&\text{ }\pig| h_2(\mathbb{L})\pig| \leq C_{h_2}\left(1+\int|z|^2d\mathbb{L}(z)\right)  \text{, for any $\mathbb{L} \in \mathcal{P}_2(\mathbb{R}^{d})$;}\label{ass. bdd of h2}\\[5pt]
		\textbf{(Biv).} 
		&\text{ }\pig|\nabla_y h_1(y)\pig| \leq C_{h_1}\pig(1+|y|^2\pigr)^{1/2} \text{, for any $y\in \mathbb{R}^{d}$;}\label{ass. bdd of Dh1}\\[5pt]
		\textbf{(Bv).} 
		&\text{ }\pig|\nabla_{yy} h_1(y)\pig| \leq C_{h_1} \text{, for any $y \in \mathbb{R}^d$;}
		\label{ass. bdd of D^2h1}\\[5pt]
		\textbf{(Bvi).} &\text{ $\pig[ \nabla_{yy} h_1(y) p_1\pig]\cdot p_1 \geq -\lambda_{h_1} |p_1|^2$, for any $p_1 \in \mathbb{R}^{d}$ and $y \in \mathbb{R}^d$.}\label{ass. convexity of h}
	\end{align}
	We always assume that {\bf (Ai)}-{\bf (Axi)} and {\bf (Bi)}-{\bf (Bvi)} hold throughout this article. These assumptions are natural in terms of control theory. Still, we need an additional monotonicity assumption in \eqref{ass. Cii} to ensure the unique existence of the global equilibrium, or equivalently, the well-posedness of solution of the FBSDE \eqref{eq. FBSDE, equilibrium} together with the first order condition \eqref{eq. 1st order condition, equilibrium}; see Section \ref{sec. comparsion with other mono} for the explanation of this assumption and the comprehensive discussion on its relation to the contemporary monotonicity conditions in this field. Besides, there is a counterexample in Section \ref{sec. counterex} in which the mean field game problem is ill-posed when \eqref{ass. Cii} fails to hold. It is worth noting that the constants $\lambda_{g_1}$, $\lambda_{g_2}$ and $\lambda_{h_1}$ in {\bf (Ax)}, {\bf (Axi)}, {\bf (Bvi)} are allowed to take positive values; in other words, both the running cost functions and terminal cost functions can be non-convex in the state variables in our framework. Such assumptions are practically important. For instance, in crowd dynamics models \cite{aurell2018mean,lachapelle2011mean}, running cost functions with aversion preferences are often non-convex in the state variable $x$. A notable case is the separable running cost function with nonlocal coupling introduced in equation (3.6) of \cite{aurell2018mean}, whose Hessian matrix with respect to $x$ is indeed not positive semi-definite in some regions.

	\subsection{Comparison with other Prevalent Monotonicity Conditions}
	\label{sec. comparsion with other mono}
	We compare our set of assumptions with two commonly used monotonicity conditions that are frequently applied in the context of master equations in mean field games. Specifically, we consider the Lasry-Lions monotonicity ((2.5) in \cite{CDLL19}) and the displacement monotonicity (Assumptions 3.2 and 3.5 of \cite{GMMZ22}). The running cost function $g_2(y,\mathbb{L})$ and terminal cost function $h_1(y)+h_2(\mathbb{L})$ in our current formulation correspond respectively to $F(x,m)$ and $G(x,m)$, which were introduced on the page 20 of \cite{CDLL19}. The corresponding Lasry-Lions monotonicity (LLM) if formulated in our setting is equivalent to the following condition (see Remark 2.4 in \cite{GMMZ22}):
	\begin{equation}
		\mathcal{M}_{\textup{LLM}}:=\mathbb{E} \mathbb{\widetilde{E}} \left[\Big\langle
		\nabla_{\tilde{y}}\dfrac{d}{d\nu}\nabla_yg_2(X,\mathbb{L}_X)(\widetilde{X})\widetilde{Y},Y
		\Big\rangle_{\mathbb{R}^d}  \right] \geq 0,
		\label{LLM}
	\end{equation}for any $X, Y \in \mathcal{H}$, where $(\widetilde{X},\widetilde{Y})$ is an independent copy of $(X,Y)$. However, under our assumptions (especially \eqref{ass. bdd of D dnu D g2}), the value of quantity $\mathcal{M}_{\textup{LLM}}$ may range from $ -c_{g_2}\|Y\|^2_{\mathcal{H}}$ to $c_{g_2}\|Y\|^2_{\mathcal{H}}$. On the other hand, we also assume the small mean field effect (see {\bf (Cii)} of Assumption \eqref{ass. Cii}) $c_{g_2}\leq -(\lambda_{g_1}+\lambda_{g_2})$, where $c_{g_2}>0$ and $\lambda_{g_1}$ or $\lambda_{g_2}$ can be positive but $\lambda_{g_1}+\lambda_{g_2}<0$. In other words, rather than assuming the LLM, we postulate that $c_{g_2}$ is small. There is no direct correspondence between our assumptions and LLM, our assumptions and assumptions in \cite{CDLL19} have some overlapping cases but also some disjoint ones.

	For the displacement monotonicity (DM), we define the Hamiltonian by 
	\begin{equation}
		\widetilde{H}(y,m,p):=g_1\big(y,u(y,p)\big)+g_2(y,m) + u(y,p) \cdot p,
		\label{def. ham}
	\end{equation}
	where $u(y,p)$ solves the first order condition $p+\nabla_v g_1 \big(y,v\big)\Big|_{v=u(y,p)}=0$ (see Lemma \ref{lem. derivation of FBSDE, necessarity for control problem, fix L}). The corresponding DM if formulated in our setting is equivalent to the following condition (see Remark 2.4 in \cite{GMMZ22}): 
	\begin{align}
		\mathcal{M}_{\textup{DM}}:&=\int_{\mathbb{R}^d}\int_{\mathbb{R}^d} 
		\left\langle
		\nabla_y\nabla_{\tilde{y}}\dfrac{d}{d\nu}\widetilde{H}(y,m,\varphi(y))(\widetilde{y})v(\widetilde{y})
		+\nabla_{yy}\widetilde{H}(y,m,\varphi(y))v(y),v(y)
		\right\rangle_{\mathbb{R}^d}dm(y)dm(\widetilde{y})\geq 0,
		\label{DM}
	\end{align}
	for any $m \in \mathcal{P}_2(\mathbb{R}^d)$, $\varphi \in C^1$ and $v \in L^2_m$. Under our setting, substituting \eqref{def. ham} into \eqref{DM}, and using \eqref{eq. diff 1st order condition}, we have
	\begin{align*}
		\mathcal{M}_{\textup{DM}}
		&=\int_{\mathbb{R}^d}\int_{\mathbb{R}^d} 
		\bigg\langle
		\nabla_{\tilde{y}}\dfrac{d}{d\nu}\nabla_y g_2(y,m) (\widetilde{y})v(\widetilde{y})\\
		&\h{65pt}+\Big[\nabla_{yy}g_1 - \nabla_{vy}g_1 \big(\nabla_{vv}g_1\big)^{-1} \nabla_{yv}g_1 + \nabla_{yy} g_2(y,m) \Big]v(y),v(y)
		\bigg\rangle_{\mathbb{R}^d}dm(y)dm(\widetilde{y}).
	\end{align*}
	With the aid of \eqref{ass. bdd of D dnu D g2} of Assumption {\bf (Aix)}, \eqref{ass. convexity of g1} of {\bf (Ax)} and \eqref{ass. convexity of g2} of Assumption \textup{\bf (Axi)}, we deduce that
	$$ \mathcal{M}_{\textup{DM}}\geq -(c_{g_2}+\lambda_{g_1}+\lambda_{g_2}) \int_{\mathbb{R}^d}|v(y)|^2 dm(y).$$
If $-(c_{g_2}+\lambda_{g_1}+\lambda_{g_2}) \geq 0$, then our assumptions are consistent with DM. In this case, if the terminal cost $h_1$ is convex in the state variable (i.e., $\lambda_{h_1}\leq0$), then Assumption \eqref{ass. Cii} suggests that the time horizon $T$ can be arbitrarily large, the upper bound for $c_{g_2}$ is given by $c_{g_2}\leq -(\lambda_{g_1}+\lambda_{g_2})$. It shows how much the mean field sensitivity effect (how large the $L$-derivative of $\nabla_y g_2$) can be allowed, in terms of $\lambda_{g_1}+\lambda_{g_2}$. In other words, we quantify how the convexity counteracts the mean field sensitivity effect. If $-(c_{g_2}+\lambda_{g_1}+\lambda_{g_2}) < 0$, we can also find a $T_0$ depending on $\Lambda_{g_1}$, $\lambda_{g_1}$, $\lambda_{g_2}$, $\lambda_{h_1}$ and $c_{g_2}$ such that $\Lambda_{g_1} - (\lambda_{h_1})_+T_0 - \pig(\lambda_{g_1}+\lambda_{g_2}+c_{g_2}\pigr)\frac{T^2_0}{2}>0$ (see Assumption \eqref{ass. Cii}) so that the FBSDEs are also well-posed on $[0,T_0]$. In this situation, the displacement monotonicity is not required for the well-posedness, but the maximum lifespan $T_0$ has to be chosen such that \begin{equation}
	T_0<(c_{g_2}+\lambda_{g_1}+\lambda_{g_2})^{-1}\pig\{\big[2\Lambda_{g_1}(c_{g_2}+\lambda_{g_1}+\lambda_{g_2})+((\lambda_{h_1})_+)^2\big]^{1/2}-(\lambda_{h_1})_+\pig\},
	\label{time}
\end{equation} which is not that small in many cases, indeed. There is no direct correspondence between DM and the small mean field sensitivity effect; while they overlap in some cases, each also encompasses distinct, non-overlapping scenarios. Nevertheless, both conditions can be viewed as interesting cases of $\beta$-monotonicity discussed in the recent work \cite{BHTY24}. Besides, our assumptions allow models beyond the common linear quadratic setting.

	\subsection{Differentiability and Convexity of Objective Functional}
	We now study the differentiability and convexity of the objective functional $\mathcal{J}_{t\xi}$. Its G\^ateaux derivative is related to the solution of a backward stochastic differential equation (BSDE). To this end, we define the space $\mathbb{H}_{\mathcal{W}_{t\xi}}\big[t,T\big]$ be all $\mathbb{R}^{d \times d}$-valued $\mathcal{W}_{t\xi}$-progressively measurable processes $q(s)$ with the norm $\|q(s)\|^2_{\mathbb{H}_{\mathcal{W}_{t\xi}}[t,T]}:=\int^T_t  
	\big\|\h{.7pt}q(\tau)\big\|^2_{\mathcal{H}} d\tau$.
	\begin{lemma}
		Let $t \in[0,T]$, $\mathbb{L}(s) \in C\pig(t,T;\mathcal{P}_2(\mathbb{R}^{d})\pig)$ and a random variable $\xi \in L^2(\Omega,\mathcal{W}^t_0,\mathbb{P};\mathbb{R}^d)$. The objective functional $\mathcal{J}_{t\xi}(v_{t\xi},\mathbb{L})$ has the G\^ateaux derivative with respect to $v_{t\xi}$, given by:
		\begin{equation}
			D_v \mathcal{J}_{t\xi}\pig(v_{t\xi}(s),\mathbb{L}(s)\pig) = 
			p_{t\xi}(s) + \nabla_v g_1\pig(x_{t\xi}(s),v_{t\xi}(s)\pig)\h{15pt} \text{ for any $v_{t\xi}(s) \in L^2_{\mathcal{W}_{t\xi}}\big(t,T;\mathcal{H}\big)$,}
			\label{eq. D_v J(v,L)}
		\end{equation}
		where the process $\pig(p_{t\xi}(s),q_{t\xi}(s)\pig) \in L^2_{\mathcal{W}_{t\xi}}\big(t,T;\mathcal{H}\big) \times \mathbb{H}_{\mathcal{W}_{t\xi}}\big[t,T\big]$ is the unique solution of the BSDE
		\begin{equation}\label{eq. backward process p_s}
			\left\{\begin{aligned}
				-dp_{t\xi}(s)&=
				\nabla_y g_1\pig(x_{t\xi}(s), v_{t\xi}(s)\pig)
				+\nabla_y g_2\pig(x_{t\xi}(s),\mathbb{L}(s)\pig)\;ds
				-q_{t\xi}(s) dW_s
				\h{1pt}, \h{10pt} \text{for $s \in [t,T]$;}\\
				p_{t\xi}(T)&=\nabla_y h_1\pig(x_{t\xi}(T)\pig).
			\end{aligned}\right.
		\end{equation}
		\label{lem. DJ(v,L)}
	\end{lemma}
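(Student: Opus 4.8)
The plan is to use the standard duality (adjoint) argument of the stochastic maximum principle. Fix an arbitrary perturbation direction $w_{t\xi} \in L^2_{\mathcal{W}_{t\xi}}(t,T;\mathcal{H})$ and consider the perturbed control $v_{t\xi} + \epsilon w_{t\xi}$. Since the drift in \eqref{eq. state x_s} is exactly linear in the control while the diffusion coefficient $\eta$ is control-independent, the perturbed state is \emph{exactly} affine in $\epsilon$: one has $x^\epsilon_{t\xi}(s) = x_{t\xi}(s) + \epsilon y_{t\xi}(s)$, where the variation process solves $dy_{t\xi}(s) = w_{t\xi}(s)\,ds$ with $y_{t\xi}(t)=0$, that is $y_{t\xi}(s) = \int_t^s w_{t\xi}(\tau)\,d\tau$. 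This linearity removes any need to analyse a genuinely nonlinear linearised flow and is what keeps the computation clean.

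First I would differentiate $\mathcal{J}_{t\xi}$ under the expectation. Since $\mathbb{L}$ is held fixed, the term $h_2(\mathbb{L}(T))$ is constant in $\epsilon$ and $g_2$ depends on $\epsilon$ only through $x^\epsilon_{t\xi}$. Using the exact identity $x^\epsilon_{t\xi} = x_{t\xi} + \epsilon y_{t\xi}$ together with Taylor's theorem and the uniformly bounded second-order derivatives from {\bf (Avii)}, {\bf (Aviii)} and {\bf (Bv)}, the difference quotients of $g_1$, $g_2$ and $h_1$ converge pointwise and are dominated by integrable bounds expressed through $|x_{t\xi}|$, $|v_{t\xi}|$, $|y_{t\xi}|$ and $|w_{t\xi}|$, all of which are square integrable. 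Dominated convergence then gives
\begin{equation*}
\frac{d}{d\epsilon}\mathcal{J}_{t\xi}(v_{t\xi}+\epsilon w_{t\xi},\mathbb{L})\Big|_{\epsilon=0} = \E\int_t^T [(\nabla_y g_1 + \nabla_y g_2)\cdot y_{t\xi} + \nabla_v g_1\cdot w_{t\xi}]\,ds + \E[\nabla_y h_1(x_{t\xi}(T))\cdot y_{t\xi}(T)],
\end{equation*}
where all the gradients are evaluated at the unperturbed arguments.

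Next I would eliminate the terms containing $y_{t\xi}$ by integrating by parts against the adjoint process. The BSDE \eqref{eq. backward process p_s} admits a unique solution $(p_{t\xi},q_{t\xi}) \in L^2_{\mathcal{W}_{t\xi}}(t,T;\mathcal{H}) \times \mathbb{H}_{\mathcal{W}_{t\xi}}[t,T]$ by standard BSDE theory, because its terminal datum $\nabla_y h_1(x_{t\xi}(T))$ and its driver $\nabla_y g_1 + \nabla_y g_2$ are square integrable under the linear-growth bounds {\bf (Av)}, {\bf (Avi)} and {\bf (Biv)}. Applying It\^o's formula to the scalar product $p_{t\xi}(s)\cdot y_{t\xi}(s)$ and noting that $y_{t\xi}$ is of finite variation (so the cross-variation vanishes), then taking expectations to annihilate the $q_{t\xi}$-martingale and using $y_{t\xi}(t)=0$ and $p_{t\xi}(T)=\nabla_y h_1(x_{t\xi}(T))$, yields
\begin{equation*}
\E[\nabla_y h_1(x_{t\xi}(T))\cdot y_{t\xi}(T)] + \E\int_t^T (\nabla_y g_1 + \nabla_y g_2)\cdot y_{t\xi}\,ds = \E\int_t^T p_{t\xi}\cdot w_{t\xi}\,ds.
\end{equation*}
Substituting this identity into the preceding display collapses the directional derivative to $\E\int_t^T (p_{t\xi} + \nabla_v g_1)\cdot w_{t\xi}\,ds$. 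Since this holds for every admissible $w_{t\xi}$, the element of $L^2_{\mathcal{W}_{t\xi}}(t,T;\mathcal{H})$ representing the G\^ateaux derivative is exactly $p_{t\xi} + \nabla_v g_1(x_{t\xi},v_{t\xi})$, which is \eqref{eq. D_v J(v,L)}.

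The main obstacle is the rigorous justification of the differentiation under the expectation, given that $g_1$ and $h_1$ are permitted merely quadratic growth: one must exhibit an $\epsilon$-uniform, integrable dominating function for the difference quotients, and must check the integrability of $p_{t\xi}\cdot w_{t\xi}$ and of the It\^o correction so that the stochastic-integral term indeed has vanishing expectation. Both are handled by combining the bounded-Hessian hypotheses {\bf (Avii)}--{\bf (Aix)} and {\bf (Bv)} (which force the first derivatives to grow at most linearly) with the a priori $L^2$-bounds on $x_{t\xi}$, $y_{t\xi}$ and $(p_{t\xi},q_{t\xi})$; the remaining manipulations are routine.
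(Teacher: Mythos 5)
Your proposal is correct and follows essentially the same route as the paper's own proof: an exact affine perturbation of the state, differentiation under the expectation justified by domination (the paper invokes the linear-growth bounds \textbf{(Av)}, \textbf{(Avi)}, \textbf{(Biv)} with the mean value theorem, which your bounded-Hessian argument implies), and the duality step via It\^o's formula applied to $p_{t\xi}(s)\cdot y_{t\xi}(s)$ with the stochastic-integral term annihilated by adaptedness. There are no gaps; this matches the paper's argument.
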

	The proof is put in Appendix \ref{app. Proofs in Problem Setting and Preliminary}. By utilizing Lemma \ref{lem. DJ(v,L)}, we demonstrate that $\mathcal{J}_{t\xi}$ is indeed convex. In the following, $(a)_+$ denotes the positive part of the real number $a\in \mathbb{R}$ such that $(a)_+:=\max\{a,0\}$.
	\begin{lemma}
		Let $t \in[0,T)$, $\mathbb{L}(s) \in C\pig(t,T;\mathcal{P}_2(\mathbb{R}^{d})\pig)$ and a random variable $\xi \in L^2(\Omega,\mathcal{W}^t_0,\mathbb{P};\mathbb{R}^d)$. The objective functional $\mathcal{J}_{t\xi}(v_{t\xi},\mathbb{L})$ is continuous with respect to $v_{t\xi}$ in $L^2_{\mathcal{W}_{t\xi}}\big(t,T;\mathcal{H}\big)$. Moreover, if 
		\begin{flalign}
			\textup{\bf (Ci).}&&
			c_0:=\Lambda_{g_1}
			- (\lambda_{h_1})_+T
			- \pig(\lambda_{g_1}+\lambda_{g_2}\pigr)_+\dfrac{T^2}{2}>0,
			&&
			\label{def. c_0 > 0 convex, ass. Ci}
		\end{flalign}
		then the objective functional $\mathcal{J}_{t\xi}(v_{t\xi},\mathbb{L})$ is
		strictly convex in $v_{t\xi}$, and coercive in $v_{t\xi}$ in the sense that $\mathcal{J}_{t\xi}(v_{t\xi},\mathbb{L}) \longrightarrow \infty$ as $\int^T_t\|v_{t\xi}(s)\|^2_{\mathcal{H}}ds \to \infty$. 
		\label{lem. convexity and coercivity of J(v,L)}
	\end{lemma}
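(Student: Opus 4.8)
The plan is to prove the (unconditional) continuity claim first, and then deduce both strict convexity and coercivity from a single strong-convexity estimate obtained by bounding the second variation of $\mathcal{J}_{t\xi}$ from below by $c_0$ times the squared control norm. Throughout I write $\|v\|^2 := \int_t^T \|v(s)\|^2_{\mathcal{H}}\, ds$ for the norm on $L^2_{\mathcal{W}_{t\xi}}(t,T;\mathcal{H})$. For continuity, suppose $v^n_{t\xi} \to v_{t\xi}$ in this space. Since the drift in \eqref{eq. state x_s} is just the control, the corresponding states satisfy $x^n_{t\xi}(s) - x_{t\xi}(s) = \int_t^s (v^n_{t\xi} - v_{t\xi})(\tau)\, d\tau$, so Cauchy--Schwarz gives $\sup_{s\in[t,T]} \mathbb{E}\,|x^n_{t\xi}(s) - x_{t\xi}(s)|^2 \le (T-t)\,\|v^n_{t\xi} - v_{t\xi}\|^2 \to 0$. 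Combining this state stability with the quadratic growth bounds \eqref{ass. bdd of g1}, \eqref{ass. bdd of g2}, \eqref{ass. bdd of h1}, \eqref{ass. bdd of h2} (which make the integrands uniformly integrable along the $L^2$-bounded sequence of controls and states) and the joint continuity of $g_1$, $g_2$, $h_1$, a standard Vitali/dominated-convergence argument passes the limit inside the expectation in \eqref{def. functional J_t xi(v,L)}, yielding $\mathcal{J}_{t\xi}(v^n_{t\xi},\mathbb{L}) \to \mathcal{J}_{t\xi}(v_{t\xi},\mathbb{L})$. This step uses none of \textbf{(Ci)}.

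For convexity, fix $v_{t\xi}, w_{t\xi} \in L^2_{\mathcal{W}_{t\xi}}(t,T;\mathcal{H})$ and set $\phi(\theta) := \mathcal{J}_{t\xi}(v_{t\xi} + \theta w_{t\xi}, \mathbb{L})$. Because the drift is linear in the control, the perturbed state is simply $x^\theta_{t\xi} = x_{t\xi} + \theta\, y$ with the $\theta$-independent first variation $y(s) := \int_t^s w_{t\xi}(\tau)\, d\tau$; this is the feature of the linear dynamics that makes the computation transparent. Differentiating twice under the expectation --- legitimate by the $C^2$-regularity \eqref{ass. cts and diff of g1}, \eqref{ass. cts and diff of g2}, \eqref{ass. cts and diff of h1} together with the bounded Hessians \eqref{ass. bdd of D^2g1}, \eqref{ass. bdd of D^2g2}, \eqref{ass. bdd of D^2h1}, which dominate the difference quotients --- gives
\begin{equation*}
\begin{aligned}
\phi''(\theta) = \mathbb{E}\bigg[\int_t^T \Big( &(\nabla_{vv}g_1\, w_{t\xi})\cdot w_{t\xi} + 2(\nabla_{yv}g_1\, w_{t\xi})\cdot y + (\nabla_{yy}g_1\, y)\cdot y \\
&{}+ (\nabla_{yy}g_2\, y)\cdot y \Big)\, ds + (\nabla_{yy}h_1\, y(T))\cdot y(T)\bigg],
\end{aligned}
\end{equation*}
where all derivatives of $g_1$, $g_2$ are evaluated at $(x^\theta_{t\xi}(s), v_{t\xi}(s) + \theta w_{t\xi}(s))$ and those of $h_1$ at $x^\theta_{t\xi}(T)$.

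The crux is that the first three terms are exactly the quadratic expression on the left-hand side of Assumption \textbf{(Ax)} with $p_1 = w_{t\xi}$ and $p_2 = y$, hence bounded below pointwise by $\Lambda_{g_1}|w_{t\xi}|^2 - \lambda_{g_1}|y|^2$; Assumption \textbf{(Axi)} gives $(\nabla_{yy}g_2\, y)\cdot y \ge -\lambda_{g_2}|y|^2$ and Assumption \textbf{(Bvi)} gives $(\nabla_{yy}h_1\, y(T))\cdot y(T) \ge -\lambda_{h_1}|y(T)|^2$. It then remains to reabsorb the state-variation contributions: from $y(s) = \int_t^s w_{t\xi}\, d\tau$ and Cauchy--Schwarz, $\mathbb{E}\,|y(s)|^2 \le (s-t)\|w_{t\xi}\|^2$, whence $\int_t^T \mathbb{E}\,|y(s)|^2\, ds \le \tfrac{T^2}{2}\|w_{t\xi}\|^2$ and $\mathbb{E}\,|y(T)|^2 \le T\|w_{t\xi}\|^2$. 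Using positive parts to cover either sign of $\lambda_{g_1}+\lambda_{g_2}$ and of $\lambda_{h_1}$, these estimates combine into $\phi''(\theta) \ge c_0\,\|w_{t\xi}\|^2$ with $c_0$ as in \eqref{def. c_0 > 0 convex, ass. Ci}. Finally, the integral form of Taylor's theorem, $\mathcal{J}_{t\xi}(v_{t\xi}+w_{t\xi},\mathbb{L}) - \mathcal{J}_{t\xi}(v_{t\xi},\mathbb{L}) - \phi'(0) = \int_0^1 (1-\theta)\,\phi''(\theta)\, d\theta \ge \tfrac{c_0}{2}\|w_{t\xi}\|^2$, establishes strong, hence strict, convexity once \textbf{(Ci)} guarantees $c_0 > 0$, where $\phi'(0)$ is the $L^2$-pairing with $w_{t\xi}$ of the G\^ateaux derivative from Lemma \ref{lem. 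DJ(v,L)}. Specializing $v_{t\xi} = 0$ and bounding the linear term by Cauchy--Schwarz yields $\mathcal{J}_{t\xi}(w_{t\xi},\mathbb{L}) \ge \mathcal{J}_{t\xi}(0,\mathbb{L}) - \|D_v\mathcal{J}_{t\xi}(0,\mathbb{L})\|\,\|w_{t\xi}\| + \tfrac{c_0}{2}\|w_{t\xi}\|^2 \to \infty$ as $\|w_{t\xi}\| \to \infty$, which is coercivity.

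I expect the main obstacle to lie not in the final algebra but in rigorously justifying the twofold differentiation under the expectation and in identifying $\phi''$ with the Hessian quadratic form of \textbf{(Ax)}: the bounded-Hessian hypotheses \eqref{ass. bdd of D^2g1}, \eqref{ass. bdd of D^2g2}, \eqref{ass. bdd of D^2h1} are exactly what supply the domination needed for the limit passage, while the structural design of \textbf{(Ax)} --- the coercive constant $\Lambda_{g_1}$ on the control direction against the defect $-\lambda_{g_1}$ on the state direction --- is precisely tailored to this second-variation identity, so that the control-cost coercivity survives the reabsorption of the state terms at the cost of the $T$- and $\tfrac{T^2}{2}$-factors appearing in $c_0$.
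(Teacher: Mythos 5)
Your proposal is correct, and for the continuity and coercivity parts it coincides in substance with the paper's argument (state stability by Cauchy--Schwarz for continuity; a Taylor/line-integral expansion around $v_{t\xi}=0$ for coercivity). Where you genuinely diverge is the convexity step. The paper never computes a second variation: it establishes \emph{strong monotonicity of the G\^ateaux derivative}, i.e.\ $\int_t^T \langle D_v\mathcal{J}_{t\xi}(v^1)-D_v\mathcal{J}_{t\xi}(v^2),\,v^1-v^2\rangle_{\mathcal{H}}\,ds \ge c_0\int_t^T\|v^1-v^2\|^2_{\mathcal{H}}\,ds$, starting from the BSDE representation $D_v\mathcal{J}_{t\xi}=p_{t\xi}+\nabla_v g_1$ of Lemma \ref{lem. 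DJ(v,L)}, eliminating the adjoint process $p^1-p^2$ by an integration by parts against $x^1-x^2$ (which brings in $\nabla_y h_1$ at $T$ and the drivers $\nabla_y g_1+\nabla_y g_2$), and then applying the mean value theorem to the gradient differences so that Assumptions \textbf{(Ax)}, \textbf{(Axi)}, \textbf{(Bvi)} can be invoked. You instead differentiate $\phi(\theta)=\mathcal{J}_{t\xi}(v+\theta w,\mathbb{L})$ twice under the expectation, which is legitimate here because the affine control-to-state map makes the first variation $y$ independent of $\theta$ and the bounded Hessians \eqref{ass. bdd of D^2g1}, \eqref{ass. bdd of D^2g2}, \eqref{ass. bdd of D^2h1} dominate the second difference quotients by $C(|w|^2+|y|^2)\in L^1(ds\otimes d\mathbb{P})$. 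The two routes are equivalent in outcome --- both reduce to the identical quadratic form of \textbf{(Ax)} with $p_1$ the control direction and $p_2$ the state variation, the same reabsorption estimates, and the same constant $c_0$ --- but they trade different technical burdens: the paper's route needs only the first-order differentiability already proved in Lemma \ref{lem. DJ(v,L)} and stays inside the adjoint-process formalism used throughout the paper, while yours dispenses with the adjoint process and the BSDE integration by parts entirely, at the price of justifying a second-order differentiation under the expectation (which the standing assumptions do support). One small bookkeeping remark: your bounds give the sharper interval-length constants $(T-t)$ and $(T-t)^2/2$, which you correctly relax to $T$ and $T^2/2$ to match the statement of \textbf{(Ci)}.
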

	We put the proof in Appendix \ref{app. Proofs in Problem Setting and Preliminary}. Using the differentiability and convexity of $\mathcal{J}_{t\xi}$ in Lemma \ref{lem. DJ(v,L)} and \ref{lem. convexity and coercivity of J(v,L)}, respectively, we characterize the optimal control of $\mathcal{J}_{t\xi}(v,\mathbb{L})$ with a fixed $\mathbb{L}(s)$.
	
	\begin{lemma}
		Let $t \in[0,T)$, $\mathbb{L}(s) \in C\pig(t,T;\mathcal{P}_2(\mathbb{R}^{d})\pig)$ and a random variable $\xi \in L^2(\Omega,\mathcal{W}^t_0,\mathbb{P};\mathbb{R}^d)$. Under \eqref{def. c_0 > 0 convex, ass. Ci} of Assumption \textup{\bf (Ci)}, the statements are true.
		\begin{itemize}
			\item[(a).]There is a unique optimal control $u_{t\xi\mathbb{L}}(s)$ for the problem $\displaystyle \inf\pig\{ \mathcal{J}_{t\xi}(v,\mathbb{L}) \h{1pt} ; \h{1pt} v \in  L^2_{\mathcal{W}_{t\xi}}\big(t,T;\mathcal{H}\big)\pig\}$ subject to the dynamics in \eqref{eq. state x_s}. Moreover, the optimal control $u_{t\xi\mathbb{L}}(s)$ solves the first order condition
			\begin{equation}\label{eq. 1st order condition, fix L}
				p_{t\xi\mathbb{L}}(s) 
				+ \nabla_v g_1\pig(y_{t\xi\mathbb{L}}(s),u_{t\xi\mathbb{L}}(s)\pig) = 0,
			\end{equation}
			with $y_{t\xi\mathbb{L}}(s)$, $p_{t\xi\mathbb{L}}(s)$ and $q_{t\xi\mathbb{L}}(s)$ satisfying the following FBSDE \begin{equation}\label{eq. FBSDE, fix L}
				\left\{
				\begin{aligned}
					y_{t\xi\mathbb{L}}(s) &= \xi + \int^s_t u_{t\xi\mathbb{L}}(\tau)d\tau+\int^s_t\eta\h{.7pt} dW_\tau;\\
					p_{t\xi\mathbb{L}}(s) &= \nabla_y h_1\pig(y_{t\xi\mathbb{L}}(T)\pig)
					+\int_s^T
					\nabla_y g_1 \pig(y_{t\xi\mathbb{L}}(\tau),u_{t\xi\mathbb{L}}(\tau)\pig)
					+\nabla_y g_2 \pig(y_{t\xi\mathbb{L}}(\tau),\mathbb{L}(\tau)\pig)\;d \tau
					-\int_s^Tq_{t\xi\mathbb{L}}(\tau) dW_\tau.
				\end{aligned}
				\right.
			\end{equation}
			\item[(b).] If $ \pig( y_{t\xi\mathbb{L}}(s), p_{t\xi\mathbb{L}}(s), q_{t\xi\mathbb{L}}(s),u_{t\xi\mathbb{L}}(s)\pig)$ is the solution to \eqref{eq. 1st order condition, fix L}-\eqref{eq. FBSDE, fix L}, then $u_{t\xi\mathbb{L}}(s)$ solves the control problem $\displaystyle \inf\pig\{ \mathcal{J}_{t\xi}(v,\mathbb{L}) \h{1pt} ; \h{1pt} v \in  L^2_{\mathcal{W}_{t\xi}}\big(t,T;\mathcal{H}\big)\pig\}$ subject to the dynamics of \eqref{eq. state x_s}. 
		\end{itemize}
		Moreover, there is a unique function $u(y,p)$ solving $ p 
			+ \nabla_v g_1\pig(y,u(y,p)\pig) = 0$
		such that $u_{t\xi\mathbb{L}}(s)=u\pig(y_{t\xi\mathbb{L}}(s),p_{t\xi\mathbb{L}}(s)\pig)$ 
		with $y_{t\xi\mathbb{L}}(s)$, $p_{t\xi\mathbb{L}}(s)$ and $q_{t\xi\mathbb{L}}(s)$ satisfying the FBSDE in \eqref{eq. FBSDE, fix L}.
		\label{lem. derivation of FBSDE, necessarity for control problem, fix L}
	\end{lemma}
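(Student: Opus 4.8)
The plan is to handle the three assertions in turn, leaning entirely on the differentiability and convexity already in hand. For the existence and uniqueness in part (a), I would invoke Lemma \ref{lem. convexity and coercivity of J(v,L)}: under Assumption \textbf{(Ci)} the functional $\mathcal{J}_{t\xi}(\,\cdot\,,\mathbb{L})$ is continuous, strictly convex and coercive on the Hilbert space $L^2_{\mathcal{W}_{t\xi}}(t,T;\mathcal{H})$. The direct method then applies verbatim: coercivity makes any minimizing sequence bounded, reflexivity yields a weakly convergent subsequence, and convexity together with strong continuity gives weak lower semicontinuity, so the weak limit attains the infimum; strict convexity upgrades this to a unique minimizer $u_{t\xi\mathbb{L}}$. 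Since the admissible set is the whole linear space, every point is interior and no constraint qualification is needed.

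For the necessary first-order condition, I would use that at a minimizer of a Gâteaux-differentiable functional over a linear space the derivative annihilates all directions, i.e. $D_v\mathcal{J}_{t\xi}(u_{t\xi\mathbb{L}},\mathbb{L})=0$. Substituting the representation from Lemma \ref{lem. DJ(v,L)} produces exactly $p_{t\xi\mathbb{L}}(s)+\nabla_v g_1\pig(y_{t\xi\mathbb{L}}(s),u_{t\xi\mathbb{L}}(s)\pig)=0$, which is \eqref{eq. 1st order condition, fix L}; coupling the forward equation \eqref{eq. state x_s} driven by $u_{t\xi\mathbb{L}}$ with the adjoint BSDE \eqref{eq. backward process p_s} then assembles the FBSDE \eqref{eq. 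FBSDE, fix L}. Part (b) is the sufficiency direction and is immediate from convexity: for a convex Gâteaux-differentiable functional a vanishing derivative is sufficient for a global minimum, so any quadruple $(y,p,q,u)$ solving \eqref{eq. 1st order condition, fix L}--\eqref{eq. FBSDE, fix L} has $u$ minimizing $\mathcal{J}_{t\xi}(\,\cdot\,,\mathbb{L})$.

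The ``moreover'' statement reduces to solving the pointwise relation $p+\nabla_v g_1(y,u)=0$ for $u$. Here I would extract from Assumption \textbf{(Ax)}, by setting $p_2=0$ in \eqref{ass. convexity of g1}, the uniform bound $\nabla_{vv}g_1(y,v)\geq \Lambda_{g_1} I$, so that for each fixed $y$ the $C^1$ map $v\mapsto \nabla_v g_1(y,v)$ is uniformly strictly monotone and coercive. By the Hadamard global inverse function theorem (equivalently Browder--Minty), this map is a diffeomorphism of $\mathbb{R}^d$ onto itself, so $p+\nabla_v g_1(y,u)=0$ admits a unique solution $u=u(y,p)$ depending continuously on $(y,p)$. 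Evaluating at $(y,p)=\pig(y_{t\xi\mathbb{L}}(s),p_{t\xi\mathbb{L}}(s)\pig)$ and comparing with \eqref{eq. 1st order condition, fix L} identifies $u_{t\xi\mathbb{L}}(s)=u\pig(y_{t\xi\mathbb{L}}(s),p_{t\xi\mathbb{L}}(s)\pig)$.

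The step I expect to be the main obstacle is making the first-order condition genuinely pointwise rather than merely an $L^2_{\mathcal{W}_{t\xi}}(t,T;\mathcal{H})$ identity: the vanishing of the Gâteaux derivative as a Hilbert-space element gives the relation only $dt\otimes d\mathbb{P}$-almost everywhere, and one must verify that localized admissible perturbations suffice to upgrade it to the stated equality, while simultaneously confirming that the composed process $u\pig(y_{t\xi\mathbb{L}}(s),p_{t\xi\mathbb{L}}(s)\pig)$ inherits $\mathcal{W}_{t\xi}$-adaptedness and square-integrability so that it is itself an admissible control.
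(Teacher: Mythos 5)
Your proposal is correct and follows essentially the same route as the paper: existence and uniqueness of the minimizer from the continuity, strict convexity and coercivity of Lemma \ref{lem. convexity and coercivity of J(v,L)} (the paper cites Theorem 5.2 of \cite{YZ99} where you spell out the direct method), the first-order condition from the G\^ateaux derivative representation of Lemma \ref{lem. DJ(v,L)}, sufficiency in part (b) from convexity (the paper phrases this as a short contradiction argument), and pointwise solvability of $p+\nabla_v g_1(y,u)=0$ from the uniform bound $\nabla_{vv}g_1\geq\Lambda_{g_1}I$ implied by Assumption \textbf{(Ax)} with $p_2=0$ (the paper invokes the implicit function theorem where you use Browder--Minty/Hadamard, which if anything makes the global claim more explicit). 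The adaptedness and a.e.-versus-pointwise caveat you flag at the end is handled the same way in the paper, namely by interpreting the first-order condition in the $L^2_{\mathcal{W}_{t\xi}}$ sense, so it is not a gap.
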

	This lemma is proved in Appendix \ref{app. Proofs in Problem Setting and Preliminary}. For a fixed $\mathbb{L}(s)$, we assume that $u_{t\xi\mathbb{L}}(s)$ is the optimal control and $y_{t\xi\mathbb{L}}(s)$ is the corresponding optimal state. To solve the mean field game of Problem \ref{problem, MFG no lifting}, we aim at searching for the equilibrium, so we want to seek for a fixed point such that $\mathbb{L}(s)=\mathcal{L}\big(y_{t\xi\mathbb{L}}(s)\big)$. To this end, it is sufficient to solve for the FBSDE
	\begin{equation}\label{eq. FBSDE, equilibrium}
		\left\{
		\begin{aligned}
			y_{t\xi}(s) &= \xi + \int^s_t u_{t\xi}(\tau)d\tau+\int^s_t\eta\h{.7pt} dW_\tau;\\
			p_{t\xi}(s) &= \nabla_y h_1\pig(y_{t\xi}(T)\pig)
			+\int_s^T
			\nabla_y g_1 \pig(y_{t\xi}(\tau),u_{t\xi}(\tau)\pig)
			+\nabla_y g_2 \pig(y_{t\xi}(\tau),\mathcal{L}\big(y_{t\xi}(\tau)\big)\pig)\;d\tau
			-\int_s^Tq_{t\xi}(\tau) dW_\tau,\h{-10pt}
		\end{aligned}
		\right.
	\end{equation}
	\begin{flalign} \label{eq. 1st order condition, equilibrium}
		\text{subject to}&& 
		p_{t\xi}(s) + \nabla_v g_1\pig(y_{t\xi}(s),u_{t\xi}(s)\pig) = 0.&&
	\end{flalign} 
	To elaborate, we suppose that \eqref{eq. FBSDE, equilibrium}-\eqref{eq. 1st order condition, equilibrium} possesses the unique solution $\pig( y_{t\xi}(s), p_{t\xi}(s), q_{t\xi}(s),u_{t\xi}(s)\pig)$. We now set $\mathbb{L}^*(s) = \mathcal{L}\big(y_{t\xi}(s)\big)$. By (b) of Lemma \ref{lem. derivation of FBSDE, necessarity for control problem, fix L}, we see that $u_{t\xi}(s)$ is the optimal control for the objective functional $\mathcal{J}_{t\xi}\pig(v_{t\xi},\mathbb{L}^*\pig)$. Besides, we can find the unique optimal control $u_{t\xi\mathbb{L}^*}(s)$ for the objective functional $\mathcal{J}_{t\xi}\pig(v_{t\xi},\mathbb{L}^*(s)\pig)$ by (a) of Lemma \ref{lem. derivation of FBSDE, necessarity for control problem, fix L}, as well as the solution $\pig( y_{t\xi\mathbb{L}^*}(s), p_{t\xi\mathbb{L}^*}(s), q_{t\xi\mathbb{L}^*}(s),u_{t\xi\mathbb{L}^*}(s)\pig)$ to the FBSDE \eqref{eq. 1st order condition, fix L}-\eqref{eq. FBSDE, fix L}. Since the solution to the control problem $\displaystyle \inf\pig\{ \mathcal{J}_{t\xi}(v,\mathbb{L}^*) \h{1pt} ; \h{1pt} v \in  L^2_{\mathcal{W}_{t\xi}}\big(t,T;\mathcal{H}\big)\pig\}$ is unique due to the strict convexity in Lemma \ref{lem. convexity and coercivity of J(v,L)}, then we see that $\pig( y_{t\xi}(s), p_{t\xi}(s), q_{t\xi}(s),u_{t\xi}(s)\pig) = \pig( y_{t\xi\mathbb{L}^*}(s), p_{t\xi\mathbb{L}^*}(s), q_{t\xi\mathbb{L}^*}(s),u_{t\xi\mathbb{L}^*}(s)\pig)$ and thus Problem \ref{problem, MFG no lifting} is resolved.

	\section{Well-posedness of FBSDE (\ref{eq. FBSDE, equilibrium})-(\ref{eq. 1st order condition, equilibrium})}\label{sec. Well-posedness of FBSDE}
	In this section, we aim to establish the well-posedness of FBSDE (\ref{eq. FBSDE, equilibrium})-(\ref{eq. 1st order condition, equilibrium}). We first find $u(y,p)$ that solves the optimality condition $p+\nabla_v g_1\pig(y,v\pig)\Big|_{v=u(y,p)}=0$
	by using the classical implicit function theorem under Assumption \eqref{ass. convexity of g1}. Then we solve the FBSDE \eqref{eq. FBSDE, equilibrium} by putting $u_{t\xi}(s) = u\pig(y_{t\xi}(s),p_{t\xi}(s)\pig)$. The idea of the proof is to first split $[t, T]$ into a number of subintervals such that we can
	establish the local existence of the solution over each subinterval. We shall show later that the solution of backward dynamics is uniformly Lipschitz continuous with respect to the initial condition of the forward dynamics, then it is possible to cut the time interval $[t, T]$ into a finite number of subintervals with uniform size, so that the local solutions to the underlying forward-backward systems can be resolved over all of these subintervals; then the global solution can be obtained by smoothly gluing all these local solutions together. In the rest of the article, we always assume that the function $u(y,p)$ is the optimal control obtained by solving the optimality condition $p+\nabla_v g_1\pig(y,v\pig)\Big|_{v=u(y,p)}=0$.
	
	\subsection{Local Existence of Solution}\label{subsec. Local Existence of Solution}
	We shall establish the local-in-time existence of the FBSDE \eqref{eq. FBSDE, equilibrium}-\eqref{eq. 1st order condition, equilibrium} by using the contraction mapping theorem. Let the sequence $\big\{\tau_i\big\}_{i=1}^{n} \subset [t,T]$ such that $t=\tau_1<\tau_2<\ldots<\tau_{n-1}<\tau_n=T$  and consider the local FBSDE system: for $s \in [\tau_i,\tau_{i+1}]$, 
	\begin{equation}\label{eq. FBSDE, local}
		\left\{
		\begin{aligned}
			y_{\tau_i \xi}(s) &= \xi + \int^s_{\tau_i} u\pig(y_{\tau_i \xi}(\tau),p_{\tau_i \xi}(\tau)\pig)d\tau
			+\int^s_{\tau_i}\eta\h{.7pt} dW_\tau;\\
			p_{\tau_i \xi}(s) &= Q_i\pig(y_{\tau_i \xi}(\tau_{i+1})\pig)
			+\int_s^{\tau_{i+1}}
			\nabla_y g_1 \pig(y_{\tau_i \xi}(\tau),u\big(y_{\tau_i \xi}(\tau),p_{\tau_i \xi}(\tau)\big)\pig)
			+\nabla_y g_2 \pig(y_{\tau_i \xi}(\tau),\mathcal{L}\big(y_{\tau_i \xi}(\tau)\big)\pig)\;d\tau \\
			&\h{10pt}-\int_s^{\tau_{i+1}}q_{\tau_i \xi }(\tau) dW_\tau,
		\end{aligned}
		\right.\vspace{-15pt}
	\end{equation}
	\begin{flalign} \label{eq. 1st order condition, local lifted}
		\text{subject to}&& 
		p_{\tau_i\xi}(s) + \nabla_v g_1 \pig(y_{\tau_i\xi}(s),u\big(y_{\tau_i\xi}(s),p_{\tau_i\xi}(s)\big)\pig) = 0.&&
	\end{flalign} 
	Here $Q_i(Y)$ is a proxy function from $\mathcal{H}$ into $\mathcal{H}$ defined iteratively as follows:\\
	\noindent {\bf Step 1. Define $Q_{n-1}(Y)$:} If $i=n-1$, then we define $Q_{n-1}(Y):=\nabla_y h_1(Y)$ for any $Y \in \mathcal{H}$.\vspace{5pt}
	
	\noindent {\bf Step 2. Solve the FBSDE on $[\tau_{n-1},T]$:} For $\xi_{n-1} \in L^2(\Omega,\mathcal{W}^{\tau_{n-1}}_0,\mathbb{P};\mathbb{R}^d)
	$, we suppose that (\ref{eq. FBSDE, local})-\eqref{eq. 1st order condition, local lifted} admits a solution $\mathcal{S}^n_{n-1}(\xi_{n-1}):=\pig(y_{\tau_{n-1}\xi_{n-1}}(s),$ $p_{\tau_{n-1}\xi_{n-1}}(s),$ $q_{\tau_{n-1}\xi_{n-1}}(s),u_{\tau_{n-1}\xi_{n-1}}(s)\pig)$ for  $s \in [\tau_{n-1},T]$, with the forward dynamics having the initial condition $\xi_{n-1}$ and the backward dynamics having the terminal $Q_{n-1}\pig(y_{\tau_{n-1}\xi_{n-1}}(T)\pig)$. \vspace{5pt}
	
	\noindent {\bf Step 3. Define $Q_{n-2}(Y)$:}  For $i=n-2$, we define $Q_{n-2}(Y) := p_{\tau_{n-1}Y}(\tau_{n-1})$ for any $Y\in L^2(\Omega,\mathcal{W}^{\tau_{n-1}}_0,\mathbb{P};\mathbb{R}^d)$. \vspace{5pt}
	
	\noindent {\bf Step 4. Solve the FBSDE on $[\tau_{n-2},T]$:} For $\xi_{n-2}\in L^2(\Omega,\mathcal{W}^{\tau_{n-2}}_0,\mathbb{P};\mathbb{R}^d)$, we suppose that (\ref{eq. FBSDE, local})-\eqref{eq. 1st order condition, local lifted} admits a solution $\mathcal{S}^{n-1}_{n-2}(\xi_{n-2}):=\pig(y_{\tau_{n-2}\xi_{n-2}}(s),p_{\tau_{n-2}\xi_{n-2}}(s),$ $q_{\tau_{n-2}\xi_{n-2}}(s),u_{\tau_{n-2}\xi_{n-2}}(s)\pig)$ on $[\tau_{n-2},\tau_{n-1}]$ with the forward dynamics having the initial condition $\xi_{n-2}$ and  the backward dynamics having the terminal $Q_{n-2}(y_{\tau_{n-2}\xi_{n-2}}(\tau_{n-1}))$. Next, we smoothly paste $\mathcal{S}^{n}_{n-1}(y_{\tau_{n-2}\xi_{n-2}}(\tau_{n-1}))$ and  $\mathcal{S}^{n-1}_{n-2}(\xi_{n-2})$ together and form a solution to the FBSDE on the longer time interval $[\tau_{n-2},T]$, denoted by $\mathcal{S}_{n-2}(\xi_{n-2})$. Without loss of generality, we still denote the solution $\mathcal{S}_{n-2}(\xi_{n-2})$ on $[\tau_{n-2},T]$ by  $\pig(y_{\tau_{n-2}\xi_{n-2}}(s),p_{\tau_{n-2}\xi_{n-2}}(s),$ $q_{\tau_{n-2}\xi_{n-2}}(s),u_{\tau_{n-2}\xi_{n-2}}(s)\pig)$. 
	
	\noindent {\bf Step 5: Induction argument:} We repeat Step 3 for $i=n-3$ and Step 4 on $[\tau_{n-3},T]$, and so forth; see also the illustration in Figure 1.
	
	\begin{figure}[h!]
		\centering
		\includegraphics[width=0.54 \textwidth]{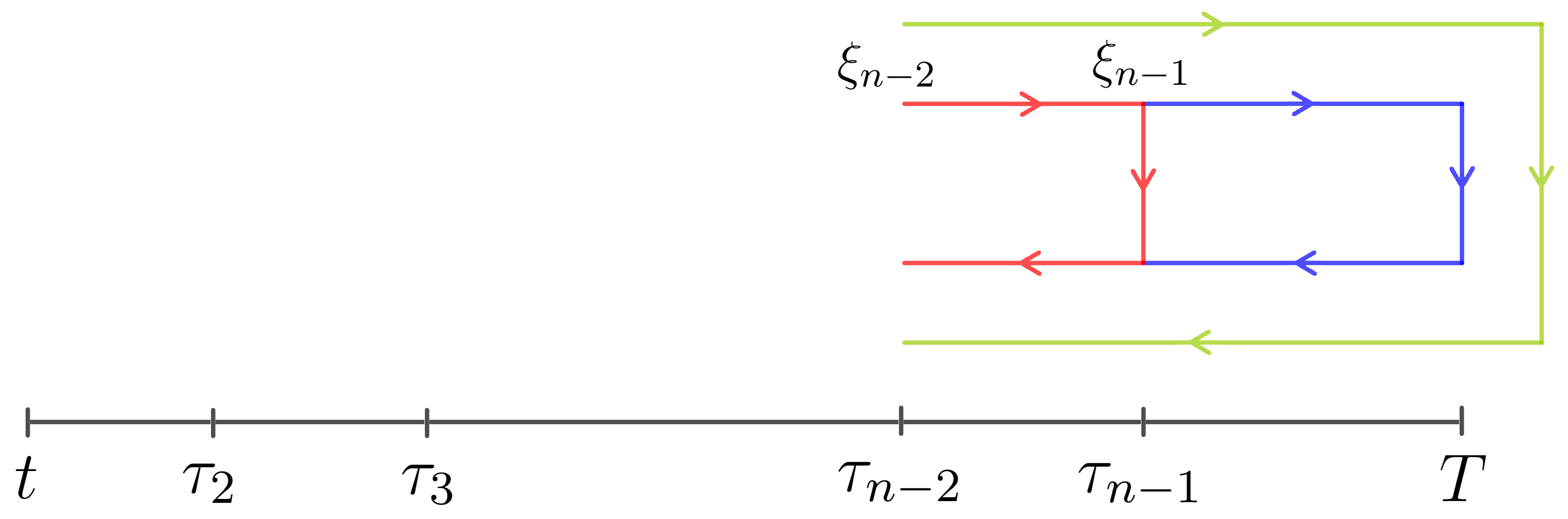}
		\caption{Concatenation scheme}
		\label{fig, Gluing scheme}
	\end{figure}

	\noindent To solve the FBSDE \eqref{eq. FBSDE, equilibrium}-\eqref{eq. 1st order condition, equilibrium} globally, we aim to ensure that
	\begin{equation}
		\pig\| Q_i(X_1) - Q_i(X_2) \pigr\|_{\mathcal{H}} \leq C_{Q_i} \|X_1-X_2\|_{\mathcal{H}},\h{20pt}
		\text{for any $i =1,2,\ldots,n-1$,}
		\label{ineq. lip of Q_i}
	\end{equation}
	where $C_{Q_i}$ is uniformly bounded in $i$ as long as $\displaystyle\max_{i=1,\ldots,n-1}\big| \tau_{i+1}-\tau_{i} \big|$ is small enough. For the time being, we take this condition for granted and this will be verified in Section \ref{subsec. Global Existence of Solution}. Based on Assumption~\eqref{ass. convexity of h}, we can choose $C_{Q_{n-1}} = C_{h_1}$ on the last subinterval $[\tau_{n-1},\tau_n]$. By defining $\mathbb{S}_{\mathcal{W}_{\tau_i\xi}}[\tau_{i},\tau_{i+1}]$ be the space containing all the continuous processes in $L_{\mathcal{W}_{\tau_i\xi}}^{2}(\tau_i,\tau_{i+1};\mathcal{H})$ with the norm $\|X(s)\|_{\mathbb{S}_{\mathcal{W}_{\tau_i\xi}}[\tau_{i},\tau_{i+1}]}^2:=\mathbb{E}\left[\displaystyle\sup_{s\in[\tau_{i},\tau_{i+1}]}|X(s)|^2\right]$, we first investigate the local estimate of the forward dynamics. In the following, we shall use the contraction mapping theorem to establish the local existence. In the following, we use underline for a symbol to denote the function serving as an input, and setting overline on the top of the function symbol to stand for output, for instance, $\underline{y}^\nu_{\tau_i\xi}(s) $ and $\overline{y}^\nu_{\tau_i\xi}(s) $, respectively.
	
	\begin{lemma}[\bf Local Forward Estimate]
		For an $i = 1,2,\ldots,n-1$ and $\xi  \in L^2(\Omega,\mathcal{W}^{\tau_{i}}_0,\mathbb{P};\mathbb{R}^d)$. For $\nu=1,2$, let the input $\underline{p}^\nu_{\tau_i\xi}(s) \in L_{\mathcal{W}_{\tau_i\xi}}^{2}(\tau_i,\tau_{i+1};\mathcal{H})$ be given, and we use $\overline{y}^\nu_{\tau_i\xi}(s)   \in L_{\mathcal{W}_{\tau_i\xi}}^{2}(\tau_i,\tau_{i+1};\mathcal{H})$ to denote the corresponding output as the solution to the forward equation
		\begin{equation}\label{eq. decoupled forward, short time}
			\begin{aligned}
				\overline{y}_{\tau_i\xi}^\nu(s) = \xi +  \int^s_{\tau_i} u\pig(\overline{y}_{\tau_i\xi}^\nu(\tau),\underline{p}_{\tau_i\xi}^\nu(\tau)\pig)d\tau
				+\int^s_{\tau_i}\eta\h{.7pt} dW_\tau.
			\end{aligned}
		\end{equation}
		If we set $\theta>\dfrac{2\sqrt{2}C_{g_1}}{\Lambda_{g_1}}$, we have
		\begin{equation}
			\begin{aligned}
				&\mathbb{E}\left[\sup_{s\in[\tau_{i},\tau_{i+1}]}e^{-\theta s} \pigl| \overline{y}^1 (s)-\overline{y}^2 (s) \pigr|^2 \right]
				+ \left(\theta - \dfrac{2\sqrt{2}C_{g_1}}{\Lambda_{g_1}}\right)\int^s_{\tau_i}  e^{-\theta \tau} 
				\pigl\| \overline{y}^1 (\tau)-\overline{y}^2 (\tau) \pigr\|_{\mathcal{H}}^2 d \tau\\
				&\leq \dfrac{2\sqrt{2}}{\Lambda_{g_1}C_{g_1}}\int^{\tau_{i+1}}_{\tau_i}  e^{-\theta \tau} 
				\pig\|\underline{p}^1(\tau)  
				-\underline{p}^2(\tau)\pigr\|_{\mathcal{H}}^2  d \tau.
			\end{aligned}
			\label{ineq. local forward est.}
		\end{equation}
		\label{lem. local forward estimate}
	\end{lemma}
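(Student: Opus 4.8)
The plan is to exploit the fact that the two copies of the forward equation \eqref{eq. decoupled forward, short time} are driven by the \emph{same} Brownian term $\eta\,dW$ and start from the \emph{same} datum $\xi$. Writing $\delta y(s):=\overline{y}^1(s)-\overline{y}^2(s)$ and $\delta p(s):=\underline{p}^1(s)-\underline{p}^2(s)$, the stochastic integrals cancel upon subtraction, so
\[
\delta y(s)=\int_{\tau_i}^s\pigl[u\pig(\overline{y}^1(\tau),\underline{p}^1(\tau)\pig)-u\pig(\overline{y}^2(\tau),\underline{p}^2(\tau)\pig)\pigr]\,d\tau,
\]
which is, pathwise, absolutely continuous in $s$ with $\delta y(\tau_i)=0$ and \emph{no} martingale part. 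Consequently the whole estimate reduces to a deterministic Gr\"onwall-type argument carried out $\omega$ by $\omega$, after which one integrates over $\Omega$; in particular no Burkholder--Davis--Gundy inequality is needed for the supremum term.

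First I would record the Lipschitz behaviour of $u$. Differentiating the optimality relation $p+\nabla_v g_1(y,u(y,p))=0$ (which defines $u$ via the implicit function theorem under \eqref{ass. convexity of g1}) gives $\nabla_p u=-\pig(\nabla_{vv}g_1\pig)^{-1}$ and $\nabla_y u=-\pig(\nabla_{vv}g_1\pig)^{-1}\nabla_{yv}g_1$. Assumption \textbf{(Ax)} with $p_2=0$ yields $\nabla_{vv}g_1\geq\Lambda_{g_1}I$, hence $|(\nabla_{vv}g_1)^{-1}|\leq\Lambda_{g_1}^{-1}$, while \textbf{(Avii)} gives $|\nabla_{yv}g_1|\leq C_{g_1}$. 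Therefore $|\nabla_p u|\leq\Lambda_{g_1}^{-1}$ and $|\nabla_y u|\leq C_{g_1}\Lambda_{g_1}^{-1}$, and integrating along the segment joining $(\overline{y}^2,\underline{p}^2)$ to $(\overline{y}^1,\underline{p}^1)$ produces
\[
\big|u(\overline{y}^1,\underline{p}^1)-u(\overline{y}^2,\underline{p}^2)\big|\leq\frac{C_{g_1}}{\Lambda_{g_1}}|\delta y|+\frac{1}{\Lambda_{g_1}}|\delta p|.
\]

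Next I would derive the pointwise differential inequality. Since $\frac{d}{ds}|\delta y|^2=2\delta y\cdot\pig(u(\overline{y}^1,\underline{p}^1)-u(\overline{y}^2,\underline{p}^2)\pig)$, inserting the bound above and applying Young's inequality to the cross term $\tfrac{2}{\Lambda_{g_1}}|\delta y||\delta p|$ --- choosing the weight so that the coefficient of $|\delta y|^2$ becomes \emph{exactly} $\tfrac{2\sqrt2\,C_{g_1}}{\Lambda_{g_1}}$ (the threshold on $\theta$ in the hypothesis) and then crudely enlarging the remaining coefficient to $\tfrac{2\sqrt2}{\Lambda_{g_1}C_{g_1}}$, which is legitimate since $\tfrac{1}{(2\sqrt2-2)C_{g_1}}\le\tfrac{2\sqrt2}{C_{g_1}}$ --- gives
\[
\frac{d}{ds}|\delta y(s)|^2\leq\frac{2\sqrt2\,C_{g_1}}{\Lambda_{g_1}}|\delta y(s)|^2+\frac{2\sqrt2}{\Lambda_{g_1}C_{g_1}}|\delta p(s)|^2.
\]
Multiplying by $e^{-\theta s}$ and rearranging gives $\frac{d}{ds}\pig(e^{-\theta s}|\delta y|^2\pig)\leq-\pig(\theta-\tfrac{2\sqrt2 C_{g_1}}{\Lambda_{g_1}}\pig)e^{-\theta s}|\delta y|^2+\tfrac{2\sqrt2}{\Lambda_{g_1}C_{g_1}}e^{-\theta s}|\delta p|^2$; integrating from $\tau_i$ to $s$ and using $\delta y(\tau_i)=0$ yields, for every $s\in[\tau_i,\tau_{i+1}]$,
\[
e^{-\theta s}|\delta y(s)|^2+\pig(\theta-\tfrac{2\sqrt2 C_{g_1}}{\Lambda_{g_1}}\pig)\!\int_{\tau_i}^{s}e^{-\theta\tau}|\delta y(\tau)|^2\,d\tau\leq\frac{2\sqrt2}{\Lambda_{g_1}C_{g_1}}\!\int_{\tau_i}^{s}e^{-\theta\tau}|\delta p(\tau)|^2\,d\tau.
\]

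Finally I would pass to the supremum and take expectations. Since $\theta>\tfrac{2\sqrt2 C_{g_1}}{\Lambda_{g_1}}$, both terms on the left are nonnegative; evaluating the integrated inequality at the (pathwise) time $s^\ast$ at which $e^{-\theta s}|\delta y(s)|^2$ attains its maximum over $[\tau_i,\tau_{i+1}]$, the first term becomes $\sup_{s}e^{-\theta s}|\delta y(s)|^2$, while the $\delta p$-integral is enlarged to the full subinterval $[\tau_i,\tau_{i+1}]$. Taking $\mathbb{E}[\,\cdot\,]$ and exchanging expectation with the time integrals by Tonelli turns $\mathbb{E}|\cdot|^2$ into $\|\cdot\|_{\mathcal{H}}^2$ and delivers \eqref{ineq. local forward est.}. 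The genuinely structural inputs are only the two bounds on $\nabla u$ (convexity from \textbf{(Ax)} and the Hessian bound from \textbf{(Avii)}) together with the cancellation of the noise; the one place to be careful is the bookkeeping of the supremum step, which stays elementary precisely because $\delta y$ carries no stochastic integral.
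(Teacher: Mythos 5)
Your proposal is correct and follows essentially the same route as the paper: cancellation of the common noise so that $\overline{y}^1-\overline{y}^2$ is a pathwise absolutely continuous process, a Lipschitz bound for $u$ in $(y,p)$ with constants $C_{g_1}/\Lambda_{g_1}$ and $1/\Lambda_{g_1}$ drawn from Assumptions \textbf{(Ax)} and \textbf{(Avii)}, a weighted $e^{-\theta s}$ energy estimate closed by Young's inequality, and finally the supremum/expectation step. The only cosmetic difference is that you obtain the Lipschitz property of $u$ by differentiating the optimality relation (as the paper itself does later, in \eqref{eq. diff 1st order condition with inverse} and \eqref{bdd. Dx u and Dp u}), whereas the paper's proof of this lemma derives it by telescoping the difference of the first-order conditions and invoking the strong convexity of $g_1$ in $v$; both yield the same structural constants and the argument is otherwise identical.
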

	The proof  is put in Appendix \ref{app. Proofs in Well-posedness of FBSDE}. We next establish the local estimate of the backward dynamics.
	
	\begin{lemma}[\bf Local Backward Estimate]
		
		Fix $i = 1,2,\ldots,n-1$ and $\xi  \in L^2(\Omega,\mathcal{W}^{\tau_{i}}_0,\mathbb{P};\mathbb{R}^d)$ and $\nu=1,2$. Suppose that $Q_i$ is a map from $L^2(\Omega,\mathcal{W}^{\tau_{i+1}}_0,\mathbb{P};\mathbb{R}^d)$ into itself satisfying \eqref{ineq. lip of Q_i}. 
		Let the input process $\underline{y}^\nu_{\tau_i\xi} \in \mathbb{S}_{\mathcal{W}_{\tau_i\xi}}[\tau_{i},\tau_{i+1}]$ be given. Then there is a solution $\pig(\overline{p}^\nu_{\tau_i\xi}, \overline{q}^\nu_{\tau_i\xi}\pig) \in L_{\mathcal{W}_{\tau_i\xi}}^{2}(\tau_i,\tau_{i+1};\mathcal{H}) \times \mathbb{H}_{\mathcal{W}_{\tau_i\xi}}[\tau_{i},\tau_{i+1}]$ to the backward equation
		\begin{equation}\label{eq. decoupled backward, short time}
			\begin{aligned}
				\overline{p}^\nu_{\tau_i\xi}(s) 
				=\,& Q_i\pig(\,\underline{y}^\nu_{\tau_i\xi}(\tau_{i+1})\pig)
				+\int^{\tau_{i+1}}_s 
				\nabla_y g_1 \pig(\,\underline{y}^\nu_{\tau_i\xi}(\tau),u\big(\,\underline{y}^\nu_{\tau_i\xi}(\tau),\overline{p}^\nu_{\tau_i\xi}(\tau)\big)\pig)
				+\nabla_y g_2 \pig(\,\underline{y}^\nu_{\tau_i\xi}(\tau),\mathcal{L}\big(\underline{y}^\nu_{\tau_i\xi}(\tau)\big)\pig)
				d\tau \\
				&-\int^{\tau_{i+1}}_s \overline{q}^\nu_{\tau_i\xi}(\tau) dW_\tau.
			\end{aligned}
		\end{equation}
		For any $\gamma_1>0$, set $\vartheta$ such that $\vartheta>2\left[\dfrac{\sqrt{2}C_{g_1}}{\Lambda_{g_1}}
		+\gamma_1\left(C_{g_1}+c_{g_2}+C_{g_2}+\dfrac{\sqrt{2}C_{g_1}^2}{\Lambda_{g_1}}\right)\right]$, then we have 
		\fontsize{9.8pt}{11pt}\begin{align*}
			&\h{-10pt} \mathbb{E}\left[\sup_{s \in [\tau_i,\tau_{i+1}]}
			e^{ \vartheta  s} \big|\h{.7pt} \overline{p}^1(s)-\overline{p}^2(s) \big|^2 \right]
			+2\left\{ \vartheta -2\left[\dfrac{\sqrt{2}C_{g_1}}{\Lambda_{g_1}}
			+\gamma_1\left(C_{g_1}+c_{g_2}+C_{g_2}+\dfrac{\sqrt{2}C_{g_1}^2}{\Lambda_{g_1}}\right)\right]\right\}\int^{\tau_{i+1}}_{\tau_i}
			e^{ \vartheta  \tau} \big\|\h{.7pt} \overline{p}^1(\tau)-\overline{p}^2(\tau) \big\|^2_{\mathcal{H}} d\tau\nonumber\\
			&+\int^{\tau_{i+1}}_{\tau_i}
			e^{ \vartheta  \tau} 
			\big\|\h{.7pt}\overline{q}^{1}(\tau) - \overline{q}^{2}(\tau)\big\|^2_{\mathcal{H}} d\tau\nonumber\\
			\leq \,&
			264\left[e^{ \vartheta  \tau_{i+1}}C_{Q_i} \big\|\h{.7pt} \underline{y}^1(\tau_{i+1})-\underline{y}^2(\tau_{i+1}) \big\|^2_{\mathcal{H}}
			+\dfrac{1}{2\gamma_1}\left(C_{g_1}+c_{g_2}+C_{g_2}+\dfrac{\sqrt{2}C_{g_1}^2}{\Lambda_{g_1}}\right)
			\int^{\tau_{i+1}}_{\tau_i}e^{ \vartheta  s} 
			\pig\|  \underline{y}^1(s)-\underline{y}^2(s) \pigr\|_{\mathcal{H}}^2ds\right],
		\end{align*}\normalsize
		where $C_{Q_i}$ is defined in (\ref{ineq. lip of Q_i}).
		\label{lem. local backward estimate}
	\end{lemma}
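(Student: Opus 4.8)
The statement is a standard a priori stability estimate for the decoupled backward equation \eqref{eq. decoupled backward, short time}, and the plan is to obtain it from an It\^o expansion of an exponentially weighted squared difference together with the Lipschitz properties of the driver. First I would dispose of existence and uniqueness of $(\overline{p}^\nu_{\tau_i\xi},\overline{q}^\nu_{\tau_i\xi})$ by the classical Pardoux--Peng theory: for a frozen input $\underline{y}^\nu_{\tau_i\xi}$ (and its law), the generator $\nabla_y g_1(\underline{y}^\nu,u(\underline{y}^\nu,\cdot))+\nabla_y g_2(\underline{y}^\nu,\mathcal{L}(\underline{y}^\nu))$ is globally Lipschitz in the $\overline{p}^\nu$-variable with linear growth, and the terminal datum $Q_i(\underline{y}^\nu(\tau_{i+1}))$ is square integrable thanks to \eqref{ineq. lip of Q_i}, so a unique adapted solution in $L^2_{\mathcal{W}_{\tau_i\xi}}\times\mathbb{H}_{\mathcal{W}_{\tau_i\xi}}$ exists.

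The crucial preliminary step for the estimate is to quantify the Lipschitz constants of the driver, writing $\delta p:=\overline{p}^1-\overline{p}^2$, $\delta q:=\overline{q}^1-\overline{q}^2$, $\delta y:=\underline{y}^1-\underline{y}^2$ and letting $\delta F$ denote the difference of the two drivers. Differentiating the optimality relation $p+\nabla_v g_1(y,u(y,p))=0$ from Lemma \ref{lem. derivation of FBSDE, necessarity for control problem, fix L} and using that \eqref{ass. convexity of g1} forces $\nabla_{vv}g_1\geq\Lambda_{g_1}I$, I obtain $|\partial_p u|\leq\Lambda_{g_1}^{-1}$ and $|\partial_y u|\leq C_{g_1}\Lambda_{g_1}^{-1}$ via \eqref{ass. bdd of D^2g1}. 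Hence, by the chain rule and \eqref{ass. bdd of D^2g1}, the map $p\mapsto\nabla_y g_1(y,u(y,p))$ is Lipschitz with constant $\tfrac{\sqrt2 C_{g_1}}{\Lambda_{g_1}}$, while its $y$-Lipschitz constant, combined with the $y$-Lipschitz constant $C_{g_2}$ of $\nabla_y g_2$ from \eqref{ass. bdd of D^2g2}, accounts for $C_{g_1}+C_{g_2}+\tfrac{\sqrt2 C_{g_1}^2}{\Lambda_{g_1}}$. For the measure dependence I would pass through an independent copy and the representation \eqref{f_statelaw}, writing the law increment as $\widetilde{\mathbb{E}}\int_0^1\nabla_{\tilde y}\tfrac{d}{d\nu}\nabla_y g_2(\cdot)(\cdot)\cdot\delta\widetilde{y}\,d\theta$ and bounding by \eqref{ass. bdd of D dnu D g2}; this gives a Wasserstein-Lipschitz constant $c_{g_2}$ in the $\mathcal{H}$-norm, so altogether $\|\delta F\|\leq\tfrac{\sqrt2 C_{g_1}}{\Lambda_{g_1}}\|\delta p\|+(C_{g_1}+c_{g_2}+C_{g_2}+\tfrac{\sqrt2 C_{g_1}^2}{\Lambda_{g_1}})\|\delta y\|$, which is precisely the constant grouping appearing in the statement.

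With these bounds in hand I apply It\^o's formula to $e^{\vartheta s}|\delta p(s)|^2$, integrate over $[s,\tau_{i+1}]$ and take expectations; the terminal term yields $e^{\vartheta\tau_{i+1}}C_{Q_i}\|\delta y(\tau_{i+1})\|^2$ through \eqref{ineq. lip of Q_i}, and the cross term $2\delta p\cdot\delta F$ is split by Young's inequality with the free parameter $\gamma_1$, the self-interaction part being absorbed into the $\vartheta$-weight to produce the factor $\vartheta-2[\tfrac{\sqrt2 C_{g_1}}{\Lambda_{g_1}}+\gamma_1(\cdots)]$ and the $\|\delta y\|$-part being thrown to the right with weight $\tfrac{1}{2\gamma_1}(\cdots)$. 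This already controls $\int e^{\vartheta\tau}\|\delta p\|^2$ and $\int e^{\vartheta\tau}\|\delta q\|^2$.

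The remaining and most delicate step is upgrading this pointwise $L^2$ control to the uniform-in-time bound $\mathbb{E}[\sup_s e^{\vartheta s}|\delta p(s)|^2]$: I would take the supremum over $s$ before taking expectations and estimate the martingale $\int_s^{\tau_{i+1}}2e^{\vartheta\tau}\delta p\cdot\delta q\,dW_\tau$ by the Burkholder--Davis--Gundy inequality, whose bracket is dominated by $(\sup_\tau e^{\vartheta\tau}|\delta p|^2)^{1/2}(\int e^{\vartheta\tau}|\delta q|^2)^{1/2}$, so a further Young's inequality absorbs a small multiple of the sup term back into the left-hand side while spawning a controllable multiple of $\int e^{\vartheta\tau}\|\delta q\|^2$. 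Carefully tracking all numerical constants through this absorption, including the BDG constant, the two Young parameters, and the doubling created by splitting $\int_s^{\tau_{i+1}}=\int_{\tau_i}^{\tau_{i+1}}-\int_{\tau_i}^{s}$, is what pins down the overall factor $2$ in front of the $\int\|\delta p\|^2$-term and the explicit constant $264$ on the right; this bookkeeping, rather than any conceptual difficulty, is the main obstacle.
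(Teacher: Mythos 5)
Your proposal is correct and follows essentially the same route as the paper's proof: It\^o's formula applied to $e^{\vartheta s}|\overline{p}^1(s)-\overline{p}^2(s)|^2$, the Lipschitz bound on $(y,p)\mapsto u(y,p)$ extracted from the first-order condition together with Assumptions \eqref{ass. bdd of D^2g1}--\eqref{ass. bdd of D dnu D g2} to control the driver difference, Young's inequality with the free parameter $\gamma_1$ to absorb the $\|\overline{p}^1-\overline{p}^2\|^2$ self-interaction into the $\vartheta$-weight, the Lipschitz property \eqref{ineq. lip of Q_i} for the terminal term, and finally Burkholder--Davis--Gundy plus Young to upgrade to the supremum bound while recycling the first-stage estimate of $\int e^{\vartheta\tau}\|\overline{q}^1-\overline{q}^2\|^2_{\mathcal{H}}d\tau$. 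The only cosmetic difference is that you obtain the Lipschitz constants of $u$ by implicit differentiation of the optimality condition, whereas the paper derives the equivalent bound directly from the difference of the two first-order conditions (in the proof of Lemma \ref{lem. local forward estimate}); both yield the same constant grouping.
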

	The proof is also provided in Appendix \ref{app. Proofs in Well-posedness of FBSDE}. We next define the iteration scheme for the fixed point argument for constructing the short-time solution. To this end, for each input $\pig(\underline{y}(s),\underline{p}(s)\pig) \in \mathbb{S}_{\mathcal{W}_{\tau_i\xi}}[\tau_{i},\tau_{i+1}]\times \mathbb{S}_{\mathcal{W}_{\tau_i\xi}}[\tau_{i},\tau_{i+1}]$, we have the corresponding output $\pig(\overline{y}(s),\overline{p}(s)\pig)$ and the process $\overline{q}(s)$ defined as follows. First, we consider $i=n-1$ and note that the terminal condition $Q_{i}(Y) = \nabla_y h_1(Y)$ is well-defined. For $s \in [\tau_i,\tau_{i+1}]$, the output $\pig(\overline{y}(s),\overline{p}(s),\overline{q}(s)\pig)$ satisfying the following
	\begin{equation}\label{eq. FBSDE, local lifted, input output}
		\left\{
		\begin{aligned}
			&\overline{y}(s)= \xi + \int^s_{\tau_{i}} u\pig(\overline{y}(\tau),\underline{p}(\tau)\pig)d\tau
			+\int^s_{\tau_{i}} \eta\h{.7pt} dW_s;\\
			&\overline{p}(s) = Q_i\pig(\overline{y}(\tau_{i+1})\pig)
			+\int^{\tau_{i+1}}_s \nabla_y g_1 \pig(\,\overline{y}(\tau),u\big(\,\overline{y}(\tau),\overline{p}(\tau)\big)\pig)
			+\nabla_y g_2 \pig(\,\overline{y}(\tau),\mathcal{L}\big(\overline{y}(\tau)\big)\pig)
			d\tau -
			\int^{\tau_{i+1}}_s \overline{q}(\tau) dW_\tau.
		\end{aligned}
		\right.
	\end{equation}
	Suppose that $\pig(\overline{y}^\nu(s),\overline{p}^\nu(s),\overline{q}^\nu(s)\pig)$ is the solution to (\ref{eq. FBSDE, local lifted, input output}) corresponding to the input $\pig(\underline{y}^\nu(s),\underline{p}^\nu(s)\pig)$ for $\nu=1,2$. Lemma \ref{lem. local forward estimate} and \ref{lem. local backward estimate} together imply that
	\begin{align*}
			\mathbb{E}\left[\sup_{s\in[\tau_{i},\tau_{i+1}]} \pigl| \overline{y}^1 (s)-\overline{y}^2 (s) \pigr|^2 \right]
			\leq \dfrac{2\sqrt{2}(e^{-\theta(\tau_{i}-\tau_{i+1})}-1)}{\Lambda_{g_1}C_{g_1}\theta}
			\mathbb{E}\left[\sup_{s \in [\tau_i,\tau_{i+1}]}
			\big|\h{.7pt} \underline{p}^1(s)-\underline{p}^2(s) \big|^2 \right],
		\end{align*}
		and
	\begin{align}
		& \mathbb{E}\left[\sup_{s \in [\tau_i,\tau_{i+1}]}
		e^{ \vartheta  s} \big|\h{.7pt} \overline{p}^1(s)-\overline{p}^2(s) \big|^2 \right]
		+\int^{\tau_{i+1}}_{\tau_i}
		e^{ \vartheta  \tau} 
		\big\|\h{.7pt}\overline{q}^{1}(\tau) - \overline{q}^{2}(\tau)\big\|^2_{\mathcal{H}} d\tau
		\nonumber\\
		&\leq 264\left[e^{ \vartheta  \tau_{i+1}}C_{Q_i} 
		+\dfrac{e^{ \vartheta  \tau_{i+1}}-e^{ \vartheta  \tau_{i}}}{2\gamma_1 \vartheta}\left(C_{g_1}+c_{g_2}+C_{g_2}+\dfrac{\sqrt{2}C_{g_1}^2}{\Lambda_{g_1}}\right)\right]
		\mathbb{E}\left[\sup_{s \in [\tau_i,\tau_{i+1}]}
		\big|\h{.7pt} \overline{y}^1(s)-\overline{y}^2(s) \big|^2 \right]\nonumber\\
		&\leq 264\left[e^{ \vartheta  \tau_{i+1}}C_{Q_i} 
		+\dfrac{e^{ \vartheta  \tau_{i+1}}-e^{ \vartheta  \tau_{i}}}{2\gamma_1 \vartheta}\left(C_{g_1}+c_{g_2}+C_{g_2}+\dfrac{\sqrt{2}C_{g_1}^2}{\Lambda_{g_1}}\right)\right]
		\dfrac{2\sqrt{2}(e^{-\theta(\tau_{i}-\tau_{i+1})}-1)}{\Lambda_{g_1}C_{g_1}\theta}\cdot\nonumber\\
		&\h{350pt}\mathbb{E}\left[\sup_{s \in [\tau_i,\tau_{i+1}]}
		\big|\h{.7pt} \underline{p}^1(s)-\underline{p}^2(s) \big|^2 \right].
		\label{est. of p1-p2, contraction map}
	\end{align}
	Thus, for small enough $|\tau_{i+1}-\tau_i|$ such that
		\begin{align}
			|\tau_{i+1}-\tau_i|&\leq  \min\left\{
			\dfrac{1}{\vartheta},\;
			\dfrac{1}{\theta}\log(C_{g_1}^2+1),\;
			\dfrac{1}{\theta}\log\left(\dfrac{C_{g_1}^2}{264e}\left[C_{Q_i} 
			+\dfrac{1}{2\gamma_1 \vartheta}\left(C_{g_1}+c_{g_2}+C_{g_2}+\dfrac{\sqrt{2}C_{g_1}^2}{\Lambda_{g_1}}\right)\right]^{-1}+1\right)
			\right\}\nonumber\\
			&=:\delta_{\textup{loc}}(\Lambda_{g_1},C_{g_1},C_{g_2},c_{g_2},C_{Q_i})
			\label{def. local time length}
		\end{align}
		\begin{flalign}
		\textup{with}&&\vartheta=4\left[\dfrac{\sqrt{2}C_{g_1}}{\Lambda_{g_1}}
		+\gamma_1\left(C_{g_1}+c_{g_2}+C_{g_2}+\dfrac{\sqrt{2}C_{g_1}^2}{\Lambda_{g_1}}\right)\right]
		\h{10pt} \text{and} \h{10pt} 
		\theta=\dfrac{4\sqrt{2}C_{g_1}}{\Lambda_{g_1}},&&
		\end{flalign}
	the iteration map $\pig(\underline{y}(s),\underline{p}(s)\pig)\longmapsto \pig(\overline{y}(s),\overline{p}(s)\pig)$ becomes a contraction map from $\mathbb{S}_{\mathcal{W}_{\tau_i\xi}}[\tau_{i},\tau_{i+1}]\times \mathbb{S}_{\mathcal{W}_{\tau_i\xi}}[\tau_{i},\tau_{i+1}]$ into itself with the choice that $C_{Q_i}= C_{Q_{n-1}}=C_{h_1} $ and $i=n-1$, therefore the Banach fixed point theorem guarantees a unique fixed point  $\pig(y^*(s),p^*(s)\pig) \in \mathbb{S}_{\mathcal{W}_{\tau_i\xi}}[\tau_{i},\tau_{i+1}]\times \mathbb{S}_{\mathcal{W}_{\tau_i\xi}}[\tau_{i},\tau_{i+1}]$ for $i=n-1$. We use the martingale representation theorem to find a process $q^*(s)\in \mathbb{H}_{\mathcal{W}_{\tau_i\xi}}[\tau_{i},\tau_{i+1}]$ such that the triple $\pig(y^*(s),p^*(s),q^*(s)\pig)$ solves the system (\ref{eq. FBSDE, local}).

	Clearly, the estimate in (\ref{est. of p1-p2, contraction map}) remains valid for any $i=1,2,\ldots,n-2$, as long as $Q_i$ is well-defined and the system (\ref{eq. FBSDE, local}) over the subinterval $[\tau_{i+1},\tau_{i+2}]$ is well-posed. Therefore, the local-in-time existence of solution is now established for \eqref{eq. FBSDE, equilibrium}-\eqref{eq. 1st order condition, equilibrium} by inductive arguments, for $s \in [\tau_{i_0},T]$ for some $i_0=1,2,\ldots,n-1$. In order to establish the global well-posedness of the FBSDE \eqref{eq. FBSDE, equilibrium}-\eqref{eq. 1st order condition, equilibrium}, we have to show that the bound of $C_{Q_i}$ is independent of the intervals selected from the partitions, provided that $\displaystyle\max_{j=1,\ldots,n-1} |\tau_{j+1}-\tau_j|$ is small enough.

	To facilitate the proof of the global existence of solution, we finish this subsection by presenting the lemma regarding bounds of the solution to the local FBSDE.
	\begin{lemma}[\bf Local Solution to FBSDE and its Bound]
		\label{lem. bdd of y p q u}
		If $|\tau_{j+1}-\tau_j| \leq \delta_{\textup{loc}}(\Lambda_{g_1},C_{g_1},C_{g_2},c_{g_2},C_{Q_i}) $ for each $j=i,i+1,\ldots, n$, where $\delta_{\textup{loc}}$ is defined in \eqref{def. local time length}. For each $\xi \in L^2(\Omega,\mathcal{W}^{\tau_{i}}_0,\mathbb{P};\mathbb{R}^d)$, there is a unique solution quadruple $\pig(y_{\tau_i \xi}(s), p_{\tau_i \xi}(s), q_{\tau_i \xi}(s), u_{\tau_i \xi}(s)\pig) \in \mathbb{S}_{\mathcal{W}_{\tau_i\xi}}[\tau_{i},T]\times \mathbb{S}_{\mathcal{W}_{\tau_i\xi}}[\tau_{i},T] \times \mathbb{S}_{\mathcal{W}_{\tau_i\xi}}[\tau_{i},T]
		\times \mathbb{H}_{\mathcal{W}_{\tau_i\xi}}[\tau_{i},T]$ to the FBSDE \eqref{eq. FBSDE, equilibrium} subject to \eqref{eq. 1st order condition, equilibrium} on $[\tau_i,T]$. Moreover, if
		\small\begin{flalign}
			\textup{\bf (Cii).} && 
			\Lambda_{g_1}
			- (\lambda_{h_1})_+T
			- \pig(\lambda_{g_1}+\lambda_{g_2}+c_{g_2}\pigr)_+\dfrac{T^2}{2}>0,
			&&
			\label{ass. Cii}
		\end{flalign}\normalsize
		then it satisfies
		\begin{align} 
			\mathbb{E}\left[\sup_{s\in[\tau_i,T]}\big|y_{\tau_i \xi}(s)\big|^2
			+\sup_{s\in[\tau_i,T]}\big|p_{\tau_i \xi}(s)\big|^2
			+\sup_{s\in[\tau_i,T]}\big|u_{\tau_i \xi}(s)\big|^2\right]
			+\int_{\tau_i}^{T}\pigl\|q_{\tau_i \xi}(s)\pigr\|^{2}_{\mathcal{H}}ds
			\leq C_4\pig(1+\|\xi\|_{\mathcal{H}}^2\pig),
			\label{ineq. bdd y, p, q, u}
		\end{align}
		where $C_{4}$ is a positive constant depending only on $d$, $\eta$, $\lambda_{g_1}$, $\lambda_{g_2}$, $\lambda_{h_1}$, $\Lambda_{g_1}$, $C_{g_1}$, $c_{g_2}$, $C_{g_2}$, $C_{h_1}$, $T$.
	\end{lemma}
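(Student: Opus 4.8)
The plan is to treat the two assertions separately: existence--uniqueness on $[\tau_i,T]$, and the a priori bound under \textbf{(Cii)}. The first is the lighter half and needs no new idea: one concatenates the local solutions produced by the contraction argument above, gluing the pieces $\mathcal{S}^{j+1}_{j}$ across the nodes $\tau_j$ exactly as in Steps 1--5 (each local problem being well-posed once $|\tau_{j+1}-\tau_j|\le\delta_{\textup{loc}}$ as in \eqref{def. local time length}), with local uniqueness on each subinterval propagating to uniqueness on $[\tau_i,T]$. So the real content is the estimate \eqref{ineq. bdd y, p, q, u}, with the crucial feature that $C_4$ must not see the partition $\{\tau_j\}$. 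As a preliminary I would record a pointwise feedback bound: since \textbf{(Ax)} with $p_2=0$ makes $v\mapsto g_1(y,v)$ strongly convex with modulus $\Lambda_{g_1}$, the map $u(y,p)$ solving $p+\nabla_v g_1(y,u)=0$ obeys $|u(y,p)-u(y,0)|\le\Lambda_{g_1}^{-1}|p|$, while \textbf{(Av)} controls $|u(y,0)|$; hence $|u(y,p)|\le\Lambda_{g_1}^{-1}|p|+\Lambda_{g_1}^{-1}C_{g_1}(1+|y|^2)^{1/2}$, so any bound on $u$ reduces to bounds on $y$ and $p$.

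The heart of the matter is an energy identity, in the spirit of the convexity Lemma \ref{lem. convexity and coercivity of J(v,L)}, but now run at the level of the equilibrium FBSDE so that the self-consistency $\mathbb{L}=\mathcal{L}(y)$ is felt. I would differentiate the duality product $s\mapsto\mathbb{E}[\langle p_{\tau_i\xi}(s),y_{\tau_i\xi}(s)\rangle]$ via It\^o; because $\eta$ is constant the only quadratic covariation is the harmless $\mathrm{tr}(\eta^\top q)$, absorbed by Young's inequality against the $\int\|q\|_{\mathcal{H}}^2$ that the backward equation controls. Using $p=-\nabla_v g_1(y,u)$ from \eqref{eq. 1st order condition, equilibrium}, the fundamental theorem of calculus, and \textbf{(Ax)}, the $g_1$-terms collapse into $-\Lambda_{g_1}|u|^2+\lambda_{g_1}|y|^2$ (the origin values $\nabla_y g_1(0,0)$, etc. being bounded by the growth constants), while the local part of $g_2$ yields $\lambda_{g_2}|y|^2$ by \textbf{(Axi)} and $h_1$ yields $-\lambda_{h_1}|y(T)|^2$ by \textbf{(Bvi)}. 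Feeding in $\mathbb{E}|y(s)|^2\lesssim 1+\|\xi\|_{\mathcal{H}}^2+T\int\mathbb{E}|u|^2$ reproduces exactly the weights $(\lambda_{h_1})_+T$ and $(\lambda_{g_1}+\lambda_{g_2}+c_{g_2})_+\tfrac{T^2}{2}$, so that the positivity postulated in \textbf{(Cii)} is precisely what lets the $\int\mathbb{E}|u|^2$ term be absorbed on the left. (Run on the difference $\delta(\cdot)$ of two solutions, the same computation gives uniqueness, stability, and the uniform Lipschitz constant $C_{Q_i}$ required in \eqref{ineq. lip of Q_i}, since $\delta y$ then has no martingale part at all.)

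The delicate — and decisive — point, which I expect to be the main obstacle, is the nonlocal part of $\nabla_y g_2(y,\mathcal{L}(y))$: this is where $c_{g_2}$, hence \textbf{(Cii)} rather than merely \textbf{(Ci)}, must appear. Splitting the increment of $\nabla_y g_2$ into a state part (covered by \textbf{(Axi)}) and a law part, I would interpolate the measures along $\mathcal{L}(y^2+\theta\,\delta y)$, use the linear functional derivative with an independent-copy expectation to write the law part as $\widetilde{\mathbb{E}}[\nabla_{\tilde y}\tfrac{d}{d\nu}\nabla_y g_2(\cdot)(\tilde y^\theta)\,\delta\tilde y]$, and invoke $|\nabla_{\tilde y}\tfrac{d}{d\nu}\nabla_y g_2|\le c_{g_2}$ from \textbf{(Aix)} together with $\mathbb{E}\widetilde{\mathbb{E}}[|\delta\tilde y||\delta y|]=(\mathbb{E}|\delta y|)^2\le\mathbb{E}|\delta y|^2$; this produces exactly the extra $c_{g_2}|\delta y|^2$ appearing in the absorbable coefficient, and it is here that one must be careful that all constants stay manifestly partition-free. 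Once the resulting control $\int_{\tau_i}^T\mathbb{E}|u_{\tau_i\xi}|^2\lesssim 1+\|\xi\|_{\mathcal{H}}^2$ is secured, the full bound \eqref{ineq. bdd y, p, q, u} follows routinely: the forward equation in \eqref{eq. FBSDE, equilibrium} with Burkholder--Davis--Gundy and Gr\"onwall gives $\mathbb{E}\sup|y|^2\lesssim 1+\|\xi\|_{\mathcal{H}}^2$; the backward equation, whose data are controlled by the growth bounds \textbf{(Av)}, \textbf{(Avi)}, \textbf{(Biv)}, gives $\mathbb{E}\sup|p|^2+\int\|q\|_{\mathcal{H}}^2\lesssim 1+\|\xi\|_{\mathcal{H}}^2$; and the feedback bound of the first paragraph upgrades this to $\mathbb{E}\sup|u|^2$. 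Since none of the intervening constants depends on $\{\tau_j\}$, the final $C_4$ depends only on the listed data and $T$, as claimed.
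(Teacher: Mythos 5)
Your overall strategy coincides with the paper's on every essential mechanism: a duality pairing of $p$ with the forward state, convexity via \textbf{(Ax)}, \textbf{(Axi)}, \textbf{(Bvi)}, the independent-copy argument with \textbf{(Aix)} giving the $c_{g_2}$ contribution (you correctly single this out as the decisive point, and your bound $\mathbb{E}\widetilde{\mathbb{E}}[|\delta\tilde y|\,|\delta y|]\le\mathbb{E}|\delta y|^2$ is exactly how it is absorbed), absorption of $\int\mathbb{E}|u|^2$ under \eqref{ass. Cii}, and the routine forward/backward/BDG upgrade to \eqref{ineq. bdd y, p, q, u}. Where you genuinely diverge is the execution of the duality identity. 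The paper never applies It\^o to $\langle p(s),y(s)\rangle_{\mathbb{R}^d}$: it computes $\langle p(\tau_i),\xi\rangle_{\mathcal{H}}$ in two ways --- once by the product rule applied to the pairing of $p$ with the finite-variation, zero-initial-value part $y(s)-\xi-\eta(W_s-W_{\tau_i})=\int_{\tau_i}^s u\,d\tau$, once by pairing the backward equation directly with $\xi$ --- and subtracts. This cancels, at a stroke, both terms that your version leaves on the right-hand side: the covariation $\mathrm{tr}(\eta^\top q)$ \emph{and} the initial pairing $\mathbb{E}\langle p(\tau_i),\xi\rangle$. It also makes the convexity terms act on $\int_{\tau_i}^s u\,d\tau$, whose norm is bounded by $(s-\tau_i)\int\|u\|^2_{\mathcal{H}}$ with no slack; that is how the weights $(\lambda_{h_1})_+T$ and $(\lambda_{g_1}+\lambda_{g_2}+c_{g_2})_+T^2/2$ come out exactly. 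Your telescoping at the origin does \emph{not} reproduce them exactly: $\mathbb{E}|y(s)|^2\le(1+\kappa)(s-\tau_i)\int\mathbb{E}|u|^2+C_\kappa\big(\|\xi\|_{\mathcal{H}}^2+d|\eta|^2T\big)$ inflates every coefficient by $(1+\kappa)$. This is salvageable only because \eqref{ass. Cii} is a strict inequality, so $\kappa$ may be taken small; you should state this rather than claim exactness.

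Two points must be added for your route to close. First, your It\^o identity retains $\mathbb{E}\langle p(\tau_i),\xi\rangle$; you dispose of the covariation by Young against $\int\|q\|^2_{\mathcal{H}}$ but never address this term, which needs the same treatment, $|\mathbb{E}\langle p(\tau_i),\xi\rangle|\le\epsilon\sup_s\|p(s)\|^2_{\mathcal{H}}+\tfrac{1}{4\epsilon}\|\xi\|^2_{\mathcal{H}}$. Second, both of these bootstraps presuppose the backward a priori estimate $\sup_s\|p(s)\|^2_{\mathcal{H}}+\int\|q\|^2_{\mathcal{H}}\le b_1(1+\|\xi\|_{\mathcal{H}}^2)+b_2\int\|u\|^2_{\mathcal{H}}$ (It\^o on $|p|^2$ plus \textbf{(Av)}, \textbf{(Avi)}, \textbf{(Biv)} and the forward estimate), so it must be established \emph{before} the duality step, and the final absorption requires $\epsilon b_2$ small as well --- legitimate, since $b_2$ depends only on the structural constants, $d$, $\eta$ and $T$, never on the partition, but it is an extra loop that the paper's cancellation trick avoids entirely. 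With these repairs your estimate closes and yields $C_4$ of the claimed form. Finally, your uniqueness-by-propagation through the nodes (each restriction of a global solution is a fixed point of the local contraction, so the terminal data $Q_j$ are forced and uniqueness cascades backwards) is valid, and is in fact cleaner than the paper's, whose direct convexity proof of uniqueness invokes \eqref{ass. Cii} even though the statement asserts uniqueness under the partition-size condition alone.
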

	\begin{remark}
		Lemma \ref{lem. convexity and coercivity of J(v,L)} considers a control problem with a fixed exogenous measure term. However, since the measure term in the FBSDE \eqref{eq. FBSDE, equilibrium}-\eqref{eq. 1st order condition, equilibrium} depends on the law of the state, the positivity condition in Assumption  \eqref{def. c_0 > 0 convex, ass. Ci} of \textup{\bf (Ci)} in Lemma \ref{lem. convexity and coercivity of J(v,L)} {\color{black}(see also the similar but fundamentally different assumption  in (3.30) in \cite{BTY23} for the mean field type control problem)} is not sufficient to derive the estimate in \eqref{ineq. bdd y, p, q, u}. To gain a better understanding of the FBSDE \eqref{eq. FBSDE, equilibrium}-\eqref{eq. 1st order condition, equilibrium}, we need to investigate the behavior of the measure derivative of the coefficient in the FBSDE, which motivates the introduction of \textup{\bf (Cii)} of Assumption \eqref{ass. Cii} which is referred to as the small mean field effect, it places a requirement on the size of the $L$-derivative of $\nabla_y g_2(y,\mathbb{L})$ relative to $\Lambda_{g_1}$, $\lambda_{g_1}$, $\lambda_{g_2}$, $\lambda_{h_1}$, and $T$. More precisely, we rewrite \eqref{ass. Cii} of  Assumption \textup{\bf (Cii)} to give the upper bound of $c_{g_2}$, that is,
		\begin{equation}
              c_{g_2}<
              \pig(-\lambda_{g_1} - \lambda_{g_2}\pig)\mathbbm{1}_{\{c_{g_2}\leq-\lambda_{g_1} - \lambda_{g_2}\}}
              +\left\{\dfrac{2\pig[\Lambda_{g_1}
				- (\lambda_{h_1})_+(T-t)\pig]}{(T-t)^2}
			-\lambda_{g_1} - \lambda_{g_2}\right\}
			\mathbbm{1}_{\{c_{g_2}>-\lambda_{g_1} - \lambda_{g_2}\}}.
			\label{bdd of c_g_2}
		\end{equation}
		In view of \eqref{bdd of c_g_2}, some of the cost functions can be non-convex, as long as the assumption in \eqref{ass. Cii} holds.
	\end{remark}
	\begin{proof}
		For simplicity, without any cause of ambiguity, we omit the subscripts $\tau_i \xi$ in $y_{\tau_i \xi}(s)$, $p_{\tau_i \xi}(s)$, $q_{\tau_i \xi}(s)$ and $u_{\tau_i \xi}(s)$, where $u_{\tau_i \xi}(s) = u(s) = u\big(y(s),p(s)\big)=u\big(y_{\tau_i \xi}(s),p_{\tau_i \xi}(s)\big)$.

		\noindent\textbf{Part 1. Local Well-posedness:}\\
		The existence is already established before this lemma, we only prove the uniqueness here.  	Suppose that $(y^\nu_{\tau_i \xi},p^\nu_{\tau_i \xi},q^\nu_{\tau_i \xi},u^\nu_{\tau_i \xi})=(y^\nu,p^\nu,q^\nu,u^\nu)$ is the solution to \eqref{eq. FBSDE, equilibrium}-\eqref{eq. 1st order condition, equilibrium} starting at time $t=\tau_i$ for $\nu=1,2$. Then we see that
		\begin{equation}\label{eq. FBSDE, equilibrium, 1-2, unique}
			\left\{
			\begin{aligned}
				y^1(s)-y^2(s) &=   \int^s_{\tau_i} u^1(\tau) - u^2(\tau)d\tau,\\
				p^1(s)-p^2(s) &= \nabla_y h_1\pig(y^1(T)\pig)
				-\nabla_y h_1\pig(y^2(T)\pig)
				+\int_s^T
				\nabla_y g_1 \pig(y^1(\tau),u^1(\tau)\pig)
				-\nabla_y g_1 \pig(y^2(\tau),u^2(\tau)\pig)\;d\tau\\
				&\h{10pt}+\int_s^T
				\nabla_y g_2 \pig(y^1(\tau),\mathcal{L}\big(y^1(\tau)\big)\pig)
				-\nabla_y g_2 \pig(y^2(\tau),\mathcal{L}\big(y^2(\tau)\big)\pig)\;d\tau
				-\int_s^Tq^1(\tau)-q^2(\tau) dW_\tau,
			\end{aligned}
			\right.
		\end{equation}
		\begin{flalign} \label{eq. 1st order condition, equilibrium, 1-2, unique}
			\text{and}&& 
			p^1(s)-p^2(s) + \nabla_v g_1\pig(y^1(s),u^1(s)\pig)
			-\nabla_v g_1\pig(y^2(s),u^2(s)\pig)= 0.&&
		\end{flalign} 
		Applying It\^o's lemma to the inner product $\pig\langle p^1(s)-p^2(s),y^1(s)-y^2(s)\pigr\rangle_{\mathbb{R}^d}$ and then using \eqref{eq. 1st order condition, equilibrium, 1-2, unique} for the backward dynamics in \eqref{eq. FBSDE, equilibrium, 1-2, unique}, we have
		\begin{align*}
			&\h{-10pt}\Big\langle \nabla_y h_1\pig(y^1(T)\pig)-\nabla_y h_1\pig(y^2(T)\pig),y^1(T)-y^2(T)\Big\rangle_{\mathcal{H}}\\
			=\:&-\int_{\tau_i}^{T}\Big\langle \nabla_y g_1 \pig(y^1(s),u^1(s)\pig)
			-\nabla_y g_1 \pig(y^2(s),u^2(s)\pig),
			y^1(s)-y^2(s)\Big\rangle_{\mathcal{H}}ds\\
			&-\int_{\tau_i}^{T}\Big\langle \nabla_y g_2 \pig(y^1(s),\mathcal{L}\big(y^1(s)\big)\pig)
			-\nabla_y g_2 \pig(y^2(s),\mathcal{L}\big(y^2(s)\big)\pig),
			y^1(s)-y^2(s)\Big\rangle_{\mathcal{H}}ds\\
			&-\int_{\tau_i}^{T}\Big\langle \nabla_v g_1 \pig(y^1(s),u^1(s)\pig)
			-\nabla_v g_1 \pig(y^2(s),u^2(s)\pig),
			u^1(s)-u^2(s)\Big\rangle_{\mathcal{H}}ds.
		\end{align*}
		By the mean value theorem and Assumptions \eqref{ass. convexity of g1}, \eqref{ass. convexity of g2}, \eqref{ass. bdd of D dnu D g2}, \eqref{ass. convexity of h}, we have
		
		\begin{align}
			\Lambda_{g_1}\int_{\tau_i}^{T}\|u^1(s)-u^2(s)\|^{2}_{\mathcal{H}}ds
			\leq\:& 
			\pig(\lambda_{g_1}+\lambda_{g_2}+c_{g_2}\pigr)\displaystyle\int_{\tau_i}^{T}\|y^1(s)-y^2(s)\|^{2}_{\mathcal{H}}ds
			+\lambda_{h_1} \|y^1(T) - y^2(T)\|_{\mathcal{H}}^{2}.
			\label{4147}
		\end{align}
		Next, from the dynamics of $y^j(s)$ in \eqref{eq. FBSDE, equilibrium, 1-2, unique}, it yields that
		\small\begin{equation}
			\|y^1(s)-y^2(s)\|^{2}_{\mathcal{H}}
			\leq(s-\tau_i)\int_{\tau_i}^{T}
			\left\|u^1(\tau)-u^2(\tau)\right\|^{2}_{\mathcal{H}}
			d\tau \h{5pt} \textup{and}  \h{5pt}
			\int_{\tau_i}^{T}\|y^1(\tau)-y^2(\tau)\|^{2}_{\mathcal{H}}d\tau
			\leq\dfrac{(T-\tau_i)^{2}}{2}\int_{\tau_i}^{T}\left\|u^1(\tau)-u^2(\tau)\right\|^{2}_{\mathcal{H}}d\tau
			\label{est. |Y1-Y2|^2}
		\end{equation}\normalsize
		Substituting (\ref{est. |Y1-Y2|^2}) into (\ref{4147}), we have 
		\begin{equation}
			\left[\Lambda_{g_1} -
			\left(\lambda_{h_1}\right)_+
			(T-\tau_i) 
			-\left(\lambda_{g_1}+\lambda_{g_2}+c_{g_2}\right)_+
			\dfrac{(T-\tau_i)^{2}}{2}\right]
			\displaystyle\int_{\tau_i}^{T}\|u^1-u^2(s)\|^{2}_{\mathcal{H}}ds
			\leq 0.
		\end{equation}
		which implies $u^1(s)=u^2(s)$ for a.e. $s\in [\tau_i,T]$, $\mathbb{P}$-a.s. due to \eqref{ass. Cii} of Assumption \textbf{(Cii)}. Therefore, from \eqref{est. |Y1-Y2|^2}, we see that $y^1(s)=y^2(s)$, $\mathbb{P}$-a.s.. From the backward equation in \eqref{eq. FBSDE, equilibrium, 1-2, unique}, we see that $p^1(s)=p^2(s)$, $\mathbb{P}$-a.s. and $q^1(s)=q^2(s)$ for a.e. $s\in [\tau_i,T]$, $\mathbb{P}$-a.s. Therefore, the uniqueness is obtained.
		
		\noindent\textbf{Part 2. Upper Bound:}\\
		Recalling the FBSDE in (\ref{eq. FBSDE, equilibrium}) and the first order condition \eqref{eq. 1st order condition, equilibrium}, an application of product rule gives
		\begin{equation}
			\begin{aligned}
				\pigl\langle p(\tau_i),\xi \pigr\rangle_{\mathcal{H}}
				=\:&\int_{\tau_i}^{T}\Big\langle \nabla_y g_1 \pig(y(s),u(s)\pig)
				+\nabla_y g_2 \pig(y(s),\mathcal{L}\big(y(s)\big)\pig),y(s)-\eta\big(W_s-W_{\tau_i}\big)\Big\rangle_{\mathcal{H}} ds\\
				&+\int_{\tau_i}^{T}\Big\langle  \nabla_v  g_1 \pig(y(s),u(s)\pig),u(s)\Big\rangle_{\mathcal{H}}ds
				+\Big\langle \nabla_y h_1(y(T)),y(T)-\eta\pig(W_T-W_{\tau_i}\pig)\Big\rangle_{\mathcal{H}}.
			\end{aligned}
			\label{eq:ApB2}
		\end{equation}
		On the other hand, taking inner product of the dynamic of $p(s)$ in (\ref{eq. FBSDE, equilibrium}) and $\xi$ directly, we see that
		\begin{equation}
			\begin{aligned}
				\pigl\langle p(\tau_i),\xi\pigr\rangle_{\mathcal{H}}
				\h{-3pt}=\,&\left\langle \int_{\tau_i}^{T}   \nabla_y g_1 \pig(y(s),u(s)\pig)
				+\nabla_y g_2 \pig(y(s),\mathcal{L}\big(y(s)\big)\pig) ds,\xi \right\rangle_{\mathcal{H}}\h{-7pt}
				+\Big\langle \nabla_y h_1(y(T)),\xi\Big\rangle_{\mathcal{H}}.
			\end{aligned}
			\label{eq:ApB3}
		\end{equation}
		Therefore, subtracting (\ref{eq:ApB3}) from (\ref{eq:ApB2}), we have
		\begin{equation}
			\begin{aligned}
				0=\:&\int_{\tau_i}^{T}\Big\langle  \nabla_y g_1 \pig(y(s),u(s)\pig)
				+\nabla_y g_2 \pig(y(s),\mathcal{L}\big(y(s)\big)\pig),y(s)-\xi-\eta\big(W_s-W_{\tau_i}\big)\Big\rangle_{\mathcal{H}} ds\\
				&+\int_{\tau_i}^{T}\Big\langle \nabla_v g_1 \pig(y(s),u(s)\pig),u(s)\Big\rangle_{\mathcal{H}} \h{-3pt} ds
				+\Big\langle \nabla_y h_1(y(T)),y(T)-\xi-\eta\pig(W_T-W_{\tau_i}\pig)\Big\rangle_{\mathcal{H}}.
			\end{aligned}
			\label{eq:ApB4}
		\end{equation}
		After telescoping, we obtain 
		\begin{align*}
			0=&\:\int_{\tau_i}^{T}\Big\langle \nabla_y g_1 \pig(y(s),u(s)\pig)-\nabla_y g_1(\xi+\eta\big(W_s-W_{\tau_i}\big),0),y(s)-\xi-\eta\big(W_s-W_{\tau_i}\big)\Big\rangle_{\mathcal{H}}ds\\
			&+\int_{\tau_i}^{T}\Big\langle \nabla_y g_2 \pig(y(s),\mathcal{L}(y(s))\pig)
			-\nabla_y g_2\pig(\xi+\eta\big(W_s-W_{\tau_i}\big),\mathcal{L}(\xi+\eta\big(W_s-W_{\tau_i}\big))\pig),y(s)-\xi-\eta\big(W_s-W_{\tau_i}\big)\Big\rangle_{\mathcal{H}}ds\\
			&+\int_{\tau_i}^{T}\Big\langle \nabla_v g_1\pig(y(s),u(s)\pig)-\nabla_v g_1(\xi+\eta\big(W_s-W_{\tau_i}\big),0),u(s)\Big\rangle_{\mathcal{H}}ds\\
			&+\Big\langle \nabla_y h_1(y(T))-\nabla_y h_1\pig(\xi+\eta\pig(W_T-W_{\tau_i}\pig)\pig),
			y(T)-\xi-\eta(W_T-W_{\tau_i})\Big\rangle_{\mathcal{H}}\\
			&+\int_{\tau_i}^{T}\Big\langle \nabla_y g_1 (\xi+\eta\big(W_s-W_{\tau_i}\big),0) 
			+ \nabla_y g_2\pig(\xi+\eta\big(W_s-W_{\tau_i}\big),\mathcal{L}(\xi+\eta\big(W_s-W_{\tau_i}\big))\pig),\\
			&\h{280pt}y(s)-\xi-\eta\big(W_s-W_{\tau_i}\big)\Big\rangle_{\mathcal{H}}ds\\
			&+\int_{\tau_i}^{T}\Big\langle \nabla_v g_1(\xi+\eta\big(W_s-W_{\tau_i}\big),0) ,u(s)\Big\rangle_{\mathcal{H}}ds\\
			&+\left\langle \nabla_x h_{1}\pig(\xi+\eta\pig(W_T-W_{\tau_i}\pig)\pig)
			,y(T)-\xi-\eta(W_T-W_{\tau_i})\right\rangle_{\mathcal{H}}.
		\end{align*}
		In light of Assumptions \eqref{ass. convexity of g1}, \eqref{ass. convexity of g2}, \eqref{ass. bdd of Dg1},
		\eqref{ass. bdd of Dg2},
		\eqref{ass. bdd of Dh1},
		\eqref{ass. convexity of h}, we have 
		\begingroup
		\allowdisplaybreaks
		\begin{align*}
			\Lambda_{g_1}\int_{\tau_i}^{T}\|u(s)\|^{2}_{\mathcal{H}}ds
			\leq\:& 
			\pig(\lambda_{g_1}+\lambda_{g_2}+c_{g_2}\pigr)\displaystyle\int_{\tau_i}^{T}\|y(s)-\xi-\eta\big(W_s-W_{\tau_i}\big)\|^{2}_{\mathcal{H}}ds
			+\lambda_{h_1}\left\|\int_{\tau_i}^{T} u(s)ds\right\|_{\mathcal{H}}^{2}\\
			&+(C_{g_1}+\sqrt{2}C_{g_2})\Big(1+\|\xi\|_{\mathcal{H}}+\|\eta\big(W_T-W_{\tau_i}\big)\|_{\mathcal{H}}\Big)\int_{\tau_i}^{T}\|y(s)-\xi-\eta\big(W_s-W_{\tau_i}\big)\|_{\mathcal{H}}ds\\
			&+C_{g_1}\Big(1+\|\xi\|_{\mathcal{H}}+\|\eta\big(W_T-W_{\tau_i}\big)\|_{\mathcal{H}}\Big)
			\displaystyle\int_{\tau_i}^{T}\|u(s)\|_{\mathcal{H}}ds\\
			&+C_{h_1}\pig(1+\|\xi\|_{\mathcal{H}}+\big\|\eta(W_T-W_{\tau_i})\big\|_{\mathcal{H}}\pig)
			\left\|\displaystyle\int_{\tau_i}^{T}u(s)ds\right\|_{\mathcal{H}}.
		\end{align*}
		\endgroup
		Applications of the Cauchy-Schwarz and Young's inequalities, we further obtain
		\begingroup
		\allowdisplaybreaks
		\begin{align*}
			&\h{-10pt}\Lambda_{g_1}\int_{\tau_i}^{T}\|u(s)\|^{2}_{\mathcal{H}}ds\\
			\leq\:& 
			(\lambda_{g_1}+\lambda_{g_2}+c_{g_2})_+\dfrac{(T-\tau_i)^2}{2}\int_{\tau_i}^{T}\|u(s)\|_{\mathcal{H}}^2ds
			+\big(\lambda_{h_1}\big)_+(T-\tau_i)\int_{\tau_i}^{T}\|u(s)\|_{\mathcal{H}}^2ds\\
			&+\kappa_1(C_{g_1}+\sqrt{2}C_{g_2})\pig(1+\|\xi\|_{\mathcal{H}}+\big\|\eta(W_T-W_{\tau_i})\big\|_{\mathcal{H}}\pigr)^2
			+\dfrac{(C_{g_1}+\sqrt{2}C_{g_2})(T-\tau_i)^3}{8\kappa_1}
			\int_{\tau_i}^{T}\|u(s)\|_{\mathcal{H}}^2ds\\
			&+\kappa_2C_{g_1}\pig(1+\|\xi\|_{\mathcal{H}}+\big\|\eta(W_T-W_{\tau_i})\big\|_{\mathcal{H}}\pigr)^2
			+\dfrac{C_{g_1}(T-\tau_i)}{4\kappa_2}\displaystyle\int_{\tau_i}^{T}\|u(s)\|_{\mathcal{H}}^2ds\\
			&+\kappa_3C_{h_1}\pig(1+\|\xi\|_{\mathcal{H}}+\big\|\eta(W_T-W_{\tau_i})\big\|_{\mathcal{H}}\pigr)^2
			+\dfrac{C_{h_1}(T-\tau_i)}{4\kappa_3}
			\int_{\tau_i}^{T}\|u(s)\|_{\mathcal{H}}^2ds,
		\end{align*}
		\endgroup
		for some positive constants $\kappa_1$, $\kappa_2$, $\kappa_3$ to be determined later; after rearrangement, it is equivalent to
		\fontsize{9.5pt}{11pt}\begin{equation}
			\begin{aligned}
				&\Bigg\{\Lambda_{g_1} 
				-(\lambda_{g_1}+\lambda_{g_2}+c_{g_2})_+\dfrac{(T-\tau_i)^2}{2}
				-\big(\lambda_{h_1}\big)_+(T-\tau_i)
				-\dfrac{(T-\tau_i)}{4}\left[\dfrac{(C_{g_1}+\sqrt{2}C_{g_2})(T-\tau_i)^2}{2\kappa_1}+\dfrac{C_{g_1}}{\kappa_2}+\dfrac{C_{h_1}}{\kappa_3}\right]\Bigg\}
				\int_{\tau_i}^{T}\|u(s)\|^{2}_{\mathcal{H}}ds\\
				&\leq\:  4\pig(\kappa_1C_{g_1}+\kappa_1\sqrt{2}C_{g_2}+\kappa_2C_{g_1}+\kappa_3C_{h_1}\pig)\pig(1+\|\xi\|_{\mathcal{H}}^2\pigr)
				+2\pig(\kappa_1C_{g_1}+\kappa_1\sqrt{2}C_{g_2}+\kappa_2C_{g_1}+\kappa_3C_{h_1}\pig)|\eta|^2d(T-\tau_i).
			\end{aligned}
			\label{eq:ApB5}
		\end{equation}\normalsize
		Taking suitable choices of $\kappa_1$, $\kappa_2$, $\kappa_3$ and using \eqref{ass. Cii} of Assumption \textup{\bf (Cii)} to ensure that the coefficient of the left hand side in the first line of (\ref{eq:ApB5}) is strictly positive, we thus conclude that 
		\begin{equation}
			\int_{\tau_i}^{T}\|u(s)\|^{2}_{\mathcal{H}}ds
			\leq A\pig(1+\|\xi\|^{2}_{\mathcal{H}}\pig),
			\label{bdd int |u|^2}
		\end{equation}
		for some $A>0$ depending on $d$, $\eta$, $\lambda_{g_1}$, $\lambda_{g_2}$, $\lambda_{h_1}$, $\Lambda_{g_1}$, $C_{g_1}$, $c_{g_2}$, $C_{g_2}$, $C_{h_1}$, $T$, it is of quadratic growth in $T$. We note that the constant $A$ changes its value from line to line in this proof, but still depends on the same set of parameters. Next, from the dynamics of $y(s)$ in (\ref{eq. FBSDE, equilibrium}), it yields that
		\begin{equation}
			|y(s)-\xi|^{2}
			\leq(s-\tau_i)(1+\kappa_4)\int_{\tau_i}^{s}
			\left|u(\tau)\right|^{2}
			d\tau
			+|\eta|^{2}\left(1+\dfrac{1}{\kappa_4}\right)
			|W_s-W_{\tau_i}|^2,
			\label{est. |Ys-X|^2}
		\end{equation}
		for some $\kappa_4>0$. Taking expectation and  integrating (\ref{est. |Ys-X|^2}) with respect to $s$, we obtain
		\begin{equation}
			\int_{\tau_i}^{T}\|y(\tau)-\xi\|^{2}_{\mathcal{H}}d\tau
			\leq\dfrac{(1+\kappa_4)(T-\tau_i)^{2}}{2}\int_{\tau_i}^{T}\left\|u(\tau)\right\|^{2}_{\mathcal{H}}d\tau
			+\dfrac{d|\eta|^{2}(T-\tau_i)^{2}}{2}
			\left(1+\dfrac{1}{\kappa_4}\right).
			\label{eq:ApB6}
		\end{equation}
		From (\ref{bdd int |u|^2}) and  (\ref{est. |Ys-X|^2}), we deduce that 
		\begin{equation}
			\mathbb{E}\left[\sup_{s\in[\tau_i,T]}\big|y(s)\big|^2\right]
			\leq 2\mathbb{E}\left[\sup_{s\in(\tau_i,T)}\big|y(s)-\xi\big|^2 + \big|\xi\big|^2\right]
			\leq A\pig(1+\|\xi\|_{\mathcal{H}}^2\pig).
			\label{bdd |Y|^2}
		\end{equation}
		From (\ref{eq. FBSDE, equilibrium}) and It\^o's formula, we
		obtain 
		\begin{equation}
			d\big|p(s)\big|^{2}
			=-2\Big\langle p(s),\nabla_y g_1 \pig(y(s),u(s)\pig)
			+\nabla_y g_2 \pig(y(s),\mathcal{L}(y(s))\pig)\Big\rangle_{\mathbb{R}^d}ds
			-2\Big\langle p(s),q(s)dW_s\Big\rangle_{\mathbb{R}^d}
			+ \big|q(s)\big|^{2}ds.
			\label{1905}
		\end{equation}
		Using Assumptions \eqref{ass. bdd of Dg1}, \eqref{ass. bdd of Dg2},
		\eqref{ass. bdd of Dh1} and (\ref{eq. 1st order condition, equilibrium}), it follows that 
		\begin{align*}
			& \sup_{s\in(\tau_i,T)}\big\|p(s)\big\|_{\mathcal{H}}^2
			+ \int_{\tau_i}^{T}\big\|q(\tau)\big\|^{2}_{\mathcal{H}}d\tau\\
			&\leq 2C_{h_1}^2\pig(1+\big\|y(T)\big\|_{\mathcal{H}}^2\pig) 
			+C_{g_1}^2\int^T_{\tau_i} 1+\big\|y(\tau)\big\|_{\mathcal{H}}^2 +
			\big\|u(\tau)\big\|_{\mathcal{H}}^2d\tau
			+2\pig(C_{g_1}^2+2C_{g_2}^2\pig)\int^T_{\tau_i} 1 +
			\big\|y(\tau)\big\|_{\mathcal{H}}^2d\tau.
			\nonumber
		\end{align*}
		Using (\ref{bdd |Y|^2}) and (\ref{bdd int |u|^2}), we obtain the upper bound \begin{equation}
			\displaystyle\sup_{s\in(\tau_i,T)}\big\|p(s)\big\|_{\mathcal{H}}^2+ \int_{\tau_i}^{T}\big\|q(s)\big\|^{2}_{\mathcal{H}}ds
			\leq A\pig(1+\|\xi\|_{\mathcal{H}}^2\pig).
			\label{1922}
		\end{equation} 
		The relation of \eqref{1905} also implies that
		\fontsize{9.5pt}{11pt}\begin{align*}
			\mathbb{E}\left[\sup_{s\in[\tau_i,T]}\big|p(s)\big|^{2}\right]
			+ \displaystyle\int_{\tau_i}^{T} \big\|q (\tau)\big\|^{2}_{\mathcal{H}}d\tau
			\leq\,& \pigl\|\nabla_y h_1(y(T))\pigr\|^{2}_{\mathcal{H}}
			+2\mathbb{E}\left[\int_{\tau_i}^{T}\Big|\Big\langle p(\tau),\nabla_y g_1 \pig(y(\tau),u(\tau)\pig)
			+\nabla_y g_2 \pig(y(\tau),\mathcal{L}(y(\tau))\pig)\Big\rangle_{\mathbb{R}^d}\Big|d\tau\right]\\
			&+2\mathbb{E}\left[\sup_{s\in[\tau_i,T]}\left|\int_{s}^{T}\Big\langle p(\tau),q(\tau) d W_\tau \Big\rangle_{\mathbb{R}^d}\right|\right].
		\end{align*}\normalsize
		Using the same algebra as those leading to \eqref{1632}, we use the estimates in  \eqref{bdd |Y|^2} and \eqref{1922}, and an application of  Burkholder-Davis-Gundy inequality to derive that
		\begin{align}
			\mathbb{E}\left[\sup_{s\in[\tau_i,T]}\big|p(s)\big|^{2}\right]
			\leq A\pig(1+\|\xi\|_{\mathcal{H}}^2\pig).
			\label{1938}
		\end{align}
		Finally, from (\ref{eq. 1st order condition, equilibrium}) and Assumptions (\ref{ass. bdd of Dg1}), \eqref{ass. convexity of g1}, together with Young's inequality, we have 
		\fontsize{9.8pt}{11pt}\begin{equation}
			\Lambda_{g_1}\mathbb{E}\left[\sup_{s\in[\tau_i,T]}\big|u(s)\big|^2\right]
			\leq \mathbb{E}\left[\dfrac{\Lambda_{g_1}}{4}\sup_{s\in[\tau_i,T]}\big|u(s)\big|^2
			+\dfrac{C_{g_1}^2}{\Lambda_{g_1}}\left(1+\sup_{s\in[\tau_i,T]}|y(s)|^{2}\right)
			+\dfrac{1}{\Lambda_{g_1}}\sup_{s\in[\tau_i,T]}\big|p(s)\big|^2
			+\dfrac{\Lambda_{g_1}}{4}\sup_{s\in[\tau_i,T]}\big|u(s)\big|^2\right].
			\label{eq:ApB100}
		\end{equation}\normalsize
		The results of (\ref{bdd |Y|^2}) and (\ref{1938}) imply that $
		\mathbb{E}\left[\sup_{s\in[\tau_i,T]}\big|u(s)\big|^2\right]\leq A\pig(1+\|\xi\|_{\mathcal{H}}^2\pig)$.
	\end{proof}

	\subsection{Jacobian Flow of Solution }\label{s:Jacobian Flow}
	In this section, we assume that the FBSDE \eqref{eq. FBSDE, equilibrium} subject to the first order condition \eqref{eq. 1st order condition, equilibrium} possesses a unique solution in the time interval $[\tau_i,T]$. In order to connect the global-in-time solution, we shall study the boundedness of the Jacobian flow over the whole horizon. First, given any $\xi$, $\Psi \in L^2(\Omega,\mathcal{W}^{\tau_{i}}_0,\mathbb{P};\mathbb{R}^d)$, we aim to check the existence of the G\^ateaux derivative of the solution, with respect to the initial condition $\xi$ (at time $\tau_i$) in the direction of $\Psi$, over $[\tau_i,T]$. Recall the definition of $\mathcal{W}_{t\xi\Psi}^{s}=\sigma(\xi,\Psi)\bigvee\mathcal{W}_{t}^{s}\bigvee \mathcal{N}$ and $\mathcal{W}_{t\xi\Psi}$ the filtration generated by
	the $\sigma$-algebras $\mathcal{W}_{t\xi\Psi}^{s}$. For $\epsilon >0 $, we define the difference quotient processes 
	\begin{equation}
		\begin{aligned}
			\Delta^\epsilon_\Psi y_{\tau_i\xi}(s)
			&:=\dfrac{y_{\tau_i,\xi+\epsilon\Psi}(s)- y_{\tau_i \xi}(s)}{\epsilon}
			\h{1pt};\h{5pt} &
			\Delta^\epsilon_\Psi p_{\tau_i\xi}(s)
			&:=\dfrac{p_{\tau_i,\xi+\epsilon\Psi}(s)- p_{\tau_i \xi}(s)}{\epsilon};\\
			\Delta^\epsilon_\Psi u_{\tau_i\xi}(s)
			&:=\dfrac{u_{\tau_i,\xi+\epsilon\Psi}(s)- u_{\tau_i \xi}(s)}{\epsilon}
			\h{1pt};\h{5pt} &
			\Delta^\epsilon_\Psi q_{\tau_i\xi}(s)
			&:=\dfrac{q_{\tau_i,\xi+\epsilon\Psi}(s)- q_{\tau_i \xi}(s)}{\epsilon};
		\end{aligned}
		\label{def diff process}
	\end{equation}
	and we next want to bound them.
	\begin{lemma}  Let $\xi$, $\Psi \in L^2(\Omega,\mathcal{W}^{\tau_{i}}_0,\mathbb{P};\mathbb{R}^d)$ and $\epsilon>0$. Under \eqref{ass. Cii} of Assumption \textup{\bf (Cii)}, the difference quotient processes defined in (\ref{def diff process}) satisfy the following:
		\begin{align} 
			\mathbb{E}\left[\sup_{s\in[\tau_i,T]}\big|\Delta^\epsilon_\Psi y_{\tau_i \xi}(s)\big|^2
			+\sup_{s\in[\tau_i,T]}\big|\Delta^\epsilon_\Psi p_{\tau_i \xi}(s)\big|^2
			+\sup_{s\in[\tau_i,T]}\big|\Delta^\epsilon_\Psi u_{\tau_i \xi}(s)\big|^2\right]
			+ \int_{\tau_i}^{T}\pigl\|\Delta^\epsilon_\Psi q_{\tau_i \xi}(s)\pigr\|^{2}_{\mathcal{H}}ds
			\leq C_4'\|\Psi\|_{\mathcal{H}}^2,
			\label{bdd. diff quotient of y, p, q, u}
		\end{align}
		where $C_{4}'$ is a positive constant depending only on $\lambda_{g_1}$, $\lambda_{g_2}$, $\lambda_{h_1}$, $\Lambda_{g_1}$, $C_{g_1}$, $c_{g_2}$, $C_{g_2}$, $C_{h_1}$ and $T$.
		\label{lem. bdd of diff quotient}
	\end{lemma}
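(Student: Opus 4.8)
The plan is to recognise that the four difference quotient processes in \eqref{def diff process} solve a secant version of the FBSDE \eqref{eq. FBSDE, equilibrium}--\eqref{eq. 1st order condition, equilibrium}, and to treat it by combining the coercivity mechanism from Part~1 (uniqueness) with the martingale-free telescoping from Part~2 of Lemma~\ref{lem. bdd of y p q u}. Write $\Delta y$, $\Delta p$, $\Delta q$, $\Delta u$ for $\Delta^\epsilon_\Psi y_{\tau_i\xi}$, $\Delta^\epsilon_\Psi p_{\tau_i\xi}$, $\Delta^\epsilon_\Psi q_{\tau_i\xi}$, $\Delta^\epsilon_\Psi u_{\tau_i\xi}$; these are well-defined since Lemma~\ref{lem. bdd of y p q u} provides a solution for each initial datum (note $\xi+\epsilon\Psi\in L^2(\Omega,\mathcal W^{\tau_i}_0,\mathbb P;\mathbb R^d)$). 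Subtracting the two copies of \eqref{eq. FBSDE, equilibrium} for initial data $\xi+\epsilon\Psi$ and $\xi$ and dividing by $\epsilon$ shows that $\Delta y(s)=\Psi+\int_{\tau_i}^s\Delta u\,d\tau$, so $\Delta y$ carries no martingale part and $\Delta y-\Psi$ vanishes at $s=\tau_i$; that $\Delta p$ solves a linear BSDE whose driver $D_p(s)$ is the difference quotient of $\nabla_y g_1$ plus that of $\nabla_y g_2(\cdot,\mathcal L(\cdot))$ and whose terminal value is the difference quotient of $\nabla_y h_1$; and, via \eqref{eq. 1st order condition, equilibrium}, that $\Delta p(s)=-D_v(s)$, where $D_v(s)$ is the difference quotient of $\nabla_v g_1$.

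The crucial step is to apply It\^o's lemma to $\langle\Delta p(s),\Delta y(s)-\Psi\rangle_{\mathbb R^d}$ on $[\tau_i,T]$. Because $\Delta y-\Psi$ has no martingale component and vanishes at $\tau_i$, taking expectations annihilates both the stochastic integral and the boundary term at $\tau_i$, leaving, after using $\langle\Delta p,\Delta u\rangle=-\langle D_v,\Delta u\rangle$,
\[
-\,\mathbb E\langle\Delta p(T),\Delta y(T)-\Psi\rangle_{\mathcal H}=\int_{\tau_i}^T\mathbb E\left[\langle D_p,\Delta y-\Psi\rangle+\langle D_v,\Delta u\rangle\right]ds.
\]
Expressing every increment by the fundamental theorem of calculus along the segment joining the two solutions --- and, for the law-dependence of $\nabla_y g_2$, along the interpolating laws via formula \eqref{f_statelaw} --- the combination $\langle D_p,\Delta y\rangle+\langle D_v,\Delta u\rangle$ becomes the quadratic form $\langle\nabla_{vv}g_1\Delta u,\Delta u\rangle+2\langle\nabla_{yv}g_1\Delta u,\Delta y\rangle+\langle\nabla_{yy}g_1\Delta y,\Delta y\rangle+\langle\nabla_{yy}g_2\Delta y,\Delta y\rangle$ plus the mean-field term $\mathbb E\widetilde{\mathbb E}\langle\nabla_{\tilde y}\frac{d}{d\nu}\nabla_y g_2\,\widetilde{\Delta y},\Delta y\rangle$, whose absolute value is at most $c_{g_2}\|\Delta y\|_{\mathcal H}^2$ by \eqref{ass. bdd of D dnu D g2}. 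Assumptions \eqref{ass. convexity of g1} and \eqref{ass. convexity of g2} then yield the coercivity bound $\langle D_p,\Delta y\rangle+\langle D_v,\Delta u\rangle\geq\Lambda_{g_1}\|\Delta u\|_{\mathcal H}^2-(\lambda_{g_1}+\lambda_{g_2}+c_{g_2})\|\Delta y\|_{\mathcal H}^2$, exactly mirroring \eqref{4147}.

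To close, I would write $\langle D_p,\Delta y-\Psi\rangle=\langle D_p,\Delta y\rangle-\langle D_p,\Psi\rangle$ and bound the inhomogeneous pieces: $\int_{\tau_i}^T\mathbb E\langle D_p,\Psi\rangle$ by Young's inequality against $\|D_p\|_{\mathcal H}^2\leq C(\|\Delta y\|_{\mathcal H}^2+\|\Delta u\|_{\mathcal H}^2)$ (using \eqref{ass. bdd of D^2g1}, \eqref{ass. bdd of D^2g2}, \eqref{ass. bdd of D dnu D g2}), and the terminal term by $-\mathbb E\langle\Delta p(T),\Delta y(T)-\Psi\rangle\leq(\lambda_{h_1})_+\|\Delta y(T)\|_{\mathcal H}^2+C_{h_1}\|\Delta y(T)\|_{\mathcal H}\|\Psi\|_{\mathcal H}$ via \eqref{ass. bdd of D^2h1}, \eqref{ass. convexity of h}. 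The identity $\Delta y-\Psi=\int_{\tau_i}^{\cdot}\Delta u$ gives, for any $\alpha>0$, the estimates $\int_{\tau_i}^T\|\Delta y\|_{\mathcal H}^2\leq(1+\alpha)\frac{(T-\tau_i)^2}{2}\int_{\tau_i}^T\|\Delta u\|_{\mathcal H}^2+(1+\alpha^{-1})(T-\tau_i)\|\Psi\|_{\mathcal H}^2$ and $\|\Delta y(T)\|_{\mathcal H}^2\leq(1+\alpha)(T-\tau_i)\int_{\tau_i}^T\|\Delta u\|_{\mathcal H}^2+(1+\alpha^{-1})\|\Psi\|_{\mathcal H}^2$. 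Collecting the coefficient of $\int_{\tau_i}^T\|\Delta u\|_{\mathcal H}^2$ produces $\Lambda_{g_1}-(\lambda_{h_1})_+(1+\alpha)(T-\tau_i)-(\lambda_{g_1}+\lambda_{g_2}+c_{g_2})_+(1+\alpha)\frac{(T-\tau_i)^2}{2}$ minus the small Young remainders; since $T-\tau_i\leq T$ and \eqref{ass. Cii} of Assumption~\textbf{(Cii)} holds with strict inequality, choosing $\alpha$ and the Young parameters small enough makes this coefficient strictly positive, whence $\int_{\tau_i}^T\|\Delta u\|_{\mathcal H}^2\,ds\leq C\|\Psi\|_{\mathcal H}^2$. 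Bootstrapping then delivers the rest: the forward identity upgrades this to $\mathbb E\sup_s|\Delta y|^2\leq C\|\Psi\|_{\mathcal H}^2$; It\^o's formula for $|\Delta p(s)|^2$ together with the Burkholder--Davis--Gundy inequality and the growth bounds \eqref{ass. bdd of D^2g1}, \eqref{ass. bdd of D^2g2}, \eqref{ass. bdd of D dnu D g2}, \eqref{ass. bdd of D^2h1} yields $\mathbb E\sup_s|\Delta p|^2+\int_{\tau_i}^T\|\Delta q\|_{\mathcal H}^2\leq C\|\Psi\|_{\mathcal H}^2$; and the first order condition with $\Lambda_{g_1}>0$ controls $\mathbb E\sup_s|\Delta u|^2$ by $\mathbb E\sup_s|\Delta p|^2+\mathbb E\sup_s|\Delta y|^2$, establishing \eqref{bdd. diff quotient of y, p, q, u}.

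The main obstacle is the bookkeeping around the non-zero initial increment $\Psi$: unlike the uniqueness computation, here $\Delta y$ is not simply $\int\Delta u$, so the clean $\frac{T^2}{2}$ and $T$ coefficients of Assumption~\textbf{(Cii)} are recovered only by testing against the martingale-free reference $\Delta y-\Psi$ and by exploiting the strictness of \eqref{ass. Cii} to absorb the $O(\alpha)$ and Young losses. Handling the law-derivative term of $\nabla_y g_2$ through \eqref{f_statelaw} and estimating it by $c_{g_2}$ is the other delicate point, and it is precisely there that the small mean field effect enters.
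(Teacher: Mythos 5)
Your proposal is correct and takes essentially the same route as the paper's proof: the single It\^o computation you perform on $\langle \Delta^\epsilon_\Psi p(s),\Delta^\epsilon_\Psi y(s)-\Psi\rangle$ produces exactly the identity the paper obtains by subtracting the direct expansion of $\langle \Delta^\epsilon_\Psi p(\tau_i),\Psi\rangle$ from the It\^o expansion of $\langle \Delta^\epsilon_\Psi p,\Delta^\epsilon_\Psi y\rangle$, and both arguments then invoke the same convexity assumptions \textbf{(Ax)}, \textbf{(Axi)}, the small mean field bound \textbf{(Aix)}, \textbf{(Bvi)}, the forward identity for $\Delta^\epsilon_\Psi y$, and the strictness of Assumption \textbf{(Cii)} to absorb the Young losses, followed by the same bootstrap through the backward equation and the differentiated first-order condition. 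The only difference is bookkeeping --- the paper evaluates the coercive quadratic form at $(\Delta^\epsilon_\Psi u,\Delta^\epsilon_\Psi y-\Psi)$ so the $\Psi$-errors appear as Hessian cross terms with additive $\kappa$-perturbations, while you evaluate it at $(\Delta^\epsilon_\Psi u,\Delta^\epsilon_\Psi y)$ and push the errors into $(1+\alpha)$ factors and the $\langle D_p,\Psi\rangle$ remainder --- and both absorptions succeed for the same reason.
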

	
	\begin{proof}
		For simplicity, without any cause of ambiguity, we omit the subscripts $\tau_i \xi$ in $y_{\tau_i \xi}(s)$, $p_{\tau_i \xi}(s)$, $q_{\tau_i \xi}(s)$ and $u_{\tau_i \xi}(s)$, where $u_{\tau_i \xi}(s) = u(s) = u\big(y(s),p(s)\big)=u\big(y_{\tau_i \xi}(s),p_{\tau_i \xi}(s)\big)$. From \eqref{eq. FBSDE, equilibrium}, the quadruple $\pig( \Delta^\epsilon_\Psi y (s),$
		$\Delta^\epsilon_\Psi p (s),$
		$\Delta^\epsilon_\Psi q(s),$
		$\Delta^\epsilon_\Psi u(s)\pig)$ solves the system \vspace{-15pt}
		
		\small\begin{empheq}[left=\h{-10pt}\empheqbiglbrace]{align}
			\Delta^\epsilon_\Psi y (s)
			=\:& \Psi + \int^s_{\tau_i} \Delta^\epsilon_\Psi u(\tau)d\tau;
			\label{eq. diff quotient forward}\\
			\Delta^\epsilon_\Psi p (s)=\:&
			\int_{0}^{1}\nabla_{yy}h_1\pig(y(T)+\theta\epsilon \Delta^\epsilon_\Psi y (T)\pig)\Delta^\epsilon_\Psi y (T)d\theta\nonumber\\
			&+\int^T_s\int_{0}^{1}\nabla_{yy}g_1\pig(y(\tau)+\theta\epsilon \Delta^\epsilon_\Psi y (\tau),u(\tau) +\theta\epsilon \Delta^\epsilon_\Psi u (\tau) \pig)\Delta^\epsilon_\Psi y (\tau)\nonumber\\
			&\h{35pt}+\nabla_{vy}g_1\pig(y(\tau)+\theta\epsilon \Delta^\epsilon_\Psi y (\tau),u(\tau) +\theta\epsilon \Delta^\epsilon_\Psi u (\tau) \pig)
			\Delta^\epsilon_\Psi u(\tau)d\theta d\tau\h{-1pt}\nonumber\\
			&+\int^T_s\int^1_0 \nabla_{yy}g_2\pig(y(\tau)+\theta\epsilon \Delta^\epsilon_\Psi y (\tau),\mathcal{L}\big(y(\tau)+\theta\epsilon \Delta^\epsilon_\Psi y (\tau)\big)\pig) 
			\Delta^\epsilon_\Psi y (\tau) d \theta d\tau\nonumber\\
			&+\int^T_s\int^1_0 \widetilde{\mathbb{E}}
			\left[\nabla_{y'}\dfrac{d}{d\nu}\nabla_{y}g_2\pig(y(\tau)+\theta\epsilon \Delta^\epsilon_\Psi y (\tau),\mathcal{L}\big(y(\tau)+\theta\epsilon \Delta^\epsilon_\Psi y (\tau)\big)\pig) (y')\bigg|_{y'=\tilde{y}(\tau)+\theta\epsilon \widetilde{\Delta^\epsilon_\Psi y} (\tau)} 
			\widetilde{\Delta^\epsilon_\Psi y} (\tau)\right]
			d\theta d\tau\h{-50pt}\nonumber\\
			&-\int^T_s \Delta^\epsilon_\Psi q(\tau)dW_\tau,
			\label{eq. diff quotient backward}
		\end{empheq}\normalsize
		where $\pig(\widetilde{y}(\tau),\widetilde{\Delta^\epsilon_\Psi y} (\tau)\pig)$ are the independent copies of $\pig(y(\tau),\Delta^\epsilon_\Psi y (\tau)\pig)$.
		Meanwhile, we have \vspace{-15pt}
		\begin{equation}
			\begin{aligned}
				\Delta^\epsilon_\Psi p (s)
				+\int_{0}^{1}&\nabla_{yv}g_1\pig(y(s)+\theta\epsilon \Delta^\epsilon_\Psi y (s),u(s) +\theta\epsilon \Delta^\epsilon_\Psi u (s) \pig)
				\Delta^\epsilon_\Psi y (s)\\
				&\h{50pt}+\nabla_{vv}g_1\pig(y(s)+\theta\epsilon \Delta^\epsilon_\Psi y (s),u(s) +\theta\epsilon \Delta^\epsilon_\Psi u (s) \pig)
				\Delta^\epsilon_\Psi u(s)d\theta=0.
			\end{aligned}
			\label{eq. 1st order diff quotient}
		\end{equation}
		We use $y^{\theta\epsilon}(s)$ and $u^{\theta\epsilon}(s)$ to denote the processes $y(s)+\theta\epsilon \Delta^\epsilon_\Psi y (s)$ and $u(s)+\theta\epsilon \Delta^\epsilon_\Psi u (s)$, respectively. Applying It\^o's formula to the inner product $\Big\langle \Delta^\epsilon_\Psi p (s),\Delta^\epsilon_\Psi y (s)  \Big\rangle_{\mathbb{R}^d}$, together with the equations in \eqref{eq. diff quotient forward}-\eqref{eq. 1st order diff quotient}, we have
			\begin{align}
				&\h{-10pt}\Big\langle \Delta^\epsilon_\Psi p (\tau_i),
				\Delta^\epsilon_\Psi y (\tau_i)  \Big\rangle_{\mathcal{H}}\nonumber\\
				=\,&\left\langle\int_{0}^{1}\nabla_{yy}h_1\pig(y^{\theta\epsilon}(T)\pig)\Delta^\epsilon_\Psi y (T)d\theta,
				\Delta^\epsilon_\Psi y (T)\right\rangle_{\mathcal{H}}\nonumber\\
				&+\int^T_{\tau_i}\Bigg\langle\int_{0}^{1}
				\nabla_{yy}g_1\pig(y^{\theta\epsilon}(\tau),u^{\theta\epsilon}(\tau) \pig)\Delta^\epsilon_\Psi y (\tau)
				+\nabla_{vy}g_1\pig(y^{\theta\epsilon}(\tau),u^{\theta\epsilon}(\tau) \pig)
				\Delta^\epsilon_\Psi u(\tau)d\theta ,\Delta^\epsilon_\Psi y (\tau) \Bigg\rangle_{\mathcal{H}} d\tau\h{-1pt}\nonumber\\
				&+\int^T_{\tau_i} \Bigg\langle \int^1_0 \nabla_{yy}g_2\pig(y^{\theta\epsilon}(\tau),\mathcal{L}\big(y^{\theta\epsilon}(\tau)\big)\pig) 
				\Delta^\epsilon_\Psi y (\tau) d \theta,\Delta^\epsilon_\Psi y (\tau)
				\Bigg\rangle_{\mathcal{H}} d\tau\nonumber\\
				&
				+\displaystyle\int^T_{\tau_i} \Bigg\langle \int^1_0 \widetilde{\mathbb{E}}
				\left[\nabla_{y'}\dfrac{d}{d\nu}\nabla_{y}g_2\pig(y^{\theta\epsilon}(\tau),\mathcal{L}\big(y^{\theta\epsilon}(\tau)\big)\pig) (y')\bigg|_{y'=\widetilde{y^{\theta\epsilon}}(\tau)} 
				\widetilde{\Delta^\epsilon_\Psi y} (\tau)\right]
				d\theta,
				\Delta^\epsilon_\Psi y (\tau)
				\Bigg\rangle_{\mathcal{H}} d\tau\nonumber\\
				&+\int^T_{\tau_i} \Bigg\langle \int^1_0 \nabla_{yv}g_1\pig(y^{\theta\epsilon}(\tau),u^{\theta\epsilon}(\tau) \pig)
				\Delta^\epsilon_\Psi y (\tau)
				+\nabla_{vv}g_1\pig(y^{\theta\epsilon}(\tau),u^{\theta\epsilon}(\tau) \pig)
				\Delta^\epsilon_\Psi u(\tau)d \theta,
				\Delta^\epsilon_\Psi u (\tau)
				\Bigg\rangle_{\mathcal{H}} d\tau.
				\label{eq. ito of <Delta p,Delta y>, finite diff}
			\end{align}
		The backward dynamic in \eqref{eq. diff quotient backward} together with the tower property imply that
		\begin{align}
			\Big\langle \Delta^\epsilon_\Psi p (\tau_i),
			\Psi  \Big\rangle_{\mathcal{H}}
			=\,&\left\langle\int_{0}^{1}\nabla_{yy}h_1\pig(y^{\theta\epsilon}(T)\pig)\Delta^\epsilon_\Psi y (T)d\theta,
			\Psi \right\rangle_{\mathcal{H}}\nonumber\\
			&+\int^T_{\tau_i}\Bigg\langle\int_{0}^{1}
			\nabla_{yy}g_1\pig(y^{\theta\epsilon}(\tau),u^{\theta\epsilon}(\tau) \pig)\Delta^\epsilon_\Psi y (\tau)
			+\nabla_{vy}g_1\pig(y^{\theta\epsilon}(\tau),u^{\theta\epsilon}(\tau) \pig)
			\Delta^\epsilon_\Psi u(\tau)d\theta ,\Psi  \Bigg\rangle_{\mathcal{H}} d\tau\nonumber\\
			&+\int^T_{\tau_i} \Bigg\langle \int^1_0 \nabla_{yy}g_2\pig(y^{\theta\epsilon}(\tau),\mathcal{L}\big(y^{\theta\epsilon}(\tau)\big)\pig) 
			\Delta^\epsilon_\Psi y (\tau) d \theta,\Psi 
			\Bigg\rangle_{\mathcal{H}} d\tau\nonumber\\
			&+\displaystyle\int^T_{\tau_i} \Bigg\langle \int^1_0 \widetilde{\mathbb{E}}
			\left[\nabla_{y'}\dfrac{d}{d\nu}\nabla_{y}g_2\pig(y^{\theta\epsilon}(\tau),\mathcal{L}\big(y^{\theta\epsilon}(\tau)\big)\pig) (y')\bigg|_{y'=\widetilde{y^{\theta\epsilon}}(\tau)} 
			\widetilde{\Delta^\epsilon_\Psi y} (\tau)\right]
			d\theta,\Psi 
			\Bigg\rangle_{\mathcal{H}} d\tau. 
			\label{eq. direct of <Delta p,Delta y>, finite diff}
		\end{align}
		Therefore, subtracting \eqref{eq. direct of <Delta p,Delta y>, finite diff} from \eqref{eq. ito of <Delta p,Delta y>, finite diff}, we have
		$$
		\begin{aligned}
			0=\,&\left\langle\int_{0}^{1}\nabla_{yy}h_1\pig(y^{\theta\epsilon}(T)\pig)\Delta^\epsilon_\Psi y (T)d\theta,
			\Delta^\epsilon_\Psi y (T) -\Psi\right\rangle_{\mathcal{H}}\\
			&+\int^T_{\tau_i}\Bigg\langle\int_{0}^{1}
			\nabla_{yy}g_1\pig(y^{\theta\epsilon}(\tau),u^{\theta\epsilon}(\tau) \pig)\Delta^\epsilon_\Psi y (\tau)
			+\nabla_{vy}g_1\pig(y^{\theta\epsilon}(\tau),u^{\theta\epsilon}(\tau) \pig)
			\Delta^\epsilon_\Psi u(\tau)d\theta,
			\Delta^\epsilon_\Psi y (\tau)-\Psi \Bigg\rangle_{\mathcal{H}} d\tau\h{-1pt}\\
			&+\int^T_{\tau_i} \Bigg\langle \int^1_0 \nabla_{yy}g_2\pig(y^{\theta\epsilon}(\tau),\mathcal{L}\big(y^{\theta\epsilon}(\tau)\big)\pig) 
			\Delta^\epsilon_\Psi y (\tau) d \theta,
			\Delta^\epsilon_\Psi y (\tau)-\Psi
			\Bigg\rangle_{\mathcal{H}} d\tau\\
			&
			+\displaystyle\int^T_{\tau_i} \Bigg\langle \int^1_0 \widetilde{\mathbb{E}}
			\left[\nabla_{y'}\dfrac{d}{d\nu}\nabla_{y}g_2\pig(y^{\theta\epsilon}(\tau),\mathcal{L}\big(y^{\theta\epsilon}(\tau)\big)\pig) (y')\bigg|_{y'=\widetilde{y^{\theta\epsilon}}(\tau)} 
			\widetilde{\Delta^\epsilon_\Psi y} (\tau)\right]
			d\theta,
			\Delta^\epsilon_\Psi y (\tau)-\Psi
			\Bigg\rangle_{\mathcal{H}} d\tau\h{-50pt}\\
			&+\int^T_{\tau_i} \Bigg\langle \int^1_0 \nabla_{yv}g_1\pig(y^{\theta\epsilon}(\tau),u^{\theta\epsilon}(\tau) \pig)
			\Delta^\epsilon_\Psi y (\tau)
			+\nabla_{vv}g_1\pig(y^{\theta\epsilon}(\tau),u^{\theta\epsilon}(\tau) \pig)
			\Delta^\epsilon_\Psi u(\tau)d \theta,
			\Delta^\epsilon_\Psi u (\tau)
			\Bigg\rangle_{\mathcal{H}} d\tau.
		\end{aligned}
		$$
	 \eqref{ass. bdd of D^2g2}, \eqref{ass. bdd of D dnu D g2}, \eqref{ass. convexity of g1}, \eqref{ass. convexity of g2}, \eqref{ass. convexity of h} of Assumptions {\bf (Aviii)}, {\bf (Aix)}, {\bf (Ax)}, {\bf (Axi)}, {\bf (Bvi)} imply that
		\begin{align*}
			&\h{-10pt}\Lambda_{g_1}\int^T_{\tau_i} 
			\big\|\Delta^\epsilon_\Psi u (\tau)\big\|_{\mathcal{H}}^2d\tau\\
			\leq\,&\lambda_{g_1}\int^T_{\tau_i} 
			\big\|\Delta^\epsilon_\Psi y (\tau)-\Psi\big\|_{\mathcal{H}}^2d\tau
			+\lambda_{h_1}
			\big\|\Delta^\epsilon_\Psi y (T) - \Psi\big\|_{\mathcal{H}}^2
			+\left|\left\langle\int_{0}^{1}\nabla_{yy}h_1\pig(y^{\theta\epsilon}(T)\pig)\Psi d\theta,
			\Delta^\epsilon_\Psi y (T) -\Psi\right\rangle_{\mathcal{H}}\right|\\
			&+\left|\int^T_{\tau_i}\Bigg\langle\int_{0}^{1}
			\nabla_{yy}g_1\pig(y^{\theta\epsilon}(\tau),u^{\theta\epsilon}(\tau) \pig)\Psi,
			\Delta^\epsilon_\Psi y (\tau)-\Psi \Bigg\rangle_{\mathcal{H}} d\tau\right|\\
			&+(\lambda_{g_2}+c_{g_2})\int^T_{\tau_i} 
			\big\|\Delta^\epsilon_\Psi y (\tau)-\Psi\big\|_{\mathcal{H}}^2d\tau
			+\left|\int^T_{\tau_i} \Bigg\langle \int^1_0 \nabla_{yy}g_2\pig(y^{\theta\epsilon}(\tau),\mathcal{L}\big(y^{\theta\epsilon}(\tau)\big)\pig) \Psi d \theta,
			\Delta^\epsilon_\Psi y (\tau)-\Psi
			\Bigg\rangle_{\mathcal{H}} d\tau\right|\\
			&
			+\left|\displaystyle\int^T_{\tau_i} \Bigg\langle \int^1_0 \widetilde{\mathbb{E}}
			\left[\nabla_{y'}\dfrac{d}{d\nu}\nabla_{y}g_2\pig(y^{\theta\epsilon}(\tau),\mathcal{L}\big(y^{\theta\epsilon}(\tau)\big)\pig) (y')\bigg|_{y'=\widetilde{y^{\theta\epsilon}}(\tau)} 
			\widetilde{\Psi}\right]
			d\theta,
			\Delta^\epsilon_\Psi y (\tau)-\Psi
			\Bigg\rangle_{\mathcal{H}} d\tau\right|\h{-50pt}\\
			&+\left|\int^T_{\tau_i} \Bigg\langle \int^1_0 \nabla_{yv}g_1\pig(y^{\theta\epsilon}(\tau),u^{\theta\epsilon}(\tau) \pig)\Psi,
			\Delta^\epsilon_\Psi u (\tau)
			\Bigg\rangle_{\mathcal{H}} d\tau\right|.
		\end{align*}
		 \eqref{ass. bdd of D^2g1}, \eqref{ass. bdd of D^2g2}, \eqref{ass. bdd of D dnu D g2},
		 \eqref{ass. bdd of D^2h1} of Assumptions {\bf (Avii)}, {\bf (Aviii)}, {\bf (Aix)}, {\bf (Bv)}  and the Cauchy-Schwarz inequality for the inner product $\langle\cdot,\cdot\rangle_{\mathcal{H}}$ give that
		\begin{align*}
			\Lambda_{g_1}\int^T_{\tau_i} 
			\big\|\Delta^\epsilon_\Psi u (\tau)\big\|_{\mathcal{H}}^2d\tau
			\leq\,&\lambda_{g_1}\int^T_{\tau_i} 
			\big\|\Delta^\epsilon_\Psi y (\tau)-\Psi\big\|_{\mathcal{H}}^2d\tau
			+\lambda_{h_1}
			\big\|\Delta^\epsilon_\Psi y (T) - \Psi\big\|_{\mathcal{H}}^2
			+C_{h_1}\big\|\Psi\big\|_{\mathcal{H}}
			\big\|\Delta^\epsilon_\Psi y (T) - \Psi\big\|_{\mathcal{H}}\nonumber\\
			&+C_{g_1}\big\|\Psi\big\|_{\mathcal{H}}
			\int^T_{\tau_i} \big\|\Delta^\epsilon_\Psi y (\tau) - \Psi\big\|_{\mathcal{H}}d\tau+(\lambda_{g_2}+c_{g_2})\int^T_{\tau_i} 
			\big\|\Delta^\epsilon_\Psi y (\tau)-\Psi\big\|_{\mathcal{H}}^2d\tau\nonumber\\
			&+(c_{g_2}+C_{g_2})\big\|\Psi\big\|_{\mathcal{H}}
			\int^T_{\tau_i} \big\|\Delta^\epsilon_\Psi y (\tau) - \Psi\big\|_{\mathcal{H}}d\tau
			+C_{g_1}\big\|\Psi\big\|_{\mathcal{H}}
			\int^T_{\tau_i} \big\|\Delta^\epsilon_\Psi u (\tau)
			\big\|_{\mathcal{H}}d\tau.\nonumber
		\end{align*}
		We use the Cauchy-Schwarz and Young's inequalities to obtain
		\begin{align}
			\Lambda_{g_1}\int^T_{\tau_i} 
			\big\|\Delta^\epsilon_\Psi u (\tau)\big\|_{\mathcal{H}}^2d\tau
			\leq\,&\pig[\lambda_{g_1}+\lambda_{g_2}+c_{g_2} + C_{g_1}\kappa_5(T-\tau_i) +(c_{g_2}+C_{g_2})\kappa_6(T-\tau_i)\pig] \int^T_{\tau_i} 
			\big\|\Delta^\epsilon_\Psi y (\tau)-\Psi\big\|_{\mathcal{H}}^2d\tau\nonumber\\
			&+(\lambda_{h_1} + \kappa_8 C_{h_1})
			\big\|\Delta^\epsilon_\Psi y (T) - \Psi\big\|_{\mathcal{H}}^2
			+\dfrac{C_{h_1}}{4\kappa_8}\big\|\Psi\big\|_{\mathcal{H}}^2
			+\dfrac{C_{g_1}}{4\kappa_5}\big\|\Psi\big\|_{\mathcal{H}}^2
			\label{ineq. int Delta u <.. finite diff}\\
			&
			+\dfrac{c_{g_2}+C_{g_2}}{4\kappa_6}\big\|\Psi\big\|_{\mathcal{H}}^2+\dfrac{C_{g_1}}{4\kappa_7}\big\|\Psi\big\|_{\mathcal{H}}^2
			+\kappa_7C_{g_1}(T-\tau_i)\int^T_{\tau_i} \big\|\Delta^\epsilon_\Psi u (\tau)
			\big\|_{\mathcal{H}}^2d\tau.
			\nonumber
		\end{align}
		From the dynamics of $\Delta^\epsilon_\Psi y (s)$ in \eqref{eq. diff quotient forward}, it yields that
		\begin{equation}
			|\Delta^\epsilon_\Psi y (s)-\Psi|^{2}
			\leq(s-\tau_i)\int_{\tau_i}^{T}
			\left|\Delta^\epsilon_\Psi u(\tau)\right|^{2}
			d\tau\h{5pt} \text{and} \h{5pt}
				\int_{\tau_i}^{T}\|\Delta^\epsilon_\Psi y(\tau)-\Psi\|^{2}_{\mathcal{H}}d\tau
			\leq\dfrac{(T-\tau_i)^{2}}{2}\int_{\tau_i}^{T}\left\|\Delta^\epsilon_\Psi u(\tau)\right\|^{2}_{\mathcal{H}}d\tau.
			\label{ineq. |Delta ys - Psi|^2}
		\end{equation}
		Bringing \eqref{ineq. |Delta ys - Psi|^2} into \eqref{ineq. int Delta u <.. finite diff}, 
		\fontsize{10pt}{11pt}
		\begin{align*}
			&\left\{\Lambda_{g_1}
			-\pig[(\lambda_{g_1}+\lambda_{g_2}+c_{g_2})_+ + C_{g_1}\kappa_5(T-\tau_i) +2C_{g_2}\kappa_6(T-\tau_i)\pigr]\dfrac{(T-\tau_i)^{2}}{2}
			-\pig[(\lambda_{h_1})_+
			+\kappa_7C_{g_1}+\kappa_8 C_{h_1}\pigr](T-\tau_i)
			\right\}\\
			&\cdot\displaystyle\int^T_{\tau_i} 
			\big\|\Delta^\epsilon_\Psi u (\tau)\big\|_{\mathcal{H}}^2d\tau\\
			&\leq 
			\left(\dfrac{C_{h_1}}{4\kappa_8}
			+\dfrac{C_{g_1}}{4\kappa_5}
			+\dfrac{c_{g_2} + C_{g_2}}{4\kappa_6}
			+\dfrac{C_{g_1}}{4\kappa_7}\right)\big\|\Psi\big\|_{\mathcal{H}}^2.
		\end{align*}\normalsize
		Using \eqref{ass. Cii} of Assumption {\bf C(ii)}  and taking suitably small constants $\kappa_5,\ldots,\kappa_8$ to ensure that
		\begin{equation}
			\int^T_{\tau_i} 
			\big\|\Delta^\epsilon_\Psi u (\tau)\big\|_{\mathcal{H}}^2d\tau
			\leq A \|\Psi\|^{2}_{\mathcal{H}},
			\label{bdd int |Delta u|^2}
		\end{equation}
		for some $A>0$ depending on $\lambda_{g_1}$, $\lambda_{g_2}$, $\lambda_{h_1}$, $\Lambda_{g_1}$, $C_{g_1}$, $c_{g_2}$, $C_{g_2}$, $C_{h_1}$, $T$. For simplicity, the constant $A$ changes its value from line to line in this proof, but still depends on the same set of parameters. From (\ref{bdd int |Delta u|^2}) and  (\ref{ineq. |Delta ys - Psi|^2}), we deduce that 
		\begin{equation}
			\mathbb{E}\left[\sup_{s\in [\tau_i,T]}| \Delta^\epsilon_\Psi y (s) |^2\right]
			\leq 2 \mathbb{E}\left[\sup_{s\in [\tau_i,T]}|\Delta^\epsilon_\Psi y (s)-\Psi|^2\right]
			+2\| \Psi  \|_{\mathcal{H}}^2
			\leq A\| \Psi  \|_{\mathcal{H}}^2.
			\label{bdd |Delta y|^2}
		\end{equation}
		Putting \eqref{bdd |Delta y|^2} and \eqref{bdd int |Delta u|^2} into the backward dynamics of \eqref{eq. diff quotient backward} and the first order condition \eqref{eq. 1st order diff quotient}, we obtain the desired estimates.
	\end{proof}

	\begin{lemma}
		Let $\xi$, $\Psi \in L^2(\Omega,\mathcal{W}^{\tau_{i}}_0,\mathbb{P};\mathbb{R}^d)$ and an arbitrary sequence $\{\epsilon_\ell\}_{\ell=1}^\infty$ converging to $0$. Under \eqref{ass. Cii} of Assumption \textup{\bf (Cii)}, there is a subsequence of $\{\epsilon_\ell\}_{\ell=1}^\infty$, still denoted by $\{\epsilon_{k_\ell}\}_{\ell=1}^\infty$,  such that the processes 
		$ \Delta^{\epsilon_{k_\ell}}_\Psi y_{\tau_i \xi}(s),
		\Delta^{\epsilon_{k_\ell}}_\Psi p_{\tau_i \xi}(s),$ and 
		$\Delta^{\epsilon_{k_\ell}}_\Psi u_{\tau_i \xi}(s)$ converge weakly in $L_{\mathcal{W}_{\tau_i \xi \Psi}}^{2}(\tau_i,T;\mathcal{H})$ to 
		$ D^\Psi_\xi y_{\tau_i \xi}(s),
		D^\Psi_\xi p_{\tau_i \xi}(s)$, and 
		$D^\Psi_\xi u_{\tau_i \xi}(s)$, respectively, and $
		\Delta^{\epsilon_{k_\ell}}_\Psi q_{\tau_i \xi}(s)$ converges weakly in $\mathbb{H}_{\mathcal{W}_{\tau_i \xi \Psi}}[\tau_i,T]$ to 
		$D^\Psi_\xi q_{\tau_i \xi}(s)$, as $\ell \to \infty$. These limiting processes are the Jacobian flow (G\^ateaux derivative with respect to the initial condition $\xi$ at time $\tau_i$) of the solution to the FBSDE (\ref{eq. FBSDE, equilibrium})-(\ref{eq. 1st order condition, equilibrium}), which is also the unique solution in $\mathbb{S}_{\mathcal{W}_{\tau_i \xi \Psi}}[\tau_i,T]\times \mathbb{S}_{\mathcal{W}_{\tau_i \xi \Psi}}[\tau_i,T] \times \mathbb{H}_{\mathcal{W}_{\tau_i \xi \Psi}}[\tau_i,T]$ of the following FBSDE
		\begin{equation}
			\h{-10pt}\left\{
			\begin{aligned}
				D^\Psi_\xi y_{\tau_i \xi}  (s)
				=\,& \Psi
				+\displaystyle\int_{\tau_i}^{s}
				\Big[ \nabla_y u\pig(y_{\tau_i \xi}(\tau),p_{\tau_i \xi}(\tau)\pig)\Big] 
				\Big[D^\Psi_\xi y_{\tau_i \xi}(\tau) \Big]
				+\Big[\nabla_p  u\pig(y_{\tau_i \xi}(\tau),p_{\tau_i \xi}(\tau)\pig)\Big] 
				\Big[D^\Psi_\xi p_{\tau_i \xi}(\tau)\Big]  d\tau;\\
				D^\Psi_\xi p_{\tau_i \xi} (s)
				=\,&\nabla_{yy} h_1(y_{\tau_i \xi}(T))D^\Psi_\xi y_{\tau_i \xi}(T)
				+\int^T_s\nabla_{yy}g_1\pig(y_{\tau_i \xi}(\tau),u_{\tau_i \xi}(\tau) \pig)D_\xi^\Psi y_{\tau_i \xi} (\tau) d\tau\\
				&+\int^T_s\nabla_{vy}g_1\pig(y_{\tau_i \xi}(\tau),u_{\tau_i \xi}(\tau) \pig)\Big[ \nabla_y u\pig(y_{\tau_i \xi}(\tau),p_{\tau_i \xi}(\tau)\pig)\Big] 
				\Big[D^\Psi_\xi y_{\tau_i \xi}(\tau) \Big]d\tau\\
				&+\int^T_s\nabla_{vy}g_1\pig(y_{\tau_i \xi}(\tau),u_{\tau_i \xi}(\tau) \pig)\Big[\nabla_p  u\pig(y_{\tau_i \xi}(\tau),p_{\tau_i \xi}(\tau)\pig)\Big] 
				\Big[D^\Psi_\xi p_{\tau_i \xi}(\tau)\Big]d\tau \\
				&+\int^T_s \nabla_{yy}g_2\pig(y_{\tau_i \xi}(\tau),\mathcal{L}\big(y_{\tau_i \xi}(\tau)\big)\pig) 
				D^\Psi_\xi y_{\tau_i \xi} (\tau) d\tau\\
				&+\int^T_s\widetilde{\mathbb{E}}
				\left[\nabla_{y'}\dfrac{d}{d\nu}\nabla_{y}g_2\pig(y_{\tau_i \xi}(\tau),\mathcal{L}\big(y_{\tau_i \xi}(\tau)\big)\pig)  (y')\bigg|_{y'= \widetilde{y_{\tau_i \xi}} (\tau)}
				\widetilde{D^\Psi_\xi y_{\tau_i \xi}} (\tau)\right]
				d\tau
				-\int^T_s D^\Psi_\xi q_{\tau_i \xi}(\tau)dW_\tau.
			\end{aligned}\right.
			\label{eq. J flow of FBSDE}
		\end{equation}
		Here $u(y,p)$ is the function satisfying $p+\nabla_v g_1\pig(y,v\pig)\Big|_{v=u(y,p)}=0$ and $u_{\tau_i \xi}(s)= u\pig(y_{\tau_i \xi}(s),p_{\tau_i \xi}(s)\pig)$. Moreover, we have
		\begin{equation}
			D^\Psi_\xi p_{\tau_i \xi} (s)
			+\nabla_{yv}g_1\pig(y_{\tau_i \xi}(s),u_{\tau_i \xi}(s)\pig)
			D^\Psi_\xi y_{\tau_i \xi} (s)
			+\nabla_{vv}g_1\pig(y_{\tau_i \xi}(s),u_{\tau_i \xi}(s)\pig)
			D^\Psi_\xi u_{\tau_i \xi}(s)=0,
			\label{eq. 1st order J flow}
		\end{equation}
		inherited from differentiating the first order condition \eqref{eq. 1st order condition, equilibrium}.
		\label{lem. Existence of J flow, weak conv.}
	\end{lemma}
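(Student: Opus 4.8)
The plan is to exploit the uniform bounds of Lemma~\ref{lem. bdd of diff quotient} to extract weak limits of the difference quotients, to pass to the limit in the linear system \eqref{eq. diff quotient forward}--\eqref{eq. 1st order diff quotient}, and then to use a uniqueness argument for the resulting linear FBSDE to upgrade the subsequential weak convergence into the identification of the G\^ateaux derivative. First, by \eqref{bdd. diff quotient of y, p, q, u} the families $\Delta^{\epsilon}_\Psi y_{\tau_i\xi}$, $\Delta^{\epsilon}_\Psi p_{\tau_i\xi}$, $\Delta^{\epsilon}_\Psi u_{\tau_i\xi}$ are bounded in the Hilbert space $L^2_{\mathcal{W}_{\tau_i\xi\Psi}}(\tau_i,T;\mathcal{H})$ and $\Delta^{\epsilon}_\Psi q_{\tau_i\xi}$ is bounded in $\mathbb{H}_{\mathcal{W}_{\tau_i\xi\Psi}}[\tau_i,T]$, so along a common subsequence $\{\epsilon_{k_\ell}\}$ they converge weakly to some limits $D^\Psi_\xi y_{\tau_i\xi}$, $D^\Psi_\xi p_{\tau_i\xi}$, $D^\Psi_\xi u_{\tau_i\xi}$, $D^\Psi_\xi q_{\tau_i\xi}$. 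Since the adapted processes form a closed (hence weakly closed) subspace, these limits remain adapted to $\mathcal{W}_{\tau_i\xi\Psi}$.

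The core step is to pass to the limit in \eqref{eq. diff quotient forward}--\eqref{eq. 1st order diff quotient}. Every coefficient there is a $\theta$-averaged second derivative evaluated at $y^{\theta\epsilon}=y_{\tau_i\xi}+\theta\epsilon\Delta^\epsilon_\Psi y_{\tau_i\xi}$ and $u^{\theta\epsilon}=u_{\tau_i\xi}+\theta\epsilon\Delta^\epsilon_\Psi u_{\tau_i\xi}$. Because $\theta\epsilon\Delta^\epsilon_\Psi y_{\tau_i\xi}\to0$ and $\theta\epsilon\Delta^\epsilon_\Psi u_{\tau_i\xi}\to0$ in $L^2$ as $\epsilon\to0$, the joint continuity of the second derivatives in Assumptions~\textbf{(Ai)}, \textbf{(Aii)}, \textbf{(Bi)}, together with their uniform bounds \eqref{ass. bdd of D^2g1}, \eqref{ass. bdd of D^2g2}, \eqref{ass. bdd of D^2h1}, \eqref{ass. bdd of D dnu D g2} and dominated convergence, force these coefficient matrices to converge strongly (after passing to a further a.s.-convergent subsequence) to the corresponding Hessians evaluated along the fixed solution. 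Each product in \eqref{eq. diff quotient backward} is thus of weak-times-strong type, so it converges weakly to the product of the limits; the It\^o integral $\int_s^T\Delta^\epsilon_\Psi q_{\tau_i\xi}(\tau)\,dW_\tau$ converges weakly because the It\^o isometry makes stochastic integration a bounded operator, and the backward relation can be recovered pointwise in $s$ by taking conditional expectations $\mathbb{E}[\,\cdot\mid\mathcal{W}^s_{\tau_i\xi\Psi}]$ and using that conditional expectation is weakly continuous. In this way the limits satisfy the backward part of \eqref{eq. J flow of FBSDE}.

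The most delicate term is the nonlocal one, namely the $\theta$- and $\tau$-integral of $\widetilde{\mathbb{E}}\big[\nabla_{y'}\frac{d}{d\nu}\nabla_y g_2(y^{\theta\epsilon},\mathcal{L}(y^{\theta\epsilon}))(y')\big|_{y'=\widetilde{y^{\theta\epsilon}}}\,\widetilde{\Delta^\epsilon_\Psi y}\big]$. I would treat this on the product space carrying the independent copy: the kernel is bounded by $c_{g_2}$ uniformly in $\epsilon$ via \eqref{ass. bdd of D dnu D g2}, and converges strongly by the joint continuity of $\nabla_{y'}\frac{d}{d\nu}\nabla_y g_2$ together with $y^{\theta\epsilon}\to y_{\tau_i\xi}$, $\widetilde{y^{\theta\epsilon}}\to\widetilde{y_{\tau_i\xi}}$ and $\mathcal{L}(y^{\theta\epsilon})\to\mathcal{L}(y_{\tau_i\xi})$ in $\mathcal{W}_2$, while the independent copy $\widetilde{\Delta^\epsilon_\Psi y}$ converges weakly to $\widetilde{D^\Psi_\xi y_{\tau_i\xi}}$; a weak-times-strong pairing on the enlarged space then yields the corresponding limit term in \eqref{eq. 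J flow of FBSDE}. Finally, letting $\epsilon\to0$ in \eqref{eq. 1st order diff quotient} gives \eqref{eq. 1st order J flow}, and solving it for $D^\Psi_\xi u_{\tau_i\xi}=[\nabla_y u]D^\Psi_\xi y_{\tau_i\xi}+[\nabla_p u]D^\Psi_\xi p_{\tau_i\xi}$ (using the identities for $\nabla_y u$, $\nabla_p u$ obtained from \eqref{eq. diff 1st order condition}) and substituting into the limit of \eqref{eq. diff quotient forward} produces exactly the forward equation in \eqref{eq. J flow of FBSDE}.

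It remains to prove uniqueness of the linear FBSDE \eqref{eq. J flow of FBSDE}--\eqref{eq. 1st order J flow}, which is both needed to finish the lemma and where the structural hypotheses re-enter. Taking the difference of two solutions, which satisfies the homogeneous system with $\Psi=0$, I would apply It\^o's lemma to $\langle D^\Psi_\xi p_{\tau_i\xi}(s),D^\Psi_\xi y_{\tau_i\xi}(s)\rangle_{\mathbb{R}^d}$, exactly as in the uniqueness part of Lemma~\ref{lem. bdd of y p q u}, and invoke Assumptions~\eqref{ass. convexity of g1}, \eqref{ass. convexity of g2}, \eqref{ass. bdd of D dnu D g2}, \eqref{ass. convexity of h} together with \eqref{ass. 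Cii} of \textbf{(Cii)} to force $D^\Psi_\xi u_{\tau_i\xi}\equiv0$, whence $D^\Psi_\xi y_{\tau_i\xi}\equiv0$ and then $D^\Psi_\xi p_{\tau_i\xi}\equiv D^\Psi_\xi q_{\tau_i\xi}\equiv0$. Because the weak limit is therefore independent of the chosen subsequence, the whole family of difference quotients converges weakly, which identifies the limits as the (weak) G\^ateaux derivatives and completes the proof. The main obstacle is the passage to the limit in the nonlocal measure-derivative term, where one must carefully combine strong convergence of the bounded $L$-derivative kernel with the weak convergence of the independent copy $\widetilde{\Delta^\epsilon_\Psi y}$ on the enlarged probability space; the matching of the pointwise-in-$s$ weak limits with the $L^2$-in-time weak limit is the accompanying technical subtlety.
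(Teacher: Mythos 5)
Your proposal is correct and follows essentially the same route as the paper's proof: uniform bounds from Lemma \ref{lem. bdd of diff quotient} plus Banach--Alaoglu to extract weak limits, passage to the limit in the linearized system by combining strong (a.e., via Borel--Cantelli and dominated convergence) convergence of the bounded, continuous coefficient matrices with weak convergence of the difference quotients — including the same weak-times-strong treatment of the nonlocal $L$-derivative term on the enlarged space — and finally uniqueness of the linear FBSDE \eqref{eq. J flow of FBSDE} via It\^o's formula applied to $\langle D^\Psi_\xi p, D^\Psi_\xi y\rangle$, the convexity assumptions and \eqref{ass. Cii}, which makes the weak limit subsequence-independent and identifies it with the Jacobian flow. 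The only presentational difference is that the paper telescopes explicitly and tests against $L^\infty_{\mathcal{W}_{\tau_i\xi\Psi}}$ random variables where you invoke conditional expectations and bounded-operator weak continuity, but these are the same argument.
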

	\begin{proof}
		Same as before, we omit the subscripts $\tau_i \xi$ in the processes. By Lemma \ref{lem. bdd of diff quotient}, the processes $\Delta^\epsilon_\Psi y (s)$, $\Delta^\epsilon_\Psi p (s)$ and $\Delta^\epsilon_\Psi u (s)$ are uniformly bounded in $L^\infty_{\mathcal{W}_{\tau_i \xi \Psi}}(\tau_i,T;\mathcal{H})$ for any $\epsilon>0$, then by Banach-Alaoglu theorem, these difference quotient processes converge to the respective weak limits $\mathscr{D} y (s)$, $\mathscr{D} p (s)$ and $\mathscr{D} u (s)$ in $L^2_{\mathcal{W}_{\tau_i \xi \Psi}}(\tau_i,T;\mathcal{H})$, as $\epsilon \to 0$ along a subsequence, called it $\{{\epsilon}_{\ell}^{(1)}\}^\infty_{\ell=1}$. To avoid notational cumbersome, we use $\epsilon$ to denote a generic element in $\{{\epsilon}_{\ell}^{(1)}\}^\infty_{\ell=1}$.
		
		\noindent {\bf Step 1. Weak Convergence of $\Delta^\epsilon_\Psi y(s)$ :}\\
		Equation (\ref{eq. diff quotient forward}) becomes, as $\epsilon \to 0$ along the subsequence, 
		\begin{equation*}
			\Delta^\epsilon_\Psi y(s) \longrightarrow \mathscr{D} y(s)
			=\, \Psi
			+\displaystyle\int_{\tau_i}^{s}
			\mathscr{D} u(\tau)  d\tau
			\h{20pt}\text{weakly in $L^2_{\mathcal{W}_{\tau_i \xi \Psi}}(\tau_i,T;\mathcal{H})$,}
		\end{equation*}
		\begin{equation}
		\text{and}\h{10pt}	\Delta^\epsilon_\Psi y(T) \longrightarrow \mathscr{D} y(T)
			=\, \Psi
			+\displaystyle\int_{\tau_i}^{T}
			\mathscr{D} u(\tau)  d\tau
			\h{20pt}\text{weakly in $L^2(\Omega,\mathcal{W}^{T}_{0\xi\Psi},\mathbb{P};\mathbb{R}^d)$.}
			\label{conv. Delta y(T) to Dy(T) weakly}
		\end{equation}
		
		\noindent {\bf Step 2. Weak Convergence of $\Delta^\epsilon_\Psi u(s)$ :}\\
		We use $y^{\theta\epsilon}(s)$, $p^{\theta\epsilon}(s)$ and $u^{\theta\epsilon}(s)$ to denote the processes $y(s)+\theta\epsilon \Delta^\epsilon_\Psi y (s)$, $p(s)+\theta\epsilon \Delta^\epsilon_\Psi p (s)$  and $u(s)+\theta\epsilon \Delta^\epsilon_\Psi u (s)$. The difference quotient process $\Delta^\epsilon_\Psi u(s) $ can be expressed as 
		\begin{equation}
			\begin{aligned}
				\Delta^\epsilon_\Psi u(s)  
				=\,&\int^1_0\Big[ \nabla_y u\pig(y^{\theta\epsilon}(s),p^{\theta\epsilon}(s)\pig)\Big] 
				\Big[\Delta^\epsilon_\Psi y(s) \Big]
				+\Big[\nabla_p  u\pig(y^{\theta\epsilon}(s),p^{\theta\epsilon}(s)\pig)\Big] 
				\Big[\Delta^\epsilon_\Psi p(s)\Big] d \theta\\
				=\,&\Big[ \nabla_y u\pig(y(s),p(s)\pig)\Big] 
				\Big[\Delta^\epsilon_\Psi y(s) \Big]
				+\Big[\nabla_p  u\pig(y(s),p(s)\pig)\Big] 
				\Big[\Delta^\epsilon_\Psi p(s)\Big] \\
				&+\int^1_0 \Big[ \nabla_y u\pig(y^{\theta\epsilon}(s),p^{\theta\epsilon}(s)\pig)
				-\nabla_y u\pig(y(s),p(s)\pig)\Big] 
				\Big[\Delta^\epsilon_\Psi y(s) \Big]\\
				&\h{30pt}+\Big[\nabla_p  u\pig(y^{\theta\epsilon}(s),p^{\theta\epsilon}(s)\pig)
				-\nabla_p  u\pig(y(s),p(s)\pig)\Big] 
				\Big[\Delta^\epsilon_\Psi p(s)\Big]d\theta.
			\end{aligned}
			\label{eq. Delta u = int finite diff}
		\end{equation}
		Let $\varphi \in L^\infty_{\mathcal{W}_{\tau_i \xi \Psi}}(\tau_i,T;\mathcal{H})$ be the pointwisely bounded test random variable, the last two lines in \eqref{eq. Delta u = int finite diff} can be estimated by using Lemma \ref{lem. bdd of diff quotient}:
		\begin{align}
			&\mbox{\fontsize{9.5}{10}\selectfont\(
				\bigg|\displaystyle\int^T_{\tau_i}\bigg\langle
				\displaystyle\int^1_0 \Big[ \nabla_y u\pig(y^{\theta\epsilon}(s),p^{\theta\epsilon}(s)\pig)
				-\nabla_y u\pig(y(s),p(s)\pig)\Big] \Delta^\epsilon_\Psi y(s)
				+\Big[\nabla_p  u\pig(y^{\theta\epsilon}(s),p^{\theta\epsilon}(s)\pig)
				-\nabla_p  u\pig(y(s),p(s)\pig)\Big] 
				\Delta^\epsilon_\Psi p(s)d\theta,\varphi\bigg\rangle_\mathcal{H}ds\bigg|\)}\nonumber\\
			&\leq \|\varphi\|_{L^\infty}
			\displaystyle\int^T_{\tau_i}\int^1_0  \Big\| \nabla_y u\pig(y^{\theta\epsilon}(s),p^{\theta\epsilon}(s)\pig)
			-\nabla_y u\pig(y(s),p(s)\pig)\Big\|_\mathcal{H}\Big\|\Delta^\epsilon_\Psi y(s)\Big\|_\mathcal{H}\nonumber\\
			&\h{100pt}+\Big\|\nabla_p  u\pig(y^{\theta\epsilon}(s),p^{\theta\epsilon}(s)\pig)
			-\nabla_p  u\pig(y(s),p(s)\pig)\Big\|_{\mathcal{H}}
			\Big\|\Delta^\epsilon_\Psi p(s)\Big\|_\mathcal{H} d\theta ds\nonumber\\
			&\leq  \mbox{\fontsize{10.2}{10}\selectfont\(C_4' \|\Psi\|_\mathcal{H} \|\varphi\|_{L^\infty}
				\displaystyle\int^T_{\tau_i}\int^1_0  \Big\| \nabla_y u\pig(y^{\theta\epsilon}(s),p^{\theta\epsilon}(s)\pig)
				-\nabla_y u\pig(y(s),p(s)\pig)\Big\|_\mathcal{H}
				+\Big\|\nabla_p  u\pig(y^{\theta\epsilon}(s),p^{\theta\epsilon}(s)\pig)
				-\nabla_p  u\pig(y(s),p(s)\pig)\Big\|_{\mathcal{H}} d\theta ds\)}.
			\label{2809}
		\end{align}
		The definition of $y^{\theta\epsilon}(s)$ and Lemma \ref{lem. bdd of diff quotient} deduce that
		\begin{align}
			\int^T_{\tau_i}\int^1_0\big\|y^{\theta\epsilon}(s) - y(s)\big\|_{\mathcal{H}}d\theta ds
			= \int^T_{\tau_i}\int^1_0 \big\|\theta\epsilon \Delta^\epsilon_\Psi y (s)\big\|_{\mathcal{H}}d\theta ds
			\leq \dfrac{\epsilon C_4'T}{2} \|\Psi\|_{\mathcal{H}} \longrightarrow 0 \h{20pt}
			\text{ as $\epsilon \to 0$;}
			\label{conv. int int |y^theta-y| ds to 0}
		\end{align}
		using similar argument; the same convergence holds for $p^{\theta\epsilon}(\cdot) - p(\cdot)$ and $u^{\theta\epsilon}(\cdot) - u(\cdot)$. Borel-Cantelli lemma shows that we can extract another subsequence from $\{{\epsilon}_{\ell}^{(2)}\}^\infty_{\ell=1}$, still called it $\epsilon$, such that
		\begin{align}
			y^{\theta\epsilon}(s) - y(s) \h{1pt},\h{3pt}
			p^{\theta\epsilon}(s) - p(s) \h{1pt},\h{3pt}
			u^{\theta\epsilon}(s) - u(s)\to 0 \h{5pt}
			\text{for a.e. $\theta \in [0,1]$, $s \in [\tau_i,T]$ and $\mathbb{P}$-a.s. as $\epsilon \to 0$.}\h{-5pt}
			\label{conv. y theta -y, u theta -u, p theta -p}
		\end{align}
		Since $\nabla_y u$ and $\nabla_p u$ are continuous, we see that
		\begin{align}
			\nabla_{y}u\pig(y^{\theta\epsilon}(s),p^{\theta\epsilon}(s)\pig)-\nabla_{y}u\pig(y(s),p(s)\pig)\h{1pt},\h{5pt} 
			\nabla_{p}u\pig(y^{\theta\epsilon}(s),p^{\theta\epsilon}(s)\pig)-\nabla_{p}u\pig(y(s),p(s)\pig)
			\longrightarrow 0,
			\label{conv. of Dx u-Dx u and Dp u-Dp u}
		\end{align}
		for a.e. $\theta \in [0,1]$, $s \in [\tau_i,T]$ and $\mathbb{P}$-a.s. as $\epsilon \to 0$ up to this subsequence. As $u(y,p)$ solves $p+\nabla_v g_1\pig(y,v\pig)\Big|_{v=u(y,p)}=0$ implicitly, it is obtained that 
		\begin{align}
			\nabla_y u(y,p) = -\left[\nabla_{vv} g_1\pig(y,v\pig)\right]^{-1}\nabla_{yv} g_1\pig(y,v\pig)\Big|_{v=u(y,p)} \h{7pt}\text{and}\h{7pt}
			\nabla_p u(y,p) = -\left[\nabla_{vv} g_1\pig(y,v\pig)\Big|_{v=u(y,p)}\right]^{-1}.\h{-5pt}
			\label{eq. diff 1st order condition with inverse}
		\end{align}
		\eqref{ass. bdd of D^2g1} and \eqref{ass. convexity of g1} of Assumptions {\bf (Avii)} and {\bf (Ax)} deduce that
		\begin{align}
			\pig|\nabla_y u(y,p)\pig| \leq \dfrac{C_{g_1}}{\Lambda_{g_1}} \h{10pt}\text{and}\h{10pt}
			\pig|\nabla_p u(y,p)\pig| \leq \dfrac{1}{\Lambda_{g_1}},
			\h{20pt}
			\text{ for any $y,p \in \mathbb{R}^d$.}
			\label{bdd. Dx u and Dp u}
		\end{align}
		The convergence in \eqref{conv. of Dx u-Dx u and Dp u-Dp u}, the boundedness of \eqref{bdd. Dx u and Dp u}, the estimate in \eqref{2809} and the Lebesgue dominated convergence theorem altogether show that \eqref{eq. Delta u = int finite diff} converges to
		\begin{align}
			\Delta^\epsilon_\Psi u(s) \longrightarrow \mathscr{D}u(s) = \Big[ \nabla_y u\pig(y(s),p(s)\pig)\Big] 
			\mathscr{D} y(s)
			+\Big[\nabla_p  u\pig(y(s),p(s)\pig)\Big] 
			\mathscr{D} p(s)
			\h{20pt}\text{weakly in $L^2_{\mathcal{W}_{\tau_i \xi \Psi}}(\tau_i,T;\mathcal{H})$.}
			\label{eq. conv. Du = Dy + Dp}
		\end{align}
		
		\noindent {\bf Step 3. Weak Convergences of $\Delta^\epsilon_\Psi p(s)$ and $\Delta^\epsilon_\Psi q(s)$ :}\\
		The process $\Delta^\epsilon_\Psi q_{\tau_i \xi}(s)$ is uniformly bounded for any $\epsilon > 0$ in $L^2_{\mathcal{W}_{\tau_i \xi \Psi}}(\tau_i,T;\mathcal{H})$ by Lemma \ref{lem. bdd of diff quotient}, then it converges to the weak limits $\mathscr{D} q(s)$, up to the subsequence $\{{\epsilon}_{\ell}^{(3)}\}^\infty_{\ell=1}$\mycomment{change all the subseq.}. We rewrite the right hand side of (\ref{eq. diff quotient backward}) by augmenting the pointwisely bounded test random variable $\varphi \in L^\infty_{\mathcal{W}_{\tau_i \xi \Psi}}(\tau_i,T;\mathcal{H})$ under the inner product, after telescoping, 
		\begin{align}
			&\left\langle\int_{0}^{1}\nabla_{yy}h_1\pig(y^{\theta\epsilon}(T)\pig)\Delta^\epsilon_\Psi y(T)d\theta,
			\varphi\right\rangle_{\mathcal{H}}\nonumber\\
			&\h{10pt}+\bigg\langle\int^T_s\int_{0}^{1}\nabla_{yy}g_1\pig(y^{\theta\epsilon}(\tau),u^{\theta\epsilon}(\tau)\pig)\Delta^\epsilon_\Psi y (\tau)
			+\nabla_{vy}g_1\pig(y^{\theta\epsilon}(\tau),
			u^{\theta\epsilon}(\tau)\pig)\Delta^\epsilon_\Psi u(\tau)d\theta d\tau ,\varphi\bigg\rangle_{\mathcal{H}}\nonumber\\
			&\h{10pt}+\bigg\langle\int^T_s\int^1_0 \nabla_{yy}g_2\pig(y^{\theta\epsilon}(\tau),\mathcal{L}\big(y^{\theta\epsilon}(\tau)\big)\pig) 
			\Delta^\epsilon_\Psi y (\tau) d \theta d\tau,\varphi\bigg\rangle_{\mathcal{H}}\nonumber\\
			&\h{10pt}+\bigg\langle\int^T_s\int^1_0 \widetilde{\mathbb{E}}
			\left[\nabla_{y'}\dfrac{d}{d\nu}\nabla_{y}g_2
			\pig(y^{\theta\epsilon}(\tau),\mathcal{L}\big(y^{\theta\epsilon}(\tau)\big)\pig)  (y')\bigg|_{y'=\widetilde{y^{\theta\epsilon}} (\tau)} 
			\widetilde{\Delta^\epsilon_\Psi y} (\tau)\right]
			d\theta d\tau,\varphi\bigg\rangle_{\mathcal{H}}\nonumber\\
			&=\left\langle\nabla_{yy}h_1\pig(y(T)\pig)\Delta^\epsilon_\Psi y(T),
			\varphi\right\rangle_{\mathcal{H}}\nonumber\\
			&\h{10pt}+\left\langle\int_{0}^{1}\Big[\nabla_{yy}h_1\pig(y^{\theta\epsilon}(T)\pig)-\nabla_{yy}h_1\pig(y(T)\pig)
			\Big]\Delta^\epsilon_\Psi y(T)d\theta,
			\varphi\right\rangle_{\mathcal{H}}\label{term 1 in RHS of <Delta p,phi>}\\
			&\h{10pt}+\bigg\langle\int^T_s
			\nabla_{yy}g_1\pig(y(\tau),u(\tau)\pig)\Delta^\epsilon_\Psi y (\tau)
			+\nabla_{vy}g_1\pig(y(\tau),
			u(\tau)\pig)\Delta^\epsilon_\Psi u(\tau) d\tau ,\varphi\bigg\rangle_{\mathcal{H}}\nonumber\\
			&\h{10pt}+\bigg\langle\int^T_s\int_{0}^{1}\Big[\nabla_{yy}g_1\pig(y^{\theta\epsilon}(\tau),u^{\theta\epsilon}(\tau)\pig)
			-\nabla_{yy}g_1\pig(y(\tau),u(\tau)\pig)\Big]
			\Delta^\epsilon_\Psi y (\tau)\nonumber\\
			&\h{70pt}+\Big[\nabla_{vy}g_1\pig(y^{\theta\epsilon}(\tau),
			u^{\theta\epsilon}(\tau)\pig)
			-\nabla_{vy}g_1\pig(y(\tau),u(\tau)\pig)\Big]
			\Delta^\epsilon_\Psi u(\tau)d\theta d\tau ,\varphi\bigg\rangle_{\mathcal{H}}\label{term 2 in RHS of <Delta p,phi>}\\
			&\h{10pt}+\bigg\langle\int^T_s\int^1_0 \nabla_{yy}g_2\pig(y(\tau),\mathcal{L}\big(y(\tau)\big)\pig) 
			\Delta^\epsilon_\Psi y (\tau) d \theta d\tau,\varphi\bigg\rangle_{\mathcal{H}}\nonumber\\
			&\h{10pt}+\bigg\langle\int^T_s\int^1_0 \Big[\nabla_{yy}g_2\pig(y^{\theta\epsilon}(\tau),\mathcal{L}\big(y^{\theta\epsilon}(\tau)\big)\pig) 
			- \nabla_{yy}g_2\pig(y(\tau),\mathcal{L}\big(y(\tau)\big)\pig) \Big]
			\Delta^\epsilon_\Psi y (\tau) d \theta d\tau,\varphi\bigg\rangle_{\mathcal{H}}\label{term 3 in RHS of <Delta p,phi>}\\
			&\h{10pt}+\bigg\langle\int^T_s \widetilde{\mathbb{E}}
			\left[\nabla_{y'}\dfrac{d}{d\nu}\nabla_{y}g_2
			\pig(y(\tau),\mathcal{L}\big(y(\tau)\big)\pig)  (y')\bigg|_{y'=\widetilde{y} (\tau)} 
			\widetilde{\Delta^\epsilon_\Psi y} (\tau)\right]
			d\tau,\varphi\bigg\rangle_{\mathcal{H}}\nonumber\\
			&\h{10pt}+\mbox{\fontsize{8}{10}\selectfont\(
				\bigg\langle\displaystyle\int^T_s\int^1_0 \widetilde{\mathbb{E}}
				\left[\left(\nabla_{y'}\dfrac{d}{d\nu}\nabla_{y}g_2
				\pig(y^{\theta\epsilon}(\tau),\mathcal{L}\big(y^{\theta\epsilon}(\tau)\big)\pig)  (y')\bigg|_{y'=\widetilde{y^{\theta\epsilon}} (\tau)} 
				\h{-5pt}-\nabla_{y'}\dfrac{d}{d\nu}\nabla_{y}g_2
				\pig(y(\tau),\mathcal{L}\big(y(\tau)\big)\pig)  (y')\bigg|_{y'=\widetilde{y} (\tau)} \right)
				\widetilde{\Delta^\epsilon_\Psi y} (\tau)\right]
				d\theta d\tau,\varphi\bigg\rangle_{\mathcal{H}}\)}.
			\label{term 4 in RHS of <Delta p,phi>}
		\end{align}
		Next, we show that the terms in \eqref{term 1 in RHS of <Delta p,phi>}-\eqref{term 4 in RHS of <Delta p,phi>} converge to zero as $\epsilon \to 0$, up to the subsequence of $\{{\epsilon}_{\ell}^{(3)}\}^\infty_{\ell=1}$. First, the term in \eqref{term 1 in RHS of <Delta p,phi>} can be estimated by
		\begin{align}
			&\left|\left\langle\int_{0}^{1}\Big[\nabla_{yy}h_1\pig(y^{\theta\epsilon}(T)\pig)-\nabla_{yy}h_1\pig(y(T)\pig)
			\Big]\Delta^\epsilon_\Psi y(T)d\theta,
			\varphi\right\rangle_{\mathcal{H}}\right|\nonumber\\
			&\leq C_4'\|\varphi\|_{L^\infty}\big\|\Psi\big\|_{\mathcal{H}}
			\int_{0}^{1}\Big\|\nabla_{yy}h_1\pig(y^{\theta\epsilon}(T)\pig)-\nabla_{yy}h_1\pig(y(T)\pig)
			\Big\|_{\mathcal{H}}d\theta,
			\label{ineq. |<int D^2 h1 -D^2 h1 ,phi>|}
		\end{align}
		where we have used Lemma \ref{lem. bdd of diff quotient}. We utilize the definition of $y^{\theta\epsilon}(s)$ and Lemma \ref{lem. bdd of diff quotient} to get
		\begin{align}
			\int^1_0\big\|y^{\theta\epsilon}(T) - y(T)\big\|_{\mathcal{H}}d\theta 
			= \int^1_0 \big\|\theta\epsilon \Delta^\epsilon_\Psi y (T)\big\|_{\mathcal{H}}d\theta 
			\leq \dfrac{\epsilon C_4'}{2} \|\Psi\|_{\mathcal{H}} \longrightarrow 0.
			\label{conv. int |y theta-y| to 0}
		\end{align}
		Borel-Cantelli lemma shows that we can extract a subsequence $\{{\epsilon}_{\ell}^{(4)}\}^\infty_{\ell=1}$ from $\{{\epsilon}_{\ell}^{(3)}\}^\infty_{\ell=1}$ such that
		\begin{align*}
			y^{\theta\epsilon}(T) - y(T)  \longrightarrow 0 \h{20pt}
			\text{ for a.e. $\theta \in [0,1]$ and $\mathbb{P}$-a.s. as $\epsilon \to 0$.}
		\end{align*}
		Since $\nabla_{yy}h_1$ is continuous by \eqref{ass. cts and diff of h1} Assumption {\bf (Bi)}, we see that
		\begin{align}
			\nabla_{yy}h_1\pig(y^{\theta\epsilon}(T)\pig)-\nabla_{yy}h_1\pig(y(T)\pig) \longrightarrow 0 
			\h{10pt}
			\text{ for a.e. $\theta \in [0,1]$ and $\mathbb{P}$-a.s. as $\epsilon \to 0$ up to the subsequence $\{{\epsilon}_{\ell}^{(4)}\}^\infty_{\ell=1}$.}
			\label{conv. D^2_x h(y theta) - D^2_x h(y)}
		\end{align}
		As $\nabla_{yy}h_1$ is bounded by \eqref{ass. bdd of D^2h1} of Assumption {\bf (Bv)}, the Lebesgue dominated convergence concludes that
		\begin{align}
			\int_{0}^{1}\Big\|\nabla_{yy}h_1\pig(y^{\theta\epsilon}(T)\pig)-\nabla_{yy}h_1\pig(y(T)\pig)
			\Big\|_{\mathcal{H}}d\theta \longrightarrow 0 
			\h{20pt}
			\text{as $\epsilon \to 0$ up to the subsequence $\{{\epsilon}_{\ell}^{(4)}\}^\infty_{\ell=1}$.}
			\label{conv. D^2 h1-D^2 h1 to 0}
		\end{align}
		Putting \eqref{conv. D^2 h1-D^2 h1 to 0} into \eqref{ineq. |<int D^2 h1 -D^2 h1 ,phi>|}, we see that \eqref{term 1 in RHS of <Delta p,phi>} converges to 0 as $\epsilon \to 0$ up to the subsequence $\{{\epsilon}_{\ell}^{(4)}\}^\infty_{\ell=1}$. The term in \eqref{term 2 in RHS of <Delta p,phi>} can be estimated by Lemma \ref{lem. bdd of diff quotient}
		\begin{align}
			&\bigg|\bigg\langle\int^T_s\int_{0}^{1}\Big[\nabla_{yy}g_1\pig(y^{\theta\epsilon}(\tau),u^{\theta\epsilon}(\tau)\pig)
			-\nabla_{yy}g_1\pig(y(\tau),u(\tau)\pig)\Big]
			\Delta^\epsilon_\Psi y (\tau)\nonumber\\
			&\h{200pt}+\Big[\nabla_{vy}g_1\pig(y^{\theta\epsilon}(\tau),
			u^{\theta\epsilon}(\tau)\pig)
			-\nabla_{vy}g_1\pig(y(\tau),u(\tau)\pig)\Big]
			\Delta^\epsilon_\Psi u(\tau)d\theta d\tau ,\varphi\bigg\rangle_{\mathcal{H}}\bigg|\nonumber\\
			&\leq C_4'\|\varphi\|_{L^\infty}\big\|\Psi\big\|_{\mathcal{H}}
			\int^T_{\tau_i}\int_{0}^{1}\Big\|\nabla_{yy}g_1\pig(y^{\theta\epsilon}(\tau),u^{\theta\epsilon}(\tau)\pig)
			-\nabla_{yy}g_1\pig(y(\tau),u(\tau)\pig)
			\Big\|_{\mathcal{H}} d\tau d\theta\nonumber\\
			&\h{10pt}+C_4'\|\varphi\|_{L^\infty}\big\|\Psi\big\|_{\mathcal{H}}
			\int^T_{\tau_i}\int_{0}^{1}\Big\|\nabla_{vy}g_1\pig(y^{\theta\epsilon}(\tau),u^{\theta\epsilon}(\tau)\pig)
			-\nabla_{vy}g_1\pig(y(\tau),u(\tau)\pig)
			\Big\|_{\mathcal{H}} d\tau d\theta.
			\label{ineq. for term 2 diff quotient}
		\end{align}
		Since $\nabla_{yy}g_1$ and $\nabla_{vy}g_1$ are continuous by \eqref{ass. cts and diff of g1} of Assumption {\bf (Ai)}, the convergences in \eqref{conv. y theta -y, u theta -u, p theta -p} points out that as $\epsilon \to 0$ up to the subsequence $\{{\epsilon}_{\ell}^{(4)}\}^\infty_{\ell=1}$,
		\begin{align*}
			\nabla_{yy}g_1\pig(y^{\theta\epsilon}(s),u^{\theta\epsilon}(s)\pig)-\nabla_{yy}g_1\pig(y(s),u(s)\pig)\h{1pt},\h{5pt} 
			\nabla_{vy}g_1\pig(y^{\theta\epsilon}(s),u^{\theta\epsilon}(s)\pig)-\nabla_{vy}g_1\pig(y(s),u(s)\pig)
			\longrightarrow 0,
		\end{align*}
		for a.e. $\theta \in [0,1]$, $s \in [\tau_i,T]$ and $\mathbb{P}$-a.s.. As $\nabla_{yy}g_1$ and  $\nabla_{vy}g_1$ are bounded by \eqref{ass. bdd of D^2g1} of Assumption {\bf (Avii)}, the Lebesgue dominated convergence concludes that \eqref{ineq. for term 2 diff quotient} tends to zero as as $\epsilon \to 0$ up to the subsequence $\{{\epsilon}_{\ell}^{(4)}\}^\infty_{\ell=1}$, which further implies that \eqref{term 2 in RHS of <Delta p,phi>} tends to zero. For the term in \eqref{term 3 in RHS of <Delta p,phi>}, we see that by definition,
		$$ \int^T_{\tau_i}\int^1_0 \mathcal{W}_2(\mathcal{L}\pig(y^{\theta\epsilon}(s)\pig),\mathcal{L}(y(s))\pig) d \theta ds
		\leq\int^T_{\tau_i} \int^1_0 \pig\| y^{\theta\epsilon}(s) - y(s) \pigr\|_{\mathcal{H}}d \theta ds \longrightarrow 0 \h{20pt}
		\text{ as $\epsilon \to 0$,}$$
		which implies 
		\begin{align*}
			\mathcal{W}_2\pig(\mathcal{L}(y^{\theta\epsilon}(s)),\mathcal{L}(y(s))\pig) \longrightarrow 0 \h{20pt}
			\text{ for a.e. $\theta \in [0,1]$, $s \in [\tau_i,T]$ and $\mathbb{P}$-a.s. as $\epsilon \to 0$,}
		\end{align*} 
		up to the subsequence $\{{\epsilon}_{\ell}^{(5)}\}^\infty_{\ell=1}\subset \{{\epsilon}_{\ell}^{(4)}\}^\infty_{\ell=1}$ by Borel-Cantelli lemma. Therefore, with the aid of the boundedness of $\nabla_{yy} g_2$ in \eqref{ass. bdd of D^2g2} Assumption {\bf (Aviii)}, the continuity of $\nabla_{yy} g_2$ in \eqref{ass. cts and diff of g2} of Assumption {\bf (Aii)}, the convergence in \eqref{conv. y theta -y, u theta -u, p theta -p} and the Lebesgue dominated convergence, we can also conclude that the term in \eqref{term 3 in RHS of <Delta p,phi>} converges to zero as $\epsilon \to 0$ up to the subsequence $\{{\epsilon}_{\ell}^{(5)}\}^\infty_{\ell=1}$. We apply similar manner to prove that the term in \eqref{term 4 in RHS of <Delta p,phi>} converges to zero as $\epsilon \to 0$ up to the subsequence $\{{\epsilon}_{\ell}^{(5)}\}^\infty_{\ell=1}$ with the use of the boundedness and continuity of $\nabla_{y'} \dfrac{d}{d\nu}\nabla_y g_2(y,\mathbb{L})(x')$ in \eqref{ass. cts and diff of g2}, \eqref{ass. bdd of D dnu D g2} of Assumptions {\bf (Aii)}, {\bf (Aix)}, respectively. The convergence of all the terms of \eqref{term 1 in RHS of <Delta p,phi>}-\eqref{term 4 in RHS of <Delta p,phi>}, the weak convergence of $\Delta^\epsilon_\Psi y(s)$, $\Delta^\epsilon_\Psi u(s)$ and It\^o's isometry deduce that
		\fontsize{10.5pt}{11pt}
		\begin{align*}
			\Delta^\epsilon_\Psi p(s) \longrightarrow \mathscr{D} p(s)
			=\,&\nabla_{yy} h_1(y(T))\mathscr{D} y(T)
			+\int^T_s\nabla_{yy}g_1\pig(y(\tau),u(\tau) \pig)\mathscr{D} y (\tau) d\tau
			+\int^T_s\nabla_{vy}g_1\pig(y(\tau),u(\tau) \pig)
			\mathscr{D} u(\tau) d\tau\nonumber\\
			&+\int^T_s \nabla_{yy}g_2\pig(y(\tau),\mathcal{L}\big(y(\tau)\big)\pig) 
			\mathscr{D} y (\tau) d\tau\\
			&+\int^T_s\widetilde{\mathbb{E}}
			\left[\nabla_{y'}\dfrac{d}{d\nu}\nabla_{y}g_2\pig(y(\tau),\mathcal{L}\big(y(\tau)\big)\pig)  (y')\bigg|_{y'= \widetilde{y} (\tau)} 
			\widetilde{\mathscr{D} y} (\tau)\right]
			d\tau-\int^T_s\mathscr{D} q(\tau)dW_\tau,
		\end{align*}\normalsize
		weakly in $L^2_{\mathcal{W}_{\tau_i \xi \Psi}}(\tau_i,T;\mathcal{H})$. In particular, we also have the following weak convergence in $L^2(\Omega,\mathcal{W}^{\tau_{i}}_{0\xi\Psi},\mathbb{P};\mathbb{R}^d)$:
		\fontsize{10.5pt}{11pt}
		\begin{align}
			\Delta^\epsilon_\Psi p(\tau_i) \longrightarrow \mathscr{D} p(\tau_i)
			=\,&\nabla_{yy} h_1(y(T))\mathscr{D} y(T)
			+\int^T_{\tau_i}\nabla_{yy}g_1\pig(y(\tau),u(\tau) \pig)\mathscr{D} y (\tau) d\tau
			+\int^T_{\tau_i}\nabla_{vy}g_1\pig(y(\tau),u(\tau) \pig)
			\mathscr{D} u(\tau) d\tau\nonumber\\
			&+\int^T_{\tau_i} \nabla_{yy}g_2\pig(y(\tau),\mathcal{L}\big(y(\tau)\big)\pig) 
			\mathscr{D} y (\tau) d\tau\nonumber\\
			&+\int^T_{\tau_i}\widetilde{\mathbb{E}}
			\left[\nabla_{y'}\dfrac{d}{d\nu}\nabla_{y}g_2\pig(y(\tau),\mathcal{L}\big(y(\tau)\big)\pig)  (y')\bigg|_{y'= \widetilde{y} (\tau)} 
			\widetilde{\mathscr{D} y} (\tau)\right]
			d\tau
			-\int^T_{\tau_i}\mathscr{D} q(\tau)dW_\tau.
			\label{conv. weak conv. of Delta p(tau_i)}
		\end{align}\normalsize
		
		\noindent {\bf Step 4. Equations Satisfied by the Weak Limits :}\\
		We see that (\ref{eq. J flow of FBSDE}) is a linear system in the unknown process $\pig(D^\Psi_\xi y (s),D^\Psi_\xi p (s),D^\Psi_\xi q (s)\pig)$ and thus its solution can be shown to be unique, where this claim will be proven below. Taking this for granted, we see that the solution of (\ref{eq. J flow of FBSDE}) equals the weak limit of $\pig( \Delta^\epsilon_\Psi y (s),
		\Delta^\epsilon_\Psi p (s),
		\Delta^\epsilon_\Psi q (s)\pig)$. As a consequence, it implies that the weak limit of $\pig( \Delta^\epsilon_\Psi y (s),
		\Delta^\epsilon_\Psi p (s),
		\Delta^\epsilon_\Psi q (s)\pig)$ along any subsequence as $\epsilon \to 0$ is the same, if it exists. Therefore, we can now identify  $\pig( \mathscr{D} y (s),
		\mathscr{D} p (s),
		\mathscr{D} q (s)\pig)$ with $\pig( D^\Psi_\xi y (s),
		D^\Psi_\xi p (s), D^\Psi_\xi q (s) \pig)$ which is the Jacobian flow (G\^{a}teaux derivatives with respect to initial condition $\xi$) indeed.

		\noindent {\bf Uniqueness of (\ref{eq. J flow of FBSDE}):}\\
		For if we have two solutions with the same initial condition $\Psi$ to (\ref{eq. J flow of FBSDE}), then it is enough to show that the differences between them are identically zero. Since the system (\ref{eq. J flow of FBSDE}) is linear, it is sufficient  to show that any solution $\pig( \mathscr{D} y^* (s),
		\mathscr{D} p^* (s),
		\mathscr{D} q^*(s)\pig)$ to (\ref{eq. J flow of FBSDE}) with a zero initial data, that is,
		\small\begin{equation}
			\h{-10pt}\left\{
			\begin{aligned}
				\mathscr{D} y^*  (s)
				=\,& 
				\displaystyle\int_{t}^{s}
				\mathscr{D} u^*(\tau)d\tau;\\
				\mathscr{D} p^* (s)
				=\,&\nabla_{yy}h_1(y(T))\mathscr{D} y^*(T)
				+\displaystyle\int^T_s\nabla_{yy}g_1\pig(y(\tau),u(\tau)\pig)\mathscr{D} y^*(\tau)
				+\nabla_{vy}g_1\pig(y(\tau),u(\tau)\pig)\mathscr{D} u^*(\tau)d\tau\\
				&+\int^T_s \nabla_{yy}g_2\pig(y(\tau),\mathcal{L}\big(y(\tau)\big)\pig) 
				\mathscr{D} y^*(\tau) d\tau
				+\int^T_s\widetilde{\mathbb{E}}
				\left[\nabla_{y'}\dfrac{d}{d\nu}\nabla_{y}g_2\pig(y(\tau),\mathcal{L}\big(y(\tau)\big)\pig)  (y')\bigg|_{y'= \widetilde{y} (\tau)} 
				\widetilde{\mathscr{D}y^*} (\tau)\right]
				d\tau\\
				&-\int^T_s \mathscr{D} q^*(\tau)dW_\tau,
			\end{aligned}\right.
			\label{eq. J flow of FBSDE, app, zero ini}
		\end{equation}\normalsize
		has to be vanished, where $\mathscr{D}u^*(s) = \Big[ \nabla_y u\pig(y(s),p(s)\pig)\Big] 
		\mathscr{D} y^*(s) 
		+\Big[\nabla_p  u\pig(y(s),p(s)\pig)\Big] 
		\mathscr{D} p^*(s)$. From the first order condition in (\ref{eq. 1st order J flow}), we also obtain that
		\begin{equation}
			\nabla_{yv}g_1\pig(y(s),u(s)\pig)
			\mathscr{D}y^* (s)
			+\nabla_{vv}g_1\pig(y(s),u(s)\pig)
			\mathscr{D} u^* (s)+\mathscr{D} p^*  (s)=0.
			\label{1st order finite diff. uni}
		\end{equation}
		Consider the inner product 
		$\pig\langle \mathscr{D}p^*(s), \mathscr{D}y^*(s) \pigr\rangle_{\mathbb{R}^d}$, by combining with (\ref{1st order finite diff. uni}), we obtain the following equation:
		\begin{align}
			&\h{-10pt}\bigg\langle
			\nabla_{yy}h_1(y(T))\mathscr{D} y^*(T)
			,\mathscr{D} y^*(T)
			\bigg\rangle_{\mathcal{H}}\nonumber\\
			=\:&
			-\int_{\tau_i}^{T}
			\Big\langle 
			\nabla_{yy}g_1\pig(y(\tau),u(\tau)\pig)
			\mathscr{D}y^* (\tau)
			+\nabla_{vy}g_1\pig(y(\tau),u(\tau)\pig)
			\mathscr{D}u^* (\tau),
			\mathscr{D} y^*(\tau)\Big\rangle_{\mathcal{H}} d\tau\nonumber\\
			&-\int_{\tau_i}^{T}\h{-3pt}
			\bigg\langle   \nabla_{yy}g_2\pig(y(\tau),u(\tau)\pig)
			\mathscr{D} y^*(\tau),
			\mathscr{D} y^*(\tau)
			\bigg\rangle_{\mathcal{H}} \h{-6pt} d\tau\nonumber\\
			&
			-\displaystyle\int^T_{\tau_i} \Bigg\langle \widetilde{\mathbb{E}}
			\left[\nabla_{y'}\dfrac{d}{d\nu}\nabla_{y}g_2\pig(y(\tau),\mathcal{L}\big(y(\tau)\big)\pig) (y')\bigg|_{y'=\widetilde{y}(\tau)} 
			\widetilde{\mathscr{D} y^*} (\tau)\right],\mathscr{D}^* y (\tau)
			\Bigg\rangle_{\mathcal{H}} d\tau\nonumber\\
			&-\int^T_{\tau_i} \Bigg\langle  \nabla_{yv}g_1\pig(y(\tau),u(\tau) \pig)
			\mathscr{D} y^* (\tau)
			+\nabla_{vv}g_1\pig(y(\tau),u(\tau) \pig)
			\mathscr{D} u^*(\tau),
			\mathscr{D} u^* (\tau)
			\Bigg\rangle_{\mathcal{H}} d\tau.
		\end{align}
		Then, \eqref{ass. bdd of D^2g2}, \eqref{ass. bdd of D dnu D g2}, \eqref{ass. convexity of g1}, \eqref{ass. convexity of g2},  \eqref{ass. convexity of h} of the respective Assumptions {\bf (Aviii)}, {\bf (Aix)}, {\bf (Ax)}, {\bf (Axi)}, {\bf (Bvi)} imply
		\begin{equation}
			\begin{aligned}
				\int_{\tau_i}^{T}
				\Lambda_{g_1}\big\|\mathscr{D}  u^* (s) \big\|_{\mathcal{H}}^{2}
				-(\lambda_{g_1}+\lambda_{g_2}+c_{g_2})
				\big\|\mathscr{D} y^* (s)\big\|_{\mathcal{H}}^{2}ds
				-\lambda_{h_1}\big\|\mathscr{D} y^* (T)\big\|_{\mathcal{H}}^{2}\leq 0.
			\end{aligned}
			\label{est Jk > 5}
		\end{equation}
		The equation of $\mathscr{D} y^* (s)$ in (\ref{eq. J flow of FBSDE, app, zero ini}) with a simple application of the Cauchy-Schwarz inequality gives
		\begin{equation}
			\big\|\mathscr{D} y^* (s)\big\|_{\mathcal{H}}^{2} \leq (s-\tau_i)\int^T_{\tau_i}
			\big\|\mathscr{D}  u^* (\tau) \big\|_{\mathcal{H}}^{2}d\tau \h{5pt} \text{, and}\h{5pt} 
			\int^T_{\tau_i}\big\|\mathscr{D} y^* (\tau)\big\|_{\mathcal{H}}^{2} 
			d\tau
			\leq \dfrac{(T-\tau_i)^2}{2}\int^T_{\tau_i}
			\big\|\mathscr{D} u^*(\tau) \big\|_{\mathcal{H}}^{2}d\tau.
			\label{4657}
		\end{equation}
		Putting (\ref{4657}) into (\ref{est Jk > 5}), we have
		$$\left[\lambda
		-(\lambda_{h_1})_+(T-\tau_i)
		-\left(\lambda_{g_1}+\lambda_{g_2}+c_{g_2}\right)_+\dfrac{(T-\tau_i)^2}{2}
		\right]\int_{\tau_i}^{T}
		\big\|\mathscr{D}  u^* (s) \big\|_{\mathcal{H}}^{2}ds
		\leq 0.$$
		\eqref{ass. Cii} of Assumption \textup{\bf (Cii)} implies that $\int^T_{\tau_i}
		\big\|\mathscr{D} u^*(s) \big\|_{\mathcal{H}}^{2}ds=0$ which further gives $\mathscr{D}u^*(s)=0$, $\mathbb{P}$-a.s. for a.e. $s \in [\tau_i,T]$. Therefore, since $\mathscr{D}y^*(s)$ and hence $\mathscr{D}p^*(s)$ are continuous in $s$, we see that $\mathscr{D}y^*(s)=\mathscr{D}p^*(s)=0$, $\mathbb{P}$-a.s. for all $s \in [t,T]$, and $\mathscr{D}q^*(s)=0$, $\mathbb{P}$-a.s. for a.e. $s \in [\tau_i,T]$, from (\ref{eq. J flow of FBSDE, app, zero ini}) and (\ref{1st order finite diff. uni}).
	\end{proof}

	\begin{lemma}
		Under \eqref{ass. Cii} of Assumption \textup{\bf (Cii)}, $ \Delta^\epsilon_\Psi y_{\tau_i \xi}(s),
		\Delta^\epsilon_\Psi p_{\tau_i \xi}(s)$ and $
		\Delta^\epsilon_\Psi u_{\tau_i \xi}(s)$ converge strongly in $L_{\mathcal{W}_{\tau_i \xi \Psi}}^{\infty}(\tau_i,T;\mathcal{H})$ to $D^\Psi_\xi y_{\tau_i \xi}(s)$, $D^\Psi_\xi p_{\tau_i \xi}(s)$ and $D^\Psi_\xi u_{\tau_i \xi}(s)$, respectively, as $\epsilon \to 0$. The process $\Delta^\epsilon_\Psi q_{\tau_i \xi}(s)$ converges strongly in $\mathbb{H}_{\mathcal{W}_{\tau_i \xi \Psi}}[\tau_i,T]$ to $D^\Psi_\xi q_{\tau_i \xi}(s)$, as $\epsilon \to 0$.
		\label{lem. Existence of J flow, strong conv.}
	\end{lemma}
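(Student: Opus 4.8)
The plan is to estimate the differences between the difference quotients and the already-identified Jacobian flow, and to show via the same convexity/monotonicity mechanism used for uniqueness in Lemma~\ref{lem. Existence of J flow, weak conv.} that these differences vanish in the strong topology. Omitting the subscripts $\tau_i\xi$, set $e^\epsilon_y(s):=\Delta^\epsilon_\Psi y(s)-D^\Psi_\xi y(s)$ and likewise $e^\epsilon_p,e^\epsilon_u,e^\epsilon_q$. Subtracting \eqref{eq. J flow of FBSDE}--\eqref{eq. 1st order J flow} from \eqref{eq. diff quotient forward}--\eqref{eq. 1st order diff quotient}, the quadruple $(e^\epsilon_y,e^\epsilon_p,e^\epsilon_u,e^\epsilon_q)$ solves a linear FBSDE of the type in \eqref{eq. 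J flow of FBSDE} with vanishing initial datum $e^\epsilon_y(\tau_i)=0$, but with two kinds of contributions in the backward drift and in the first-order condition: \emph{principal} terms, in which the averaged second-order coefficients $\int_0^1\nabla_{yy}h_1(y^{\theta\epsilon}(T))\,d\theta$, $\int_0^1\nabla_{yy}g_1(y^{\theta\epsilon},u^{\theta\epsilon})\,d\theta$, and their analogues act on $e^\epsilon_y,e^\epsilon_u$; and \emph{error} terms $R^\epsilon$, in which the coefficient increments $\int_0^1[\nabla_{yy}h_1(y^{\theta\epsilon}(T))-\nabla_{yy}h_1(y(T))]\,d\theta$, together with those for $\nabla_{yy}g_1,\nabla_{vy}g_1,\nabla_{yy}g_2$ and $\nabla_{y'}\frac{d}{d\nu}\nabla_y g_2$, act on the bounded Jacobian-flow processes $D^\Psi_\xi y$ and $\widetilde{D^\Psi_\xi y}$. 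The point is that each averaged Hessian still obeys the convexity bounds \eqref{ass. convexity of g1}, \eqref{ass. convexity of g2}, \eqref{ass. convexity of h} and the size bounds \eqref{ass. bdd of D^2g2}, \eqref{ass. bdd of D dnu D g2}, since these hold pointwise in the argument.

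Next I would run the energy estimate exactly as in the uniqueness computation \eqref{est Jk > 5}--\eqref{4657}: apply It\^o's formula to $\langle e^\epsilon_p(s),e^\epsilon_y(s)\rangle_{\mathbb{R}^d}$, use the first-order conditions to replace $\langle e^\epsilon_p,e^\epsilon_u\rangle_{\mathcal{H}}$, and invoke \eqref{ass. bdd of D^2g2}, \eqref{ass. bdd of D dnu D g2}, \eqref{ass. convexity of g1}, \eqref{ass. convexity of g2}, \eqref{ass. convexity of h}. Because $e^\epsilon_y(\tau_i)=0$, the only boundary contribution is at $T$, and the principal terms assemble into the left-hand side of the uniqueness estimate, so that
\[
\Lambda_{g_1}\!\int_{\tau_i}^T\!\|e^\epsilon_u\|_{\mathcal{H}}^2\,ds
\le (\lambda_{g_1}+\lambda_{g_2}+c_{g_2})\!\int_{\tau_i}^T\!\|e^\epsilon_y\|_{\mathcal{H}}^2\,ds
+\lambda_{h_1}\|e^\epsilon_y(T)\|_{\mathcal{H}}^2+E^\epsilon,
\]
where $E^\epsilon$ gathers the inner products of $R^\epsilon$ against $e^\epsilon_y$ and $e^\epsilon_u$. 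By Cauchy--Schwarz and Young's inequality these are bounded by $\tfrac{\kappa}{2}(\|e^\epsilon_y\|_{\mathcal{H}}^2+\|e^\epsilon_u\|_{\mathcal{H}}^2)+\tfrac{1}{2\kappa}\|R^\epsilon\|_{\mathcal{H}}^2$; using the forward bounds $\int_{\tau_i}^T\|e^\epsilon_y\|_{\mathcal{H}}^2\,ds\le \tfrac{(T-\tau_i)^2}{2}\int_{\tau_i}^T\|e^\epsilon_u\|_{\mathcal{H}}^2\,ds$ and $\|e^\epsilon_y(T)\|_{\mathcal{H}}^2\le(T-\tau_i)\int_{\tau_i}^T\|e^\epsilon_u\|_{\mathcal{H}}^2\,ds$ (as in \eqref{4657}) and choosing $\kappa$ small, I absorb the $e^\epsilon$-dependent part of $E^\epsilon$ to the left, leaving a coefficient that is strictly positive precisely by \eqref{ass. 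Cii} of Assumption \textbf{(Cii)}.

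The residual $\int_{\tau_i}^T\|R^\epsilon\|_{\mathcal{H}}^2\,ds\to0$ follows from the dominated-convergence arguments already established in the proof of Lemma~\ref{lem. Existence of J flow, weak conv.}: the coefficient increments converge to zero a.e. (as in \eqref{conv. of Dx u-Dx u and Dp u-Dp u} and \eqref{conv. D^2 h1-D^2 h1 to 0}, with the analogues for $g_1,g_2$) and are uniformly bounded by \eqref{ass. bdd of D^2g1}, \eqref{ass. bdd of D^2g2}, \eqref{ass. bdd of D dnu D g2}, \eqref{ass. bdd of D^2h1}, while $D^\Psi_\xi y$ is bounded in $L^\infty_{\mathcal{W}_{\tau_i \xi \Psi}}(\tau_i,T;\mathcal{H})$, being a weak limit of processes uniformly bounded in Lemma~\ref{lem. bdd of diff quotient}. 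Hence $\int_{\tau_i}^T\|e^\epsilon_u\|_{\mathcal{H}}^2\,ds\to0$ along a subsequence; since these a.e. convergences are available along a subsequence of \emph{every} subsequence and the target is always $0$, the Urysohn subsequence principle promotes this to $\Delta^\epsilon_\Psi u\to D^\Psi_\xi u$ strongly in $L^2_{\mathcal{W}_{\tau_i \xi \Psi}}(\tau_i,T;\mathcal{H})$ along the full $\epsilon\to0$.

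Finally I bootstrap to the asserted norms. From $e^\epsilon_y(s)=\int_{\tau_i}^s e^\epsilon_u\,d\tau$ one gets $\sup_{s}\|e^\epsilon_y(s)\|_{\mathcal{H}}^2\le(T-\tau_i)\int_{\tau_i}^T\|e^\epsilon_u\|_{\mathcal{H}}^2\,ds\to0$, i.e.\ $L^\infty$-convergence of $y$; feeding this into the backward equation for $e^\epsilon_p$ and applying It\^o's formula to $|e^\epsilon_p|^2$ together with the Burkholder--Davis--Gundy inequality (as in \eqref{1905}--\eqref{1938}) yields $\sup_s\|e^\epsilon_p\|_{\mathcal{H}}^2+\int_{\tau_i}^T\|e^\epsilon_q\|_{\mathcal{H}}^2\,ds\to0$, giving the $\mathbb{H}$-convergence of $q$ and the $L^\infty$-convergence of $p$; and the first-order condition \eqref{eq. 1st order J flow}, with $\nabla_{vv}g_1\ge\Lambda_{g_1}$ from \eqref{ass. convexity of g1}, expresses $e^\epsilon_u$ through $e^\epsilon_p,e^\epsilon_y$ and a vanishing error, so that $\sup_s\|e^\epsilon_u\|_{\mathcal{H}}^2\to0$. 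The one genuinely delicate point is the treatment of the error terms $R^\epsilon$ and the accompanying subsequence bookkeeping; everything else is a direct transcription of the monotonicity estimate in Lemma~\ref{lem. Existence of J flow, weak conv.} and the a priori bounds of Lemma~\ref{lem. bdd of diff quotient}.
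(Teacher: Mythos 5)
Your proposal is correct, but it takes a genuinely different route from the paper's own proof. The paper argues by contradiction: assuming failure of strong convergence, it invokes the weak convergence from Lemma~\ref{lem. Existence of J flow, weak conv.} along a subsequence and compares the two It\^o identities for $\langle \Delta^{\epsilon_j}_\Psi p(\tau_i),\Psi\rangle_{\mathcal{H}}$ and $\langle D^\Psi_\xi p(\tau_i),\Psi\rangle_{\mathcal{H}}$ (see \eqref{eq. ito of <Delta p,Delta y>, finite diff strong conv.} and \eqref{eq. ito of <D p,D y>, finite diff strong conv.}); that decomposition leaves cross terms such as $\langle \Gamma_{1}^*( D^\Psi_\xi y(T)),\Delta^{\epsilon_j}_\Psi y(T)-D^\Psi_\xi y(T)\rangle_{\mathcal{H}}$, which pair a \emph{fixed} element against a merely weakly convergent error, and these genuinely need the weak convergences \eqref{conv. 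Delta y(T) to Dy(T) weakly} and \eqref{conv. weak conv. of Delta p(tau_i)} to vanish; only then does the quadratic form $\mathscr{J}^\dagger_{\epsilon_j}$ tend to zero, and Assumption \textbf{(Cii)} is applied once at the end, with no absorption. You instead subtract the two systems and treat $(e^\epsilon_y,e^\epsilon_p,e^\epsilon_q,e^\epsilon_u)$ as a linear FBSDE with zero initial datum and source terms $R^\epsilon$ given by coefficient increments acting on the fixed, bounded Jacobian flow; in your It\^o computation on $\langle e^\epsilon_p,e^\epsilon_y\rangle$ every cross term pairs a residual against an error, so Young's inequality plus absorption disposes of them with no appeal to weak convergence at all --- you only need the existence and boundedness of the Jacobian flow from Lemmas~\ref{lem. bdd of diff quotient} and \ref{lem. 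Existence of J flow, weak conv.} --- at the cost of spending part of the strict positivity in \eqref{ass. Cii} on the absorption parameter $\kappa$ (legitimate, since the left-hand side of \eqref{ass. Cii} is decreasing in $T$, so there is a positive margin on $[\tau_i,T]$). Your route also replaces the contradiction framing by the quantitative bound $\int_{\tau_i}^T\|e^\epsilon_u\|^2_{\mathcal{H}}\,ds\le C\|R^\epsilon\|^2_{L^2}$ together with the Urysohn subsequence principle, which cleanly upgrades the subsequential vanishing of $\|R^\epsilon\|_{L^2}$ (Borel--Cantelli plus dominated convergence, exactly as in the paper) to full-sequence convergence.

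One point deserves more care in your final bootstrap: the $L^\infty$-in-time convergence of $u$ through the first order condition requires uniform-in-$s$ smallness of the residual appearing there, whereas your dominated-convergence argument delivers it a priori only in time-integrated form. This is repairable (the bounds of Lemma~\ref{lem. bdd of diff quotient} give $\sup_s\|y^{\theta\epsilon}(s)-y(s)\|_{\mathcal{H}}\le \theta\epsilon (C_4')^{1/2}\|\Psi\|_{\mathcal{H}}$, so the arguments of the coefficients converge uniformly in $s$), and the paper's own proof is equally terse at this very step, so I record it as a shared omission rather than a defect of your approach.
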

	\begin{proof}
		Again, for simplicity, we omit the subscripts $\tau_i \xi$ in the processes. We aim to show by contradiction, in contrary, we assume that there is a sequence $\{\epsilon_k\}_{k\in\mathbb{N}}$ such that $\epsilon_k \longrightarrow 0$ and, without loss of generality, we assume the limit (up to a subsequence)
		\begin{equation}
			\limsup_{k \to \infty} \mathbb{E}\left[\sup_{s\in [\tau_i,T]} \pig| \Delta^{\epsilon_k}_\Psi y (s)- D^\Psi_\xi y (s) \pigr|^2\right]>0.
			\label{not conv DY}
		\end{equation}
		According to Lemma \ref{lem. Existence of J flow, weak conv.}, by Banach–Alaoglu theorem, we can extract a subsequence $\{\epsilon_{k_j}\}_{j\in \mathbb{N}}$ from $\{\epsilon_{k}\}_{k\in \mathbb{N}}$ such that 
		$\Delta^{\epsilon_{k_j}}_\Psi y (s)$,
		$\Delta^{\epsilon_{k_j}}_\Psi p (s)$ and 
		$\Delta^{\epsilon_{k_j}}_\Psi u(s)$ converge weakly to $ D^\Psi_\xi y (s),
		D^\Psi_\xi p (s)$ and $
		D^\Psi_\xi u (s)$, respectively, in $L^2_{\mathcal{W}_{\tau_i \xi \Psi}}(\tau_i,T;\mathcal{H})$, 
		$\Delta^{\epsilon_{k_j}}_\Psi q(s)$  converges weakly to $ D^\Psi_\xi q (s)$ in $\mathbb{H}_{\mathcal{W}_{\tau_i \xi \Psi}}[\tau_i,T]$, as $j \to 0$. For simplicity, we write $\epsilon_j$ in place of $\epsilon_{k_j}$ to avoid cumbersome notations. Recalling from \eqref{eq. ito of <Delta p,Delta y>, finite diff},
		\begin{align}
			&\h{-10pt}\Big\langle \Delta^{\epsilon_j}_\Psi p (\tau_i),
			\Delta^{\epsilon_j}_\Psi y (\tau_i)  \Big\rangle_{\mathcal{H}}\nonumber\\
			=\,&\left\langle\int_{0}^{1}\nabla_{yy}h_1\pig(y^{\theta{\epsilon_j}}(T)\pig)\Delta^{\epsilon_j}_\Psi y (T)d\theta,
			\Delta^{\epsilon_j}_\Psi y (T)\right\rangle_{\mathcal{H}}\nonumber\\
			&+\int^T_{\tau_i}\Bigg\langle\int_{0}^{1}
			\nabla_{yy}g_1\pig(y^{\theta{\epsilon_j}}(\tau),u^{\theta{\epsilon_j}}(\tau) \pig)\Delta^{\epsilon_j}_\Psi y (\tau)
			+\nabla_{vy}g_1\pig(y^{\theta{\epsilon_j}}(\tau),u^{\theta{\epsilon_j}}(\tau) \pig)
			\Delta^{\epsilon_j}_\Psi u(\tau)d\theta ,\Delta^{\epsilon_j}_\Psi y (\tau) \Bigg\rangle_{\mathcal{H}} d\tau\h{-1pt}\nonumber\\
			&+\int^T_{\tau_i} \Bigg\langle \int^1_0 \nabla_{yy}g_2\pig(y^{\theta{\epsilon_j}}(\tau),\mathcal{L}\big(y^{\theta{\epsilon_j}}(\tau)\big)\pig) 
			\Delta^{\epsilon_j}_\Psi y (\tau) d \theta,\Delta^{\epsilon_j}_\Psi y (\tau)
			\Bigg\rangle_{\mathcal{H}} d\tau\nonumber\\
			&
			+\displaystyle\int^T_{\tau_i} \Bigg\langle \int^1_0 \widetilde{\mathbb{E}}
			\left[\nabla_{y'}\dfrac{d}{d\nu}\nabla_{y}g_2\pig(y^{\theta{\epsilon_j}}(\tau),\mathcal{L}\big(y^{\theta{\epsilon_j}}(\tau)\big)\pig) (y')\bigg|_{y'=\widetilde{y^{\theta{\epsilon_j}}}(\tau)} 
			\widetilde{\Delta^{\epsilon_j}_\Psi y} (\tau)\right]
			d\theta,\Delta^{\epsilon_j}_\Psi y (\tau)
			\Bigg\rangle_{\mathcal{H}} d\tau\nonumber\\
			&+\int^T_{\tau_i} \Bigg\langle \int^1_0 \nabla_{yv}g_1\pig(y^{\theta{\epsilon_j}}(\tau),u^{\theta{\epsilon_j}}(\tau) \pig)
			\Delta^{\epsilon_j}_\Psi y (\tau)
			+\nabla_{vv}g_1\pig(y^{\theta{\epsilon_j}}(\tau),u^{\theta{\epsilon_j}}(\tau) \pig)
			\Delta^{\epsilon_j}_\Psi u(\tau)d \theta,
			\Delta^{\epsilon_j}_\Psi u (\tau)
			\Bigg\rangle_{\mathcal{H}} d\tau,
			\label{eq. ito of <Delta p,Delta y>, finite diff strong conv.}
		\end{align}
		we define the linear operators
		\begin{align*}
			&\Gamma_{1{\epsilon_j}}(z):=\pig[\nabla_{yy} h_1\pig(y^{\theta{\epsilon_j}}(T) \pig)\pig]z\h{1pt};\h{3pt}
			\Gamma_{2{\epsilon_j}}(z):=\pig[\nabla_{yy}g_1\pig(y^{\theta{\epsilon_j}}(\tau),u^{\theta{\epsilon_j}}(\tau)\pig)\pig]z
			+\pig[\nabla_{yy}g_2\pig(y^{\theta{\epsilon_j}}(\tau),\mathcal{L}(y^{\theta{\epsilon_j}}(\tau))\pig)\pig]z\\
			&\h{200pt}+\widetilde{\mathbb{E}}
			\left[\nabla_{y'}\dfrac{d}{d\nu}\nabla_{y}g_2\pig(y^{\theta{\epsilon_j}}(\tau),\mathcal{L}\big(y^{\theta{\epsilon_j}}(\tau)\big)\pig) (y')\bigg|_{y'=\widetilde{y^{\theta\epsilon}}(\tau)}
			\widetilde{z}\right];\\
			&\Gamma_{3{\epsilon_j}}(z):=\pig[\nabla_{vy}g_1\pig(y^{\theta{\epsilon_j}}(\tau),u^{\theta{\epsilon_j}}(\tau)\pig)\pig]z\h{1pt};\h{3pt}
			\Gamma_{4{\epsilon_j}}(z):=\pig[\nabla_{yv}g_1\pig(y^{\theta{\epsilon_j}}(\tau),u^{\theta{\epsilon_j}}(\tau)\pig)\pig]z\h{1pt};\h{3pt}\\
			&\Gamma_{5{\epsilon_j}}(z):=\pig[\nabla_{vv}g_1\pig(y^{\theta{\epsilon_j}}(\tau),u^{\theta{\epsilon_j}}(\tau)\pig)\pig]z.
		\end{align*} 
		We can also write, by using the backward equation in (\ref{eq. J flow of FBSDE}) and It\^o's lemma directly, 
		\begin{align}
			\pig\langle D^\Psi_\xi p(\tau_i), \Psi \pigr\rangle_{\mathcal{H}}
			=\:&
			\Big\langle
			\Gamma_{1}^*\pig(D^\Psi_\xi y (T)\pig),
			D^\Psi_\xi y(T)
			\Big\rangle_{\mathcal{H}} 
			+\int_{\tau_i}^{T}
			\Big\langle
			\Gamma_{2}^*\pig(D^\Psi_\xi y(\tau)\pig),
			D^\Psi_\xi y(\tau)\Big\rangle_{\mathcal{H}}
			+\Big\langle
			\Gamma_{3}^*\pig(D^\Psi_\xi u(\tau)\pig),
			D^\Psi_\xi y(\tau)\Big\rangle_{\mathcal{H}} d \tau\nonumber\\ 
			&+\int_{\tau_i}^{T}
			\Big\langle \Gamma_{4}^*\pig(D^\Psi_\xi y(\tau)\pig),
			D^\Psi_\xi u(\tau)
			\Big\rangle_{\mathcal{H}}
			+\Big\langle \Gamma_{5}^*\pig(D^\Psi_\xi u(\tau)\pig),
			D^\Psi_\xi u(\tau)
			\Big\rangle_{\mathcal{H}} d\tau,\h{-40pt}
			\label{eq. ito of <D p,D y>, finite diff strong conv.}
		\end{align}
		Furthermore, every linear operator $\Gamma^*_i$ is defined by replacing the processes $y^{\theta\epsilon_j}(\tau)$, $u^{\theta\epsilon_j}(\tau)$ in $\Gamma_{i\epsilon_j}$ by $y(\tau)$, $u(\tau)$ respectively, for each $i=1,2,\ldots,5$, for example; $\Gamma_{1}^*(z)= \pig[\nabla_{yy}h_{1}(y(T))\pig]z$.
		
		\noindent {\bf Step 1. Estimate of $\pig\langle \Delta^{\epsilon_j}_\Psi  p(\tau_i),\Psi  \pigr\rangle_{\mathcal{H}} 
			- \pig\langle D^\Psi_\xi p(\tau_i),\Psi \pigr\rangle_{\mathcal{H}}$:}\\
		Referring to (\ref{eq. ito of <Delta p,Delta y>, finite diff strong conv.}) and (\ref{eq. ito of <D p,D y>, finite diff strong conv.}), we study the limit of $\pig\langle \Delta^{\epsilon_j}_\Psi  p(\tau_i),\Psi  \pigr\rangle_{\mathcal{H}} 
		- \pig\langle D^\Psi_\xi p(\tau_i),\Psi \pigr\rangle_{\mathcal{H}}$ by first checking the term
		$$
		\begin{aligned}
			&\h{-10pt}\int^1_0\Big\langle
			\Gamma_{1{\epsilon_j}}\pig(\Delta^{\epsilon_j}_\Psi  y (T)\pig),
			\Delta^{\epsilon_j}_\Psi  y (T)
			\Big\rangle_{\mathcal{H}} 
			-\Big\langle
			\Gamma_{1}^*\pig( D^\Psi_\xi y  (T)\pig), D^\Psi_\xi y(T)
			\Big\rangle_{\mathcal{H}} d\theta \\
			=\:&\int^1_0\Big\langle
			\Gamma_{1{\epsilon_j}}\pig(\Delta^{\epsilon_j}_\Psi  y (T)
			-D^\Psi_\xi y (T)\pig)
			,\Delta^{\epsilon_j}_\Psi  y (T)-D^\Psi_\xi y (T)
			\Big\rangle_{\mathcal{H}}d\theta \\
			&+\int^1_0  \Big\langle
			(\Gamma_{1{\epsilon_j}}-\Gamma_{1}^*)\pig( D^\Psi_\xi y  (T)\pig),\Delta^{\epsilon_j}_\Psi  y(T)
			\Big\rangle_{\mathcal{H}}
			+\Big\langle
			\Gamma_{1}^*\pig( D^\Psi_\xi y  (T)\pig),\Delta^{\epsilon_j}_\Psi  y(T)
			-D^\Psi_\xi y(T)
			\Big\rangle_{\mathcal{H}}d\theta\\
			&+\int^1_0\Big\langle
			\Gamma_{1{\epsilon_j}}\pig(\Delta^{\epsilon_j}_\Psi  y (T)\pig)
			-\Gamma_{1{\epsilon_j}}\pig( D^\Psi_\xi y  (T)\pig),D^\Psi_\xi y(T)
			\Big\rangle_{\mathcal{H}}d\theta\\
			=\:&\int^1_0\Big\langle
			\Gamma_{1{\epsilon_j}}\pig(\Delta^{\epsilon_j}_\Psi  y (T)
			-D^\Psi_\xi y (T)\pig)
			,\Delta^{\epsilon_j}_\Psi  y (T)-D^\Psi_\xi y (T)
			\Big\rangle_{\mathcal{H}}d\theta \\
			&+\int^1_0  \Big\langle
			(\Gamma_{1{\epsilon_j}}-\Gamma_{1}^*)\pig( D^\Psi_\xi y  (T)\pig),2\Delta^{\epsilon_j}_\Psi  y (T) 
			-D^\Psi_\xi y  (T)
			\Big\rangle_{\mathcal{H}}
			+2\Big\langle
			\Gamma_{1}^*\pig( D^\Psi_\xi y  (T)\pig),\Delta^{\epsilon_j}_\Psi  y(T)
			-D^\Psi_\xi y(T)
			\Big\rangle_{\mathcal{H}}d\theta
		\end{aligned}
		$$
		where we have used the fact that $\Gamma_{1\epsilon_j}$ is a symmetric matrix in the last equality. By the weak convergence of $\Delta^{\epsilon_j}_\Psi  y (T)$ in \eqref{conv. Delta y(T) to Dy(T) weakly} to the term $\Big\langle
		\Gamma_{1}^*\pig( D^\Psi_\xi y  (T)\pig),\Delta^{\epsilon_j}_\Psi  y(T)
		-D^\Psi_\xi y(T)
		\Big\rangle_{\mathcal{H}}$, we therefore have
		\begin{align}
			&\h{-10pt}\lim_{j\to0}\int^1_0\Big\langle
			\Gamma_{1{\epsilon_j}}\pig(\Delta^{\epsilon_j}_\Psi  y (T)\pig),
			\Delta^{\epsilon_j}_\Psi  y (T)
			\Big\rangle_{\mathcal{H}} 
			-\Big\langle
			\Gamma_{1}^*\pig( D^\Psi_\xi y  (T)\pig), D^\Psi_\xi y(T)
			\Big\rangle_{\mathcal{H}} d\theta \nonumber\\
			=\:&\lim_{j\to0}\int^1_0\Big\langle
			\Gamma_{1{\epsilon_j}}\pig(\Delta^{\epsilon_j}_\Psi  y (T)
			-D^\Psi_\xi y (T)\pig)
			,\Delta^{\epsilon_j}_\Psi  y (T)-D^\Psi_\xi y (T)
			\Big\rangle_{\mathcal{H}}d\theta \nonumber\\
			&+\lim_{j\to0}\int^1_0  \Big\langle
			(\Gamma_{1{\epsilon_j}}-\Gamma_{1}^*)\pig( D^\Psi_\xi y  (T)\pig),2\Delta^{\epsilon_j}_\Psi  y (T)
			-D^\Psi_\xi y (T)
			\Big\rangle_{\mathcal{H}}d\theta.
			\label{Q1e-Q1infty}
		\end{align}
		The last term on the right hand side of (\ref{Q1e-Q1infty})  can be estimated by using Lemma \ref{lem. bdd of diff quotient}:
		\begin{align*}
			&\int^1_0\left|\Big\langle
			(\Gamma_{1{\epsilon_j}}-\Gamma_{1}^*)\pig( D^\Psi_\xi y  (T)\pig),
			2\Delta^{\epsilon_j}_\Psi y (T)-D^\Psi_\xi y (T)
			\Big\rangle_{\mathcal{H}}\right|d\theta\nonumber\\
			&\leq \int^1_0\left\|
			\pig[ \nabla_{yy}h_1 \pig(y^{\theta{\epsilon_j}}(T)\pig)
			- \nabla_{yy}h_1 \pig( y (T) \pig)\pig]
			\pig( D^\Psi_\xi y  (T)\pig)\right\|_{\mathcal{H}}\cdot
			\left\|
			2\Delta^{\epsilon_j}_\Psi y (T)-D^\Psi_\xi y (T)
			\right\|_{\mathcal{H}}
			d\theta.
		\end{align*}
		By Lemma \ref{lem. bdd of diff quotient}, Fatou's lemma and the weak convergence of $\Delta^{\epsilon_j}_\Psi  y(T)$ mentioned in (\ref{conv. Delta y(T) to Dy(T) weakly}), we see that 
		\begin{equation}
			\left\| D^\Psi_\xi y (T)\right\|_{\mathcal{H}}\leq C'_4 \|\Psi\|_{\mathcal{H}}.
			\label{bdd. |DY(T)|}
		\end{equation}
		Lemma \ref{lem. bdd of diff quotient} and \eqref{bdd. |DY(T)|} imply that, by actually the definitions of $\Gamma_{1{\epsilon_j}}$ and $\Gamma_{1}^*$,
		\begin{align}
			&\int^1_0\left|\Big\langle
			(\Gamma_{1{\epsilon_j}}-\Gamma_{1}^*)\pig( D^\Psi_\xi y  (T)\pig),
			2\Delta^{\epsilon_j}_\Psi y (T)-D^\Psi_\xi y (T)
			\Big\rangle_{\mathcal{H}}\right|d\theta\nonumber\\
			&\leq 3C'_4 \|\Psi\|_{\mathcal{H}}\int^1_0\left\|
			\pig[ \nabla_{yy}h_1 \pig(y^{\theta{\epsilon_j}}(T)\pig)
			- \nabla_{yy}h_1 \pig( y (T) \pig)\pig]
			\pig( D^\Psi_\xi y  (T)\pig)\right\|_{\mathcal{H}}d\theta.
			\label{4843}
		\end{align}
		The bound of \eqref{bdd. |DY(T)|} clearly implies that $D^\Psi_\xi y (T)$ is finite $\mathbb{P}$-a.s. as $\Psi$ is fixed. Therefore, we use the convergence in \eqref{conv. D^2_x h(y theta) - D^2_x h(y)} to conclude that $ \nabla_{yy}h_1 \pig(y^{\theta{\epsilon_j}}(T)\pig)
		- \nabla_{yy}h_1 \pig( y (T) \pig)\pig]
		\pig( D^\Psi_\xi y  (T)\pig)$ converges to zero $\mathbb{P}$-a.s. and for a.e. $\theta \in [0,1]$, as $\epsilon_j \to 0$ up to the subsequence. Thus, a simple application of Lebesgue dominated convergence theorem, the right hand side integral of (\ref{4843}) converges to zero as $\epsilon_j \to 0$, so does the left hand side of (\ref{Q1e-Q1infty}).

		By estimating the remaining terms in $\pig\langle \Delta^{\epsilon_j}_\Psi p(\tau_i),\Psi  \pigr\rangle_{\mathcal{H}} 
		- \pig\langle D^\Psi_\xi  p(\tau_i),\Psi \pigr\rangle_{\mathcal{H}}$ involving $\Gamma_{2\epsilon_j},\ldots,\Gamma_{5\epsilon_j}$ similarly (we omit the details), we can deduce that as $\epsilon_j \to 0$  up to the subsequence, there as a whole,
		\begin{equation}
			\pig\langle \Delta^{\epsilon_j}_\Psi p(\tau_i),\Psi  \pigr\rangle_{\mathcal{H}} 
			- \pig\langle D^\Psi_\xi  p(\tau_i),\Psi \pigr\rangle_{\mathcal{H}}
			-\mathscr{J}^\dagger_{\epsilon_j}\longrightarrow 0,\h{10pt} \text{up to the subsequence $\epsilon_{k_j}=:\epsilon_j \to 0$,}
			\label{Delta Z - DZ - J to 0}
		\end{equation}
		where
		\begin{align*}
			\mathscr{J}^\dagger_{\epsilon_j}:=\int^1_0&\Big\langle \Gamma_{1\epsilon_j}\pig(\Delta^{\epsilon_j}_\Psi  y (T)- D^\Psi_\xi y (T)\pig),
			\Delta^{\epsilon_j}_\Psi  y (T)- D^\Psi_\xi y (T)
			\Big\rangle_{\mathcal{H}}d\theta\\
			&+\int^1_0\int_{\tau_i}^{T}
			\Big\langle \Gamma_{2\epsilon_j}
			\pig(\Delta^{\epsilon_j}_\Psi  y (s)- D^\Psi_\xi y (s)\pig),
			\Delta^{\epsilon_j}_\Psi y (s)- D^\Psi_\xi y (s)
			\Big\rangle_{\mathcal{H}}\\
			&\h{20pt}+\Big\langle \Gamma_{3\epsilon_j}
			\pig(\Delta^{\epsilon_j}_\Psi  u (s)- D^\Psi_\xi u (s)\pig),
			\Delta^{\epsilon_j}_\Psi y (s)- D^\Psi_\xi y (s)\Big\rangle_{\mathcal{H}}\\
			&\h{20pt}+\Big\langle
			\Gamma_{4\epsilon_j}
			\pig(\Delta^{\epsilon_j}_\Psi  y (s)- D^\Psi_\xi y (s)\pig),
			\Delta^{\epsilon_j}_\Psi u (s)- D^\Psi_\xi u (s)\Big\rangle_{\mathcal{H}}\\
			&\h{20pt}+\Big\langle \Gamma_{5\epsilon_j}
			\pig(\Delta^{\epsilon_j}_\Psi u (s)- D^\Psi_\xi u (s)\pig),
			\Delta^{\epsilon_j}_\Psi u (s)- D^\Psi_\xi u (s)\Big\rangle_{\mathcal{H}}dsd\theta.
		\end{align*}
		\noindent {\bf Step 2. Estimate of $\mathscr{J}^\dagger_{\epsilon_j}$ and Strong Convergence:}\\
		Together with the convergences in (\ref{Delta Z - DZ - J to 0}) and (\ref{conv. weak conv. of Delta p(tau_i)}), we have $\mathscr{J}^\dagger_{\epsilon_j} \longrightarrow 0$ up to the subsequence $\epsilon_{k_j}=:\epsilon_j \to 0$ {\mycomment{delete it}}. The convexity conditions of $g_1$, $g_2$ and $h_1$ in \eqref{ass. convexity of g1}, \eqref{ass. convexity of g2}, \eqref{ass. convexity of h} of Assumptions {\bf (Ax)}, {\bf (Axi)}, {\bf (Bvi)}  respectively imply that
		\begin{equation}
			\begin{aligned}
				\mathscr{J}^\dagger_{\epsilon_j}\geq\:&\int_{\tau_i}^{T}
				\Lambda_{g_1}\big\|\Delta^{\epsilon_j}_\Psi u (s)-D^\Psi_\xi u (s)\big\|_{\mathcal{H}}^{2}
				-\left(\lambda_{g_1}+\lambda_{g_2}+c_{g_2}\right)
				\big\|\Delta^{\epsilon_j}_\Psi y (s)-D^\Psi_\xi y (s)\big\|_{\mathcal{H}}^{2}ds\\
				&- \lambda_{h_1} \big\|\Delta^{\epsilon_j}_\Psi  y (T)-D^\Psi_\xi y (T)\big\|_{\mathcal{H}}^{2}.
			\end{aligned}
			\label{est Jk > 1}
		\end{equation}
		With an application of the Cauchy-Schwarz inequality, the equation of $\Delta^{\epsilon_j}_\Psi  y (s)-D^\Psi_\xi y (s)$ implies that
		\begin{equation}
			\begin{aligned}
				&\big|\Delta^{\epsilon_j}_\Psi  y (s)-D^\Psi_\xi y (s)\big|^{2} 
				\leq (s-t)\int^s_{\tau_i}
				\big|\Delta^{\epsilon_j}_\Psi  u (\tau)-D^\Psi_\xi u (\tau)\big|^{2}d\tau, \h{10pt} \text{ and then}\\
				&\int^T_{\tau_i}\big\|\Delta^{\epsilon_j}_\Psi  y (\tau)-D^\Psi_\xi y (\tau)\big\|_{\mathcal{H}}^{2} 
				d\tau
				\leq \dfrac{T^2}{2}\int^T_{\tau_i}
				\big\|\Delta^{\epsilon_j}_\Psi  u (\tau)-D^\Psi_\xi u (\tau)\big\|_{\mathcal{H}}^{2}d\tau.
			\end{aligned}
			\label{4514}
		\end{equation}
		Putting (\ref{4514}) into (\ref{est Jk > 1}), we have
			\begin{align*}
				\mathscr{J}^\dagger_{\epsilon_j}\geq\:&\left[\Lambda_{g_1}
				- (\lambda_{h_1})_+T
				- \pig(\lambda_{g_1}+\lambda_{g_2}+c_{g_2}\pigr)_+\dfrac{T^2}{2}\right]
				\int_{\tau_i}^{T}
				\big\|\Delta^{\epsilon_j}_\Psi u (s)-D^\Psi_\xi u (s)\big\|_{\mathcal{H}}^{2}ds
			\end{align*}
		 Together with \eqref{ass. Cii} of Assumption \textup{\bf (Cii)} and the convergence $\mathscr{J}^\dagger_{\epsilon_j} \longrightarrow 0$, we see that up to the subsequence $\epsilon_{k_j}=:\epsilon_j \to 0$,
		\begin{equation}
			\int_{\tau_i}^{T}
			\big\|\Delta^{\epsilon_j}_\Psi  u (s)-D^\Psi_\xi u (s)\big\|_{\mathcal{H}}^{2}ds \longrightarrow 0.
			\label{4520}
		\end{equation}
		We also plug (\ref{4520}) into (\ref{4514}) to yield that up to the subsequence $\epsilon_{k_j}=:\epsilon_j \to 0$, {\mycomment{delete it}},
		\begin{equation}
			\mathbb{E}\left[\sup_{s\in [\tau_i,T]} \big|\Delta^{\epsilon_j}_\Psi  y (s)-D^\Psi_\xi y (s)\big|^{2}\right] \longrightarrow 0 \h{10pt} \text{uniformly for all $s\in [t,T]$}.
			\label{4526}
		\end{equation}
		It contradicts (\ref{not conv DY}), therefore the strong convergence of $\Delta^{\epsilon}_\Psi  y (s)$ should follow, for sequence $\epsilon \to 0$. 
		
		The strong convergences of 
		$\Delta^{\epsilon_j}_\Psi  p (s)$ and $\Delta^{\epsilon_j}_\Psi   q (s)$ are concluded by subtracting the equation of $\Delta^{\epsilon_j}_\Psi  p (s)$ in (\ref{eq. diff quotient backward}) from the equation of $D^\Psi_\xi p (s)$ in (\ref{eq. J flow of FBSDE}) and then using It\^o's lemma, together with the convergences in (\ref{4520}) and (\ref{4526}). We refer readers to similar computation from \eqref{1905} to \eqref{eq:ApB100}. Finally, the strong convergence of $\Delta^{\epsilon_j}_\Psi  u (s)$ is deduced by the first order condition in (\ref{eq. 1st order diff quotient}) and the strong convergences of $\Delta^{\epsilon_j}_\Psi  y (s)$, $\Delta^{\epsilon_j}_\Psi  p (s)$ and $\Delta^{\epsilon_j}_\Psi  q (s)$ to $D^\Psi_\xi  y (s), D^\Psi_\xi  p (s)$ and $D^\Psi_\xi  q (s)$ just obtained.
	\end{proof}
	\begin{lemma}
		Suppose \eqref{ass. Cii} of Assumption \textup{\bf (Cii)} holds. Then the Jacobian flow $D^\Psi_\xi y_{\tau_i \xi}(s),
		D^\Psi_\xi p_{\tau_i \xi}(s),$\\$D^\Psi_\xi q_{\tau_i \xi}(s),D^\Psi_\xi u_{\tau_i \xi}(s)$ are linear in $\Psi \in L^2(\Omega,\mathcal{W}^{\tau_{i}}_0,\mathbb{P};\mathbb{R}^d)$ for a given $\xi \in L^2(\Omega,\mathcal{W}^{\tau_{i}}_0,\mathbb{P};\mathbb{R}^d)$ and $s\in [\tau_i,T]$, and each of them is continuous in $\xi \in L^2(\Omega,\mathcal{W}^{\tau_{i}}_0,\mathbb{P};\mathbb{R}^d)$ for a given $\Psi \in L^2(\Omega,\mathcal{W}^{\tau_{i}}_0,\mathbb{P};\mathbb{R}^d)$. Thus, their Fr\'echet derivatives exist, and they are denoted by $D_\xi y_{\tau_i \xi}(s),
		D_\xi p_{\tau_i \xi}(s),D_\xi u_{\tau_i \xi}(s),D_\xi q_{\tau_i \xi}(s)$, respectively.
		\label{lem. Existence of Frechet derivatives}
	\end{lemma}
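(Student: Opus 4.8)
The plan is to deduce the Fréchet differentiability from three ingredients — linearity of the Jacobian flow in the direction $\Psi$, its uniform boundedness, and its continuity in the base point $\xi$ — and then to close the argument by a fundamental-theorem-of-calculus estimate along the segment $\xi+\theta\Psi$. First I would establish linearity in $\Psi$. For a fixed $\xi$, the coefficients of the linear FBSDE \eqref{eq. J flow of FBSDE} together with \eqref{eq. 1st order J flow} depend only on the base solution $\pig(y_{\tau_i\xi}(s),p_{\tau_i\xi}(s),u_{\tau_i\xi}(s)\pig)$, which does not involve $\Psi$; the direction $\Psi$ enters solely as the (linear) initial datum of the forward equation. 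Hence, given $\Psi_1,\Psi_2\in L^2(\Omega,\mathcal{W}^{\tau_i}_0,\mathbb{P};\mathbb{R}^d)$ and scalars $a,b$, the process $a\,D^{\Psi_1}_\xi y_{\tau_i\xi}+b\,D^{\Psi_2}_\xi y_{\tau_i\xi}$ (and the analogous combinations for $p,q,u$) solves \eqref{eq. J flow of FBSDE}-\eqref{eq. 1st order J flow} with initial datum $a\Psi_1+b\Psi_2$; by the uniqueness of this linear system proved within Lemma \ref{lem. Existence of J flow, weak conv.} it must coincide with $D^{a\Psi_1+b\Psi_2}_\xi y_{\tau_i\xi}$, giving linearity. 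Passing to the limit in the bound of Lemma \ref{lem. bdd of diff quotient}, via the strong convergence of Lemma \ref{lem. Existence of J flow, strong conv.}, yields $\big\|D^\Psi_\xi(\cdot)\big\|\leq C_4'\|\Psi\|_{\mathcal{H}}$, so for each $\xi$ the Gâteaux derivative is a bounded linear operator on $L^2(\Omega,\mathcal{W}^{\tau_i}_0,\mathbb{P};\mathbb{R}^d)$.

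Next I would prove continuity in $\xi$. Fixing $\Psi$ and letting $\xi'\to\xi$, the stability of the base FBSDE \eqref{eq. FBSDE, equilibrium}-\eqref{eq. 1st order condition, equilibrium} under perturbation of the initial condition (the analogue of Lemma \ref{lem. bdd of diff quotient} applied with $\epsilon=1$ to the increment $\xi'-\xi$) gives $\pig(y_{\tau_i\xi'},p_{\tau_i\xi'},u_{\tau_i\xi'}\pig)\to\pig(y_{\tau_i\xi},p_{\tau_i\xi},u_{\tau_i\xi}\pig)$. Consequently the coefficient matrices of \eqref{eq. J flow of FBSDE} evaluated along the two base trajectories differ by quantities that are uniformly bounded, by Assumptions \textbf{(Avii)}, \textbf{(Aviii)}, \textbf{(Aix)}, \textbf{(Bv)}, and converge to zero $\mathbb{P}$-a.s. for a.e. time, by the continuity in \textbf{(Ai)}, \textbf{(Aii)}, \textbf{(Bi)}. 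The difference of the two Jacobian flows then solves a linear FBSDE with \emph{zero} forward initial datum (both flows start at $\Psi$) and source terms of the form (coefficient difference)$\times D^\Psi_{\xi'}(\cdot)$. Running the very same Itô computation on $\big\langle\,\cdot\,,\,\cdot\,\big\rangle$ and energy estimate as in the uniqueness part of Lemma \ref{lem. Existence of J flow, weak conv.}, and invoking \eqref{ass. Cii} of Assumption \textbf{(Cii)}, I obtain the $L^2$-norm of the difference of the control components bounded by the squared norm of the source, which tends to zero by dominated convergence (the source is dominated by $C\|\Psi\|_{\mathcal{H}}$ and vanishes a.s.); the bounds for the state, adjoint and martingale components follow as before.

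Finally I would upgrade to Fréchet differentiability through the directional representation of the increment,
\[
y_{\tau_i,\xi+\Psi}(s)-y_{\tau_i\xi}(s)-D^\Psi_\xi y_{\tau_i\xi}(s)=\int_0^1\pigl[D^\Psi_{\xi+\theta\Psi}y_{\tau_i,\xi+\theta\Psi}(s)-D^\Psi_\xi y_{\tau_i\xi}(s)\pigr]d\theta,
\]
and using linearity to factor $D^\Psi_\zeta=\|\Psi\|_{\mathcal{H}}\,D^{\widehat\Psi}_\zeta$ with $\widehat\Psi=\Psi/\|\Psi\|_{\mathcal{H}}$, the normalized remainder is controlled by $\int_0^1\big\|D^{\widehat\Psi}_{\xi+\theta\Psi}y_{\tau_i,\xi+\theta\Psi}-D^{\widehat\Psi}_\xi y_{\tau_i\xi}\big\|\,d\theta$, which vanishes as $\|\Psi\|_{\mathcal{H}}\to0$; the same argument applies to $p,q,u$. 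I expect the main obstacle to be precisely this last step: the fundamental-theorem estimate needs the base-point continuity to hold \emph{uniformly over directions} $\|\widehat\Psi\|_{\mathcal{H}}=1$ (that is, in operator norm), whereas the energy estimate a priori delivers only continuity for each fixed $\Psi$. The delicacy is that in the source (coefficient difference)$\times D^{\widehat\Psi}_{\xi'}(\cdot)$ the factor $D^{\widehat\Psi}_{\xi'}$ depends on the direction and ranges over a bounded but non-compact subset of $L^2$. I would resolve this by exploiting that the coefficient-difference matrices are independent of $\widehat\Psi$, uniformly bounded and convergent in measure, combined with the direction-uniform estimate $\mathbb{E}\big[\sup_s|D^{\widehat\Psi}_{\xi'}y(s)|^2\big]\le (C_4')^2$, to extract a modulus of continuity independent of $\widehat\Psi$. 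Alternatively, one can bypass the uniformity issue altogether by estimating the remainder $R(\Psi):=y_{\tau_i,\xi+\Psi}-y_{\tau_i\xi}-D^\Psi_\xi y_{\tau_i\xi}$ directly through an energy estimate of the same type, whose source is bounded by the modulus of continuity of the $C^2$-data times $\|\Psi\|_{\mathcal{H}}$, yielding $\|R(\Psi)\|=o(\|\Psi\|_{\mathcal{H}})$ at once.
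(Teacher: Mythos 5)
Your first two steps are essentially the paper's own proof. Linearity in $\Psi$ is obtained there in the same way: one sums (and scales) the linear system \eqref{eq. J flow of FBSDE} subject to \eqref{eq. 1st order J flow} and invokes the uniqueness proved in Lemma \ref{lem. Existence of J flow, weak conv.} (the paper is careful to choose the independent copies so that the pair $(\widetilde{D^{\Psi_1}_\xi y},\widetilde{D^{\Psi_2}_\xi y})$ is an independent copy of $(D^{\Psi_1}_\xi y,D^{\Psi_2}_\xi y)$, a point worth retaining, and it gets homogeneity directly from the definition of the G\^ateaux derivative). Continuity in $\xi$ for a fixed $\Psi$ is likewise proved there by your energy argument: It\^o's formula applied to the inner product of the differences of the two Jacobian flows, the convexity assumptions together with \eqref{ass. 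Cii}, and a Borel--Cantelli/dominated-convergence passage to the limit, with source terms of the form (coefficient difference) times $D^\Psi_\xi(\cdot)$ at the frozen base point. Where you part ways with the paper is the last step: the paper performs no mean-value argument at all; it concludes by citing Definition 3.2.1 and Proposition 3.2.15 of \cite{DM07}, i.e.\ the criterion that a G\^ateaux derivative which is linear in the direction and continuous in the base point is a Fr\'echet derivative.

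The obstacle you flag in that last step is genuine, but neither of your proposed repairs closes it. Both reduce to showing that quantities of the form $\mathbb{E}\big[\,|\Delta|^2\,|T\widehat\Psi|^2\,\big]$ vanish uniformly over $\|\widehat\Psi\|_{\mathcal{H}}\le 1$, where $\Delta$ is a coefficient difference (uniformly bounded, converging to zero in probability) and $T\widehat\Psi$ is a Jacobian flow or a normalized difference quotient. Boundedness of $\Delta$ together with the direction-uniform bound $\sup_{\widehat\Psi}\mathbb{E}|T\widehat\Psi|^2\le (C_4')^2$ from Lemma \ref{lem. bdd of diff quotient} does not suffice: splitting the expectation on $\{|\Delta|>\varepsilon\}$, one needs uniform integrability of the family $\{|T\widehat\Psi|^2:\|\widehat\Psi\|_{\mathcal{H}}\le 1\}$, which a uniform $L^2$ bound never provides. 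Equivalently, you need the product of the multiplication operator by $|\Delta|$ with $T$ to be small in operator norm knowing only that the multiplication operator is small in the strong topology; this is true when $T$ is compact, but your $T$ contains the identity, since $D^{\widehat\Psi}_\xi y(s)=\widehat\Psi+\int_{\tau_i}^s\cdots$, and the model case $T=\mathrm{Id}$, $\Delta=\mathbbm{1}_{E_k}$ with $\mathbb{P}(E_k)\to0$ shows the desired uniform convergence fails at this level of generality. Your alternative (estimating $R(\Psi)$ directly) has exactly the same structure and hence the same defect; note also that continuity plus boundedness of the second derivatives yields no global modulus of continuity, so the phrase ``modulus of continuity of the $C^2$-data times $\|\Psi\|_{\mathcal{H}}$'' is not available. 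What would close the gap is uniform integrability of the squared difference quotients, e.g.\ $L^4$ or pointwise bounds; the paper obtains such bounds only later (Lemma \ref{lem L4 regularity}, via the decoupling relation $p=\nabla_xV$ and Lemma \ref{lem 2nd diff V w.r.t. x}), not within Section \ref{sec. Well-posedness of FBSDE}. So either develop such a bound at this stage, or do what the paper does and invoke the cited criterion of \cite{DM07} --- bearing in mind that your worry then resurfaces as the question of whether the continuity actually proved (for each fixed $\Psi$) matches the continuity that criterion requires.
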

Without ambiguity, we sometime denote $D_\xi$ by $D$ by suppressing the subscript of initial data $\xi$. We provide the proof in Appendix \ref{app. Proofs in Well-posedness of FBSDE}.

	\subsection{Global Existence of Solution}\label{subsec. Global Existence of Solution}
	In this section, we shall establish the global well-posedness of the FBSDE (\ref{eq. FBSDE, equilibrium})-(\ref{eq. 1st order condition, equilibrium}) over any finite time horizon $[0,T]$ for any $T>0$. For $\xi$, $\Psi \in L^2(\Omega,\mathcal{W}^{\tau_{i}}_0,\mathbb{P};\mathbb{R}^d)$, for if that the FBSDE (\ref{eq. FBSDE, equilibrium})-(\ref{eq. 1st order condition, equilibrium}) has a solution $\pig(y_{\tau_i \xi}(s), p_{\tau_i \xi}(s), q_{\tau_i \xi}(s), u_{\tau_i \xi}(s)\pig)$ over the interval $[\tau_i,T]$, for some $i=1,2,\ldots,n-1$. Due to Lemma \ref{lem. Existence of J flow, weak conv.} and \ref{lem. Existence of J flow, strong conv.}, we are now ready to show that the Jacobian flow $D^\Psi_\xi p_{\tau_i \xi} (s)$ of the backward dynamics of (\ref{eq. J flow of FBSDE}) is bounded in $\mathcal{H}$ for $s \in [\tau_i,T]$ and for any $\Psi$ such that $\|\Psi\|_{\mathcal{H}}\leq 1$; by then, we can extend the solution backward for one more time subinterval one by one. The following lemma gives a crucial estimate by investigating the inner product of $D^\Psi_\xi y_{\tau_i \xi} (s)$ and $D^\Psi_\xi p_{\tau_i \xi} (s)$.
	\begin{lemma}
		Under \eqref{ass. Cii} of Assumption \textup{\bf (Cii)}, for any $\xi$, $\Psi \in L^2(\Omega,\mathcal{W}^{\tau_{i}}_0,\mathbb{P};\mathbb{R}^d)$ with $\|\Psi\|_{\mathcal{H}}\leq 1$, we have
		\begin{equation}
			\begin{aligned}
				\big\| D^\Psi_\xi p_{\tau_i \xi} (\tau_i) \big\|_{\mathcal{H}}
				\geq-\lambda_{h_1}
				\big\| D^\Psi_\xi y_{\tau_i \xi} (T) \big\|^2_{\mathcal{H}}
				+\dfrac{1}{C_{g_1}}
				\int^T_{\tau_i}\big\|D^\Psi_\xi p_{\tau_i \xi} (s)\big\|^2_{\mathcal{H}} ds
				-\left(\lambda_{g_1} + \lambda_{g_2}+c_{g_2}\right)
				\int^T_{\tau_i}\big\| D^\Psi_\xi y_{\tau_i \xi} (s) \big\|^2_{\mathcal{H}} ds.
			\end{aligned}
			\label{ineq. |DZ|>-|DY(T)|+int |DZ|}
		\end{equation}
		\label{lem cruical est.}
	\end{lemma}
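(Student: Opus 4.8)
The plan is to run the same duality computation that underlies Lemma \ref{lem. bdd of y p q u} and the uniqueness argument in Lemma \ref{lem. Existence of J flow, weak conv.}, but now read off an \emph{inequality} rather than an identity. First I would apply It\^o's lemma to the inner product $\langle D^\Psi_\xi p_{\tau_i\xi}(s), D^\Psi_\xi y_{\tau_i\xi}(s)\rangle_{\mathbb{R}^d}$ using the forward and backward Jacobian equations in (\ref{eq. J flow of FBSDE}). Since $D^\Psi_\xi y_{\tau_i\xi}$ is absolutely continuous in time (it carries no $dW_\tau$ term), there is no quadratic covariation, and after integrating over $[\tau_i,T]$, taking expectations and discarding the stochastic integral by the tower property, the terminal datum $D^\Psi_\xi p_{\tau_i\xi}(T)=\nabla_{yy}h_1(y_{\tau_i\xi}(T))D^\Psi_\xi y_{\tau_i\xi}(T)$ and the initial datum $D^\Psi_\xi y_{\tau_i\xi}(\tau_i)=\Psi$ give an identity of the form $\langle D^\Psi_\xi p_{\tau_i\xi}(\tau_i),\Psi\rangle_{\mathcal{H}} = \langle \nabla_{yy}h_1(y(T))D^\Psi_\xi y(T),D^\Psi_\xi y(T)\rangle_{\mathcal{H}} + \int_{\tau_i}^{T}(\text{Hessian integrand})\,ds$, where (suppressing the subscript $\tau_i\xi$) the integrand collects $\langle\nabla_{yy}g_1\,D y,D y\rangle$, $\langle\nabla_{vy}g_1\,D u,D y\rangle$, the $g_2$-terms, and $-\langle D p,D u\rangle$.

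The crux is to reorganize this integrand. I would replace $D^\Psi_\xi u$ throughout by $[\nabla_y u]D^\Psi_\xi y+[\nabla_p u]D^\Psi_\xi p$ using (\ref{eq. conv. Du = Dy + Dp}), and then substitute the explicit formulas $\nabla_y u=-[\nabla_{vv}g_1]^{-1}\nabla_{yv}g_1$ and $\nabla_p u=-[\nabla_{vv}g_1]^{-1}$ from (\ref{eq. diff 1st order condition with inverse}). The key algebraic observation is that, precisely because $\nabla_y u=\nabla_p u\cdot\nabla_{yv}g_1$, the mixed $D^\Psi_\xi y$--$D^\Psi_\xi p$ contributions produced by $\langle\nabla_{vy}g_1\,D^\Psi_\xi u,D^\Psi_\xi y\rangle$ and by $-\langle D^\Psi_\xi p,D^\Psi_\xi u\rangle$ cancel exactly. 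What remains splits cleanly into a pure $D^\Psi_\xi y$ quadratic form governed by the Schur complement $\nabla_{yy}g_1-\nabla_{vy}g_1[\nabla_{vv}g_1]^{-1}\nabla_{yv}g_1$ (together with the $\nabla_{yy}g_2$ term and the measure-derivative term), plus the pure $D^\Psi_\xi p$ term $\langle[\nabla_{vv}g_1]^{-1}D^\Psi_\xi p,D^\Psi_\xi p\rangle$.

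The inequalities then follow termwise from the structural assumptions. The Schur-complement form is $\geq-\lambda_{g_1}\|D^\Psi_\xi y\|_{\mathcal{H}}^2$ by \textbf{(Ax)} (the very bound already invoked in Section \ref{sec. comparsion with other mono}); \textbf{(Axi)} gives $\langle\nabla_{yy}g_2\,D^\Psi_\xi y,D^\Psi_\xi y\rangle\geq-\lambda_{g_2}\|D^\Psi_\xi y\|_{\mathcal{H}}^2$; \textbf{(Bvi)} gives the terminal bound $-\lambda_{h_1}\|D^\Psi_\xi y(T)\|_{\mathcal{H}}^2$; and since $|\nabla_{vv}g_1|\leq C_{g_1}$ by \textbf{(Avii)} we have $[\nabla_{vv}g_1]^{-1}\succeq C_{g_1}^{-1}I$, so $\langle[\nabla_{vv}g_1]^{-1}D^\Psi_\xi p,D^\Psi_\xi p\rangle\geq C_{g_1}^{-1}|D^\Psi_\xi p|^2$, which is the source of the $\tfrac{1}{C_{g_1}}$ factor. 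For the measure-derivative term I would use the bound $c_{g_2}$ from \textbf{(Aix)} together with the independence of the tilde-copy and Jensen's inequality to obtain $\geq-c_{g_2}\|D^\Psi_\xi y\|_{\mathcal{H}}^2$ (this is exactly where the small mean field effect enters). Bounding the left side by $\langle D^\Psi_\xi p_{\tau_i\xi}(\tau_i),\Psi\rangle_{\mathcal{H}}\leq\|D^\Psi_\xi p_{\tau_i\xi}(\tau_i)\|_{\mathcal{H}}\|\Psi\|_{\mathcal{H}}\leq\|D^\Psi_\xi p_{\tau_i\xi}(\tau_i)\|_{\mathcal{H}}$ (using $\|\Psi\|_{\mathcal{H}}\leq1$) and collecting the coefficients yields precisely (\ref{ineq. |DZ|>-|DY(T)|+int |DZ|}).

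I expect the main obstacle to be the bookkeeping of the second paragraph: verifying the exact cancellation of the cross terms and tracking the transposes so that the Schur complement is identified correctly, ensuring that the coefficient of $\int_{\tau_i}^T\|D^\Psi_\xi y\|_{\mathcal{H}}^2$ comes out as exactly $-(\lambda_{g_1}+\lambda_{g_2}+c_{g_2})$ and that of $\int_{\tau_i}^T\|D^\Psi_\xi p\|_{\mathcal{H}}^2$ as exactly $C_{g_1}^{-1}$. The remaining steps are routine applications of the convexity and boundedness hypotheses already in force.
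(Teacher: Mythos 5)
Your proposal is correct and follows essentially the same route as the paper's proof: It\^o's formula applied to $\pig\langle D^\Psi_\xi p_{\tau_i\xi}(s), D^\Psi_\xi y_{\tau_i\xi}(s)\pigr\rangle_{\mathcal{H}}$, exact cancellation of the mixed $D^\Psi_\xi y$--$D^\Psi_\xi p$ terms via the differentiated first-order condition (the paper derives this through the inner-product identities \eqref{eq. <d/dx 1st order condition,DZ u p1>}--\eqref{1531} before substituting \eqref{eq. diff 1st order condition with inverse}, while you substitute the inverse formulas directly, an equivalent computation), identification of the Schur complement bounded below by $-\lambda_{g_1}$ via \textbf{(Ax)}, the eigenvalue bound $\pig(\nabla_{vv}g_1\pigr)^{-1}\succeq C_{g_1}^{-1}Id$ (which also uses positive definiteness of $\nabla_{vv}g_1$ from \textbf{(Ax)}, as the paper notes), and termwise estimates from \textbf{(Axi)}, \textbf{(Aix)}, \textbf{(Bvi)} followed by Cauchy--Schwarz with $\|\Psi\|_{\mathcal{H}}\leq 1$. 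No gaps.
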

	\begin{proof}
		Differentiating the following inner product, we have
			\begin{align}
				& \h{-10pt} d\Big\langle D^\Psi_\xi y_{\tau_i \xi} (s),D^\Psi_\xi p_{\tau_i \xi} (s)  \Big\rangle_{\mathcal{H}}\nonumber\\
				=\:&\bigg\langle 
				\Big[ \nabla_y u\pig(y_{\tau_i \xi}(s),p_{\tau_i \xi}(s)\pig)\Big] 
				D^\Psi_\xi y_{\tau_i \xi}  (s),
				D^\Psi_\xi p_{\tau_i \xi} (s) 
				\bigg\rangle_{\mathcal{H}}\h{-5pt}ds
				+\bigg\langle \Big[ \nabla_p u \pig(y_{\tau_i \xi}(s),p_{\tau_i \xi}(s)\pig)\Big] 
				D^\Psi_\xi p_{\tau_i \xi} (s),
				D^\Psi_\xi p_{\tau_i \xi} (s) 
				\bigg\rangle_{\mathcal{H}}\h{-5pt}ds\h{-10pt}\nonumber\\
				&-\bigg\langle D^\Psi_\xi y_{\tau_i \xi} (s),
				\bigg\{\nabla_{yy}g_1\pig(y_{\tau_i \xi}(s),u_{\tau_i \xi}(s)\pig)
				+\nabla_{vy}g_1\pig(y_{\tau_i \xi}(s),u_{\tau_i \xi}(s)\pig)\Big[ \nabla_y u\pig(y_{\tau_i \xi}(s),p_{\tau_i \xi}(s)\pig)\Big]\bigg\}D^\Psi_\xi y_{\tau_i \xi} (s) \bigg\rangle_{\mathcal{H}}ds\nonumber\\
				&-\bigg\langle D^\Psi_\xi y_{\tau_i \xi} (s),
				\nabla_{vy}g_1\pig(y_{\tau_i \xi}(s),u_{\tau_i \xi}(s)\pig)
				\Big[ \nabla_p  u \pig(y_{\tau_i \xi}(s),p_{\tau_i \xi}(s)\pig)\Big] 
				D^\Psi_\xi p_{\tau_i \xi} (s) \bigg\rangle_{\mathcal{H}}ds\nonumber\\
				&-\bigg\langle D^\Psi_\xi y_{\tau_i \xi} (s),
				\nabla_{yy}g_2\pig(y_{\tau_i \xi}(s),\mathcal{L}(y_{\tau_i \xi}(s))\pig)
				D^\Psi_\xi y_{\tau_i \xi} (s) \bigg\rangle_{\mathcal{H}}ds\nonumber\\
				&- \left\langle D^{\Psi}_\xi y_{\tau_i \xi} (s),
				\widetilde{\mathbb{E}}
				\left\{\nabla_{\tilde{y}}\dfrac{d}{d\nu}\nabla_{y}g_2\pig(y_{\tau_i \xi}(s),\mathcal{L}\big(y_{\tau_i \xi}(s)\big)\pig)  (\widetilde{y})\bigg|_{\tilde{x}= \widetilde{y_{\tau_i \xi}} (s)}
				\widetilde{D^{\Psi}_\xi y_{\tau_i \xi}} (s)\right\}
				\right\rangle_{\mathcal{H}}ds.
							\label{eq. of d<DY,DZ> 1}
			\end{align}
		Differentiating the first order condition (\ref{eq. 1st order condition, equilibrium}) with respect to $y$ and $p$ respectively, we also have
		\begin{equation}
			\begin{aligned}
				\nabla_{yv}g_1\pig(y_{\tau_i \xi}(s),u_{\tau_i \xi}(s)\pig) 
				+ \nabla_{vv}g_1\pig(y_{\tau_i \xi}(s),u_{\tau_i \xi}(s)\pig)\,
				\nabla_y u\pig(y_{\tau_i \xi}(s),u_{\tau_i \xi}(s)\pig)&=0;\\
				\nabla_{vv}g_1\pig(y_{\tau_i \xi}(s),u_{\tau_i \xi}(s)\pig)\,
				\nabla_p u\pig(y_{\tau_i \xi}(s),p_{\tau_i \xi}(s)\pig)  + I d&=0.
			\end{aligned}
			\label{eq. diff 1st order condition}
		\end{equation}
		In the rest of this proof, for simplicity, we write $g_1$, $g_2$, $u$ for $g_1\pig(y_{\tau_i \xi}(s),u_{\tau_i \xi}(s)\pig)$, $g_2\pig(y_{\tau_i \xi}(s),\mathcal{L}\big(y_{\tau_i \xi}(s)\big)\pig)$, 
		$u\pig(y_{\tau_i \xi}(s),p_{\tau_i \xi}(s)\pig)$ without disclosing their arguments. Let $p_1$ and $p_2 \in \mathcal{H}$, then \eqref{eq. diff 1st order condition} implies that, due to symmetry,
		\begin{equation}
			\begin{aligned}
				0=\Big\langle \nabla_{yv}g_1 \, p_2
				+ \nabla_{vv}g_1\,
				\nabla_y u\, p_2,\nabla_p u \, p_1 \Big\rangle_{\mathcal{H}}
				=\Big\langle \nabla_{yv}g_1 \, p_2
				,\nabla_p u \, p_1 \Big\rangle_{\mathcal{H}}
				+\Big\langle \nabla_{vv}g_1\,
				\nabla_p u \, p_1 ,\nabla_y u \, p_2 \Big\rangle_{\mathcal{H}}
			\end{aligned}
			\label{eq. <d/dx 1st order condition,DZ u p1>}
		\end{equation}
		\begin{flalign}
			\text{and}
			&&0=\Big\langle \nabla_{vv}g_1 \, \nabla_p  u \,  p_1
			+  p_1, \nabla_y u \, p_2\Big\rangle_{\mathcal{H}}
			=\Big\langle  \nabla_{vv}g_1 \, \nabla_p  u \, p_1
			, \nabla_y u \, p_2\Big\rangle_{\mathcal{H}}
			+\Big\langle  p_1, \nabla_y u  p_2\Big\rangle_{\mathcal{H}}.&&
			\label{eq. <d/dz 1st order condition,DX u p2>}
		\end{flalign}
		Equating (\ref{eq. <d/dx 1st order condition,DZ u p1>}) and (\ref{eq. <d/dz 1st order condition,DX u p2>}) gives 
		$\Big\langle p_1, \nabla_y u \, p_2\Big\rangle_{\mathcal{H}} 
		=  \Big\langle \nabla_{yv}g_1 \, p_2
		,\nabla_p u \, p_1 \Big\rangle_{\mathcal{H}}$. Taking $p_1 = D^\Psi_\xi p_{\tau_i \xi}(s)$ and $p_2 = D^\Psi_\xi y_{\tau_i \xi} (s)$, this relation gives
		\begin{align}
			\Big\langle D^\Psi_\xi p_{\tau_i \xi}(s), \nabla_y u  \, D^\Psi_\xi y_{\tau_i \xi} (s)\Big\rangle_{\mathcal{H}}
			&=  \Big\langle \nabla_{yv}g_1 \, D^\Psi_\xi y_{\tau_i \xi} (s)
			,\nabla_p u \, D^\Psi_\xi p_{\tau_i \xi}(s) \Big\rangle_{\mathcal{H}}\nonumber\\
			&=  \Big\langle  D^\Psi_\xi y_{\tau_i \xi} (s)
			,\nabla_{vy}g_1 \, \nabla_p u \, D^\Psi_\xi p_{\tau_i \xi}(s) \Big\rangle_{\mathcal{H}}.
			\label{<DZ,DuDY>}
		\end{align}
		Plugging (\ref{<DZ,DuDY>}) into (\ref{eq. of d<DY,DZ> 1}), we obtain
		\begin{align}
			&\h{-10pt}d\Big\langle  D^\Psi_\xi y_{\tau_i \xi} (s),D^\Psi_\xi p_{\tau_i \xi} (s)  \Big\rangle_{\mathcal{H}}\nonumber\\
			=\:&\Bigg\langle \nabla_p  u \, 
			D^\Psi_\xi p_{\tau_i \xi} (s),D^\Psi_\xi p_{\tau_i \xi} (s) \Bigg\rangle_{\mathcal{H}}ds
			-\bigg\langle  D^\Psi_\xi y_{\tau_i \xi} (s),
			\bigg\{\nabla_{yy}g_1
			+\nabla_{vy}g_1 \, \nabla_y u \bigg\}D^\Psi_\xi y_{\tau_i \xi} (s) \bigg\rangle_{\mathcal{H}}\h{-5pt}ds\nonumber\\
			&-\bigg\langle D^\Psi_\xi y_{\tau_i \xi} (s),
			\nabla_{yy}g_2 \,
			D^\Psi_\xi y_{\tau_i \xi} (s) \bigg\rangle_{\mathcal{H}}ds\nonumber\\
			&- \left\langle D^{\Psi}_\xi y_{\tau_i \xi} (s),
			\widetilde{\mathbb{E}}
			\left\{\nabla_{\tilde{y}}\dfrac{d}{d\nu}\nabla_{y}g_2\pig(y_{\tau_i \xi}(s),\mathcal{L}\big(y_{\tau_i \xi}(s)\big)\pig)  (\widetilde{y})\bigg|_{\tilde{x}= \widetilde{y_{\tau_i \xi}} (s)}
			\widetilde{D^{\Psi}_\xi y_{\tau_i \xi}} (s)\right\}
			\right\rangle_{\mathcal{H}}ds.
			\label{eq. of d<DY,DZ> 2}
		\end{align}
		From \eqref{eq. diff 1st order condition}, we replace by $p_1=p_2=D^\Psi_\xi y_{\tau_i \xi} (s)$ in $\Big\langle \nabla_{yv}g_1 \, p_2
		+ \nabla_{vv}g_1\,
		\nabla_y u\, p_2,\nabla_y u \, p_1 \Big\rangle_{\mathcal{H}}=0$, then
		\begin{align}
			0=\:&\Big\langle \nabla_{yv}g_1 \, D^\Psi_\xi y_{\tau_i \xi} (s)
			+ \nabla_{vv}g_1 \,
			\nabla_y u D^\Psi_\xi y_{\tau_i \xi} (s),
			\nabla_y u D^\Psi_\xi y_{\tau_i \xi} (s)\Big\rangle_{\mathcal{H}}\nonumber\\
			=\:&\Big\langle \nabla_{vy}g_1 \, \nabla_y u \, D^\Psi_\xi y_{\tau_i \xi} (s)
			, D^\Psi_\xi y_{\tau_i \xi} (s) \Big\rangle_{\mathcal{H}}
			+\Big\langle \nabla_{vv}g_1
			\nabla_y u \, D^\Psi_\xi y_{\tau_i \xi} (s),\nabla_y u \, D^\Psi_\xi y_{\tau_i \xi} (s) \Big\rangle_{\mathcal{H}}.\label{1522}
		\end{align}
		Similarly, we take $p_1=p_2=D^\Psi_\xi p_{\tau_i \xi} (s)$ in $\Big\langle \nabla_{vv}g_1 \, \nabla_p  u \,  p_1
		+  p_1, \nabla_p u \, p_2\Big\rangle_{\mathcal{H}}=0$, we further have
		\begin{align}
			0&=\Big\langle \nabla_{vv}g_1 \, \nabla_p  u \, D^\Psi_\xi p_{\tau_i \xi} (s)
			+ D^\Psi_\xi p_{\tau_i \xi} (s), \nabla_p  u \, D^\Psi_\xi p_{\tau_i \xi} (s)\Big\rangle_{\mathcal{H}}\nonumber\\
			&=\Big\langle \nabla_{vv}g_1 \, \nabla_p  u \, D^\Psi_\xi p_{\tau_i \xi} (s)
			, \nabla_p  u \, D^\Psi_\xi p_{\tau_i \xi} (s)\Big\rangle_{\mathcal{H}}
			+\Big\langle  \nabla_p  u \, D^\Psi_\xi p_{\tau_i \xi} (s)
			,D^\Psi_\xi p_{\tau_i \xi} (s)\Big\rangle_{\mathcal{H}}.\label{1531}
		\end{align}
		Substituting \eqref{1522} and (\ref{1531}) into (\ref{eq. of d<DY,DZ> 2}), we then obtain
		$$
		\begin{aligned}
			&\h{-10pt}d\Big\langle  D^\Psi_\xi y_{\tau_i \xi} (s),D^\Psi_\xi p_{\tau_i \xi} (s)  \Big\rangle_{\mathcal{H}}\\
			=\:&-\Big\langle \nabla_{vv}g_1 \, \nabla_p  u \, D^\Psi_\xi p_{\tau_i \xi} (s)
			, \nabla_p  u \, D^\Psi_\xi p_{\tau_i \xi} (s)\Big\rangle_{\mathcal{H}}ds
			-\bigg\langle  D^\Psi_\xi y_{\tau_i \xi} (s),
			\nabla_{yy}g_1 \, D^\Psi_\xi y_{\tau_i \xi} (s) \bigg\rangle_{\mathcal{H}}ds\\
			&+\Big\langle \nabla_{vv}g_1
			\nabla_y u \, D^\Psi_\xi y_{\tau_i \xi} (s),\nabla_y u D^\Psi_\xi y_{\tau_i \xi} (s) \Big\rangle_{\mathcal{H}}ds
			-\bigg\langle D^\Psi_\xi y_{\tau_i \xi} (s),
			\nabla_{yy}g_2
			D^\Psi_\xi y_{\tau_i \xi} (s) \bigg\rangle_{\mathcal{H}}ds\nonumber\\
			&- \left\langle D^{\Psi}_\xi y_{\tau_i \xi} (s),
			\widetilde{\mathbb{E}}
			\left\{\nabla_{\tilde{y}}\dfrac{d}{d\nu}\nabla_{y}g_2\pig(y_{\tau_i \xi}(s),\mathcal{L}\big(y_{\tau_i \xi}(s)\big)\pig)  (\widetilde{y})\bigg|_{\tilde{x}= \widetilde{y_{\tau_i \xi}} (s)}
			\widetilde{D^{\Psi}_\xi y_{\tau_i \xi}} (s)\right\}
			\right\rangle_{\mathcal{H}}ds.
		\end{aligned}
		$$
		Using the equalities in (\ref{eq. diff 1st order condition with inverse}), we can deduce
		\begin{align}
			&\h{-10pt}d\Big\langle  D^\Psi_\xi y_{\tau_i \xi} (s),D^\Psi_\xi p_{\tau_i \xi} (s)  \Big\rangle_{\mathcal{H}}\nonumber\\
			=\:&-\Big\langle  D^\Psi_\xi p_{\tau_i \xi} (s)
			, \pig(\nabla_{vv}g_1\pigr)^{-1} D^\Psi_\xi p_{\tau_i \xi} (s)\Big\rangle_{\mathcal{H}}ds
			-\bigg\langle  D^\Psi_\xi y_{\tau_i \xi} (s),
			\nabla_{yy}g_1D^\Psi_\xi y_{\tau_i \xi} (s) \bigg\rangle_{\mathcal{H}}ds\nonumber\\
			&+\Big\langle \nabla_{yv}g_1 D^\Psi_\xi y_{\tau_i \xi} (s),\pig(\nabla_{vv}g_1\pigr)^{-1}\nabla_{yv}g_1 D^\Psi_\xi y_{\tau_i \xi} (s) \Big\rangle_{\mathcal{H}}ds
			-\bigg\langle D^\Psi_\xi y_{\tau_i \xi} (s),
			\nabla_{yy}g_2
			D^\Psi_\xi y_{\tau_i \xi} (s) \bigg\rangle_{\mathcal{H}}ds\nonumber\\
			&- \left\langle D^{\Psi}_\xi y_{\tau_i \xi} (s),
			\widetilde{\mathbb{E}}
			\left\{\nabla_{\tilde{y}}\dfrac{d}{d\nu}\nabla_{y}g_2\pig(y_{\tau_i \xi}(s),\mathcal{L}\big(y_{\tau_i \xi}(s)\big)\pig)  (\widetilde{y})\bigg|_{\tilde{x}= \widetilde{y_{\tau_i \xi}} (s)}
			\widetilde{D^{\Psi}_\xi y_{\tau_i \xi}} (s)\right\}
			\right\rangle_{\mathcal{H}}ds\nonumber\\
			=\:&-\Big\langle  D^\Psi_\xi p_{\tau_i \xi} (s)
			, \pig(\nabla_{vv}g_1\pigr)^{-1} D^\Psi_\xi p_{\tau_i \xi} (s)\Big\rangle_{\mathcal{H}}ds
			-\bigg\langle  D^\Psi_\xi y_{\tau_i \xi} (s),
			\Big[\nabla_{yy}g_1 - \nabla_{vy}g_1\pig(\nabla_{vv}g_1\pigr)^{-1} \nabla_{yv}g_1 \Big]D^\Psi_\xi y_{\tau_i \xi} (s) \bigg\rangle_{\mathcal{H}}ds\nonumber\\
			&
			-\bigg\langle D^\Psi_\xi y_{\tau_i \xi} (s),
			\nabla_{yy}g_2
			D^\Psi_\xi y_{\tau_i \xi} (s) \bigg\rangle_{\mathcal{H}}ds\nonumber\\
			&- \left\langle D^{\Psi}_\xi y_{\tau_i \xi} (s),
			\widetilde{\mathbb{E}}
			\left\{\nabla_{\tilde{y}}\dfrac{d}{d\nu}\nabla_{y}g_2\pig(y_{\tau_i \xi}(s),\mathcal{L}\big(y_{\tau_i \xi}(s)\big)\pig)  (\widetilde{y})\bigg|_{\tilde{x}= \widetilde{y_{\tau_i \xi}} (s)}
			\widetilde{D^{\Psi}_\xi y_{\tau_i \xi}} (s)\right\}
			\right\rangle_{\mathcal{H}}ds.
			\label{eq. of d<DY,DZ> 3}
		\end{align}
		Since $\pig(\nabla_{vv}g_1\pigr)^{-1}$ is also symmetric, we have
		\begin{align*}
			\Big\langle  D^\Psi_\xi p_{\tau_i \xi} (s)
			, \pig(\nabla_{vv}g_1\pigr)^{-1} D^\Psi_\xi p_{\tau_i \xi} (s)\Big\rangle_{\mathcal{H}} 
			\geq \lambda_{\textup{min}}\pig(\big(\nabla_{vv}g_1\big)^{-1}\pig) 
			\big\| D^\Psi_\xi p_{\tau_i \xi} (s) \big\|^2_{\mathcal{H}}
			=\dfrac{1}{\lambda_{\textup{max}}\pig(\nabla_{vv}g_1\pig) }\big\| D^\Psi_\xi p_{\tau_i \xi} (s) \big\|^2_{\mathcal{H}},
		\end{align*}
		where $\lambda_{\textup{max}}\pig(A(\cdot,\cdot)\pig) 
		:= \displaystyle\max_{(y,v)\in \mathbb{R}^d \times \mathbb{R}^d}
		\lambda_{\textup{max}}\pig(A(y,v)\pig)$ and $\lambda_{\textup{max}}\pig(A(y,v)\pig)$ is the maximum eigenvalue of the matrix $A(y,v)$ with fixed $y,v\in \mathbb{R}^d$, while the notation $\lambda_{\textup{min}}$ stands for the minimum one. As  (\ref{ass. bdd of D^2g1}) and \eqref{ass. convexity of g1} of the respective Assumptions {\bf (Avii)} and {\bf (Ax)}, we see that $ 0< \Lambda_{g_1} \leq \lambda_{\textup{max}}\pig(\nabla_{vv}g_1\pig) = \displaystyle\max{(y,v)\in \mathbb{R}^d \times \mathbb{R}^d}\big|\nabla_{vv}g_1(y,v) \big|\leq C_{g_1}$, we obtain
		\begin{align}
			\Big\langle  D^\Psi_\xi p_{\tau_i \xi} (s)
			, \pig(\nabla_{vv}g_1\pigr)^{-1} D^\Psi_\xi p_{\tau_i \xi} (s)\Big\rangle_{\mathcal{H}} 
			\geq \dfrac{1}{ C_{g_1} }\big\| D^\Psi_\xi p_{\tau_i \xi} (s) \big\|^2_{\mathcal{H}}.
			\label{eq. <DZ,l^-1_vv DZ>}
		\end{align}
		For any $p_1$ and $p_2 \in \mathcal{H}$, we compute directly that
		\begin{align}
			&\h{-10pt}\nabla_{yy}g_1 p_1 \cdot p_1
			+2\nabla_{yv}g_1 p_1 \cdot p_2
			+\nabla_{vv}g_1 p_2 \cdot p_2 \nonumber\\
			=\:&\nabla_{yy}g_1 p_1 \cdot p_1
			+(\nabla_{yv}g_1 p_1+\nabla_{vv}g_1 p_2) \cdot \Big[\pig(\nabla_{vv}g_1\pigr)^{-1}\nabla_{yv}g_1p_1+p_2\Big]
			-(\nabla_{yv}g_1 p_1+\nabla_{vv}g_1 p_2) \cdot \Big[\pig(\nabla_{vv}g_1\pigr)^{-1}\nabla_{yv}g_1p_1\Big]
			\nonumber\\
			&+\nabla_{yv}g_1 p_1 \cdot p_2 \nonumber\\
			=\:&\nabla_{yy}g_1 p_1 \cdot p_1
			+(\nabla_{yv}g_1 p_1+\nabla_{vv}g_1 p_2) \cdot \Big[\pig(\nabla_{vv}g_1\pigr)^{-1}\nabla_{yv}g_1p_1+p_2\Big]
			-\Big[\pig(\nabla_{vv}g_1\pigr)^{-1}\nabla_{yv}g_1 p_1+ p_2\Big] \cdot (\nabla_{yv}g_1p_1)\nonumber\\
			&+\nabla_{yv}g_1 p_1 \cdot p_2 \nonumber\\
			=\:&\nabla_{yy}g_1 p_1 \cdot p_1
			+(\nabla_{yv}g_1 p_1+\nabla_{vv}g_1 p_2) \cdot \Big[\pig(\nabla_{vv}g_1\pigr)^{-1}\nabla_{yv}g_1p_1+p_2\Big]
			-\Big[\pig(\nabla_{vv}g_1\pigr)^{-1}\nabla_{yv}g_1 p_1\Big] \cdot (\nabla_{yv}g_1p_1)\nonumber\\
			=\:&\Big[\nabla_{yy}g_1 - \nabla_{vy}g_1\pig(\nabla_{vv}g_1\pigr)^{-1}\nabla_{yv}g_1\Big]
			p_1\cdot p_1
			+(\nabla_{yv}g_1 p_1+\nabla_{vv}g_1 p_2) \cdot \Big[\pig(\nabla_{vv}g_1\pigr)^{-1}\nabla_{yv}g_1p_1+p_2\Big].
			\label{lxx+lvx+lvv}
		\end{align}
		Choosing $p_1 = D^\Psi_\xi y_{\tau_i \xi} (s)$, $p_2 = -\pig(\nabla_{vv}g_1\pigr)^{-1}\nabla_{yv}g_1\,p_1$, and using \eqref{ass. convexity of g1} of Assumption {\bf (Ax)},  we convert (\ref{lxx+lvx+lvv}) to
		\small
		\begin{align}
			\bigg\langle  D^\Psi_\xi y_{\tau_i \xi} (s),
			\Big[\nabla_{yy}g_1 - \nabla_{vy}g_1 \pig(\nabla_{vv}g_1\pigr)^{-1} \nabla_{yv}g_1 \Big]D^\Psi_\xi y_{\tau_i \xi} (s) \bigg\rangle_{\mathcal{H}}
			\geq&\: \Lambda_{g_1}\big\|\pig(\nabla_{vv}g_1\pigr)^{-1}\nabla_{yv}g_1D^\Psi_\xi y_{\tau_i \xi} (s)\big\|^2_{\mathcal{H}}
			- \lambda_{g_1}\big\| D^\Psi_\xi y_{\tau_i \xi} (s) \big\|^2_{\mathcal{H}} \nonumber\\
			\geq&\: - \lambda_{g_1} \big\| D^\Psi_\xi y_{\tau_i \xi} (s) \big\|^2_{\mathcal{H}}.
			\label{lower bound of schur com}
		\end{align}
		\normalsize
		Inequality \eqref{lower bound of schur com} implies that the Schur complement of the block sub-matrix $\nabla_{vv}g_1$ of the Hessian matrix of $g_1$  has the least eigenvalue $-\lambda_{g_1}$. Applying inequalities (\ref{lower bound of schur com}), (\ref{eq. <DZ,l^-1_vv DZ>}) and \eqref{ass. bdd of D^2g2}, \eqref{ass. bdd of D dnu D g2}, \eqref{ass. convexity of g2} of the respective Assumptions {\bf (Aviii)}, {\bf (Aix)}, {\bf (Axi)} to (\ref{eq. of d<DY,DZ> 3}), we can conclude that
		$$\begin{aligned}
			d\Big\langle  D^\Psi_\xi y_{\tau_i \xi} (s),D^\Psi_\xi p_{\tau_i \xi} (s)  \Big\rangle_{\mathcal{H}}
			\leq-\dfrac{1}{C_{g_1}}\big\|D^\Psi_\xi p_{\tau_i \xi} (s)\big\|^2_{\mathcal{H}} ds
			+(\lambda_{g_1}+\lambda_{g_2}+c_{g_2}) \big\| D^\Psi_\xi y_{\tau_i \xi} (s)\big\|^2_{\mathcal{H}}ds
			.
		\end{aligned}$$
		Integrating both sides of the above from $\tau_i$ to $T$ gives
		$$\begin{aligned}
			\Big\langle  D^\Psi_\xi y_{\tau_i \xi} (T),D^\Psi_\xi p_{\tau_i \xi} (T)  \Big\rangle_{\mathcal{H}}
			-\Big\langle  D^\Psi_\xi y_{\tau_i \xi} (\tau_i),D^\Psi_\xi p_{\tau_i \xi} (\tau_i)  \Big\rangle_{\mathcal{H}}
			\leq\:&- \dfrac{1}{C_{g_1}}
			\int^T_{\tau_i}\big\|D^\Psi_\xi p_{\tau_i \xi} (s)\big\|^2_{\mathcal{H}} ds\\
			&+\left(\lambda_{g_1}+\lambda_{g_2} + c_{g_2} \right)
			\int^T_{\tau_i}\big\| D^\Psi_\xi y_{\tau_i \xi} (s) \big\|^2_{\mathcal{H}} ds.
		\end{aligned}$$
		Substituting the initial value of $ D^\Psi_\xi y_{\tau_i \xi} (s)$ and the terminal value of $ D^\Psi_\xi p_{\tau_i \xi} (s)$, we finally obtain 
		$$\begin{aligned}
			-\lambda_{h_1}\big\| D^\Psi_\xi y_{\tau_i \xi} (T) \big\|^2_{\mathcal{H}}
			-\Big\langle  \Psi,D^\Psi_\xi p_{\tau_i \xi} (\tau_i)  \Big\rangle_{\mathcal{H}}
			\leq\:&- \dfrac{1}{C_{g_1}}
			\int^T_{\tau_i}\big\|D^\Psi_\xi p_{\tau_i \xi} (s)\big\|^2_{\mathcal{H}} ds
			+\left(\lambda_{g_1}+\lambda_{g_2} +c_{g_2}\right)
			\int^T_{\tau_i}\big\| D^\Psi_\xi y_{\tau_i \xi} (s) \big\|^2_{\mathcal{H}} ds,
		\end{aligned}$$
		where we have used \eqref{ass. convexity of h} of Assumption {\bf (Bvi)}  in the last inequality. Therefore, the desired result follows.
	\end{proof}

	\begin{lemma}
		Let $\xi$, $\Psi \in L^2(\Omega,\mathcal{W}^{\tau_{i}}_0,\mathbb{P};\mathbb{R}^d)$ with $\|\Psi\|_{\mathcal{H}}\leq 1$. If 
		\begin{flalign}
			\textup{\bf (Cii$)^*$.} &&
			\lambda_{h_1}<0 \h{5pt} \text{and} \h{5pt} \lambda_{g_1}+\lambda_{g_2}+c_{g_2}<0,&&
			\label{ass. Cii has no T}
		\end{flalign}
		then $\big\|D^\Psi_\xi p_{\tau_i \xi} (s)\big\|_{\mathcal{H}}$ is bounded by $C_2(\gamma_2,\Lambda_{g_1},\lambda_{g_1},\lambda_{g_2},\lambda_{h_1},C_{g_1},C_{g_2},C_{h_1}) $, which is independent of the terminal time $T$, as given in (\ref{def. C_2}).
		\label{lem. bdd of DZ}
	\end{lemma}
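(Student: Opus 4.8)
The plan is to trap $\big\|D^\Psi_\xi p_{\tau_i\xi}(\tau_i)\big\|_{\mathcal H}$ between two estimates of opposite nature and then close the loop. The decisive point is that, under \textbf{(Cii$)^*$} of \eqref{ass. Cii has no T}, every term on the right-hand side of the crucial lower bound \eqref{ineq. |DZ|>-|DY(T)|+int |DZ|} from Lemma~\ref{lem cruical est.} carries a favorable sign, since $-\lambda_{h_1}>0$ and $-(\lambda_{g_1}+\lambda_{g_2}+c_{g_2})>0$, so that
\[
\big\|D^\Psi_\xi p_{\tau_i\xi}(\tau_i)\big\|_{\mathcal H}
\;\geq\; (-\lambda_{h_1})\,\big\|D^\Psi_\xi y_{\tau_i\xi}(T)\big\|_{\mathcal H}^2
+\tfrac1{C_{g_1}}\!\int_{\tau_i}^T\!\big\|D^\Psi_\xi p_{\tau_i\xi}\big\|_{\mathcal H}^2\,ds
-(\lambda_{g_1}+\lambda_{g_2}+c_{g_2})\!\int_{\tau_i}^T\!\big\|D^\Psi_\xi y_{\tau_i\xi}\big\|_{\mathcal H}^2\,ds
\]
is a sum of three nonnegative quantities. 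This is exactly what will make the final constant $T$-independent: no Gr\"onwall argument (and hence no $e^{cT}$ factor) is needed to control the $\|D^\Psi_\xi y\|$-terms, in contrast to the borderline regime where these coefficients may be positive and one must absorb them at the cost of an exponential-in-$T$ factor.

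Next I would produce a matching upper bound for $\|D^\Psi_\xi p_{\tau_i\xi}(s)\|_{\mathcal H}^2$ by applying It\^o's formula to $|D^\Psi_\xi p_{\tau_i\xi}(s)|^2$ along the backward Jacobian dynamics in \eqref{eq. J flow of FBSDE} and taking expectations, so that the stochastic integral drops out. Using the terminal datum $\nabla_{yy}h_1\pig(y_{\tau_i\xi}(T)\pig)D^\Psi_\xi y_{\tau_i\xi}(T)$ together with the boundedness of $\nabla_{yy}h_1$, $\nabla_{yy}g_1$, $\nabla_{vy}g_1$, $\nabla_{yy}g_2$ and of the $L$-derivative $\nabla_{y'}\frac{d}{d\nu}\nabla_y g_2$ from Assumptions \eqref{ass. bdd of D^2h1}, \eqref{ass. bdd of D^2g1}, \eqref{ass. bdd of D^2g2}, \eqref{ass. bdd of D dnu D g2}, and the a priori bounds $|\nabla_y u|\le C_{g_1}/\Lambda_{g_1}$, $|\nabla_p u|\le 1/\Lambda_{g_1}$ from \eqref{bdd. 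Dx u and Dp u}, I would apply Young's inequality with a single free parameter $\gamma_3>0$ to the cross terms. Dropping the nonnegative $\int_s^T\|D^\Psi_\xi q_{\tau_i\xi}\|_{\mathcal H}^2\,d\tau$ on the left, this yields an inequality of the form
\[
\big\|D^\Psi_\xi p_{\tau_i\xi}(s)\big\|_{\mathcal H}^2
\;\leq\; C_{h_1}^2\big\|D^\Psi_\xi y_{\tau_i\xi}(T)\big\|_{\mathcal H}^2
+\alpha\!\int_{\tau_i}^T\!\big\|D^\Psi_\xi p_{\tau_i\xi}\big\|_{\mathcal H}^2\,d\tau
+\beta\!\int_{\tau_i}^T\!\big\|D^\Psi_\xi y_{\tau_i\xi}\big\|_{\mathcal H}^2\,d\tau,
\]
valid for all $s\in[\tau_i,T]$, where $\alpha$ and $\beta$ are the explicit constants appearing in \eqref{def. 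C_2}, with $\alpha$ carrying the factor $1/\gamma_3$ and $\beta$ the factor $\gamma_3$.

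Finally I would match coefficients. Choosing $C_2$ as the reciprocal of the minimum of the three ratios $-\lambda_{h_1}/C_{h_1}^2$, $1/(C_{g_1}\alpha)$ and $-(\lambda_{g_1}+\lambda_{g_2}+c_{g_2})/\beta$ --- precisely the constant in \eqref{def. C_2} --- guarantees termwise that the right-hand side of the upper bound is dominated by $C_2$ times the (nonnegative) right-hand side of the lower bound; hence the upper bound is at most $C_2\,\|D^\Psi_\xi p_{\tau_i\xi}(\tau_i)\|_{\mathcal H}$ for every $s$. Evaluating this at $s=\tau_i$ gives the self-improving inequality $\|D^\Psi_\xi p_{\tau_i\xi}(\tau_i)\|_{\mathcal H}^2\le C_2\,\|D^\Psi_\xi p_{\tau_i\xi}(\tau_i)\|_{\mathcal H}$, whence $\|D^\Psi_\xi p_{\tau_i\xi}(\tau_i)\|_{\mathcal H}\le C_2$; reinserting this into the bound for general $s$ yields $\|D^\Psi_\xi p_{\tau_i\xi}(s)\|_{\mathcal H}\le C_2$ uniformly in $s\in[\tau_i,T]$, with $C_2$ depending only on the stated structural constants and not on $T$. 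The main obstacle is the It\^o step for the upper bound: one must keep the coefficient $\alpha$ of $\int\|D^\Psi_\xi p\|^2$ and the coefficient $\beta$ of $\int\|D^\Psi_\xi y\|^2$ in a form compatible with the three lower-bound coefficients $\tfrac1{C_{g_1}}$, $-(\lambda_{g_1}+\lambda_{g_2}+c_{g_2})$ and $-\lambda_{h_1}$, so that the single parameter $\gamma_3$ can be absorbed into the definition of $C_2$ without reintroducing any $T$-dependence.
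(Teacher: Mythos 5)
Your proposal is correct and follows essentially the same route as the paper's proof: both combine the sign-definite lower bound \eqref{ineq. |DZ|>-|DY(T)|+int |DZ|} from Lemma \ref{lem cruical est.} with an It\^o/Young upper bound for $\|D^\Psi_\xi p_{\tau_i\xi}(s)\|_{\mathcal H}^2$ (the paper's \eqref{|DZ|^2+int|Dr|<|DY_T|^2+int|DZ|^2+int|DY|^2}), match the three coefficient ratios exactly as in \eqref{def. C_2}, and close with the self-improving inequality $\|D^\Psi_\xi p_{\tau_i\xi}(\tau_i)\|_{\mathcal H}^2\le C_2\|D^\Psi_\xi p_{\tau_i\xi}(\tau_i)\|_{\mathcal H}$ evaluated at $s=\tau_i$. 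No gaps beyond the trivial observation that the nonnegativity of the integrands lets one compare $\int_s^T$ with $\int_{\tau_i}^T$, which the paper also uses implicitly.
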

	\begin{proof}
		In this proof, we still abbreviate $g_1$, $g_2$, $u$ for $g_1\pig(y_{\tau_i \xi}(s),u_{\tau_i \xi}(s)\pig)$, $g_2\pig(y_{\tau_i \xi}(s),\mathcal{L}\big(y_{\tau_i \xi}(s)\big)\pig)$, 
		$u\pig(y_{\tau_i \xi}(s),p_{\tau_i \xi}(s)\pig)$ for simplicity. An application of It\^o's lemma gives:
		\begingroup
		\allowdisplaybreaks
		\begin{align*}
			&\h{-12pt} d\big\|D^\Psi_\xi p_{\tau_i \xi} (s)\big\|^2_{\mathcal{H}}\\
			=\:& -2\Big\langle   D^\Psi_\xi p_{\tau_i \xi} (s),
			\nabla_{yy}g_1 D^\Psi_\xi y_{\tau_i \xi} (s)
			+\nabla_{vy}g_1 \nabla_y u  D^\Psi_\xi y_{\tau_i \xi} (s)
			+\nabla_{vy}g_1 \nabla_p u  D^\Psi_\xi y_{\tau_i \xi} (s)
			+\nabla_{yy}g_2 D^\Psi_\xi y_{\tau_i \xi} (s) 
			\Big\rangle_{\mathcal{H}}ds\\ 
			&-2\left\langle   D^\Psi_\xi p_{\tau_i \xi} (s),
			\widetilde{\mathbb{E}}
			\left[\nabla_{y'}\dfrac{d}{d\nu}\nabla_{y}g_2\pig(y_{\tau_i \xi}(s),\mathcal{L}\big(y_{\tau_i \xi}(s)\big)\pig)  (y')\bigg|_{\tilde{x}= \widetilde{y_{\tau_i \xi}} (s)}
			\widetilde{D^\Psi_\xi y_{\tau_i \xi}} (s)\right]
			\right\rangle_{\mathcal{H}}ds
			+\left\|D^\Psi_\xi q_{\tau_i \xi}(s)
			\right\|^2_{\mathcal{H}}ds.
		\end{align*}
		\endgroup
		Integrating both sides of the above from $s$ to $T$ yields that
			\begin{align}
				&\h{-10pt}\big\|D^\Psi_\xi p_{\tau_i \xi} (s)\big\|^2_{\mathcal{H}}
				+\int^T_s \left\|D^\Psi_\xi q_{\tau_i \xi}(\tau)
				\right\|^2_{\mathcal{H}}d\tau			\label{eq. of |DZ|^2 + int |Dr|^2}\\
				=\:&\big\|D^\Psi_\xi p_{\tau_i \xi} (T)\big\|^2_{\mathcal{H}} 
				+2\int^T_s \Big\langle   D^\Psi_\xi p_{\tau_i \xi} (\tau),
				\nabla_{yy}g_1 D^\Psi_\xi y_{\tau_i \xi} (\tau)
				+\nabla_{vy}g_1 \nabla_y u  D^\Psi_\xi y_{\tau_i \xi} (\tau)
				+\nabla_{vy}g_1 \nabla_p u  D^\Psi_\xi y_{\tau_i \xi} (\tau)
				\Big\rangle_{\mathcal{H}}d\tau\nonumber\\
				&+2\int^T_s\left\langle   D^\Psi_\xi p_{\tau_i \xi} (\tau),
				\nabla_{yy}g_2 D^\Psi_\xi y_{\tau_i \xi} (\tau) 
				+\widetilde{\mathbb{E}}
				\left[\nabla_{\tilde{y}}\dfrac{d}{d\nu}\nabla_{y}g_2\pig(y_{\tau_i \xi}(\tau),\mathcal{L}\big(y_{\tau_i \xi}(\tau)\big)\pig)  (\widetilde{y})\bigg|_{\tilde{x}= \widetilde{y_{\tau_i \xi}} (\tau)}
				\widetilde{D^\Psi_\xi y_{\tau_i \xi}} (\tau)\right]
				\right\rangle_{\mathcal{H}}d\tau.\nonumber
			\end{align}

		Recalling the terminal condition in (\ref{eq. J flow of FBSDE}) and \eqref{ass. bdd of D^2g1}, \eqref{ass. bdd of D^2h1} Assumptions of {\bf (Avii)}, {\bf (Bv)}, the first term on the right hand side of (\ref{eq. of |DZ|^2 + int |Dr|^2}) can be bounded by
		\begin{equation}
			\big\|D^\Psi_\xi p_{\tau_i \xi} (T)\big\|^2_{\mathcal{H}}
			\leq C_{h_1}^2\big\|D^\Psi_\xi y_{\tau_i \xi} (T)\big\|^2_{\mathcal{H}}.
			\label{2nd line of |DZ|^2 + int |Dr|^2}
		\end{equation}
		By (\ref{bdd. Dx u and Dp u}), \eqref{ass. bdd of D^2g1} of  Assumption {\bf (Avii)} and Young's inequality, the second integral term on the right hand side of (\ref{eq. of |DZ|^2 + int |Dr|^2}) satisfies the inequality
		\begin{align}
			&\int^T_s\Big\langle   D^\Psi_\xi p_{\tau_i \xi} (\tau),
			\Big[\nabla_{yy}g_1+\nabla_{vy}g_1 \nabla_y u+\nabla_{vy}g_1 \nabla_p u  \Big]
			D^\Psi_\xi y_{\tau_i \xi} (\tau)\Big\rangle_{\mathcal{H}}d\tau\nonumber\\
			& \leq \dfrac{C_{g_1}}{\gamma_3}\left( 1+\dfrac{C_{g_1}+1}{\Lambda_{g_1}} \right)\cdot
			\int^T_s\big\|D^\Psi_\xi p_{\tau_i \xi} (\tau)\big\|^2_{\mathcal{H}} d\tau 
			+\dfrac{C_{g_1}\gamma_3}{4}
			\left( 1+\dfrac{C_{g_1}+1}{\Lambda_{g_1}} \right)\cdot\int^T_s\big\|D^\Psi_\xi y_{\tau_i \xi} (\tau)\big\|^2_{\mathcal{H}} d\tau,
			\label{3rd line of |DZ|^2 + int |Dr|^2}
		\end{align}
		for some $\gamma_3>0$ to be determined. By \eqref{ass. bdd of D^2g2}, \eqref{ass. bdd of D dnu D g2} of the respective Assumptions {\bf (Aviii)}, {\bf (Aix)}, and the third integral term on the right hand side of (\ref{eq. of |DZ|^2 + int |Dr|^2}) is bounded by:
		\begin{equation}
			\begin{aligned}
				&\int^T_s\left\langle   D^\Psi_\xi p_{\tau_i \xi} (\tau),
				\nabla_{yy}g_2 D^\Psi_\xi y_{\tau_i \xi} (\tau) 
				+\widetilde{\mathbb{E}}
				\left[\nabla_{\tilde{y}}\dfrac{d}{d\nu}\nabla_{y}g_2\pig(y_{\tau_i \xi}(\tau),\mathcal{L}\big(y_{\tau_i \xi}(\tau)\big)\pig)  (\widetilde{y})\bigg|_{\tilde{x}= \widetilde{y_{\tau_i \xi}} (\tau)}
				\widetilde{D^\Psi_\xi y_{\tau_i \xi}} (\tau)\right]
				\right\rangle_{\mathcal{H}}d\tau\\
				&\leq \dfrac{c_{g_2} + C_{g_2}}{\gamma_3} \cdot
				\int^T_s\big\|D^\Psi_\xi p_{\tau_i \xi} (\tau)\big\|^2_{\mathcal{H}} d\tau 
				+\dfrac{(c_{g_2} + C_{g_2})\gamma_3}{4}\cdot\int^T_s\big\|D^\Psi_\xi y_{\tau_i \xi} (\tau)\big\|^2_{\mathcal{H}} d\tau.
			\end{aligned}
			\label{5th line of |DZ|^2 + int |Dr|^2}
		\end{equation}
		Using inequalities (\ref{2nd line of |DZ|^2 + int |Dr|^2})-(\ref{5th line of |DZ|^2 + int |Dr|^2}) for (\ref{eq. of |DZ|^2 + int |Dr|^2}), we obtain the following inequality:
		\begin{equation}
			\begin{aligned}
				\big\|D^\Psi_\xi p_{\tau_i \xi} (s)\big\|^2_{\mathcal{H}}
				+\int^T_s \left\|D^\Psi_\xi q_{\tau_i \xi}(\tau)
				\right\|^2_{\mathcal{H}}d\tau
				\leq\:& C_{h_1}^2\big\|D^\Psi_\xi y_{\tau_i \xi} (T)\big\|^2_{\mathcal{H}}\\ 
				&+\dfrac{2}{\gamma_3}\left[ c_{g_2} + C_{g_2}
				+ C_{g_1} \left( 1+\dfrac{C_{g_1}+1}{\Lambda_{g_1}} \right)\right]\cdot
				\int^T_s\big\|D^\Psi_\xi p_{\tau_i \xi} (\tau)\big\|^2_{\mathcal{H}} d\tau \\
				&+ \dfrac{\gamma_3}{2}\left[c_{g_2} + C_{g_2}
				+C_{g_1}\left( 1+\dfrac{C_{g_1}+1}{\Lambda_{g_1}} \right)\right]\cdot
				\int^T_s\big\|D^\Psi_\xi y_{\tau_i \xi} (\tau)\big\|^2_{\mathcal{H}} d\tau.
			\end{aligned}
			\label{|DZ|^2+int|Dr|<|DY_T|^2+int|DZ|^2+int|DY|^2}
		\end{equation}
		Besides, the inequality in \eqref{ineq. |DZ|>-|DY(T)|+int |DZ|} can be written as
		\begin{align}
			&\h{-10pt}\big\| D^\Psi_\xi p_{\tau_i \xi} (\tau_i) \big\|_{\mathcal{H}}\nonumber\\
			\geq\,&\min\Bigg\{ \dfrac{-\lambda_{h_1}}{C_{h_1}^2}, 
			\dfrac{1}{C_{g_1}}\dfrac{\gamma_3}{2}\left[ c_{g_2} + C_{g_2}
			+ C_{g_1} \left( 1+\dfrac{C_{g_1}+1}{\Lambda_{g_1}} \right)\right]^{-1},\nonumber\\
			&\h{200pt}-\left(\lambda_{g_1} + \lambda_{g_2}+c_{g_2}\right)
			\dfrac{2}{\gamma_3}\left[ c_{g_2} + C_{g_2}
			+ C_{g_1} \left( 1+\dfrac{C_{g_1}+1}{\Lambda_{g_1}} \right)\right]^{-1}
			\Bigg\}\cdot\nonumber\\
			&\Bigg\{C_{h_1}^2\big\|D^\Psi_\xi y_{\tau_i \xi} (T)\big\|^2_{\mathcal{H}}
			+\dfrac{2}{\gamma_3}\left[ c_{g_2} + C_{g_2}
			+ C_{g_1} \left( 1+\dfrac{C_{g_1}+1}{\Lambda_{g_1}} \right)\right]
			\int^T_s\big\|D^\Psi_\xi p_{\tau_i \xi} (\tau)\big\|^2_{\mathcal{H}} d\tau \nonumber\\
			&\h{180pt}+ \dfrac{\gamma_3}{2}\left[ c_{g_2} + C_{g_2}
			+ C_{g_1} \left( 1+\dfrac{C_{g_1}+1}{\Lambda_{g_1}} \right)\right]
			\int^T_s\big\|D^\Psi_\xi y_{\tau_i \xi} (\tau)\big\|^2_{\mathcal{H}} d\tau\Bigg\}.
			\label{3251}
		\end{align}
		Define 
		\begin{equation}
			\begin{aligned}
				\dfrac{1}{C_2}:=\min\Bigg\{ \dfrac{-\lambda_{h_1}}{C_{h_1}^2}, 
				&\dfrac{1}{C_{g_1}}\dfrac{\gamma_3}{2}\left[ c_{g_2} + C_{g_2}
				+ C_{g_1} \left( 1+\dfrac{C_{g_1}+1}{\Lambda_{g_1}} \right)\right]^{-1},\\
				&-\left(\lambda_{g_1} + \lambda_{g_2}+c_{g_2}\right)
				\dfrac{2}{\gamma_3}\left[ c_{g_2} + C_{g_2}
				+ C_{g_1} \left( 1+\dfrac{C_{g_1}+1}{\Lambda_{g_1}} \right)\right]^{-1}
				\Bigg\},
			\end{aligned}
			\label{def. C_2}
		\end{equation}
		 Using the inequality of \eqref{|DZ|^2+int|Dr|<|DY_T|^2+int|DZ|^2+int|DY|^2} for \eqref{3251}, it immediately yields that
		\begin{align*}
			\big\| D^\Psi_\xi p_{\tau_i \xi} (s) \big\|_{\mathcal{H}}^2
			\leq C_2 \big\| D^\Psi_\xi p_{\tau_i \xi} (\tau_i) \big\|_{\mathcal{H}}.
		\end{align*}
		Evaluating at $s=\tau_i$, it concludes with $\big\| D^\Psi_\xi p_{\tau_i \xi} (s) \big\|_{\mathcal{H}}
		\leq C_2$ for any $s \in [\tau_i,T]$.
	\end{proof}
	\begin{theorem}
		Assume \eqref{ass. Cii} of Assumption \textup{\bf (Cii)} and fix a $t\in [0,T)$. For any $\xi \in L^2(\Omega,\mathcal{W}^{t}_0,\mathbb{P};\mathbb{R}^d)$, the FBSDE (\ref{eq. FBSDE, equilibrium}) subject to (\ref{eq. 1st order condition, equilibrium}) has a unique global solution $\pig(y_{t\xi}(s), p_{t\xi}(s), u_{t\xi}(s), q_{t\xi}(s)\pig) \in \mathbb{S}_{\mathcal{W}_{t \xi }}[t,T]\times \mathbb{S}_{\mathcal{W}_{t \xi }}[t,T] \times \mathbb{S}_{\mathcal{W}_{t \xi }}[t,T] \times \mathbb{H}_{\mathcal{W}_{t \xi }}[t,T]$.
		\label{thm. global existence}
	\end{theorem}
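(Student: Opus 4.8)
The plan is to realise the global solution through the backward concatenation scheme set up in Steps~1--5 of Section~\ref{subsec. Local Existence of Solution}: partition $[t,T]$ as $t=\tau_1<\cdots<\tau_n=T$, solve the local system \eqref{eq. FBSDE, local}--\eqref{eq. 1st order condition, local lifted} on the last piece $[\tau_{n-1},T]$ with the genuine terminal $Q_{n-1}=\nabla_y h_1$ (for which $C_{Q_{n-1}}=C_{h_1}$), and then march backward one subinterval at a time, at each stage pasting the freshly produced local solution onto the already-constructed solution on $[\tau_{i+1},T]$. Uniqueness over $[t,T]$ requires no new work, since it has already been obtained in Part~1 of the proof of Lemma~\ref{lem. bdd of y p q u} directly from \eqref{ass. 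Cii}; so I would concentrate entirely on the existence of the global solution.

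The entire scheme is driven by the single requirement \eqref{ineq. lip of Q_i}, namely that the transfer maps $Q_i$ be Lipschitz with constants $C_{Q_i}$ that are \emph{uniformly} bounded in $i$. Indeed, inspecting \eqref{def. local time length}, the admissible local length $\delta_{\textup{loc}}(\Lambda_{g_1},C_{g_1},C_{g_2},c_{g_2},C_{Q_i})$ depends on $C_{Q_i}$ only through a decreasing but strictly positive logarithmic term; hence a uniform upper bound $C_{Q_i}\le C_{Q}$ yields a uniform lower bound $\delta_{\textup{loc}}\ge\delta_\ast>0$, so that finitely many subintervals of common length $\delta_\ast$ already cover $[t,T]$ and the induction terminates after finitely many steps. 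The first step is therefore to read off $C_{Q_i}$ analytically: since for $i\le n-2$ the proxy is $Q_i(Y)=p_{\tau_{i+1}Y}(\tau_{i+1})$, and since Lemmas~\ref{lem. Existence of J flow, weak conv.}--\ref{lem. Existence of Frechet derivatives} guarantee that $\xi\mapsto p_{\tau_{i+1}\xi}(\tau_{i+1})$ is Fr\'echet differentiable with derivative acting as $\Psi\mapsto D^\Psi_\xi p_{\tau_{i+1}\xi}(\tau_{i+1})$, the mean-value representation gives
\begin{equation*}
	\big\|Q_i(X_1)-Q_i(X_2)\big\|_{\mathcal{H}}
	\le\Big(\sup_{\xi}\,\sup_{\|\Psi\|_{\mathcal{H}}\le1}\big\|D^\Psi_\xi p_{\tau_{i+1}\xi}(\tau_{i+1})\big\|_{\mathcal{H}}\Big)\|X_1-X_2\|_{\mathcal{H}}.
\end{equation*}
Thus $C_{Q_i}$ is controlled by the operator norm of the backward Jacobian flow evaluated at its own starting time $\tau_{i+1}$.

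The core of the argument is then the uniform bound on this operator norm. Under \eqref{ass. Cii} Lemma~\ref{lem. bdd of diff quotient} already supplies $\big\|D^\Psi_\xi p_{\tau_{i+1}\xi}(s)\big\|_{\mathcal{H}}^2\le C_4'\|\Psi\|_{\mathcal{H}}^2$ for every $s\in[\tau_{i+1},T]$, where $C_4'$ depends only on the structural constants and on $T$. Because the coercivity threshold $\Lambda_{g_1}-(\lambda_{h_1})_+\theta-(\lambda_{g_1}+\lambda_{g_2}+c_{g_2})_+\theta^2/2$ is decreasing in the horizon length $\theta$, the validity of \eqref{ass. Cii} for the full horizon $[t,T]$ forces it for every shorter horizon $[\tau_{i+1},T]$, and hence the \emph{same} $C_4'$ serves uniformly for all $i$. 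Consequently $C_{Q_i}\le\sqrt{C_4'}$ for every $i$, which is exactly the uniform control demanded above. When moreover the displacement-monotone regime \eqref{ass. Cii has no T} of Assumption~\textbf{(Cii$)^*$} holds, the crucial inner-product identity of Lemma~\ref{lem cruical est.} upgrades this to the genuinely $T$-independent bound $C_{Q_i}\le C_2$ of Lemma~\ref{lem. bdd of DZ}, so that $\delta_\ast$ can be fixed once and for all and the very same construction goes through for arbitrarily large terminal times.

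With the uniform Lipschitz bound in hand the assembly is routine: choose any partition with $\max_i|\tau_{i+1}-\tau_i|\le\delta_\ast$, solve the contraction problem \eqref{eq. FBSDE, local lifted, input output} successively on $[\tau_{n-1},T],[\tau_{n-2},T],\dots,[\tau_1,T]=[t,T]$ exactly as in Section~\ref{subsec. Local Existence of Solution}, and glue the pieces smoothly; the resulting quadruple is the unique solution $\pig(y_{t\xi},p_{t\xi},u_{t\xi},q_{t\xi}\pig)$ of \eqref{eq. FBSDE, equilibrium}--\eqref{eq. 1st order condition, equilibrium} in the stated spaces, with the moment bounds inherited from Lemma~\ref{lem. bdd of y p q u}. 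The delicate point throughout --- and the only place where \eqref{ass. Cii} is genuinely exploited --- is the uniform-in-$\tau_i$ control of the backward Jacobian flow: as the continuation proceeds, the remaining horizon $[\tau_i,T]$ lengthens, and the whole scheme would collapse if the constants $C_{Q_i}$ grew with the horizon. It is precisely the monotonicity of the coercivity threshold under \eqref{ass. Cii} (respectively its $T$-independence under \eqref{ass. Cii has no T}) that prevents this growth and thereby closes the induction.
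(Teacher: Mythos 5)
Your proposal is correct and follows essentially the same route as the paper's own proof: backward concatenation of the local contraction solutions, with the Lipschitz constants $C_{Q_i}$ of the transfer maps identified through the Fr\'echet derivative of the backward flow (Lemmas~\ref{lem. Existence of Frechet derivatives} and \ref{lem. Existence of J flow, strong conv.}) and bounded uniformly in $i$ by the difference-quotient estimate of Lemma~\ref{lem. bdd of diff quotient}, and uniqueness imported from Part~1 of the proof of Lemma~\ref{lem. bdd of y p q u}. Your explicit remark that the coercivity threshold in \eqref{ass. Cii} is monotone in the horizon length, so the full-horizon assumption covers every subinterval $[\tau_{i+1},T]$, is only implicit in the paper but is exactly the mechanism it relies on.
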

	
	\begin{proof}
		Consider the iteration map defined for the system of \eqref{eq. FBSDE, local lifted, input output} over each subinterval $[\tau_i,\tau_{i+1}]$, for any $i=1,2,\ldots,n-1$. Next, we choose
		$$
		\vartheta=4\left[\dfrac{\sqrt{2}C_{g_1}}{\Lambda_{g_1}}
		+\gamma_1\left(C_{g_1}+c_{g_2}+C_{g_2}+\dfrac{\sqrt{2}(C_{g_1})^2}{\Lambda_{g_1}}\right)\right]
		\h{10pt} \text{and} \h{10pt} 
		\theta=\dfrac{4\sqrt{2}C_{g_1}}{\Lambda_{g_1}},
		$$
		for the estimates in Lemma \ref{lem. local forward estimate} and \ref{lem. local backward estimate}, along with the requirement of
		$$
		264e^{ \vartheta  (\tau_{i+1}-\tau_i)}
		\left[C_{Q_i} 
		+\dfrac{1-e^{-\vartheta(\tau_{i+1}-\tau_i)}}{2\gamma_1 \vartheta}\left(C_{g_1}+c_{g_2}+C_{g_2}+\dfrac{\sqrt{2}C_{g_1}^2}{\Lambda_{g_1}}\right)\right]
		\dfrac{2\sqrt{2}(e^{\theta(\tau_{i+1}-\tau_i)}-1)}{\Lambda_{g_1}C_{g_1}\theta} \leq \dfrac{1}{2}
		$$
		for the estimate in \eqref{est. of p1-p2, contraction map}. It is equivalent to the choice of the sequence $\big\{\tau_i\big\}_{i=1}^{n} \subset [t,T]$ such that 
		\begin{align*}
			&\displaystyle\sup_{i}|\tau_{i+1}-\tau_i|\\
			&\leq  \min\left\{
			\dfrac{1}{\vartheta},\;
			\dfrac{1}{\theta}\log((C_{g_1})^2+1),\;
			\dfrac{1}{\theta}\log\left(\dfrac{\Lambda_{g_1}C_{g_1}\theta}{1056\sqrt{2}e}\left[C_{Q_i} 
			+\dfrac{1}{2\gamma_1 \vartheta}\left(C_{g_1}+c_{g_2}+C_{g_2}+\dfrac{\sqrt{2}(C_{g_1})^2}{\Lambda_{g_1}}\right)\right]^{-1}+1\right)
			\right\}.
		\end{align*}
		According to \eqref{ineq. lip of Q_i}, we can choose
		$$C_{Q_{i}}
		=\sup_{\| \Psi\|_{\mathcal{H}} 
			\leq 1}
		\big\|D^\Psi_\xi p_{\tau_{i+1} \xi} (\tau_{i+1})\big\|_{\mathcal{H}}
		\leq\displaystyle\sup_{s\in [\tau_{i+1},\tau_{i+2}]}
		\sup_{\| \Psi\|_{\mathcal{H}} 
			\leq 1}
		\big\|D^\Psi_\xi p_{\tau_{i+1} \xi} (s)\big\|_{\mathcal{H}}
		\leq\displaystyle\sup_{s\in [\tau_{i+1},\tau_{i+2}]}
		\big\|D_\xi p_{\tau_{i+1} \xi} (s)\big\|_{\mathcal{H}},$$ 
		where $D_\xi p_{\tau_{i+1} \xi} (s)$ is the Fr\'echet derivative of $p_{\tau_{i+1} \xi}(s)$ with respect to $\xi$ and is bounded by a constant independent of the choice of the subintervals $[\tau_{i},\tau_{i+1}]$ for all $i=1,2,\ldots,n-1$ by Lemma \ref{lem. Existence of Frechet derivatives}, \ref{lem. Existence of J flow, strong conv.} and \ref{lem. bdd of diff quotient}. Thus, we can glue the local solutions by the rule in Steps 1 to 6 stated at the beginning of Section \ref{subsec. Local Existence of Solution}, for any $T>0$. This deduces the global existence of the FBSDE (\ref{eq. FBSDE, equilibrium})-(\ref{eq. 1st order condition, equilibrium}). The proof of uniqueness is completely the same as in part 1 in the proof of Lemma \ref{lem. bdd of y p q u}, with $\tau_i$ replaced by $t$.
	\end{proof}

	\subsection{Ill-posed Example without Small Mean Field Effect}\label{sec. counterex}
	In this section, we give an example illustrating the ill-posedness of the FBSDE in \eqref{eq. FBSDE, equilibrium}-\eqref{eq. 1st order condition, equilibrium} without \eqref{ass. Cii} of Assumption \textbf{(Cii)}. For $n=2$ and $t=0$, we let $Q$, $R$, $S$, $\overline{Q}$, $Q_T$ be $\mathbb{R}^{2\times 2}$ constant matrices. Let $g_1(y,v) = \frac{1}{2}y\cdot(Qy)
	+\frac{1}{2}v\cdot(Rv)$, $g_2(y,\mathbb{L}) = \frac{1}{2}\pig[y-S\mathbb{E}(z_\mathbb{L})\pig]\cdot\pig[\,\overline{Q}y-\overline{Q}S\mathbb{E}(z_\mathbb{L})\pig]$, $h_1(y) = \frac{1}{2}y\cdot(Q_Ty)$, where $z_{\mathbb{L}}$ is the random variable with the law $\mathbb{L}$. The first order condition in \eqref{eq. 1st order condition, equilibrium} becomes
	$p(s) + \frac{1}{2}(R+R^\top)u(s)=0$ which implies that $u(s) = -2(R+R^\top)^{-1}p(s)$. The FBSDE in \eqref{eq. FBSDE, equilibrium} then reads
	\begin{equation}\label{eq. FBSDE, example}
		\left\{
		\begin{aligned}
			y(s) &= \xi -2(R+R^\top)^{-1} \int^s_0 p(\tau)d\tau+\int^s_0\eta\h{.7pt} dW_\tau;\\
			p(s) &= \dfrac{1}{2}(Q_T+Q^\top_T)y(T)
			+\int_s^T
			\dfrac{1}{2}(Q+Q^\top)y(\tau)
			+\dfrac{1}{2}(\overline{Q}+\overline{Q}^\top)
			\Big\{y(\tau)-S\mathbb{E}\pig[y(\tau)\pig]\Big\}\;d\tau
			-\int_s^Tq(\tau) dW_\tau.
		\end{aligned}
		\right.
	\end{equation}
	For if the system \eqref{eq. FBSDE, example} has a unique solution $\pig(y(s),p(s),q(s)\pig) \in \mathbb{S}_{\mathcal{W}_{0 \xi}}[0,T]\times \mathbb{S}_{\mathcal{W}_{0 \xi}}[0,T] \times \mathbb{H}_{\mathcal{W}_{0 \xi}}[0,T]$, we consider $\overline{y}(s):=\mathbb{E}\big[y(s)\big]$ and $\overline{p}(s):=\mathbb{E}\big[p(s)\big]$, as well as the system of ODE
	\begin{equation}\label{eq. FBSDE, ODE}
		\begin{aligned}
			&\dfrac{d}{ds} 
			\begin{pmatrix}
				\overline{y}(s)\\
				\overline{p}(s)
			\end{pmatrix}
			=
			-\begin{pmatrix}
				0&2(R+R^\top)^{-1}\\
				\dfrac{1}{2}(Q+Q^\top)
				+\dfrac{1}{2}(\overline{Q}+\overline{Q}^\top)(I_d-S)&0
			\end{pmatrix}
			\begin{pmatrix}
				\overline{y}(s)\\
				\overline{p}(s)
			\end{pmatrix};\\
			&\overline{y}(0)=\mathbb{E}\big[\xi\big],\h{10pt}
			\overline{p}(T)=\dfrac{1}{2}(Q_T+Q^\top_T)\overline{y}(T).
		\end{aligned}
	\end{equation}
	The fundamental matrix of \eqref{eq. FBSDE, ODE} is 
	$$ \Phi(s) =  \begin{pmatrix}
		\Phi_{11}(s)&\Phi_{12}(s)\\
		\Phi_{21}(s)&\Phi_{22}(s)
	\end{pmatrix}
	=:\exp(\Pi s)\h{10pt}
	\text{where }
	\Pi:=-\begin{pmatrix}
		0&2(R+R^\top)^{-1}\\
		\dfrac{1}{2}(Q+Q^\top)
		+\dfrac{1}{2}(\overline{Q}+\overline{Q}^\top)(Id-S)&0
	\end{pmatrix}.
	$$
	By Theorem 3.2 in \cite{BSYY16} (see also Theorem 4.7 in \cite{Y99}), the system of ODE \eqref{eq. FBSDE, ODE} is also uniquely solvable, then there is a unique $\overline{p}(0)$ such that
	$$\begin{pmatrix}
		\Phi_{11}(T)&\Phi_{12}(T)\\
		\Phi_{21}(T)&\Phi_{22}(T)
	\end{pmatrix}
	\begin{pmatrix}
		\overline{y}(0)\\
		\overline{p}(0)
	\end{pmatrix}
	=\begin{pmatrix}
		\overline{y}(T)\\
		0
	\end{pmatrix},$$
	where the second component is equivalent to
	$ \Phi_{21}(T)\overline{y}(0) + \Phi_{22}(T)\overline{p}(0)=0 $.

	Consider the scenario if  $R=\begin{pmatrix}
		0.812&-0.826\\
		-0.826&0.861
	\end{pmatrix}$,	
	$Q=\begin{pmatrix}
		-0.419&-0.0150\\
		-0.0150&-0.0180
	\end{pmatrix}$,
	$\overline{Q}=\begin{pmatrix}
		-0.360& 0.416\\
		0.416&-0.855
	\end{pmatrix}$, $S=  -0.941Id$ and $Q_T=0$, then the determinants of $\Phi_{22}(0.1)$ and $\Phi_{22}(0.11)$ are approximately $0.10336$ and $-0.042878$, respectively. Since $\textup{det}\pig(\Phi_{22}(s)\pig)$ is continuous, therefore, there is a time $T_0 \in (0.1,0.11)$ such that $\textup{det}\pig(\Phi_{22}(T_0)\pig)=0$. It implies that $\Phi_{22}(T_0)$ is singular and thus we cannot solve $\overline{p}(0)$ uniquely for every $\overline{y}(0)=\mathbb{E}\big[\xi\big]$. In this case, we denote $\lambda_{R}$, $\lambda_{Q}$ and $\lambda_{\overline{Q}}$ the least eigenvalues of $R$, $Q$ and $\overline{Q}$, respectively, then
	\fontsize{12}{12} \selectfont\begin{align*}
		&\pig[\nabla_{vv} g_1(y,v)p_1\pig]\cdot p_1
		+2\pig[\nabla_{yv} g_1(y,v)p_1\pig]\cdot p_2
		+\pig[\nabla_{yy} g_1(y,v)p_2\pig]\cdot p_2
		\geq \lambda_{R} |p_1|^2 + \lambda_{Q} |p_2|^2;\\
		&\pig[\nabla_{yy} g_2(y,\mathbb{L})p_1\pig]\cdot p_1 \geq \lambda_{\overline{Q}}|p_1|^2;
		\h{30pt}\p_{y^*_j}\dfrac{d}{d\nu}\p_{y_i} g_2(y,\mathbb{L})(y^*) 
		= -\frac{1}{2}\sum^{2}_{\alpha=1}  \overline{Q}_{i\alpha}S_{\alpha j}+\overline{Q}_{\alpha i}S_{\alpha j}.
	\end{align*}\normalsize
	We compute that $\left|\nabla_{y^*}\frac{d}{d\nu}\nabla_y g_2(y,\mathbb{L})(y^*) \right|\approx 1.027$, $\lambda_R \approx 0.010137$, $\lambda_Q\approx-0.41956$ and $\lambda_{\overline{Q}} \approx-1.0916$. It illustrates that
	$\lambda_R-\pig(-\lambda_Q-\lambda_{\overline{Q}}\pigr)_+\frac{0.11^2}{2}\approx 0.000994$ satisfying \eqref{def. c_0 > 0 convex, ass. Ci} of Assumption \textup{\bf (Ci)}, but $\lambda_R-\pig(-\lambda_Q-\lambda_{\overline{Q}}+|S\overline{Q}|\pigr)_+\frac{0.1^2}{2}\approx -0.00255$ violates \eqref{ass. Cii} of Assumption \textbf{(Cii)}. This example provides a numerical evidence that \eqref{ass. Cii} of Assumption \textbf{(Cii)} is not only a sufficient condition for the well-posedness of the FBSDEs, but is almost necessary as well. This example shows that even we can solve the optimal control problem in Lemma \ref{lem. derivation of FBSDE, necessarity for control problem, fix L} for each fixed exogenous measure term, it can still be possible that there is no a unique fixed-point measure term which solves Problem \ref{problem, MFG no lifting} as discussed in above. We note that this example also violates both the Lasry-Lions monotonicity \eqref{LLM} and the displacement monotonicity \eqref{DM}.

	\section{Regularity of Value Function}\label{sec. Regularity of Value Function}
	
	For any fixed $\mathbb{L}(s) \in C\pig(t,T;\mathcal{P}_2(\mathbb{R}^d)\pig)$, we recall the value function 
	\begin{equation}
		\begin{aligned}
			V(x,t):=\:\inf_{v}\mathcal{J}_{tx}(v,\mathbb{L})=\E\left[\int_{t}^{T}
			g_1\pig(y_{tx\mathbb{L}}(s),u_{tx\mathbb{L}}(s) \pig)
			+g_2\pig(y_{tx\mathbb{L}}(s),\mathbb{L}(s)\pig)\;ds
			+h_1\pig(y_{tx\mathbb{L}}(T)\pig)
			+h_2\pig(\mathbb{L}(T)\pig)\right],
		\end{aligned}
		\label{def. value function}
	\end{equation}
	where $(y_{tx\mathbb{L}}$, $p_{tx\mathbb{L}}$, $q_{tx\mathbb{L}}$, $u_{tx\mathbb{L}})$ is the unique solution of the FBSDE \eqref{eq. FBSDE, fix L}-\eqref{eq. 1st order condition, fix L} for the fixed $\mathbb{L}(s) \in C\pig(t,T;\mathcal{P}_2(\mathbb{R}^d)\pig)$ and $\xi=x$ being a deterministic point in $\mathbb{R}^d$. We now state some properties of these processes.
	\begin{lemma}
		Assume \eqref{def. c_0 > 0 convex, ass. Ci} of Assumption \textup{\bf (Ci)} and let $t \in [0,T)$, $\mathbb{L}(\bigcdot) \in C\pig(t,T;\mathcal{P}_2(\mathbb{R}^d)\pig)$, then the following statement hold:
		\begin{itemize}
			\item[(a).]  There is a constant $C_4^*>0$ depending only on $d$, $\eta$, $\lambda_{g_1}$, $\lambda_{g_2}$, $\lambda_{h_1}$, $\Lambda_{g_1}$, $C_{g_1}$, $C_{g_2}$, $C_{h_1}$, $T$, such that for any $x\in \mathbb{R}^d$ and $\mathbb{L}(\bigcdot) \in C\pig(t,T;\mathcal{P}_2(\mathbb{R}^d)\pig)$,
			\fontsize{9.5pt}{11pt}\begin{align}
				\mathbb{E}\left[\sup_{s\in[t,T]}\big|y_{tx\mathbb{L}}(s)\big|^2
				+\sup_{s\in[t,T]}\big|p_{tx\mathbb{L}}(s)\big|^2
				+\sup_{s\in[t,T]}\big|u_{tx\mathbb{L}}(s)\big|^2\right]
				+\int_{t}^{T}\pigl\|q_{tx\mathbb{L}}(s)\pigr\|^{2}_{\mathcal{H}}ds
				\leq C_4^*\left[1+|x|^2+\int^T_t\int|y|^2d\mathbb{L}(s)ds\right];
				\label{bdd. y p q u fix L}
			\end{align}\normalsize
			\item[(b).] The Fr\'echet derivatives of $y_{tx\mathbb{L}}$, $p_{tx\mathbb{L}}$, $q_{tx\mathbb{L}}$, $u_{tx\mathbb{L}}$ with respect to $x$, denoted by $\nabla_x y_{tx\mathbb{L}}$, $\nabla_x p_{tx\mathbb{L}}$, $\nabla_x q_{tx\mathbb{L}}$, $\nabla_x u_{tx\mathbb{L}}$ respectively, exist in the sense that
			$$\lim_{\Delta x \to 0} \dfrac{1}{|\Delta x|}\big\|y_{t,x+\Delta x,\mathbb{L}}(s)-y_{tx\mathbb{L}}(s) - \nabla_x y_{tx\mathbb{L}} \Delta x\big\|_\mathcal{H} =0\h{15pt} \text{for any $s\in [t,T]$ and $x\in \mathbb{R}^d$,}$$
			and same convergences also hold for $p_{tx\mathbb{L}}$, $u_{tx\mathbb{L}}$; we also have
			$$\lim_{\Delta x\to 0} \int^T_t\dfrac{1}{|\Delta x|^2}\big\|q_{t,x+\Delta x,\mathbb{L}}(s)-q_{tx\mathbb{L}}(s) - \nabla_x q_{tx\mathbb{L}} \Delta x\big\|_\mathcal{H}^2 ds=0;$$
			\item[(c).] For each $i=1,2,\ldots,d$, the derivatives $\pig(\p_{x_i} y_{tx\mathbb{L}},\p_{x_i} p_{tx\mathbb{L}},\p_{x_i} q_{tx\mathbb{L}}\pig) \in \mathbb{S}_{\mathcal{W}_{t}}[t,T]\times \mathbb{S}_{\mathcal{W}_{t}}[t,T] \times \mathbb{H}_{\mathcal{W}_{t}}[t,T]$ is the unique solution to the following FBSDE
			\begin{equation}
				\h{-10pt}\left\{
				\begin{aligned}
					\p_{x_i} y_{tx\mathbb{L}}  (s)
					=\,& e_i
					+\displaystyle\int_t^{s}
					\Big[ \nabla_y u\pig(y_{tx\mathbb{L}}(\tau),p_{tx\mathbb{L}}(\tau)\pig)\Big] 
					\Big[\p_{x_i} y_{tx\mathbb{L}}(\tau) \Big]
					+\Big[\nabla_p  u\pig(y_{tx\mathbb{L}}(\tau),p_{tx\mathbb{L}}(\tau)\pig)\Big] 
					\Big[\p_{x_i} p_{tx\mathbb{L}}(\tau)\Big]  d\tau;\h{-50pt}\\
					\p_{x_i} p_{tx\mathbb{L}} (s)
					=\,&\nabla_{yy} h_1(y_{tx\mathbb{L}}(T))\p_{x_i} y_{tx\mathbb{L}}(T)
					+\int^T_s\nabla_{yy}g_1\pig(y_{tx\mathbb{L}}(\tau),u_{tx\mathbb{L}}(\tau) \pig)\p_{x_i} y_{tx\mathbb{L}} (\tau) d\tau\\
					&+\int^T_s\nabla_{vy}g_1\pig(y_{tx\mathbb{L}}(\tau),u_{tx\mathbb{L}}(\tau) \pig)\Big[ \nabla_y u\pig(y_{tx\mathbb{L}}(\tau),p_{tx\mathbb{L}}(\tau)\pig)\Big] 
					\Big[\p_{x_i} y_{tx\mathbb{L}}(\tau) \Big]d\tau\\
					&+\int^T_s\nabla_{vy}g_1\pig(y_{tx\mathbb{L}}(\tau),u_{tx\mathbb{L}}(\tau) \pig)\Big[\nabla_p  u\pig(y_{tx\mathbb{L}}(\tau),p_{tx\mathbb{L}}(\tau)\pig)\Big] 
					\Big[\p_{x_i} p_{tx\mathbb{L}}(\tau)\Big]d\tau \\
					&+\int^T_s \nabla_{yy}g_2\pig(y_{tx\mathbb{L}}(\tau),\mathbb{L}(\tau)\pig) 
					\p_{x_i} y_{tx\mathbb{L}} (\tau) d\tau-\int^T_s \p_{x_i} q_{tx\mathbb{L}}(\tau)dW_\tau,
				\end{aligned}\right.
				\label{eq. J flow of FBSDE, fix L}
			\end{equation}
			where $u(y,p)$ is the function satisfying $p+\nabla_v g_1\pig(y,v\pig)\Big|_{v=u(y,p)}=0$ and $u_{tx\mathbb{L}}(s)= u\pig(y_{tx\mathbb{L}}(s),p_{tx\mathbb{L}}(s)\pig)$. Moreover, we have
			\begin{equation}
				\p_{x_i} p_{tx\mathbb{L}} (s)
				+\nabla_{yv}g_1\pig(y_{tx\mathbb{L}}(s),u_{tx\mathbb{L}}(s)\pig)
				\p_{x_i} y_{tx\mathbb{L}} (s)
				+\nabla_{vv}g_1\pig(y_{tx\mathbb{L}}(s),u_{tx\mathbb{L}}(s)\pig)
				\p_{x_i} u_{tx\mathbb{L}}(s)=0;
				\label{eq. 1st order J flow fix L}
			\end{equation}
			\item[(d).] There is a constant $C_4''>0$ depending on $d$, $\lambda_{g_1}$, $\lambda_{g_2}$, $\lambda_{h_1}$, $\Lambda_{g_1}$, $C_{g_1}$, $c_{g_2}$, $C_{g_2}$, $C_{h_1}$ and $T$ such that 
			\begin{align}
				\sum^d_{i=1}				\mathbb{E}\left[\sup_{s\in[t,T]}\big|\p_{x_i}y_{tx\mathbb{L}}(s)\big|^2
				+\sup_{s\in[t,T]}\big|\p_{x_i}p_{tx\mathbb{L}}(s)\big|^2
				+\sup_{s\in[t,T]}\big|\p_{x_i}u_{tx\mathbb{L}}(s)\big|^2\right]
				+\displaystyle\int_t^{T}\pigl\|\p_{x_i} q_{tx\mathbb{L}}(s)\pigr\|^{2}_{\mathcal{H}}ds
				\leq (C_4'')^2
				\label{bdd. Dx y p q u, fix L}
			\end{align} 
			{\color{black}\item[(e).] There is a constant $C_4'''>0$ depending on $d$, $\lambda_{g_1}$, $\lambda_{g_2}$, $\lambda_{h_1}$, $\Lambda_{g_1}$, $C_{g_1}$, $c_{g_2}$, $C_{g_2}$, $C_{h_1}$ and $T$ such that for any $s_1, s_2 \in [t,T]$, we have
			\begin{align}
				&\sum^d_{i=1}				\big\|\p_{x_i}y_{tx\mathbb{L}}(s_1)-\p_{x_i}y_{tx\mathbb{L}}(s_2)\big\|^2_{\mathcal{H}}
				+\big\|\p_{x_i}p_{tx\mathbb{L}}(s_1)-\p_{x_i}p_{tx\mathbb{L}}(s_2)\big\|^2_{\mathcal{H}}
				+\big\|\p_{x_i}u_{tx\mathbb{L}}(s_1)-\p_{x_i}u_{tx\mathbb{L}}(s_2)\big\|^2_{\mathcal{H}}\nonumber\\
				&\leq (C_4''')^2|s_1-s_2|.
				\label{ineq. cts of Dx y p q u, fix L}
			\end{align} }
		\end{itemize}
		\label{lem. prop of y p q u fix L}
	\end{lemma}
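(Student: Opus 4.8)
The plan is to prove parts (a)--(e) essentially by mimicking the arguments already developed for the equilibrium FBSDE in Section 3, but now working with the decoupled problem for a fixed exogenous measure flow $\mathbb{L}(\bigcdot)$. The crucial simplification is that the measure argument no longer depends on the state, so the term $\nabla_{\tilde{y}}\frac{d}{d\nu}\nabla_y g_2$ will no longer contribute to the dynamics of $\p_{x_i} p_{tx\mathbb{L}}$; instead $\mathbb{L}(\tau)$ appears merely as a frozen inhomogeneity. This is why the convexity budget in \eqref{def. c_0 > 0 convex, ass. Ci} of Assumption \textbf{(Ci)} suffices here, rather than the stronger \eqref{ass. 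Cii} of \textbf{(Cii)} needed when $c_{g_2}$ enters. I would begin with part (a): apply the product rule to $\langle p_{tx\mathbb{L}}(s), \cdot\rangle$ exactly as in the proof of Lemma \ref{lem. bdd of y p q u}, Part 2, telescoping against the reference process $x+\eta(W_s-W_t)$ and invoking Assumptions \eqref{ass. convexity of g1}, \eqref{ass. convexity of g2}, \eqref{ass. bdd of Dg1}, \eqref{ass. bdd of Dg2}, \eqref{ass. bdd of Dh1}, \eqref{ass. convexity of h}. The only difference is bookkeeping for the deterministic initial $x$ and the extra forcing $\int_t^T\int |y|^2 d\mathbb{L}(s)\,ds$ coming from $\nabla_y g_2(y,\mathbb{L})$ via \eqref{ass. bdd of Dg2}; the coercivity under \textbf{(Ci)} then yields \eqref{bdd. y p q u fix L}.

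For parts (b) and (c), I would run the difference-quotient and weak-compactness machinery of Lemmas \ref{lem. bdd of diff quotient}, \ref{lem. Existence of J flow, weak conv.} and \ref{lem. Existence of J flow, strong conv.} verbatim, but with $\Psi = e_i$ deterministic and with $\mathbb{L}(\tau)$ held fixed. Because $\mathbb{L}$ does not vary with $x$, when differentiating the backward equation \eqref{eq. FBSDE, fix L} the measure-derivative term drops out, leaving precisely the linear FBSDE \eqref{eq. J flow of FBSDE, fix L}. The weak limits of the difference quotients solve this system; uniqueness of the linear system (shown by the same inner-product-of-$\langle \mathscr{D}y^*,\mathscr{D}p^*\rangle$ computation as in the uniqueness step of Lemma \ref{lem. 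Existence of J flow, weak conv.}, now needing only \textbf{(Ci)}) identifies the limit as the Fréchet derivative, giving the convergence claims of (b) and the first-order relation \eqref{eq. 1st order J flow fix L} by differentiating \eqref{eq. 1st order condition, fix L}. The bound (d) in \eqref{bdd. Dx y p q u, fix L} follows by repeating the estimates of Lemma \ref{lem. bdd of diff quotient} with $\|\Psi\|_\mathcal{H}=|e_i|=1$ and summing over $i=1,\dots,d$.

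Part (e), the $1/2$-Hölder continuity in time, is the genuinely new ingredient and I expect it to be the main obstacle, though a mild one. The estimate for $\p_{x_i}y_{tx\mathbb{L}}$ is immediate: from the forward equation in \eqref{eq. J flow of FBSDE, fix L} one has $\p_{x_i}y(s_1)-\p_{x_i}y(s_2)=\int_{s_2}^{s_1}[\nabla_y u\,\p_{x_i}y + \nabla_p u\,\p_{x_i}p]\,d\tau$, so Cauchy--Schwarz together with the bounds \eqref{bdd. Dx u and Dp u} on $\nabla_y u,\nabla_p u$ and the uniform bound (d) gives a factor $|s_1-s_2|$. For $\p_{x_i}p_{tx\mathbb{L}}$ the difficulty is the stochastic integral $\int_s^T \p_{x_i}q\,dW_\tau$: I would take conditional expectations $\mathbb{E}[\,\cdot\mid \mathcal{W}^{s_2}_t]$ to annihilate the martingale part, bound the drift integral between $s_2$ and $s_1$ by $|s_1-s_2|$ using (d) and the Hessian bounds \eqref{ass. bdd of D^2g1}, \eqref{ass. bdd of D^2g2}, \eqref{ass. bdd of D^2h1}, and control the remaining conditional-expectation increment of the terminal and post-$s_1$ terms by an It\^o-isometry argument that produces $\int_{s_2}^{s_1}\|\p_{x_i}q\|^2_\mathcal{H}\,d\tau \le (C_4'')^2|s_1-s_2|$ again via (d). Finally, the continuity of $\p_{x_i}u_{tx\mathbb{L}}$ follows from \eqref{eq. 1st order J flow fix L}, writing $\p_{x_i}u = -(\nabla_{vv}g_1)^{-1}[\p_{x_i}p + \nabla_{yv}g_1\,\p_{x_i}y]$, so that its time-increment is controlled by those of $\p_{x_i}p$ and $\p_{x_i}y$ together with the Lipschitz continuity in time of the coefficients evaluated along the (already Hölder-continuous) processes $y_{tx\mathbb{L}},u_{tx\mathbb{L}}$. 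Collecting these and summing over $i$ yields \eqref{ineq. cts of Dx y p q u, fix L}.
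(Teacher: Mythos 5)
Your treatment of parts (a)--(d) is correct and is essentially the paper's own proof: the paper disposes of these parts by remarking that one repeats Lemmas \ref{lem. bdd of y p q u}, \ref{lem. bdd of diff quotient}, \ref{lem. Existence of J flow, weak conv.}, \ref{lem. Existence of J flow, strong conv.} and \ref{lem. Existence of Frechet derivatives} with $\Psi=e_i$ and the measure flow frozen, and your observation that the $L$-derivative term of $g_2$ drops out (so that Assumption \textbf{(Ci)} suffices in place of \textbf{(Cii)}) is exactly the point of working with an exogenous $\mathbb{L}(\bigcdot)$.

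There is, however, a genuine gap in your argument for part (e), at the step where you claim
\begin{equation*}
\int_{s_2}^{s_1}\big\|\p_{x_i}q_{tx\mathbb{L}}(\tau)\big\|^{2}_{\mathcal{H}}\,d\tau\;\leq\;(C_4'')^2\,|s_1-s_2| \qquad\text{``again via (d)''}.
\end{equation*}
Estimate \eqref{bdd. Dx y p q u, fix L} only provides the \emph{global} bound $\int_t^{T}\|\p_{x_i}q_{tx\mathbb{L}}(\tau)\|^2_{\mathcal{H}}d\tau\leq (C_4'')^2$; it gives no control on how this mass is distributed over subintervals, and a bound proportional to $|s_1-s_2|$ amounts to a pointwise-in-time bound $\sup_{\tau}\|\p_{x_i}q_{tx\mathbb{L}}(\tau)\|_{\mathcal{H}}\leq C$, which is not contained in (a)--(d). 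The conditional-expectation device cannot rescue this: from the backward equation in \eqref{eq. J flow of FBSDE, fix L}, $\p_{x_i}p_{tx\mathbb{L}}(s_1)-\p_{x_i}p_{tx\mathbb{L}}(s_2)$ equals a drift increment (which you bound correctly by $C|s_1-s_2|$) plus the martingale increment $\int_{s_2}^{s_1}\p_{x_i}q_{tx\mathbb{L}}\,dW_\tau$, and by It\^o's isometry the $\mathcal{H}$-norm squared of that martingale increment \emph{equals} $\int_{s_2}^{s_1}\|\p_{x_i}q_{tx\mathbb{L}}\|^2_{\mathcal{H}}d\tau$. So bounding the martingale increment and bounding the local $L^2$-norm of $\p_{x_i}q$ are the same statement, and your derivation is circular.

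This missing control is not a routine detail under the standing $C^2$ hypotheses. The martingale part of $\p_{x_i}p_{tx\mathbb{L}}$ is that of $\mathbb{E}\big[\nabla_{yy}h_1(y_{tx\mathbb{L}}(T))\,\p_{x_i}y_{tx\mathbb{L}}(T)+\int_{\cdot}^{T}\Phi\,d\tau\,\big|\,\mathcal{W}^{\cdot}_t\big]$, and when $\nabla_{yy}h_1$ is merely bounded and continuous (Assumptions \eqref{ass. cts and diff of h1}, \eqref{ass. bdd of D^2h1}) the quadratic variation of such a conditional expectation can concentrate on arbitrarily short intervals near $T$: taking $\nabla_{yy}h_1$ oscillating at spatial scale $1/N$, a heat-kernel computation in the uncoupled model $g_1=\frac12|v|^2$, $g_2=0$ shows that the increment of $\p_{x_i}p$ over $[T-1/N^2,T]$ remains of order one while $|s_1-s_2|=1/N^2\to 0$. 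Consequently a constant in \eqref{ineq. cts of Dx y p q u, fix L} cannot be extracted from \eqref{ass. bdd of D^2g1}, \eqref{ass. bdd of D^2g2}, \eqref{ass. bdd of D^2h1} together with (d) alone: one needs either a pointwise bound on $\p_{x_i}q_{tx\mathbb{L}}$ --- which, via Malliavin differentiation or differentiation of the decoupling field, calls on third-order derivatives of the data, i.e.\ hypotheses of the type \eqref{ass. cts, bdd of 3rd order d of g1}--\eqref{ass. cts, bdd of 3rd order d of h1} --- or a Zhang-type $L^2$ path-regularity theorem, which requires Lipschitz continuity of the second derivatives. To be fair, the paper itself dismisses (e) in one sentence (``standard due to the linear structure''), so your proposal founders on precisely the point the paper glosses over; but as written the step is not a proof, and you should either invoke the stronger regularity hypotheses or supply the pointwise bound on $\p_{x_i}q_{tx\mathbb{L}}$ explicitly.
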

	\begin{remark}
		The proof of (a)-(d) is exactly the same as that for Lemma \ref{lem. bdd of y p q u}, \ref{lem. Existence of Frechet derivatives}, \ref{lem. Existence of J flow, weak conv.}, \ref{lem. Existence of J flow, strong conv.}, and \ref{lem. bdd of diff quotient}. {\color{black}	The proof of (e) is standard due to the linear structure of the FBSDE system, by using the boundedness in \eqref{ass. bdd of D^2g1}, \eqref{ass. bdd of D^2g2}, \eqref{ass. bdd of D^2h1} of the respective Assumptions {\bf (Avii)}, {\bf (Aviii)}, {\bf (Bv)}.}
	\end{remark}
	To prove that the value function solves the HJB equation for each fixed $\mathbb{L}(\bigcdot) \in C\pig(t,T;\mathcal{P}_2(\mathbb{R}^d)\pig)$, we state the standard flow property: for $\epsilon \in (0, T-t)$, $y^\epsilon := y_{t x \mathbb{L}}(t+\epsilon)$, then for any $s \in [t+\epsilon,T]$,
	\begin{equation}
		y_{t+\epsilon,y^\epsilon ,\mathbb{L}}(s)=y_{t x \mathbb{L}}(s),\;
		p_{t+\epsilon,y^\epsilon ,\mathbb{L}}(s)=p_{t x \mathbb{L}}(s),\;
		q_{t+\epsilon,y^\epsilon ,\mathbb{L}}(s)=q_{t x \mathbb{L}}(s),\;
		u_{t+\epsilon,y^\epsilon ,\mathbb{L}}(s)=u_{t x \mathbb{L}}(s).
		\label{flow property}
	\end{equation}
	We note that $\mathbb{L}(\bigcdot)$ is fixed and the solution  $(y_{tx\mathbb{L}}$, $p_{tx\mathbb{L}}$, $q_{tx\mathbb{L}}$, $u_{tx\mathbb{L}})$ to the FBSDE \eqref{eq. 1st order condition, fix L}-\eqref{eq. FBSDE, fix L} is unique.
	\begin{lemma}[\bf Differentiability of $V(x,t)$ in $x$]
		\label{lem diff V w.r.t. x} 
		Under \eqref{def. c_0 > 0 convex, ass. Ci} of Assumption \textup{\bf (Ci)}, let  $\mathbb{L}(\bigcdot) \in C\pig(t,T;\mathcal{P}_2(\mathbb{R}^d)\pig)$, then the value function $V(x,t)$ is differentiable in $x$ such that $\nabla_x V(x,t) = p_{tx\mathbb{L}}(t)$ in $\mathbb{R}^d \times [0,T]$.
		Moreover, the derivative $\nabla_x V(x,t)$ is continuous in $t$.
	\end{lemma}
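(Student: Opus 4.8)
The plan is to show that $\nabla_x V(x,t) = p_{tx\mathbb{L}}(t)$ by computing the directional derivative of $V$ along an arbitrary direction $e\in\mathbb{R}^d$ and identifying the limit with $\langle p_{tx\mathbb{L}}(t), e\rangle$. First I would fix $\mathbb{L}(\bigcdot)$ and consider the difference quotient $\frac{V(x+\epsilon e,t)-V(x,t)}{\epsilon}$. By definition \eqref{def. value function}, each term is an expectation of $g_1$, $g_2$ and $h_1$ evaluated along the optimal flows $y_{tx\mathbb{L}}$, $u_{tx\mathbb{L}}$. Writing out the difference and using the fundamental theorem of calculus on each cost term, I would express the difference quotient as an integral against the difference quotients $\Delta^\epsilon_e y_{tx\mathbb{L}}(s)$ and $\Delta^\epsilon_e u_{tx\mathbb{L}}(s)$ of the state and control. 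The crucial input here is part (b) of Lemma \ref{lem. prop of y p q u fix L}, which guarantees these difference quotients converge (strongly) to the Fréchet derivatives $\nabla_x y_{tx\mathbb{L}}\,e$ and $\nabla_x u_{tx\mathbb{L}}\,e$, together with the uniform bounds of part (a), part (d) which let me pass to the limit by dominated convergence.

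The main simplification comes from the first-order optimality condition. After passing to the limit, the directional derivative of $V$ becomes
\begin{align*}
	\langle \nabla_x V(x,t), e\rangle
	=\:&\mathbb{E}\Bigg[\int_t^T
	\Big\langle \nabla_y g_1\pig(y_{tx\mathbb{L}}(s),u_{tx\mathbb{L}}(s)\pig),\nabla_x y_{tx\mathbb{L}}(s)\,e\Big\rangle_{\mathbb{R}^d}\\
	&\h{20pt}+\Big\langle \nabla_v g_1\pig(y_{tx\mathbb{L}}(s),u_{tx\mathbb{L}}(s)\pig),\nabla_x u_{tx\mathbb{L}}(s)\,e\Big\rangle_{\mathbb{R}^d}
	+\Big\langle \nabla_y g_2\pig(y_{tx\mathbb{L}}(s),\mathbb{L}(s)\pig),\nabla_x y_{tx\mathbb{L}}(s)\,e\Big\rangle_{\mathbb{R}^d}\,ds\\
	&\h{20pt}+\Big\langle \nabla_y h_1\pig(y_{tx\mathbb{L}}(T)\pig),\nabla_x y_{tx\mathbb{L}}(T)\,e\Big\rangle_{\mathbb{R}^d}\Bigg].
\end{align*}
The key algebraic step is then to apply Itô's lemma to the inner product $\langle p_{tx\mathbb{L}}(s),\nabla_x y_{tx\mathbb{L}}(s)\,e\rangle_{\mathbb{R}^d}$ over $[t,T]$, using the backward dynamics \eqref{eq. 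FBSDE, fix L} for $dp_{tx\mathbb{L}}$ and the forward equation in part (c) (equation \eqref{eq. J flow of FBSDE, fix L}) for $d(\nabla_x y_{tx\mathbb{L}})$. This produces boundary terms $\langle p_{tx\mathbb{L}}(T),\nabla_x y_{tx\mathbb{L}}(T)\,e\rangle = \langle \nabla_y h_1, \nabla_x y_{tx\mathbb{L}}(T)\,e\rangle$ at $s=T$ and $\langle p_{tx\mathbb{L}}(t),e\rangle$ at $s=t$ (since $\nabla_x y_{tx\mathbb{L}}(t)=Id$). The drift terms from the Itô expansion, after substituting the first-order condition \eqref{eq. 1st order J flow fix L} to eliminate $\nabla_x u$ and the relation $\nabla_v g_1 = -p_{tx\mathbb{L}}$ from \eqref{eq. 1st order condition, fix L}, exactly reproduce the integrand displayed above. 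Matching the two computations yields $\langle \nabla_x V(x,t),e\rangle = \langle p_{tx\mathbb{L}}(t),e\rangle$ for every $e$, hence $\nabla_x V(x,t)=p_{tx\mathbb{L}}(t)$.

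For the continuity of $\nabla_x V$ in $t$, I would use the representation $\nabla_x V(x,t)=p_{tx\mathbb{L}}(t)$ together with the flow property \eqref{flow property} and the regularity estimates. Writing $p_{tx\mathbb{L}}(t)-p_{t'x\mathbb{L}}(t')$ and controlling it via the Lipschitz-in-initial-data and Hölder-in-time bounds (analogous to part (e) of Lemma \ref{lem. prop of y p q u fix L}, and the a priori bounds of parts (a) and (d)), the difference is shown to vanish as $t'\to t$; one estimates the change due to shifting the starting time against the change in the terminal horizon, both of which are controlled uniformly in $x$ on compacta.

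The main obstacle I anticipate is rigorously justifying the interchange of the limit $\epsilon\to0$ with the expectation and the time integral in the difference quotient. Because the cost functions $g_1$, $g_2$ have only quadratic growth, the integrands are not uniformly bounded, so the passage to the limit must rely on the uniform $L^2$-bounds from Lemma \ref{lem. prop of y p q u fix L}(a),(d), combined with the strong convergence of the difference quotients in Lemma \ref{lem. prop of y p q u fix L}(b), and the mean-value-theorem expansion of the cost differences with continuous (by Assumptions \textbf{(Ai)}, \textbf{(Aii)}, \textbf{(Bi)}) and appropriately bounded second derivatives. Handling the $g_2$ term requires care since its dependence is through the fixed measure $\mathbb{L}(s)$ rather than the law of $y_{tx\mathbb{L}}$, which actually simplifies matters here, as no measure-derivative terms enter.
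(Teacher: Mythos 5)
Your proposal is correct, but it takes a genuinely different route from the paper's own proof of this lemma. You differentiate the flows first, invoking Lemma \ref{lem. prop of y p q u fix L}(a), (b), (d) to pass the limit through the cost functional, and then identify the resulting directional derivative with $p_{tx\mathbb{L}}(t)$ via the adjoint identity obtained from It\^o's formula applied to $\langle p_{tx\mathbb{L}}(s),\nabla_x y_{tx\mathbb{L}}(s)\,e\rangle$, the chain rule $\nabla_x u_{tx\mathbb{L}}=\nabla_y u\,\nabla_x y_{tx\mathbb{L}}+\nabla_p u\,\nabla_x p_{tx\mathbb{L}}$, and the first-order condition $\nabla_v g_1=-p_{tx\mathbb{L}}$ (this, rather than the differentiated condition \eqref{eq. 1st order J flow fix L}, is what actually closes the algebra, but the two are equivalent here); the martingale term has zero expectation thanks to the bounds in parts (a) and (d). The paper never differentiates the flows for this step: it exploits the structure that the drift equals the control, so the trajectory driven by the control optimal for $x^2$ but started from $x^1$ is exactly $y_{tx^2\mathbb{L}}+x^1-x^2$; sub-optimality, the mean value theorem, and the bounded Hessians in \textbf{(Avii)}, \textbf{(Aviii)}, \textbf{(Bv)} give the one-sided bound $V(x^1,t)-V(x^2,t)\le p_{tx^2\mathbb{L}}(t)\cdot(x^1-x^2)+C|x^1-x^2|^2$, and the reverse bound follows by swapping $x^1,x^2$ and controlling $[p_{tx^1\mathbb{L}}(t)-p_{tx^2\mathbb{L}}(t)]\cdot(x^1-x^2)$ through the duality product $\langle p^1-p^2,y^1-y^2\rangle$, the convexity Assumptions \textbf{(Ax)}, \textbf{(Axi)}, \textbf{(Bvi)} and \textbf{(Ci)}. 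The paper's squeeze thus yields a uniform quadratic remainder, hence Fr\'echet differentiability at once, with lighter machinery; your route is the standard sensitivity/adjoint computation, more systematic but reliant on the Jacobian-flow results, and you should make explicit that the uniformity over directions $|e|\le 1$ inherited from the Fr\'echet-type convergence in Lemma \ref{lem. prop of y p q u fix L}(b) (equivalently, continuity of $x\mapsto p_{tx\mathbb{L}}(t)$) is what upgrades your G\^ateaux computation to the full differentiability claimed. Your continuity-in-$t$ argument coincides in substance with the paper's: flow property, Lipschitz dependence on the initial point from part (d), the estimate $\|y_{tx\mathbb{L}}(t+\epsilon)-x\|_{\mathcal{H}}\lesssim\epsilon^{1/2}$, and time-continuity of $p_{tx\mathbb{L}}(\cdot)$ read off the backward dynamics.
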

	{\color{black}\begin{remark}
			Note that $p_{tx\mathbb{L}}(t)$ can be seen as deterministic, because $p_{tx\mathbb{L}}(s)$ is measurable to $\mathcal{W}^s_t$.
	\end{remark}}
	We prove the lemma in Appendix \ref{app. Proofs in Regularity of Value Function}.
	
	\begin{lemma}[\bf Differentiability of $\nabla_x V(x,t)$ in $x$]
		\label{lem 2nd diff V w.r.t. x} 
		Under \eqref{def. c_0 > 0 convex, ass. Ci} of Assumption \textup{\bf (Ci)}, let $\mathbb{L}(\bigcdot) \in C\pig(t,T;\mathcal{P}_2(\mathbb{R}^d)\pig)$, then the value function $V(x,t)$ is twice differentiable in $x$ such that 
		$\nabla_{xx} V(x,t) = \nabla_x p_{tx\mathbb{L}}(t)$ in $\mathbb{R}^d \times [0,T]$. Moreover, the derivative $\nabla_{xx} V(x,t)$ is jointly continuous in $t$ and $x$.
	\end{lemma}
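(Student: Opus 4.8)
The plan is to show that the first spatial derivative $\nabla_x p_{tx\mathbb{L}}(t)$, whose existence and regularity were established in Lemma~\ref{lem. prop of y p q u fix L}, is itself differentiable in $x$, and that its derivative gives the second spatial derivative of $V$. Since Lemma~\ref{lem diff V w.r.t. x} already identifies $\nabla_x V(x,t)=p_{tx\mathbb{L}}(t)$, the natural route is to differentiate this identity once more. Concretely, I would fix a direction $e_i$ and study the difference quotient
\[
\frac{\nabla_x V(x+\Delta x\, e_j,t)-\nabla_x V(x,t)}{\Delta x}
=\frac{p_{t,x+\Delta x e_j,\mathbb{L}}(t)-p_{tx\mathbb{L}}(t)}{\Delta x},
\]
and show it converges, as $\Delta x\to 0$, to $\p_{x_j}p_{tx\mathbb{L}}(t)$, the evaluation at $s=t$ of the process solving the linearized FBSDE \eqref{eq. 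J flow of FBSDE, fix L}. The key observation, exactly parallel to the Jacobian-flow analysis of Section~\ref{s:Jacobian Flow}, is that the process $\nabla_x p_{tx\mathbb{L}}(s)$ is measurable with respect to $\mathcal{W}^s_t$ (the forward initial data is now the \emph{deterministic} point $x$), so $\nabla_x p_{tx\mathbb{L}}(t)$ is deterministic and coincides with $\nabla_{xx}V(x,t)$.

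First I would set up the second-order difference quotients of $\p_{x_i}y_{tx\mathbb{L}}$, $\p_{x_i}p_{tx\mathbb{L}}$, $\p_{x_i}u_{tx\mathbb{L}}$, $\p_{x_i}q_{tx\mathbb{L}}$ with respect to the base point $x$, exactly mirroring the construction of $\Delta^\epsilon_\Psi$-processes in Lemma~\ref{lem. bdd of diff quotient}. These satisfy a linear FBSDE obtained by differentiating \eqref{eq. J flow of FBSDE, fix L} once more in $x$; the new system involves the \emph{third}-order derivatives of $g_1$, $g_2$, $h_1$ acting on the already-bounded first-order flow $\p_{x_i}y_{tx\mathbb{L}}$, together with a linear term in the unknown second-order flow. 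The a~priori bound in \eqref{bdd. Dx y p q u, fix L}, the convexity Assumptions \textbf{(Ax)}, \textbf{(Axi)}, \textbf{(Bvi)}, and the positivity \eqref{def. c_0 > 0 convex, ass. Ci} of \textbf{(Ci)} would again furnish a uniform bound on the second-order difference quotients, and then weak/strong convergence along the lines of Lemmas~\ref{lem. Existence of J flow, weak conv.} and \ref{lem. Existence of J flow, strong conv.} identifies the limit as the unique solution of the twice-linearized FBSDE. The continuity in $x$ of this limit follows from the continuous dependence of the coefficients on $x$ through the (continuous) first-order flow.

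For the joint continuity of $\nabla_{xx}V(x,t)=\p_x p_{tx\mathbb{L}}(t)$ in $(x,t)$, I would combine two estimates. Continuity in $x$ (uniformly in $t$) comes from the Lipschitz dependence of the linearized-FBSDE coefficients on the base point $x$, transported to $s=t$ via the stability estimate for the second-order flow. Continuity in $t$ is where the temporal-regularity bound \eqref{ineq. cts of Dx y p q u, fix L} of part~(e) enters decisively: it provides the Hölder-$1/2$ control $\|\p_{x_i}p_{tx\mathbb{L}}(s_1)-\p_{x_i}p_{tx\mathbb{L}}(s_2)\|_{\mathcal{H}}^2\le (C_4''')^2|s_1-s_2|$, and together with the flow property \eqref{flow property} (which lets me compare $p_{tx\mathbb{L}}$ started at different times through a common trajectory) this yields continuity of the diagonal value $s=t$ as $t$ varies. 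One has to be slightly careful that moving $t$ changes the initial time of the FBSDE as well as the terminal horizon, so the comparison must be routed through $y^\epsilon=y_{tx\mathbb{L}}(t+\epsilon)$ and the concatenation identity rather than a naive subtraction.

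\textbf{The main obstacle} I anticipate is not the existence of the second-order flow—that is a faithful repetition of the Section~\ref{s:Jacobian Flow} machinery one derivative higher, and is precisely why the paper must upgrade to $C^3$-coefficients (Assumptions \textbf{(Ai)}, \textbf{(Aii)} give twice-differentiability, so strictly the $C^3$ hypotheses invoked elsewhere for the master equation are what make the third-order terms in the twice-differentiated FBSDE meaningful). Rather, the delicate point is establishing that the \emph{deterministic} quantity $\nabla_{xx}V(x,t)$ equals $\p_x p_{tx\mathbb{L}}(t)$ with genuine (not merely $L^2$-weak) pointwise convergence of the difference quotients, and verifying that the limit process evaluated at the single time $s=t$ inherits joint continuity. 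This requires promoting the $\mathcal{H}$-valued convergence to convergence of the deterministic initial values $p_{tx\mathbb{L}}(t)$, which is legitimate precisely because these are $\mathcal{W}^t_t$-measurable, hence constant; I would flag this measurability reduction as the crux and handle it exactly as in the remark following Lemma~\ref{lem diff V w.r.t. x}.
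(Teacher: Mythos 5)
There is a genuine gap, and it lies at the very center of your construction. The identity $\nabla_x V(x,t)=p_{tx\mathbb{L}}(t)$ from Lemma \ref{lem diff V w.r.t. x} means that $\nabla_{xx}V(x,t)$ is the \emph{first} derivative of $x\mapsto p_{tx\mathbb{L}}(t)$, and this derivative already exists by part (b) of Lemma \ref{lem. prop of y p q u fix L}: the Fr\'echet differentiability there, evaluated at $s=t$, is exactly the convergence of your difference quotient $\Delta x^{-1}\bigl[p_{t,x+\Delta x e_j,\mathbb{L}}(t)-p_{tx\mathbb{L}}(t)\bigr]$ to $\p_{x_j}p_{tx\mathbb{L}}(t)$, and since both quantities are deterministic (measurable with respect to $\mathcal{W}^t_t$) the $\mathcal{H}$-norm convergence is Euclidean convergence --- precisely the measurability reduction you flag at the end. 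That is the whole of the differentiability statement; the paper dispatches it in one sentence. Instead, you propose to build the \emph{second-order} difference quotients of $\p_{x_i}y_{tx\mathbb{L}},\p_{x_i}p_{tx\mathbb{L}},\dots$ by differentiating \eqref{eq. J flow of FBSDE, fix L} once more in $x$. That object is the Hessian flow (it would control the \emph{third} derivative of $V$, not the second), it is not needed here, and --- as you yourself note --- the twice-differentiated FBSDE involves third-order derivatives of $g_1,g_2,h_1$, which are not hypotheses of this lemma: only Assumption \textbf{(Ci)} and the standing $C^2$ conditions are available. The third-order assumptions \eqref{ass. cts, bdd of 3rd order d of g1}--\eqref{ass. cts, bdd of 3rd order d of h1} are introduced only later (Lemma \ref{lem hessian flow}) for the master equation. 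So your route, even if carried out, proves the lemma only under strictly stronger hypotheses than stated.

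The second gap is in the joint continuity. Your continuity in $x$ ``uniformly in $t$'' rests on Lipschitz dependence of the linearized-FBSDE coefficients on the base point $x$; but the coefficients of \eqref{eq. J flow of FBSDE, fix L} are second derivatives of the data evaluated along $(y_{tx\mathbb{L}},u_{tx\mathbb{L}})$, and these are only assumed continuous and bounded, never Lipschitz. The paper's proof does the real work here with a compactness/contradiction argument: take $t_k\downarrow t$, $x_k\to x$, extract weak limits of $\nabla_x y_k,\nabla_x p_k,\nabla_x u_k$ (Banach--Alaoglu), identify the limit equation using Borel--Cantelli a.e.-convergence of the coefficients and dominated convergence, then upgrade weak to strong convergence via the It\^o-product identities and the convexity in Assumptions \textbf{(Ax)}, \textbf{(Axi)}, \textbf{(Bvi)} together with \eqref{def. c_0 > 0 convex, ass. Ci}, and finally pass to the diagonal $s=t_k\to t$ using the temporal H\"older bound \eqref{ineq. cts of Dx y p q u, fix L} and the flow property \eqref{flow property}. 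Your use of part (e) and the flow property for the $t$-direction matches this last step in spirit, but without the weak-convergence/convexity machinery (or some substitute for the unavailable Lipschitz estimates) the $x$-direction of the joint continuity is unsupported.
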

	\begin{proof}
		The differentiability in $x$ is immediate from (b) of Lemma \ref{lem. prop of y p q u fix L} and Lemma \ref{lem diff V w.r.t. x}  {\color{black} as $p_{tx\mathbb{L}}(t)$, $\nabla_x  p_{tx\mathbb{L}}(t)$ are deterministic, and hence, the $\mathcal{H}$-norm in (b) of Lemma \ref{lem. prop of y p q u fix L} is actually the usual Euclidean norm}. We first consider the limit by letting $\{t_k\}_{k \in \mathbb{N}} \downarrow t$ and $\{x_k\}_{k \in \mathbb{N}} $ such that $x_k \to x$, for simplicity, we denote the respective processes $\nabla_x y_{t_k x_k\mathbb{L}}(s)$, $\nabla_x p_{t_k x_k\mathbb{L}}(s)$,  $\nabla_x q_{t_k x_k\mathbb{L}}(s)$,  $\nabla_x u_{t_k x_k\mathbb{L}}(s)$ by
		$\nabla_x y_k(s)$,  
		$\nabla_x p_k(s)$,  
		$\nabla_x q_k(s)$,  
		$\nabla_x u_k(s)$. Assume by contradiction that there is a subsequence of $k$, namely, $k_l$, such that $\lim_{l \to \infty}|\nabla_x p_{t_{k_l}x_{k_l}\mathbb{L}}(t_{k_l})-\nabla_x p_{tx\mathbb{L}}(t)|>0$; if there were a subsequence of the subsequence $k_l$, called it, $k'_j$, such that $\nabla_x p_{k'_j}(t_{k'_j})$ converges to $\nabla_x p_{tx\mathbb{L}}(t)$ as $j\to \infty$, then the assumption is violated. In the remaining of this proof, we simply denote the subsequence $k_l$ by $k$ to avoid cumbersome of notation.

		From Lemma \ref{lem. bdd of diff quotient}, (\ref{est. |Ys-X|^2}), (\ref{bdd DY, X_k-X}) and the flow property (\ref{flow property}), we obtain that, for any $s \in [t_k,T]$,
		\begin{align}
			\big\|y_{t_{k}x_k\mathbb{L}}(s)-y_{tx\mathbb{L}}(s)\big\|_{\mathcal{H}}
			&\leq
	\big\|y_{t_kx_k\mathbb{L}}(s)
			-y_{t_k,y_{tx_k\mathbb{L}}(t_k),\mathbb{L}}(s)\big\|_{\mathcal{H}}
			+	\big\|y_{tx_k\mathbb{L}}(s)
			-y_{tx\mathbb{L}}(s)\big\|_{\mathcal{H}}\nonumber\\
			&\leq (C'_4)^{1/2}\Big(\big\|x_k-y_{tx_k\mathbb{L}}(t_k)\big\|_{\mathcal{H}}
			+|x_k-x|\Big) \nonumber\\
			&\leq A\Big(|t_k-t|^{1/2}
			+|x_k-x|\Big)
			\label{ineq. |yk-y|<|tk-t|}
		\end{align}
		and the same estimate holds for $\big\|p_{t_{k}x_k\mathbb{L}}(s)-p_{tx\mathbb{L}}(s)\big\|_{\mathcal{H}}$ and  $\big\|u_{t_{k}x_k\mathbb{L}}(s)-u_{tx\mathbb{L}}(s)\big\|_{\mathcal{H}}$ by arguing in the same manner as in (\ref{ineq. |yk-y|<|tk-t|}), with $A$ depending on $d$, $\eta$, $\lambda_{g_1}$, $\lambda_{g_2}$, $\lambda_{h_1}$, $\Lambda_{g_1}$, $C_{g_1}$, $C_{g_2}$, $C_{h_1}$, $T$.

		\noindent {\bf Step 1. Weak Convergence:}\\
		By (d) of Lemma \ref{lem. prop of y p q u fix L}, the processes $\nabla_x y_k(s)$,  
		$\nabla_x p_k(s)$,  
		$\nabla_x q_k(s)$,  
		$\nabla_x u_k(s)$ are uniformly bounded in $k$ in $L^2_{\mathcal{W}_t}(\tau^*,T;\mathcal{H})$, for a any $\tau^*\in (t,T]$. Therefore, without loss of generality, in light of Banach-Alaoglu theorem, we can pick subsequences of the processes, without relabeling, such that they converge weakly to $\mathscr{D}y_{\infty}(s)$, $\mathscr{D}p_{\infty}(s)$,
		$\mathscr{D}q_{\infty}(s)$,  $\mathscr{D}u_{\infty}(s)$ in $L_{\mathcal{W}_{\tau^*}}^{2}(\tau^*,T;\mathcal{H})$, respectively. Then we are going to prove the strong convergence of $\nabla_x p_k(s)$ to $\mathscr{D}p_{\infty}(s)$ in $L_{\mathcal{W}_{\tau^*}}^{2}(\tau^*,T;\mathcal{H})$ and also identify that $\mathscr{D}p_{\infty}(s)=\nabla p_{tx\mathbb{L}}(s)$.
		
		\noindent {\bf Step 1A. Weak Convergence of $\nabla_x y_k(s)$:}\\
			Recall that $\nabla_x u_k(s) 
			= \nabla_y u\pig(y_{t_k x_k\mathbb{L}}(s),p_{t_k x_k\mathbb{L}}(s)  \pig) \nabla_x y_k(s)
			+ \nabla_p u\pig(y_{t_k x_k\mathbb{L}}(s),p_{t_k x_k\mathbb{L}}(s)  \pig) \nabla_x p_k(s)$, we pass $k \to \infty$ (up to that subsequence) and use Equation \eqref{eq. J flow of FBSDE, fix L} to obtain that
		\begin{equation}
			\nabla_x y_k(s) \longrightarrow \mathscr{D}y_{\infty}(s)\:=Id+\int_{t}^{s}
			\mathscr{D}u_{\infty}(\tau) d\tau;
			\h{20pt}\text{weakly in $L^2_{\mathcal{W}_{\tau^*}}(\tau^*,T;\mathcal{H})$.}
			\label{conv. Delta y to Dy weakly}
		\end{equation}
		For $s=T$,  we see from Equation \eqref{eq. J flow of FBSDE, fix L} again that
		\begin{equation}
			\nabla_x y_k(T) \longrightarrow \mathscr{D}y_{\infty}(T)\:=Id+\int_{t}^{T}
			\mathscr{D}u_{\infty}(\tau) d\tau;
			\h{20pt}\text{weakly in $L^2(\Omega,\mathcal{W}^{T}_t,\mathbb{P};\mathbb{R}^d)$.}
			\label{conv. D_x y_k(T) to Dy(T) weakly}
		\end{equation}

		\noindent {\bf Step 1B. Weak Convergence of $\nabla_x u_k(s)$:}\\
		We expand the term
		\begin{align}
			\nabla_x u_k(s) 
			&= \nabla_x\Big[ u\pig(y_{t_k x_k\mathbb{L}}(s),p_{t_k x_k\mathbb{L}}(s)  \pig)\Big] \nonumber\\
			&= \nabla_y u\pig(y_{t_k x_k\mathbb{L}}(s),p_{t_k x_k\mathbb{L}}(s)  \pig) \nabla_x y_k(s)
			+ \nabla_p u\pig(y_{t_k x_k\mathbb{L}}(s),p_{t_k x_k\mathbb{L}}(s)  \pig) \nabla_x p_k(s)\nonumber\\
			&=\nabla_y u\pig(y_{tx\mathbb{L}}(s),p_{tx\mathbb{L}}(s)  \pig) \nabla_x y_k(s)
			+ \pig[\nabla_y u\pig(y_{t_k x_k\mathbb{L}}(s),p_{t_k x_k\mathbb{L}}(s)  \pig) 
			- \nabla_y u\pig(y_{tx\mathbb{L}}(s),p_{tx\mathbb{L}}(s)  \pig) \pig]
			\nabla_x y_k(s)\nonumber\\
			&\h{10pt}+ \nabla_p u\pig(y_{tx\mathbb{L}}(s),p_{tx\mathbb{L}}(s)  \pig) \nabla_x p_k(s)
			+ \pig[\nabla_p u\pig(y_{t_k x_k\mathbb{L}}(s),p_{t_k x_k\mathbb{L}}(s)  \pig) 
			- \nabla_p u\pig(y_{tx\mathbb{L}}(s),p_{tx\mathbb{L}}(s)  \pig) \pig]
			\nabla_x p_k(s).
			\label{4978}
		\end{align}
		{\color{black} Let $\varphi$ be a  $\mathbb{R}^{d \times d}$-valued bounded test random variable such that each column of $\varphi$ is in $L^\infty_{\mathcal{W}_{\tau^*}}(\tau^*,T;\mathcal{H})$,} then (d) of Lemma \ref{lem. prop of y p q u fix L} tells us that
		\begin{align}
			&\Bigg|\int^T_{\tau^*}\bigg\langle
			\pig[\nabla_y u\pig(y_{t_k x_k\mathbb{L}}(s),p_{t_k x_k\mathbb{L}}(s)  \pig) 
			- \nabla_y u\pig(y_{tx\mathbb{L}}(s),p_{tx\mathbb{L}}(s)  \pig) \pig]
			\nabla_x y_{t_k x_k\mathbb{L}}(s),\varphi
			\bigg\rangle_\mathcal{H}ds
			\Bigg|\nonumber\\
			&+\Bigg|\int^T_{\tau^*}\bigg\langle
			\pig[\nabla_p u\pig(y_{t_k x_k\mathbb{L}}(s),p_{t_k x_k\mathbb{L}}(s)  \pig) 
			- \nabla_p u\pig(y_{tx\mathbb{L}}(s),p_{tx\mathbb{L}}(s)  \pig) \pig]
			\nabla_x p_{t_k x_k\mathbb{L}}(s),\varphi
			\bigg\rangle_\mathcal{H}ds
			\Bigg|\nonumber\\
			&\leq C'_4 \|\varphi\|_{L^\infty}
			\int^T_{\tau^*}\Big\|
			\nabla_y u\pig(y_{t_k x_k\mathbb{L}}(s),p_{t_k x_k\mathbb{L}}(s)  \pig) 
			- \nabla_y u\pig(y_{tx\mathbb{L}}(s),p_{tx\mathbb{L}}(s)  \pig) 
			\Big\|_\mathcal{H}\nonumber\\
			&\h{75pt}+\Big\|
			\nabla_p u\pig(y_{t_k x_k\mathbb{L}}(s),p_{t_k x_k\mathbb{L}}(s)  \pig) 
			- \nabla_p u\pig(y_{tx\mathbb{L}}(s),p_{tx\mathbb{L}}(s)  \pig) 
			\Big\|_\mathcal{H}ds,\label{5001}
		\end{align}
		{\color{black}here we also use the notation $\langle A ,B\rangle_{\mathcal{H}}$ to denote the inner product $\mathbb{E}[\textup{tr}(A^\top B)]$ for any $\mathbb{R}^{d\times d}$-valued processes $A$ and $B$.} Integrating both sides of  \eqref{ineq. |yk-y|<|tk-t|}  with respect to $s$, it yields
		\begin{align*}
			\int^T_{\tau^*} 
			\big\|y_{t_kx_k\mathbb{L}}(s)-y_{tx\mathbb{L}}(s)\big\|_{\mathcal{H}}ds
			\h{1pt},\h{5pt}
			\int^T_{\tau^*} 
			\big\|p_{t_kx_k\mathbb{L}}(s)-p_{tx\mathbb{L}}(s)\big\|_{\mathcal{H}}ds
			\leq A\Big(|t_k-t|^{1/2}
			+|x_k-x|\Big).
		\end{align*}
		An use of Borel-Cantelli lemma implies that there is another subsequence of the sequence $k$ such that
		\begin{align}
			y_{t_kx_k\mathbb{L}}(s)-y_{tx\mathbb{L}}(s)
			\h{1pt},\h{5pt}
			p_{t_kx_k\mathbb{L}}(s)-p_{tx\mathbb{L}}(s) \longrightarrow 0\h{10pt}
			\text{for $\mathcal{L}^1\otimes\mathbb{P}$-a.e. $(s,\omega) \in [\tau^*,T]\times\Omega$, as $k \to\infty$.}
			\label{conv. yk-y, pk-p to 0}
		\end{align}
		Together with the continuity of the derivative of $u(y,p)$, we have
		\begin{align}
			\nabla_y u\pig(y_{t_k x_k\mathbb{L}}(s),p_{t_k x_k\mathbb{L}}(s)  \pig) 
			- \nabla_y u\pig(y_{tx\mathbb{L}}(s),p_{tx\mathbb{L}}(s)  \pig) 
			\h{1pt},\h{5pt}
			\nabla_p u\pig(y_{t_k x_k\mathbb{L}}(s),p_{t_k x_k\mathbb{L}}(s)  \pig) 
			- \nabla_p u\pig(y_{tx\mathbb{L}}(s),p_{tx\mathbb{L}}(s)  \pig)  \longrightarrow 0 
		\end{align}
		for $\mathcal{L}^1\otimes\mathbb{P}$-a.e. $(s,\omega) \in [\tau^*,T]\times\Omega$, as $k \to\infty$. Thus, by the bounds of \eqref{bdd. Dx u and Dp u} and the Lebesgue dominated convergence theorem, we conclude that the term on the left hand side of \eqref{5001} converges to $0$ as $k \to \infty$. Using this new convergence for \eqref{4978}, together with the weak convergences of $\nabla_x y_k$ and $\nabla_x p_k$, we see that
		\begin{equation}
			\nabla_x u_k(s) \longrightarrow \mathscr{D}u_{\infty}(s)
			\:=\nabla_y u\pig(y_{tx\mathbb{L}}(s),p_{tx\mathbb{L}}(s)  \pig) \mathscr{D}y_{\infty}(s)
			+\nabla_p u\pig(y_{tx\mathbb{L}}(s),p_{tx\mathbb{L}}(s)  \pig) \mathscr{D}p_{\infty}(s)
			\h{10pt}\text{weakly in $L^2_{\mathcal{W}_{\tau^*}}(\tau^*,T;\mathcal{H})$.}
		\end{equation}
		\noindent {\bf Step 1C. Weak Convergence of $\nabla_x p_k(s)$:}\\
		Next, we aim to establish that
		$$
\begin{aligned}
			\mathscr{D}p_{\infty}(s)=&\mathbb{E}\Bigg\{\nabla_{yy} h_1\pig(y_{tx\mathbb{L}}(T)\pig)\mathscr{D}y_{\infty} (T)
			+\displaystyle\int^T_s 
			\nabla_{yy} g_1\pig(y_{tx\mathbb{L}}(\tau),u_{tx\mathbb{L}}(\tau)\pig)\mathscr{D}y_{\infty} (\tau) d\tau \\
			&\h{20pt}+\displaystyle\int^T_s 
			\nabla_{vy} g_1\pig(y_{tx\mathbb{L}}(\tau),u_{tx\mathbb{L}}(\tau)\pig)\mathscr{D}u_{\infty} (\tau) d\tau +\displaystyle\int^T_s 
			\nabla_{yy} g_2\pig(y_{tx\mathbb{L}}(\tau),\mathbb{L} (\tau)\pig)\mathscr{D}y_{\infty} (\tau) d\tau  
			\Bigg|\mathcal{W}_{t}^{s}\Bigg\}.\h{-30pt}
		\end{aligned}
		$$
		 {\color{black} For any bounded $\mathbb{R}^{d \times d}$-valued random variable $\varphi$ such that each column of $\varphi$ is in $L^\infty_{\mathcal{W}_{\tau^*}}(\tau^*,T;\mathcal{H})$ and $\varphi(s)$ is measurable to $\mathcal{W}_{\tau^*}^s$,} we see that $\varphi(s)$ is also measurable to  $\mathcal{W}_{t}^s$ as $\mathcal{W}_{\tau^*}^s \subset \mathcal{W}_{t}^s$ where $t<\tau^*$. As the initial data is now a deterministic point $x_k \in \mathbb{R}^d$, the processes $ y_{t_k x_k\mathbb{L}}(s),p_{t_k x_k\mathbb{L}}(s),q_{t_k x_k\mathbb{L}}(s),u_{t_k x_k\mathbb{L}}(s)$ are measurable to $\mathcal{W}^s_{t_k}$. Therefore, as $\mathcal{W}^{t_k}_t$ is independent to $\mathcal{W}^s_{t_k}$, we can change the conditioning $\sigma$-algebra see also Remark 3.8 and the proof of Proposition 5.2 in \cite{BGY20}) such that
	\fontsize{10.5pt}{11pt}	\begin{align}
			\langle\nabla_x p_{k}(s) \varphi \rangle_{\mathcal{H}}
			=\:&\mathbb{E}\Bigg\{
			\mathbb{E}\Bigg[\nabla_{yy} h_1\pig(y_{t_k x_k\mathbb{L}}(T)\pig)\nabla_x y_k (T)
			+\displaystyle\int^T_s 
			\nabla_{yy} g_1\pig(y_{t_k x_k\mathbb{L}}(\tau),u_{t_k x_k\mathbb{L}}(\tau)\pig)\nabla_x y_k (\tau) d\tau
			\bigg|\mathcal{W}_{t_k}^{s}\Bigg]\varphi\Bigg\}\nonumber\\
			&+\mathbb{E}\Bigg\{\mathbb{E}\bigg[\int_s^T
			\nabla_{vy} g_1\pig(y_{t_k x_k\mathbb{L}}(\tau),u_{t_k x_k\mathbb{L}}(\tau)\pig)\nabla_x u_k (\tau)
			+\nabla_{yy} g_2\pig(y_{t_k x_k\mathbb{L}}(\tau),\mathbb{L} (\tau)\pig)\nabla_x y_k (\tau) d\tau\bigg|\mathcal{W}_{t_k}^{s}\bigg]\varphi\Bigg\}\nonumber\\
			=\:&\mathbb{E}\Bigg\{
			\mathbb{E}\Bigg[\nabla_{yy} h_1\pig(y_{t_k x_k\mathbb{L}}(T)\pig)\nabla_x y_k (T)
			+\displaystyle\int^T_s 
			\nabla_{yy} g_1\pig(y_{t_k x_k\mathbb{L}}(\tau),u_{t_k x_k\mathbb{L}}(\tau)\pig)\nabla_x y_k (\tau) d\tau
			\bigg|\mathcal{W}_{t}^{s}\Bigg]\varphi\Bigg\}\nonumber\\
			&+\mathbb{E}\Bigg\{\mathbb{E}\bigg[\int_s^T
			\nabla_{vy} g_1\pig(y_{t_k x_k\mathbb{L}}(\tau),u_{t_k x_k\mathbb{L}}(\tau)\pig)\nabla_x u_k (\tau)
			+\nabla_{yy} g_2\pig(y_{t_k x_k\mathbb{L}}(\tau),\mathbb{L} (\tau)\pig)\nabla_x y_k (\tau) d\tau\bigg|\mathcal{W}_{t}^{s}\bigg]\varphi\Bigg\}\nonumber\\
			=\:&\mathbb{E}\Bigg\{
			\Bigg[\nabla_{yy} h_1\pig(y_{t_k x_k\mathbb{L}}(T)\pig)\nabla_x y_k (T)
			+\displaystyle\int^T_s 
			\nabla_{yy} g_1\pig(y_{t_k x_k\mathbb{L}}(\tau),u_{t_k x_k\mathbb{L}}(\tau)\pig)\nabla_x y_k (\tau) d\tau
			\Bigg]\varphi\Bigg\}\nonumber\\
			&+\mathbb{E}\Bigg\{\bigg[\int_s^T
			\nabla_{vy} g_1\pig(y_{t_k x_k\mathbb{L}}(\tau),u_{t_k x_k\mathbb{L}}(\tau)\pig)\nabla_x u_k (\tau)
			+\nabla_{yy} g_2\pig(y_{t_k x_k\mathbb{L}}(\tau),\mathbb{L} (\tau)\pig)\nabla_x y_k (\tau) d\tau\bigg]\varphi\Bigg\}\nonumber\\
			=\:&
			\mathbb{E}\Bigg\{\Big[\nabla_{yy} h_1\pig(y_{t_k x_k\mathbb{L}}(T)\pig)
			-\nabla_{yy} h_1\pig(y_{tx\mathbb{L}}(T)\pig)\Big]\nabla_x y_{k} (T)\varphi\Bigg\}
			\label{E(Dpk)phi, line1}\\
			&+\mathbb{E}\Bigg\{\nabla_{yy} h_1\pig(y_{tx\mathbb{L}}(T)\pig)
			\nabla_x y_{k} (T)
			\varphi\Bigg\}
			\nonumber\\
			&+\mathbb{E} \left\{\int_s^T
			\Big[
			\nabla_{yy}g_1\pig(y_{t_{k}x_k\mathbb{L}}(\tau),u_{t_{k}x_k\mathbb{L}}(\tau)\pig)
			-\nabla_{yy}g_1\pig(y_{t x\mathbb{L}}(\tau),u_{t x\mathbb{L}}(\tau)\pig)
			\Big]\nabla_x y_{k} (\tau)\varphi d\tau \right\}
			\label{E(Dpk)phi, line2}\\
			&+\mathbb{E} \left\{\int_s^T\Big[\nabla_{yy}g_1\pig(y_{t x\mathbb{L}}(\tau),u_{t x\mathbb{L}}(\tau)\pig)
			\nabla_x y_{k} (\tau)\Big]\varphi d\tau \right\}\nonumber\\
	&+\mathbb{E} \left\{\int_s^T
		\Big[
		\nabla_{vy}g_1\pig(y_{t_{k}x_k\mathbb{L}}(\tau),u_{t_{k}x_k\mathbb{L}}(\tau)\pig)
		-\nabla_{vy}g_1\pig(y_{t x\mathbb{L}}(\tau),u_{t x\mathbb{L}}(\tau)\pig)
		\Big]\nabla_x u_{k} (\tau)\varphi d\tau \right\}
		\label{E(Dpk)phi, line3}\\
		&+\mathbb{E} \left\{\int_s^T\nabla_{vy}g_1\pig(y_{t x\mathbb{L}}(\tau),u_{t x\mathbb{L}}(\tau)\pig)
			\nabla_x u_{k} (\tau)\varphi d\tau \right\}\nonumber\\
			&+\mathbb{E} \left\{\int_s^T
			\Big[
			\nabla_{yy}g_2\pig(y_{t_{k}x_k\mathbb{L}}(\tau),\mathbb{L} (\tau)\pig)
			-\nabla_{yy}g_2\pig(y_{t x\mathbb{L}}(\tau),\mathbb{L} (\tau)\pig)
			\Big]\nabla_x y_{k} (\tau)\varphi d\tau \right\}
			\label{E(Dpk)phi, line4}\\
			&+\mathbb{E} \left\{\int_s^T\nabla_{yy}g_2\pig(y_{t x\mathbb{L}}(\tau),\mathbb{L} (\tau)\pig)
			\nabla_x y_{k} (\tau)\varphi d\tau \right\}.\nonumber
		\end{align}\normalsize
		We are going to show that the terms in lines \eqref{E(Dpk)phi, line1}-\eqref{E(Dpk)phi, line4} converge to zero as $k \to \infty$. First, the inequality in \eqref{ineq. |yk-y|<|tk-t|} and an application of Borel-Cantelli's lemma imply that there is another subsequence of $k$ such that $y_{t_kx_k\mathbb{L}}(T)-y_{tx\mathbb{L}}(T)
			\longrightarrow 0$ $\mathbb{P}$-a.s., as $k \to\infty$; therefore, the continuity of $\nabla_{yy}h_1$ in \eqref{ass. cts and diff of h1} of Assumption {\bf (Bi)} yields that 
		$\nabla_{yy} h_1\pig(y_{t_k x_k\mathbb{L}}(T)\pig)
		-\nabla_{yy} h_1\pig(y_{tx\mathbb{L}}(T)\pig)$ converges to zero $\mathbb{P}$-a.s., as $k \to\infty$ up to this another subsequence. Therefore, the Lebesgue dominated convergence theorem and \eqref{ass. bdd of D^2h1} of  Assumption {\bf (Bv)} about the pointwise uniform boundedness of $\nabla_{yy}h_1$ illustrate that \eqref{E(Dpk)phi, line1} converges to zero as $k \to\infty$ up to the subsequence since 
		\small\begin{align*}
			\left|\mathbb{E}\Bigg\{\Big[\nabla_{yy} h_1\pig(y_{t_k x_k\mathbb{L}}(T)\pig)
			-\nabla_{yy} h_1\pig(y_{tx\mathbb{L}}(T)\pig)\Big]\nabla_x y_{k} (T)\varphi\Bigg\}\right|
			&\leq \|\varphi\|_{L^\infty} \pig\|\nabla_{yy} h_1\pig(y_{t_k x_k\mathbb{L}}(T)\pig)
			-\nabla_{yy} h_1\pig(y_{tx\mathbb{L}}(T)\pig)\pigr\|_\mathcal{H}\cdot
			\pig\|\nabla_x y_{k} (T)\pigr\|_\mathcal{H}\\
			&\leq C_4'\|\varphi\|_{L^\infty} \pig\|\nabla_{yy} h_1\pig(y_{t_k x_k\mathbb{L}}(T)\pig)
			-\nabla_{yy} h_1\pig(y_{tx\mathbb{L}}(T)\pig)\pigr\|_\mathcal{H},
		\end{align*}\normalsize
		where we have used (d) of Lemma \ref{lem. prop of y p q u fix L}. By \eqref{conv. yk-y, pk-p to 0}, the continuities of $\nabla_{yy}g_1$, $\nabla_{vy} g_1$, $\nabla_{yy} g_2$ in \eqref{ass. cts and diff of g1}, \eqref{ass. cts and diff of g2} of Assumptions {\bf(Ai)} and {\bf(Aii)}, we see that all $\nabla_{yy}g_1\pig(y_{t_{k}x_k \mathbb{L}}(\tau),u_{t_{k}x_k \mathbb{L}}(\tau)\pig)
		-\nabla_{yy}g_1\pig(y_{tx\mathbb{L}}(\tau),u_{tx\mathbb{L}}(\tau)\pig)$, 
		$\nabla_{vy}g_1\pig(y_{t_{k}x_k \mathbb{L}}(\tau),u_{t_{k}x_k \mathbb{L}}(\tau)\pig)
		-\nabla_{vy}g_1\pig(y_{tx\mathbb{L}}(\tau),u_{tx\mathbb{L}}(\tau)\pig)$,
		$\nabla_{yy}g_2\pig(y_{t_{k}x_k \mathbb{L}}(\tau),\mathbb{L} (\tau)\pig)
		-\nabla_{yy}g_2\pig(y_{tx\mathbb{L}}(\tau),\mathbb{L} (\tau)\pig)$ converge to zero as $k \to \infty$ for $\mathcal{L}^1\otimes\mathbb{P}$-a.e. $(s,\omega) \in [\tau^*,T]\times\Omega$, as $k \to\infty$ up to the subsequence. Therefore, (d) of Lemma \ref{lem. prop of y p q u fix L}, the pointwise uniform boundedness $\nabla_{yy}g_1$, $\nabla_{vy} g_1$, $\nabla_{yy} g_2$ in \eqref{ass. bdd of D^2g1}, \eqref{ass. bdd of D^2g2} of respective Assumptions {\bf (Avii)}, {\bf (Aviii)} and again the Lebesgue dominated convergence theorem show that the lines \eqref{E(Dpk)phi, line2}-\eqref{E(Dpk)phi, line4} converge to zero as $k \to\infty$ up to the subsequence. It concludes that  
		\begin{equation}
			\h{-10pt}\begin{aligned}
				\nabla_x p_{k}(s) \longrightarrow 
				\mathscr{D}p_{\infty}&(s)\\ 
				=\mathbb{E}\Bigg\{&\nabla_{yy} h_1\pig(y_{tx\mathbb{L}}(T)\pig)\mathscr{D}y_{\infty} (T)
				+\displaystyle\int^T_s 
				\nabla_{yy} g_1\pig(y_{tx\mathbb{L}}(\tau),u_{tx\mathbb{L}}(\tau)\pig)\mathscr{D}y_{\infty} (\tau) d\tau \\
				&\h{5pt}+\displaystyle\int^T_s 
				\nabla_{vy} g_1\pig(y_{tx\mathbb{L}}(\tau),u_{tx\mathbb{L}}(\tau)\pig)\mathscr{D}u_{\infty} (\tau) d\tau +\displaystyle\int^T_s 
				\nabla_{yy} g_2\pig(y_{tx\mathbb{L}}(\tau),\mathbb{L} (\tau)\pig)\mathscr{D}y_{\infty} (\tau) d\tau  
				\Bigg|\mathcal{W}_{t}^{s}\Bigg\},
			\end{aligned}
			\label{DZ_infty}
		\end{equation}
		weakly in $L^2_{\mathcal{W}_{\tau^*}}(\tau^*,T;\mathcal{H})$.
		Necessarily, due to the uniqueness of the linear FBSDE system (\ref{eq. J flow of FBSDE, fix L}) in Lemma \ref{lem. prop of y p q u fix L}, we see that $\mathscr{D}y_{\infty}(s)=\nabla_x y_{tx\mathbb{L}} (s)$, $\mathscr{D}p_{\infty}(s)=\nabla_x p_{tx\mathbb{L}}(s)$, $\mathscr{D}q_\infty = \nabla_x q_{tx\mathbb{L}}(s)$,
		$\mathscr{D}u_\infty = \nabla_x u_{tx\mathbb{L}}(s)$.
		We next show the strong convergence of $\nabla_x p_k(s)$ to the same weak limit $\nabla_x p_{tx\mathbb{L}}(s)$.
		
		\noindent {\bf Step 2. Estimate of $\pig\langle \nabla_x p_k(t_k),Id \pigr\rangle_{\mathcal{H}} - \pig\langle \nabla_x p_{tx\mathbb{L}}(t),Id \pigr\rangle_{\mathcal{H}}$:}\\
		We consider
		\begin{align}
			\lim_{k \to \infty}\pig\langle \nabla_x p_k(t_k),Id \pigr\rangle_{\mathcal{H}}
			= \lim_{k \to \infty}\pig\langle \nabla_x p_k(t_k)-\nabla_x p_k(s) ,Id \pigr\rangle_{\mathcal{H}} 
			+ \lim_{k \to \infty}\pig\langle \nabla_x p_k(s) ,Id \pigr\rangle_{\mathcal{H}}
			\h{10pt} \text{for any $s \in [t,T]$}\mycomment{delete it}
			\label{<DZk(tk), Psi> to <DZ(t), Psi>+Jk}
		\end{align}
		To deal with the first term on the right hand side of \eqref{<DZk(tk), Psi> to <DZ(t), Psi>+Jk}, we define
		\begin{align*}\mycomment{put it back to 1st row}
			&\mathscr{J}_{k,s} 
			:= \pig\langle \nabla_x p_k(t_k)-\nabla_x p_k(s) ,Id \pigr\rangle_{\mathcal{H}} \\
			&=\h{-3pt}\mbox{\fontsize{9}{10}\selectfont\(\displaystyle\int^{s}_{t_k}\h{-3pt}
				\bigg\langle \nabla_{yy} g_1\pig(y_{t_kx_k\mathbb{L}}(\tau),u_{t_kx_k\mathbb{L}}(\tau)\pig)
				\nabla_x y_k (\tau)
				+ \nabla_{vy} g_1\pig(y_{t_kx_k\mathbb{L}}(\tau),u_{t_kx_k\mathbb{L}}(\tau)\pig)
				\nabla_x u_k (\tau)
				+ \nabla_{yy} g_2\pig(y_{t_kx_k\mathbb{L}}(\tau),\mathbb{L} (\tau)\pig)
				\nabla_x y_k (\tau),
				Id
				\bigg\rangle_{\mathcal{H}}\h{-5pt} d\tau.\)}
		\end{align*}
		We estimate $\mathscr{J}_{k,s}$ by using \eqref{ass. bdd of D^2g1}, \eqref{ass. bdd of D^2g2} of Assumptions {\bf (Avii)}, {\bf (Aviii)} and the bounds in \eqref{bdd. Dx y p q u, fix L}, 
		\begin{align}
			\big|\mathscr{J}_{k,s}\big| \leq \int^{s}_{t_k}\Big( C_{g_1}\big\|
			\nabla_x u_k (\tau)\bigr\|_{\mathcal{H}} 
			+(C_{g_1}+C_{g_2})\big\|\nabla_x y_k (\tau) \big\|_{\mathcal{H}}\Big) d\tau
			\leq(s-t_k) C_4 \Big( 2C_{g_1}+C_{g_2}\Big),
			\label{3852}
		\end{align}
		which implies that $\mathscr{J}_{k,s} \longrightarrow 0$ as $t_k \downarrow t^+$ and then passing $s \downarrow t^+$. It is equivalent to \begin{align}
		\lim_{s \to t^+}\lim_{k \to \infty}\pig\langle \nabla_x p_k(t_k)-\nabla_x p_k(s) ,Id \pigr\rangle_{\mathcal{H}}=0. 
		\label{3856}
		\end{align}
		Using the same decomposition and estimate in Step 1C, together with the weak convergences of $\nabla_x y_k$, $\nabla_x u_k$ and the strong convergences of $y_{t_k x_k\mathbb{L}}$, $u_{t_k x_k\mathbb{L}}$, we use \eqref{DZ_infty} to obtain that
		\begin{align}
		&\h{-10pt}\lim_{k\to\infty}\pig\langle \nabla_x p_k(s) ,Id \pigr\rangle_{\mathcal{H}}\nonumber\\
		=&\lim_{k\to\infty}\left\langle\nabla_{yy} h_1\pig(y_{t_k x_k\mathbb{L}}(T)\pig)\nabla_x y_k (T)
		+\displaystyle\int^T_s 
		\nabla_{yy} g_1\pig(y_{t_k x_k\mathbb{L}}(\tau),u_{t_k x_k\mathbb{L}}(\tau)\pig)\nabla_x y_k (\tau) d\tau,Id\right\rangle_{\mathcal{H}}
		\nonumber\\
		&+\lim_{k\to\infty}\left\langle\int_s^T
		\nabla_{vy} g_1\pig(y_{t_k x_k\mathbb{L}}(\tau),u_{t_k x_k\mathbb{L}}(\tau)\pig)\nabla_x u_k (\tau)
		+\nabla_{yy} g_2\pig(y_{t_k x_k\mathbb{L}}(\tau),\mathbb{L} (\tau)\pig)\nabla_x y_k (\tau) d\tau,Id\right\rangle_{\mathcal{H}}\nonumber\\
		=&\lim_{k\to\infty}\left\langle\nabla_{yy} h_1\pig(y_{t x\mathbb{L}}(T)\pig)\nabla_x y_{t x\mathbb{L}} (T)
		+\displaystyle\int^T_s 
		\nabla_{yy} g_1\pig(y_{t x\mathbb{L}}(\tau),u_{t x\mathbb{L}}(\tau)\pig)\nabla_x y_{t x\mathbb{L}} (\tau) d\tau,Id\right\rangle_{\mathcal{H}}
		\nonumber\\
		&+\lim_{k\to\infty}\left\langle\int_s^T
		\nabla_{vy} g_1\pig(y_{t x\mathbb{L}}(\tau),u_{t x\mathbb{L}}(\tau)\pig)\nabla_x u_{t x\mathbb{L}} (\tau)
		+\nabla_{yy} g_2\pig(y_{t x\mathbb{L}}(\tau),\mathbb{L} (\tau)\pig)\nabla_x y_{t x\mathbb{L}} (\tau) d\tau,Id\right\rangle_{\mathcal{H}}\nonumber\\
		=&\langle\nabla_xp_{t x\mathbb{L}}(s),Id\rangle_{\mathcal{H}}\label{3875}
		\end{align}
		for each fixed $s\in [\tau^*,T]$. Also, the weak limit $\nabla_x p_{tx\mathbb{L}}(s)$ is also continuous for $s\in [t,T]$ by {\color{black}\eqref{ineq. cts of Dx y p q u, fix L}}, therefore it yields $\displaystyle\lim_{s \to t^+}\lim_{k \to \infty}\pig\langle \nabla_x p_k(s) ,Id \pigr\rangle_{\mathcal{H}} = \pig\langle \nabla_x p_{tx\mathbb{L}}(t),Id \pigr\rangle_{\mathcal{H}}$. Hence, it yields from (\ref{<DZk(tk), Psi> to <DZ(t), Psi>+Jk}), \eqref{3856} and \eqref{3875} that 
		\begin{align}
				\lim_{k \to \infty}\pig\langle \nabla_x p_k(t_k),Id \pigr\rangle_{\mathcal{H}}
	&=\lim_{s \to t^+}\lim_{k \to \infty}\pig\langle \nabla_x p_k(t_k)-\nabla_x p_k(s) ,Id \pigr\rangle_{\mathcal{H}} 
	+\lim_{s \to t^+}\lim_{k \to \infty}\pig\langle \nabla_x p_k(s) ,Id \pigr\rangle_{\mathcal{H}}\nonumber\\
	&=\lim_{s \to t^+}\pig\langle \nabla_x p_{tx\mathbb{L}}(s) ,Id \pigr\rangle_{\mathcal{H}}\nonumber\\
	&= \pig\langle \nabla_x p_{tx\mathbb{L}}(t) ,Id \pigr\rangle_{\mathcal{H}}.
			\label{<DZk(tk), Psi> to <DZ(t), Psi>}
		\end{align}
		On the other hand, by noting that $\nabla_x y_k (s)$ is a finite variation process, we apply It\^o's lemma to $\pig\langle \nabla_x y_k (s), \nabla_x p_k(s)  \pigr\rangle_{\mathbb{R}^d}$ which gives the equality
		\begin{equation}
			\scalemath{0.95}{
				\h{-10pt}\begin{aligned}
					\pig\langle \nabla_x &p_k(t_k), Id  \pigr\rangle_{\mathcal{H}}\\
					=\:&
					\Big\langle
					\nabla_{yy} h_1\pig(y_{t_k x_k\mathbb{L}}(T)\pig)\nabla_x y_k (T)
					,\nabla_x y_k (T)
					\Big\rangle_{\mathcal{H}}\\
					&+\int_{t_k}^{T}
					\Big\langle 
					\nabla_{yy} g_1\pig(y_{t_k x_k\mathbb{L}}(\tau),u_{t_k x\mathbb{L}}(\tau)\pig)
					\nabla_x y_k (\tau)
					+\nabla_{vy} g_1\pig(y_{t_k x_k\mathbb{L}}(\tau),u_{t_k x_k\mathbb{L}}(\tau)\pig)
					\nabla_x u_k(\tau),\nabla_x y_k(\tau)\Big\rangle_{\mathcal{H}} d\tau\\
					&+\int_{t_k}^{T}
					\Big\langle 
					\nabla_{yy} g_2\pig(y_{t_k x_k\mathbb{L}}(\tau),\mathbb{L} (\tau)\pig)
					\nabla_x y_k (\tau),\nabla_x y_k(\tau)\Big\rangle_{\mathcal{H}} d\tau\\
					&+\int_{t_k}^{T}\h{-3pt}
					\bigg\langle \nabla_{yv} g_1\pig(y_{t_kx_k\mathbb{L}}(\tau),u_{t_kx_k\mathbb{L}}(\tau)\pig)
					\nabla_x y_k (\tau)
					+\nabla_{vv} g_1\pig(y_{t_kx_k\mathbb{L}}(\tau),u_{t_kx_k\mathbb{L}}(\tau)\pig)
					\nabla_x u_k (\tau),
					\nabla_x u_k (\tau)
					\bigg\rangle_{\mathcal{H}} \h{-6pt} d\tau,\h{-60pt}
			\end{aligned}}
			\label{<DZ_k(t_k),Psi>}
		\end{equation}
		where the last summand are obtained by using have used \eqref{eq. 1st order J flow fix L}. To facilitate the following discussion, we define the linear operators 
		$\Gamma_{1k}(z):=\nabla_{yy} h_1\pig(y_{t_k x_k\mathbb{L}}(T)\pig)z$,
		$\Gamma_{2k}(z):=\nabla_{yy} g_1\pig(y_{t_kx_k\mathbb{L}}(\tau),u_{t_kx_k\mathbb{L}}(\tau)\pig)z$,
		$\Gamma_{3k}(z):=2\nabla_{vy}g_1\pig(y_{t_kx_k\mathbb{L}}(\tau),u_{t_kx_k\mathbb{L}}(\tau)\pig)z$,
		$\Gamma_{4k}(z):=\nabla_{yy} g_2\pig(y_{t_kx_k\mathbb{L}}(\tau),\mathbb{L} (\tau)\pig)z$,\\\
		$\Gamma_{5k}(z):=\nabla_{vv} g_1\pig(y_{t_kx_k\mathbb{L}}(\tau),u_{t_kx_k\mathbb{L}}(\tau)\pig)z$. {\color{black} We can rewrite \eqref{<DZ_k(t_k),Psi>} by
			\begin{align*}
				\pig\langle \nabla_x p_{k}(t_k), Id \pigr\rangle_{\mathcal{H}}
				=\:&\Big\langle
				\Gamma_{1k}\pig(\nabla_x y_{k} (T)\pig),
				\nabla_x y_{k}(T)
				\Big\rangle_{\mathcal{H}} 
				+\int_{t_k}^{T}
				\Big\langle \Gamma_{2k}
				\pig(\nabla_x y_{k} (\tau)\pig),
				\nabla_x y_{k} (\tau)
				\Big\rangle_{\mathcal{H}} d\tau
				\\
				&+\int_{t_k}^{T}
				\Big\langle
				\Gamma_{3k}\pig(\nabla_x u_{k} (\tau)\pig),\nabla_x y_{k}(\tau)
				\Big\rangle_{\mathcal{H}} d \tau 
				+\int_{t_k}^{T}\Big\langle \Gamma_{4k}\pig(\nabla_x y_{k}(\tau)\pig),
				\nabla_x y_{k} (\tau)
				\Big\rangle_{\mathcal{H}} d\tau\\
				&+\int_{t_k}^{T}
				\Big\langle
				\Gamma_{5k}\pig(\nabla_x u_{k} (\tau)\pig),\nabla_x u_{k}(\tau)
				\Big\rangle_{\mathcal{H}} d \tau.
			\end{align*}}
We can also write, by using It\^o's lemma and the FBSDE in \eqref{eq. J flow of FBSDE, fix L}, 
			\begin{align}
				\pig\langle \nabla_x p_{tx\mathbb{L}}(t), Id \pigr\rangle_{\mathcal{H}}
				=\:&\Big\langle
				\widetilde{\Gamma}_{1}\pig(\nabla_x y_{tx\mathbb{L}} (T)\pig),
				\nabla_x y_{tx\mathbb{L}}(T)
				\Big\rangle_{\mathcal{H}} 
				+\int_{t}^{T}
				\Big\langle \widetilde{\Gamma}_{2}
				\pig(\nabla_x y_{tx\mathbb{L}} (\tau)\pig),
				\nabla_x y_{tx\mathbb{L}} (\tau)
				\Big\rangle_{\mathcal{H}} d\tau
				\nonumber\\
				&+\int_{t}^{T}
				\Big\langle
				\widetilde{\Gamma}_{3}\pig(\nabla_x u_{tx\mathbb{L}} (\tau)\pig),\nabla_x y_{tx\mathbb{L}}(\tau)
				\Big\rangle_{\mathcal{H}} d \tau 
				+\int_{t}^{T}\Big\langle \widetilde{\Gamma}_{4}\pig(\nabla_x y_{tx\mathbb{L}}(\tau)\pig),
				\nabla_x y_{tx\mathbb{L}} (\tau)
				\Big\rangle_{\mathcal{H}} d\tau\nonumber\\
				&+\int_{t}^{T}
				\Big\langle
				\widetilde{\Gamma}_{5}\pig(\nabla_x u_{tx\mathbb{L}} (\tau)\pig),\nabla_x u_{tx\mathbb{L}}(\tau)
				\Big\rangle_{\mathcal{H}} d \tau,
							\label{<DZ_infty(t),Psi>}
			\end{align}
		where the linear operators $\widetilde{\Gamma}_i$ is defined similarly as $\Gamma_{ik}$ by replacing the processes $y_{t_kx_k\mathbb{L}}(\tau)$, $u_{t_kx_k\mathbb{L}}(\tau)$ in $\Gamma_{ik}$ by $y_{tx\mathbb{L}}(\tau)$, $u_{tx\mathbb{L}}(\tau)$ respectively, for each $i=1,2,\ldots,5$; for instance, $\widetilde{\Gamma}_1(z)= \pig[\nabla_{yy}h_{1}\pig(y_{tx\mathbb{L}}(T)\pig)\pig]z$.

		Referring to (\ref{<DZ_k(t_k),Psi>}) and (\ref{<DZ_infty(t),Psi>}), we consider $\pig\langle \nabla_x p_k(t_k),Id \pigr\rangle_{\mathcal{H}} - \pig\langle \nabla_x p_{tx\mathbb{L}}(t),Id \pigr\rangle_{\mathcal{H}}$ by first checking the term		
		\begin{align}
			&\h{-10pt}\Big\langle
			\Gamma_{1k}\pig(\nabla_x y_k (T)\pig),
			\nabla_x y_k (T)
			\Big\rangle_{\mathcal{H}} 
			-\Big\langle
			\widetilde{\Gamma}_1\pig(\nabla_x y_{tx\mathbb{L}} (T)\pig),
			\nabla_x y_{tx\mathbb{L}}(T)
			\Big\rangle_{\mathcal{H}} \nonumber\\
			=\:&\Big\langle
			\Gamma_{1k}\pig(\nabla_x y_k (T)-\nabla_x y_{tx\mathbb{L}} (T)\pig)
			,\nabla_x y_k (T)-\nabla_x y_{tx\mathbb{L}} (T)
			\Big\rangle_{\mathcal{H}} 
			+\Big\langle
			\Gamma_{1k}\pig(\nabla_x y_{tx\mathbb{L}} (T)\pig)
			-\widetilde{\Gamma}_1\pig(\nabla_x y_{tx\mathbb{L}} (T)\pig),\nabla_x y_k(T)
			\Big\rangle_{\mathcal{H}} \nonumber\\
			&+\Big\langle
			\widetilde{\Gamma}_1\pig(\nabla_x y_{tx\mathbb{L}} (T)\pig),\nabla_x y_k(T)
			-\nabla_x y_{tx\mathbb{L}}(T)
			\Big\rangle_{\mathcal{H}}
			+\Big\langle
			\Gamma_{1k}\pig(\nabla_x y_k (T)\pig)
			-\Gamma_{1k}\pig(\nabla_x y_{tx\mathbb{L}} (T)\pig)
			,\nabla_x y_{tx\mathbb{L}}(T)
			\Big\rangle_{\mathcal{H}}.
		\end{align}
		Since $\Gamma_{1k} = \nabla_{yy} h_1\pig(y_{t_kx_k\mathbb{L}}(T)\pig)$ is symmetric and $\nabla_x y_{t_kx_k\mathbb{L}}(T)$ converges weakly to $\nabla_x y_{tx\mathbb{L}}(T)$ by \eqref{conv. D_x y_k(T) to Dy(T) weakly}, {\color{black} we use the similar computation as in \eqref{Q1e-Q1infty} to obtain}
		\begin{align}
			&\h{-10pt}\lim_{k \to \infty}\bigg(
			\Big\langle
			\Gamma_{1k}\pig(\nabla_x y_k (T)\pig),
			\nabla_x y_k (T)
			\Big\rangle_{\mathcal{H}} 
			-\Big\langle
			\widetilde{\Gamma}_1\pig(\nabla_x y_{tx\mathbb{L}} (T)\pig),
			\nabla_x y_{tx\mathbb{L}}(T)
			\Big\rangle_{\mathcal{H}}\bigg) \nonumber\\
			=\:&\lim_{k \to \infty}\Big\langle
			\Gamma_{1k}\pig(\nabla_x y_k (T)-\nabla_x y_{tx\mathbb{L}} (T)\pig)
			,\nabla_x y_k (T)-\nabla_x y_{tx\mathbb{L}} (T)
			\Big\rangle_{\mathcal{H}} \nonumber\\
			&+\lim_{k \to \infty}\Big\langle
			\Gamma_{1k}\pig(\nabla_x y_{tx\mathbb{L}} (T)\pig)
			-\widetilde{\Gamma}_1\pig(\nabla_x y_{tx\mathbb{L}} (T)\pig),\nabla_x y_k(T)
			\Big\rangle_{\mathcal{H}} \nonumber\\
			&+\lim_{k \to \infty}
			\Big\langle
			\nabla_x y_k (T)-\nabla_x y_{tx\mathbb{L}} (T)
			,\Gamma_{1k}\pig(\nabla_x y_{tx\mathbb{L}}(T)\pig)
			-\widetilde{\Gamma}_1\pig(\nabla_x y_{tx\mathbb{L}}(T)\pig)
			\Big\rangle_{\mathcal{H}}\nonumber\\
			=\:&\lim_{k \to \infty}\Big\langle
			\Gamma_{1k}\pig(\nabla_x y_k (T)-\nabla_x y_{tx\mathbb{L}} (T)\pig)
			,\nabla_x y_k (T)-\nabla_x y_{tx\mathbb{L}} (T)
			\Big\rangle_{\mathcal{H}} \nonumber\\
			&+\lim_{k \to \infty}\Big\langle
			\Gamma_{1k}\pig(\nabla_x y_{tx\mathbb{L}} (T)\pig)
			-\widetilde{\Gamma}_1\pig(\nabla_x y_{tx\mathbb{L}} (T)\pig),2\nabla_x y_k(T)
			-\nabla_x y_{tx\mathbb{L}}(T) \Big\rangle_{\mathcal{H}}.
			\label{Q1k_Q1infty}
		\end{align}
	By \eqref{bdd. Dx y p q u, fix L}, the last line of \eqref{Q1k_Q1infty} can be estimated by 
		\begin{align}
			&\left|\Big\langle
			\Gamma_{1k}\pig(\nabla_x y_{tx\mathbb{L}} (T)\pig)
			-\widetilde{\Gamma}_1\pig(\nabla_x y_{tx\mathbb{L}} (T)\pig),2\nabla_x y_k(T)
			-\nabla_x y_{tx\mathbb{L}}(T) \Big\rangle_{\mathcal{H}}\right|\nonumber\\
			&\leq\left\|
			\Gamma_{1k}\pig(\nabla_x y_{tx\mathbb{L}} (T)\pig)
			-\widetilde{\Gamma}_1\pig(\nabla_x y_{tx\mathbb{L}} (T)\pig)\right\|_{\mathcal{H}}
			\left\|2\nabla_x y_k(T)
			-\nabla_x y_{tx\mathbb{L}}(T) \right\|_{\mathcal{H}}\nonumber\\
			&\leq\left\|
			\pig[\nabla_{yy} h_1\pig(y_{t_kx\mathbb{L}}(T)\pig)
			-\nabla_{yy} h_1\pig(y_{tx\mathbb{L}}(T)\pig)\pig]\pig(\nabla_x y_{tx\mathbb{L}} (T)\pig)\right\|_{\mathcal{H}}\cdot (3C_4'').
			\label{5463}
		\end{align}
		The bound in \eqref{bdd. Dx y p q u, fix L} implies that $\nabla_x y_{tx\mathbb{L}} (T)$ is finite $\mathbb{P}$-a.s.. Due to the convergence of $y_{t_kx_k\mathbb{L}}(T)$ in \eqref{conv. yk-y, pk-p to 0} and the continuity of $\nabla_{yy}h_1$ in \eqref{ass. cts and diff of h1} of Assumption {\bf (Bi)}, we see that $\pig[\nabla_{yy} h_1\pig(y_{t_kx_k\mathbb{L}}(T)\pig)
		-\nabla_{yy} h_1\pig(y_{tx\mathbb{L}}(T)\pig)\pig]\nabla_x y_{tx\mathbb{L}} (T)$ converges to zero $\mathbb{P}$-a.s. as $k \to \infty$ up to the subsequence.  Therefore, together with the boundedness of $\nabla_{yy}h_1$ in \eqref{ass. bdd of D^2h1} of Assumption {\bf (Bv)}, we apply the Lebesgue dominated convergence theorem to conclude that \eqref{5463} converges to zero as $k \to \infty$ up to the {\color{black}subsequence in \eqref{conv. yk-y, pk-p to 0}}. Hence, from \eqref{Q1k_Q1infty}, it yields that 
		\begin{align}
			&\h{-10pt}\lim_{k \to \infty}\bigg(
			\Big\langle
			\Gamma_{1k}\pig(\nabla_x y_k (T)\pig),
			\nabla_x y_k (T)
			\Big\rangle_{\mathcal{H}} 
			-\Big\langle
			\widetilde{\Gamma}_1\pig(\nabla_x y_{tx\mathbb{L}} (T)\pig),
			\nabla_x y_{tx\mathbb{L}}(T)
			\Big\rangle_{\mathcal{H}}\bigg) \nonumber\\
			=\:&\lim_{k \to \infty}\Big\langle
			\Gamma_{1k}\pig(\nabla_x y_k (T)-\nabla_x y_{tx\mathbb{L}} (T)\pig)
			,\nabla_x y_k (T)-\nabla_x y_{tx\mathbb{L}} (T)
			\Big\rangle_{\mathcal{H}}.
		\end{align}
		By estimating the remaining terms in $ \pig\langle \nabla_x p_k(t_k),Id \pigr\rangle_{\mathcal{H}} - \pig\langle \nabla_x p_{tx\mathbb{L}}(t),Id \pigr\rangle_{\mathcal{H}}$ involving $\Gamma_{2k}$, $\Gamma_{3k}$, $\Gamma_{4k}$ and $\Gamma_{5k}$ similarly, we can deduce
		\begin{equation}
			\pig\langle \nabla_x p_k(t_k),Id \pigr\rangle_{\mathcal{H}} - \pig\langle \nabla_x p_{tx\mathbb{L}}(t),Id \pigr\rangle_{\mathcal{H}}
			-\mathscr{J}'_k
			-\mathscr{J}''_k\longrightarrow 0, 
			\label{5278}
		\end{equation}
		where the followings are residuals when we cope with $\Gamma_{ij}$, $i=1,\ldots,5$,
		\begin{align*}
			\mathscr{J}'_{k}:=&\Big\langle \nabla_{yy}h_1 (y_{t_kx_k\mathbb{L}}(T))
			\pig(\nabla_x y_k (T)-\nabla_x y_{tx\mathbb{L}}(T)\pig),\nabla_x y_k (T)-\nabla_x y_{tx\mathbb{L}}(T)\Big\rangle_{\mathcal{H}}\\
			&+\int_{t_{k}}^{T}\Big\langle
			\nabla_{yy} g_1\pig(y_{t_kx_k\mathbb{L}}(s),u_{t_kx_k\mathbb{L}}(s)\pig)
			\pig(\nabla_x y_k(s)-\nabla_x y_{tx\mathbb{L}}(s)\pig),\nabla_x y_k(s)-\nabla_x y_{tx\mathbb{L}}(s)\Big\rangle_{\mathcal{H}}\\
			&\h{20pt}+2\Big\langle \nabla_{vy} g_1\pig(y_{t_kx_k\mathbb{L}}(s),u_{t_kx_k\mathbb{L}}(s)\pig)\pig(\nabla_x u_k(s)-\nabla_x u_{tx\mathbb{L}}(s)\pig),\nabla_x y_k (s)-\nabla_x y_{tx\mathbb{L}}(s)\Big\rangle_{\mathcal{H}}\\
			&\h{20pt}+\Big\langle
			\nabla_{yy} g_2\pig(y_{t_kx_k\mathbb{L}}(s),\mathbb{L}(s)\pig)
			\pig(\nabla_x y_k(s)-\nabla_x y_{tx\mathbb{L}}(s)\pig),\nabla_x y_k(s)-\nabla_x y_{tx\mathbb{L}}(s)\Big\rangle_{\mathcal{H}}\\
			&\h{20pt}+\Big\langle \nabla_{vv} g_1\pig(y_{t_kx_k\mathbb{L}}(s),u_{t_kx_k\mathbb{L}}(s)\pig)\pig(\nabla_x u_k(s)-\nabla_x u_{tx\mathbb{L}}(s)\pig),\nabla_x u_k(s)-\nabla_x u_{tx\mathbb{L}}(s)\Big\rangle_{\mathcal{H}}ds,
		\end{align*}
		and
		\begin{align*}
			\mathscr{J}''_{k}:=&
			\int^{t_k}_t
			\Big\langle 
			\nabla_{yy} g_1\pig(y_{tx\mathbb{L}}(s),u_{tx\mathbb{L}}(s)\pig)
			\nabla_x y_{tx\mathbb{L}}(s) 
			+2\nabla_{vy} g_1\pig(y_{tx\mathbb{L}}(s),u_{tx\mathbb{L}}(s)\pig)
			\nabla_x u_{tx\mathbb{L}}(s),\nabla_x y_{tx\mathbb{L}}(s)\Big\rangle_{\mathcal{H}} \\
			&+
			\Big\langle \nabla_{yy} g_2\pig(y_{tx\mathbb{L}}(s),\mathbb{L}(s)\pig)
			\nabla_x y_{tx\mathbb{L}}(s),
			\nabla_x y_{tx\mathbb{L}}(s)
			\Big\rangle_{\mathcal{H}}
			\h{-6pt}+\Big\langle 
			\nabla_{vv} g_1\pig(y_{tx\mathbb{L}}(s),u_{tx\mathbb{L}}(s)\pig)
			\nabla_x u_{tx\mathbb{L}}(s),
			\nabla_x u_{tx\mathbb{L}}(s)
			\Big\rangle_{\mathcal{H}} \h{-6pt} ds.
		\end{align*}
		Applying \eqref{ass. bdd of D^2g1}, \eqref{ass. bdd of D^2g2} of the respective Assumptions {\bf (Avii)}, {\bf (Aviii)} and the bounds in (\ref{bdd. Dx y p q u, fix L}) yields that
		\begin{align}
			|\mathscr{J}''_{k}|\leq\:&
			\int^{t_k}_t
			(C_{g_1}+C_{g_2})\pig\|
			\nabla_x y_{tx\mathbb{L}}(s)
			\pigr\|_{\mathcal{H}}^2 
			+2C_{g_1}\pig\|
			\nabla_x y_{tx\mathbb{L}}(s) \pigr\|_{\mathcal{H}}
			\pig\|\nabla_x u_{tx\mathbb{L}}(s)\pigr\|_{\mathcal{H}} 
			+C_{g_1}
			\pig\|\nabla_x u_{tx\mathbb{L}}(s)\pigr\|_{\mathcal{H}}^2
			ds\nonumber \\
			&\h{-10pt}\longrightarrow 0, \h{20pt} \text{ as $k \to \infty$.}\label{J'' to 0}
		\end{align}
		Together with the fact that $\pig\langle \nabla_x p_k(t_k),Id \pigr\rangle_{\mathcal{H}} - \pig\langle \nabla_x p_{tx\mathbb{L}}(t),Id \pigr\rangle_{\mathcal{H}} \longrightarrow 0$ in (\ref{<DZk(tk), Psi> to <DZ(t), Psi>}), $\mathscr{J}''_{k}\longrightarrow 0$ in (\ref{J'' to 0}) and the convergence in \eqref{5278}, we deduce $\mathscr{J}'_k \longrightarrow 0$ as $k \to \infty$ up to the subsequence. Applying \eqref{ass. convexity of g1}, \eqref{ass. convexity of g2}, \eqref{ass. convexity of h} of the respective Assumptions {\bf (Ax)}, {\bf (Axi)}, {\bf (Bvi)} deduces
		\small\begin{equation}
			\begin{aligned}
				&\int_{t_{k}}^{T}
				\Lambda_{g_1}\big\|\nabla_x u_k(s)-\nabla_x u_{tx\mathbb{L}}(s)\big\|_{\mathcal{H}}^{2}
				-(\lambda_{g_1}+\lambda_{g_2}) \big\|\nabla_x y_k (s)-\nabla_x y_{tx\mathbb{L}}(s)\big\|_{\mathcal{H}}^{2}ds
				-\lambda_{h_1}\big\|\nabla_x y_k (T)-\nabla_x y_{tx\mathbb{L}}(T)\big\|_{\mathcal{H}}^{2}\\
				&\leq \mathscr{J}'_{k}
				\longrightarrow 0 \h{20pt} \text{ as $k\to \infty$ up to the subsequence.}
			\end{aligned}
			\label{est Jk >}
		\end{equation}\normalsize
		
		\noindent {\bf Step 3. Strong Convergence of $\nabla_x p_{k}$:}\\
		In \eqref{eq. J flow of FBSDE, fix L}, the equations of $\nabla_x y_k $ and $\nabla_x y_{tx\mathbb{L}}$ give
			\begin{align}
				\big\|\nabla_x y_k (s)-\nabla_x y_{tx\mathbb{L}}(s)\big\|_{\mathcal{H}}^{2}
				\leq\,&(1+\kappa_{16})|x_k-x|^2+\left(1+\dfrac{1}{\kappa_{16}}\right)^2(s-t_k)\int_{t_{k}}^{T}
				\big\|\nabla_x u_k(\tau)-\nabla_x u_{tx\mathbb{L}}(\tau)\big\|^{2}_{\mathcal{H}}d\tau\nonumber\\
				&+(1+\kappa_{16})\left(1+\dfrac{1}{\kappa_{16}}\right)(t_{k}-t)^{2}\sup_{\tau \in [t,T]}\big\|\nabla_x u_{tx\mathbb{L}}(\tau)\big\|^{2}_{\mathcal{H}} ,\nonumber\\
				\text{by integrating over $[t_k,T]$, we obtain}\nonumber\\
				\int_{t_{k}}^{T}\big\|\nabla_x y_k (s)-\nabla_x y_{tx\mathbb{L}}(s)\big\|_{\mathcal{H}}^{2}ds
				\leq\,&T(1+\kappa_{16})|x_k-x|^2+\left(1+\dfrac{1}{\kappa_{16}}\right)^2\dfrac{(T-t_k)^{2}}{2}\int_{t_{k}}^{T}\big\|\nabla_x u_k(\tau)-\nabla_x u_{tx\mathbb{L}}(\tau)\big\|^{2}_{\mathcal{H}}d\tau\nonumber\\
				&+T(1+\kappa_{16})\left(1+\dfrac{1}{\kappa_{16}}\right)(t_{k}-t)^{2}
				\sup_{\tau \in [t,T]}\big\|\nabla_x u_{tx\mathbb{L}}(\tau)\big\|^{2}_{\mathcal{H}}.
				\label{est of yk}
			\end{align}
		By putting (\ref{est of yk}) into (\ref{est Jk >}), (\ref{def. c_0 > 0 convex, ass. Ci}), we have
		{\color{black}
		\begin{align*}
	    &\int_{t_{k}}^{T}
	    \left[\Lambda_{g_1}-(\lambda_{g_1}+\lambda_{g_2})_+\left(1+\dfrac{1}{\kappa_{16}}\right)^2\dfrac{T^{2}}{2}
	    -(\lambda_{h_1})_+\left(1+\dfrac{1}{\kappa_{16}}\right)^2
	    T\right]\big\|\nabla_x u_k(s)-\nabla_x u_{tx\mathbb{L}}(s)\big\|_{\mathcal{H}}^{2}ds\\
	     &-(\lambda_{g_1}+\lambda_{g_2})_+(1+\kappa_{16})T\left[|x_k-x|^2
	    +\left(1+\dfrac{1}{\kappa_{16}}\right)(t_{k}-t)^{2}\sup_{\tau \in [t,T]}\big\|\nabla_x u_{tx\mathbb{L}}(\tau)\big\|^{2}_{\mathcal{H}}\right]\\
	    &-(\lambda_{h_1})_+(1+\kappa_{16})\left[|x_k-x|^2
	    +\left(1+\dfrac{1}{\kappa_{16}}\right)(t_{k}-t)^{2}\sup_{\tau \in [t,T]}\big\|\nabla_x u_{tx\mathbb{L}}(\tau)\big\|^{2}_{\mathcal{H}}\right]\\
	    &\leq \mathscr{J}'_{k}
	    \longrightarrow 0 \h{20pt} \text{ as $k\to \infty$ up to the subsequence.}
		\end{align*}
		}
		Hence, \eqref{def. c_0 > 0 convex, ass. Ci} of Assumption of {\bf (Ci)}, the facts that $x_k \to x$, $t_{k}\downarrow t$ and
		$\mathscr{J}'_{k}\rightarrow0$ give the convergence $\int_{t_k}^{T}\pig\|\nabla_x u_k (s)-\nabla_x u_{tx\mathbb{L}}(s)\pigr\|^{2}_{\mathcal{H}}ds\longrightarrow 0$ by choosing $\kappa_{16}$ small enough. Therefore, the second inequality in (\ref{est of yk}) shows the strong convergence of $\nabla_x y_k (s)$ to $\nabla_x y_{tx\mathbb{L}}(s)$ in $L_{\mathcal{W}_{\tau^*}}^{2}(\tau^*,T;\mathcal{H})$. Moreover, Equation \eqref{eq. 1st order J flow fix L} gives the strong convergence of $\nabla_x p_k (s)$ to $\nabla_x p_{tx\mathbb{L}}(s)$ in $L_{\mathcal{W}_{\tau^*}}^{2}(\tau^*,T;\mathcal{H})$ by the strong convergence of $\nabla_x y_k (s)$ and $\nabla_x u_k (s)$ just obtained. Furthermore, It\^o's lemma yields
		\begin{align}
			&\h{-15pt}\big\| \nabla_x p_k(s)  - \nabla_x p_{tx\mathbb{L}}(s) \big\|_{\mathcal{H}}^2
			+\int^T_s\big\| \nabla_x q_k(\tau) - \nabla_x q_{tx\mathbb{L}}(\tau) \big\|_{\mathcal{H}}^2 d \tau\nonumber\\
			=\:&\Big\| \nabla_{yy} h_1\pig(y_{t_k x_k\mathbb{L}}(T)\pig)
			\nabla_x y_k (T) 
			-\nabla_{yy} h_1\pig(y_{tx\mathbb{L}}(T)\pig)
			\nabla_x y_{tx\mathbb{L}} (T) \big\|_{\mathcal{H}}^2\nonumber\\
			&+\int^T_s\pig\langle
			\nabla_{vy} g_1\pig(y_{t_kx_k\mathbb{L}}(\tau),u_{t_kx_k\mathbb{L}}(\tau)\pig)\nabla_x u_k(\tau)
			-\nabla_{vy} g_1\pig(y_{tx\mathbb{L}}(\tau),u_{tx\mathbb{L}}(\tau)\pig)
			\nabla_x u_{tx\mathbb{L}}(\tau),\nabla_x p_k(\tau) - \nabla_x p_{tx\mathbb{L}}(\tau)
			\pigr\rangle_{\mathcal{H}}d\tau \nonumber\\
			&+\int^T_s\pig\langle
			\nabla_{yy} g_1\pig(y_{t_kx_k\mathbb{L}}(\tau),u_{t_kx_k\mathbb{L}}(\tau)\pig)
			\nabla_x y_k  (\tau)
			-\nabla_{yy} g_1\pig(y_{tx\mathbb{L}}(\tau),u_{tx\mathbb{L}}(\tau)\pig)
			\nabla_x y_{tx\mathbb{L}}  (\tau)
			,\nabla_x p_k(\tau) - \nabla_x p_{tx\mathbb{L}}(\tau)
			\pigr\rangle_{\mathcal{H}}d\tau\nonumber\\
			&+\int^T_s\pig\langle
			\nabla_{yy} g_2\pig(y_{t_kx_k\mathbb{L}}(\tau),\mathbb{L} (\tau)\pig)
			\nabla_x y_k  (\tau)
			-\nabla_{yy} g_2\pig(y_{tx\mathbb{L}}(\tau),\mathbb{L} (\tau)\pig)
			\nabla_x y_{tx\mathbb{L}}  (\tau)
			,\nabla_x p_k(\tau) - \nabla_x p_{tx\mathbb{L}}(\tau)
			\pigr\rangle_{\mathcal{H}}d\tau
			\label{|DZ_k-DZ|}
		\end{align}
		which converges to zero as $k \to \infty$ up to the subsequence due to the strong convergences of $\nabla_x y_k$, $\nabla_x u_k$, $y_{t_kx_k\mathbb{L}}$, $u_{t_kx_k\mathbb{L}}$ and the boundedness of the second derivatives of $g_1$, $g_2$ in \eqref{ass. bdd of D^2g1}, \eqref{ass. bdd of D^2g2} of Assumptions {\bf (Avii)}, {\bf (Aviii)}. It implies that $\big\|\nabla_x p_k(s) -\nabla_x p_{tx\mathbb{L}}(s)\big\|_{\mathcal{H}}$ converges to zero for every $s \in [\tau^*,T]$. Hence, for $s\in [\tau^*,T]$, \begin{align*}
			&\lim_{k \to \infty}\pig\|
			\nabla_x p_k(t_k)-\nabla_x p_{tx\mathbb{L}}(t)\pigr\|_{\mathcal{H}}\\
			&\leq\lim_{k \to \infty}\pig\|
			\nabla_x p_k(t_k)-\nabla_x p_k(s) \pigr\|_{\mathcal{H}}
			+\lim_{k \to \infty}\pig\|
			\nabla_x p_k(s) -\nabla_x p_{tx\mathbb{L}}(s)\pigr\|_{\mathcal{H}}
			+\pig\|
			\nabla_x p_{tx\mathbb{L}}(s)-\nabla_x p_{tx\mathbb{L}}(t)\pigr\|_{\mathcal{H}}\\ 
			&=\lim_{k \to \infty}\pig\|
			\nabla_x p_k(t_k)-\nabla_x p_k(s) \pigr\|_{\mathcal{H}}
			+\pig\|\nabla_x p_{tx\mathbb{L}}(s)
			-\nabla_x p_{tx\mathbb{L}}(t)\pigr\|_{\mathcal{H}}.
		\end{align*}
		By taking $s\to t^+$ {\color{black} and using the continuity of $\nabla_x p_{tx\mathbb{L}}(s)$ in \eqref{ineq. cts of Dx y p q u, fix L}}, we have
		\begin{align}
			\lim_{k \to \infty}\pig\|
			\nabla_x p_k(t_k)-\nabla_x p_{tx\mathbb{L}}(t)\pigr\|_{\mathcal{H}}
			\leq\lim_{s \to t^+}\lim_{k \to \infty}\pig\|
			\nabla_x p_k(t_k)-\nabla_x p_k(s) \pigr\|_{\mathcal{H}}.
			\label{ineq. of |DZ_k(t_k)-DZ(t)|}
		\end{align}
		Applying It\^o's lemma and Young's inequality,
		we have
		\begin{align*}
			&\h{-10pt}\pig\|
		\nabla_x p_k(t_k)-\nabla_x p_k(s) \pigr\|_{\mathcal{H}}^2
		+\int^s_{t_k}\pig\|
		\nabla_x q_k(t_k)-\nabla_x q_k(\tau) \pigr\|_{\mathcal{H}}^2d\tau\nonumber\\
		\leq\:& \displaystyle\int^s_{t_k}
		\bigg\|\nabla_{vy} g_1\pig(y_{t_kx_k\mathbb{L}}(\tau),u_{t_kx_k\mathbb{L}}(\tau)\pig)
		\nabla_x u_k (\tau)
		+\nabla_{yy} g_1\pig(y_{t_kx_k\mathbb{L}}(\tau),u_{t_kx_k\mathbb{L}}(\tau)\pig)
		\nabla_x y_k (\tau)\nonumber\\
		&\h{220pt}+\nabla_{yy} g_2\pig(y_{t_kx_k\mathbb{L}}(\tau),\mathbb{L} (\tau)\pig)
		\nabla_x y_k  (\tau)\bigg\|_{\mathcal{H}}^2 d\tau\\
		&+\displaystyle\int^s_{t_k}
		\pig\|\nabla_x p_k(t_k)-\nabla_x p_k(\tau) \pigr\|_{\mathcal{H}}^2d\tau.
		\end{align*}
		Using the boundedness of \eqref{ass. bdd of D^2g1}, \eqref{ass. bdd of D^2g2} of respective Assumptions {\bf (Avii)}, {\bf (Aviii)}, as well as \eqref{bdd. Dx y p q u, fix L}, we have
		\begin{align}
			&\h{-10pt}\pig\|
			\nabla_x p_k(t_k)-\nabla_x p_k(s) \pigr\|_{\mathcal{H}}^2\nonumber\\
			\leq\:& 
			3(s-t_k)\displaystyle\int^s_{t_k}
			C_{g_1}^2\sup_k\big\|\nabla_x u_k (\tau)\big\|_{\mathcal{H}}^2 
			+(C_{g_1}^2+C_{g_2}^2)
			\sup_k\big\|\nabla_x y_k  (\tau)\big\|_{\mathcal{H}}^2 d\tau
			&+\displaystyle\int^s_{t_k}
			\pig\|\nabla_x p_k(t_k)-\nabla_x p_k(\tau) \pigr\|_{\mathcal{H}}^2d\tau\nonumber\\
			\leq\:& 
			(s-t_k)(C_4'')^2\pig[3(2C_{g_1}^2+C_{g_2}^2)+4\pig].
\label{DZ_k(t_k)-DZ_k(s) to 0}
		\end{align}
		Putting (\ref{DZ_k(t_k)-DZ_k(s) to 0}) into (\ref{ineq. of |DZ_k(t_k)-DZ(t)|}) yields the convergence $\nabla_{xx} V(x_k,t_k) - \nabla_{xx} V(x,t) = \nabla_x p_k(t_k) - \nabla_x p_{tx\mathbb{L}}(t) \longrightarrow 0 $ as $k \to \infty$ up to the subsequence, which contradicts the hypothesis at the very beginning of this proof. {\color{black}If $\{t_k\}_{k \in \mathbb{N}}\uparrow t$, we suppose $k$ is large enough and there is $k_0 \in \mathbb{N}$ such that $k_0\leq k$. Then, we replace the $\sigma$-algebra $\mathcal{W}_{t_k}^{s}$ in the following term by $\mathcal{W}_{t_{k_0}}^{s}$:
			\begin{align}
			\langle\nabla_x p_{k}(s) \varphi \rangle_{\mathcal{H}}
			=\:&\mathbb{E}\Bigg\{
			\mathbb{E}\Bigg[\nabla_{yy} h_1\pig(y_{t_k x_k\mathbb{L}}(T)\pig)\nabla_x y_k (T)
			+\displaystyle\int^T_s 
			\nabla_{yy} g_1\pig(y_{t_k x_k\mathbb{L}}(\tau),u_{t_k x_k\mathbb{L}}(\tau)\pig)\nabla_x y_k (\tau) d\tau
			\bigg|\mathcal{W}_{t_k}^{s}\Bigg]\varphi\Bigg\}\nonumber\\
			&+\mathbb{E}\Bigg\{\mathbb{E}\bigg[\int_s^T
			\nabla_{vy} g_1\pig(y_{t_k x_k\mathbb{L}}(\tau),u_{t_k x_k\mathbb{L}}(\tau)\pig)\nabla_x u_k (\tau)
			+\nabla_{yy} g_2\pig(y_{t_k x_k\mathbb{L}}(\tau),\mathbb{L} (\tau)\pig)\nabla_x y_k (\tau) d\tau\bigg|\mathcal{W}_{t_k}^{s}\bigg]\varphi\Bigg\}\nonumber\\
			=\:&\mathbb{E}\Bigg\{
			\mathbb{E}\Bigg[\nabla_{yy} h_1\pig(y_{t_k x_k\mathbb{L}}(T)\pig)\nabla_x y_k (T)
			+\displaystyle\int^T_s 
			\nabla_{yy} g_1\pig(y_{t_k x_k\mathbb{L}}(\tau),u_{t_k x_k\mathbb{L}}(\tau)\pig)\nabla_x y_k (\tau) d\tau
			\bigg|\mathcal{W}_{t_{k_0}}^{s}\Bigg]\varphi\Bigg\}\nonumber\\
			&+\mathbb{E}\Bigg\{\mathbb{E}\bigg[\int_s^T
			\nabla_{vy} g_1\pig(y_{t_k x_k\mathbb{L}}(\tau),u_{t_k x_k\mathbb{L}}(\tau)\pig)\nabla_x u_k (\tau)
			+\nabla_{yy} g_2\pig(y_{t_k x_k\mathbb{L}}(\tau),\mathbb{L} (\tau)\pig)\nabla_x y_k (\tau) d\tau\bigg|\mathcal{W}_{t_{k_0}}^{s}\bigg]\varphi\Bigg\}.
		\end{align}
		Then, we take $k_0 \to \infty$ after $k \to \infty$. The remaining details are the same as before.
		}
	\end{proof}

	Finally, following the standard estimates and using the optimality principle, {\color{black} together with the boundedness in \eqref{bdd. y p q u fix L} and \eqref{ass. bdd of g1}, \eqref{ass. bdd of g2}, \eqref{ass. convexity of g1}, \eqref{ass. convexity of g2}, \eqref{ass. bdd of h1}, \eqref{ass. bdd of h2} of the respective Assumptions {\bf (Aiii)}, {\bf (Aiv)}, {\bf (Ax)}, {\bf (Axi)}, {\bf (Bii)}, {\bf (Biii)}}, {\color{black}we state the following Lemma without the proof:}
	\begin{lemma}[\bf Existence and Spatial Continuity of $\p_t V(x,t)$]
		\label{lem diff of V in t} 
		Suppose that \textup{\bf (Ci)}'s \eqref{def. c_0 > 0 convex, ass. Ci} holds and let $\mathbb{L}(\bigcdot) \in C\pig(t,T;\mathcal{P}_2(\mathbb{R}^d)\pig)$, then the value function $V(x,t)$ is continuously differentiable in $t$ and the derivative $\p_{t} V(x,t)$ is continuous in $x$.
	\end{lemma}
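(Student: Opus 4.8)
The plan is to derive, for the fixed flow $\mathbb{L}(\bigcdot)$, the Hamilton--Jacobi--Bellman equation satisfied by $V$ and to read off $\p_t V$ from it; its regularity in $x$ and $t$ then follows from the already-established regularity of $\nabla_x V$ and $\nabla_{xx}V$. First I would establish the local dynamic programming principle: for $\epsilon\in(0,T-t)$, writing $y^\epsilon:=y_{tx\mathbb{L}}(t+\epsilon)$, I split the objective at time $t+\epsilon$ and use the flow property \eqref{flow property}, the tower property (conditioning on $\mathcal{W}^{t+\epsilon}_t$ and exploiting that the Wiener increments after $t+\epsilon$ are independent of it), together with the uniqueness of the optimal control from the strict convexity in Lemma \ref{lem. convexity and coercivity of J(v,L)}, to identify the tail with $\E\pig[V(y^\epsilon,t+\epsilon)\pig]$. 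This gives
\begin{equation*}
	V(x,t)=\E\left[\int_t^{t+\epsilon} g_1\pig(y_{tx\mathbb{L}}(s),u_{tx\mathbb{L}}(s)\pig)+g_2\pig(y_{tx\mathbb{L}}(s),\mathbb{L}(s)\pig)\,ds+V\pig(y^\epsilon,t+\epsilon\pig)\right].
\end{equation*}

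Next I would apply It\^o's lemma to the map $x'\mapsto V(x',t+\epsilon)$, which is $C^2$ in its spatial argument by Lemmas \ref{lem diff V w.r.t. x} and \ref{lem 2nd diff V w.r.t. x}, along the trajectory $y_{tx\mathbb{L}}(\bigcdot)$ on $[t,t+\epsilon]$ with the second argument $t+\epsilon$ frozen. Taking expectations annihilates the martingale part, and after rearranging and dividing by $\epsilon$ one obtains
\begin{equation*}
	\frac{V(x,t+\epsilon)-V(x,t)}{\epsilon}=-\frac{1}{\epsilon}\E\left[\int_t^{t+\epsilon} g_1+g_2+\nabla_x V\pig(y_{tx\mathbb{L}}(s),t+\epsilon\pig)\cdot u_{tx\mathbb{L}}(s)+\tfrac12\textup{tr}\pig(\eta\eta^\top \nabla_{xx}V(y_{tx\mathbb{L}}(s),t+\epsilon)\pig)\,ds\right].
\end{equation*}

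Then I would let $\epsilon\to 0^+$. The a priori bounds of Lemma \ref{lem. prop of y p q u fix L} furnish the domination, while $y_{tx\mathbb{L}}(s)\to x$ and $u_{tx\mathbb{L}}(s)\to u_{tx\mathbb{L}}(t)=u\pig(x,\nabla_x V(x,t)\pig)$ as $s\to t^+$, the continuity of $\mathbb{L}(\bigcdot)$ in $C\pig(t,T;\mathcal{P}_2(\mathbb{R}^d)\pig)$, and the joint continuity of $\nabla_x V,\nabla_{xx}V$ (from Lemmas \ref{lem diff V w.r.t. x} and \ref{lem 2nd diff V w.r.t. x}) make the integrand continuous at $s=t$, so the time-average tends to its endpoint value. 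Writing $u^\ast:=u\pig(x,\nabla_x V(x,t)\pig)$, the right-derivative equals
\begin{equation*}
	-g_1(x,u^\ast)-g_2\pig(x,\mathbb{L}(t)\pig)-\nabla_x V(x,t)\cdot u^\ast-\tfrac12\textup{tr}\pig(\eta\eta^\top \nabla_{xx}V(x,t)\pig).
\end{equation*}
Running the same computation over $[t-\epsilon,t]$ with the trajectory started at $(x,t-\epsilon)$ (so that $\nabla_x V,\nabla_{xx}V$ are now evaluated at the fixed second argument $t$) yields the identical limit for the left difference quotient; hence $\p_t V(x,t)$ exists and coincides with the displayed expression. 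Since every term there is continuous in $x$ (continuity of $g_1,g_2,u$ and the spatial continuity of $\nabla_x V,\nabla_{xx}V$) and in $t$ (continuity in $t$ of $\nabla_x V,\nabla_{xx}V$ and of $s\mapsto\mathbb{L}(s)$), the derivative $\p_t V$ is jointly continuous, which proves both that $V$ is continuously differentiable in $t$ and that $\p_t V(x,t)$ is continuous in $x$.

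The main obstacle is the passage to the limit in the penultimate step, since the integrand carries the \emph{moving} second argument $t+\epsilon$ inside $\nabla_x V(\bigcdot,t+\epsilon)$ and $\nabla_{xx}V(\bigcdot,t+\epsilon)$. I would handle this by decomposing, say, $\nabla_{xx}V(y_{tx\mathbb{L}}(s),t+\epsilon)-\nabla_{xx}V(x,t)$ into a spatial increment, controlled by $\|y_{tx\mathbb{L}}(s)-x\|_{\mathcal{H}}\lesssim(s-t)^{1/2}$ (from the forward dynamics as in \eqref{est. |Ys-X|^2}) together with the local uniform continuity of $\nabla_{xx}V$, and a temporal increment controlled by the joint continuity in Lemma \ref{lem 2nd diff V w.r.t. x}; the analogous splitting applies to $\nabla_x V$. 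The quadratic-growth controls in Lemma \ref{lem. prop of y p q u fix L}, inherited from the growth and convexity assumptions \eqref{ass. bdd of g1}, \eqref{ass. bdd of g2}, \eqref{ass. convexity of g1} and \eqref{ass. convexity of g2}, then supply the uniform integrability needed to justify the dominated convergence.
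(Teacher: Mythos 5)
Your proposal is correct, and it follows essentially the route the paper itself indicates: the paper states this lemma without proof, appealing to the optimality principle and standard estimates together with the boundedness in \eqref{bdd. y p q u fix L}. Your argument --- dynamic programming plus It\^o's formula applied to the spatially $C^2$ map $V(\cdot,t+\epsilon)$, with the passage to the limit (including the moving time argument inside $\nabla_x V$ and $\nabla_{xx}V$) justified by the regularity in Lemmas \ref{lem diff V w.r.t. x} and \ref{lem 2nd diff V w.r.t. x} and the bounds of Lemma \ref{lem. prop of y p q u fix L} --- is exactly that standard argument written out in full.
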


	\section{Linear Functional Differentiability of Solution to FBSDE \eqref{eq. FBSDE, with m_0 and start at x}-\eqref{eq. 1st order condition, with m_0 and start at x} and its Regularity}\label{sec. Linear Functional Differentiability of Solution to FBSDE and its Regularity}
	To establish the existence of the linear functional derivative with respect to the initial distribution of the value function at the equilibrium, we first consider the solution $\pig(y_{tx}^{\xi}(s), p_{tx}^{\xi}(s), q_{tx}^{\xi}(s), u_{tx}^{\xi}(s)\pig)$ to the following  FBSDE 
	\small\begin{equation}\label{eq. FBSDE, with law xi and start at x}
		\left\{
		\begin{aligned}
			y_{tx}^{\xi}(s) &= x + \int^s_t u\pig(y_{tx}^{\xi}(\tau),p_{tx}^{\xi}(\tau)\pig)d\tau+\int^s_t\eta\h{.7pt} dW_\tau;\\
			p_{tx}^{\xi}(s) &= \nabla_y h_1\pig(y_{tx}^{\xi}(T)\pig)
			+\int_s^T
			\nabla_y g_1 \pig(y_{tx}^{\xi}(\tau),u\big(y_{tx}^{\xi}(\tau),p_{tx}^{\xi}(\tau)\big)\pig)
			+\nabla_y g_2 \pig(y_{tx}^{\xi}(\tau),\mathcal{L}\big(y_{t\xi}(\tau)\big)\pig)\;d\tau
			-\int_s^Tq_{tx}^{\xi}(\tau) dW_\tau,
		\end{aligned}
		\right.
	\end{equation}\normalsize
	\begin{flalign} \label{eq. 1st order condition, with law xi and start at x}
		\text{subject to}&& 
		p_{tx}^{\xi}(s) + \nabla_v g_1\pig(y_{tx}^{\xi}(s),u\big(y_{tx}^{\xi}(s),p_{tx}^{\xi}(s)\big)\pig) = 0,&&
	\end{flalign} 
	where $y_{t\xi}(\tau)$ is the solution to the forward equation in the FBSDE \eqref{eq. FBSDE, equilibrium}-\eqref{eq. 1st order condition, equilibrium} and the feedback control $u_{tx}^{\xi}(\tau)=u\big(y_{tx}^{\xi}(\tau),p_{tx}^{\xi}(\tau)\big)$. The solution $\pig(y_{tx}^{\xi}(s), p_{tx}^{\xi}(s),  q_{tx}^{\xi}(s), u_{tx}^{\xi}(s)\pig)$ is clearly well-defined by Lemma \ref{lem. derivation of FBSDE, necessarity for control problem, fix L} with $\mathbb{L}(s)= \mathcal{L}\pig(y_{t\xi}(s)\pig)$. Due to the uniqueness of the FBSDE \eqref{eq. FBSDE, equilibrium}-\eqref{eq. 1st order condition, equilibrium} in Theorem \ref{thm. global existence}, we see that $\pig(y_{tx}^{\xi}(s), p_{tx}^{\xi}(s), q_{tx}^{\xi}(s), u_{tx}^{\xi}(s)\pig)\Big|_{x=\xi}=\pig(y_{t\xi}(s), p_{t\xi}(s), q_{t\xi}(s), u_{t\xi}(s)\pig)$. We first state the following lemma, and  its proof is put in Appendix \ref{app. Proofs in Linear Functional Differentiability of Solution to FBSDE and its Regularity}.
	\mycomment{\begin{lemma}
	\end{lemma}
	\begin{proof}
	Suppose that $\xi_1$, $\xi_2 \in L^2(\Omega,\mathcal{W}^{t}_0,\mathbb{P};\mathbb{R}^d)$ have the same law $\mu \in \mathcal{P}_2(\mathbb{R}^d)$. Note that the in \eqref{eq. FBSDE, equilibrium}-\eqref{eq. 1st order condition, equilibrium} is strongly unique by Theorem \ref{thm. global existence}. By the Yamada–Watanabe theorem for FBSDEs in \cite{D03}, the solution $\pig(y_{t\xi_1}(s), p_{t\xi_1}(s), q_{t\xi_1}(s), u_{t\xi_1}(s)\pig)$ to \eqref{eq. FBSDE, equilibrium}-\eqref{eq. 1st order condition, equilibrium} and the solution $\pig(y_{t\xi_2\mathbb{L}_1}(s), p_{t\xi_2\mathbb{L}_1}(s), q_{t\xi_2\mathbb{L}_1}(s), u_{t\xi_2\mathbb{L}_1}(s)\pig)$ to \eqref{eq. 1st order condition, fix L}-\eqref{eq. FBSDE, fix L} with  $\mathbb{L}_1(s)=\mathcal{L}(y_{t\xi_1}(s))$ satisfying $\mathcal{L}(y_{t\xi_1}(s))=\mathcal{L}(y_{t\xi_2\mathbb{L}_1}(s))$.
	\end{proof}}
	\begin{lemma}
		Suppose \eqref{ass. Cii} of Assumption \textup{\bf (Cii)}  holds. For any $t\in [0,T)$, $\xi_i \in L^2(\Omega,\mathcal{W}^{t}_0,\mathbb{P};\mathbb{R}^d)$ and $x_i \in \mathbb{R}^d$, the solutions $\pig(y_{tx_i}^{\xi_i}(s), p_{tx_i}^{\xi_i}(s), q_{tx_i}^{\xi_i}(s), u_{tx_i}^{\xi_i}(s)\pig)$ to \eqref{eq. FBSDE, with law xi and start at x}-\eqref{eq. 1st order condition, with law xi and start at x} for $i=1,2$ satisfy:
		\begin{align}
			&\pig\|y_{tx_1}^{\xi_1}(s)-y_{tx_2}^{\xi_2}(s)\pigr\|_\mathcal{H}
			\h{1pt},\h{5pt}
			\pig\|p_{tx_1}^{\xi_1}(s)-p_{tx_2}^{\xi_2}(s)\pigr\|_\mathcal{H}
			\h{1pt},\h{5pt}
			\pig\|u_{tx_1}^{\xi_1}(s)-u_{tx_2}^{\xi_2}(s)\pigr\|_\mathcal{H}
			\h{1pt},\h{5pt}
			\left[\int^T_t\pig\|q_{tx_1}^{\xi_1}(\tau)-q_{tx_2}^{\xi_2}(\tau)\pigr\|_\mathcal{H}^2d\tau\right]^{1/2}\nonumber\\
			&\leq C_8\pig[|x_1-x_2|+\mathcal{W}_2(\mathcal{L}(\xi_1),\mathcal{L}(\xi_2))\pig],
			\label{}
		\end{align}
		for any $s \in [t,T]$, where $C_8$ only depends on $\lambda_{g_1}$, $\lambda_{g_2}$, $\lambda_{h_1}$, $\Lambda_{g_1}$, $C_{g_1}$, $c_{g_2}$, $C_{g_2}$, $C_{h_1}$, $T$.
		\label{lem. lip in x and xi}
	\end{lemma}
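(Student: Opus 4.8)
The plan is to prove the bound by decomposing the joint variation of the data $(x_i,\xi_i)$ into a variation of the starting point $x$ with the exogenous measure flow frozen and a variation of the initial law with the starting point frozen, and then to recombine by the triangle inequality. Write $(y^i,p^i,q^i,u^i):=(y_{tx_i}^{\xi_i},p_{tx_i}^{\xi_i},q_{tx_i}^{\xi_i},u_{tx_i}^{\xi_i})$ and observe that \eqref{eq. FBSDE, with law xi and start at x}--\eqref{eq. 1st order condition, with law xi and start at x} is exactly the decoupled control FBSDE \eqref{eq. FBSDE, fix L}--\eqref{eq. 1st order condition, fix L} driven by the \emph{fixed} measure flow $\mathbb{L}(s)=\mathcal{L}(y_{t\xi}(s))$, so Lemma \ref{lem. prop of y p q u fix L} applies to it verbatim. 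I would first handle the dependence on the starting point with the law frozen at $\xi_1$: integrating the Fr\'echet derivative from parts (b)--(d) of Lemma \ref{lem. prop of y p q u fix L} along the segment $x(\theta)=\theta x_1+(1-\theta)x_2$ gives
\[
y_{tx_1}^{\xi_1}(s)-y_{tx_2}^{\xi_1}(s)=\int_0^1 \nabla_x y_{t,x(\theta)}^{\xi_1}(s)\,(x_1-x_2)\,d\theta,
\]
and likewise for $p,u,q$. Since the constant $C_4''$ in \eqref{bdd. Dx y p q u, fix L} depends only on the structural constants and \emph{not} on the measure flow $\mathbb{L}$, this yields $\|y_{tx_1}^{\xi_1}(s)-y_{tx_2}^{\xi_1}(s)\|_{\mathcal H}\le C_4''|x_1-x_2|$ uniformly in $\xi_1$, together with the analogous bounds for the other three components.

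Next I would establish the auxiliary $\mathcal{W}_2$-stability of the equilibrium flow in the initial law, which supplies the source term for the remaining step. Lemma \ref{lem. bdd of diff quotient} bounds the difference quotients of the equilibrium solution uniformly by $C_4'\|\Psi\|_{\mathcal H}$; integrating along the segment joining $\xi_1$ and $\xi_2$ gives $\|y_{t\xi_1}(s)-y_{t\xi_2}(s)\|_{\mathcal H}\le C_4'\|\xi_1-\xi_2\|_{\mathcal H}$. Because \eqref{eq. FBSDE, equilibrium}--\eqref{eq. 1st order condition, equilibrium} is a McKean--Vlasov system with a unique solution (Theorem \ref{thm. global existence}), the law $\mathcal{L}(y_{t\xi}(s))$ depends on $\xi$ only through $\mathcal{L}(\xi)$; hence, replacing $\xi_1,\xi_2$ by representatives realizing the optimal $\mathcal{W}_2$ coupling of their laws and using $\mathcal{W}_2(\mathcal{L}(X),\mathcal{L}(Y))\le\|X-Y\|_{\mathcal H}$, I obtain $\mathcal{W}_2\bigl(\mathcal{L}(y_{t\xi_1}(s)),\mathcal{L}(y_{t\xi_2}(s))\bigr)\le C_4'\,\mathcal{W}_2\bigl(\mathcal{L}(\xi_1),\mathcal{L}(\xi_2)\bigr)$ for all $s\in[t,T]$.

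The core of the argument is the dependence on the law with the starting point frozen at $x=x_2$, comparing the two solutions driven by the exogenous flows $\mathcal{L}(y_{t\xi_1})$ and $\mathcal{L}(y_{t\xi_2})$. Applying It\^o's lemma to $\langle p_{tx_2}^{\xi_1}(s)-p_{tx_2}^{\xi_2}(s),\,y_{tx_2}^{\xi_1}(s)-y_{tx_2}^{\xi_2}(s)\rangle_{\mathcal H}$ and using the first order condition \eqref{eq. 1st order condition, with law xi and start at x}, exactly as in Part 1 of Lemma \ref{lem. bdd of y p q u}, the boundary term at $s=t$ now \emph{vanishes} because both forward processes start at the same point $x_2$. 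Denoting by $\Delta$ the difference of the two $\xi$-indexed processes and setting $\rho(s):=\mathcal{W}_2(\mathcal{L}(y_{t\xi_1}(s)),\mathcal{L}(y_{t\xi_2}(s)))$, the joint convexity \eqref{ass. convexity of g1} with \eqref{ass. convexity of g2}, \eqref{ass. convexity of h}, and the splitting of the $g_2$-increment into a state part (controlled by $\lambda_{g_2}$ via \eqref{ass. convexity of g2}) and a measure part give
\[
\Lambda_{g_1}\!\int_t^T\!\|\Delta u\|_{\mathcal H}^2\,ds
\le (\lambda_{g_1}+\lambda_{g_2})\!\int_t^T\!\|\Delta y\|_{\mathcal H}^2\,ds
+\lambda_{h_1}\|\Delta y(T)\|_{\mathcal H}^2
+c_{g_2}\!\int_t^T\!\rho(s)\,\|\Delta y(s)\|_{\mathcal H}\,ds,
\]
where the measure-Lipschitz estimate $|\nabla_y g_2(y,\mathbb{L}_1)-\nabla_y g_2(y,\mathbb{L}_2)|\le c_{g_2}\mathcal{W}_2(\mathbb{L}_1,\mathbb{L}_2)$ follows by integrating the $L$-derivative bound \eqref{ass. bdd of D dnu D g2}. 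Since $\Delta y(s)=\int_t^s\Delta u\,d\tau$, I bound $\int\|\Delta y\|_{\mathcal H}^2$ and $\|\Delta y(T)\|_{\mathcal H}^2$ by multiples of $\int\|\Delta u\|_{\mathcal H}^2$, absorb by Young's inequality the $\int\|\Delta u\|_{\mathcal H}^2$ arising from the source term, and invoke \eqref{ass. Cii} of Assumption \textbf{(Cii)} (which in particular forces the positivity \eqref{def. c_0 > 0 convex, ass. Ci}) to conclude $\int_t^T\|\Delta u\|_{\mathcal H}^2\,ds\le C\,\rho_{\sup}^2$, with $\rho_{\sup}:=\sup_s\rho(s)\le C_4'\,\mathcal{W}_2(\mathcal{L}(\xi_1),\mathcal{L}(\xi_2))$ from the previous step.

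Finally I would propagate this to the remaining norms: the forward equation gives $\sup_s\|\Delta y(s)\|_{\mathcal H}^2\le C\,\rho_{\sup}^2$; It\^o's formula applied to $\|\Delta p\|_{\mathcal H}^2$, the boundedness of the second derivatives \eqref{ass. bdd of D^2g1}, \eqref{ass. bdd of D^2g2}, \eqref{ass. bdd of D^2h1}, the measure source term again controlled through \eqref{ass. bdd of D dnu D g2}, and a Gronwall/Burkholder--Davis--Gundy argument as in \eqref{1905}--\eqref{1938} give $\sup_s\|\Delta p(s)\|_{\mathcal H}^2+\int_t^T\|\Delta q\|_{\mathcal H}^2\,ds\le C\,\rho_{\sup}^2$; and the first order condition with \eqref{ass. convexity of g1} gives $\sup_s\|\Delta u(s)\|_{\mathcal H}^2\le C\,\rho_{\sup}^2$. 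Combining this $\xi$-variation bound at the frozen point $x_2$ with the $x$-variation bound of the first step through the triangle inequality $y^1-y^2=(y_{tx_1}^{\xi_1}-y_{tx_2}^{\xi_1})+(y_{tx_2}^{\xi_1}-y_{tx_2}^{\xi_2})$ (and similarly for $p,u,q$) produces the desired estimate, with $C_8$ depending only on the listed constants. I expect the main obstacle to be the careful treatment of the exogenous measure flow: establishing the $\mathcal{W}_2$-stability of $\mathcal{L}(y_{t\xi}(s))$ in the initial law, which hinges on the McKean--Vlasov law-invariance and the passage from the $\mathcal H$-norm to $\mathcal{W}_2$ by optimal coupling, and converting the $L$-derivative bound \eqref{ass. bdd of D dnu D g2} into the measure-Lipschitz estimate that drives the source term in the coercivity inequality.
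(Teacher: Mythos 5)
Your proposal is correct, and it reaches the estimate by a genuinely different organization than the paper, even though the hard analytic ingredients coincide. The paper treats both variations at once: it forms the full difference $\delta y = y_{tx_1}^{\xi_1}-y_{tx_2}^{\xi_2}$ (so the forward difference starts at $x_1-x_2\neq 0$), runs the It\^o duality/coercivity argument on $\langle\delta p,\delta y\rangle$ with the convexity assumptions and \eqref{ass. Cii}, carries the nonzero boundary term $\|\delta y(s)\|_{\mathcal H}^2$ through the estimate, controls the measure increment of $\nabla_y g_2$ by $c_{g_2}\mathcal{W}_1$ via the Kantorovich--Rubinstein duality (Corollary 5.4 of Vol.~I of \cite{CD18}) exactly as you do, and only at the end converts $\mathcal{W}_1\bigl(\mathcal{L}(y_{t\xi_1}(r)),\mathcal{L}(y_{t\xi_2}(r))\bigr)$ into $\mathcal{W}_2(\mathcal{L}(\xi_1),\mathcal{L}(\xi_2))$ through law-invariance, optimal-coupling representatives $\alpha_1,\alpha_2\in L^2(\Omega,\mathcal{W}^t_0,\mathbb{P};\mathbb{R}^d)$, and the difference-quotient bound of Lemma \ref{lem. bdd of diff quotient}. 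You instead split by the triangle inequality into an $x$-variation at frozen law and a law-variation at frozen starting point: the first is dispatched by the mean value inequality and the uniform Jacobian bound \eqref{bdd. Dx y p q u, fix L} of Lemma \ref{lem. prop of y p q u fix L} (whose constant $C_4''$ is indeed independent of the exogenous flow $\mathbb{L}$, which is the point your step rests on), while the second runs the same coercivity argument as the paper but with vanishing boundary term, which cleans up the bookkeeping. What your route buys is modularity and a simpler coercivity step; what it costs is the extra reliance on Lemma \ref{lem. prop of y p q u fix L} and on the law-invariance of $\mathcal{L}(y_{t\xi}(s))$ stated up front (the paper needs the same fact, justified via strong uniqueness in Theorem \ref{thm. global existence} and the Yamada--Watanabe-type argument of Lemma 5.6 in \cite{CD15}; your one-line appeal to ``McKean--Vlasov uniqueness'' should cite that same mechanism, since pathwise uniqueness does not by itself hand you uniqueness in law). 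Note also that (Cii) implies (Ci), so your invocation of Lemma \ref{lem. prop of y p q u fix L} under (Cii) is legitimate; with these two justifications made explicit, your argument is complete and yields a constant $C_8$ with the stated dependence.
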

	
	From the above lemma (see also the proof of Lemma 5.6 in \cite{CD15}), we note that for all $s\in[0,T]$, we have $\mathcal{L}\pig(y_{t\xi}(s)\pig)=\mathcal{L}\pig(y_{t\xi'}(s)\pig)$ if $\xi$ and $\xi'$ have the same law. Therefore, the solution $\pig(y_{tx}^{\xi}(s), p_{tx}^{\xi}(s), q_{tx}^{\xi}(s), u_{tx}^{\xi}(s)\pig) $ depends on $\xi$ only through its law.  Suppose that $\mu \in \mathcal{P}_2(\mathbb{R}^d)$ and the initial random variable $\xi \in  L^2(\Omega,\mathcal{W}^{t}_0,\mathbb{P};\mathbb{R}^d)$ has the law $\mu$, without loss of generality, we simply denote the solution $\pig(y_{tx}^{\xi}(s), p_{tx}^{\xi}(s), q_{tx}^{\xi}(s), u_{tx}^{\xi}(s)\pig)$ by $\pig(y_{tx}^{\mu}(s), p_{tx}^{\mu}(s), q_{tx}^{\mu}(s), u_{tx}^{\mu}(s)\pig)$. We can also write $\mathcal{L}\pig(y_{t\xi}(s)\pig)(B):=y_{t\bigcdot}^\mu(s)\#(\mathbb{P}\otimes \mu)(B)\mycomment{ := \int \mathbb{P}(y_{tx}^\xi(s) \in B|y_{tx}^\xi(t)=x)d\mu(x)}$ for any Borel set $B \subset \mathbb{R}^d$, for simplicity, we denote it by $y_{t\bigcdot}^\mu(s)\otimes \mu(B)$ as $\mathbb{P}$ is always fixed being generated by the Brownian filtration. The process $\pig(y_{tx}^{\mu}(s), p_{tx}^{\mu}(s), q_{tx}^{\mu}(s), u_{tx}^{\mu}(s)\pig)$ solves the FBSDE 
	\small
	\begin{equation}\label{eq. FBSDE, with m_0 and start at x}
		\left\{
		\begin{aligned}
			y_{tx}^\mu(s) &= x + \int^s_t u\pig(y_{tx}^\mu(\tau),p_{tx}^\mu(\tau)\pig)d\tau+\int^s_t\eta\h{.7pt} dW_\tau;\\
			p_{tx}^\mu(s) &= \nabla_y h_1\pig(y_{tx}^\mu(T)\pig)
			+\int_s^T
			\nabla_y g_1 \pig(y_{tx}^\mu(\tau),u\big(y_{tx}^\mu(\tau),p_{tx}^\mu(\tau)\big)\pig)
			+\nabla_y g_2 \pig(y_{tx}^\mu(\tau),y_{t\bigcdot}^\mu(\tau)\otimes \mu\pig)\;d\tau
			-\int_s^Tq_{tx}^\mu(\tau) dW_\tau,
		\end{aligned}
		\right.
	\end{equation}\normalsize
	\begin{flalign}
		\label{eq. 1st order condition, with m_0 and start at x}
		\text{subject to} &  & p_{tx}^\mu(s) + \nabla_v g_1\pig(y_{tx}^\mu(s),u\big(y_{tx}^\mu(s),p_{tx}^\mu(s)\big)\pig) = 0. &  &
	\end{flalign}

	\subsection{Existence of Linear Functional Derivatives}
	In this section, we establish the existence of linear functional derivatives,  with respect to the initial distribution, of the solution to \eqref{eq. FBSDE, with law xi and start at x}-\eqref{eq. 1st order condition, with law xi and start at x}. The purpose of the following lemma is to establish the existence of the linear functional derivatives of the process  $\pig(y_{tx}^\mu(s), p_{tx}^\mu(s), q_{tx}^\mu(s), u_{tx}^\mu(s)\pig)$ with respect to $\mu$, and we here also write the FBSDE satisfied by them.
	
	\begin{lemma}Suppose \eqref{ass. Cii} of Assumption \textup{\bf (Cii)} to hold, and
		\begin{align}
			\textup{\bf (Di).} &\text{ } \dfrac{d}{d\nu}\nabla_y g_2(y,\mathbb{L})(\widetilde{y}) 
			\text{ exists and is jointly continuous in $y, \widetilde{y}$, $\mathbb{L}$ such that }
			\left| \dfrac{d}{d\nu}\nabla_y g_2(y,\mathbb{L})(\widetilde{y})\right|\nonumber\\
			&\text{ } \leq 
			C_{g_2} \left(1+|y|^2+|\widetilde{y}|^2+\int_{\mathbb{R}^d}|z|^2d\mathbb{L}(z)\right)^{1/2},
			\text{ for any $y, \widetilde{y} \in \mathbb{R}^{d}$ and $\mathbb{L}\in \mathcal{P}_2(\mathbb{R}^{d})$}
			\label{ass. cts, bdd of dnu D g_2}
		\end{align}
		is valid. Let $t \in [0,T)$, $\mu \in \mathcal{P}_2(\mathbb{R}^d)$ and $\pig(y_{tx}^\mu(s), p_{tx}^\mu(s), q_{tx}^\mu(s), u_{tx}^\mu(s)\pig)$ be the solution to FBSDE \eqref{eq. FBSDE, with m_0 and start at x}-\eqref{eq. 1st order condition, with m_0 and start at x}. Then the following statements are true.
		\begin{enumerate}[(a).]

			\item For any $x,x' \in \mathbb{R}^d$, there is a unique solution $\pig(\mathcal{Y}^\mu_{tx}(x',s), \mathcal{P}^\mu_{tx}(x',s), \mathcal{Q}^\mu_{tx}(x',s)\pig) \in \mathbb{S}_{\mathcal{W}_{t\xi}}[t,T]\times \mathbb{S}_{\mathcal{W}_{t\xi}} [t,T]\times  \mathbb{H}_{\mathcal{W}_{t\xi}}[t,T]$ to the following FBSDE 
			\begin{equation}
				\h{-10pt}\left\{
				\begin{aligned}
					\mathcal{Y}^\mu_{tx}(x',s)
					=\,& \displaystyle\int_{t}^{s}
					\Big[ \nabla_y u\pig(y_{tx}^\mu(\tau),p_{tx}^\mu(\tau)\pig)\Big] 
					\mathcal{Y}^\mu_{tx}(x',\tau) 
					+\Big[\nabla_p  u\pig(y_{tx}^\mu(\tau),p_{tx}^\mu(\tau)\pig)\Big] 
					\mathcal{P}^\mu_{tx}(x',\tau)  d\tau;\\
					\mathcal{P}^\mu_{tx}(x',s)
					=\,&\nabla_{yy} h_1(y_{tx}^\mu(T))
					\mathcal{Y}^\mu_{tx}(x',T)
					+\int^T_s\nabla_{yy}g_1\pig(y_{tx}^\mu(\tau),u_{tx}^\mu(\tau) \pig)\mathcal{Y}^\mu_{tx}(x',\tau) d\tau\\
					&+\int^T_s\nabla_{vy}g_1\pig(y_{tx}^\mu(\tau),u_{tx}^\mu(\tau) \pig)\Big[ \nabla_y u\pig(y_{tx}^\mu(\tau),p_{tx}^\mu(\tau)\pig)\Big]
					\mathcal{Y}^\mu_{tx}(x',\tau)d\tau\\
					&+\int^T_s\nabla_{vy}g_1\pig(y_{tx}^\mu(\tau),u_{tx}^\mu(\tau) \pig)\Big[\nabla_p  u\pig(y_{tx}^\mu(\tau),p_{tx}^\mu(\tau)\pig)\Big]
					\mathcal{P}^\mu_{tx}(x',\tau)d\tau \\
					&+\int^T_s \nabla_{yy}g_2\pig(y_{tx}^\mu(\tau),y_{t\bigcdot}^\mu(\tau)\otimes \mu\pig) 
					\mathcal{Y}^\mu_{tx}(x',\tau)  d\tau\\
					&+\int^T_s\widetilde{\mathbb{E}}
					\left[\int\nabla_{y^*}\dfrac{d}{d\nu}\nabla_{y}g_2\pig(y_{tx}^\mu(\tau),y_{t\bigcdot}^\mu(\tau)\otimes \mu\pig)  (y^*)\Bigg|_{y^* = \widetilde{y^\mu_{t\widetilde{x}}} (\tau)}
					\widetilde{\mathcal{Y}^\mu_{t\widetilde{x}}} (x',\tau)d\mu(\widetilde{x})\right]
					d\tau \\
					&+\int^T_s\widetilde{\mathbb{E}}
					\left[ \dfrac{d}{d\nu}\nabla_{y}g_2\pig(y_{tx}^\mu(\tau),y_{t\bigcdot}^\mu(\tau)\otimes \mu\pig)(\widetilde{y_{tx'}^\mu} (\tau)) \right]
					d\tau
					-\int^T_s \mathcal{Q}^\mu_{tx} (x',\tau)dW_\tau,
				\end{aligned}\right.
				\label{eq. linear functional derivatives of FBSDE}
			\end{equation}
			\begin{flalign}
				\text{where} &&\mathcal{U}^\mu_{tx} (x',s):=\Big[ \nabla_y u\pig(y_{tx}^\mu(s),p_{tx}^\mu(s)\pig)\Big] 
				\mathcal{Y}^\mu_{tx}(x',s)
				+\Big[\nabla_p  u\pig(y_{tx}^\mu(s),p_{tx}^\mu(s)\pig)\Big] 
				\mathcal{P}^\mu_{tx}(x',s);&&
				\label{def. linear functional d of u}
			\end{flalign}
			
			\item There is a positive constant $C_6$ depending only on $d$, $\eta$, $\lambda_{g_1}$, $\lambda_{g_2}$, $\lambda_{h_1}$, $\Lambda_{g_1}$, $C_{g_1}$, $c_{g_2}$, $C_{g_2}$, $C_{h_1}$, $T$ such that for any $\mu \in \mathcal{P}_2(\mathbb{R}^d)$, $s\in[t,T]$, $x,x'\in \mathbb{R}^d$
			\begin{align} 
				&\mathbb{E}\left[\sup_{s\in [t,T]}\left|\mathcal{Y}^\mu_{tx} (x',s)\right|^2+
				\sup_{s\in [t,T]}\left|\mathcal{P}^\mu_{tx}(x',s)\right|^2+
				\sup_{s\in [t,T]}\left|\mathcal{U}^\mu_{tx}(x',s)\right|^2\right]
				+\int_{t}^{T}\left\|\mathcal{Q}^\mu_{tx}(x',s)\right\|^{2}_{\mathcal{H}}ds\nonumber\\
				&\leq C_6\left(1+|x|^2+|x'|^2+\int|\widetilde{x}|^2d\mu(\widetilde{x})\right).
				\label{ineq. bdd y, p, q, u, linear functional derivative}
			\end{align}
			
			\item The respective linear functional derivatives, with respect to $\mu$, of $\pig(y_{tx}^\mu(s), p_{tx}^\mu(s), u_{tx}^\mu(s)\pig)$, denoted by $\left(\dfrac{dy_{tx}^\mu}{d\nu}(x',s), \dfrac{dp_{tx}^\mu}{d\nu}(x',s), \dfrac{du_{tx}^\mu}{d\nu}(x',s)\right)$, exist in the sense that, 
			\begin{align*}
				\lim_{\epsilon \to 0}\mathbb{E}\left[\sup_{s\in [t,T]}\left|\dfrac{y_{tx}^{\mu+\epsilon (\mu'-\mu)}(s)- y_{tx}^\mu(s)}{\epsilon}-\int\dfrac{dy_{tx}^\mu}{d\nu}(x',s)d\pig[\mu'(x')-\mu(x')\pig]\right|^2\right]=0
			\end{align*}
			for each $x\in \mathbb{R}^d$ and $\mu, \mu' \in \mathcal{P}_2(\mathbb{R}^d)$, similar convergence holds for $u_{tx}^\mu(s)$ and $p_{tx}^\mu(s)$. Moreover, we always choose the value that $\left(\dfrac{dy_{tx}^\mu}{d\nu}(x',s), \dfrac{dp_{tx}^\mu}{d\nu}(x',s), \dfrac{du_{tx}^\mu}{d\nu}(x',s)\right) = \pig( \mathcal{Y}^\mu_{tx} (x',s),\mathcal{P}^\mu_{tx} (x',s),$\\
			$\mathcal{U}^\mu_{tx} (x',s)\pig)$ for any $x,x' \in \mathbb{R}^d$ and $s\in [t,T]$.
		\end{enumerate} 
		\label{lem. existence of linear functional derivative of processes}
	\end{lemma}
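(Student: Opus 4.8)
The plan is to read \eqref{eq. linear functional derivatives of FBSDE} as a linear, McKean--Vlasov-coupled FBSDE in the triple $\pig(\mathcal{Y}^\mu_{tx}(x',\cdot),\mathcal{P}^\mu_{tx}(x',\cdot),\mathcal{Q}^\mu_{tx}(x',\cdot)\pig)$, in which $x'$ is a fixed parameter and the last integral before the $dW_\tau$ term is a prescribed inhomogeneous source. Its structure is identical to the Jacobian-flow system \eqref{eq. J flow of FBSDE}, apart from (i) the coupling term carrying the independent copy $\widetilde{\mathcal{Y}^\mu_{t\widetilde{x}}}(x',\tau)$ integrated against $d\mu(\widetilde{x})$, and (ii) the source term $\widetilde{\mathbb{E}}\big[\tfrac{d}{d\nu}\nabla_y g_2(\cdots)(\widetilde{y^\mu_{tx'}}(\tau))\big]$, which is independent of the unknowns. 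For part (a) I would first prove uniqueness exactly as in the uniqueness step of Lemma~\ref{lem. Existence of J flow, weak conv.}: lifting to $\xi\sim\mu$, applying It\^o's lemma to $\big\langle \mathcal{Y}^\mu_{t\xi}(x',s),\mathcal{P}^\mu_{t\xi}(x',s)\big\rangle_{\mathcal{H}}$, substituting the differentiated first-order relation \eqref{def. linear functional d of u}, and invoking \eqref{ass. convexity of g1}, \eqref{ass. convexity of g2}, \eqref{ass. bdd of D^2g2}, \eqref{ass. convexity of h} of Assumptions \textbf{(Ax)}, \textbf{(Axi)}, \textbf{(Aviii)}, \textbf{(Bvi)} together with \eqref{ass. bdd of D dnu D g2} of Assumption \textbf{(Aix)} to control the coupling term (which, upon taking the double expectation $\mathbb{E}\widetilde{\mathbb{E}}$, is bounded by $c_{g_2}$). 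Closing the resulting inequality via the Cauchy--Schwarz estimate relating $\mathcal{Y}$ to $\mathcal{U}$ and the small mean field effect \eqref{ass. Cii} of Assumption \textbf{(Cii)} forces the homogeneous solution to vanish. Existence then follows from the same local contraction-and-gluing scheme as in Section~\ref{subsec. Local Existence of Solution} and Theorem~\ref{thm. global existence}, the bounded source being harmless to the contraction estimate while the a priori bound of part (b) furnishes the uniform Lipschitz control of the proxy maps needed to concatenate the local pieces across $[t,T]$.

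For part (b) I would derive \eqref{ineq. bdd y, p, q, u, linear functional derivative} by the energy method used for \eqref{ineq. bdd y, p, q, u} in Lemma~\ref{lem. bdd of y p q u}. Applying It\^o's product rule and the convexity and boundedness assumptions yields $\Lambda_{g_1}\int_t^T\|\mathcal{U}^\mu_{tx}(x',s)\|_{\mathcal{H}}^2\,ds$ controlled by $(\lambda_{g_1}+\lambda_{g_2}+c_{g_2})\int_t^T\|\mathcal{Y}^\mu_{tx}(x',s)\|_{\mathcal{H}}^2\,ds+\lambda_{h_1}\|\mathcal{Y}^\mu_{tx}(x',T)\|_{\mathcal{H}}^2$ plus the contribution of the source. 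The source contribution is estimated by \eqref{ass. cts, bdd of dnu D g_2} of Assumption \textbf{(Di)}, whose quadratic-growth bound on $\tfrac{d}{d\nu}\nabla_y g_2$ combined with the a priori estimate \eqref{bdd. y p q u fix L} of Lemma~\ref{lem. prop of y p q u fix L} on $y^\mu_{tx}$ produces exactly the right-hand side $C_6\big(1+|x|^2+|x'|^2+\int|\widetilde{x}|^2\,d\mu(\widetilde{x})\big)$. The Cauchy--Schwarz inequality bounding $\mathcal{Y}$ in terms of $\mathcal{U}$ and the use of \eqref{ass. Cii} then close the estimate for $\int_t^T\|\mathcal{U}\|_{\mathcal{H}}^2\,ds$, after which the forward and backward equations, together with the Burkholder--Davis--Gundy inequality, propagate the bound to the supremum norms of $\mathcal{Y},\mathcal{P},\mathcal{U}$ and to $\int_t^T\|\mathcal{Q}\|_{\mathcal{H}}^2\,ds$, as at the end of the proof of Lemma~\ref{lem. bdd of y p q u}.

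For part (c), the identification as a linear functional derivative, I would set $\mathbb{L}_\epsilon:=\mu+\epsilon(\mu'-\mu)$ and form the difference quotients $\Delta^\epsilon y:=\epsilon^{-1}\big(y^{\mathbb{L}_\epsilon}_{tx}-y^\mu_{tx}\big)$, and likewise for $p,u,q$, each well-defined through Theorem~\ref{thm. global existence} applied to the equilibrium with initial law $\mathbb{L}_\epsilon$. Writing the FBSDE these quotients satisfy by Taylor expansion of the coefficients with integral remainders, the key point is that the law $y^{\mathbb{L}_\epsilon}_{t\bigcdot}(\tau)\otimes\mathbb{L}_\epsilon$ splits into a flow part, contributing the $\nabla_{y^*}\tfrac{d}{d\nu}\nabla_y g_2$ coupling term controlled by $c_{g_2}$, and a base-measure part $y^\mu_{t\bigcdot}(\tau)\otimes(\mathbb{L}_\epsilon-\mu)=\epsilon\,y^\mu_{t\bigcdot}(\tau)\otimes(\mu'-\mu)$, whose division by $\epsilon$ gives precisely the order-one source integrated against $d(\mu'-\mu)$. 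Uniform bounds on the quotients, again obtained via the energy method and \eqref{ass. Cii} (mirroring Lemma~\ref{lem. bdd of diff quotient}), allow extraction of weak limits by Banach--Alaoglu and then their upgrade to strong limits verbatim as in Lemmas~\ref{lem. Existence of J flow, weak conv.} and \ref{lem. Existence of J flow, strong conv.}. These limits solve the version of \eqref{eq. linear functional derivatives of FBSDE} obtained by integrating against $d(\mu'-\mu)(x')$, and by linearity $\int\mathcal{Y}^\mu_{tx}(x',\cdot)\,d(\mu'-\mu)(x')$ solves the same integrated system; uniqueness from part (a) identifies the two, which is exactly the defining relation \eqref{Frechet derviative of F} of the linear functional derivative, so the chosen representative $\pig(\mathcal{Y},\mathcal{P},\mathcal{U}\pig)$ is the pointwise derivative.

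The hard part will be managing the twofold dependence of the equilibrium law $y^\mu_{t\bigcdot}(\tau)\otimes\mu$ on $\mu$ and the attendant McKean--Vlasov coupling: the flow dependence feeds the unknown $\mathcal{Y}$ back into the backward dynamics, so every energy estimate (for uniqueness in (a), for the bound in (b), and for the difference-quotient convergence in (c)) closes only because the small mean field effect \eqref{ass. Cii} of Assumption \textbf{(Cii)} lets the convexity dominate the measure-derivative size $c_{g_2}$ appearing in \eqref{ass. bdd of D dnu D g2}. A secondary technical point is keeping the pointwise-in-$x'$ construction consistent with the integrated identification, which is resolved precisely by the uniqueness established in part (a).
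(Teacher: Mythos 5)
Your overall route coincides with the paper's: for (a), local contraction plus gluing of subintervals and a convexity-based uniqueness argument; for (b), an energy estimate via It\^o's product rule and the first-order relation \eqref{def. linear functional d of u}; for (c), difference quotients in the direction $\mu'-\mu$, uniform bounds, weak-then-strong limit extraction in the style of Lemmas \ref{lem. Existence of J flow, weak conv.} and \ref{lem. Existence of J flow, strong conv.}, and identification of the limit with $\int\mathcal{Y}^\mu_{tx}(x',\cdot)\,d(\mu'-\mu)(x')$ through the uniqueness of (a).

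There is, however, one step that fails as written. In part (b) you assert that It\^o's product rule and the structural assumptions give, \emph{pointwise in} $x$,
$\Lambda_{g_1}\int_t^T\|\mathcal{U}^\mu_{tx}(x',s)\|_{\mathcal{H}}^2\,ds \le (\lambda_{g_1}+\lambda_{g_2}+c_{g_2})\int_t^T\|\mathcal{Y}^\mu_{tx}(x',s)\|_{\mathcal{H}}^2\,ds+\lambda_{h_1}\|\mathcal{Y}^\mu_{tx}(x',T)\|_{\mathcal{H}}^2$ plus a source term. This is not obtainable: by \eqref{ass. bdd of D dnu D g2}, the McKean--Vlasov coupling term in \eqref{eq. linear functional derivatives of FBSDE} is only bounded by
$c_{g_2}\,\|\mathcal{Y}^\mu_{tx}(x',s)\|_{\mathcal{H}}\bigl(\int\|\mathcal{Y}^\mu_{t\widetilde{x}}(x',s)\|_{\mathcal{H}}^2\,d\mu(\widetilde{x})\bigr)^{1/2}$,
which involves the unknown at \emph{every} starting point $\widetilde{x}$, not only at $x$, so it cannot be absorbed into $c_{g_2}\|\mathcal{Y}^\mu_{tx}\|_{\mathcal{H}}^2$ and the pointwise closure breaks down. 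The estimate closes only via a two-pass argument: first integrate the inequality against $d\mu(x)$ (equivalently, run it at the lifted initial point $\xi\sim\mu$, as you yourself do for uniqueness in (a)); then Cauchy--Schwarz and Jensen convert the cross term into $c_{g_2}\int_t^T\int\|\mathcal{Y}^\mu_{tx}\|_{\mathcal{H}}^2\,d\mu(x)\,ds$, Assumption \eqref{ass. Cii} closes the $\mu$-integrated bound, and only afterwards does one return to fixed $x$, re-running the same inequality with the coupling term now treated as a known source controlled by the integrated bound (and with \eqref{bdd. y p q u fix L} handling the inhomogeneous term). This bootstrap is exactly how the paper's proof of (b) proceeds, and the same two-pass structure is also required for the uniform bounds on the difference quotients in part (c), whose equations carry the identical nonlocal coupling; your awareness of the lifting in (a) needs to be deployed in (b) and (c) as well, otherwise both estimates are unjustified as stated.
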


	\begin{proof}
		We decompose the proof into three parts.
		
		\noindent {\bf Proof of the part (a):}
		We repeat the arguments in Section \ref{subsec. Local Existence of Solution} to obtain the local existence of solution $\pig(\mathcal{Y}^{\mu}_{tx}(x',s), \mathcal{P}^{\mu}_{tx}(x',s), \mathcal{Q}^{\mu}_{tx}(x',s)\pig)$ if $T-t$ is small enough. In particular, since the system \eqref{eq. linear functional derivatives of FBSDE} is linear, the proof is simpler than the arguments in Section \ref{subsec. Local Existence of Solution}, we also refer interested readers to Section 4.3 of \cite{BTY23}. For the global existence, we first define \begin{align*}
			\mathcal{U}^{\mu}_{tx}(x',s):=\Big[ \nabla_y u\pig(y_{tx}^{\mu}(s),p_{tx}^{\mu}(s)\pig)\Big] \mathcal{Y}^{\mu}_{tx}(x',s)  
			+\Big[\nabla_p  u\pig(y_{tx}^{\mu}(s),p_{tx}^{\mu}(s)\pig)\Big] 
			\mathcal{P}^{\mu}_{tx}(x',s).
		\end{align*} 
		Hence, by \eqref{eq. diff 1st order condition with inverse}, we obtain the relation after differentiation that
		\begin{align}
			\mathcal{P}^{\mu}_{tx}(x',s)
			+\nabla_{yv}g_1\pig(y_{tx}^{\mu}(s),u_{tx}^{\mu}(s) \pig)
			\mathcal{Y}^{\mu}_{tx}(x',s)
			+\nabla_{vv}g_1\pig(y_{tx}^{\mu}(s),u_{tx}^{\mu}(s) \pig)
			\mathcal{U}^{\mu}_{tx}(x',s)=0.
			\label{eq. 1st order condition, linear functional d}
		\end{align} 
		Due to the linearity of the system \eqref{eq. linear functional derivatives of FBSDE}, we note that a similar estimate as that of Lemma \ref{lem. bdd of diff quotient} also holds for $\pig(\mathcal{Y}^{\mu}_{tx}(x',s), \mathcal{P}^{\mu}_{tx}(x',s),\mathcal{U}^{\mu}_{tx}(x',s), \mathcal{Q}^{\mu}_{tx}(x',s)\pig)$. Then, we can easily follow the arguments in Sections \ref{subsec. Global Existence of Solution} to obtain the global existence of the solution $\pig(\mathcal{Y}^{\mu}_{tx}(x',s), \mathcal{P}^{\mu}_{tx}(x',s), \mathcal{Q}^{\mu}_{tx}(x',s)\pig)$ for any $t$ and $T$ as \eqref{ass. Cii} of Assumption {\bf (Cii)}  holds. \mycomment{Note that this solution satisfies \eqref{eq. linear functional derivatives of FBSDE} only $\mathbb{P}$-a.s., $\mu$-a.e. $x\in \mathbb{R}^d$, every $x'\in \mathbb{R}^d$ and every $s \in [t,T]$. To obtain the solution for any $x \in \mathbb{R}^d$, we consider another solution $\pig(\widehat{\mathcal{Y}}^{\mu}_{tx}(x',s), \widehat{\mathcal{P}}^{\mu}_{tx}(x',s), \widehat{\mathcal{Q}}^{\mu}_{tx}(x',s)\pig)$ to the alternative FBSDE
		\begin{equation}
			\h{-10pt}\left\{
			\begin{aligned}
				\widehat{\mathcal{Y}}^{\mu}_{tx}(x',s)
				=\,& \displaystyle\int_{t}^{s}
				\Big[ \nabla_y u\pig(y_{tx}^{\mu}(\tau),p_{tx}^{\mu}(\tau)\pig)\Big] 
				\widehat{\mathcal{Y}}^{\mu}_{tx}(x',\tau) 
				+\Big[\nabla_p  u\pig(y_{tx}^{\mu}(\tau),p_{tx}^{\mu}(\tau)\pig)\Big] 
				\widehat{\mathcal{P}}^{\mu}_{tx}(x',\tau)  d\tau;\\
				\widehat{\mathcal{P}}^{\mu}_{tx}(x',s)
				=\,&\nabla_{yy} h_1(y_{tx}^{\mu}(T))
				\widehat{\mathcal{Y}}^{\mu}_{tx}(x',T)
				+\int^T_s\nabla_{yy}g_1\pig(y_{tx}^{\mu}(\tau),u_{tx}^{\mu}(\tau) \pig)\widehat{\mathcal{Y}}^{\mu}_{tx}(x',\tau) d\tau\\
				&+\int^T_s\nabla_{vy}g_1\pig(y_{tx}^{\mu}(\tau),u_{tx}^{\mu}(\tau) \pig)\Big[ \nabla_y u\pig(y_{tx}^{\mu}(\tau),p_{tx}^{\mu}(\tau)\pig)\Big]
				\widehat{\mathcal{Y}}^{\mu}_{tx}(x',\tau)d\tau\\
				&+\int^T_s\nabla_{vy}g_1\pig(y_{tx}^{\mu}(\tau),u_{tx}^{\mu}(\tau) \pig)\Big[\nabla_p  u\pig(y_{tx}^{\mu}(\tau),p_{tx}^{\mu}(\tau)\pig)\Big]
				\widehat{\mathcal{P}}^{\mu}_{tx}(x',\tau)d\tau \\
				&+\int^T_s \nabla_{yy}g_2\pig(y_{tx}^{\mu}(\tau),y_{t\bigcdot}^{\mu}(\tau)\otimes \mu\pig) 
				\widehat{\mathcal{Y}}^{\mu}_{tx}(x',\tau)  d\tau\\
				&+\int^T_s\widetilde{\mathbb{E}}
				\left[\int\nabla_{y^*}\dfrac{d}{d\nu}\nabla_{y}g_2\pig(y_{tx}^{\mu}(\tau),y_{t\bigcdot}^{\mu}(\tau)\otimes \mu\pig)  (y^*)\bigg|_{y^* = \widetilde{y^\mu_{t\widetilde{x}}} (\tau)}
				\widetilde{\mathcal{Y}^{\mu}_{t\widetilde{x}}} (x',\tau)d\mu(\widetilde{x})\right]
				d\tau \\
				&+\int^T_s\widetilde{\mathbb{E}}
				\left[ \dfrac{d}{d\nu}\nabla_{y}g_2\pig(y_{tx}^{\mu}(\tau),y_{t\bigcdot}^{\mu}(\tau)\otimes \mu\pig)(\widetilde{y_{tx'}^{\mu}} (\tau)) \right]
				d\tau
				-\int^T_s \widehat{\mathcal{Q}}^{\mu}_{tx} (x',\tau)dW_\tau.
			\end{aligned}\right.
			\label{eq. linear functional derivatives of FBSDE 2}
		\end{equation}
		We note the term $\widetilde{\mathcal{Y}^{\mu}_{t\widetilde{x}}} (x',\tau)$ in the second last line in \eqref{eq. linear functional derivatives of FBSDE 2} is the independent copy of $ \mathcal{Y}^{\mu}_{tx} (x',\tau)$ which is the solution to \eqref{eq. linear functional derivatives of FBSDE} just obtained. Now, we can repeat the steps above to obtain the global existence of solution $\pig(\widehat{\mathcal{Y}}^{\mu}_{tx}(x',s), \widehat{\mathcal{P}}^{\mu}_{tx}(x',s), \widehat{\mathcal{Q}}^{\mu}_{tx}(x',s)\pig)$ to the equation \eqref{eq. linear functional derivatives of FBSDE 2} for $x\in \mathbb{R}^d$ outside the support of $\mu$. By combining $\pig(\mathcal{Y}^{\mu}_{tx}(x',s), \mathcal{P}^{\mu}_{tx}(x',s), \mathcal{Q}^{\mu}_{tx}(x',s)\pig)$ and $\pig(\widehat{\mathcal{Y}}^{\mu}_{tx}(x',s), \widehat{\mathcal{P}}^{\mu}_{tx}(x',s), \widehat{\mathcal{Q}}^{\mu}_{tx}(x',s)\pig)$, we obtain the solution to \eqref{eq. linear functional derivatives of FBSDE} for any $x,x' \in \mathbb{R}^d$ and $s\in [t,T]$.} The uniqueness result follows by repeating the proof of Lemma \ref{lem. Existence of J flow, weak conv.}.
		
		\noindent {\bf Proof of the part (b):}
		We apply It\^o's lemma to the inner product $\pig\langle\mathcal{Y}^{\mu}_{tx}(x',s),\mathcal{P}^{\mu}_{tx}(x',s)\pigr\rangle_{\mathbb{R}^d}$ and then integrate from $t$ to $T$, together with \eqref{eq. 1st order condition, linear functional d}, to obtain that
		\begin{align*}
			&\h{-20pt}\left\langle\mathcal{Y}^{\mu}_{tx}(x',T),
			\mathcal{P}^{\mu}_{tx}(x',T)\right\rangle_{\mathcal{H}}\\
			=\,&-\int^T_t\Bigg\langle\mathcal{Y}^{\mu}_{tx}(x',\tau),
			\nabla_{yy}g_1\pig(y_{tx}^{\mu}(\tau),u_{tx}^{\mu}(\tau) \pig)\mathcal{Y}^{\mu}_{tx}(x',\tau) 
			+\nabla_{vy}g_1\pig(y_{tx}^{\mu}(\tau),u_{tx}^{\mu}(\tau) \pig)
			\mathcal{U}^{\mu}_{tx}(x',\tau) \\
			&\h{110pt}+ \nabla_{yy}g_2\pig(y_{tx}^{\mu}(\tau),y_{t\bigcdot}^{\mu}(\tau)\otimes \mu\pig) 
			\mathcal{Y}^{\mu}_{tx}(x',\tau)  \\
			&\h{110pt}+\widetilde{\mathbb{E}}
			\left[\int\nabla_{y^*}\dfrac{d}{d\nu}\nabla_{y}g_2\pig(y_{tx}^{\mu}(\tau),y_{t\bigcdot}^{\mu}(\tau)\otimes \mu\pig)  (y^*)\bigg|_{y^* = \widetilde{y^\mu_{t\widetilde{x}}} (\tau)}
			\widetilde{\mathcal{Y}^{\mu}_{t\widetilde{x}}} (x',\tau)d\mu(\widetilde{x})\right]
			\\
			&\h{110pt}+\widetilde{\mathbb{E}}
			\left[ \dfrac{d}{d\nu}\nabla_{y}g_2\pig(y_{tx}^{\mu}(\tau),y_{t\bigcdot}^{\mu}(\tau)\otimes \mu\pig)(\widetilde{y_{tx'}^{\mu}} (\tau)) \right]\Bigg\rangle_{\mathcal{H}}d\tau\\
			&-\int^T_t\Bigg\langle\mathcal{U}^{\mu}_{tx}(x',\tau),
			\nabla_{vv} g_1\pig(y_{tx}^{\mu}(\tau),u_{tx}^{\mu}(\tau)\pig)\mathcal{U}^{\mu}_{tx}(x',\tau)
			+ \nabla_{yv} g_1\pig(y_{tx}^{\mu}(s),u_{tx}^{\mu}(\tau)\pig)\mathcal{Y}^{\mu}_{tx}(x',\tau) \Bigg\rangle_{\mathcal{H}}d\tau.
		\end{align*}
		From \eqref{ass. bdd of D^2g2}, \eqref{ass. bdd of D dnu D g2}, \eqref{ass. convexity of g1}, \eqref{ass. convexity of g2}, \eqref{ass. convexity of h},  \eqref{ass. cts, bdd of dnu D g_2} of Assumptions {\bf (Aviii)}, {\bf (Aix)}, {\bf (Ax)}, {\bf (Axi)}, {\bf (Bvi)}, {\bf (Di)}, it further implies that
		\begin{align}
			&\Lambda_{g_1} \int^T_t 
			\left\|\mathcal{U}^{\mu}_{tx}(x',\tau)\right\|_{\mathcal{H}}^2d\tau \nonumber\\
			\leq\,& (\lambda_{g_1}+\lambda_{g_2}) \int^T_t 
			\left\|\mathcal{Y}^{\mu}_{tx}(x',\tau)\right\|_{\mathcal{H}}^2 d\tau
			+c_{g_2}\int^T_t 
			\left\|\mathcal{Y}^{\mu}_{tx}(x',\tau)\right\|_{\mathcal{H}}
			\left[\int\left\|\mathcal{Y}^{\mu}_{t\widetilde{x}}(x',\tau)\right\|_{\mathcal{H}}^2 d\mu(\widetilde{x})\right]^{1/2} d\tau
			+\lambda_{h_1} 
			\left\|\mathcal{Y}^{\mu}_{tx}(x',T)\right\|_{\mathcal{H}}^2 
			\nonumber\\
			&
			+C_{g_2}\int^T_t 
			\left\|\mathcal{Y}^{\mu}_{tx}(x',\tau)\right\|_{\mathcal{H}}
			\left\{\widetilde{\mathbb{E}}
			\left[ 1+\|y_{tx}^{\mu} (\tau)\|^2_{\mathcal{H}}+|\widetilde{y_{tx'}^{\mu}} (\tau)|^2 +\int|\widetilde{y_{t\widetilde{x}}^{\mu}} (\tau)|^2 d\mu(\widetilde{x})\right]\right\}^{1/2}d\tau.
			\label{5280}
		\end{align}
		From the dynamics of $\mathcal{Y}^{\mu}_{tx}(x',\tau)$ in \eqref{eq. linear functional derivatives of FBSDE}, it yields that
		\fontsize{10.3pt}{11pt}\begin{equation}
			\begin{aligned}
				\left\|\mathcal{Y}^{\mu}_{tx}(x',s) \right\|^{2}_{\mathcal{H}}
				\leq(s-t)\int_{t}^{T}
				\left\|\mathcal{U}^{\mu}_{tx}(x',\tau) \right\|^{2}_{\mathcal{H}}
				d\tau,
				\h{5pt} \text{and} \h{5pt}
				\int_{t}^{T}\left\|\mathcal{Y}^{\mu}_{tx}(x',\tau) \right\|^{2}_{\mathcal{H}}d\tau
				\leq\dfrac{(T-t)^{2}}{2}
				\int_{t}^{T}\left\|\mathcal{U}^{\mu}_{tx}(x',\tau) \right\|^{2}_{\mathcal{H}}d\tau.
			\end{aligned}
			\label{ineq. int |d nu ys|^2}
		\end{equation}\normalsize
		Using \eqref{ineq. int |d nu ys|^2}, \eqref{bdd. y p q u fix L}, Jensen's and Young's inequalities, the inequality of \eqref{5280} can be simplified as:
		\begin{align*}
			&\Bigg[\Lambda_{g_1}-(\lambda_{h_1})_+\cdot(T-t)
			-\pig[(\lambda_{g_1}+\lambda_{g_2}+c_{g_2})_++C_{g_2}\kappa_{22}\pig]\dfrac{(T-t)^2}{2}\Bigg]\int^T_t \int 
			\left\|\mathcal{U}^{\mu}_{tx}(x',\tau)\right\|_{\mathcal{H}}^2d\mu(x)d\tau\\
			&\leq 
			\dfrac{C_{g_2}^2}{4\kappa_{22}}\left[1+C_4^*\left(1+|x'|^2+2\int^T_t\int|\widetilde{x}|^2d(y^\mu_{t\bigcdot}(s)\otimes\mu)(\widetilde{x}) ds\right)
			+C_4^*\left(1+\int|\widetilde{x}|^2d\mu(\widetilde{x})\right)\right].
		\end{align*}
		Choosing a small enough $\kappa_{22}$, one can choose a positive constant $A$ depending only on $d$, $\eta$, $\lambda_{g_1}$, $\lambda_{g_2}$, $\lambda_{h_1}$, $\Lambda_{g_1}$, $C_{g_1}$, $c_{g_2}$, $C_{g_2}$, $C_{h_1}$, $T$ such that
		$$
		\int^T_t\int 
		\left\|\mathcal{U}^{\mu}_{tx}(x',\tau)\right\|_{\mathcal{H}}^2d\mu(x)d\tau \leq A \left(1+|x'|^2+\int|\widetilde{x}|^2d\mu(\widetilde{x})\right).
		$$Thus, \eqref{ineq. int |d nu ys|^2} implies that for all $s\in [t,T]$,
		\begin{equation}
			\int\left\|\mathcal{Y}^{\mu}_{tx}(x',s)\right\|_{\mathcal{H}}^2d\mu(x) \leq A\cdot T\left(1+|x'|^2+\int|\widetilde{x}|^2d\mu(\widetilde{x})\right).
			\label{bdd. linear functional d  of u}
		\end{equation}
		Following the same argument leading to Lemma \ref{lem. bdd of y p q u}, we apply \eqref{bdd. linear functional d  of u} to the dynamics \eqref{eq. linear functional derivatives of FBSDE} to show that for any $s\in[t,T]$,
		\begin{align} 
			&\text{all }\,\int\left\|\mathcal{Y}^{\mu}_{tx}(x',s)\right\|_{\mathcal{H}}^2d\mu(x) ,\:
			\int\left\|\mathcal{P}^{\mu}_{tx}(x',s)\right\|_{\mathcal{H}}^2d\mu(x) ,\:
			\int\left\|\mathcal{U}^{\mu}_{tx}(x',s)\right\|_{\mathcal{H}}^2d\mu(x) ,\:
			\int\int_{t}^{T}\left\|\mathcal{Q}^{\mu}_{tx}(x',s)\right\|^{2}_{\mathcal{H}}ds d\mu(x) \nonumber\\
			&\leq A\cdot T\left(1+|x|^2+|x'|^2+\int|\widetilde{x}|^2d\mu(\widetilde{x})\right).
			\label{ineq. bdd y, p, q, u, linear functional derivative with int mdx}
		\end{align}
		Finally, we substitute \eqref{ineq. bdd y, p, q, u, linear functional derivative with int mdx} and \eqref{ineq. int |d nu ys|^2} into \eqref{5280} to conclude that for all $s\in [t,T]$:
		\begin{equation}
			\int^T_t\left\|\mathcal{U}^{\mu}_{tx}(x',s)\right\|_{\mathcal{H}}^2 ds \leq A\cdot T \left(1+|x'|^2+\int|\widetilde{x}|^2d\mu(\widetilde{x})\right).
			\label{6210}
		\end{equation}
		Again, following the same steps of the proof for Lemma \ref{lem. bdd of y p q u}, we apply \eqref{6210} to the dynamics \eqref{eq. linear functional derivatives of FBSDE} to yield (b) of Lemma \ref{lem. existence of linear functional derivative of processes}.

		\noindent {\bf Proof of the part (c):}
		Let $\epsilon>0$, $\mu' \in \mathcal{P}_2(\mathbb{R}^d)$ and $\rho:=\mu'-\mu$, we define the difference quotient processes 
		\begin{equation}
			\begin{aligned}
				\Delta^\epsilon_\rho y_{tx}(s)
				&:=\dfrac{y_{tx}^{\mu+\epsilon \rho}(s)- y_{tx}^{\mu}(s)}{\epsilon}
				\h{1pt},\h{5pt} &
				\Delta^\epsilon_\rho p_{tx}(s)
				&:=\dfrac{p_{tx}^{\mu+\epsilon \rho}(s)- p_{tx}^{\mu}(s)}{\epsilon},\\
				\Delta^\epsilon_\rho u_{tx}(s)
				&:=\dfrac{u_{tx}^{\mu+\epsilon \rho}(s)- u_{tx}^{\mu}(s)}{\epsilon}
				\h{1pt},\h{5pt} &
				\Delta^\epsilon_\rho q_{tx}(s)
				&:=\dfrac{q_{tx}^{\mu+\epsilon \rho}(s)- q_{tx}^{\mu}(s)}{\epsilon}.
			\end{aligned}
			\label{def diff process, linear functional}
		\end{equation}
		Also denote $Y^{\theta\epsilon}_x(s):=y_{tx}^{\mu}(s)+\theta\pig(y_{tx}^{\mu+\epsilon \rho}(s)- y_{tx}^{\mu}(s)\pig)$ and 
		$U^{\theta\epsilon}_x(s):=u_{tx}^{\mu}(s)+\theta\pig(u_{tx}^{\mu+\epsilon \rho}(s)- u_{tx}^{\mu}(s)\pig)$, the triple $\pig(\Delta^\epsilon_\rho y_{tx}(s),\Delta^\epsilon_\rho p_{tx}(s),\Delta^\epsilon_\rho q_{tx}(s)\pig)$ solves, for $s\in [t,T]$
		\begin{equation}
			\h{-10pt}\left\{
			\begin{aligned}
				\Delta^\epsilon_\rho y_{tx}(s)
				=\,& \displaystyle\int_{t}^{s}
				\Delta^\epsilon_\rho u_{tx}(\tau) d\tau;\\
				\Delta^\epsilon_\rho p_{tx}(s)
				=\,&\int^1_0\nabla_{yy} h_1(Y^{\theta\epsilon}_x(T))
				\Delta^\epsilon_\rho y_{tx}(T)d\theta
				+\int^T_s\int^1_0\nabla_{yy}g_1\pig(Y^{\theta\epsilon}_x(\tau),U^{\theta\epsilon}_x(\tau) \pig)\Delta^\epsilon_\rho y_{tx}(s) d\theta d\tau\\
				&+\int^T_s\int^1_0\nabla_{vy}g_1\pig(Y^{\theta\epsilon}_x(\tau),U^{\theta\epsilon}_x(\tau) \pig)
				\Delta^\epsilon_\rho u_{tx}(\tau) d\theta d\tau \\
				&+\int^T_s \int^1_0 \nabla_{yy}g_2\pig(Y^{\theta\epsilon}_x(\tau),y_{t\bigcdot}^{\mu+\epsilon\rho}(\tau)\otimes (\mu+\epsilon\rho)\pig) 
				\Delta^\epsilon_\rho y_{tx}(\tau) d\theta d\tau\\
				&+\int^T_s\int^1_0\widetilde{\mathbb{E}}
				\left[\int\nabla_{y^*}\dfrac{d}{d\nu}\nabla_{y}g_2\pig(y_{tx}^{\mu}(\tau),Y^{\theta \epsilon}_{\bigcdot}(\tau)\otimes (\mu+\epsilon\rho)\pig)  (y^*)\Bigg|_{y^* = \widetilde{ Y_{\widetilde{x}}^{\theta \epsilon}} (\tau)}
				\widetilde{\Delta^\epsilon_\rho y_{t\widetilde{x}}} (\tau)d(\mu+\epsilon\rho)(\widetilde{x})\right]
				d\theta d\tau \\
				&+\int^T_s\int^1_0\int\widetilde{\mathbb{E}}
				\left[ \dfrac{d}{d\nu}\nabla_{y}g_2\pig(y_{tx}^{\mu}(\tau),y_{t\bigcdot}^{\mu}(\tau)\otimes (\mu+\theta\epsilon\rho)\pig)(\widetilde{y_{t\widetilde{x}}^{\mu}} (\tau)) \right] d\rho(\widetilde{x}) d \theta
				d\tau
				-\int^T_s \Delta^\epsilon_\rho q_{tx}(\tau) dW_\tau,
			\end{aligned}\right.
			\label{eq. linear functional derivatives of FBSDE D*}
		\end{equation}
		Repeating the derivation of the estimates of Lemma \ref{lem. bdd of diff quotient}, we again obtain:
		\begin{align} 
			\mathbb{E}\left[\sup_{s\in[t,T]}\big|\Delta^\epsilon_\rho y_{tx}(s)\big|^2
			+\sup_{s\in[t,T]}\big|\Delta^\epsilon_\rho p_{tx}(s)\big|^2
			+\sup_{s\in[t,T]}\big|\Delta^\epsilon_\rho u_{tx}(s)\big|^2\right]
			+ \int_{t}^{T}\pigl\|\Delta^\epsilon_\rho q_{tx}(s)\pigr\|^{2}_{\mathcal{H}}ds
			\leq C_{15},
			\label{bdd. diff quotient of y, p, q, u linear functional d}
		\end{align}
		where $C_{15}$ is a positive constant depending only on $d$, $\eta$, $\lambda_{g_1}$, $\lambda_{g_2}$, $\lambda_{h_1}$, $\Lambda_{g_1}$, $C_{g_1}$, $c_{g_2}$, $C_{g_2}$, $C_{h_1}$, $T$, $\mu$ and $\rho$. Integrating \eqref{eq. linear functional derivatives of FBSDE} with respect to $d\rho(x')$ and then finding the difference with \eqref{eq. linear functional derivatives of FBSDE D*}, we can use the strong convergence arguments of Lemma \ref{lem. Existence of J flow, strong conv.} to establish the part (c) in Lemma \ref{lem. existence of linear functional derivative of processes}, with the aid of \eqref{bdd. diff quotient of y, p, q, u linear functional d} and \eqref{ass. Cii} of Assumption \textup{\bf (Cii)}.

		\mycomment{Now we turn to the proof of (c) in Lemma \ref{lem. existence of linear functional derivative of processes}. First, the first order condition in \eqref{eq. 1st order condition, equilibrium} implies
			\begin{align*}
				\dfrac{dp_{tx}}{d\nu}(x',s)
				+ \nabla_{vv} g_1\pig(y_{tx}(s),u_{tx}(s)\pig)\dfrac{du_{tx}}{d\nu}(x',s)
				+ \nabla_{yv} g_1\pig(y_{tx}(s),u_{tx}(s)\pig)\dfrac{dy_{tx}}{d\nu}(x',s)= 0.
			\end{align*}
			Then, we apply It\^o's lemma to the inner product $\left\langle\dfrac{dy_{tx}}{d\nu}(x',s),\dfrac{dp_{tx}}{d\nu}(x',s)\right\rangle_{\mathcal{H}}$ and then integrate from $t$ to $T$ to obtain that
			\begin{align*}
				&\h{-20pt}\left\langle\dfrac{dy_{tx}}{d\nu}(x',T),
				\dfrac{dp_{tx}}{d\nu}(x',T)\right\rangle_{\mathcal{H}}\\
				=\,&-\int^T_t\Bigg\langle\dfrac{dy_{tx}}{d\nu}(x',\tau),
				\nabla_{yy}g_1\pig(y_{tx}(\tau),u_{tx}(\tau) \pig)\dfrac{dy_{tx}}{d\nu}(x',\tau) 
				+\nabla_{vy}g_1\pig(y_{tx}(\tau),u_{tx}(\tau) \pig)
				\dfrac{du_{tx}}{d\nu}(x',\tau) \\
				&\h{110pt}+ \nabla_{yy}g_2\pig(y_{tx}(\tau),\mathcal{L}\big(y_{t\bigcdot}(\tau)\big)\pig) 
				\dfrac{dy_{tx}}{d\nu}(x',\tau)  \\
				&\h{110pt}+\widetilde{\mathbb{E}}
				\left[\int\nabla_{y^*}\dfrac{d}{d\nu}\nabla_{y}g_2\pig(y_{tx}(\tau),\mathcal{L}\big(y_{t\bigcdot}(\tau)\big)\pig)  (y^*)\bigg|_{y^* = \widetilde{y_{t\widetilde{x}}} (\tau)}
				\widetilde{\dfrac{dy_{t\widetilde{x}}}{d\nu}} (x',\tau)m_0(\widetilde{x})d\widetilde{x}\right]
				\\
				&\h{110pt}+\widetilde{\mathbb{E}}
				\left[ \dfrac{d}{d\nu}\nabla_{y}g_2\pig(y_{tx}(\tau),\mathcal{L}\big(y_{t\bigcdot}(\tau)\big)\pig)(\widetilde{y_{tx'}} (\tau)) \right]\Bigg\rangle_{\mathcal{H}}d\tau\\
				&-\int^T_t\Bigg\langle\dfrac{du_{tx}}{d\nu}(x',\tau),
				\nabla_{vv} g_1\pig(y_{tx}(\tau),u_{tx}(\tau)\pig)\dfrac{du_{tx}}{d\nu}(x',\tau)
				+ \nabla_{yv} g_1\pig(y_{tx}(s),u_{tx}(\tau)\pig)\dfrac{dy_{tx}}{d\nu}(x',\tau) \Bigg\rangle_{\mathcal{H}}d\tau.
			\end{align*}
			\eqref{ass. convexity of g1} of Assumption of {\bf (Ax)}  tells us that
			\begin{align*}
				&\h{-10pt}\Lambda_{g_1} \int^T_t 
				\left\|\dfrac{du_{tx}}{d\nu}(x',\tau)\right\|_{\mathcal{H}}^2d\tau\\
				\leq\,& \lambda_{g_1} \int^T_t 
				\left\|\dfrac{dy_{tx}}{d\nu}(x',\tau)\right\|_{\mathcal{H}}^2d\tau
				-\left\langle\dfrac{dy_{tx}}{d\nu}(x',T),
				\dfrac{dp_{tx}}{d\nu}(x',T)\right\rangle_{\mathcal{H}}\\
				&-\int^T_t\Bigg\langle\dfrac{dy_{tx}}{d\nu}(x',\tau),
				\nabla_{yy}g_2\pig(y_{tx}(\tau),\mathcal{L}\big(y_{t\bigcdot}(\tau)\big)\pig) 
				\dfrac{dy_{tx}}{d\nu}(x',\tau)  \\
				&\h{110pt}+\widetilde{\mathbb{E}}
				\left[\int\nabla_{y^*}\dfrac{d}{d\nu}\nabla_{y}g_2\pig(y_{tx}(\tau),\mathcal{L}\big(y_{t\bigcdot}(\tau)\big)\pig)  (y^*)\bigg|_{y^* = \widetilde{y_{t\widetilde{x}}} (\tau)}
				\widetilde{\dfrac{dy_{t\widetilde{x}}}{d\nu}} (x',\tau)m_0(\widetilde{y})d\widetilde{y}\right]
				\\
				&\h{110pt}+\widetilde{\mathbb{E}}
				\left[ \dfrac{d}{d\nu}\nabla_{y}g_2\pig(y_{tx}(\tau),\mathcal{L}\big(y_{t\bigcdot}(\tau)\big)\pig)(\widetilde{y_{tx'}} (\tau)) \right]\Bigg\rangle_{\mathcal{H}}d\tau.
			\end{align*}
			 \eqref{ass. convexity of g2}, \eqref{ass. convexity of h}, \eqref{ass. bdd of D^2g2}, \eqref{ass. bdd of D dnu D g2}, \eqref{ass. cts, bdd of dnu D g_2} of Assumptions {\bf (Aviii)}, {\bf (Aix)}, {\bf (Ax)}, {\bf (Axi)}, {\bf (Bvi)}, {\bf (Di)} further imply
			\begin{align}
				&\h{-10pt}\Lambda_{g_1} \int^T_t 
				\int\left\|\dfrac{du_{tx}}{d\nu}(x',\tau)\right\|_{\mathcal{H}}^2m_0(x)dxd\tau\nonumber\\
				\leq\,& (\lambda_{g_1}+\lambda_{g_2}+c_{g_2}) \int^T_t 
				\int\left\|\dfrac{dy_{tx}}{d\nu}(x',\tau)\right\|_{\mathcal{H}}^2m_0(x)dxd\tau
				+\lambda_{h_1}\int\left\|\dfrac{dy_{tx}}{d\nu}(x',T)\right\|_{\mathcal{H}}^2m_0(x)dx\nonumber\\
				&
				+C_{g_2}\int^T_t\int
				\left\|\dfrac{dy_{tx}}{d\nu}(x',\tau)\right\|_{\mathcal{H}}m_0(x)dx
				\widetilde{\mathbb{E}}
				\pig[ 1+ \sqrt{2}|\widetilde{y_{tx'}} (\tau)| \pig]d\tau.
				\label{5280}
			\end{align}
			From the dynamics of $\dfrac{dy_{tx}}{d\nu}(x',\tau)$ in \eqref{eq. linear functional derivatives of FBSDE}, it yields that
			\begin{equation}
				\begin{aligned}
					&\int\left\|\dfrac{dy_{tx}}{d\nu}(x',s) \right\|^{2}_{\mathcal{H}}m_0(x)dx
					\leq(s-t)\int_{t}^{T}
					\int\left\|\dfrac{du_{tx}}{d\nu}(x',\tau) \right\|^{2}_{\mathcal{H}}
					m_0(x)dxd\tau\\
					&\h{5pt} \text{and } \h{15pt}
					\int_{t}^{T}\int\left\|\dfrac{dy_{tx}}{d\nu}(x',\tau) \right\|^{2}_{\mathcal{H}}m_0(x)dxd\tau
					\leq\dfrac{(T-t)^{2}}{2}
					\int_{t}^{T}\int\left\|\dfrac{du_{tx}}{d\nu}(x',\tau) \right\|^{2}_{\mathcal{H}}m_0(x)dxd\tau.
				\end{aligned}
				\label{ineq. int |d nu ys|^2}
			\end{equation}
			Using \eqref{ineq. int |d nu ys|^2}, \eqref{ineq. bdd y, p, q, u} and Young's inequality, the inequality in \eqref{5280} reduces to
			\small\begin{align}
				\left[\Lambda_{g_1}-(\lambda_{h_1})_+(T-t)
				-(\lambda_{g_1}+\lambda_{g_2}+c_{g_2}+C_{g_2}\kappa_{22})_+\dfrac{(T-t)^2}{2}\right]\int^T_t \int 
				\left\|\dfrac{du_{tx}}{d\nu}(x',\tau)\right\|_{\mathcal{H}}^2m_0(x)dxd\tau
				\leq\,&  
				\dfrac{C_{g_2}C_4^2}{\kappa_{22}}(1+|x'|^2).
				\label{5280}
			\end{align}\normalsize
			Choosing a small enough $\kappa_{22}$, we see that there is positive constant $A$ depending only on $d$, $\eta$, $\lambda_{g_1}$, $\lambda_{g_2}$, $\lambda_{h_1}$, $\Lambda_{g_1}$, $C_{g_1}$, $c_{g_2}$, $C_{g_2}$, $C_{h_1}$, $T$ such that
			$$\int^T_t\int 
			\left\|\dfrac{du_{tx}}{d\nu}(x',\tau)\right\|_{\mathcal{H}}^2m_0(x)dxd\tau \leq A (1+|x'|^2).$$ Thus, \eqref{ineq. int |d nu ys|^2} implies that for all $s\in [t,T]$
			$$
			\int\left\|\dfrac{du_{tx}}{d\nu}(x',s)\right\|_{\mathcal{H}}^2m_0(x)dx \leq AT (1+|x'|^2).$$ 
			Following the same steps as in Lemma \ref{lem. bdd of y p q u}, we can prove (c) in Lemma \ref{lem. existence of linear functional derivative of processes}.}
	\end{proof}
	
	\begin{lemma}[\bf Continuity of Linear Functional Derivatives of Processes]
		Suppose that both \eqref{ass. Cii} and \eqref{ass. cts, bdd of dnu D g_2} of Assumptions \textup{\bf (Cii)}  and \textup{\bf (Di)}  hold. Let $t \in [0,T)$, $\mu \in \mathcal{P}_2(\mathbb{R}^d)$ and $x' \in \mathbb{R}^d$, then the triple $\bigg(\dfrac{dy_{tx}^{\mu}}{d\nu}(x',s), \dfrac{dp_{tx}^{\mu}}{d\nu}(x',s),\dfrac{du_{tx}^{\mu}}{d\nu}(x',s)\bigg)$ obtained in (c) of Lemma \ref{lem. existence of linear functional derivative of processes} is continuous in $\mu$ and $x$, for every $s \in [t,T]$. For instance, for each $x,x'\in \mathbb{R}^d$, if $\{\mu_k\}_{k=1}^\infty\subset\mathcal{P}_2(\mathbb{R}^d)$ is a sequence converging to $\mu_0 \in \mathcal{P}_2(\mathbb{R}^d)$, then we have as $k \to \infty$,
		$$\mathbb{E}\left[\sup_{\tau \in [t,T]}\left|\dfrac{dy_{tx}^{\mu_k}}{d\nu}(x',\tau)
		-\dfrac{dy_{tx}^{\mu_0}}{d\nu}(x',\tau)\right|^2 \right]\longrightarrow 0.$$
		The continuity in $x$, and the continuity for other processes are valid in the same manner. \mycomment{defined similarly.??}
		\label{lem cts of linear functional d of process}
	\end{lemma}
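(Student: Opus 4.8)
The plan is to invoke Lemma~\ref{lem. existence of linear functional derivative of processes}(c), which identifies the linear functional derivatives $\pig(\tfrac{dy_{tx}^{\mu}}{d\nu}(x',\cdot),\tfrac{dp_{tx}^{\mu}}{d\nu}(x',\cdot),\tfrac{du_{tx}^{\mu}}{d\nu}(x',\cdot)\pig)$ with the solution $\pig(\mathcal{Y}^\mu_{tx}(x',\cdot),\mathcal{P}^\mu_{tx}(x',\cdot),\mathcal{U}^\mu_{tx}(x',\cdot)\pig)$ of the linear FBSDE~\eqref{eq. linear functional derivatives of FBSDE}; it therefore suffices to prove the continuity of this solution in $\mu$ (and in $x$). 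The whole argument parallels the strong-convergence schemes already developed in Lemma~\ref{lem. Existence of J flow, strong conv.} and Lemma~\ref{lem 2nd diff V w.r.t. x}. As a preliminary step, I would fix a sequence $\mu_k\to\mu_0$ in $\mathcal{P}_2(\mathbb{R}^d)$ and choose, through optimal couplings, representatives $\xi_k\sim\mu_k$, $\xi_0\sim\mu_0$ with $\|\xi_k-\xi_0\|_{\mathcal{H}}=\mathcal{W}_2(\mu_k,\mu_0)\to0$. Lemma~\ref{lem. lip in x and xi} then yields the strong convergence of the base processes, $\sup_{s}\|y^{\mu_k}_{tx}(s)-y^{\mu_0}_{tx}(s)\|_{\mathcal{H}}\to0$, and likewise for $p^{\mu_k}_{tx}$ and $u^{\mu_k}_{tx}$; a Borel--Cantelli extraction then gives $\mathcal{L}^1\otimes\mathbb{P}$-a.e.\ convergence along a subsequence, which will feed the continuity arguments below.

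Next I would set $\delta\mathcal{Y}_k:=\mathcal{Y}^{\mu_k}_{tx}(x',\cdot)-\mathcal{Y}^{\mu_0}_{tx}(x',\cdot)$, and similarly $\delta\mathcal{P}_k$, $\delta\mathcal{U}_k$, $\delta\mathcal{Q}_k$, and subtract the two copies of~\eqref{eq. linear functional derivatives of FBSDE} written for $\mu_k$ and $\mu_0$. The difference solves a linear FBSDE of exactly the same homogeneous structure as~\eqref{eq. linear functional derivatives of FBSDE} but with the coefficient matrices and the mean-field kernel frozen along the $\mu_0$-flow, plus an inhomogeneous error term $\mathscr{E}_k$ gathering: (i) the discrepancies of the coefficients $\nabla_y u$, $\nabla_p u$, $\nabla_{yy}g_1$, $\nabla_{vy}g_1$, $\nabla_{yy}g_2$ evaluated along the $\mu_k$-flow versus the $\mu_0$-flow, applied to the uniformly bounded $\mu_0$-solution; (ii) the discrepancy of the interaction kernel $\nabla_{y^*}\tfrac{d}{d\nu}\nabla_y g_2$, of its evaluation point, and of the integrating measure $\mu_k$ versus $\mu_0$; and (iii) the discrepancy in the source term $\widetilde{\mathbb{E}}\big[\tfrac{d}{d\nu}\nabla_y g_2(\cdots)(\widetilde{y^\mu_{tx'}}(\tau))\big]$.

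The a priori estimate is then obtained exactly as in part~(b) of Lemma~\ref{lem. existence of linear functional derivative of processes} and in Lemma~\ref{lem cruical est.}: applying It\^o's lemma to $\langle\delta\mathcal{Y}_k,\delta\mathcal{P}_k\rangle_{\mathbb{R}^d}$, using the differentiated first-order relation~\eqref{eq. 1st order condition, linear functional d}, the convexity bounds~\eqref{ass. convexity of g1}, \eqref{ass. convexity of g2}, \eqref{ass. convexity of h} of Assumptions \textbf{(Ax)}, \textbf{(Axi)}, \textbf{(Bvi)}, the Schur-complement computation, the kernel bound~\eqref{ass. bdd of D dnu D g2} of \textbf{(Aix)}, and crucially the smallness condition~\eqref{ass. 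Cii} of Assumption \textbf{(Cii)}, produces a strictly positive coefficient in front of $\int_t^T\|\delta\mathcal{U}_k(s)\|_{\mathcal{H}}^2\,ds$. Pairing $\mathscr{E}_k$ against the uniformly bounded quantities furnished by~\eqref{ineq. bdd y, p, q, u, linear functional derivative} and invoking Cauchy--Schwarz and Young's inequalities, I would bound $\int_t^T\|\delta\mathcal{U}_k\|_{\mathcal{H}}^2\,ds$, hence $\sup_s\|\delta\mathcal{Y}_k\|_{\mathcal{H}}^2$, and finally (through the backward dynamics and It\^o's isometry) $\sup_s\|\delta\mathcal{P}_k\|_{\mathcal{H}}^2$ and $\int_t^T\|\delta\mathcal{Q}_k\|_{\mathcal{H}}^2\,ds$, all by a constant multiple of a norm of $\mathscr{E}_k$.

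It then remains to show $\mathscr{E}_k\to0$. For the coefficient discrepancies in (i) and the source term (iii), the a.e.\ convergence of the base flow together with the continuity of $\nabla_y u$, $\nabla_p u$ (from~\eqref{eq. diff 1st order condition with inverse} and the continuity in Assumption \textbf{(Ai)}), of $\nabla_{yy}g_1,\nabla_{vy}g_1$ (\textbf{(Ai)}), of $\nabla_{yy}g_2$ and $\nabla_{y^*}\tfrac{d}{d\nu}\nabla_y g_2$ (\textbf{(Aii)}), and of $\tfrac{d}{d\nu}\nabla_y g_2$ (Assumption~\eqref{ass. cts, bdd of dnu D g_2} of \textbf{(Di)}), combined with the uniform bounds~\eqref{ass. bdd of D^2g1}, \eqref{ass. bdd of D^2g2}, \eqref{bdd. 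Dx u and Dp u} and the Lebesgue dominated convergence theorem, will drive these pieces to $0$; since the bound holds for the whole sequence and any subsequence admits a further subsequence along which $\mathscr{E}_k\to0$, the full sequence converges. The main obstacle is the mean-field interaction piece in (ii): it simultaneously carries the \emph{unknown} difference $\delta\mathcal{Y}_k$ --- which must be kept on the homogeneous side and absorbed by the smallness from~\eqref{ass. Cii} --- together with a change of the integrating measure from $\mu_k$ to $\mu_0$. The separation is achieved by writing $\int F_k\,d\mu_k-\int F_0\,d\mu_0=\int(F_k-F_0)\,d\mu_k+\pig(\int F_0\,d\mu_k-\int F_0\,d\mu_0\pig)$ and, after the optimal-coupling representation, recasting every $d\mu$-integral as an expectation over $\xi_k\to\xi_0$ in $\mathcal{H}$; the quadratic-growth control from~\eqref{ass. cts, bdd of dnu D g_2} and~\eqref{ineq. bdd y, p, q, u, linear functional derivative} guarantees that the measure-difference remainder vanishes under $\mathcal{W}_2$-convergence. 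Finally, the continuity in $x$ follows by the identical scheme, using instead the Lipschitz-in-$x$ part of Lemma~\ref{lem. lip in x and xi} with the measure arguments now unchanged, which is strictly simpler.
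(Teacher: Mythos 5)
Your overall route is the same as the paper's: identify the linear functional derivatives with the solution of the linear FBSDE \eqref{eq. linear functional derivatives of FBSDE}, subtract the equations for $\mu_k$ and $\mu_0$, run the It\^o/duality computation on $\langle\delta\mathcal{P}_k,\delta\mathcal{Y}_k\rangle$ with the convexity assumptions and \eqref{ass. Cii}, and send the coefficient-discrepancy errors to zero via Lemma \ref{lem. lip in x and xi}, joint continuity of the derivatives of the data, and dominated convergence along subsequences. However, there is one step where your plan, as written, would fail: you propose to close the a priori estimate pointwise in $x$, absorbing the nonlocal term carrying the unknown difference ``by the smallness from \eqref{ass. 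Cii}''. At a fixed $x$, the duality inequality (the analogue of \eqref{6557}) contains the term
$c_{g_2}\int_t^T\|\delta\mathcal{Y}_k(x,\tau)\|_{\mathcal{H}}\int\|\delta\mathcal{Y}_k(\widetilde{x},\tau)\|_{\mathcal{H}}\,d\mu_k(\widetilde{x})\,d\tau$,
in which the unknown appears averaged over all $\widetilde{x}$ against $\mu_k$; this average is not controlled by $\int_t^T\|\delta\mathcal{U}_k(x,\tau)\|^2_{\mathcal{H}}\,d\tau$ at the same $x$, so the coercivity furnished by \eqref{ass. Cii} cannot absorb it pointwise, and no Gr\"onwall-type closure is available at fixed $x$.

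The paper resolves this with a two-stage device that your proposal omits. First, the whole inequality \eqref{6557} is integrated in $x$ against the fixed reference measure $d\mu_0(x)$, and the inner $d\mu_k$-average of the unknown is converted into a $d\mu_0$-average at the cost of a remainder of order $\mathcal{W}_2(\mu_k,\mu_0)$, via the measure-change inequality \eqref{6596}, which itself rests on the uniform bound \eqref{ineq. bdd y, p, q, u, linear functional derivative} from part (b) of Lemma \ref{lem. existence of linear functional derivative of processes}. At this averaged level both sides carry the same measure $\mu_0$, so the smallness in \eqref{ass. Cii} (with a careful choice of the Young parameters $\kappa_{23},\dots,\kappa_{29}$) does close the estimate and yields \eqref{conv. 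U1-U-2 linear functional d}. Only then, in a second stage, is the averaged convergence substituted back into the pointwise inequality \eqref{6557} to produce the fixed-$x$ convergences \eqref{6644} and \eqref{6651}, after which the backward dynamics give the convergence of $\delta\mathcal{P}_k$ and $\delta\mathcal{Q}_k$. Your coupling/decomposition treatment of the measure-difference remainder and of the coefficient errors is correct and matches the paper; what is missing is precisely this ``average in $x$ first, then return to fixed $x$'' structure, without which the absorption step you invoke does not go through.
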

	We prove this claim in Appendix \ref{app. Proofs in Linear Functional Differentiability of Solution to FBSDE and its Regularity}.
	
	\subsection{First, Second Order Spatial Differentiabilities of Linear Functional Derivatives}
	\begin{lemma}[\bf First-order Spatial Differentiability of Linear Functional Derivatives]
		Suppose that \textup{\bf (Cii)}'s \eqref{ass. Cii} and \textup{\bf (Di)}'s \eqref{ass. cts, bdd of dnu D g_2} hold. The triple $\left(\dfrac{dy_{tx}^{\mu}}{d\nu}(x',s), \dfrac{dp_{tx}^{\mu}}{d\nu}(x',s), \dfrac{du_{tx}^{\mu}}{d\nu}(x',s)\right)$ obtained in (c) of Lemma \ref{lem. existence of linear functional derivative of processes} is differentiable in $x'$ such that, for instance, the derivative $\p_{x_i'}\dfrac{dy_{tx}^{\mu}}{d\nu}(x',s)$ exists in the sense that
		$$ 	\mathbb{E}\left\{\sup_{s \in [t,T]}\left|\dfrac{1}{\epsilon}\left[ \dfrac{dy_{tx}^{\mu}}{d\nu}(x'+\epsilon e_i,s) - \dfrac{dy_{tx}^{\mu}}{d\nu}(x',s)\right] -\p_{x_i'}\dfrac{dy_{tx}^{\mu}}{d\nu}(x',s)\right|^2\right\} \longrightarrow 0 \h{10pt} \text{ as $\epsilon \to 0$,}$$
		for all $\mu \in \mathcal{P}_2(\mathbb{R}^d)$, $x,x' \in \mathbb{R}^d$ and $i=1,2,\ldots,d$.
		\label{lem existence of d of linear functional d}
	\end{lemma}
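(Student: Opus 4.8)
The plan is to differentiate the linear FBSDE \eqref{eq. linear functional derivatives of FBSDE} with respect to the spatial variable $x'$ directly, exploiting that its $x'$-dependence is affine. Indeed, with $\mu$ frozen, the coefficients multiplying $(\mathcal{Y}^\mu_{tx},\mathcal{P}^\mu_{tx},\mathcal{Q}^\mu_{tx})$ — namely the Hessians $\nabla_{yy}g_1$, $\nabla_{vy}g_1$, $\nabla_{yy}g_2$, $\nabla_{yy}h_1$ and the derivatives $\nabla_y u$, $\nabla_p u$ evaluated along $(y^\mu_{tx},p^\mu_{tx},u^\mu_{tx})$ — do not depend on $x'$, and the kernel $\nabla_{y^*}\frac{d}{d\nu}\nabla_y g_2$ in the first $L$-derivative term is evaluated along the equilibrium flow $y^\mu_{t\bigcdot}(\tau)\otimes\mu$, which is likewise independent of $x'$; the only genuinely $x'$-dependent forcing is the last line of \eqref{eq. linear functional derivatives of FBSDE}, where $x'$ enters solely through the evaluation point $\widetilde{y^\mu_{tx'}}(\tau)$. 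Accordingly I would form the increments $\Delta^\epsilon_i\mathcal{Y}(x',s):=\epsilon^{-1}[\mathcal{Y}^\mu_{tx}(x'+\epsilon e_i,s)-\mathcal{Y}^\mu_{tx}(x',s)]$, and likewise $\Delta^\epsilon_i\mathcal{P}$, $\Delta^\epsilon_i\mathcal{Q}$, $\Delta^\epsilon_i\mathcal{U}$. Subtracting \eqref{eq. linear functional derivatives of FBSDE} at $x'+\epsilon e_i$ from that at $x'$ and dividing by $\epsilon$ produces a linear FBSDE for this quadruple with exactly the same coefficient structure as \eqref{eq. linear functional derivatives of FBSDE}, whose only inhomogeneous term is the difference quotient of the last line,
$$\int_s^T\widetilde{\mathbb{E}}\left[\frac{1}{\epsilon}\left(\frac{d}{d\nu}\nabla_y g_2\big(y^\mu_{tx}(\tau),y^\mu_{t\bigcdot}(\tau)\otimes\mu\big)\big(\widetilde{y^\mu_{t,x'+\epsilon e_i}}(\tau)\big)-\frac{d}{d\nu}\nabla_y g_2\big(y^\mu_{tx}(\tau),y^\mu_{t\bigcdot}(\tau)\otimes\mu\big)\big(\widetilde{y^\mu_{tx'}}(\tau)\big)\right)\right]d\tau.$$

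Next I would derive a uniform-in-$\epsilon$ a priori bound on the quadruple, reusing the estimate engine of Lemma \ref{lem. bdd of diff quotient} and Lemma \ref{lem. existence of linear functional derivative of processes}(b): apply It\^o's formula to $\langle\Delta^\epsilon_i\mathcal{Y},\Delta^\epsilon_i\mathcal{P}\rangle_{\mathcal{H}}$, invoke the convexity bounds \eqref{ass. convexity of g1}, \eqref{ass. convexity of g2}, \eqref{ass. convexity of h}, the smallness \eqref{ass. bdd of D dnu D g2}, and crucially \eqref{ass. Cii}, and control the new forcing term. The latter is handled by the mean value theorem along the segment joining $\widetilde{y^\mu_{tx'}}(\tau)$ and $\widetilde{y^\mu_{t,x'+\epsilon e_i}}(\tau)$, bounding $\nabla_{y^*}\frac{d}{d\nu}\nabla_y g_2$ by $c_{g_2}$ through \eqref{ass. bdd of D dnu D g2} and then using the spatial Lipschitz estimate $\|y^\mu_{t,x'+\epsilon e_i}(\tau)-y^\mu_{tx'}(\tau)\|_{\mathcal{H}}\le C_8\epsilon$ from Lemma \ref{lem. lip in x and xi}. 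This yields a bound uniform in $\epsilon$.

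With the uniform bound, I would pass to the limit in two stages, mirroring Lemma \ref{lem. Existence of J flow, weak conv.} and Lemma \ref{lem. Existence of J flow, strong conv.}. By Banach--Alaoglu, along a subsequence the difference quotients converge weakly to some $(\mathscr{D}\mathcal{Y},\mathscr{D}\mathcal{P},\mathscr{D}\mathcal{Q},\mathscr{D}\mathcal{U})$; identifying the weak limit of the forcing term is the step carrying the real content. Here one observes that $y^\mu_{tx'}(\cdot)$ coincides with the fixed-measure process $y_{tx'\mathbb{L}}(\cdot)$ for $\mathbb{L}(\tau)=y^\mu_{t\bigcdot}(\tau)\otimes\mu$, so Lemma \ref{lem. prop of y p q u fix L}(c)--(d) supplies the spatial Jacobian flow $\p_{x_i'}y^\mu_{tx'}$, which is the strong limit of $\epsilon^{-1}(y^\mu_{t,x'+\epsilon e_i}-y^\mu_{tx'})$ and is uniformly bounded by $C_4''$. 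Combining this with the continuity of $\nabla_{y^*}\frac{d}{d\nu}\nabla_y g_2$ from \eqref{ass. cts and diff of g2} and a dominated-convergence argument (after extracting a further a.e.-convergent subsequence via Borel--Cantelli, exactly as in Lemma \ref{lem. Existence of J flow, weak conv.}) shows the forcing difference quotient converges to
$$\int_s^T\widetilde{\mathbb{E}}\left[\nabla_{y^*}\frac{d}{d\nu}\nabla_y g_2\big(y^\mu_{tx}(\tau),y^\mu_{t\bigcdot}(\tau)\otimes\mu\big)(y^*)\Big|_{y^*=\widetilde{y^\mu_{tx'}}(\tau)}\,\p_{x_i'}\widetilde{y^\mu_{tx'}}(\tau)\right]d\tau.$$
Thus $(\mathscr{D}\mathcal{Y},\mathscr{D}\mathcal{P},\mathscr{D}\mathcal{Q})$ solves a linear FBSDE of precisely the type \eqref{eq. linear functional derivatives of FBSDE}, uniquely solvable under \eqref{ass. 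Cii} by Lemma \ref{lem. existence of linear functional derivative of processes}(a).

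Finally, I would upgrade to strong convergence and conclude that the full family (not just a subsequence) converges, by the contradiction-plus-It\^o scheme of Lemma \ref{lem. Existence of J flow, strong conv.}: expanding $\langle\Delta^\epsilon_i\mathcal{Y}-\mathscr{D}\mathcal{Y},\Delta^\epsilon_i\mathcal{P}-\mathscr{D}\mathcal{P}\rangle_{\mathcal{H}}$, using convexity together with \eqref{ass. Cii} gives $\int_t^T\|\Delta^\epsilon_i\mathcal{U}-\mathscr{D}\mathcal{U}\|_{\mathcal{H}}^2\,ds\to 0$, whence the uniform convergence of $\Delta^\epsilon_i\mathcal{Y}$, and then of $\Delta^\epsilon_i\mathcal{P}$ and $\Delta^\epsilon_i\mathcal{U}$, to the claimed spatial derivatives $\p_{x_i'}\frac{dy^\mu_{tx}}{d\nu}$, $\p_{x_i'}\frac{dp^\mu_{tx}}{d\nu}$, $\p_{x_i'}\frac{du^\mu_{tx}}{d\nu}$. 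The main obstacle is precisely the identification and convergence of the $x'$-differentiated forcing term, the only ingredient nonlinear in $x'$: handling it requires simultaneously the chain rule through the composition $\widetilde{y}\mapsto\frac{d}{d\nu}\nabla_y g_2(\cdots)(\widetilde{y})$ evaluated at the $x'$-dependent random point $\widetilde{y^\mu_{tx'}}(\tau)$, the existence of the spatial Jacobian flow from Lemma \ref{lem. prop of y p q u fix L}, and careful control of the independent-copy structure under the nested expectations.
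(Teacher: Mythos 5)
Your proposal is correct and follows essentially the same route as the paper's proof: form the difference quotients in $x'$, obtain uniform-in-$\epsilon$ bounds by the estimate machinery of Lemma \ref{lem. bdd of diff quotient}, extract weak limits and identify the limiting linear FBSDE as in Lemma \ref{lem. Existence of J flow, weak conv.}, then upgrade to strong convergence as in Lemma \ref{lem. Existence of J flow, strong conv.}, with the only genuinely new ingredient being the chain-rule forcing term involving $\nabla_{x'}y^{\mu}_{tx'}$, exactly as in the paper's equation \eqref{eq.  1st d linear functional derivatives of FBSDE}. Your justification of the Jacobian flow $\p_{x_i'}y^{\mu}_{tx'}$ via the fixed-measure identification and Lemma \ref{lem. prop of y p q u fix L} is a harmless variant of the paper's citation of Lemmas \ref{lem. lip in x and xi}, \ref{lem. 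Existence of J flow, weak conv.}, \ref{lem. Existence of J flow, strong conv.} and \ref{lem. Existence of Frechet derivatives}.
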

	
		\begin{proof}
		We simply denote the solution $\bigg(\dfrac{dy_{tx}^{\mu}}{d\nu}(x',s), \dfrac{dp_{tx}^{\mu}}{d\nu}(x',s), \dfrac{dq_{tx}^{\mu}}{d\nu}(x',s)\bigg)$ to \eqref{eq. linear functional derivatives of FBSDE}, with $\dfrac{du_{tx}^{\mu}}{d\nu}(x',s)$ defined in \eqref{def. linear functional d of u}, by $\big(\mathcal{Y}_{tx}^{\mu}(x',s), \mathcal{P}_{tx}^{\mu}(x',s) , \mathcal{Q}_{tx}^{\mu}(x',s)\big)$, and also $\mathcal{U}_{tx}^{\mu}(x',s)$ to the latter, respectively. We first consider the difference quotient processes, for instance, $\dfrac{1}{\epsilon}\left[ \dfrac{dy_{tx}^{\mu}}{d\nu}(x'+\epsilon e_i,s) - \dfrac{dy_{tx}^{\mu}}{d\nu}(x',s)\right]$; and then establish their uniform (in $\epsilon$) upper bounds by following the same arguments as in Lemma \ref{lem. bdd of diff quotient}. Next, we can find the weak limits of the  difference quotient processes and the equation satisfied by them, as in Lemma \ref{lem. Existence of J flow, weak conv.}. Finally, we show that the difference quotient processes strongly converge to their respective aforementioned weak limits by following the arguments for Lemma \ref{lem. Existence of J flow, strong conv.}. We omit the details here and just write down the equations satisfied by these derivatives:
		\begin{equation}
			\h{-10pt}\left\{
			\begin{aligned}
				\p_{x_i'}\mathcal{Y}^{\mu}_{tx}(x',s)
				=\,& \displaystyle\int_{t}^{s}
				\Big[ \nabla_y u\pig(y_{tx}^{\mu}(\tau),p_{tx}^{\mu}(\tau)\pig)\Big] 
				\p_{x_i'}\mathcal{Y}^{\mu}_{tx}(x',\tau) 
				+\Big[\nabla_p  u\pig(y_{tx}^{\mu}(\tau),p_{tx}^{\mu}(\tau)\pig)\Big] 
				\p_{x_i'}\mathcal{P}^{\mu}_{tx}(x',\tau)  d\tau;\\
				\p_{x_i'}\mathcal{P}^{\mu}_{tx}(x',s)
				=\,&\nabla_{yy} h_1(y_{tx}^{\mu}(T))
				\p_{x_i'}\mathcal{Y}^{\mu}_{tx}(x',T)
				+\int^T_s\nabla_{yy}g_1\pig(y_{tx}^{\mu}(\tau),u_{tx}^{\mu}(\tau) \pig)\p_{x_i'}\mathcal{Y}^{\mu}_{tx}(x',\tau) d\tau\\
				&+\int^T_s\nabla_{vy}g_1\pig(y_{tx}^{\mu}(\tau),u_{tx}^{\mu}(\tau) \pig)\Big[ \nabla_y u\pig(y_{tx}^{\mu}(\tau),p_{tx}^{\mu}(\tau)\pig)\Big]
				\p_{x_i'}\mathcal{Y}^{\mu}_{tx}(x',\tau)d\tau\\
				&+\int^T_s\nabla_{vy}g_1\pig(y_{tx}^{\mu}(\tau),u_{tx}^{\mu}(\tau) \pig)\Big[\nabla_p  u\pig(y_{tx}^{\mu}(\tau),p_{tx}^{\mu}(\tau)\pig)\Big]
				\p_{x_i'}\mathcal{P}^{\mu}_{tx}(x',\tau)d\tau \\
				&+\int^T_s \nabla_{yy}g_2\pig(y_{tx}^{\mu}(\tau),y_{t\bigcdot}^{\mu}(\tau)\otimes \mu\pig) 
				\p_{x_i'}\mathcal{Y}^{\mu}_{tx}(x',\tau)  d\tau\\
				&+\int^T_s\widetilde{\mathbb{E}}
				\left[\int\nabla_{y^*}\dfrac{d}{d\nu}\nabla_{y}g_2\pig(y_{tx}^{\mu}(\tau),y_{t\bigcdot}^{\mu}(\tau)\otimes \mu\pig)  (y^*)\bigg|_{y^* = \widetilde{y_{t\widetilde{x}}^{\mu}} (\tau)}
				\widetilde{\p_{x_i'}\mathcal{Y}^{\mu}_{t\widetilde{x}}} (x',\tau)d\mu(\widetilde{x})\right]
				d\tau \\
				&+\int^T_s\widetilde{\mathbb{E}}
				\left[ \nabla_{y^*}\dfrac{d}{d\nu}\nabla_{y}g_2\pig(y_{tx}^{\mu}(\tau),y_{t\bigcdot}^{\mu}(\tau)\otimes \mu\pig)(y^*)\bigg|_{y^*=\widetilde{y_{tx'}^{\mu}} (\tau)} \widetilde{\nabla_{x'} y_{tx'}^{\mu}}(\tau)e_i\right]
				d\tau\\
				&-\int^T_s \p_{x_i'}\mathcal{Q}^{\mu}_{tx} (x',\tau)dW_\tau,
			\end{aligned}\right.
			\label{eq.  1st d linear functional derivatives of FBSDE}
		\end{equation}
		\begin{flalign}
			\text{with} &&\p_{x_i'}\mathcal{U}^{\mu}_{tx} (x',s)=\Big[ \nabla_y u\pig(y_{tx}^{\mu}(s),p_{tx}^{\mu}(s)\pig)\Big] 
			\p_{x_i'}\mathcal{Y}^{\mu}_{tx}(x',s)
			+\Big[\nabla_p  u\pig(y_{tx}^{\mu}(s),p_{tx}^{\mu}(s)\pig)\Big] 
			\p_{x_i'}\mathcal{P}^{\mu}_{tx}(x',s),&&
			\label{def. linear functional d of u 1st d}
		\end{flalign}
		where $\nabla_{x'} y_{tx'}^{\mu}(\tau)$ clearly exists due to Lemma \ref{lem. lip in x and xi}, \ref{lem. Existence of J flow, weak conv.}, \ref{lem. Existence of J flow, strong conv.} and \ref{lem. Existence of Frechet derivatives}.
	\end{proof}

	Looking at the equation \eqref{eq.  1st d linear functional derivatives of FBSDE}, the derivative with respect to $x'$ of $\big(\p_{x_i'}\mathcal{Y}_{tx}^\mu(x',s), \p_{x_i'}\mathcal{P}_{tx}^\mu(x',s) , \p_{x_i'}\mathcal{Q}_{tx}^\mu(x',s)\big)$  should involve the process $ \nabla_{x'x'} y_{tx'}^{\mu}(\tau)$ whose existence requires higher moment integrability of $ \nabla_{x'} y_{tx'}^{\mu}(\tau)$. The following lemma gives the $L^4$-integrability of the Jacobian flow.
	\begin{lemma}[\bf $L^4$-integrability of Jacobian Flow]
		Suppose that \eqref{ass. Cii} of Assumption \textup{\bf (Cii)}  holds. The Jacobian flow $\pig(\p_{x_j} y_{tx}^{\mu}(s)$, $\p_{x_j} p_{tx}^{\mu}(s)$, $\p_{x_j} u_{tx}^{\mu}(s)\pig)$ and the difference quotient process 
		\begin{align}
			\pig(\Delta_\epsilon^j y_{tx}^{\mu}(s), \Delta_\epsilon^j p_{tx}^{\mu}(s), \Delta_\epsilon^j u_{tx}^{\mu}(s)\pig)
			:=\left(\dfrac{y_{t,x+\epsilon e_j}^{\mu}(s)-y_{tx}^{\mu}(s)}{\epsilon}
			,\dfrac{p_{t,x+\epsilon e_j}^{\mu}(s)-p_{tx}^{\mu}(s)}{\epsilon}
			,\dfrac{u_{t,x+\epsilon e_j}^{\mu}(s)-u_{tx}^{\mu}(s)}{\epsilon}\right)\h{-5pt}
		\end{align} 
		are in $L^\infty_{\mathcal{W}_{t\xi}}\pig(t,T;L^4(\Omega;\mathbb{R}^d)\pig)$ 
		such that there is a constant $C_9:=C_9$($n$, $\lambda_{g_1}$, $\lambda_{g_2}$, $\lambda_{h_1}$, $\Lambda_{g_1}$, $C_{g_1}$, $c_{g_2}$, $C_{g_2}$, $C_{h_1}$, $T$) satisfying 
		\begin{align}
			\mathbb{E}\Bigg[&\sup_{s \in [t,T]}\pig|\nabla_x y_{tx}^{\mu}(s)\pigr|^4+\sup_{s \in [t,T]}	 \pig|\nabla_x p_{tx}^{\mu}(s)\pigr|^4+\sup_{s \in [t,T]}
			\pig|\nabla_x u_{tx}^{\mu}(s)\pigr|^4\nonumber\\
			&+\sup_{s \in [t,T]}
			\pig|\Delta_\epsilon^j y_{tx}^{\mu}(s)\pigr|^4+
			\sup_{s \in [t,T]}\pig|\Delta_\epsilon^j p_{tx}^{\mu}(s)\pigr|^4+
			\sup_{s \in [t,T]}\pig|\Delta_\epsilon^j u_{tx}^{\mu}(s)\pigr|^4\Bigg]\leq C_9.
			\label{ineq. L4 bdd of J flow and diff quot}
		\end{align} 
		Moreover, the difference quotient process $\pig(\Delta_\epsilon^j y_{tx}^{\mu}(s)$, $\Delta_\epsilon^j p_{tx}^{\mu}(s)$, $\Delta_\epsilon^j u_{tx}^{\mu}(s)\pig)$ strongly converges to $\pig(\p_{x_j} y_{tx}^{\mu}(s)$, $\p_{x_j} p_{tx}^{\mu}(s)$, $\p_{x_j} u_{tx}^{\mu}(s)\pig)$ in $L^\infty_{\mathcal{W}_{t\xi}}\pig(t,T;L^4(\Omega;\mathbb{R}^d)\pig)$.
		\label{lem L4 regularity}
	\end{lemma}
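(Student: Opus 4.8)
The plan is to upgrade the $L^2$ energy estimates of Lemmas \ref{lem. bdd of diff quotient}, \ref{lem. Existence of J flow, weak conv.} and \ref{lem. Existence of J flow, strong conv.} to the $L^4$ level. First I would observe that, for each fixed $\epsilon$ and each coordinate direction $e_j$, the difference quotient $\pig(\Delta_\epsilon^j y_{tx}^{\mu},\Delta_\epsilon^j p_{tx}^{\mu},\Delta_\epsilon^j q_{tx}^{\mu},\Delta_\epsilon^j u_{tx}^{\mu}\pig)$ solves a system of exactly the same shape as \eqref{eq. diff quotient forward}--\eqref{eq. 1st order diff quotient}, the only differences being that the forward initial datum is the \emph{deterministic} vector $e_j$ and that the coefficients are averaged Hessians of $g_1,g_2,h_1$ together with the averaged $L$-derivative of $\nabla_y g_2$. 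By Assumptions \textbf{(Avii)}, \textbf{(Aviii)}, \textbf{(Aix)}, \textbf{(Bv)} these coefficients are uniformly bounded, so after inverting the first-order condition the system is a linear FBSDE with bounded coefficients. In particular, \eqref{eq. 1st order diff quotient} together with \eqref{ass. convexity of g1} of \textbf{(Ax)} and the bounds in \eqref{ass. bdd of D^2g1} of \textbf{(Avii)} gives the pointwise relation $|\Delta_\epsilon^j u|\le \Lambda_{g_1}^{-1}\pig(|\Delta_\epsilon^j p|+C_{g_1}|\Delta_\epsilon^j y|\pig)$, which reduces the entire task to controlling the fourth moments of $\Delta_\epsilon^j y$ and $\Delta_\epsilon^j p$.

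The heart of the argument is a coercivity estimate in $L^4$ that is uniform in $T$, i.e.\ the quartic analogue of Lemma \ref{lem cruical est.} and of the energy inequality in Lemma \ref{lem. bdd of diff quotient}. I would repeat the duality computation, applying It\^o's formula to $\langle\Delta_\epsilon^j p(s),\Delta_\epsilon^j y(s)\rangle_{\mathbb{R}^d}$, but now testing against the cubic weight $|\Delta_\epsilon^j y(s)|^2\,\Delta_\epsilon^j y(s)$ (equivalently, tracking a quartic functional built from the same pairing), so that the convexity in \eqref{ass. convexity of g1}, \eqref{ass. convexity of g2}, \eqref{ass. convexity of h} of \textbf{(Ax)}, \textbf{(Axi)}, \textbf{(Bvi)} produces a lower bound proportional to $\Lambda_{g_1}\int_t^T\mathbb{E}|\Delta_\epsilon^j u|^4\,ds$. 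The nonlocal mean-field term is harmless here: after the inner expectation $\widetilde{\mathbb{E}}$ it is deterministic and, by \eqref{ass. bdd of D dnu D g2} of \textbf{(Aix)} and Jensen's inequality, dominated by $c_{g_2}\,\mathbb{E}|\Delta_\epsilon^j y|$, whence its contribution to the weighted pairing is at most $c_{g_2}\,\mathbb{E}|\Delta_\epsilon^j y|^4$ by H\"older's and Young's inequalities. Combining this with the forward estimate $\mathbb{E}|\Delta_\epsilon^j y(s)-e_j|^4\le C(T-t)^3\int_t^T\mathbb{E}|\Delta_\epsilon^j u|^4\,d\tau$ and invoking \eqref{ass. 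Cii} of \textbf{(Cii)} exactly as in the proofs of Lemma \ref{lem. bdd of y p q u} and Lemma \ref{lem cruical est.}, the lower-order quartic terms are absorbed and one obtains $\int_t^T\mathbb{E}|\Delta_\epsilon^j u|^4\,ds\le C|e_j|^4$ with a constant independent of $\epsilon$ and of $T$.

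From this integral bound I would recover the sup-in-time fourth moments of all the processes: the forward equation gives $\sup_s\mathbb{E}|\Delta_\epsilon^j y|^4$; the backward equation together with the $L^4$-Burkholder--Davis--Gundy inequality (applied as in the passage from \eqref{1905} to \eqref{eq:ApB100}, one integrability level higher) gives $\sup_s\mathbb{E}|\Delta_\epsilon^j p|^4$ and $\mathbb{E}\left[\left(\int_t^T|\Delta_\epsilon^j q|^2\,ds\right)^2\right]$; and the pointwise first-order relation then gives $\sup_s\mathbb{E}|\Delta_\epsilon^j u|^4$. Since every estimate is uniform in $\epsilon$, Fatou's lemma combined with the strong $L^2$-convergence $\Delta_\epsilon^j\to\p_{x_j}$ of Lemma \ref{lem. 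Existence of J flow, strong conv.} transfers the same fourth-moment bound to the Jacobian flow $\pig(\p_{x_j}y_{tx}^{\mu},\p_{x_j}p_{tx}^{\mu},\p_{x_j}u_{tx}^{\mu}\pig)$, yielding \eqref{ineq. L4 bdd of J flow and diff quot}. Finally, the strong $L^4$-convergence of the difference quotients to the Jacobian flow follows by the contradiction-and-energy scheme of Lemma \ref{lem. Existence of J flow, strong conv.}: writing the FBSDE satisfied by the difference $\Delta_\epsilon^j-\p_{x_j}$ and running the same weighted duality, \eqref{ass. Cii} of \textbf{(Cii)} forces $\int_t^T\mathbb{E}|\Delta_\epsilon^j u-\p_{x_j}u|^4\,ds\to0$, which propagates to the remaining processes through the forward equation, the backward equation, and the first-order condition.

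The main obstacle is the coercivity step of the second paragraph: one must arrange the cubic test weight so that the duality produces a clean lower bound $\Lambda_{g_1}\int_t^T\mathbb{E}|\Delta_\epsilon^j u|^4\,ds$ while holding the nonlocal coupling at the critical size $c_{g_2}\,\mathbb{E}|\Delta_\epsilon^j y|^4$, so that \eqref{ass. Cii} of \textbf{(Cii)} still closes the estimate uniformly in $T$ rather than only over a short interval. The delicate points are the correct choice of weight, so that the Schur-complement lower bound underlying Lemma \ref{lem cruical est.} survives at the quartic level, and the repeated use of H\"older's inequality to balance the two extra powers introduced by the weight without losing the smallness furnished by $c_{g_2}$.
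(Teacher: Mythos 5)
Your plan breaks down at exactly the step you flag as the ``main obstacle'', and the failure there is structural rather than merely delicate. The convexity hypotheses \eqref{ass. convexity of g1}, \eqref{ass. convexity of g2}, \eqref{ass. convexity of h} are lower bounds on a \emph{quadratic} form in $(\Delta_\epsilon^j u,\Delta_\epsilon^j y)$. If you pair the backward equation against the cubic weight $|\Delta_\epsilon^j y|^2\Delta_\epsilon^j y$ and insert the first-order condition \eqref{eq. 1st order diff quotient}, that quadratic form reappears multiplied by the scalar $|\Delta_\epsilon^j y|^2$, so the best coercive term you can extract is $\Lambda_{g_1}\int_t^T\mathbb{E}\big[|\Delta_\epsilon^j y|^2|\Delta_\epsilon^j u|^2\big]ds$, \emph{not} $\Lambda_{g_1}\int_t^T\mathbb{E}|\Delta_\epsilon^j u|^4ds$; the mixed quantity cannot dominate the quartic one, since $|\Delta_\epsilon^j u|$ may be large precisely where $|\Delta_\epsilon^j y|$ is small, and the forward estimate only controls $\mathbb{E}|\Delta_\epsilon^j y|^4$ by $\int\mathbb{E}|\Delta_\epsilon^j u|^4$, so the loop never closes. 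Worse, the differential $d\big(|\Delta_\epsilon^j y|^2\Delta_\epsilon^j y\big)=\big(|\Delta_\epsilon^j y|^2\Delta_\epsilon^j u+2(\Delta_\epsilon^j y\cdot\Delta_\epsilon^j u)\Delta_\epsilon^j y\big)ds$ produces the cross term $2(\Delta_\epsilon^j y\cdot\Delta_\epsilon^j u)\langle\Delta_\epsilon^j p,\Delta_\epsilon^j y\rangle$, which after the first-order condition contains $-2(\Delta_\epsilon^j y\cdot\Delta_\epsilon^j u)\,(\nabla_{vv}g_1\Delta_\epsilon^j u\cdot\Delta_\epsilon^j y)$; this is bounded only by $2C_{g_1}|\Delta_\epsilon^j y|^2|\Delta_\epsilon^j u|^2$, and since \eqref{ass. convexity of g1} together with \eqref{ass. bdd of D^2g1} forces $\Lambda_{g_1}\le C_{g_1}$, it is at least twice the size of the coercive term and carries no small parameter: the smallness in \eqref{ass. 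Cii} concerns $c_{g_2},\lambda_{g_1},\lambda_{g_2},\lambda_{h_1}$, never $C_{g_1}$. No rearrangement of the weight fixes this, because any quartic functional must generate two extra powers whose sign and size the quadratic convexity does not see. The same defect invalidates your final convergence step, which reuses the weighted duality.

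The paper's proof sidesteps coercivity in $L^4$ entirely by first \emph{decoupling} the system. By the flow property \eqref{flow property} and Lemma \ref{lem diff V w.r.t. x}, $p_{tx}^{\mu}(s)=\nabla_y V\pig(y_{tx}^{\mu}(s),s\pig)$ for the value function of the frozen measure flow, hence $\p_{x_i}p_{tx}^{\mu}(s)=\nabla_{yy}V\pig(y_{tx}^{\mu}(s),s\pig)\p_{x_i}y_{tx}^{\mu}(s)$. The crucial observation is that $\nabla_{yy}V(x,t)=\nabla_x p_{tx}^{\mu}(t)$ (Lemma \ref{lem 2nd diff V w.r.t. x}) is \emph{deterministic}, so Lemma \ref{lem. bdd of diff quotient} yields the pointwise bound $|\nabla_{yy}V(x,t)|\le (C_4')^{1/2}$ at every $(x,t)$ --- a free upgrade from $L^2$ to $L^\infty$. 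Substituting this into the forward Jacobian equation turns it into a pathwise linear ODE,
\begin{equation*}
\p_{x_i}y_{tx}^{\mu}(s)=e_i+\int_t^s\Big[\nabla_y u+\nabla_p u\,\nabla_{yy}V\pig(y_{tx}^{\mu}(\tau),\tau\pig)\Big]\p_{x_i}y_{tx}^{\mu}(\tau)\,d\tau,
\end{equation*}
whose coefficients are bounded by \eqref{bdd. Dx u and Dp u} and $(C_4')^{1/2}$; Gr\"onwall applied to $|\p_{x_i}y_{tx}^{\mu}(s)|^4$ gives the $L^4$ bound (indeed any moment), and the algebraic relations for $\p_{x_i}p_{tx}^{\mu}$ and $\p_{x_i}u_{tx}^{\mu}$ transfer it; the identical argument covers the difference quotients. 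Convergence then follows by writing the linear ODE satisfied by $\Delta_\epsilon^j y_{tx}^{\mu}-\p_{x_j}y_{tx}^{\mu}$, whose forcing terms vanish in $L^4$ by continuity of $\nabla_y u$, $\nabla_p u$, $\nabla_{yy}V$ and dominated convergence. If you replace your second and fourth paragraphs by this decoupling-field argument, the rest of your outline (reduction via the first-order condition, transfer of bounds) goes through.
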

	\begin{proof}
	For simplicity, we omit the arguments in the functions $\nabla_y u\pig(y_{tx}^{\mu}(\tau),p_{tx}^{\mu}(\tau)\pig)$, 
	$\nabla_p u\pig(y_{tx}^{\mu}(\tau),p_{tx}^{\mu}(\tau)\pig)$, $\nabla_{yy} h_1(y_{tx}^{\mu}(T))$, $\nabla_{yy}g_1\pig(y_{tx}^{\mu}(\tau),u_{tx}^{\mu}(\tau) \pig)$, $\nabla_{vy}g_1\pig(y_{tx}^{\mu}(\tau),u_{tx}^{\mu}(\tau) \pig)$, $\nabla_{yv}g_1\pig(y_{tx}^{\mu}(\tau),u_{tx}^{\mu}(\tau) \pig)$,
	$\nabla_{vv}g_1\pig(y_{tx}^{\mu}(\tau),u_{tx}^{\mu}(\tau) \pig)$ and $\nabla_{yy}g_2\pig(y_{tx}^{\mu}(\tau),y_{t\bigcdot}^{\mu}(\tau)\otimes \mu \pig)$; for instance, we just write $\nabla_y u=\nabla_y u\pig(y_{tx}^{\mu}(\tau),p_{tx}^{\mu}(\tau)\pig)$ in the following.

	\noindent {\bf Part 1. $L^4$-integrability:} By the flow property in \eqref{flow property} and Lemma \ref{lem diff V w.r.t. x}, we see that $p_{tx}^\mu(s) = p_{s,y_{tx}^\mu(s)}^\mu(s)=\nabla_y V(y_{tx}^\mu(s),s)$ for $s\in[t,T]$. Differentiating with respect to $x$ yields \\$\p_{x_i}p_{tx}^\mu(s)=\nabla_{yy} V(y_{tx}^\mu(s),s)\p_{x_i}y_{tx}^\mu(s)$. Hence, the equation of $\p_{x_i}y_{tx}^\mu(s)$ reads:
	\begin{align}
		\p_{x_i}y_{tx}^\mu(s)
		&= e_i
		+\displaystyle\int_{t}^{s}
		\nabla_y u
		\p_{x_i}y_{tx}^\mu(\tau)
		+\nabla_p  u 
		\p_{x_i}p_{tx}^\mu(\tau)  d\tau\nonumber\\
		&=e_i
		+\displaystyle\int_{t}^{s}
		\Big[\nabla_y u+\nabla_p  u\nabla_{yy} V(y_{tx}^\mu(\tau),\tau)\Big]
		\p_{x_i}y_{tx}^\mu(\tau)d\tau.
		\label{1818}
	\end{align}
	As $\nabla_{xx} V(x,t)=\nabla_x p_{tx}^\mu(t)$ by Lemma \ref{lem 2nd diff V w.r.t. x}, then Lemma \ref{lem. bdd of diff quotient} implies that $|\nabla_{xx} V(x,t)|^2=|\nabla_x p_{tx}^\mu(t)|^2\leq\mathbb{E}\big[\sup_{\tau \in [t,T]}|\nabla_x p_{tx}^\mu(\tau)|^2\big]\leq C_4'$. Inequality \eqref{1818} implies that
	\begin{align}
		|\p_{x_i}y_{tx}^\mu(s)|^4
		&\leq 8
		+8(T-t)^{3}\displaystyle\int_{t}^{s}
		\Big|\nabla_y u+\nabla_p  u\nabla_{yy} V(y_{tx}^\mu(\tau),\tau)\Big|^4
		|\p_{x_i}y_{tx}^\mu(\tau)|^4d\tau.
		\label{1650}
	\end{align}
	Thus, applying Grönwall's inequality and the boundedness in \eqref{bdd. Dx u and Dp u}, we have $\mathbb{E}\big[\sup_{\tau \in [t,T]}|\nabla_x y_{tx}^\mu(\tau)|^4\big]\leq C_9$, where $C_9>0$ depends only on $\lambda_{g_1}$, $\lambda_{g_2}$, $\lambda_{h_1}$, $\Lambda_{g_1}$, $C_{g_1}$, $c_{g_2}$, $C_{g_2}$, $C_{h_1}$ and $T$. Therefore, the relation $\p_{x_i}p_{tx}^\mu(s)=\nabla_{yy} V(y_{tx}^\mu(s),s)\p_{x_i}y_{tx}^\mu(s)$ implies that $\mathbb{E}\big[\sup_{\tau \in [t,T]}|\nabla_x p_{tx}^\mu(\tau)|^4\big]\leq C_9$. Thus, we use $\p_{x_i}u_{tx}^\mu(s)=\nabla_y u
	\p_{x_i}y_{tx}^\mu(s) 
	+\nabla_p  u
	\p_{x_i}p_{tx}^\mu(s) $ to obtain $\mathbb{E}\big[\sup_{\tau \in [t,T]}|\nabla_x u_{tx}^\mu(\tau)|^4\big]\leq C_9$. Exactly the same arugments apply to the difference quotient process $\pig(\Delta_\epsilon^j y_{tx}^{\mu}(s)$, $\Delta_\epsilon^j p_{tx}^{\mu}(s)$, $\Delta_\epsilon^j u_{tx}^{\mu}(s)\pig)$ about its $L^4$-boundedness.
	
	\noindent {\bf Part 2. $L^4$-convergence: }
	We define
	$$ Y^{\theta \epsilon j}_x(s) := y_{tx}^{\mu}(s) + \theta\epsilon\Delta_{\epsilon}^j y_{tx}^{\mu}(s)\h{1pt},\h{5pt}
	P^{\theta \epsilon j}_x(s) := p_{tx}^{\mu}(s) + \theta\epsilon\Delta_{\epsilon}^j p_{tx}^{\mu}(s)\h{1pt},\h{5pt}
	U^{\theta \epsilon j}_x(s) := u_{tx}^{\mu}(s) + \theta\epsilon\Delta_{\epsilon}^j u_{tx}^{\mu}.$$
	For the convergence results of the lemma, we subtract the equation of 
	$\p_{x_j} y_{tx}^{\mu}(s)$ from that of
	$\Delta_\epsilon^j y_{tx}^{\mu}(s) $ to give
	\begin{align}
		&\h{-10pt}\Delta_\epsilon^j y_{tx}^{\mu}(s)-\p_{x_j} y_{tx}^{\mu}(s)\nonumber\\
		=\,& 
		\displaystyle\int_{t}^{s}
		\left[\int^1_0 \nabla_y u\pig(Y^{\theta\epsilon j}_x (\tau),P^{\theta\epsilon j}_x (\tau)\pig)d\theta\right]
		\Big[\Delta_\epsilon^j y_{tx}^{\mu}(\tau)-\p_{x_j} y_{tx}^{\mu}(\tau) \Big]d\tau\nonumber\\
		&+\displaystyle\int_{t}^{s}
		\nabla_p u
		\left[\int^1_0  \nabla_{yy} V\pig(Y^{\theta\epsilon j}_x (\tau),\tau\pig)d\theta\right]
		\left[ 
		\Delta_\epsilon^j y_{tx}^{\mu}(\tau)
		-\p_{x_j} y_{tx}^{\mu}(\tau) \right]d\tau + \mathcal{I}^1_\epsilon(s)
		+\mathcal{I}^2_\epsilon(s);  
		\label{eq. diff qout -J flow of FBSDE}
	\end{align}
	where the non-homogeneous terms are:
	\fontsize{11}{12}\begin{align}
		\mathcal{I}^1_\epsilon(s):=& \displaystyle\int_{t}^{s}
		\left[\int^1_0 \nabla_y u\pig(Y^{\theta\epsilon j}_x (\tau),P^{\theta\epsilon j}_x (\tau)\pig)-\nabla_y u d\theta\right]
		\Big[\p_{x_j} y_{tx}^{\mu}(\tau) \Big]d\tau;\nonumber\\
		\mathcal{I}^2_\epsilon(s):=	&\displaystyle\int_{t}^{s}
		\left[\int^1_0 \nabla_p u\pig(Y^{\theta\epsilon j}_x (\tau),P^{\theta\epsilon j}_x (\tau)\pig) d\theta\right]\left[\int^1_0  \nabla_{yy} V\pig(Y^{\theta\epsilon j}_x (\tau),\tau\pig)-\nabla_{yy} V(y_{tx}^{\mu}(\tau),\tau) d\theta\right]
		\p_{x_j} y_{tx}^{\mu}(\tau) d\tau.
		\label{1688}
	\end{align}
	By repeating the same calculations as in the above part 1 of this lemma, there is a constant $A'=A'(\lambda_{g_1}, \lambda_{g_2}, \lambda_{h_1}, \Lambda_{g_1}, C_{g_1}$, $ c_{g_2}, C_{g_2}, C_{h_1},T)$ such that
	\begin{align}
		&\mathbb{E}\left[\sup_{s\in[t,T]}\pig|\Delta_\epsilon^j y_{tx}^{\mu}(s)-\p_{x_j} y_{tx}^{\mu}(s)\pigr|^4+
		\sup_{s\in[t,T]}\pig|\Delta_\epsilon^j p_{tx}^{\mu}(s)-\p_{x_j} p_{tx}^{\mu}(s)\pigr|^4
		+\sup_{s\in[t,T]}\pig|\Delta_\epsilon^j u_{tx}^{\mu}(s)-\p_{x_j} u_{tx}^{\mu}(s)\pigr|^4\right]\nonumber\\
		&\leq A'\mathbb{E}\left[\displaystyle\sup_{s\in [t,T]}\pig|\mathcal{I}^1_\epsilon(s)\pigr|^4
		+\displaystyle\sup_{s\in [t,T]}\pig|\mathcal{I}^2_\epsilon(s)\pigr|^4\right].
		\label{ineq. gradient - difference quotient, L4}
	\end{align}
	We next aim to show that both
	\begin{equation}
		\mathbb{E}\left[\displaystyle\sup_{s\in [t,T]}\pig|\mathcal{I}^1_\epsilon(s)\pigr|^4\right],\quad
		\mathbb{E}\left[\displaystyle\sup_{s\in [t,T]}\pig|\mathcal{I}^2_\epsilon(s)\pigr|^4\right]
		\longrightarrow 0, 
		\h{10pt} \text{as $\epsilon \to 0$.}
		\label{6893}
	\end{equation} To deal with the convergence of $\mathcal{I}^1_\epsilon$ to $0$,  we first suppose that there is a subsequence ${\epsilon_k}$ of $\epsilon$ such that for each $j=1,2,\ldots,d$,
	\begin{align}
		\lim_{k\to \infty}	\mathbb{E}\left[\displaystyle\sup_{s\in [t,T]}\pig|\mathcal{I}^1_{\epsilon_k}(s)\pigr|^4\right] >0.
		\label{6871}
	\end{align}
	By the boundedness of \eqref{bdd. Dx y p q u, fix L} (see also \eqref{bdd. diff quotient of y, p, q, u}), we see that
	\begin{align*}
		\displaystyle\int_{t}^{T}\int^1_0
		\big\|Y^{\theta \epsilon_k j}_x(\tau) - y^{\mu}_{tx}(\tau)\big\|^2_\mathcal{H}d\theta d\tau 
		=\epsilon_k \displaystyle\int_{t}^{T}\int^1_0 \theta
		\big\| \Delta_{\epsilon_k}^j y^{\mu}_{tx}(\tau)\big\|^2_\mathcal{H} d\theta d\tau \leq \epsilon_k C''_4 \longrightarrow 0 \h{10pt} \text{as $k \to \infty$.}
	\end{align*}
	By Borel-Cantelli lemma, we can find a subsequence of  ${\epsilon_k}$, still using the same notation, such that 
	$Y^{\theta\epsilon_k j}_x (\tau)$ converges to $y^{\mu}_{tx}(\tau)$ (similarly for $P^{\theta\epsilon_k j}_x (\tau)$ and $p^{\mu}_{tx}(\tau)$) for a.e. $\theta \in [0,1]$ and $\mathcal{L}^1\otimes\mathbb{P}$-a.e. $(\tau,\omega) \in [t,T]\times\Omega$. Due to the bound of \eqref{ineq. L4 bdd of J flow and diff quot}, the process $\p_{x_j} y_{tx}^{\mu}(\tau)$ is finite for $\mathcal{L}^1\otimes\mathbb{P}$-a.e. $(\tau,\omega) \in [t,T]\times\Omega$. Thus, the continuity of $\nabla_y u(y,p)$ tells us that
	\begin{align}
		\left[  \nabla_y u\pig(Y^{\theta\epsilon_k j}_x (\tau),P^{\theta\epsilon_k j}_x (\tau)\pig)-\nabla_y u\right]
		\Big[\p_{x_j} y_{tx}^{\mu}(\tau) \Big] \longrightarrow 0 \h{10pt} \text{as $k \to \infty$,}
	\end{align}
	for a.e. $\theta \in [0,1]$ and $\mathcal{L}^1\otimes\mathbb{P}$-a.e. $(\tau,\omega) \in [t,T]\times\Omega$. Hence, due to the bounds of \eqref{bdd. Dx u and Dp u} and \eqref{ineq. L4 bdd of J flow and diff quot}, the Lebesgue dominated convergence theorem shows that
	\begin{align*}
		\lim_{k\to \infty}	\mathbb{E}\left[\displaystyle\sup_{s\in [t,T]}\pig|\mathcal{I}^1_{\epsilon_k}(s)\pigr|^4\right]
		\leq T^3\lim_{k\to \infty}\displaystyle\mathbb{E}\left[\int_{t}^{T}
		\int^1_0 \Big|\nabla_y u\pig(Y^{\theta\epsilon_k j}_x (\tau),P^{\theta\epsilon_k j}_x (\tau)\pig)-\nabla_y u \Big|^4
		\Big|\p_{x_j} y_{tx}^{\mu}(\tau) \Big|^4d\theta d\tau\right]=0.
	\end{align*}
	This last claim contradicts with the hypothesis \eqref{6871}. Similarly, we can show the convergence to zero of the term $\mathcal{I}^2_\epsilon(s)$ in \eqref{1688}, henceforth we conclude \eqref{6893}. By \eqref{ineq. gradient - difference quotient, L4} and \eqref{6893}, we obtain the convergence result that
	\begin{align*}
		\lim_{\epsilon \to 0}\mathbb{E}\left[\sup_{s\in[t,T]} \pig|\Delta_\epsilon^j y_{tx}^{\mu}(s)-\p_{x_j} y_{tx}^{\mu}(s)\pigr|^4
		+\sup_{s\in[t,T]} \pig|\Delta_\epsilon^j p_{tx}^{\mu}(s)-\p_{x_j} p_{tx}^{\mu}(s)\pigr|^4\right]=0.
	\end{align*}
	The $L^4$-convergence of $\Delta_\epsilon^j u_{tx}^{\mu}(s)-\p_{x_j} u_{tx}^{\mu}(s)$ can be obtained by using the fact that $ u_{tx}^{\mu}(s)=u( y_{tx}^{\mu}(s), p_{tx}^{\mu}(s))$ and the bounds in \eqref{bdd. Dx u and Dp u}.\end{proof}
	In the next lemma, we investigate the existence of the Hessian flow.

	\begin{lemma}[\bf Hessian Flow of Processes]
		Suppose that \eqref{ass. Cii} of Assumption \textup{\bf (Cii)} holds  and the following conditions:
		\begin{align}
			\textup{\bf (Dii).} &\text{ } \text{ the third-order derivatives of $g_1(y,v)$ with respect to $v,y \in \mathbb{R}^d$ exist and are jointly continuous}\nonumber\\
			&\text{ } \text{ in $y,v \in \mathbb{R}^d$, they are all bounded by $C_{g_1}$ for all $y,v \in \mathbb{R}^d$;}\label{ass. cts, bdd of 3rd order d of g1}\\
			\textup{\bf (Diii).} &\text{ } \text{ the third order derivative of $g_2(y,\mathbb{L})$ with respect to $y \in \mathbb{R}^d$ exists and is jointly continuous in}\nonumber\\
			&\text{ } \text{ $y \in \mathbb{R}^d$ and $\mathbb{L} \in \mathcal{P}_2(\mathbb{R}^d)$, it is bounded by $C_{g_2}$ for all $y \in \mathbb{R}^d$ and $\mathbb{L} \in \mathcal{P}_2(\mathbb{R}^d)$;}\label{ass. cts, bdd of 3rd order d of g2}\\
			\textup{\bf (Div).} &\text{ } \text{ the third order derivative of $h_1(y)$ with respect to $y \in \mathbb{R}^d$ exists and is  continuous in $y\in \mathbb{R}^d$,}\nonumber\\
			&\text{ } \text{ it is bounded by $C_{h_1}$ for all $y  \in \mathbb{R}^d$,}
			\label{ass. cts, bdd of 3rd order d of h1}
		\end{align}
		are valid. The Hessian flow of $\pig(y_{tx}^{\mu}(s), p_{tx}^{\mu}(s), q_{tx}^{\mu}(s), u_{tx}^{\mu}(s)\pig)$ with respect to $x$, denoted by $\Big(\p_{x_j}\p_{x_i}y_{tx}^{\mu}(s),$\\$ \p_{x_j}\p_{x_i}p_{tx}^{\mu}(s), \p_{x_j}\p_{x_i}q_{tx}^{\mu}(s), \p_{x_j}\p_{x_i}u_{tx}^{\mu}(s)\Big)$, exist in the sense that, 
		\begin{align*}
			\lim_{\epsilon \to 0}\mathbb{E}\left[\sup_{s\in [t,T]}\left|\dfrac{ \p_{x_i}y_{t,x+\epsilon e_j}^{\mu}(s)-  \p_{x_i}y_{tx}^{\mu}(s)}{\epsilon}-\p_{x_j}\p_{x_i}y_{tx}^{\mu}(s) \right|^2\right]=0
		\end{align*}
		for each $x\in \mathbb{R}^d$ and $\mu \in \mathcal{P}_2(\mathbb{R}^d)$, similar convergence holds for $\p_{x_i}p_{tx}^{\mu}(s)$ and $\p_{x_i}u_{tx}^{\mu}(s)$. Also, we have
		\begin{align*}
			\lim_{\epsilon \to 0} \int^T_t \left\|\dfrac{ \p_{x_i}q_{t,x+\epsilon e_j}^{\mu}(s)-  \p_{x_i}q_{tx}^{\mu}(s)}{\epsilon}-\p_{x_j}\p_{x_i}q_{tx}^{\mu}(s) \right\|_{\mathcal{H}}^2ds=0
			\h{10pt} 
		\end{align*}
		for each $x\in \mathbb{R}^d$ and $\mu \in \mathcal{P}_2(\mathbb{R}^d)$.
		\label{lem hessian flow}
	\end{lemma}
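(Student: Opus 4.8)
The plan is to establish existence of the Hessian flow by the very same three-step scheme used for the Jacobian flow in Lemmas~\ref{lem. Existence of J flow, weak conv.} and~\ref{lem. Existence of J flow, strong conv.}, now carried out one differentiation order higher. Concretely, I would differentiate the linear Jacobian-flow system~\eqref{eq. J flow of FBSDE, fix L} (with the fixed exogenous measure there replaced by the prescribed flow $y_{t\bigcdot}^{\mu}(\tau)\otimes\mu$, which does not depend on the starting point $x$) once more in the direction $e_j$. Since the measure argument is frozen under $x$-differentiation, no new measure-derivative coupling is produced; the system for the Hessian flow is again a \emph{linear} FBSDE whose homogeneous part has exactly the structure of~\eqref{eq. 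J flow of FBSDE, fix L}, together with inhomogeneous source terms that are quadratic in the Jacobian flow, of the schematic form $(\nabla^2 u)\,\p_{x_j}(y,p)\,\p_{x_i} y$, $(\nabla_{yyy}g_1,\nabla_{yyv}g_1,\nabla_{yvv}g_1)\,\p_{x_j} y\,\p_{x_i} y$, $\nabla_{yyy}g_2\,\p_{x_j} y\,\p_{x_i} y$ and $\nabla_{yyy}h_1\,\p_{x_j} y(T)\,\p_{x_i} y(T)$. Assumptions~\eqref{ass. cts, bdd of 3rd order d of g1},~\eqref{ass. cts, bdd of 3rd order d of g2} and~\eqref{ass. cts, bdd of 3rd order d of h1} of \textbf{(Dii)}--\textbf{(Div)} guarantee that all these third-order coefficients are jointly continuous and uniformly bounded; combined with the implicit-function-theorem formulas~\eqref{eq. diff 1st order condition with inverse} and the uniform convexity~\eqref{ass. convexity of g1}, they also render the second derivatives of $u(y,p)$ continuous and bounded, so every coefficient of the Hessian-flow FBSDE is legitimate.

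First I would obtain uniform-in-$\epsilon$ bounds on the difference quotients $\bigl(\Delta_\epsilon^j \p_{x_i} y,\Delta_\epsilon^j \p_{x_i} p,\Delta_\epsilon^j \p_{x_i} u,\Delta_\epsilon^j \p_{x_i} q\bigr)$, which solve a linear FBSDE of the above type (with interpolated arguments supplied by the mean value theorem). Applying It\^o's lemma to the pairing $\langle \Delta_\epsilon^j\p_{x_i} y(s),\Delta_\epsilon^j\p_{x_i} p(s)\rangle_{\mathbb{R}^d}$ and using the first-order relation inherited from~\eqref{eq. 1st order J flow fix L}, the convexity conditions~\eqref{ass. convexity of g1},~\eqref{ass. convexity of g2},~\eqref{ass. convexity of h} together with the small mean field effect~\eqref{ass. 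Cii} produce, exactly as in Lemma~\ref{lem. bdd of diff quotient}, the coercive factor $\Lambda_{g_1}-(\lambda_{h_1})_+T-(\lambda_{g_1}+\lambda_{g_2}+c_{g_2})_+T^2/2>0$ that absorbs the homogeneous terms. The crucial new ingredient is the control of the quadratic source terms: each is a product of two Jacobian-flow factors, so it lies in $L^2$ by Cauchy--Schwarz provided each factor lies in $L^4$, which is precisely the content of Lemma~\ref{lem L4 regularity}. This yields a bound uniform in $\epsilon$.

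Next I would extract, by the Banach--Alaoglu theorem, weakly convergent subsequences and pass to the limit in the difference-quotient FBSDE. The continuity of the third-order derivatives from \textbf{(Dii)}--\textbf{(Div)}, the continuity of $\nabla^2 u$, the strong convergence of $(y_{tx}^{\mu},p_{tx}^{\mu},u_{tx}^{\mu})$, and the $L^4$-strong convergence of the Jacobian-flow difference quotients to the Jacobian flow (again Lemma~\ref{lem L4 regularity}), combined with Borel--Cantelli to pass to a.s.-convergent sub-subsequences and dominated convergence, identify the weak limit as a solution of the Hessian-flow FBSDE; uniqueness follows from its linearity and the same~\eqref{ass. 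Cii}-coercivity argument as in Lemma~\ref{lem. Existence of J flow, weak conv.}, so the whole family converges. Finally I would upgrade weak to strong convergence by the contradiction-plus-energy-identity argument of Lemma~\ref{lem. Existence of J flow, strong conv.}: assuming the difference quotients did not converge strongly, the pairing identity together with~\eqref{ass. Cii} forces $\int_t^T\|\Delta_\epsilon^j\p_{x_i} u-\p_{x_j}\p_{x_i} u\|_{\mathcal{H}}^2\,ds\to0$, which then propagates to $y$, $p$ and $q$ and contradicts the hypothesis. The main obstacle, and the only genuinely new difficulty beyond the Jacobian-flow case, is the treatment of the quadratic source terms: both their $L^2$-boundedness and their $L^2$-convergence hinge on the $L^4$ estimates of Lemma~\ref{lem L4 regularity}, and verifying the latter convergence requires carefully combining the $L^4$-strong convergence of the Jacobian flow with the continuity of the third-order coefficients through a dominated-convergence argument along a.s.-convergent subsequences.
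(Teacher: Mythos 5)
Your proposal follows essentially the same route as the paper's proof: write the difference-quotient FBSDE for $\Delta_\epsilon^j\p_{x_i}(y,p,q,u)$ (linear homogeneous part plus quadratic-in-Jacobian source terms with third-derivative coefficients, no measure-derivative coupling since $y_{t\bigcdot}^{\mu}\otimes\mu$ is frozen under $x$-differentiation), obtain uniform-in-$\epsilon$ bounds via the It\^o pairing, the convexity conditions and the \eqref{ass. Cii}-coercivity, control the quadratic sources exactly through the $L^4$ estimates of Lemma~\ref{lem L4 regularity}, and then pass to weak limits by Banach--Alaoglu and upgrade to strong convergence as in Lemmas~\ref{lem. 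Existence of J flow, weak conv.} and~\ref{lem. Existence of J flow, strong conv.}. This is correct and matches the paper's argument, including its reliance on Lemma~\ref{lem L4 regularity} as the one genuinely new ingredient.
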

	\begin{proof}
		We adopt to the notations used in Lemma \ref{lem L4 regularity} and also
		\begin{align}
			\Delta_{\epsilon}^j\p_{x_i} y_{tx}^{\mu}(s):=\dfrac{1}{\epsilon}\left[\p_{x_i} y_{t,x+\epsilon e_j}^{\mu}(s)-\p_{x_i} y_{tx}^{\mu}(s)\right],\h{10pt}
			[p]_i:=\text{ $i$-th coordinate of a vector $p \in \mathbb{R}^d$},\label{def. 5 notations}
		\end{align}
		and similarly for $p_{tx}^{\mu}(s)$, $u_{tx}^{\mu}(s)$ and $q_{tx}^{\mu}(s)$, with $i,j=1,2,\ldots,d$. The FBSDE satisfied by $\pig(\Delta_{\epsilon}^j\p_{x_i} y_{tx}^{\mu}(s)$, $\Delta_{\epsilon}^j\p_{x_i} p_{tx}^{\mu}(s)$, $\Delta_{\epsilon}^j\p_{x_i} q_{tx}^{\mu}(s)\pig)$  reads
		\fontsize{10pt}{11pt}
		\begin{equation}
			\h{-10pt}\left\{
			\begin{aligned}
				\Delta_{\epsilon}^j\p_{x_i} y_{tx}^{\mu}(s)
				=\,& \displaystyle\int_{t}^{s}\Delta_{\epsilon}^j\p_{x_i} u_{tx}^{\mu}(\tau)d\tau;\\
				\Delta_{\epsilon}^j\p_{x_i} p_{tx}^{\mu}(s)
				=\,&\nabla_{yy} h_1\Delta_{\epsilon}^j\p_{x_i} y_{tx}^{\mu}(T)
				+\sum^d_{k=1}\int^1_0\p_{y_k}\nabla_{yy} h_1(Y_{x}^{\theta\epsilon j}(T))\Big[\Delta^j_\epsilon y_{tx}^{\mu}(T)\Big]_k\p_{x_i} y_{t,x+\epsilon e_j}^{\mu}(T)d\theta\\
				&+\int^T_s\nabla_{yy}g_1\Delta_{\epsilon}^j\p_{x_i} y_{tx}^{\mu} (\tau) d\tau
				+\sum^d_{k=1}\int^T_s \int^1_0 \p_{y_k}\nabla_{yy}g_1\pig(Y^{\theta \epsilon j}_x(\tau),U^{\theta \epsilon j}_x(\tau) \pig) \Big[\Delta^j_\epsilon y_{tx}^{\mu}(\tau)\Big]_k\p_{x_i} y_{t,x+\epsilon e_j}^{\mu}(\tau)d\theta d\tau\\
				&+\sum^d_{k=1}\int^T_s \int^1_0 \p_{v_k}\nabla_{yy}g_1\pig(Y^{\theta \epsilon j}_x(\tau),U^{\theta \epsilon j}_x(\tau) \pig) \Big[\Delta^j_\epsilon u_{tx}^{\mu}(\tau)\Big]_k\p_{x_i} y_{t,x+\epsilon e_j}^{\mu}(\tau)d\theta d\tau\\
				&+\int^T_s\nabla_{vy}g_1\Delta_{\epsilon}^j\p_{x_i} u_{tx}^{\mu} (\tau) d\tau
				+\sum^d_{k=1}\int^T_s \int^1_0 \p_{y_k}\nabla_{vy}g_1\pig(Y^{\theta \epsilon j}_x(\tau),U^{\theta \epsilon j}_x(\tau) \pig) \Big[\Delta^j_\epsilon y_{tx}^{\mu}(\tau)\Big]_k\p_{x_i} u_{t,x+\epsilon e_j}^{\mu}(\tau)d\theta d\tau\\
				&+\sum^d_{k=1}\int^T_s \int^1_0 \p_{v_k}\nabla_{vy}g_1\pig(Y^{\theta \epsilon j}_x(\tau),U^{\theta \epsilon j}_x(\tau) \pig) 
				\Big[\Delta^j_\epsilon u_{tx}^{\mu}(\tau)\Big]_k\p_{x_i} u_{t,x+\epsilon e_j}^{\mu}(\tau)d\theta d\tau\\
				&+\int^T_s\nabla_{yy}g_2\Delta_{\epsilon}^j\p_{x_i} y_{tx}^{\mu} (\tau) d\tau
				+\sum^d_{k=1}\int^T_s \int^1_0 \p_{y_k}\nabla_{yy}g_2\pig(Y^{\theta \epsilon j}_x(\tau),y^{\mu}_{t\bigcdot}(\tau)\otimes \mu \pig) \Big[\Delta^j_\epsilon y_{tx}^{\mu}(\tau)\Big]_k\p_{x_i} y_{t,x+\epsilon e_j}^{\mu}(\tau)d\theta d\tau\\
				&-\int^T_s \Delta_{\epsilon}^j\p_{x_i} q_{tx}^{\mu} (\tau)dW_\tau,
			\end{aligned}\right.
			\label{eq. diff quot of J flow of FBSDE}
		\end{equation}\normalsize
		where $\p_{x_i} u_{tx}^{\mu}(\tau) = \Big[ \nabla_y u\pig(y_{tx}^{\mu}(\tau),p_{tx}^{\mu}(\tau)\pig)\Big] 
		\Big[\p_{x_i} y_{tx}^{\mu}(\tau) \Big]
		+\Big[\nabla_p  u\pig(y_{tx}^{\mu}(\tau),p_{tx}^{\mu}(\tau)\pig)\Big]
		\Big[\p_{x_i} p_{tx}^{\mu}(\tau)\Big]$. Moreover, the first order condition of \eqref{eq. 1st order J flow fix L} implies
		\begin{align}
			0=&\Delta_{\epsilon}^j\p_{x_i} p_{tx}^{\mu} (\tau)+\nabla_{yv}g_1\Delta_{\epsilon}^j\p_{x_i} y_{tx}^{\mu} (\tau) 
			+ \sum^d_{k=1}\int^1_0 \p_{y_k}\nabla_{yv}g_1\pig(Y^{\theta \epsilon j}_x(\tau),U^{\theta \epsilon j}_x(\tau) \pig) \Big[\Delta^j_\epsilon y_{tx}^{\mu}(\tau)\Big]_k\p_{x_i} y_{t,x+\epsilon e_j}^{\mu}(\tau)d\theta\nonumber\\
			&+\sum^d_{k=1}\int^1_0 \p_{v_k}\nabla_{yv}g_1\pig(Y^{\theta \epsilon j}_x(\tau),U^{\theta \epsilon j}_x(\tau) \pig) \Big[\Delta^j_\epsilon u_{tx}^{\mu}(\tau)\Big]_k\p_{x_i} y_{t,x+\epsilon e_j}^{\mu}(\tau)d\theta\nonumber\\
			&+\nabla_{vv}g_1\Delta_{\epsilon}^j\p_{x_i} u_{tx}^{\mu} (\tau)
			+\sum^d_{k=1}\int^1_0 \p_{y_k}\nabla_{vv}g_1\pig(Y^{\theta \epsilon j}_x(\tau),U^{\theta \epsilon j}_x(\tau) \pig) \Big[\Delta^j_\epsilon y_{tx}^{\mu}(\tau)\Big]_k\p_{x_i} u_{t,x+\epsilon e_j}^{\mu}(\tau)d\theta\nonumber\\
			&+\sum^d_{k=1}\int^1_0 \p_{v_k}\nabla_{vv}g_1\pig(Y^{\theta \epsilon j}_x(\tau),U^{\theta \epsilon j}_x(\tau) \pig) 
			\Big[\Delta^j_\epsilon u_{tx}^{\mu}(\tau)\Big]_k\p_{x_i} u_{t,x+\epsilon e_j}^{\mu}(\tau)d\theta. \label{eq. diff qout of J flow}
		\end{align}
		Applying It\^o's formula to the inner product $\Big\langle \Delta_{\epsilon}^j\p_{x_i} p_{tx}^{\mu} (s),\Delta_{\epsilon}^j\p_{x_i} y_{tx}^{\mu} (s)\Big\rangle_{\mathbb{R}^d}$, together with the equations of \eqref{eq. diff quot of J flow of FBSDE}, we have
		\begin{align}
			&\h{-10pt}{\color{red}-\left\langle\nabla_{yy} h_1\Delta_{\epsilon}^j\p_{x_i} y_{tx}^{\mu}(T)
				+\sum^d_{k=1}\int^1_0\p_{y_k}\nabla_{yy} h_1(Y_{x}^{\theta\epsilon j}(T))\Big[\Delta^j_\epsilon y_{tx}^{\mu}(T)\Big]_k\p_{x_i} y_{t,x+\epsilon e_j}^{\mu}(T)d\theta,\Delta_{\epsilon}^j\p_{x_i} y_{tx}^{\mu} (T)\right\rangle_{\mathcal{H}}}\label{H flow convex of h1}\\
			=&-\Big\langle \Delta_{\epsilon}^j\p_{x_i} p_{tx}^{\mu} (T),\Delta_{\epsilon}^j\p_{x_i} y_{tx}^{\mu} (T)\Big\rangle_{\mathcal{H}}\label{H flow ter of backward}\\
			=&\int^T_t {\color{blue}\left\langle \nabla_{yy}g_1\Delta_{\epsilon}^j\p_{x_i} y_{tx}^{\mu} (\tau),
				\Delta_{\epsilon}^j\p_{x_i} y_{tx}^{\mu} (\tau) 
				\right\rangle_{\mathcal{H}}}d\tau\label{H flow convex of g1 1}\\
			&+\int^T_t {\color{blue}\left\langle \sum^d_{k=1} \int^1_0 \p_{y_k}\nabla_{yy}g_1\pig(Y^{\theta \epsilon j}_x(\tau),U^{\theta \epsilon j}_x(\tau) \pig) \Big[\Delta^j_\epsilon y_{tx}^{\mu}(\tau)\Big]_k\p_{x_i} y_{t,x+\epsilon e_j}^{\mu}(\tau)d\theta ,
				\Delta_{\epsilon}^j\p_{x_i} y_{tx}^{\mu} (\tau) 
				\right\rangle_{\mathcal{H}}}d\tau\nonumber\\
			&+\int^T_t {\color{blue}\left\langle  \sum^d_{k=1} \int^1_0 \p_{v_k}\nabla_{yy}g_1\pig(Y^{\theta \epsilon j}_x(\tau),U^{\theta \epsilon j}_x(\tau) \pig) \Big[\Delta^j_\epsilon u_{tx}^{\mu}(\tau)\Big]_k\p_{x_i} y_{t,x+\epsilon e_j}^{\mu}(\tau)d\theta  ,
				\Delta_{\epsilon}^j\p_{x_i} y_{tx}^{\mu} (\tau) 
				\right\rangle_{\mathcal{H}}}d\tau\nonumber\\
			&+\int^T_t {\color{gray}\left\langle \nabla_{vy}g_1\Delta_{\epsilon}^j\p_{x_i} u_{tx}^{\mu} (\tau),
				\Delta_{\epsilon}^j\p_{x_i} y_{tx}^{\mu} (\tau) 
				\right\rangle_{\mathcal{H}}}d\tau\label{H flow convex of g1 2}\\
			&+\int^T_t {\color{gray}\left\langle \sum^d_{k=1} \int^1_0 \p_{y_k}\nabla_{vy}g_1\pig(Y^{\theta \epsilon j}_x(\tau),U^{\theta \epsilon j}_x(\tau) \pig) \Big[\Delta^j_\epsilon y_{tx}^{\mu}(\tau)\Big]_k\p_{x_i} u_{t,x+\epsilon e_j}^{\mu}(\tau)d\theta ,
				\Delta_{\epsilon}^j\p_{x_i} y_{tx}^{\mu} (\tau) 
				\right\rangle_{\mathcal{H}}}d\tau\nonumber\\
			&+\int^T_t {\color{gray}\left\langle  \sum^d_{k=1} \int^1_0 \p_{v_k}\nabla_{vy}g_1\pig(Y^{\theta \epsilon j}_x(\tau),U^{\theta \epsilon j}_x(\tau) \pig) 
				\Big[\Delta^j_\epsilon u_{tx}^{\mu}(\tau)\Big]_k\p_{x_i} u_{t,x+\epsilon e_j}^{\mu}(\tau)d\theta  ,
				\Delta_{\epsilon}^j\p_{x_i} y_{tx}^{\mu} (\tau) 
				\right\rangle_{\mathcal{H}}}d\tau\nonumber\\
			&+\int^T_t {\color{brown}\left\langle \nabla_{yy}g_2\Delta_{\epsilon}^j\p_{x_i} y_{tx}^{\mu} (\tau),
				\Delta_{\epsilon}^j\p_{x_i} y_{tx}^{\mu} (\tau) 
				\right\rangle_{\mathcal{H}}}d\tau\label{H flow convex of g2}\\
			&+\int^T_t {\color{brown}\left\langle \sum^d_{k=1} \int^1_0 \p_{y_k}\nabla_{yy}g_2\pig(Y^{\theta \epsilon j}_x(\tau),y^{\mu}_{t\bigcdot}(\tau)\otimes\mu \pig) \Big[\Delta^j_\epsilon y_{tx}^{\mu}(\tau)\Big]_k\p_{x_i} y_{t,x+\epsilon e_j}^{\mu}(\tau)d\theta ,
				\Delta_{\epsilon}^j\p_{x_i} y_{tx}^{\mu} (\tau) 
				\right\rangle_{\mathcal{H}}}d\tau\nonumber\\
			&+\int^T_t {\color{teal}\left\langle \nabla_{yv}g_1\Delta_{\epsilon}^j\p_{x_i} y_{tx}^{\mu} (\tau) ,
				\Delta_{\epsilon}^j\p_{x_i} u_{tx}^{\mu} (\tau) 
				\right\rangle_{\mathcal{H}}}d\tau\label{H flow convex of g1 3, used 1st order}\\
			&+\int^T_t {\color{teal}\left\langle \sum^d_{k=1} \int^1_0 \p_{y_k}\nabla_{yv}g_1\pig(Y^{\theta \epsilon j}_x(\tau),U^{\theta \epsilon j}_x(\tau) \pig) \Big[\Delta^j_\epsilon y_{tx}^{\mu}(\tau)\Big]_k\p_{x_i} y_{t,x+\epsilon e_j}^{\mu}(\tau)d\theta ,
				\Delta_{\epsilon}^j\p_{x_i} u_{tx}^{\mu} (\tau) 
				\right\rangle_{\mathcal{H}}}d\tau\label{H flow used 1st order 1}\\
			&+\int^T_t{\color{teal} \left\langle  \sum^d_{k=1} \int^1_0 \p_{v_k}\nabla_{yv}g_1\pig(Y^{\theta \epsilon j}_x(\tau),U^{\theta \epsilon j}_x(\tau) \pig) \Big[\Delta^j_\epsilon u_{tx}^{\mu}(\tau)\Big]_k\p_{x_i} y_{t,x+\epsilon e_j}^{\mu}(\tau)d\theta  ,
				\Delta_{\epsilon}^j\p_{x_i} u_{tx}^{\mu} (\tau) 
				\right\rangle_{\mathcal{H}}}d\tau\label{H flow used 1st order 2}\\
			&+\int^T_t {\color{orange}\left\langle\nabla_{vv}g_1\Delta_{\epsilon}^j\p_{x_i} u_{tx}^{\mu} (\tau),
				\Delta_{\epsilon}^j\p_{x_i} u_{tx}^{\mu} (\tau) 
				\right\rangle_{\mathcal{H}}}d\tau\label{H flow convex of g1 4, used 1st order}\\
			&+\int^T_t {\color{orange}\left\langle \sum^d_{k=1} \int^1_0 \p_{y_k}\nabla_{vv}g_1\pig(Y^{\theta \epsilon j}_x(\tau),U^{\theta \epsilon j}_x(\tau) \pig) \Big[\Delta^j_\epsilon y_{tx}^{\mu}(\tau)\Big]_k\p_{x_i} u_{t,x+\epsilon e_j}^{\mu}(\tau)d\theta ,
				\Delta_{\epsilon}^j\p_{x_i} u_{tx}^{\mu} (\tau) 
				\right\rangle_{\mathcal{H}}}d\tau\label{H flow used 1st order 3}\\
			&+\int^T_t {\color{orange}\left\langle  \sum^d_{k=1} \int^1_0 \p_{v_k}\nabla_{vv}g_1\pig(Y^{\theta \epsilon j}_x(\tau),U^{\theta \epsilon j}_x(\tau) \pig) 
				\Big[\Delta^j_\epsilon u_{tx}^{\mu}(\tau)\Big]_k\p_{x_i} u_{t,x+\epsilon e_j}^{\mu}(\tau)d\theta  ,
				\Delta_{\epsilon}^j\p_{x_i} u_{tx}^{\mu} (\tau) 
				\right\rangle_{\mathcal{H}}}d\tau\label{H flow used 1st order 4}.
		\end{align}
		In deriving the above equality, we have used the terminal condition of the backward dynamics of \eqref{eq. diff quot of J flow of FBSDE} in line \eqref{H flow ter of backward} and Equation \eqref{eq. diff qout of J flow} in lines \eqref{H flow convex of g1 3, used 1st order}-\eqref{H flow used 1st order 4}. Then, using (i) \eqref{ass. convexity of h} of Assumption {\bf (Bvi)} in line \eqref{H flow convex of h1}; (ii) \eqref{ass. convexity of g1} of Assumption {\bf (Ax)} in lines \eqref{H flow convex of g1 1}, \eqref{H flow convex of g1 2}, \eqref{H flow convex of g1 3, used 1st order}, \eqref{H flow convex of g1 4, used 1st order}; (iii) \eqref{ass. convexity of g2} of  Assumption {\bf (Axi)} in \eqref{H flow convex of g2}; and (iv) \eqref{ass. cts, bdd of 3rd order d of g1}-\eqref{ass. cts, bdd of 3rd order d of h1} of Assumptions {\bf (Dii)}-{\bf (Div)}, the Cauchy-Schwarz and Young's inequalities for the remaining lines, we have  
		\begin{align}
			&\h{-10pt}\int^T_t
			{\color{orange}\Lambda_{g_1}\big\|\Delta_{\epsilon}^j\p_{x_i} u_{tx}^{\mu}(\tau)\big\|_{\mathcal{H}}^2}
			-({\color{blue}\lambda_{g_1}}+{\color{brown}\lambda_{g_2}})\big\|\Delta_{\epsilon}^j\p_{x_i} y_{tx}^{\mu}(\tau)\big\|_{\mathcal{H}}^2d\tau
			{\color{red}-\lambda_{h_1}\big\|\Delta_{\epsilon}^j\p_{x_i} y_{tx}^{\mu}(T)\big\|_{\mathcal{H}}^2}\nonumber\\
			\leq&{\color{red}\dfrac{dC_{h_1}\kappa_{30}}{2}\mathbb{E}\pig|\Delta^j_\epsilon y_{tx}^{\mu}(T)\pigr|^4
				+\dfrac{dC_{h_1}\kappa_{30}}{2}\mathbb{E}\pig|\p_{x_i} y_{t,x+\epsilon e_j}^{\mu}(T)\pigr|^4
				+\dfrac{dC_{h_1}}{4\kappa_{30}}\pig\|\Delta_{\epsilon}^j\p_{x_i} y_{tx}^{\mu} (T)\pigr\|^2_{\mathcal{H}}}\nonumber\\
			&+\int^T_t{\color{blue}\dfrac{dC_{g_1}\kappa_{30}}{2}\mathbb{E}\pig|\Delta^j_\epsilon y_{tx}^{\mu}(\tau)\pigr|^4
				+\dfrac{dC_{g_1}\kappa_{30}}{2}\mathbb{E}\pig|\p_{x_i} y_{t,x+\epsilon e_j}^{\mu}(\tau)\pigr|^4
				+\dfrac{dC_{g_1}}{4\kappa_{30}}\pig\|\Delta_{\epsilon}^j\p_{x_i} y_{tx}^{\mu} (\tau)\pigr\|^2_{\mathcal{H}}}d\tau\nonumber\\
			&+\int^T_t{\color{blue}\dfrac{dC_{g_1}\kappa_{30}}{2}\mathbb{E}\pig|\Delta^j_\epsilon u_{tx}^{\mu}(\tau)\pigr|^4
				+\dfrac{dC_{g_1}\kappa_{30}}{2}\mathbb{E}\pig|\p_{x_i} y_{t,x+\epsilon e_j}^{\mu}(\tau)\pigr|^4
				+\dfrac{dC_{g_1}}{4\kappa_{30}}\pig\|\Delta_{\epsilon}^j\p_{x_i} y_{tx}^{\mu} (\tau)\pigr\|^2_{\mathcal{H}}}d\tau\nonumber\\
			&+\int^T_t{\color{gray}\dfrac{C_{g_1}\kappa_{30}}{2}\pig\|\Delta_{\epsilon}^j\p_{x_i} u_{tx}^{\mu} (\tau)\pigr\|^2_{\mathcal{H}}
				+\dfrac{C_{g_1}\kappa_{30}}{2}\mathbb{E}\pig\|\Delta_{\epsilon}^j\p_{x_i} y_{tx}^{\mu} (\tau)\pigr\|^2_{\mathcal{H}}}
			d\tau\nonumber\\
			&+\int^T_t{\color{gray}\dfrac{dC_{g_1}\kappa_{30}}{2}\mathbb{E}\pig|\Delta^j_\epsilon y_{tx}^{\mu}(\tau)\pigr|^4
				+\dfrac{dC_{g_1}\kappa_{30}}{2}\mathbb{E}\pig|\p_{x_i} u_{t,x+\epsilon e_j}^{\mu}(\tau)\pigr|^4
				+\dfrac{dC_{g_1}}{4\kappa_{30}}\pig\|\Delta_{\epsilon}^j\p_{x_i} y_{tx}^{\mu} (\tau)\pigr\|^2_{\mathcal{H}}}d\tau\nonumber\\
			&+\int^T_t{\color{gray}\dfrac{dC_{g_1}\kappa_{30}}{2}\mathbb{E}\pig|\Delta^j_\epsilon u_{tx}^{\mu}(\tau)\pigr|^4
				+\dfrac{dC_{g_1}\kappa_{30}}{2}\mathbb{E}\pig|\p_{x_i} u_{t,x+\epsilon e_j}^{\mu}(\tau)\pigr|^4
				+\dfrac{dC_{g_1}}{4\kappa_{30}}\pig\|\Delta_{\epsilon}^j\p_{x_i} y_{tx}^{\mu} (\tau)\pigr\|^2_{\mathcal{H}}}d\tau\nonumber\\
			&+\int^T_t{\color{brown}\dfrac{dC_{g_2}\kappa_{30}}{2}\mathbb{E}\pig|\Delta^j_\epsilon y_{tx}^{\mu}(\tau)\pigr|^4
				+\dfrac{dC_{g_2}\kappa_{30}}{2}\mathbb{E}\pig|\p_{x_i} y_{t,x+\epsilon e_j}^{\mu}(\tau)\pigr|^4
				+\dfrac{dC_{g_2}}{4\kappa_{30}}\pig\|\Delta_{\epsilon}^j\p_{x_i} y_{tx}^{\mu} (\tau)\pigr\|^2_{\mathcal{H}}}d\tau\nonumber\\
			&+\int^T_t{\color{teal}\dfrac{C_{g_1}\kappa_{30}}{2}\pig\|\Delta_{\epsilon}^j\p_{x_i} u_{tx}^{\mu} (\tau)\pigr\|^2_{\mathcal{H}}
				+\dfrac{C_{g_1}\kappa_{30}}{2}\mathbb{E}\pig\|\Delta_{\epsilon}^j\p_{x_i} y_{tx}^{\mu} (\tau)\pigr\|^2_{\mathcal{H}}}
			d\tau\nonumber\\
			&+\int^T_t{\color{teal}\dfrac{dC_{g_1}\kappa_{30}}{2}\mathbb{E}\pig|\Delta^j_\epsilon y_{tx}^{\mu}(\tau)\pigr|^4
				+\dfrac{dC_{g_1}\kappa_{30}}{2}\mathbb{E}\pig|\p_{x_i} y_{t,x+\epsilon e_j}^{\mu}(\tau)\pigr|^4
				+\dfrac{dC_{g_1}}{4\kappa_{30}}\pig\|\Delta_{\epsilon}^j\p_{x_i} u_{tx}^{\mu} (\tau)\pigr\|^2_{\mathcal{H}}}d\tau\nonumber\\
			&+\int^T_t{\color{teal}\dfrac{dC_{g_1}\kappa_{30}}{2}\mathbb{E}\pig|\Delta^j_\epsilon u_{tx}^{\mu}(\tau)\pigr|^4
				+\dfrac{dC_{g_1}\kappa_{30}}{2}\mathbb{E}\pig|\p_{x_i} y_{t,x+\epsilon e_j}^{\mu}(\tau)\pigr|^4
				+\dfrac{dC_{g_1}}{4\kappa_{30}}\pig\|\Delta_{\epsilon}^j\p_{x_i} u_{tx}^{\mu} (\tau)\pigr\|^2_{\mathcal{H}}}d\tau\nonumber\\
			&+\int^T_t{\color{orange}\dfrac{dC_{g_1}\kappa_{30}}{2}\mathbb{E}\pig|\Delta^j_\epsilon y_{tx}^{\mu}(\tau)\pigr|^4
				+\dfrac{dC_{g_1}\kappa_{30}}{2}\mathbb{E}\pig|\p_{x_i} u_{t,x+\epsilon e_j}^{\mu}(\tau)\pigr|^4
				+\dfrac{dC_{g_1}}{4\kappa_{30}}\pig\|\Delta_{\epsilon}^j\p_{x_i} u_{tx}^{\mu} (\tau)\pigr\|^2_{\mathcal{H}}}d\tau\nonumber\\
			&+\int^T_t{\color{orange}\dfrac{dC_{g_1}\kappa_{30}}{2}\mathbb{E}\pig|\Delta^j_\epsilon u_{tx}^{\mu}(\tau)\pigr|^4
				+\dfrac{dC_{g_1}\kappa_{30}}{2}\mathbb{E}\pig|\p_{x_i} u_{t,x+\epsilon e_j}^{\mu}(\tau)\pigr|^4
				+\dfrac{dC_{g_1}}{4\kappa_{30}}\pig\|\Delta_{\epsilon}^j\p_{x_i} u_{tx}^{\mu} (\tau)\pigr\|^2_{\mathcal{H}}}d\tau,\label{5447}
		\end{align}
		for some positive $\kappa_{30}>0$ determined later. The forward dynamics of \eqref{eq. diff quot of J flow of FBSDE} shows that
		\begin{equation}
			\pig|\Delta_{\epsilon}^j\p_{x_i} y_{tx}^{\mu} (s)\pigr|^2
			\leq (s-t) \int^s_t\pig|\Delta_{\epsilon}^j\p_{x_i} u_{tx}^{\mu} (\tau)\pigr|^2 d \tau \h{5pt} \text{and}\h{5pt}
			\int^T_t\pig\|\Delta_{\epsilon}^j\p_{x_i} y_{tx}^{\mu} (\tau)\pigr\|^2_{\mathcal{H}}d \tau
			\leq \dfrac{(T-t)^2}{2} \int^T_t\pig\|\Delta_{\epsilon}^j\p_{x_i} u_{tx}^{\mu} (\tau)\pigr\|^2_{\mathcal{H}} d \tau,
		\end{equation}
		together with \eqref{ineq. L4 bdd of J flow and diff quot} and \eqref{5447}, it follows that
		\fontsize{9pt}{11pt}\begin{align*}
			&\h{-10pt}\left[\Lambda_{g_1} 
			-(\lambda_{h_1})_+\cdot(T-t)
			-(\lambda_{g_1}+\lambda_{g_2})_+\cdot\dfrac{(T-t)^2}{2}
			-\dfrac{d\left(8C_{g_1}+C_{g_2}\right)}{4\kappa_{30}}\dfrac{(T-t)^2}{2}
			-\dfrac{2d C_{g_1} }{\kappa_{30}}
			-\dfrac{dC_{h_1}}{4\kappa_{30}}(T-t)
			\right]\int^T_t\big\|\Delta_{\epsilon}^j\p_{x_i} u_{tx}^{\mu}(\tau)\big\|_{\mathcal{H}}^2 d \tau \\
			\leq&d\kappa_{30} C_9\Big(C_{h_1}+7(T-t)C_{g_1}+(T-t)C_{g_2}\Big).
		\end{align*}\normalsize
		Taking $\kappa_{30}$ large enough, we use \eqref{ass. Cii} of Assumption {\bf (Cii)} to ensure that
		\begin{equation}
			\int^T_t\big\|\Delta_{\epsilon}^j\p_{x_i} u_{tx}^{\mu}(\tau)\big\|_{\mathcal{H}}^2 d \tau \leq C_{12}
			\label{ineq. bdd of int dd u}
		\end{equation}
		for some $C_{12}>0$ depending only on $d$, $\lambda_{g_1}$, $\lambda_{g_2}$, $\lambda_{h_1}$, $\Lambda_{g_1}$, $C_{g_1}$, $c_{g_2}$, $C_{g_2}$, $C_{h_1}$, $T$. Bringing \eqref{ineq. bdd of int dd u} into the forward and backward dynamics of \eqref{eq. diff quot of J flow of FBSDE} (see the proof of Lemma \ref{lem. bdd of y p q u} for  detailed computations), we conclude that
		\begin{align} \mathbb{E}\left[\sup_{s\in [t,T]}\pig|\Delta_{\epsilon}^j\p_{x_i} y_{tx}^{\mu}(s)\pigr|^2+
			\sup_{s\in [t,T]}\pig|\Delta_{\epsilon}^j\p_{x_i} p_{tx}^{\mu}(s)\pigr|^2+
			\sup_{s\in [t,T]}\pig|\Delta_{\epsilon}^j\p_{x_i} u_{tx}^{\mu}(s)\pigr|^2\right]+
			\int_{t}^{T}\pigl\|\Delta_{\epsilon}^j\p_{x_i} q_{tx}^{\mu}(s)\pigr\|^{2}_{\mathcal{H}}ds
			\leq C_{12},
			\label{bdd. diff quotient of J flow of y, p, q, u}
		\end{align} 
		Next, we can find the weak limits of the  difference quotient processes of \eqref{bdd. diff quotient of J flow of y, p, q, u} by Banach-Alaoglu theorem, and also the equation satisfied by them, as in Lemma \ref{lem. Existence of J flow, weak conv.}.  Finally, we show that the difference quotient processes strongly converge to their respective weak limits by following the arguments of Lemma \ref{lem. Existence of J flow, strong conv.}. We omit the details here.
	\end{proof}

	\mycomment{
		\begin{proof}
			Let $m'(x)dx \in \mathcal{P}_2(\mathbb{R}^d)$ and set $\rho = m'-m_0$. To have a clearer representation, we write $\mathcal{L}(y_{t\xi})=\mathcal{L}(y_{t\bigcdot}^{m_0}) :=y_{t\bigcdot}^{m_0}\#(\mathbb{P}\otimes m_0) = y_{t\bigcdot}^{m_0}\otimes m_0$ as $\mathbb{P}$ is always fixed. We use the mean value theorem to express
			\small\begin{align}
				&\h{-25pt}\dfrac{\mathcal{k}_{tx}\pig(u_{tx}^{m_0+\epsilon \rho},y_{t\bigcdot}^{m_0+\epsilon \rho}\otimes (m_0+\epsilon\rho)\pig)
					-\mathcal{k}_{tx}\pig(u_{tx}^{m_0},y_{t\bigcdot}^{m_0}\otimes m_0\pig)}{\epsilon}\nonumber\\
				=
				\mathbb{E}\Bigg\{
				&\int^T_t\int_{0}^{1}\nabla_{x}g_1\pig(y_{tx}^{m_0}(\tau)
				+\theta\epsilon \Delta^\epsilon_{\rho} y_{tx}^{m_0} (\tau),u_{tx}^{m_0}(\tau) +\theta\epsilon \Delta^\epsilon_{\rho} u_{tx}^{m_0} (\tau) \pig)
				\Delta^\epsilon_{\rho} y_{tx}^{m_0} (\tau)
				\label{line 1 in linear functional d}\\
				&\h{30pt}+\nabla_{v}g_1\pig(y_{tx}^{m_0}(\tau)
				+\theta\epsilon \Delta^\epsilon_{\rho } y_{tx}^{m_0} (\tau),u_{tx}^{m_0}(\tau) +\theta\epsilon \Delta^\epsilon_{\rho } u_{tx}^{m_0} (\tau) \pig)
				\Delta^\epsilon_{\rho } u_{tx}^{m_0} (\tau)
				\label{line 2 in linear functional d}\\
				&\h{30pt}+\nabla_{y}g_2\pig(y_{tx}^{m_0}(\tau)
				+\theta\epsilon \Delta^\epsilon_{\rho } y_{tx}^{m_0} (\tau),\pig[y_{t\bigcdot}^{m_0}(\tau) +\theta\epsilon \Delta^\epsilon_{\rho } y_{t\bigcdot}^{m_0} (\tau) \pig]\otimes (m_0+\epsilon\rho)\pig) 
				\Delta^\epsilon_{\rho } y_{tx}^{m_0} (\tau)
				\label{line 3 in linear functional d}\\
				&\h{30pt}+
				\widetilde{\mathbb{E}}
				\bigg[\int\nabla_{y^*}\dfrac{d}{d\nu}g_2\pig(y_{tx}^{m_0}(\tau)
				+\theta\epsilon \Delta^\epsilon_{\rho } y_{tx}^{m_0} (\tau),
				\pig[y_{t\bigcdot}^{m_0}(\tau) +\theta\epsilon \Delta^\epsilon_{\rho } y_{t\bigcdot}^{m_0} (\tau) \pig]\otimes (m_0+\epsilon\rho)\pig) (y^*)\bigg|_{y^*=\widetilde{y_{t\widetilde{x}}^{m_0}}(\tau)+\theta\epsilon \widetilde{\Delta^\epsilon_{\rho } y_{t\widetilde{x}}^{m_0}} (\tau)}\nonumber\\
				&\h{330pt}\cdot
				\widetilde{\Delta^\epsilon_{\rho } y_{t\widetilde{x}}^{m_0}} (\tau)(m_0+\epsilon\rho)(\widetilde{y})d\widetilde{x}\bigg]
				\label{line 4 in linear functional d}\\
				&\h{30pt}+\widetilde{\mathbb{E}}
				\left[\int\dfrac{d}{d\nu}g_2\pig(y_{tx}^{m_0}(\tau)
				,y_{t\bigcdot}^{m_0}(\tau) \otimes m_0
				+\theta\pig[y_{t\bigcdot}^{m_0}(\tau) \otimes (m_0+\epsilon\rho)
				-y_{t\bigcdot}^{m_0}(\tau) \otimes m_0\pig]\pig) \pig(\widetilde{y_{t\widetilde{x}}^{m_0}}(\tau)\pig) d|\rho(\widetilde{x})|\right] 
				d \theta d\tau
				\label{line 5 in linear functional d}\\
				&+\int_{0}^{1}\nabla_{x}h_1\pig(y_{tx}^{m_0}(T)
				+\theta\epsilon \Delta^\epsilon_{\rho } y_{tx}^{m_0} (T) \pig)
				\Delta^\epsilon_{\rho } y_{tx}^{m_0} (T)
				\label{line 6 in linear functional d}\\
				&\h{15pt}+\widetilde{\mathbb{E}}
				\left[\int\nabla_{y^*}\dfrac{d}{d\nu}h_2\pig(\pig[y_{t\bigcdot}^{m_0}(T) +\theta\epsilon \Delta^\epsilon_{\rho } y_{t\bigcdot}^{m_0} (T) \pig]\otimes (m_0+\epsilon\rho)\pig) (y^*)\bigg|_{y^*=\widetilde{y_{t\widetilde{x}}^{m_0}}(T)+\theta\epsilon \widetilde{\Delta^\epsilon_{\rho } y_{t\widetilde{x}}^{m_0}} (T)} 
				\widetilde{\Delta^\epsilon_{\rho } y_{t\widetilde{x}}^{m_0}} (T)(m_0+\epsilon\rho)(\widetilde{y})d\widetilde{x}\right]
				\label{line 7 in linear functional d}\\
				&\h{15pt}+\widetilde{\mathbb{E}}
				\left[\int\dfrac{d}{d\nu}h_2\pig(y_{t\bigcdot}^{m_0}(T) \otimes m_0
				+\theta\pig[y_{t\bigcdot}^{m_0}(T) \otimes (m_0+\epsilon\rho)
				-y_{t\bigcdot}^{m_0}(T) \otimes m_0\pig]\pig) \pig(\widetilde{y_{t\widetilde{x}}^{m_0}}(T)\pig) d|\rho(\widetilde{x})|\right] 
				d \theta \Bigg\}.
				\label{line 8 in linear functional d}
			\end{align}\normalsize
			Before we estimate line by line, we simplify the notation by setting $\widetilde{\Delta}_\tau^\epsilon:=\widetilde{\Delta^\epsilon_{\rho } y_{t\widetilde{x}}^{m_0}} (\tau)$, $Y_{x}^{\theta\epsilon}(\tau):=y_{tx}^{m_0}(\tau)
			+\theta\epsilon \Delta^\epsilon_{\rho} y_{tx}^{m_0} (\tau)$, $U_{x}^{\theta\epsilon}(\tau):=u_{tx}^{m_0}(\tau) +\theta\epsilon \Delta^\epsilon_{\rho} u_{tx}^{m_0} (\tau)$
			
			\noindent {\bf Part 1. Convergence of the terms in \eqref{line 1 in linear functional d}, \eqref{line 2 in linear functional d}, \eqref{line 3 in linear functional d}, \eqref{line 6 in linear functional d}:}
			
			\noindent First, we deal with \eqref{line 1 in linear functional d} by considering
			\fontsize{9.7pt}{11pt}\begin{align*}
				I_1:&=\int^T_t\int_{0}^{1}\int\mathbb{E}\bigg|\nabla_{x}g_1\pig(Y_{x}^{\theta\epsilon}(\tau),U_{x}^{\theta\epsilon}(\tau) \pig)
				\Delta^\epsilon_{\rho} y_{tx}^{m_0} (\tau)
				-\nabla_{x}g_1\pig(y_{tx}^{m_0}(\tau)
				,u_{tx}^{m_0}(\tau)  \pig)
				\int \dfrac{d y_{tx}^{m_0}}{d\nu}(x',\tau) \rho(x')dx'
				\bigg|m_0(x)dxd\theta d\tau\\
				&\leq\int^T_t\int_{0}^{1}\bigg[\int
				\Big\|\nabla_{x}g_1\pig(Y_{x}^{\theta\epsilon}(\tau),U_{x}^{\theta\epsilon}(\tau) \pig)
				-\nabla_{x}g_1\pig(y_{tx}^{m_0}(\tau)
				,u_{tx}^{m_0}(\tau)  \pig)
				\Big\|_\mathcal{H}^2m_0(x)dx\bigg]^{1/2}
				\bigg[\int \Big\|\Delta^\epsilon_{\rho} y_{tx}^{m_0} (\tau)\Big\|_\mathcal{H}^2m_0(x)dx\bigg]^{1/2}d\theta d\tau\\
				&\h{10pt}+\int^T_t\int_{0}^{1}\bigg[\int
				\Big\|\nabla_{x}g_1\pig(y_{tx}^{m_0}(\tau)
				,u_{tx}^{m_0}(\tau)  \pig)
				\Big\|_\mathcal{H}^2m_0(x)dx\bigg]^{1/2}
				\left[\int \left\|\Delta^\epsilon_{\rho} y_{tx}^{m_0} (\tau)
				-\int \dfrac{d y_{tx}^{m_0}}{d\nu}(x',\tau) \rho(x')dx'\right\|_\mathcal{H}^2m_0(x)dx\right]^{1/2}
				d\theta d\tau
			\end{align*}\normalsize
			By \eqref{ass. bdd of Dg1}, \eqref{ass. bdd of D^2g1} of Assumptions {\bf (Av)}, {\bf (Avii)}  and mean value theorem, we have 
			\begin{align}
				I_1&\leq\epsilon\int^T_t\int_{0}^{1}
				\sqrt{2}
				\theta 
				C_{g_1}\bigg[\int
				\Big\|\Delta^\epsilon_{\rho} y_{tx}^{m_0} (\tau)
				\Big\|_\mathcal{H}^2
				+\Big\|\Delta^\epsilon_{\rho} u_{tx}^{m_0} (\tau)
				\Big\|_\mathcal{H}^2m_0(x)dx\bigg]^{1/2}
				\bigg[\int \Big\|\Delta^\epsilon_{\rho} y_{tx}^{m_0} (\tau)\Big\|_\mathcal{H}^2m_0(x)dx\bigg]^{1/2}d\theta d\tau\nonumber\\
				&\h{10pt}+C_{g_1}\int^T_t\int_{0}^{1}\bigg[\int 1
				+\Big\|y_{tx}^{m_0}(\tau)
				\Big\|_\mathcal{H}^2
				+\Big\|u_{tx}^{m_0}(\tau) 
				\Big\|_\mathcal{H}^2m_0(x)dx\bigg]^{1/2}\nonumber\\
				&\h{150pt}\cdot\left[\int\left\|\Delta^\epsilon_{\rho} y_{tx}^{m_0} (\tau)
				-\int \dfrac{d y_{tx}^{m_0}}{d\nu}(x',\tau) \rho(x')dx'\right\|_\mathcal{H}^2
				m_0(x)dx\right]^{1/2}
				d\theta d\tau.
				\label{conv. line 1 in linear functional d}
			\end{align}
			Hence, by (b) of Lemma \ref{lem. existence of linear functional derivative of processes} and Lemma \ref{lem. bdd of y p q u}, we see that \eqref{conv. line 1 in linear functional d} converges to zero as $\epsilon \to 0$. Similarly, we see that \eqref{line 2 in linear functional d},
			\eqref{line 3 in linear functional d},
			\eqref{line 6 in linear functional d} also have the convergences
			\begin{align}
				&\mathbb{E}\Bigg[
				\int^T_t\int_{0}^{1}\int
				\nabla_{v}g_1\pig(y_{tx}^{m_0}(\tau)
				+\theta\epsilon \Delta^\epsilon_{\rho } y_{tx}^{m_0} (\tau),u_{tx}^{m_0}(\tau) +\theta\epsilon \Delta^\epsilon_{\rho } u_{tx}^{m_0} (\tau) \pig)
				\Delta^\epsilon_{\rho } u_{tx}^{m_0} (\tau)m_0(x)dxd\theta d\tau\Bigg]\nonumber\\
				&\h{100pt}\longrightarrow
				\mathbb{E}\Bigg[
				\int^T_t\int
				\nabla_{v}g_1\pig(y_{tx}^{m_0}(\tau),
				u_{tx}^{m_0}(\tau)\pig)
				\left(\int\dfrac{du_{tx}^{m_0}}{d\nu}(x',\tau)\rho(x')dx'\right)m_0(x)dx
				d\tau\Bigg],
				\label{conv. line 2 in linear functional d}\\
				&\mathbb{E}\Bigg[
				\int^T_t\int_{0}^{1}\int
				\nabla_{y}g_2\pig(y_{tx}^{m_0}(\tau)
				+\theta\epsilon \Delta^\epsilon_{\rho } y_{tx}^{m_0} (\tau),\mathcal{L}\pig(y_{t\bigcdot}^{m_0}(\tau) +\theta\epsilon \Delta^\epsilon_{\rho } y_{t\bigcdot}^{m_0} (\tau) \pig)\pig) 
				\Delta^\epsilon_{\rho } y_{tx}^{m_0} (\tau)m_0(x)dx
				d\theta d\tau\Bigg]\nonumber\\
				&\h{100pt}\longrightarrow
				\mathbb{E}\Bigg[
				\int^T_t \int
				\nabla_{y}g_2\pig(y_{tx}^{m_0}(\tau)
				,\mathcal{L}\pig(y_{t\bigcdot}^{m_0}(\tau)\pig)\pig) 
				\left(\int\dfrac{dy_{tx}^{m_0}}{d\nu}(x',\tau)\rho(x')dx'\right)
				m_0(x)dx
				d\tau\Bigg],
				\label{conv. line 3 in linear functional d}\\
				&\mathbb{E}\Bigg[
				\int_{0}^{1}\int
				\nabla_{x}h_1\pig(y_{tx}^{m_0}(T)
				+\theta\epsilon \Delta^\epsilon_{\rho } y_{tx}^{m_0} (T)\pig)
				\Delta^\epsilon_{\rho } y_{tx}^{m_0} (T)m_0(x)dx
				d\theta \Bigg]\nonumber\\
				&\h{100pt}\longrightarrow
				\mathbb{E}\Bigg[\int
				\nabla_{x}h_1\pig(y_{tx}^{m_0}(T)\pig)
				\left(\int\dfrac{dy_{tx}^{m_0}}{d\nu}(x',T)\rho(x')dx'\right)m_0(x)dx\Bigg]
				\label{conv. line 6 in linear functional d}
			\end{align}
			as $\epsilon \to 0$. 
			
			\noindent {\bf Part 2. Convergences of 
				\eqref{line 4 in linear functional d}, 
				\eqref{line 7 in linear functional d}:}
			
			\noindent For \eqref{line 4 in linear functional d}, we recall $\widetilde{\Delta^\epsilon_{\rho } y_{t\widetilde{x}}^{m_0}} (\tau)$ by $\widetilde{\Delta}_\tau^\epsilon$ and consider
			\small\begin{align*}
				I_2:=\,&\mathbb{E}\Bigg\{\int^T_t \int^1_0 \int \widetilde{\mathbb{E}}
				\Bigg|\int\nabla_{y^*}\dfrac{d}{d\nu}g_2\pig(Y_{x}^{\theta\epsilon}(\tau),Y_{\bigcdot}^{\theta\epsilon}(\tau) \otimes (m_0+\epsilon\rho)\pig) (y^*)\bigg|_{y^*=\widetilde{Y_{\widetilde{x}}^{\theta\epsilon}}(\tau)}
				\widetilde{\Delta}_\tau^\epsilon \cdot (m_0+\epsilon\rho)(\widetilde{y})d\widetilde{x}
				\\
				&\h{70pt}
				-\int\nabla_{y^*}\dfrac{d}{d\nu}g_2\pig(y_{tx}^{m_0}(\tau)
				,y_{t\bigcdot}^{m_0}(\tau)\otimes m_0\pig) (y^*)\bigg|_{y^*=\widetilde{y_{t\widetilde{x}}^{m_0}}(\tau)}
				\left[\int
				\widetilde{\dfrac{dy_{t\widetilde{x}}^{m_0}}{d\nu}}
				(x',\tau)
				\rho(x')dx'\right]
				m_0(\widetilde{x})d\widetilde{x}\Bigg|m_0(x)dx
				d\theta d\tau \Bigg\}\\
				\leq\,& \mathbb{E}\Bigg\{\int^T_t \int^1_0\widetilde{\mathbb{E}}
				\int\int
				\Bigg|\bigg[\nabla_{y^*}\dfrac{d}{d\nu}g_2\pig(Y_{x}^{\theta\epsilon}(\tau),Y_{\bigcdot}^{\theta\epsilon}(\tau) \otimes (m_0+\epsilon\rho)\pig)(y^*)\bigg|_{y^*=\widetilde{Y_{\widetilde{x}}^{\theta\epsilon}}(\tau)}\\
				&\h{130pt}-\nabla_{y^*}\dfrac{d}{d\nu}g_2\pig(y_{tx}^{m_0}(\tau)
				,y_{t\bigcdot}^{m_0}(\tau)\otimes m_0\pig) (y^*)\bigg|_{y^*=\widetilde{y_{t\widetilde{x}}^{m_0}}(\tau)}\bigg]
				\widetilde{\Delta}_\tau^\epsilon\Bigg|
				m_0(\widetilde{x})d\widetilde{x}m_0(x)dxd\theta d\tau
				\Bigg\}
				\\
				&\h{10pt}+\mathbb{E}\Bigg\{\int^T_t \int^1_0\widetilde{\mathbb{E}}
				\int\int\Bigg|\nabla_{y^*}\dfrac{d}{d\nu}g_2\pig(y_{tx}^{m_0}(\tau)
				,y_{t\bigcdot}^{m_0}(\tau)\otimes m_0\pig) (y^*)\bigg|_{y^*=\widetilde{y_{t\widetilde{x}}^{m_0}}(\tau)}
				\left[
				\widetilde{\Delta}_\tau^\epsilon
				-\int
				\widetilde{\dfrac{dy_{t\widetilde{x}}^{m_0}}{d\nu}}
				(x',\tau)
				\rho(x')dx'\right]\Bigg|
				m_0(\widetilde{y})m_0(x)dxd\widetilde{x}
				d\theta d\tau \Bigg\}\\
				&\h{10pt}+\epsilon\mathbb{E}\Bigg\{\int^T_t \int^1_0\widetilde{\mathbb{E}}
				\int\int\Bigg|\nabla_{y^*}\dfrac{d}{d\nu}g_2\pig(Y_{x}^{\theta\epsilon}(\tau),Y_{\bigcdot}^{\theta\epsilon}(\tau) \otimes (m_0+\epsilon \rho)\pig)(y^*)\bigg|_{y^*=\widetilde{Y_{\widetilde{x}}^{\theta\epsilon}}(\tau)}
				\widetilde{\Delta}_\tau^\epsilon
				\cdot d|\rho(\widetilde{x})|\Bigg|m_0(x)dx
				d\theta d\tau \Bigg\}
			\end{align*}\normalsize
			Applying Assumption \eqref{ass. bdd of Dg2}, we have
			\small\begin{align*}
				I_2&\leq \mathbb{E}\Bigg\{\int^T_t \int^1_0
				\Bigg[\widetilde{\mathbb{E}}
				\int\int
				\Bigg|\nabla_{y^*}\dfrac{d}{d\nu}g_2\pig(Y_{x}^{\theta\epsilon}(\tau),Y_{\bigcdot}^{\theta\epsilon}(\tau) \otimes (m_0+\epsilon \rho)\pig)(y^*)\bigg|_{y^*=\widetilde{Y_{\widetilde{x}}^{\theta\epsilon}}(\tau)}\\
				&\h{110pt}-\nabla_{y^*}\dfrac{d}{d\nu}g_2\pig(y_{tx}^{m_0}(\tau)
				,y_{t\bigcdot}^{m_0}(\tau)\otimes m_0\pig) (y^*)\bigg|_{y^*=\widetilde{y_{t\widetilde{x}}^{m_0}}(\tau)}
				\Bigg|^2
				m_0(\widetilde{x})d\widetilde{x}m_0(x)dx\Bigg]^{1/2}
				\Big[
				\widetilde{\mathbb{E}}
				\int\pig|\widetilde{\Delta}_\tau^\epsilon\pigr|^2
				m_0(\widetilde{x})d\widetilde{x}\Big]^{1/2}
				d\theta d\tau
				\Bigg\}
				\\
				&\h{10pt}+C_{g_2}\int^T_t 
				\bigg[
				1
				+2\int\mathbb{E}\Big|y_{tx}^{m_0} (\tau)
				\Big|^2m_0(x)dx
				\bigg]^{1/2}
				\left[\widetilde{\mathbb{E}}
				\int\Bigg|
				\widetilde{\Delta}_\tau^\epsilon
				-\int
				\widetilde{\dfrac{dy_{t\widetilde{x}}^{m_0}}{d\nu}}
				(x',\tau)
				\rho(x')dx'\Bigg|^2
				m_0(\widetilde{x})d\widetilde{x}\right]^{1/2} d\tau\\
				&\h{10pt}+\epsilon C_{g_2}
				\int^T_t \int^1_0
				\Bigg(
				\int\int 1+
				\widetilde{\mathbb{E}}\Big|\widetilde{Y_{\widetilde{x}}^{\theta\epsilon}}(\tau)\Big|^2
				+\mathbb{E}\left[\int
				\Big|Y^{\theta\epsilon}_{x}(\tau)\Big|^2
				(m_0+\epsilon\rho)(x)dx
				\right]
				d|\rho(\widetilde{x})|m_0(x)dx
				\Bigg)^{1/2}
				\left[\widetilde{\mathbb{E}}
				\int
				\Big|\widetilde{\Delta}_\tau^\epsilon\Big|^2
				d|\rho(\widetilde{x})|\right]^{1/2}
				d\theta d\tau \\
				&=: I_2^1+I_2^2+I_2^3.
			\end{align*}\normalsize
			\noindent {\bf Part 2A. Convergence of $I^1_2$:}\\
			We suppose that 
			\small\begin{align*}
				&I_2^1=I_2^1(\epsilon)\\
				&:=    \mathbb{E}\Bigg\{\int^T_t \int^1_0
				\Bigg[\widetilde{\mathbb{E}}
				\int\int
				\Bigg|\nabla_{y^*}\dfrac{d}{d\nu}g_2\pig(Y_{x}^{\theta\epsilon}(\tau),Y_{\bigcdot}^{\theta\epsilon}(\tau)\otimes (m_0+\epsilon \rho)\pig)(y^*)\bigg|_{y^*=\widetilde{Y_{\widetilde{x}}^{\theta\epsilon}}(\tau)}\\
				&\h{50pt}-\nabla_{y^*}\dfrac{d}{d\nu}g_2\pig(y_{tx}^{m_0}(\tau)
				,y_{t\bigcdot}^{m_0}(\tau)\otimes m_0\pig) (y^*)\bigg|_{y^*=\widetilde{y_{t\widetilde{x}}^{m_0}}(\tau)}
				\Bigg|^2
				m_0(\widetilde{x})d\widetilde{x}m_0(x)dx\Bigg]^{1/2}
				\Big[
				\widetilde{\mathbb{E}}
				\int\pig|\widetilde{\Delta}_\tau^\epsilon\pigr|^2
				m_0(\widetilde{x})d\widetilde{x}\Big]^{1/2}
				d\theta d\tau
				\Bigg\}
			\end{align*}\normalsize
			does not converge to zero as $\epsilon \to 0$. There is a subsequence $\{\epsilon_n\}_{n\in \mathbb{N}}$ such that $\displaystyle\lim_{n\to \infty}I_2^1(\epsilon_n)>0$. For this subsequence, we check that
			\begin{align*}
				\int^T_t\int^1_0\int\Big\|Y^{\theta\epsilon_n}_{x}(\tau)
				-y_{tx}^{m_0}(\tau)\Big\|^2_\mathcal{H}m_0(x)dx d\theta d\tau
				=\dfrac{\epsilon_n}{2}\int^T_t\int\Big\|
				\Delta^{\epsilon_n}_{\rho } y_{tx}^{m_0} (\tau)
				\Big\|^2_\mathcal{H}m_0(x)dx d\tau \longrightarrow 0 \h{10pt}
				\text{ as $\epsilon_n \to 0$.}
			\end{align*}
			due to Lemma \ref{lem. existence of linear functional derivative of processes}. By Borel-Cantelli lemma, we can pick a subsequence of $\epsilon_n$, still denoted by $\epsilon_n$, such that $Y^{\theta\epsilon_n}_{x}(\tau)$ converges to $y_{tx}^{m_0}(\tau)$ for a.e. $\theta \in [0,1]$ and $\mathcal{L}^1\otimes\mathbb{P}$-a.e. $(\tau,\omega) \in [t,T]\times\Omega$, as $\epsilon \to 0$. By the continuity of Assumption \ref{ass. cts and diff of g2}, we see that
			\small\begin{align*}
				\widehat{I}^{\,1}_2(\epsilon_n):=\Bigg|\nabla_{y^*}\dfrac{d}{d\nu}g_2\pig(Y^{\theta\epsilon_n}_{x}(\tau),Y^{\theta\epsilon_n}_{\bigcdot}(\tau) \otimes (m_0+\epsilon_n\rho)\pig) (y^*)\bigg|_{y^*=\widetilde{Y_{\widetilde{x}}^{\theta\epsilon_n}}(\tau)}
				-\nabla_{y^*}\dfrac{d}{d\nu}g_2\pig(y_{tx}^{m_0}(\tau)
				,y_{t\bigcdot}^{m_0}(\tau)\otimes m_0\pig) (y^*)\bigg|_{y^*=\widetilde{y_{t\widetilde{x}}^{m_0}}(\tau)}
				\Bigg|^2 \longrightarrow 0
			\end{align*}\normalsize
			for a.e. $\theta \in [0,1]$, $\tau \in [t,T]$, $\mathbb{P} \otimes\mathbb{P} \otimes m_0\otimes m_0$-a.s., as $n \to \infty$. By Assumption \eqref{ass. bdd of Dg2}, we see that
			\small\begin{align}
				\widehat{I}^{\,1}_2(\epsilon_n)&\leq 2C_{g_2}^2\Bigg[2 + \Big| y_{tx}^{m_0}(\tau)
				+\theta\epsilon_n \Delta^{\epsilon_n}_{\rho } y_{tx}^{m_0} (\tau)\Big|^2
				+\int\mathbb{E}\Big|y_{tx}^{m_0}(\tau) +\theta\epsilon_n \Delta^{\epsilon_n}_{\rho } y_{tx}^{m_0} (\tau) \Big|^2 (m_0+\epsilon_n\rho)(x)dx\nonumber\\
				&\h{230pt}+ \Big| y_{tx}^{m_0}(\tau)\Big|^2
				+\int\mathbb{E}\Big|y_{tx}^{m_0}(\tau) \Big|^2 m_0(x)dx \Bigg]\nonumber\\
				&\leq 2C_{g_2}^2\Bigg\{2 + 3\Big| y_{tx}^{m_0}(\tau)
				\Big|^2
				+2\Big| y_{tx}^{m_0+\epsilon_n\rho}(\tau)\Big|^2
				+\int\mathbb{E}\left[3\Big| y_{tx}^{m_0}(\tau)
				\Big|^2
				+2\Big| y_{tx}^{m_0+\epsilon_n\rho}(\tau)\Big|^2\right] m_0(x)dx\nonumber\\
				&\h{140pt}+2\epsilon_n\int\mathbb{E}\left[(1-\theta)^2\Big| y_{tx}^{m_0}(\tau)
				\Big|^2
				+\theta^2\Big| y_{tx}^{m_0+\epsilon_n\rho}(\tau)\Big|^2\right] d|\rho(x)|\Bigg\}.
				\label{ineq. dominator for I^1_2}
			\end{align}\normalsize
			By Lemma \ref{lem. bdd of y p q u} and Lemma \ref{lem. existence of linear functional derivative of processes}, we compute that
			\begin{align}
				\int\mathbb{E}\left[\pig| y_{tx}^{m_0}(\tau)
				\pigr|^2\right] m_0(x)dx
				&\leq(C^*_4)^2\int\mathbb{E}\left[1+|x|^2+\int^T_t\int|y|^2d(y_{t\bigcdot}^{m_0}(\tau)\otimes m_0)d\tau\right] m_0(x)dx\nonumber\\
				&=(C^*_4)^2\int \left[ 1+|x|^2+\int^T_t\big\|y_{t\xi}(\tau)\big\|^2d\tau\right] m_0(x)dx\nonumber\\
				&\leq(C^*_4)^2\int \left[ 1+|x|^2+2C_4^2T\pig(1+\|\xi\|_\mathcal{H}^2\pig) \right] m_0(x)dx\nonumber\\
				&=\pig[(C^*_4)^2+2C_4^2(C^*_4)^2T\pig]\int \left(1+|x|^2\right) m_0(x)dx
				\label{bdd. int E y^m mdx}
			\end{align}
			which is finite since $m_0(x)dx\in \mathcal{P}_2(\mathbb{R}^d)$. Similarly, for some random variable $\xi_n$ with the law $(m_0+\epsilon_n\rho)(x)dx$, we have 
			\begin{align}
				\int\mathbb{E}\left[\pig| y_{tx}^{m_0+\epsilon_n \rho}(\tau)
				\pigr|^2\right] m_0(x)dx
				&\leq(C^*_4)^2\int \left[ 1+|x|^2+\int^T_t\big\|y_{t\xi_n}(\tau)\big\|^2d\tau\right] m_0(x)dx\nonumber\\
				&\leq(C^*_4)^2\int \left[ 1+|x|^2+2C_4^2T\pig(1+\|\xi_n\|_\mathcal{H}^2\pig) \right] m_0(x)dx\nonumber\\
				&\leq(C^*_4)^2\int \left[ 1+|x|^2 \right] m_0(x)dx
				+2C_4^2T\left(1+\int|x|^2 (m_0+\epsilon_n \rho)(x)dx\right)
				\label{bdd. int E y^m+e rho mdx}
			\end{align}
			which is also finite and independent of $\epsilon_n$ since $(m_0+\epsilon_n \rho)(x)dx\in \mathcal{P}_2(\mathbb{R}^d)$ and $\rho=m'-m$ with $m'(x)dx\in \mathcal{P}_2(\mathbb{R}^d)$. Furthermore, we also have
			\begin{align}
				\int\mathbb{E}\left[\pig| y_{tx}^{m_0}(\tau)
				\pigr|^2\right] d|\rho(x)|
				&\leq(C^*_4)^2\int \left[ 1+|x|^2\right] d|\rho(x)|
				+2C_4^2(C^*_4)^2T\left(1+\int|x|^2 m_0(x)dx\right)
				\label{bdd. int E y^m rhodx}
			\end{align}
			and
			\begin{align}
				\int\mathbb{E}\left[\pig| y_{tx}^{m_0+\epsilon_n \rho}(\tau)
				\pigr|^2\right] d|\rho(x)|
				&\leq(C^*_4)^2\int \left[ 1+|x|^2 \right] d|\rho(x)|
				+2C_4^2(C^*_4)^2T\left(1+\int|x|^2 (m_0+\epsilon_n \rho)(x)dx\right)
				\label{bdd. int E y^m+e rho rho dx}
			\end{align}
			Therefore, the integrand $\widehat{I}^{\,1}_2(\epsilon_n)$ has a dominating function due to \eqref{ineq. dominator for I^1_2}, \eqref{bdd. int E y^m mdx} and \eqref{bdd. int E y^m+e rho mdx}. Since $\widehat{I}^{\,1}_2(\epsilon_n) \to 0$ as $n \to \infty$, the Lebesgue dominated convergence theorem shows that $I_2^1(\epsilon_n)$ $\longrightarrow 0$ as $n \to \infty$, which contradicts the fact that $\displaystyle\lim_{n\to \infty}I_2^1(\epsilon_n)>0$ and hence $\displaystyle\lim_{\epsilon \to 0}I_2^1(\epsilon)=0$.
			
			\noindent {\bf Part 2B. Convergence of $I^2_2$:}\\
			Using \eqref{bdd. int E y^m mdx} and (b) of Lemma \ref{lem. existence of linear functional derivative of processes}, we see that
			\begin{align*}
				I_2^2:&=C_{g_2}\int^T_t 
				\bigg[
				1
				+2\int\mathbb{E}\Big|y_{tx}^{m_0} (\tau)
				\Big|^2m_0(x)dx
				\bigg]^{1/2}
				\left[\widetilde{\mathbb{E}}
				\int\Bigg|
				\widetilde{\Delta}_\tau^\epsilon
				-\int
				\widetilde{\dfrac{dy_{t\widetilde{x}}^{m_0}}{d\nu}}
				(x',\tau)
				\rho(x')dx'\Bigg|^2
				m_0(\widetilde{x})d\widetilde{x}\right]^{1/2} d\tau\\
				&\longrightarrow 0 \h{10pt} \text{as $\epsilon \to 0$.}
			\end{align*}
			
			\noindent {\bf Part 2C. Convergence of $I^3_2$:}\\
			As for $I^3_2$, we have
			\fontsize{9.5pt}{11pt}\begin{align*}
				&I^3_2\\:&=\epsilon C_{g_2}
				\int^T_t \int^1_0
				\Bigg(
				\int\int 1+
				\widetilde{\mathbb{E}}\Big|\widetilde{Y_{\widetilde{x}}^{\theta\epsilon}}(\tau)\Big|^2
				+\mathbb{E}\left[\int
				\Big|Y^{\theta\epsilon}_{x}(\tau)\Big|^2
				(m_0+\epsilon\rho)(x)dx
				\right]
				|\rho(\widetilde{y})|d\widetilde{x}m_0(x)dx
				\Bigg)^{1/2}
				\left[\widetilde{\mathbb{E}}
				\int
				\Big|\widetilde{\Delta}_\tau^\epsilon\Big|^2
				|\rho(\widetilde{y})|d\widetilde{x}\right]^{1/2}
				d\theta d\tau\\
				&\leq \sqrt{2}C_{g_2}
				\int^T_t \int^1_0
				\Bigg\{
				\int \left[1+
				\widetilde{\mathbb{E}}\pig|\widetilde{y_{t\widetilde{x}}^{m_0}}(\tau)\pigr|^2+\widetilde{\mathbb{E}}\pig|\widetilde{y_{t\widetilde{x}}^{m_0+\epsilon\rho}}(\tau)\pigr|^2\right]
				|\rho(\widetilde{y})|d\widetilde{x}
				+\int
				\left[
				\mathbb{E}\pig|y_{tx}^{m_0}(\tau)\pigr|^2
				+\mathbb{E}\pig|y_{tx}^{m_0+\epsilon\rho}(\tau)\pigr|^2\right]
				(m_0+\epsilon\rho)(x)dx
				\Bigg\}^{1/2}\\
				&\h{300pt}\cdot\left[\widetilde{\mathbb{E}}
				\int
				\Big|\widetilde{y_{t\widetilde{x}}^{m_0+\epsilon\rho}}(\tau)
				-\widetilde{y_{t\widetilde{x}}^{m_0}}(\tau)\Big|^2
				|\rho(\widetilde{y})|d\widetilde{x}\right]^{1/2}
				d\theta d\tau\\
				&\leq 2 C_{g_2}
				\int^T_t \int^1_0
				\Bigg\{
				(1+\epsilon)\int\left[1+
				\widetilde{\mathbb{E}}\pig|\widetilde{y_{t\widetilde{x}}^{m_0}}(\tau)\pigr|^2+\widetilde{\mathbb{E}}\pig|\widetilde{y_{t\widetilde{x}}^{m_0+\epsilon\rho}}(\tau)\pigr|^2\right]
				|\rho(\widetilde{y})|d\widetilde{x}
				+\int
				\left[
				\mathbb{E}\pig|y_{tx}^{m_0}(\tau)\pigr|^2
				+\mathbb{E}\pig|y_{tx}^{m_0+\epsilon\rho}(\tau)\pigr|^2\right]
				m_0(x)dx
				\Bigg\}^{1/2}\\
				&\h{300pt}\cdot\left[\widetilde{\mathbb{E}}
				\int
				\Big|\widetilde{y_{t\widetilde{x}}^{m_0+\epsilon\rho}}(\tau)
				-\widetilde{y_{t\widetilde{x}}^{m_0}}(\tau)\Big|^2
				|\rho(\widetilde{y})|d\widetilde{x}\right]^{1/2}
				d\theta d\tau\\
				&=\widehat{I}^{\,3}_2\cdot\int^T_t
				\left[\widetilde{\mathbb{E}}
				\int
				\Big|\widetilde{y_{t\widetilde{x}}^{m_0+\epsilon\rho}}(\tau)
				-\widetilde{y_{t\widetilde{x}}^{m_0}}(\tau)\Big|^2
				|\rho(\widetilde{y})|d\widetilde{x}\right]^{1/2}
				d\tau
			\end{align*}\normalsize
			where $\widehat{I}^{\,3}_2$ is finite and independent of $\epsilon$ due to \eqref{bdd. int E y^m mdx}-\eqref{bdd. int E y^m+e rho rho dx}. Therefore, $I^{3}_2 \longrightarrow 0$ as $\epsilon \to 0$ by Lemma \ref{lem. lip in x and xi} Recalling the convergences of $I_2^1$, $I_2^2$, $I_2^3$ to zero, we see from the definition of $I_2$ that the term of \eqref{line 4 in linear functional d} has the convergence
			\small\begin{align*}
				&\mathbb{E}\Bigg\{\int^T_t \int^1_0 \int \widetilde{\mathbb{E}}
				\left[\int\nabla_{y^*}\dfrac{d}{d\nu}g_2\pig(Y_{x}^{\theta\epsilon}(\tau),Y_{\bigcdot}^{\theta\epsilon}(\tau) \otimes (m_0+\epsilon \rho)\pig) (y^*)\bigg|_{y^*=\widetilde{Y_{\widetilde{x}}^{\theta\epsilon}}(\tau)}
				\widetilde{\Delta}_\tau^\epsilon \cdot (m_0+\epsilon\rho)(\widetilde{y})d\widetilde{x}\right]m_0(x)dx
				d\theta d\tau \Bigg\}\\
				&\longrightarrow \mathbb{E}\Bigg\{\int^T_t \int \widetilde{\mathbb{E}}
				\left[\int\nabla_{y^*}\dfrac{d}{d\nu}g_2\pig(y_{tx}^{m_0}(\tau)
				,y_{t\bigcdot}^{m_0}(\tau)\otimes m_0\pig) (y^*)\bigg|_{y^*=\widetilde{y_{t\widetilde{x}}^{m_0}}(\tau)}
				\left(\int
				\widetilde{\dfrac{dy_{t\widetilde{x}}^{m_0}}{d\nu}}
				(x',\tau)
				\rho(x')dx'\right)
				m_0(\widetilde{x})d\widetilde{x}\right]m_0(x)dx
				d\tau \Bigg\}
			\end{align*}\normalsize
			as $\epsilon \to 0$. Similarly, we see that \eqref{line 7 in linear functional d} also has the convergence
			\small\begin{align*}
				&\mathbb{E}\Bigg\{ \int^1_0 \int \widetilde{\mathbb{E}}
				\left[\int\nabla_{y^*}\dfrac{d}{d\nu}h_2\pig(Y_{\bigcdot T}^{\theta\epsilon} \otimes (m_0+\epsilon \rho)\pig) (y^*)\bigg|_{y^*=\widetilde{Y_{\widetilde{x}T}^{\theta\epsilon}}(T)}
				\widetilde{\Delta}_T^\epsilon \cdot (m_0+\epsilon\rho)(\widetilde{y})d\widetilde{x}\right]m_0(x)dx
				d\theta \Bigg\}\\
				&\longrightarrow \mathbb{E}\Bigg\{  \int \widetilde{\mathbb{E}}
				\left[\int\nabla_{y^*}\dfrac{d}{d\nu}h_2\pig(y_{t\bigcdot}^{m_0}(T)\otimes m_0\pig) (y^*)\bigg|_{y^*=\widetilde{y_{t\widetilde{x}}^{m_0}}(T)}
				\left(\int
				\widetilde{\dfrac{dy_{t\widetilde{x}}^{m_0}}{d\nu}}
				(x',T)
				\rho(x')dx'\right)
				m_0(\widetilde{x})d\widetilde{x}\right]m_0(x)dx\Bigg\}
			\end{align*}\normalsize
			as $\epsilon \to 0$.

			\noindent {\bf Part 3. Convergences of 
				\eqref{line 5 in linear functional d}, 
				\eqref{line 8 in linear functional d}:}\\
			For \eqref{line 5 in linear functional d}, Assumption \eqref{ass. cts, bdd of dnu g2} tells us that 
			\begin{align*}
				&\left|\dfrac{d}{d\nu}g_2\pig(y_{tx}^{m_0}(\tau)
				,y_{t\bigcdot}^{m_0}(\tau) \otimes m_0
				+\theta\pig[y_{t\bigcdot}^{m_0}(\tau) \otimes (m_0+\epsilon\rho)
				-y_{t\bigcdot}^{m_0}(\tau) \otimes m_0\pig]\pig) \pig(\widetilde{y_{t\widetilde{x}}^{m_0}}(\tau)\pig)\right|^2\\
				&\leq C_{g_2}^2\left[1+\pig|\widetilde{y_{t\widetilde{x}}^{m_0}}(\tau)\pigr|^2
				+(1-\theta)\int\mathbb{E}\pig|y_{tx}^{m_0}(\tau)\pigr|^2(m_0+\epsilon\rho)(x)dx
				+\theta\int\mathbb{E}\pig|y_{tx}^{m_0}(\tau)\pigr|^2m_0(x)dx
				\right]
			\end{align*}
			Lemma \ref{lem. prop of y p q u fix L} shows that it is integrable since
			\small\begin{align*}
				&I_3\\
				:&=\mathbb{E}\Bigg\{\int^T_t\int^1_0\int\widetilde{\mathbb{E}}
				\int\Bigg|\dfrac{d}{d\nu}g_2\pig(y_{tx}^{m_0}(\tau)
				,y_{t\bigcdot}^{m_0}(\tau) \otimes m_0
				+\theta\pig[y_{t\bigcdot}^{m_0}(\tau) \otimes (m_0+\epsilon\rho)
				-y_{t\bigcdot}^{m_0}(\tau) \otimes m_0\pig]\pig) \pig(\widetilde{y_{t\widetilde{x}}^{m_0}}(\tau)\pig) \Bigg||\rho(\widetilde{y})|d\widetilde{x}  m_0(x)dx
				d \theta d\tau\Bigg\}\\
				&\leq C_{g_2}\int^T_t\int
				\left[1+\left(\widetilde{\mathbb{E}}\pig|\widetilde{y_{t\widetilde{x}}^{m_0}}(\tau)\pigr|^2\right)^{1/2}
				+\left(\int\mathbb{E}\pig|y_{tx}^{m_0}(\tau)\pigr|^2(m_0+\epsilon\rho)(x)dx\right)^{1/2}
				+\left(\int\mathbb{E}\pig|y_{tx}^{m_0}(\tau)\pigr|^2m_0(x)dx\right)^{1/2}
				\right]|\rho(\widetilde{y})|d\widetilde{x}
				d\tau\\
				&\leq C_{g_2}(1+C_\rho)\int^T_t
				\left[1+\int\left(\widetilde{\mathbb{E}}\pig|\widetilde{y_{t\widetilde{x}}^{m_0}}(\tau)\pigr|^2\right)^{1/2}|\rho(\widetilde{y})|d\widetilde{x}
				+\epsilon^{1/2}\left(\int\mathbb{E}\pig|y_{tx}^{m_0}(\tau)\pigr|^2d|\rho(x)|\right)^{1/2}
				+2\left(\int\mathbb{E}\pig|y_{tx}^{m_0}(\tau)\pigr|^2m_0(x)dx\right)^{1/2}
				\right]
				d\tau,
			\end{align*}\normalsize
			where $C_\rho=\int |\rho(\widetilde{y})|d\widetilde{x}<\infty$. Item (a) of Lemma \ref{lem. prop of y p q u fix L} and Lemma \ref{lem. bdd of y p q u} illustrate that
			$$ \mathbb{E}\pig|y_{tx}^{m_0}(\tau)\pigr|^2\leq 3C_4^*\left[ 1+|x|^2 
			+ \int^T_t\pig\|y_{t\xi}(\tau)\pigr\|_\mathcal{H}^2d\tau  \right] 
			\leq 3C_4^*\left[ 1+|x|^2 
			+ C_4T\pig(1+\big\|\xi\bigr\|_\mathcal{H}^2 \pig) \right].$$
			Therefore, $I_3$ is integrable since $|\rho(\widetilde{y})|dx$ is of finite second moment. By the continuity in Assumption \eqref{ass. cts, bdd of dnu g2}, we see that for each $x, \widetilde{x} \in \mathbb{R}^d$, $\tau \in [t,T]$, $\mathbb{P} \otimes \mathbb{P}$-a.s.,
			\begin{align*}
				&\Bigg|\dfrac{d}{d\nu}g_2\pig(y_{tx}^{m_0}(\tau)
				,y_{t\bigcdot}^{m_0}(\tau) \otimes m_0
				+\theta\pig[y_{t\bigcdot}^{m_0}(\tau) \otimes (m_0+\epsilon\rho)
				-y_{t\bigcdot}^{m_0}(\tau) \otimes m_0\pig]\pig) \pig(\widetilde{y_{t\widetilde{x}}^{m_0}}(\tau)\pig)\\
				&\h{200pt}-\dfrac{d}{d\nu}g_2\pig(y_{tx}^{m_0}(\tau)
				,y_{t\bigcdot}^{m_0}(\tau) \otimes m_0\pig) \pig(\widetilde{y_{t\widetilde{x}}^{m_0}}(\tau)\pig)\Bigg|^2
				\longrightarrow 0
			\end{align*}
			as $\epsilon \to 0$ since 
			\begin{align*}
				&\mathcal{W}_2\Big( y_{t\bigcdot}^{m_0}(\tau) \otimes m_0
				+\theta\pig[y_{t\bigcdot}^{m_0}(\tau) \otimes (m_0+\epsilon\rho)
				-y_{t\bigcdot}^{m_0}(\tau) \otimes m_0\pig],y_{t\bigcdot}^{m_0}(\tau) \otimes m_0 \Big)\\
				&=\mathcal{W}_2\Big( (1+\theta\epsilon)y_{t\bigcdot}^{m_0}(\tau) \otimes m_0,y_{t\bigcdot}^{m_0}(\tau) \otimes m_0 \Big)
				\longrightarrow 0 \h{10pt} \text{as $\epsilon \to 0$.}
			\end{align*}
			Therefore, the Lebesgue dominated convergence theorem implies that
			\fontsize{9.8pt}{11pt}\begin{align*}
				&\mathbb{E}\Bigg\{\int^T_t\int^1_0\int\widetilde{\mathbb{E}}
				\int\Bigg[\dfrac{d}{d\nu}g_2\pig(y_{tx}^{m_0}(\tau)
				,y_{t\bigcdot}^{m_0}(\tau) \otimes m_0
				+\theta\pig[y_{t\bigcdot}^{m_0}(\tau) \otimes (m_0+\epsilon\rho)
				-y_{t\bigcdot}^{m_0}(\tau) \otimes m_0\pig]\pig) \pig(\widetilde{y_{t\widetilde{x}}^{m_0}}(\tau)\pig) \Bigg]d|\rho(\widetilde{x})|  m_0(x)dx
				d \theta d\tau\Bigg\}\\
				&\longrightarrow 
				\mathbb{E}\Bigg\{\int^T_t\int^1_0\int\widetilde{\mathbb{E}}
				\int\Bigg[\dfrac{d}{d\nu}g_2\pig(y_{tx}^{m_0}(\tau)
				,y_{t\bigcdot}^{m_0}(\tau) \otimes m_0\pig) \pig(\widetilde{y_{t\widetilde{x}}^{m_0}}(\tau)\pig) \Bigg]d|\rho(\widetilde{x})|  m_0(x)dx
				d \theta d\tau\Bigg\}
			\end{align*}\normalsize
			as $\epsilon\to0$. Similarly, we have the convergence of the term of \eqref{line 8 in linear functional d}
			\small\begin{align*}
				&\mathbb{E}\Bigg\{\int^1_0\int\widetilde{\mathbb{E}}
				\int\Bigg[\dfrac{d}{d\nu}h_2\pig(y_{t\bigcdot}^{m_0}(T) \otimes m_0
				+\theta\pig[y_{t\bigcdot}^{m_0}(T) \otimes (m_0+\epsilon\rho)
				-y_{t\bigcdot}^{m_0}(T) \otimes m_0\pig]\pig) \pig(\widetilde{y_{t\widetilde{x}}^{m_0}}(T)\pig) \Bigg]d|\rho(\widetilde{x})|  m_0(x)dx
				d \theta \Bigg\}\\
				&\longrightarrow 
				\mathbb{E}\Bigg\{\int^1_0\int\widetilde{\mathbb{E}}
				\int\Bigg[\dfrac{d}{d\nu}h_2\pig(y_{t\bigcdot}^{m_0}(T) \otimes m_0\pig) \pig(\widetilde{y_{t\widetilde{x}}^{m_0}}(T)\pig) \Bigg]d|\rho(\widetilde{x})|  m_0(x)dx
				d \theta \Bigg\}
			\end{align*}\normalsize
			as $\epsilon\to0$.
		\end{proof}
	}
	
	\begin{lemma}[\bf Second-order Spatial Differentiability of Linear Functional Derivatives of Processes]
		Suppose that \eqref{ass. Cii}, \eqref{ass. cts, bdd of dnu D g_2},  \eqref{ass. cts, bdd of 3rd order d of g1}, \eqref{ass. cts, bdd of 3rd order d of g2}, \eqref{ass. cts, bdd of 3rd order d of h1} of Assumptions \textup{\bf (Cii)}, \textup{\bf (Di)}, \textup{\bf (Dii)}, \textup{\bf (Diii)} \textup{\bf (Div)}  and
		\begin{align}
			\textup{\bf (Dv).} &\text{ } \p_{y_k^*}\nabla_{y^*}\dfrac{d}{d\nu}\nabla_y g_2(y,\mathbb{L})(y^*) 
			\text{ exist and is jointly continuous in $y, y^*$, $\mathbb{L}$ such that }
			\left|\p_{y_k^*}\nabla_{y^*} \dfrac{d}{d\nu}\nabla_y g_2(y,\mathbb{L})(\widetilde{y})\right|\nonumber\\
			&\text{ } \leq 
			C_{g_2} 
			\text{ for any $y, y^* \in \mathbb{R}^{d}$, $\mathbb{L}\in \mathcal{P}_2(\mathbb{R}^{d})$ and $k=1,2,\ldots,d$}
			\label{ass. cts, bdd of D^2 dnu D g_2}
		\end{align}
		hold. The triple $\left(\p_{x'_i}\dfrac{dy_{tx}^{\mu}}{d\nu}(x',s), \p_{x'_i}\dfrac{dp_{tx}^{\mu}}{d\nu}(x',s), \p_{x'_i}\dfrac{du_{tx}^{\mu}}{d\nu}(x',s)\right)$ obtained in (c) of Lemma \ref{lem. existence of linear functional derivative of processes} is differentiable (in the sense mentioned in Lemma \ref{lem existence of d of linear functional d}) in $x'$.
		\label{lem. Second-order Spatial Differentiability of Linear Functional Derivatives of Processes}
	\end{lemma}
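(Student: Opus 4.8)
The plan is to mimic the by-now-standard three-step scheme (uniform a priori bound, weak-limit identification, upgrade to strong convergence) that was used for the Jacobian flow in Lemmas \ref{lem. bdd of diff quotient}, \ref{lem. Existence of J flow, weak conv.}, \ref{lem. Existence of J flow, strong conv.}, and again for the first spatial derivative in Lemma \ref{lem existence of d of linear functional d}, but now applied to the $x'$-difference quotients of the processes $\big(\p_{x'_i}\mathcal{Y}^\mu_{tx}(x',s),\p_{x'_i}\mathcal{P}^\mu_{tx}(x',s),\p_{x'_i}\mathcal{Q}^\mu_{tx}(x',s)\big)$ solving \eqref{eq.  1st d linear functional derivatives of FBSDE}. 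First I would fix a direction $e_l$ and set $\Xi^\epsilon_y := \tfrac{1}{\epsilon}\big[\p_{x'_i}\tfrac{dy_{tx}^\mu}{d\nu}(x'+\epsilon e_l,s)-\p_{x'_i}\tfrac{dy_{tx}^\mu}{d\nu}(x',s)\big]$, and analogously $\Xi^\epsilon_p,\Xi^\epsilon_u,\Xi^\epsilon_q$. Differencing the linear system \eqref{eq.  1st d linear functional derivatives of FBSDE} at $x'+\epsilon e_l$ and $x'$ and dividing by $\epsilon$ shows that $(\Xi^\epsilon_y,\Xi^\epsilon_p,\Xi^\epsilon_q)$ solves a linear FBSDE whose homogeneous part is exactly the linear operator appearing in \eqref{eq. linear functional derivatives of FBSDE}--\eqref{eq.  1st d linear functional derivatives of FBSDE}, and whose inhomogeneous source terms collect: (i) the $x'$-difference quotients of the coefficients, controlled through the third-order bounds \eqref{ass. cts, bdd of 3rd order d of g1}--\eqref{ass. cts, bdd of 3rd order d of h1} of Assumptions \textup{\bf (Dii)}--\textup{\bf (Div)}; (ii) the $x'$-difference quotient of the penultimate line of \eqref{eq.  1st d linear functional derivatives of FBSDE}, which carries $\widetilde{\nabla_{x'}y_{tx'}^\mu}(\tau)e_i$ and hence, upon differentiation by the chain rule, the Hessian flow $\widetilde{\nabla_{x'x'}y_{tx'}^\mu}(\tau)$ together with products of two Jacobian flows; and (iii) the double measure-derivative term $\p_{y_k^*}\nabla_{y^*}\tfrac{d}{d\nu}\nabla_y g_2$, whose boundedness is supplied precisely by the new Assumption \textup{\bf (Dv)} in \eqref{ass. cts, bdd of D^2 dnu D g_2}.

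The core a priori estimate would be produced exactly as in the proof of Lemma \ref{lem hessian flow}: apply It\^o's formula to $\big\langle\Xi^\epsilon_p(s),\Xi^\epsilon_y(s)\big\rangle_{\mathbb{R}^d}$, use the first-order relation obtained by differentiating \eqref{eq. 1st order condition, linear functional d} in $x'$, and then invoke the convexity conditions \eqref{ass. convexity of g1}, \eqref{ass. convexity of g2}, \eqref{ass. convexity of h} of Assumptions \textup{\bf (Ax)}, \textup{\bf (Axi)}, \textup{\bf (Bvi)} together with the small mean field effect \eqref{ass. Cii}. The coercive coefficient $\Lambda_{g_1}-(\lambda_{h_1})_+T-(\lambda_{g_1}+\lambda_{g_2})_+\tfrac{T^2}{2}$ dominates the left-hand side, while the source terms are absorbed by Cauchy--Schwarz and Young's inequalities. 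Here the crucial integrability input is the $L^4$-bound of the Jacobian flow from Lemma \ref{lem L4 regularity}: the products of first-order Jacobian flows with the third derivatives of the coefficients are treated exactly as the quartic terms in \eqref{5447}, so that the nonhomogeneous terms are uniformly bounded in $\epsilon$ by a constant depending only on the structural parameters. This yields a uniform estimate $\mathbb{E}\big[\sup_{s}|\Xi^\epsilon_y(s)|^2+\sup_s|\Xi^\epsilon_p(s)|^2+\sup_s|\Xi^\epsilon_u(s)|^2\big]+\int_t^T\|\Xi^\epsilon_q(s)\|_{\mathcal{H}}^2\,ds\le C$ wholly analogous to \eqref{bdd. diff quotient of J flow of y, p, q, u}.

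With the uniform bound in hand, the Banach--Alaoglu theorem gives weak limits along a subsequence, and passing to the limit in the linear FBSDE---using the continuity of all second and third derivatives of the coefficients, the $L^4$-strong convergence of the Jacobian and the convergence of the Hessian flow from Lemmas \ref{lem L4 regularity} and \ref{lem hessian flow}, together with Borel--Cantelli extractions and dominated convergence as in Lemma \ref{lem. Existence of J flow, weak conv.}---identifies the limiting equation, whose uniqueness follows from \eqref{ass. Cii} just as before. Finally, the strong convergence of the difference quotients to these limits is obtained by the contradiction/energy argument of Lemma \ref{lem. Existence of J flow, strong conv.}, which converts the uniqueness identity into the desired convergence and delivers $\p_{x'_l}\p_{x'_i}\tfrac{dy_{tx}^\mu}{d\nu}$ (and likewise for $p$ and $u$). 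The main obstacle I anticipate is neither the fixed-point structure nor the convexity bookkeeping, both of which are by now routine, but rather ensuring that the inhomogeneous term carrying the Hessian flow $\nabla_{x'x'}y_{tx'}^\mu$ is well-defined and enjoys the right integrability; this is exactly why Lemma \ref{lem hessian flow} (existence of the Hessian flow) and the quartic estimates of Lemma \ref{lem L4 regularity} were established beforehand, and verifying that these feed correctly through the strong-convergence step---in particular that the difference quotients of the twice-differentiated coefficient terms vanish in $L^2$ by dominated convergence---is the delicate point.
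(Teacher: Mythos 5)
Your proposal is correct and follows essentially the same route as the paper's proof: difference the linear FBSDE \eqref{eq.  1st d linear functional derivatives of FBSDE} in $x'$, obtain uniform-in-$\epsilon$ bounds via the It\^o product argument together with the convexity assumptions, the small mean field effect \eqref{ass. Cii}, Assumption \textup{\bf (Dv)}, the $L^4$-bound \eqref{ineq. L4 bdd of J flow and diff quot} of Lemma \ref{lem L4 regularity} and the Hessian-flow bounds \eqref{bdd. diff quotient of J flow of y, p, q, u} of Lemma \ref{lem hessian flow}, then pass to weak limits as in Lemma \ref{lem. Existence of J flow, weak conv.} and upgrade to strong convergence as in Lemma \ref{lem. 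Existence of J flow, strong conv.}. Your identification of the Hessian-flow source term as the delicate point is precisely why the paper establishes Lemmas \ref{lem L4 regularity} and \ref{lem hessian flow} beforehand and cites their bounds in this step.
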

	\begin{proof}
		Recalling the notations used in Lemma \ref{lem existence of d of linear functional d} and \ref{lem L4 regularity}, we first define the difference quotient
		$$ \Delta_\epsilon^j \p_{x_i'} \mathcal{Y}_{tx}^{\mu}(x',s):= \dfrac{1}{\epsilon}\left[ \p_{x_i'} \mathcal{Y}_{tx}^{\mu} (x'+\epsilon e_j,s) - \p_{x_i'} \mathcal{Y}_{tx}^{\mu}(x',s)\right],$$
		and similarly for $\mathcal{P}_{tx}^{\mu}(x',s) , \mathcal{Q}_{tx}^{\mu}(x',s) $ and  $\mathcal{U}_{tx}^{\mu}(x',s)$. Using \eqref{eq.  1st d linear functional derivatives of FBSDE} and \eqref{def. linear functional d of u 1st d}, we write the FBSDE of the difference quotient processes
		\begin{equation}
			\h{-10pt}\left\{
			\fontsize{10pt}{11pt}
			\begin{aligned}
				\Delta_\epsilon^j\p_{x_i'}\mathcal{Y}^{\mu}_{tx}(x',s)
				=\,& \displaystyle\int_{t}^{s}
				\Delta_\epsilon^j\p_{x_i'}\mathcal{U}^{\mu}_{tx}(x',\tau)d\tau;\\
				\Delta_\epsilon^j\p_{x_i'}\mathcal{P}^{\mu}_{tx}(x',s)
				=\,&\nabla_{yy} h_1(y_{tx}^{\mu}(T))
				\Delta_\epsilon^j\p_{x_i'}\mathcal{Y}^{\mu}_{tx}(x',T)
				+\int^T_s\nabla_{yy}g_1\pig(y_{tx}^{\mu}(\tau),u_{tx}^{\mu}(\tau) \pig)
				\Delta_\epsilon^j\p_{x_i'}\mathcal{Y}^{\mu}_{tx}(x',\tau) d\tau\\
				&+\int^T_s\nabla_{vy}g_1\pig(y_{tx}^{\mu}(\tau),u_{tx}^{\mu}(\tau) \pig)\Delta_\epsilon^j\p_{x_i'}\mathcal{U}^{\mu}_{tx}(x',\tau)d\tau \\
				&+\int^T_s \nabla_{yy}g_2\pig(y_{tx}^{\mu}(\tau),y_{t\bigcdot}^{\mu}(\tau)\otimes \mu\pig) 
				\Delta_\epsilon^j\p_{x_i'}\mathcal{Y}^{\mu}_{tx}(x',\tau)  d\tau\\
				&+\int^T_s\widetilde{\mathbb{E}}
				\left[\int\nabla_{y^*}\dfrac{d}{d\nu}\nabla_{y}g_2\pig(y_{tx}^{\mu}(\tau),y_{t\bigcdot}^{\mu}(\tau)\otimes \mu\pig)  (y^*)\bigg|_{y^* = \widetilde{y_{t\widetilde{x}}^{\mu}} (\tau)}
				\widetilde{\Delta_\epsilon^j\p_{x_i'}\mathcal{Y}^{\mu}_{tx}} (x',\tau)d\mu(\widetilde{x})\right]
				d\tau \\
				&+\int^T_s\widetilde{\mathbb{E}}
				\left[ \sum^d_{k=1}\int^1_0 \p_{y^*_k}\nabla_{y^*}\dfrac{d}{d\nu}\nabla_{y}g_2\pig(y_{tx}^{\mu}(\tau),y_{t\bigcdot}^{\mu}(\tau)\otimes \mu\pig)(y^*)\bigg|_{y^*=\widetilde{Y^{\theta\epsilon j}_{x'}} (\tau)}  
				\pig[ \Delta^j_\epsilon y_{tx'}^{\mu}(\tau) \pigr]_k
				\widetilde{\p_{x'_i} y_{t,x'+\epsilon e_j}^{\mu}}(\tau)d\theta\right]d\tau\\
				&+\int^T_s\widetilde{\mathbb{E}}
				\left[ \nabla_{y^*}\dfrac{d}{d\nu}\nabla_{y}g_2\pig(y_{tx}^{\mu}(\tau),y_{t\bigcdot}^{\mu}(\tau)\otimes \mu\pig)(y^*)\bigg|_{y^*=\widetilde{y_{tx'}^{\mu}} (\tau)}  \widetilde{\Delta^j_\epsilon\p_{x'_i} y_{tx'}^{\mu}}(\tau)\right]
				d\tau\\
				&-\int^T_s \Delta_\epsilon^j\p_{x_i'}\mathcal{Q}^{\mu}_{tx}(x',\tau)dW_\tau.
			\end{aligned}\right.\normalsize
			\label{eq. diff quot of  1st d of linear functional derivatives of FBSDE}
		\end{equation}
		We first  establish the uniform (in $\epsilon$) upper bounds of $\Big(\Delta_\epsilon^j\p_{x_i'}\mathcal{Y}^{\mu}_{tx}(x',\tau),\Delta_\epsilon^j\p_{x_i'}\mathcal{P}^{\mu}_{tx}(x',\tau),\Delta_\epsilon^j\p_{x_i'}\mathcal{Q}^{\mu}_{tx}(x',\tau),$\\$\Delta_\epsilon^j\p_{x_i'}\mathcal{U}^{\mu}_{tx}(x',\tau)\Big)$, by following the same arguments as in Lemma \ref{lem. bdd of diff quotient}. Note that the FBSDE of \eqref{eq. diff quot of  1st d of linear functional derivatives of FBSDE} is linear, the key is to use \eqref{ass. convexity of g1}, \eqref{ass. convexity of g2}, \eqref{ass. convexity of h}, \eqref{ass. Cii} of Assumptions {\bf (Ax)}, {\bf (Axi)}, {\bf (Bvi)}, {\bf (Cii)} and also the bounds of \eqref{ineq. L4 bdd of J flow and diff quot}, \eqref{bdd. diff quotient of J flow of y, p, q, u}. Next, we can find the weak limits of the  difference quotient processes and the equation satisfied by them, as in Lemma \ref{lem. Existence of J flow, weak conv.}. Finally, we show that the difference quotient processes strongly converge to their respective weak limits by following the arguments of Lemma \ref{lem. Existence of J flow, strong conv.}. We omit the details here. 
	\end{proof}
	
	\section{Solving Master Equation}\label{sec. Master Equation}

	In this section, we solve the HJB and master equations with uniqueness. We define the Hamiltonian 
	\begin{equation}
		H(x,\mathbb{L},p):=\inf_{v \in \mathbb{R}^d}\Big[ g_1(x,v)+g_2(x,\mathbb{L})+v\cdot p \Big]=g_1\pig(x,u(x,p)\pig)+g_2(x,\mathbb{L})
		+u(x,p)\cdot p
		\label{def. Hamiltonian}
	\end{equation}
	with derivatives $\nabla_x H(x,\mathbb{L},p)=\nabla_x g_1\pig(x,v\pig)\pigr|_{v=u(x,p)}+\nabla_x g_2\pig(x,\mathbb{L}\pig)$ and $\nabla_p H (x,\mathbb{L},p)=u(x,p)$. Therefore, we can write $u_{t \xi \mathbb{L}}(s)$ in the  feedback form $u_{t \xi \mathbb{L}}(s)
	=\nabla_p H\pig(y_{t \xi \mathbb{L}}(s),\mathbb{L}(s),p_{t \xi \mathbb{L}}(s)\pig)$. Recalling the definition of value function of \eqref{def. value function}, together with the regularity in Lemma \ref{lem diff V w.r.t. x}-\ref{lem diff of V in t} and It\^o's lemma, it is standard to show the following the statement:	
	\begin{lemma}
		Suppose that \eqref{def. c_0 > 0 convex, ass. Ci} of Assumption \textup{\bf (Ci)} holds and $\mathbb{L}(s) \in C\pig(0,T;\mathcal{P}_2(\mathbb{R}^d)\pig)$, then the value function defined in \eqref{def. value function} is a unique $C^{2,1}(\mathbb{R}^d \times [0,T])$ solution to the HJB equation
		\begin{equation}
				\p_t V(x,t) + \dfrac{1}{2}  \sum^{d}_{i,j=1}\pig(\eta \eta^\top\pigr)_{ij}\p_{x_i}\p_{x_j} V(x,t)
				+H\pig(x,\mathbb{L}(t), \nabla_x V(x,t) \pig)=0;\h{15pt}
				V(x,T)=h_1(y) + h_2\pig(\mathbb{L}(T)\pig).
			\label{eq. bellman eq.}
		\end{equation}
		\label{thm bellmen eq.}
	\end{lemma}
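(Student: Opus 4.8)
The plan is to verify that $V$ solves \eqref{eq. bellman eq.} by combining the dynamic programming (flow) principle with It\^o's formula, and then to obtain uniqueness through a classical verification argument. All the regularity needed to even write \eqref{eq. bellman eq.} classically---namely $V\in C^{2,1}(\mathbb{R}^d\times[0,T])$ with $\nabla_x V(x,t)=p_{tx\mathbb{L}}(t)$, $\nabla_{xx}V(x,t)=\nabla_x p_{tx\mathbb{L}}(t)$ jointly continuous, and $\partial_t V$ continuous---is already supplied by Lemmas \ref{lem diff V w.r.t. x}, \ref{lem 2nd diff V w.r.t. x} and \ref{lem diff of V in t}. Hence the only remaining content is the identification of the PDE and the uniqueness.

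First I would establish the optimality principle: for $\epsilon\in(0,T-t)$ and $y^\epsilon:=y_{tx\mathbb{L}}(t+\epsilon)$,
\[
V(x,t)=\mathbb{E}\left[\int_t^{t+\epsilon}\Big(g_1\big(y_{tx\mathbb{L}}(s),u_{tx\mathbb{L}}(s)\big)+g_2\big(y_{tx\mathbb{L}}(s),\mathbb{L}(s)\big)\Big)ds+V\big(y^\epsilon,t+\epsilon\big)\right].
\]
This follows by splitting the integral in \eqref{def. value function} at time $t+\epsilon$ and invoking the flow property \eqref{flow property} together with the uniqueness of the minimizer of $\mathcal{J}_{t\xi}(\cdot,\mathbb{L})$ from Lemma \ref{lem. derivation of FBSDE, necessarity for control problem, fix L}, which guarantees that the tail of the optimal trajectory over $[t+\epsilon,T]$ coincides with the optimal trajectory started afresh at $(t+\epsilon,y^\epsilon)$. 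Next I apply It\^o's formula to $s\mapsto V(y_{tx\mathbb{L}}(s),s)$ on $[t,t+\epsilon]$, using $dy_{tx\mathbb{L}}(s)=u_{tx\mathbb{L}}(s)\,ds+\eta\,dW_s$ and the $C^{2,1}$-regularity; taking expectations (the stochastic integral being a true martingale thanks to the moment bounds in Lemma \ref{lem. prop of y p q u fix L} and the linear growth of $\nabla_x V$), subtracting from the optimality principle, dividing by $\epsilon$ and sending $\epsilon\to0$ yields, at $(x,t)$,
\[
\partial_t V(x,t)+\frac{1}{2}\sum_{i,j}(\eta\eta^\top)_{ij}\partial_{x_i}\partial_{x_j}V(x,t)+g_1\big(x,u_{tx\mathbb{L}}(t)\big)+g_2\big(x,\mathbb{L}(t)\big)+u_{tx\mathbb{L}}(t)\cdot\nabla_x V(x,t)=0.
\]
Since $u_{tx\mathbb{L}}(t)=u\big(x,p_{tx\mathbb{L}}(t)\big)=u\big(x,\nabla_x V(x,t)\big)=\nabla_p H\big(x,\mathbb{L}(t),\nabla_x V(x,t)\big)$, the definition \eqref{def. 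Hamiltonian} collapses the three cost terms into $H\big(x,\mathbb{L}(t),\nabla_x V(x,t)\big)$, producing \eqref{eq. bellman eq.}. The terminal condition is immediate from \eqref{def. value function}, as $\mathcal{J}_{Tx}$ reduces to $h_1(x)+h_2(\mathbb{L}(T))$.

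For uniqueness I would run the standard verification theorem in the class of $C^{2,1}$ functions of quadratic growth (the natural class here, since the cost functionals have quadratic growth by \textbf{(Aiii)}, \textbf{(Aiv)}, \textbf{(Bii)}, \textbf{(Biii)}). Given any such solution $W$, apply It\^o to $W(x^v_s,s)$ along an arbitrary admissible state $x^v$, use the inequality $H(x,\mathbb{L},p)\le g_1(x,v)+g_2(x,\mathbb{L})+v\cdot p$ valid for every $v$ (with equality at $v=\nabla_p H$) to deduce $W(x,t)\le\mathcal{J}_{tx}(v,\mathbb{L})$ for all $v$ and $W(x,t)=\mathcal{J}_{tx}(u^{*},\mathbb{L})$ along the feedback optimizer; hence $W=\inf_v\mathcal{J}_{tx}(\cdot,\mathbb{L})=V$.

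The main obstacle is the passage to the limit $\epsilon\to0^+$: one must justify that the $\epsilon$-averaged It\^o correction terms converge to their pointwise values at $(x,t)$. This rests on the joint continuity of $\partial_t V$ and $\nabla_{xx}V$ from the cited lemmas together with the uniform integrability of $g_1(y_{tx\mathbb{L}}(s),u_{tx\mathbb{L}}(s))$, $g_2(y_{tx\mathbb{L}}(s),\mathbb{L}(s))$, $\nabla_x V\cdot u_{tx\mathbb{L}}$ and $\nabla_{xx}V$ near $s=t$, which in turn follows from the moment estimate \eqref{bdd. y p q u fix L} in Lemma \ref{lem. prop of y p q u fix L} and the quadratic-growth assumptions. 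The analogous care for uniqueness is ensuring the stochastic integral against $\nabla_x W(x^v_s,s)\eta$ is a genuine martingale, which is precisely why the quadratic-growth class is the correct uniqueness class. Everything else is routine, consistent with the remark in the statement that this verification is standard.
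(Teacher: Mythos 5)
Your proposal is correct and follows essentially the same route as the paper: the paper invokes the regularity from Lemmas \ref{lem diff V w.r.t. x}--\ref{lem diff of V in t} and then defers to the standard dynamic-programming-plus-verification argument (Section 3.3 of \cite{YZ99}), which is precisely your combination of the optimality principle, It\^o's formula with the first-order condition collapsing the cost terms into $H$, and the verification theorem for uniqueness. The technical points you flag (martingale property of the stochastic integrals, passage to the limit in $\epsilon$, and the quadratic-growth uniqueness class) are exactly the ones handled by that standard reference, so there is no gap.
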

	We refer the reader to Section 3.3 of \cite{YZ99} for a proof. We next establish that the value function at equilibrium \begin{align}
		U(x,\mu,t):&=\mathcal{J}_{tx}\pig(u_{tx}^{\mu},y_{t\bigcdot}^{\mu}\otimes \mu\pig)\nonumber\\
		&=\E\left[\int_{t}^{T}
		g_1\pig(y_{tx}^{\mu}(s),u_{tx}^{\mu}(s) \pig)
		+g_2\pig(y_{tx}^{\mu}(s),y_{t\bigcdot}^{\mu}(s)\otimes \mu\pig)\;ds
		+h_1\pig(y_{tx}^{\mu}(T)\pig)
		+h_2\pig(y_{t\bigcdot}^{\mu}(T)\otimes \mu\pig)\right]
		\label{def. value function at equilibrium U}
	\end{align} 
	has a linear functional derivative $\dfrac{d}{d\nu}U(x,\mu,t)(x')$ and it is twice differentiable in $x'$. 
	
	\begin{lemma}[\bf Differentiability of $U(x,\mu,t)$ in $\mu$]
		\label{prop. diff V w.r.t. m} 
		Suppose that \eqref{ass. Cii}, \eqref{ass. cts, bdd of dnu D g_2} of Assumptions \textup{\bf (Cii)}, \textup{\bf (Di)} hold and the following conditions:
		\begin{align}
			\textup{\bf (Dvi).} &\text{ } \dfrac{d}{d\nu}g_2(y,\mathbb{L})(\widetilde{y}) \text{ exists and is jointly continuous in $y$, $\widetilde{y}$, $\mathbb{L}$ such that } \left|\dfrac{d}{d\nu}g_2(y,\mathbb{L})(\widetilde{y})\right|\nonumber\\
			&\text{  }\leq 
			C_{g_2} \left(1+|y|^2+|\widetilde{y}|^2+\int_{\mathbb{R}^d}|z|^2d\mathbb{L}(z)\right)
			\text{ for any $y,\widetilde{y} \in \mathbb{R}^{d}$ and $\mathbb{L}\in \mathcal{P}_2(\mathbb{R}^{d})$;}
			\label{ass. cts, bdd of dnu g2}\\
			\textup{\bf (Dvii).} &\text{ } \dfrac{d}{d\nu}h_2(\mathbb{L})(y) \text{, }\nabla_y \dfrac{d}{d\nu}h_2(\mathbb{L})(y) 
			\text{ exist and are jointly continuous in both $\mathbb{L}$ and $y$ such that}\h{30pt}\nonumber
			\\
			&\text{ } \left|\dfrac{d}{d\nu}h_2(\mathbb{L})(y)\right|\leq C_{h_2} \left(1+|y|^2+\int_{\mathbb{R}^d}|z|^2d\mathbb{L}(z)\right)
			\text{, }
			\left|\nabla_y \dfrac{d}{d\nu}h_2(\mathbb{L})(y)\right|\leq 
			C_{h_2} \left(1+|y|^2+\int_{\mathbb{R}^d}|z|^2d\mathbb{L}(z)\right)^{1/2},\nonumber\\
			&\text{ for any $y \in \mathbb{R}^{d}$ and $\mathbb{L}\in \mathcal{P}_2(\mathbb{R}^{d})$,}
			\label{ass. cts, diff, bdd of dnu h2, D dnu h2}
		\end{align}
		are valid. Let $t\in [0,T)$, $\mu \in \mathcal{P}_2(\mathbb{R}^d)$ and $\pig(y_{tx}^{\mu}(s), p_{tx}^{\mu}(s), q_{tx}^{\mu}(s), u_{tx}^{\mu}(s)\pig)$ be the solution to FBSDE \eqref{eq. FBSDE, with m_0 and start at x}-\eqref{eq. 1st order condition, with m_0 and start at x}. Then  $U(x,\mu,t)$ defined in \eqref{def. value function at equilibrium U} has the linear functional derivative $\dfrac{d}{d\nu}U(x,\mu,t)(x')$ with respect to $\mu$ in the sense of \eqref{Frechet derviative of F} for each $x \in\mathbb{R}^d$, $\mu \in \mathcal{P}_2(\mathbb{R}^d)$ and $t \in [0,T)$.
		We always choose the value of the linear functional derivative such that 
		\small\begin{align}
			&\h{-30pt}\dfrac{d}{d\nu}U(x,\mu,t)(x') \nonumber\\
			=\E\Bigg\{&\int_{t}^{T}
			\nabla_y g_1\pig(y_{tx}^{\mu}(s),u_{tx}^{\mu}(s) \pig)
			\cdot\dfrac{dy_{tx}^{\mu}}{d\nu}(x',s)
			+\nabla_v g_1\pig(y_{tx}^{\mu}(s),u_{tx}^{\mu}(s) \pig)
			\cdot\dfrac{du_{tx}^{\mu}}{d\nu}(x',s)\;ds\nonumber\\
			&+\int_{t}^{T} \nabla_y g_2\pig(y_{tx}^{\mu}(s),y_{t\bigcdot}^{\mu}(s)\otimes \mu\pig)
			\cdot\dfrac{dy_{tx}^{\mu}}{d\nu}(x',s)ds\nonumber\\
			&+\int_{t}^{T}\widetilde{\mathbb{E}}
			\left[\int\nabla_{y^*}\dfrac{d}{d\nu}g_2\pig(y_{tx}^{\mu}(s),y_{t\bigcdot}^{\mu}(s)\otimes \mu\pig) (y^*)\bigg|_{y^*=\widetilde{y_{t\widetilde{x}}^{\mu}}(s)} 
			\widetilde{\dfrac{dy_{t\widetilde{x}}^{\mu}}{d\nu}}(x',s)d\mu(\widetilde{x})\right]ds\label{eq. linear functional d of U}\\
			&+\int_{t}^{T}\widetilde{\mathbb{E}}
			\left[\dfrac{d}{d\nu}g_2\pig(y_{tx}^{\mu}(s),y_{t\bigcdot}^{\mu}(s)\otimes \mu\pig) 
			\pig(\widetilde{y_{tx'}^{\mu}}(s)\pig) \right]
			\;ds\nonumber\\
			&+\nabla_y h_1\pig(y_{tx}^{\mu}(T)\pig)
			\cdot\dfrac{dy_{tx}^{\mu}}{d\nu}(x',T)\nonumber\\
			&+\widetilde{\mathbb{E}}\left[\int\nabla_{y^*}\dfrac{d}{d\nu}h_2\pig(y_{t\bigcdot}^{\mu}(s)\otimes \mu\pig)(y^*)\bigg|_{y^*=\widetilde{y_{t\widetilde{x}}^{\mu}}(T)}
			\cdot\widetilde{\dfrac{dy_{t\widetilde{x}}^{\mu}}{d\nu}}(x',T)d\mu(\widetilde{x})
			+\dfrac{d}{d\nu}h_2\pig(y_{t\bigcdot}^{\mu}(T)\otimes \mu\pig)(\widetilde{y_{tx'}^{\mu}}(T))\right]\Bigg\},\nonumber
		\end{align}\normalsize
		where $\dfrac{dy_{tx}^{\mu}}{d\nu}(x',s)$, $\dfrac{du_{tx}^{\mu}}{d\nu}(x',s)$ are obtained in (c) of Lemma \ref{lem. existence of linear functional derivative of processes}.
		\label{lem linear functional derivative of U}
	\end{lemma}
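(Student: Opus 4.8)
The plan is to establish the differentiability directly from the definition in \eqref{Frechet derviative of F} by a difference-quotient argument, leveraging the machinery already built in Section \ref{sec. Linear Functional Differentiability of Solution to FBSDE and its Regularity}. Fix $\mu, \mu' \in \mathcal{P}_2(\mathbb{R}^d)$, set $\rho := \mu' - \mu$, and consider the normalized increment $\frac{1}{\epsilon}\pig[U(x,\mu+\epsilon\rho,t) - U(x,\mu,t)\pig]$. First I would write out $U(x,\mu+\epsilon\rho,t) - U(x,\mu,t)$ using the representation \eqref{def. value function at equilibrium U} and apply the mean value theorem to each of the six cost terms ($g_1$, $g_2$ via its state argument, $g_2$ via its measure argument, $h_1$, and the two parts of $h_2$), exactly as in the expansion underlying Lemma \ref{lem. existence of linear functional derivative of processes}(c). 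This produces difference-quotient processes $\Delta^\epsilon_\rho y_{tx}(s)$, $\Delta^\epsilon_\rho u_{tx}(s)$ together with the measure-derivative terms of $g_2$ and $h_2$ evaluated along convex interpolations of the laws $y^\mu_{t\bigcdot}(s)\otimes\mu$ and $y^{\mu+\epsilon\rho}_{t\bigcdot}(s)\otimes(\mu+\epsilon\rho)$.

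The key step is then to pass to the limit $\epsilon \to 0$ term by term. Here I would invoke the strong convergence of the difference-quotient processes $\Delta^\epsilon_\rho y_{tx}(s), \Delta^\epsilon_\rho p_{tx}(s), \Delta^\epsilon_\rho u_{tx}(s)$ to $\int \frac{dy^\mu_{tx}}{d\nu}(x',s)\,d\rho(x')$, etc., established in part (c) of Lemma \ref{lem. existence of linear functional derivative of processes}, combined with the uniform $\mathcal{H}$-bounds of \eqref{bdd. diff quotient of y, p, q, u linear functional d} and the growth/continuity hypotheses on the derivatives of $g_1$ (Assumptions \textbf{(Av)}, \textbf{(Avii)}), $g_2$ (Assumptions \textbf{(Aviii)}, \textbf{(Aix)}, \textbf{(Di)}, \textbf{(Dvi)}), and $h_1$, $h_2$ (Assumptions \textbf{(Biv)}, \textbf{(Bv)}, \textbf{(Dvii)}). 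For each product term one uses a Cauchy--Schwarz splitting: one factor converges strongly in $\mathcal{H}$ while the other is a bounded coefficient evaluated along the interpolation, whose continuity (via \eqref{ass. cts and diff of g2}, \eqref{ass. cts, bdd of dnu D g_2}, \eqref{ass. cts, bdd of dnu g2}, \eqref{ass. cts, diff, bdd of dnu h2, D dnu h2}) and uniform boundedness let us apply the Lebesgue dominated convergence theorem after extracting, if needed, an almost-everywhere convergent subsequence by Borel--Cantelli. The a priori integrability required to dominate the measure-argument terms of $g_2$ and $h_2$ comes from the quadratic-in-$x,x',\mu$ bound \eqref{bdd. y p q u fix L} and the second-moment bound \eqref{ineq. bdd y, p, q, u, linear functional derivative}.

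The main obstacle will be handling the two genuinely measure-dependent terms — the ones carrying $\frac{d}{d\nu}g_2$ and $\frac{d}{d\nu}h_2$ evaluated along the flow $\widetilde{y^\mu_{tx'}}(s)$ — because there the interpolating law $y^\mu_{t\bigcdot}(s)\otimes(\mu+\theta\epsilon\rho)$ varies in its \emph{measure} slot, so one cannot simply freeze a coefficient. For these I would separately verify that the Wasserstein distance between the interpolated law and $y^\mu_{t\bigcdot}(s)\otimes\mu$ tends to zero (using Lemma \ref{lem. lip in x and xi} to control $\|y^{\mu+\epsilon\rho}_{tx}(s) - y^\mu_{tx}(s)\|_{\mathcal{H}}$ and hence the drift of the empirical law), then exploit the joint continuity in \textbf{(Dvi)} and \textbf{(Dvii)} together with the growth bounds to produce an $\epsilon$-independent dominating function that is integrable against $|\rho|$; this is where the finite second moments of both $\mu$ and $\mu'$ enter essentially. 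Once all six limits are identified, collecting them yields precisely the right-hand side of \eqref{eq. linear functional d of U}, and the resulting expression is jointly continuous in $(\mu,x')$ under the product topology by Lemma \ref{lem cts of linear functional d of process} together with the continuity assumptions, so that the stated normalization of $\frac{d}{d\nu}U(x,\mu,t)(x')$ is legitimate and \eqref{Frechet derviative of F} holds.
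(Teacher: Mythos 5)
Your proposal takes essentially the same route as the paper's own proof: expand the difference quotient of $U$ via the mean value theorem into the same family of terms, pass to the limit term by term using the strong convergence of the difference-quotient processes from part (c) of Lemma \ref{lem. existence of linear functional derivative of processes}, Cauchy--Schwarz splitting with Borel--Cantelli subsequence extraction and dominated convergence under the stated growth bounds, and handle the measure-slot interpolation terms of $\frac{d}{d\nu}g_2$ and $\frac{d}{d\nu}h_2$ by Wasserstein continuity together with Lemma \ref{lem. lip in x and xi} and the finite-second-moment domination. This matches the paper's argument, so the proposal is correct.
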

	We refer its proof in Appendix \ref{app. Proofs in  Master Equation}.

	\begin{lemma}[\bf Regularity of Linear Functional Derivative of $U(x,\mu,t)$] Suppose that
	\eqref{ass. Cii}, \eqref{ass. cts, bdd of dnu D g_2},  \eqref{ass. cts, bdd of 3rd order d of g1}, \eqref{ass. cts, bdd of 3rd order d of g2}, \eqref{ass. cts, bdd of 3rd order d of h1}, \eqref{ass. cts, bdd of D^2 dnu D g_2}, \eqref{ass. cts, bdd of dnu g2}, \eqref{ass. cts, diff, bdd of dnu h2, D dnu h2} of Assumptions \textup{\bf (Cii)}, \textup{\bf (Di)}, \textup{\bf (Dii)}, \textup{\bf (Diii)}, \textup{\bf (Div)}, \textup{\bf (Dv)}, \textup{\bf (Dvi)}, \textup{\bf (Dvii)} and the following conditions
		\begin{align}
			\textup{\bf (Dviii).} &\text{ } \nabla_{yy}\dfrac{d}{d\nu}h_2(\mathbb{L})(y) \text{ exists and is jointly continuous in $y$, $\mathbb{L}$ such that } \left|\nabla_{yy}\dfrac{d}{d\nu}h_2(\mathbb{L})(y)\right|\leq 
			C_{h_2} \text{} \nonumber\\
			&\text{ } \text{ for any $y \in \mathbb{R}^{d}$ and $\mathbb{L}\in \mathcal{P}_2(\mathbb{R}^{d})$,}
			\label{ass. cts, bdd of D^2 dnu h2}
		\end{align}
		hold, then the linear functional derivative $\dfrac{d}{d\nu}U(x,\mu,t)(x')$ is continuous in $\mu$ and twice differentiable in $x'$.
		\label{lem. cts and 2nd d of dnu U}
	\end{lemma}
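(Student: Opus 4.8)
The plan is to work directly from the explicit representation of $\dfrac{d}{d\nu}U(x,\mu,t)(x')$ derived in Lemma \ref{lem linear functional derivative of U}, namely \eqref{eq. linear functional d of U}, and to treat the two assertions — continuity in $\mu$ and twice differentiability in $x'$ — separately. In each case the strategy is to reduce the claimed regularity of $\dfrac{d}{d\nu}U$ to the already-established regularity of the underlying FBSDE processes $\pig(y_{tx}^\mu,p_{tx}^\mu,u_{tx}^\mu\pig)$, their linear functional derivatives $\pig(\tfrac{dy_{tx}^\mu}{d\nu},\tfrac{du_{tx}^\mu}{d\nu}\pig)$, and the spatial derivatives of all of these, combined with the continuity, growth, and boundedness hypotheses on the coefficients collected in Assumptions \textbf{(Di)}--\textbf{(Dviii)}. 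As in the earlier lemmas, every limit or differentiation will be justified through difference quotients together with dominated convergence.

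For the continuity in $\mu$, I would take a sequence $\{\mu_k\}$ converging to $\mu_0$ in $(\mathcal{P}_2(\mathbb{R}^d),\mathcal{W}_2)$ and pass to the limit in \eqref{eq. linear functional d of U} term by term. The key inputs are the Lipschitz estimates of Lemma \ref{lem. lip in x and xi}, which give convergence of $y_{tx}^{\mu_k}$, $u_{tx}^{\mu_k}$ and of the flowed laws $y_{t\bigcdot}^{\mu_k}(s)\otimes\mu_k$ in $\mathcal{W}_2$; the continuity of the linear functional derivatives $\tfrac{dy_{tx}^{\mu_k}}{d\nu}(x',s)$ and $\tfrac{du_{tx}^{\mu_k}}{d\nu}(x',s)$ in $\mu$ from Lemma \ref{lem cts of linear functional d of process}; and the joint continuity together with the quadratic or linear growth of the derivatives of $g_1,g_2,h_1,h_2$ in Assumptions \textbf{(Di)}, \textbf{(Dvi)}, \textbf{(Dvii)}. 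The uniform $L^2$-bounds of Lemma \ref{lem. existence of linear functional derivative of processes} (part (b)) and Lemma \ref{lem. prop of y p q u fix L} (part (a)) then furnish the dominating functions, so the Lebesgue dominated convergence theorem lets me pass the limit through each expectation and each spatial integral.

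For the differentiability in $x'$, I would differentiate \eqref{eq. linear functional d of U} once and then twice, splitting the integrand into two families: (i) terms in which $x'$ enters only through $\tfrac{dy_{tx}^\mu}{d\nu}(x',s)$ and $\tfrac{du_{tx}^\mu}{d\nu}(x',s)$, and (ii) the ``diagonal'' terms in which $x'$ enters through the evaluation point $\widetilde{y_{tx'}^\mu}(s)$, such as $\tfrac{d}{d\nu}g_2(\cdots)\pig(\widetilde{y_{tx'}^\mu}(s)\pig)$ and $\tfrac{d}{d\nu}h_2(\cdots)\pig(\widetilde{y_{tx'}^\mu}(T)\pig)$. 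For the type-(i) terms the first and second spatial derivatives are supplied directly by Lemma \ref{lem existence of d of linear functional d} and Lemma \ref{lem. Second-order Spatial Differentiability of Linear Functional Derivatives of Processes}. For the type-(ii) terms I would apply the chain rule, using the Jacobian flow $\nabla_{x'} y_{tx'}^\mu(s)$ (which exists by Lemmas \ref{lem. lip in x and xi}, \ref{lem. Existence of J flow, weak conv.}, \ref{lem. Existence of J flow, strong conv.}, \ref{lem. Existence of Frechet derivatives}) together with the measure-spatial derivatives of $g_2$ and $h_2$ from Assumptions \textbf{(Di)} and \textbf{(Dvii)}; the second differentiation additionally invokes the Hessian flow $\nabla_{x'x'} y_{tx'}^\mu(s)$ from Lemma \ref{lem hessian flow} and the higher measure-spatial derivatives guaranteed by Assumptions \textbf{(Dv)} and \textbf{(Dviii)}. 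In both differentiations I would bound the difference quotients uniformly, extract the pointwise limit, and interchange differentiation with expectation and integration by dominated convergence.

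The main obstacle will be the second differentiation of the type-(ii) terms. Differentiating a term such as $\widetilde{\mathbb{E}}\big[\,\tfrac{d}{d\nu}g_2(\cdots)(\widetilde{y_{tx'}^\mu}(s))\,\big]$ twice produces, via the product and chain rules, a contribution containing the product of two first-order Jacobian flows $\widetilde{\nabla_{x'} y_{tx'}^\mu}(s)$ against a second measure-spatial derivative factor, alongside a contribution involving the Hessian flow $\widetilde{\nabla_{x'x'} y_{tx'}^\mu}(s)$; a naive $L^2$-Cauchy--Schwarz estimate is not enough to keep these products integrable. The resolution is precisely the $L^4$-integrability of the Jacobian flow established in Lemma \ref{lem L4 regularity}: combining the $L^4$-bounds on $\nabla_{x'} y_{tx'}^\mu$ with the $L^2$-bounds on the second-order linear functional derivatives (Lemma \ref{lem. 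Second-order Spatial Differentiability of Linear Functional Derivatives of Processes}) through H\"older's inequality controls every such product, while the boundedness hypotheses in Assumptions \textbf{(Dii)}--\textbf{(Dv)} and \textbf{(Dviii)} supply the dominating functions required for the dominated-convergence step.
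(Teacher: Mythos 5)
Your proposal is correct and follows essentially the same route as the paper's own proof: for continuity in $\mu$ you pass to the limit term by term in \eqref{eq. linear functional d of U} using Lemmas \ref{lem. lip in x and xi} and \ref{lem cts of linear functional d of process} with the uniform bounds and dominated convergence, and for the twice differentiability in $x'$ you use the same splitting of the $x'$-dependence into terms handled by Lemmas \ref{lem existence of d of linear functional d} and \ref{lem. Second-order Spatial Differentiability of Linear Functional Derivatives of Processes} versus the diagonal terms handled by the Hessian flow of Lemma \ref{lem hessian flow}, the $L^4$-regularity of Lemma \ref{lem L4 regularity}, and Assumptions \textup{\bf (Dv)}, \textup{\bf (Dviii)}. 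Your explicit remark that the $L^4$ bound is exactly what controls the products of first-order Jacobian flows arising from the second differentiation of the diagonal terms spells out a point the paper uses only implicitly.
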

	\begin{proof}
		\noindent {\bf Part 1. Continuity:}
		The continuity of $\dfrac{d}{d\nu}U(x,\mu,t)(x')$ in $\mu$ is a direct consequence of the following facts: (1) the continuities of the derivatives of $g_1$, $g_2$, $h_1$, $h_2$ with respect to their respective arguments and their boundedness of \eqref{ass. cts and diff of g1}, \eqref{ass. cts and diff of g2}, \eqref{ass. bdd of Dg1}, \eqref{ass. bdd of Dg2}, \eqref{ass. cts and diff of h1}, \eqref{ass. bdd of Dh1}, \eqref{ass. cts, bdd of dnu g2}, \eqref{ass. cts, diff, bdd of dnu h2, D dnu h2} of Assumptions {\bf (Ai)}, {\bf (Aii)}, {\bf (Av)}, {\bf (Avi)}, {\bf (Bi)}, {\bf (Biv)}, {\bf (Dvi)}, {\bf (Dvii)}; (2) the boundedness of the processes $y_{tx}^{\mu}(s)$ and $u_{tx}^{\mu}(s)$ of \eqref{bdd. y p q u fix L}; (3) the continuities of the processes $y_{tx}^{\mu}(s)$ and $u_{tx}^{\mu}(s)$ with respect to $\mu$ of Lemma \ref{lem. lip in x and xi}; (4) the continuities of the processes $\dfrac{dy_{tx}^{\mu}}{d\nu}(x',s)$ and $\dfrac{du_{tx}^{\mu}}{d\nu}(x',s)$ with respect to $\mu$ of Lemma \ref{lem cts of linear functional d of process}; (5) the boundedness of the processes $\dfrac{dy_{tx}^{\mu}}{d\nu}(x',s)$ and $\dfrac{du_{tx}^{\mu}}{d\nu}(x',s)$ of \eqref{ineq. bdd y, p, q, u, linear functional derivative}. The proof of continuity of $\dfrac{d}{d\nu}U(x,\mu,t)(x')$ in $\mu$ is completed by using the expression of \eqref{eq. linear functional d of U} and repeating similar arguments of Lemma \ref{lem cts of linear functional d of process}, together with the above facts of (1) to (5).

		\noindent {\bf Part 2. Differentiability:}
		Recalling the linear functional derivative $\dfrac{d}{d\nu}U(x,\mu,t)(x')$ of \eqref{eq. linear functional d of U}, we see that the argument $x'$ in the linear functional derivative is only involved in the terms $\dfrac{dy_{tx}^{\mu}}{d\nu}(x',s)$, $\dfrac{du_{tx}^{\mu}}{d\nu}(x',s)$,
		$\dfrac{d}{d\nu}g_2\pig(y_{tx}^{\mu}(s),y_{t\bigcdot}^{\mu}(s) \otimes \mu\pig) 
		\pig(\widetilde{y_{tx'}^{\mu}}(s)\pig)$,
		$\dfrac{d}{d\nu}h_2\pig(y_{t\bigcdot}^{\mu}(T) \otimes \mu\pig)(\widetilde{y_{tx'}^{\mu}}(T))$ in the expression \eqref{eq. linear functional d of U}. To deal with the terms in the expression \eqref{eq. linear functional d of U} involving $\dfrac{dy_{tx}^{\mu}}{d\nu}(x',s)$ and $\dfrac{du_{tx}^{\mu}}{d\nu}(x',s)$, we apply the second-order differentiability of $\dfrac{dy_{tx}^{\mu}}{d\nu}(x',s)$, $\dfrac{du_{tx}^{\mu}}{d\nu}(x',s)$ with respect to $x'$ of Lemma \ref{lem. Second-order Spatial Differentiability of Linear Functional Derivatives of Processes}, the boundedness of $\nabla_{y}g_1$, $\nabla_{v}g_1$, $\nabla_{y}g_2$, $\nabla_{y^*}\dfrac{d}{d\nu}g_2$, $\nabla_y h_1$, $\nabla_{y^*}\dfrac{d}{d\nu}h_2$ of \eqref{ass. bdd of Dg1}, \eqref{ass. bdd of Dg2}, \eqref{ass. bdd of Dh1}, \eqref{ass. cts, diff, bdd of dnu h2, D dnu h2} of Assumptions {\bf (Av)}, {\bf (Avi)}, {\bf (Biv)}, {\bf (Dvii)}, and the boundedness of $y_{tx}^{\mu}(s)$, $u_{tx}^{\mu}(s)$ of \eqref{bdd. y p q u fix L}. To cope with the terms in the expression \eqref{eq. linear functional d of U} involving $\dfrac{d}{d\nu}g_2\pig(y_{tx}^{\mu}(s),y_{t\bigcdot}^{\mu}(s) \otimes \mu\pig) 
		\pig(\widetilde{y_{tx'}^{\mu}}(s)\pig)$ and $\dfrac{d}{d\nu}h_2\pig(y_{t\bigcdot}^{\mu}(T) \otimes \mu\pig)(\widetilde{y_{tx'}^{\mu}}(T))$, we apply the well-posedness of the Hessian flow $\p_{x_j}\p_{x_i}y_{tx}^{\mu}(s)$, $\p_{x_j}\p_{x_i}u_{tx}^{\mu}(s)$ of Lemma \ref{lem hessian flow}, the $L^4$-regularity of $\p_{x_i}y_{tx}^{\mu}(s)$, $\p_{x_i}u_{tx}^{\mu}(s)$ of Lemma \ref{lem L4 regularity}, and the continuities and boundedness of $\nabla_{y^*}\dfrac{d}{d\nu}g_2$, $\nabla_{y^*y^*}\dfrac{d}{d\nu}g_2$, $\nabla_{y}\dfrac{d}{d\nu}h_2$, $\nabla_{yy}\dfrac{d}{d\nu}h_2$ of \eqref{ass. bdd of Dg2}, \eqref{ass. cts, bdd of D^2 dnu D g_2}, \eqref{ass. cts, diff, bdd of dnu h2, D dnu h2}, \eqref{ass. cts, bdd of D^2 dnu h2} of Assumptions {\bf (Avi)}, {\bf (Dv)}, {\bf (Dvii)}, {\bf (Dviii)}  and the boundedness of $y_{tx}^{\mu}(s)$, $u_{tx}^{\mu}(s)$ of \eqref{bdd. y p q u fix L}.
	\end{proof}
	
	\begin{theorem}
		 Suppose that
		\eqref{ass. Cii}, \eqref{ass. cts, bdd of dnu D g_2},  \eqref{ass. cts, bdd of 3rd order d of g1}, \eqref{ass. cts, bdd of 3rd order d of g2}, \eqref{ass. cts, bdd of 3rd order d of h1}, \eqref{ass. cts, bdd of D^2 dnu D g_2}, \eqref{ass. cts, bdd of dnu g2}, \eqref{ass. cts, diff, bdd of dnu h2, D dnu h2}, \eqref{ass. cts, bdd of D^2 dnu h2} of Assumptions \textup{\bf (Cii)}, \textup{\bf (Di)}, \textup{\bf (Dii)}, \textup{\bf (Diii)}, \textup{\bf (Div)}, \textup{\bf (Dv)}, \textup{\bf (Dvi)}, \textup{\bf (Dvii)}, \textup{\bf (Dviii)} hold. Recall the solution to FBSDE \eqref{eq. FBSDE, with m_0 and start at x}-\eqref{eq. 1st order condition, with m_0 and start at x}, the Hamiltonian of \eqref{def. Hamiltonian} and the function of \eqref{def. value function at equilibrium U}. The function $U(x,m,t):=\mathcal{J}_{tx}\pig(u_{tx}^{m},y_{t\bigcdot}^{m}\otimes m\pig):\mathbb{R}^d \times \mathcal{P}_2(\mathbb{R}^d) \times [0,T] \mapsto \mathbb{R}$ is the unique classical solution to the following master equation:
		\fontsize{11pt}{11pt}\begin{equation}
			\h{-5pt}\left\{
			\begin{aligned}
				&
				\p_t U(x,m,t) + \dfrac{1}{2}  \sum^d_{i,j=1} \pig(\eta \eta^\top\pigr)_{ij}\p_{x_i}\p_{x_j} U(x,m,t)
				+H\pig(x,m, \nabla_x U(x,m,t) \pig)\\
				&+\int_{\mathbb{R}^d} 
				\left\{ \dfrac{1}{2}\sum^d_{i,j=1} \pig(\eta \eta^\top\pigr)_{ij}\p_{y_i}\p_{y_j}\dfrac{d}{d\nu} U(x,m,t)(y)
				+  u\pig(y,\nabla_y U(y,m,t)\pig) \cdot\nabla_y\dfrac{d}{d\nu} U(x,m,t)(y)\right\}m(y)dy
				=0;\\
				&U(x,m,T)=h_1(y) + h_2(m),
			\end{aligned}\right.
			\label{eq. master eq.}
		\end{equation}\normalsize
		in the sense that $U(x,m,t)$, $\p_t U(x,m,t)$, $\nabla_x U(x,m,t)$, $\nabla_{xx}U(x,m,t)$, $\dfrac{d}{d\nu} U(x,m,t)(y)$, $\nabla_{y}\dfrac{d}{d\nu} U(x,m,t)(y)$, $\nabla_{yy}\dfrac{d}{d\nu} U(x,m,t)(y)$ are continuous with respect to their respective arguments.
		\label{thm. master eq.}
	\end{theorem}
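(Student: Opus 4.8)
The plan is to verify that the candidate function $U(x,m,t):=\mathcal{J}_{tx}\pig(u_{tx}^{m},y_{t\bigcdot}^{m}\otimes m\pig)$, built from the equilibrium FBSDE solution, satisfies the master equation \eqref{eq. master eq.} classically, and then to argue uniqueness. The key observation, already available from the flow property \eqref{flow property} and the characterization $\nabla_x V(x,t)=p_{tx\mathbb{L}}(t)$ in Lemma \ref{lem diff V w.r.t. x}, is that $U(x,m,t)$ coincides with the value function $V(x,t)$ associated with the frozen measure flow $\mathbb{L}(s)=y_{t\bigcdot}^{m}(s)\otimes m$ on $[t,T]$. First I would exploit this identification: by Lemma \ref{thm bellmen eq.}, for the fixed exogenous measure flow $\mathbb{L}(s)$ the value function solves the HJB equation \eqref{eq. bellman eq.}, which gives exactly the first line of \eqref{eq. master eq.}, namely
\begin{align*}
	\p_t U(x,m,t)+\dfrac{1}{2}\sum^d_{i,j=1}(\eta\eta^\top)_{ij}\p_{x_i}\p_{x_j}U(x,m,t)+H\pig(x,m,\nabla_x U(x,m,t)\pig)=0,
\end{align*}
provided one is careful that the $\p_t$ here is the full time derivative of $U$, which includes the dependence of the frozen flow on the starting time $t$.

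The main work, therefore, is to show that the extra integral term in \eqref{eq. master eq.} accounts precisely for the discrepancy between the frozen-flow time derivative and the genuine master-equation time derivative. The plan is to compute $\p_t U$ in two ways. On one hand, Lemma \ref{lem diff of V in t} gives $\p_t U$ as the time derivative of the value function with the measure flow held as a fixed exogenous input. On the other hand, the true $\p_t U(x,m,t)$ measures the change as the initial time moves while the initial law $m$ stays fixed. The difference between these two is governed by how the equilibrium measure flow $y_{t\bigcdot}^{m}(s)\otimes m$ responds to an infinitesimal shift in $t$, and this is captured through the linear functional derivative $\dfrac{d}{d\nu}U(x,m,t)(y)$ established in Lemma \ref{lem linear functional derivative of U}. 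Concretely, I would perturb the initial time, use the chain rule for linear functional derivatives \eqref{eq:2-210}, and identify the rate of change of the measure argument with the drift $u\pig(y,\nabla_y U(y,m,t)\pig)$ of the forward dynamics together with the second-order (It\^o) contribution from the volatility $\eta$; this is exactly the transport-plus-diffusion operator appearing inside the integral $\int_{\mathbb{R}^d}\{\cdots\}m(y)dy$. The regularity needed to make these manipulations rigorous — namely that $\dfrac{d}{d\nu}U(x,m,t)(y)$ is $C^2$ in $y$ and jointly continuous in all arguments — is precisely what Lemma \ref{lem. cts and 2nd d of dnu U} supplies, and the feedback representation $u_{t\xi\mathbb{L}}(s)=\nabla_p H\pig(y_{t\xi\mathbb{L}}(s),\mathbb{L}(s),p_{t\xi\mathbb{L}}(s)\pig)$ lets me write the transport coefficient in the stated Hamiltonian form.

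For the continuity claims listed at the end of the theorem, I would collect them from the earlier regularity results: continuity and differentiability of $U$, $\nabla_x U$, $\nabla_{xx}U$, $\p_t U$ in $(x,t)$ follow from Lemmas \ref{lem diff V w.r.t. x}, \ref{lem 2nd diff V w.r.t. x}, \ref{lem diff of V in t}, while the existence, joint continuity, and twice-differentiability in $y$ of $\dfrac{d}{d\nu}U(x,m,t)(y)$ come from Lemmas \ref{lem linear functional derivative of U} and \ref{lem. cts and 2nd d of dnu U}. Uniqueness I would obtain by a verification argument: given any other classical solution $\widetilde U$ with the stated regularity, I would plug the associated feedback control $u\pig(x,\nabla_x\widetilde U(x,m,t)\pig)$ into the forward McKean--Vlasov dynamics, apply It\^o's formula along its own flow to show $\widetilde U$ equals the objective functional evaluated at that control, and then use the strict convexity and unique solvability of the equilibrium FBSDE from Theorem \ref{thm. global existence} (under \eqref{ass. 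Cii}) to conclude $\widetilde U=U$. The hard part will be the first computation of $\p_t U$ via the measure-flow perturbation: one must carefully justify interchanging the time-shift limit with the expectations and integrals, track the interaction between the shift in starting time and the evolving law, and produce the It\^o second-order term inside the integral with the correct constant $\tfrac12(\eta\eta^\top)_{ij}$ — this is where the joint continuity and $L^4$-type bounds from Lemmas \ref{lem L4 regularity} and \ref{lem hessian flow} are indispensable for dominating the relevant difference quotients.
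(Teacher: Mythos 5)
Your proposal is correct and follows essentially the same route as the paper: identify $U$ with the frozen-flow value function and invoke the HJB equation (Lemma \ref{thm bellmen eq.}), recover the integral term by decomposing the time difference quotient into a frozen-flow part and a measure-evolution part handled via the linear functional derivative and the Fokker--Planck/It\^o dynamics of the equilibrium flow, collect regularity from Lemmas \ref{lem diff V w.r.t. x}--\ref{lem diff of V in t}, \ref{lem linear functional derivative of U}, \ref{lem. cts and 2nd d of dnu U}, and prove uniqueness by a verification argument combined with the FBSDE uniqueness of Theorem \ref{thm. global existence}. This matches the paper's two-part proof (the split of $\frac{1}{\epsilon}[U(x,m_{t_0},t_0+\epsilon)-U(x,m_{t_0},t_0)]$ in \eqref{8444} and the mean-field It\^o verification for uniqueness), so no further comment is needed.
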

	\begin{proof}
		Let $t_0 \in [0,T)$ and $m_{t_0}(y)dy\in \mathcal{P}_2(\mathbb{R}^d)$. For any measure $\mu'\in \mathcal{P}_2(\mathbb{R}^d)$ having density $m'$, we simply identify $\mu'(y)=m'(y)dy$ by $m'$. We also define $U\pig(x,m_{t_0},t_0\pig): = 
		\mathcal{J}_{t_0x}\pig(u_{t_0x}^{m_{t_0}},y_{t_0\bigcdot}^{m_{t_0}}\otimes m_{t_0}\pig)=V(x,t_0)$. The density of  
		$y_{t_0\bigcdot}^{m_{t_0}}(s) \otimes m_{t_0}$, denoted by $\widehat{m}(y,s)$, is the solution to the Fokker-Planck equation:
		\begin{equation}
			\h{-5pt}\left\{
			\begin{aligned}
				&
				\p_s \widehat{m}(y,s) - \dfrac{1}{2}\sum^d_{i,j=1}  \pig(\eta \eta^\top\pigr)_{ij}\p_{y_i}\p_{y_j} \widehat{m}(y,s)
				+\nabla_y\cdot\Big[u\pig(y,\nabla_y V(y,s)\pig)\widehat{m}(y,s)\Big]=0 \h{10pt} \text{in $\mathbb{R}^d\times[t_0,T];$}\\
				&\widehat{m}(y,t_0)=m_{t_0}(y).
			\end{aligned}\right.
			\label{eq. FP eq1}
		\end{equation}
		By the definition of $U$, it satisfies $U(x,\widehat{m}(\cdot,t),t) = \mathcal{J}_{tx}\Big(u_{tx}^{\widehat{m}(\cdot,t)},\widehat{m}\Big)=V(x,t)$ for $t \in [t_0,T]$.
		
		\noindent{\bf Part 1. Equation satisfied by $U(x,m,t)$:} 	We consider
		\begin{align}
			\dfrac{U(x,m_{t_0},t_0+\epsilon)-U(x,m_{t_0},t_0)}{\epsilon}
			=&\dfrac{U(x,m_{t_0},t_0+\epsilon)-U(x,\widehat{m}(\cdot,t_0+\epsilon),t_0+\epsilon)}{\epsilon}\nonumber\\
			&+\dfrac{U(x,\widehat{m}(\cdot,t_0+\epsilon),t_0+\epsilon)-U(x,m_{t_0},t_0)}{\epsilon}.
			\label{8444}
		\end{align}
		The first term on the right hand side of \eqref{8444} can be estimated by using \eqref{eq. FP eq1}
		\begin{align}
			&\h{-10pt}\dfrac{U(x,m_{t_0},t_0+\epsilon)-U(x,\widehat{m}(\cdot,t_0+\epsilon),t_0+\epsilon)}{\epsilon}\nonumber\\
			=&-\dfrac{1}{\epsilon}\int^1_0\int \dfrac{d}{d\nu}U(x,(1-\theta) m_{t_0}+\theta\widehat{m}(\cdot,t_0+\epsilon),t_0+\epsilon)(y)\pig[\widehat{m}(y,t_0+\epsilon)-m_{t_0}(y)\pig]dyd\theta\nonumber\\
			=&-\dfrac{1}{\epsilon}\int^1_0\int \dfrac{d}{d\nu}U(x,(1-\theta) m_{t_0}+\theta\widehat{m}(\cdot,t_0+\epsilon),t_0+\epsilon)(y)\cdot\nonumber\\
			&\pushright{\int^{t_0+\epsilon}_{t_0}\left\{\dfrac{1}{2}\sum^d_{i,j=1}  \pig(\eta \eta^\top\pigr)_{ij}\p_{y_i}\p_{y_j} \widehat{m}(y,s)
			-\nabla_y\cdot\Big[u\pig(y,\nabla_y V(y,s)\pig)\widehat{m}(y,s)\Big]\right\}dsdyd\theta}\nonumber\\
			=&-\dfrac{1}{\epsilon}\int^1_0\int \int^{t_0+\epsilon}_{t_0}
			\sum^d_{i,j=1}\pig(\eta \eta^\top\pigr)_{ij}\p_{y_i}\p_{y_j}\dfrac{d}{d\nu}U(x,(1-\theta) m_{t_0}+\theta\widehat{m}(\cdot,t_0+\epsilon),t_0+\epsilon)(y)\widehat{m}(y,s)dsdyd\theta\nonumber\\
			&-\dfrac{1}{\epsilon}\int^1_0\int\int^{t_0+\epsilon}_{t_0}
			\nabla_y\dfrac{d}{d\nu}U(x,(1-\theta) m_{t_0}+\theta\widehat{m}(\cdot,t_0+\epsilon),t_0+\epsilon)(y)
			\cdot u\pig(y,\nabla_y V(y,s)\pig)\widehat{m}(y,s)dsdyd\theta.
			\label{8458}
		\end{align} 
		For the second term on the right hand side of \eqref{8444}, we have
		\begin{align}
		\lim_{\epsilon \to 0}	\dfrac{U(x,\widehat{m}(\cdot,t_0+\epsilon),t_0+\epsilon)-U(x,m_{t_0},t_0)}{\epsilon}
			=\lim_{\epsilon \to 0}\dfrac{V(x,t_0+\epsilon)-V(x,t_0)}{\epsilon}
			=\p_t V(x,t)\Big|_{t=t_0}.
			\label{8465}
		\end{align}
		Putting \eqref{8458} and  \eqref{8465} into \eqref{8444}, we pass $\epsilon \to 0$ to obtain that
		\begin{align*}
			\p_t U\pig(x,\widehat{m}(\cdot,t_0),t\pig)\Big|_{t=t_0}
			=\,
			&-\int_{\mathbb{R}^d}\Bigg\{ \dfrac{1}{2}  \sum^d_{i,j=1} \pig(\eta \eta^\top\pigr)_{ij}\p_{y_i}\p_{y_j} \dfrac{d}{d\nu} U\pig(x,\widehat{m}(\cdot,t_0),t_0\pig)(y)\\
			&\pushright{+u\pig(y,\nabla_y U(x,\widehat{m}(\cdot,t_0),t_0)\pig)\cdot \nabla_y \dfrac{d}{d\nu} U\pig(x,\widehat{m}(\cdot,t_0),t_0\pig)(y) \Bigg\}\widehat{m}(y,t_0)dy}\\
			&-\dfrac{1}{2}  \sum^d_{i,j=1} \pig(\eta \eta^\top\pigr)_{ij}\p_{x_i}\p_{x_j} U(x,\widehat{m}(\cdot,t_0),t_0)
			-H\pig(x,\widehat{m}(\cdot,t_0), \nabla_x U(x,\widehat{m}(\cdot,t_0),t_0) \pig),
		\end{align*}
		which gives the required master equation by evaluating at $t=t_0$. The regularity of $U$ comes from Lemma \ref{lem diff V w.r.t. x}, \ref{lem 2nd diff V w.r.t. x}, \ref{lem diff of V in t}, \ref{lem. lip in x and xi}, \ref{lem cts of linear functional d of process}, \ref{lem existence of d of linear functional d}, \ref{lem L4 regularity}, \ref{lem hessian flow}, \ref{lem. Second-order Spatial Differentiability of Linear Functional Derivatives of Processes}, \ref{lem. cts and 2nd d of dnu U} and also the expression in \eqref{eq. linear functional d of U}.
		
		\noindent{\bf Part 2. Uniqueness of master equation:} Suppose that $U_1$ is the solution of \eqref{eq. master eq.} obtained in Part 1 of this proof and $u(y,p)$ is the function solving the first order condition \eqref{eq. 1st order condition, equilibrium}. Let $U_2$ be another solution of \eqref{eq. master eq.}, $m(y) dy \in \mathcal{P}_2(\mathbb{R}^d)$ denote the initial distribution and $t \in [0, T)$, we introduce a random variable $\xi$ such that the $\mathcal{L}(\xi)=m(y) dy$. For an arbitrary control $v_{t\xi} \in L^2_{\mathcal{W}_{t}} \big(t, T; \mathcal{H} \big)$, we then consider the following processes:
		\[
		y_{tx}^{v_{t\xi}  }(s) = x + \int_t^s v_{t\xi} (\tau)d\tau 
		+ \eta(W_s-W_t),
		\]
		$$\widehat{y}_{t\xi}(s)=\xi + \int_t^su\Big(\widehat{y}_{t\xi}(\tau),\nabla_yU_2(\widehat{y}_{t\xi}(\tau),\mathcal{L}(\widehat{y}_{t\xi}(\tau)),\tau)\Big)d\tau+ \eta(W_s-W_t),$$and
		$$\widehat{y}_{tx}^{\,\xi}(s)=x + \int_t^su\Big(\widehat{y}_{tx}^{\,\xi}(\tau),\nabla_yU_2(\widehat{y}_{tx}^{\,\xi}(\tau),\mathcal{L}(\widehat{y}_{t\xi}(\tau)),\tau)\Big)d\tau+ \eta(W_s-W_t),$$
		for $s\in[t,T]$. For simplicity, we define $\widehat{u}_{t\xi}(\tau):=u\Big(\widehat{y}_{t\xi}(\tau),\nabla_yU_2(\widehat{y}_{t\xi}(\tau),\mathcal{L}(\widehat{y}_{t\xi}(\tau)),\tau)\Big)$. By applying the mean field Itô's formula (see \cite[Theorem 7.1]{BLPR17})
		to $U_2 \big( y_{tx}^{v_{t\xi}}(s), \mathcal{L}(\widehat{y}_{t\xi}(s)), s \big)$, we deduce that:
		\begin{align}
			&\mathbb{E} \Big[U_2 \pig(y_{tx}^{v_{t\xi}  }(s),\mathcal{L}(\widehat{y}_{t\xi}(s)), s \pig) \Big] - U_2(x,m,t)\nonumber\\
			&=\int_t^s \mathbb{E} \Bigg\{ \partial_t U_2 \pig( y_{tx}^{v_{t\xi}  }(\tau),\mathcal{L}(\widehat{y}_{t\xi}(\tau)),\tau \pig) 
			+ v_{t\xi}  (\tau)\cdot \nabla_y U_2 \pig( y_{tx}^{v_{t\xi}  }(\tau),\mathcal{L}(\widehat{y}_{t\xi}(\tau)),\tau \pig) \nonumber\\
			&\h{50pt}+\frac{1}{2} \sum^d_{i,j=1}\pig(\eta \eta^\top\pigr)_{ij}
			\p_{y_i}\p_{y_j} U_2  \pig( y_{tx}^{v_{t\xi}  }(\tau),\mathcal{L}(\widehat{y}_{t\xi}(\tau)),\tau \pig)\nonumber\\
			&\h{50pt}+ \widetilde{\mathbb{E}} \Big[ \widetilde{\widehat{u}_{t\xi}}(\tau) \cdot
			\nabla_{\widetilde{y}}\frac{d}{d\nu} U_2\pig( y_{tx}^{v_{t\xi}  }(\tau),\mathcal{L}(\widehat{y}_{t\xi}(\tau)),\tau \pig) \pig( \widetilde{\widehat{y}_{t\xi}}(\tau) \pig) \nonumber\\
			&\h{80pt}+\frac{1}{2} \sum^d_{i,j=1}\pig(\eta \eta^\top\pigr)_{ij}
			\p_{\,\widetilde{y}_i}\p_{\,\widetilde{y}_j} \frac{d}{d\nu}U_2  \pig( y_{tx}^{v_{t\xi}  }(\tau),\mathcal{L}(\widehat{y}_{t\xi}(\tau)),\tau \pig)(\widetilde{\widehat{y}_{t\xi}}(\tau))\Big]\Bigg\} \, d\tau,
			\label{2307}
		\end{align}
		where \(  \widetilde{\widehat{y}_{t\xi}}(\tau) \) is an independent copy of \( \widehat{y}_{t\xi}(\tau) \) and $\widetilde{\widehat{u}_{t\xi}}(\tau)=u\Big(\widetilde{\widehat{y}_{t\xi}}(\tau),\nabla_yU_2(\widetilde{\widehat{y}_{t\xi}}(\tau),\mathcal{L}(\widehat{y}_{t\xi}(\tau)),\tau)\Big)$ is an independent copy of $\widehat{u}_{t\xi}(\tau)$.
		Recall the Lagrangian $L(x,\mathbb{L},p,v)$ and Hamiltonian $H(x,\mathbb{L},p)$ defined in \eqref{def. Hamiltonian}, we evaluate \eqref{2307} at $s=T$ and use the master equation \eqref{eq. master eq.} satisfied by $U_2$ to obtain:
		\begin{align*}
			&\mathbb{E} \pig[ h_1\pig(y_{tx}^{v_{t\xi}}(T)\pig)
			+h_2 \pig(\mathcal{L}(\widehat{y}_{t\xi}(T)) \pig) \pig] - U_2(x,m,t)\\
			&=\int_t^T \mathbb{E} \Bigg\{ -g_1(y_{tx}^{v_{t\xi}  }(\tau),v_{t\xi} (\tau))
			-g_2(y_{tx}^{v_{t\xi}  }(\tau),\mathcal{L}(\widehat{y}_{t\xi}(\tau)))\\
			&\h{50pt}-H\Big(y_{tx}^{v_{t\xi}}(\tau),\mathcal{L}(\widehat{y}_{t\xi}(\tau)),\nabla_y U_2 \pig( y_{tx}^{v_{t\xi}  }(\tau),\mathcal{L}(\widehat{y}_{t\xi}(\tau)),\tau \pig)\Big)\\
			&\h{50pt}+L\Big(y_{tx}^{v_{t\xi}}(\tau),\mathcal{L}(\widehat{y}_{t\xi}(\tau)),\nabla_y U_2 \pig( y_{tx}^{v_{t\xi}  }(\tau),\mathcal{L}(\widehat{y}_{t\xi}(\tau)),\tau \pig),v_{t\xi}(\tau)\Big)\Bigg\}.
		\end{align*}
		Thus, the fact that $L(x,\mathbb{L},p,v)\geq H(x,\mathbb{L},p)$ implies that $\mathcal{J}_{tx}\pig(v_{t\xi},\mathcal{L}(\widehat{y}_{t\xi})\pig) \geq U_2(x,m,t) $ for any $v_{t\xi}  \in L^2_{\mathcal{W}_{t}}\pig(t,T;\mathcal{H}\pig)$. If we take $v_{t\xi}(\tau)=u\Big(\widehat{y}_{tx}^{\,\xi}(\tau) ,\nabla_y U_2 \pig( \widehat{y}_{tx}^{\,\xi}(\tau),\mathcal{L}(\widehat{y}_{t\xi}(\tau)),\tau \pig)\Big)$, then $\mathcal{J}_{tx}\pig(v_{t\xi},\mathcal{L}(\widehat{y}_{t\xi})\pig) = U_2(x,m,t)$ since $L(x,\mathbb{L},p,u(x,p))= H(x,\mathbb{L},p)$. Next, we define $\widehat{p}_{t\xi}(s):=\nabla_yU_2\pig( \widehat{y}_{t\xi}(s),\mathcal{L}(\widehat{y}_{t\xi}(s)),s \pig)$ and $\widehat{q}_{t\xi}(s):=\nabla_{yy}^2U_2\pig( \widehat{y}_{t\xi}(s),\mathcal{L}(\widehat{y}_{t\xi}(s)),s \pig) \eta$. By using the master equation \eqref{eq. master eq.} and mollification, we deduce that $(\widehat{y}_{t\xi},\widehat{p}_{t\xi},\widehat{q}_{t\xi})$ solves the FBSDEs in \eqref{eq. FBSDE, equilibrium} with control $u(\widehat{y}_{t\xi},\widehat{p}_{t\xi})$. By the uniqueness result  in Theorem \ref{thm. global existence} for the FBSDEs \eqref{eq. FBSDE, equilibrium}, we conclude that $\widehat{y}_{t\xi}=y_{t\xi}$ and hence $\widehat{y}_{tx}^{\,\xi}=y_{tx}^\xi$. It follows that $U_1(x,m,t)=\mathcal{J}_{tx}\pig(v_{t\xi},\mathcal{L}(\widehat{y}_{t\xi})\pig) = U_2(x,m,t)$ when we choose the equilibrium control  $u\Big(\widehat{y}_{tx}^{\,\xi}(\tau) ,\nabla_y U_2 \pig( \widehat{y}_{tx}^{\,\xi}(\tau),\mathcal{L}(\widehat{y}_{t\xi}(\tau)),\tau \pig)\Big)$. 
		
	\end{proof}

		\textbf{Acknowledgment.} The authors would like to express their sincere gratitude for the inspiring suggestions from the attendants in the talk by Phillip Yam in the ICMS workshop ``Mean-field games, energy systems, and other applications'' at the University of Edinburgh in April 2019. The primitive ideas of this article had germinated since the final stage of Ph.D. study of Michael Man Ho Chau at Imperial College London and University of Hong Kong 2016 under the supervision of Phillip Yam, and some particular results had been incorporated in his Ph.D. dissertation. The authors would also like to extend their heartfelt gratitude to Michael Chau for his remarkable work. Alain Bensoussan is supported by the
	National Science Foundation under grants NSF-DMS-1612880, NSF-DMS-1905449, NSF-DMS-1905459 and NSF-DMS-2204795, and grant
	from the SAR Hong Kong RGC GRF 14301321. Ho Man Tai extends his warmest thanks to Professor Yam and the Department of Statistics at The Chinese University of Hong Kong for the financial support. Tak Kwong Wong was partially supported by the HKU Seed Fund for Basic Research under the project code 201702159009, the Start-up Allowance for Croucher Award Recipients, and Hong Kong General Research Fund (GRF) grants with project numbers 17306420, 17302521, and 17315322. Phillip Yam acknowledges the financial supports from HKGRF-14301321 with the project title ``General Theory for Infinite Dimensional Stochastic Control: Mean Field and Some Classical Problems'' and HKSAR-GRF 14300123 with the project title ``Well-posedness of Some Poisson-driven Mean Field Learning Models and their Applications''. He also thanks the University of Texas at Dallas for the kind invitation to be a visiting professor in the Naveen Jindal School of Management.
	
	\section{Appendix}
	
	\subsection{Proofs in Section \ref{sec. Problem Setting and Preliminary}}\label{app. Proofs in Problem Setting and Preliminary}
	\begin{proof}[\bf Proof of Lemma \ref{lem. DJ(v,L)}]
		For any scalar $\theta\in (-1,1)$ and fix an arbitrary process $\widetilde{v}_{t\xi}(s) \in L^2_{\mathcal{W}_{t\xi}}\big(t,T;\mathcal{H}\big)$, the perturbation control $v^\theta_{t\xi}(s) := v_{t\xi}(s) + \theta \widetilde{v}_{t\xi}(s)$ generates the corresponding perturbed state $x_{t\xi}(s) + \theta\int^s_t \widetilde{v}_{t\xi}(\tau) d\tau$, where $x_{t\xi}(s) = x_{t\xi}(s;v_{t\xi})$ is a solution to \eqref{eq. state x_s}. The derivative of the functional:
		\begin{align*}
			\mathcal{J}_{t\xi}(v^\theta_{t\xi},\mathbb{L})
			=\,&\E\left[\int_{t}^{T}
			g_1\left( x_{t\xi}(s) + \theta\int^s_t \widetilde{v}_{t\xi}(\tau) d\tau,v^\theta_{t\xi}(s)\right)
			+g_2\left( x_{t\xi}(s) + \theta\int^s_t \widetilde{v}_{t\xi}(\tau) d\tau,\mathbb{L}(s)\right)ds\right]\\
			&+\E\left[h_1\left(x_{t\xi}(T)
			+ \theta\int^T_t \widetilde{v}_{t\xi}(\tau) d\tau \right)
			+h_2\left( \mathbb{L}(T) \right)\right],
		\end{align*}
		with respect to $\theta$ is given by
		$$\left.\dfrac{d}{d\theta} \mathcal{J}_{t\xi}(v^\theta_{t\xi},\mathbb{L}) \right|_{\theta=0}
		=\lim_{\theta \to 0} \dfrac{\mathcal{J}_{t\xi}(v^\theta_{t\xi},\mathbb{L})-\mathcal{J}_{t\xi}(v_{t\xi},\mathbb{L})}{\theta}.$$
		By \eqref{ass. bdd of Dg1}, \eqref{ass. bdd of Dg2}, \eqref{ass. bdd of Dh1} of Assumptions {\bf (Av)}, {\bf (Avi)}, {\bf (Biv)}, the mean value theorem and the Lebesgue dominated convergence theorem imply

			\begin{align}\label{e:First_Variation_of_J}
				\left.\dfrac{d}{d\theta} \mathcal{J}_{t\xi}(v^\theta_{t\xi},\mathbb{L}) \right|_{\theta=0}
				=\,& \E\Bigg[ \int_{t}^{T} 
				\left\langle\int^s_t  \widetilde{v}_{t\xi}(\tau) d\tau, 
				\nabla_y g_1\pig(x_{t\xi}(s), v_{t\xi}(s)\pig)
				+\nabla_y g_2\pig(x_{t\xi}(s),\mathbb{L}(s)\pig)\right \rangle_{\mathbb{R}^{d}}ds \nonumber\\
				&\h{20pt}+\int_{t}^{T} \Big\langle\widetilde{v}_{t\xi}(s),\nabla_v g_1\pig(x_{t\xi}(s),v_{t\xi}(s)\pig) \Big\rangle_{\mathbb{R}^{d}} \;ds 
				+\left\langle \int^T_t  \widetilde{v}_{t\xi}(\tau) d\tau,
				\nabla_y h_1\pig(x_{t\xi}(T)\pig)\right\rangle_{\mathbb{R}^{d}} \Bigg].
			\end{align}
		For a deterministic point $x \in \mathbb{R}^d$ and the process $x_{tx}(s;v_{tx})$ satisfying \eqref{eq. state x_s}, we propose that $\pig(p_{tx}(s),q_{tx}(s)\pig)$ to be the unique $\mathcal{W}_{t\xi}$-adapted solution of the following adjoint/backward problem:
		$$
		\left\{
		\begin{aligned}
			-dp_{tx}(s)&=\nabla_y g_1\pig(x_{tx}(s), v_{tx}(s)\pig)\;ds
			+\nabla_y g_2\pig(x_{tx}(s),\mathbb{L}(s)\pig)\;ds-q_{tx}(s) dW_s
			\h{1pt}, \h{20pt} \text{for $s \in [t,T]$;}\\
			p_{tx}(T)&= \nabla_y h_1\pig(x_{tx}(T) \pig).
		\end{aligned}
		\right.
		$$
		By defining $\pig(p_{t\xi}(s),q_{t\xi}(s)\pig)=\pig(p_{tx}(s),q_{tx}(s)\pig)\Big|_{x=\xi}$ and using \eqref{eq. backward process p_s}, we have
		\begin{equation*}\label{e:}
			\begin{aligned}
				&d\left\langle p_{t\xi}(s),\int^s_t\widetilde{v}_{t\xi}(\tau) d\tau\right\rangle_{\mathbb{R}^{d}}\\
				&= \Big\langle p_{t\xi}(s), \widetilde{v}_{t\xi}(s)\Big\rangle_{\mathbb{R}^{d}} ds 
				+ \left\langle \int^s_t\widetilde{v}_{t\xi}(\tau) d\tau,dp_{t\xi}(s) \right\rangle_{\mathbb{R}^{d}} \\
				&=  \Big\langle p_{t\xi}(s) , \widetilde{v}_{t\xi}(s)\Big\rangle_{\mathbb{R}^{d}} ds  \\
				&\h{15pt}- \left\langle \int^s_t \widetilde{v}_{t\xi}(\tau) d\tau, 
				\nabla_y g_1\pig(x_{t\xi}(s), v_{t\xi}(s)\pig)
				+\nabla_y g_2\pig(x_{t\xi}(s),\mathbb{L}(s)\pig)\right\rangle_{\R^{n}}ds
				+ \left\langle \int^s_t \widetilde{v}_{t\xi}(\tau) d\tau, 
				q_{t\xi}(s) dW_s \right\rangle_{\R^{n}} .
			\end{aligned}
		\end{equation*}
		Integrating both sides yields
		\begin{equation}\label{eq. E[<Dh,int tilde v>]}
			\begin{aligned}
				& \h{-10pt}\E\left[\left\langle\nabla_y h_1\pig(x_{t\xi}(T) \pig),
				\int^T_t \widetilde{v}_{t\xi}(\tau) d\tau\right\rangle_{\R^{n}}\right] \\
				=\,&
				\E\bigg[ \displaystyle\int_{t}^{T}   \Big\langle p_{t\xi}(s), \widetilde{v}_{t\xi}(s)\Big\rangle_{\R^{n}}\\
				&\h{20pt} -\left\langle \int^s_t \widetilde{v}_{t\xi}(\tau) d\tau, 
				\nabla_y g_1\pig(x_{t\xi}(s),v_{t\xi}(s)\pig)
				+\nabla_y g_2\pig(x_{t\xi}(s),\mathbb{L}(s)\pig)
				\right\rangle_{\R^{n}}\h{-5pt}ds  
				+\int_{t}^{T} \h{-5pt} \left\langle \int^s_t \widetilde{v}_{t\xi}(\tau) d\tau, 
				q_{t\xi}(s) dW_s \right\rangle_{\R^{n}}  \bigg].\h{-30pt}
			\end{aligned}
		\end{equation}
		It follows from the tower property that
		\[
		\E\left[ \int_{t}^{T} \left\langle \int^s_t \widetilde{v}_{t\xi}(\tau) d\tau, 
		q_{t\xi}(s) dW_s \right\rangle_{\R^{n}}  \right]  
		= \E\left[ \E\left( \left. \int_{t}^{T} \left\langle \int^s_t \widetilde{v}_{t\xi}(\tau) d\tau, 
		q_{t\xi}(s) dW_s \right\rangle_{\R^{n}}  \right| \mathcal{W}_{t\xi} \right) \right] = 0,
		\]
		since $q_{t\xi}(s)$ is adapted to the filtration $\mathcal{W}_{t\xi}$. Putting \eqref{eq. E[<Dh,int tilde v>]} into \eqref{e:First_Variation_of_J} yields
		\begin{equation}
			\begin{aligned}
				\left.\dfrac{d}{d\theta} \mathcal{J}_{t\xi}(v^\theta_{t\xi},\mathbb{L}) \right|_{\theta=0}= \E\Bigg[ \int_{t}^{T} 
				\Big\langle\widetilde{v}_{t\xi}(s),p_{t\xi}(s)
				+\nabla_v g_1\pig(x_{t\xi}(s), v_{t\xi}(s)\pig) \Big\rangle_{\mathbb{R}^{d}} \;ds\Bigg].
			\end{aligned}
		\end{equation}
	\end{proof}
	
	\begin{proof}[\bf Proof of Lemma \ref{lem. convexity and coercivity of J(v,L)}]
		In this proof, we keep the measure term $\mathbb{L} \in C\pig(t,T;\mathcal{P}_2(\mathbb{R}^d)\pig)$ fixed. Consider two arbitrary controls $v_{t\xi}^{1}(s), v_{t\xi}^{2}(s) \in L^2_{\mathcal{W}_{t\xi}}\big(t,T;\mathcal{H}\big)$. To simplify the notations, we write $v^{i}(s)=v_{t\xi}^{i}(s)$ and $
		x^{i}(s)=x_{t\xi} \pig(s;v_{t\xi}^{i}\pig)$ defined in \eqref{eq. state x_s} for $i=1,2$. 
		
		\noindent {\bf Part 1. Continuity of $\mathcal{J}_{t\xi}\pig(v,\mathbb{L}\pig)$ in $v$:}\\
		The fact that $x^{1}(s)-x^{2}(s)=\int_{t}^{s}v^{1}(\tau)-v^{2}(\tau)d\tau$ for any $s \in(t,T]$, together with a simple application of the Cauchy-Schwarz inequality implies that
		\begin{equation}
			\pigl\|x^{1}(T)-x^{2}(T)\pigr\|^{2}_{\mathcal{H}}
			\leq (T-t) \int_{t}^{T}\pigl\|v^{1}(s)-v^{2}(s)\pigr\|^{2}_{\mathcal{H}}ds
			\label{convex, est. |X_1(T)-X_2(T)|^2}
		\end{equation}
		\begin{equation}
			\text{ and }\h{10pt} \int_{t}^{T}\pigl\|x^{1}(s)-x^{2}(s)\pigr\|^{2}_{\mathcal{H}}ds\leq\dfrac{(T-t)^{2}}{2}\int_{t}^{T}\pigl\|v^{1}(s)-v^{2}(s)\pigr\|^{2}_{\mathcal{H}}ds.
			\label{convex, est. int |X_1(s)-X_2(s)|^2}
		\end{equation}
		Therefore, the objective functional $\mathcal{J}_{t\xi}(v_{t\xi},\mathbb{L})$ is continuous in $v_{t\xi}$ in $L^2_{\mathcal{W}_{t\xi}}\big(t,T;\mathcal{H}\big)$ by \eqref{ass. cts and diff of g1}, \eqref{ass. cts and diff of g2}, \eqref{ass. cts and diff of h1} of Assumptions {\bf (Ai)}, {\bf (Aii)}, {\bf (Bi)}.
		
		\noindent {\bf Part 2. Convexity of $\mathcal{J}_{t\xi}\pig(v,\mathbb{L}\pig)$ in $v$:}\\
		We are going
		to verify the strong convexity, in the sense that
		
		\begin{equation}
			\begin{aligned}
				\int_{t}^{T}\Big\langle D_{v}\mathcal{J}_{t\xi}\pig(v^1(s),\mathbb{L}(s)\pig)
				-D_{v}\mathcal{J}_{t\xi}\pig(v^2(s),\mathbb{L}(s)\pig),
				v^{1}(s)-v^{2}(s)\Big\rangle_{\mathcal{H}}ds
				\geq c_0\int_{t}^{T}\pigl\|v^{1}(s)-v^{2}(s)\pigr\|^{2}_{\mathcal{H}}ds.
			\end{aligned}
			\label{eq:Ap1}
		\end{equation}
		for some constant $c_0>0$. Then, the claim in the lemma will follow immediately. Let $\pig(p^i(s),q^{i}(s)\pig)$ be the corresponding solutions to (\ref{eq. backward process p_s}) with $x_{t\xi}(s)=x^i(s)$ and $v_{t\xi}(s)=v^i(s)$ for $i=1,2$. From the formula (\ref{eq. D_v J(v,L)}), we have 
		\begin{equation}
			\begin{aligned}
				&\int_{t}^{T}\Big\langle D_{v}\mathcal{J}_{t\xi}\pig(v^1(s),\mathbb{L}(s)\pig)
				-D_{v}\mathcal{J}_{t\xi}\pig(v^2(s),\mathbb{L}(s)\pig),v^{1}(s)-v^{2}(s)\Big\rangle_{\mathcal{H}}ds\\
				&=\int_{t}^{T}\Big\langle 
				\nabla_v g_1\pig(x^1(s),v^1(s)\pig)
				-\nabla_v g_1\pig(x^2(s),v^2(s)\pig),v^{1}(s)-v^{2}(s)\Big\rangle_{\mathcal{H}} 
				+\Big\langle p^{1}(s)-p^{2}(s),v^{1}(s)-v^{2}(s)\Big\rangle_{\mathcal{H}}ds.
			\end{aligned}
			\label{eq. of int DJ1-DJ2}
		\end{equation}
		Next, using the equations of $p^i(s)$ in \eqref{eq. backward process p_s}, the facts that $v^{1}(s)-v^{2}(s)=\dfrac{d}{ds}\pig[x^{1}(s)-x^{2}(s)\pig]$
		and $x^{1}(t)-x^{2}(t)=0,$ the equality in (\ref{eq. of int DJ1-DJ2}) is equivalent to
			\begin{align}
				&\h{-20pt}\int_{t}^{T}\Big\langle D_{v}\mathcal{J}_{t\xi}\pig(v^1(s),\mathbb{L}(s)\pig)
				-D_{v}\mathcal{J}_{t\xi}\pig(v^2(s),\mathbb{L}(s)\pig),v^{1}(s)-v^{2}(s)\Big\rangle_{\mathcal{H}}ds\nonumber\\
				=\:&\int_{t}^{T}\Big\langle \nabla_v g_1\pig(x^1(s),v^1(s)\pig)-\nabla_v g_1\pig(x^2(s),v^2(s)\pig),v^{1}(s)-v^{2}(s)\Big\rangle_{\mathcal{H}}ds\nonumber\\
				&+\int_{t}^{T}\Big\langle \nabla_y g_1\pig(x^1(s) ,v^1(s)\pig)
				-\nabla_y g_1\pig(x^2(s),v^2(s)\pig),x^{1}(s)-x^{2}(s)\Big\rangle_{\mathcal{H}}ds\nonumber\\
				&+\int_{t}^{T}\Big\langle \nabla_y g_2\pig(x^1(s),\mathbb{L}(s)\pig)
				-\nabla_y g_2\pig(x^2(s),\mathbb{L}(s)\pig),x^{1}(s)-x^{2}(s)\Big\rangle_{\mathcal{H}}ds\nonumber\\
				&+\Big\langle  \nabla_y h_1 \pig(x^1(T)\pig) 
				-\nabla_y h_1\pig(x^2(T)\pig),
				x^1(T)-x^2(T)
				\Big\rangle_{\mathcal{H}}.
				\label{Z1-Z2 geq v +X}
			\end{align}
		The mean value theorem, together with \eqref{ass. convexity of g1}, \eqref{ass. convexity of g2}, \eqref{ass. convexity of h} Assumptions {\bf (Ax)},  {\bf (Axi)},  {\bf (Bvi)} tell us that
		\begin{align}
			&\h{-10pt}\int_{t}^{T}\Big\langle D_{v}\mathcal{J}_{t\xi}\pig(v^1(s),\mathbb{L}(s)\pig)
			-D_{v}\mathcal{J}_{t\xi}\pig(v^2(s),\mathbb{L}(s)\pig),v^{1}(s)-v^{2}(s)\Big\rangle_{\mathcal{H}}ds\nonumber\\
			\geq\,&\int_{t}^{T}\inf_{y,v\in\mathbb{R}^d}\bigg[\Big\langle \nabla_{vv}g_1\pig(y, v\pig)\pig[v^{1}(s)-v^{2}(s)\pig]
			+\nabla_{yv}g_1\pig(y,v\pig)\pig[x^{1}(s)-x^{2}(s)\pig],v^{1}(s)-v^{2}(s)\Big\rangle_{\mathcal{H}}\nonumber\\
			&\h{55pt}+ \Big\langle \nabla_{vy}g_1\pig(y,v\pig)\pig[v^{1}(s)-v^{2}(s)\pig]
			+\nabla_{yy}g_1\pig(y,v\pig)\pig[x^{1}(s)-x^{2}(s)\pig],x^{1}(s)-x^{2}(s)\Big\rangle_{\mathcal{H}}\bigg]ds\nonumber\\
			&+\int_{t}^{T}\inf_{y\in\mathbb{R}^d,\mathbb{L} \in \mathcal{P}_2(\mathbb{R}^d)}
			\Big\langle\nabla_{yy}g_2\pig(y,\mathbb{L}\pig)\pig[x^{1}(s)-x^{2}(s)\pig],x^{1}(s)-x^{2}(s)\Big\rangle_{\mathcal{H}}ds
			- \lambda_{h_1}  \pigl\|x^{1}(T)-x^{2}(T)\pigr\|^{2}_{\mathcal{H}}\nonumber\\
			\geq\,&\Lambda_{g_1}
			\int_{t}^{T}\pigl\|v^{1}(s)-v^{2}(s)\pigr\|^{2}_{\mathcal{H}}ds
			-\big( \lambda_{g_1} +\lambda_{g_2}\big)
			\int_{t}^{T}\pigl\|x^{1}(s)-x^{2}(s)\pigr\|^{2}_{\mathcal{H}}ds
			- \lambda_{h_1}  \pigl\|x^{1}(T)-x^{2}(T)\pigr\|^{2}_{\mathcal{H}}.
			\label{convex, est. DJ1-DJ2}
		\end{align}
		Substituting (\ref{convex, est. |X_1(T)-X_2(T)|^2}) and (\ref{convex, est. int |X_1(s)-X_2(s)|^2}) into (\ref{convex, est. DJ1-DJ2}), the strong convexity can be ensured when
		\begin{equation}
			c_0:=\Lambda_{g_1}
			- (\lambda_{h_1})_+(T-t)
			- \pig(\lambda_{g_1}+\lambda_{g_2}\pigr)_+\dfrac{(T-t)^2}{2}>0.
		\end{equation}
		\noindent {\bf Part 3. Coercivity of $\mathcal{J}_{t\xi}\pig(v,\mathbb{L}\pig)$ in $v$:}\\
		For the coercivity, from formula (\ref{eq:Ap1}), we have
		\begin{equation}
			\int_{t}^{T}\Big\langle D_{v}\mathcal{J}_{t\xi}\pig(v_{t\xi}(s),\mathbb{L}(s)\pig)
			-D_{v}\mathcal{J}_{t\xi}\pig(0,\mathbb{L}(s)\pig),v_{t\xi}(s)\Big\rangle_{\mathcal{H}}ds
			\geq c_0\int_{t}^{T}\bigl\|v_{t\xi}(s)\bigr\|^{2}_{\mathcal{H}}ds,
			\label{eq:Ap2}
		\end{equation}
		In addition, one can write
		\begin{align*}
			\mathcal{J}_{t\xi}\pig(v_{t\xi},\mathbb{L}\pig)-\mathcal{J}_{t\xi}\pig(0,\mathbb{L}\pig)
			=\int^1_0 \dfrac{d}{d\theta}\mathcal{J}_{t\xi}\pig( \theta v_{t\xi},\mathbb{L}\pig)   d \theta 
			=\int_{0}^{1} \int^T_t \Big\langle D_{v}\mathcal{J}_{t\xi}\pig(\theta v_{t\xi}(s),\mathbb{L}(s)\pig),
			v_{t\xi}(s)
			\Big\rangle_{\mathcal{H}}dsd\theta,
		\end{align*}
		which is combined with (\ref{eq:Ap2}) to deduce
		\[
		\mathcal{J}_{t\xi}\pig(v_{t\xi},\mathbb{L}\pig)-\mathcal{J}_{t\xi}\pig(0,\mathbb{L}\pig)
		\geq\int_{t}^{T}\Big\langle D_{v}\mathcal{J}_{t\xi}\pig(0,\mathbb{L}(s)\pig),v_{t\xi}(s)\Big\rangle_{\mathcal{H}}ds
		+\dfrac{c_{0}}{2}\int_{t}^{T}\big\|v_{t\xi}(s)\big\|^{2}_{\mathcal{H}}ds,
		\]
		where we note that $\int^1_0\theta d \theta =\frac{1}{2}$. This completes the proof of the coercivity.
	\end{proof}
	
	\begin{proof}[\bf Proof of Lemma \ref{lem. derivation of FBSDE, necessarity for control problem, fix L}]
		For a fixed $\mathbb{L}(s) \in C\pig(t,T;\mathcal{P}_2(\mathbb{R}^{d})\pig)$, the objective functional $\mathcal{J}_{t\xi}(v,\mathbb{L})$ is continuous, strictly convex and coercive in $v$ due to Lemma \ref{lem. convexity and coercivity of J(v,L)}, then by variational methods (for instance, Theorem 5.2. in \cite{YZ99}), there is a unique optimal control $u_{t\xi\mathbb{L}}$ to the problem $\displaystyle \inf\pig\{ \mathcal{J}_{t\xi}(v,\mathbb{L}) \h{1pt} ; \h{1pt} v \in  L^2_{\mathcal{W}_{t\xi}}\big(t,T;\mathcal{H}\big)\pig\}$  subject to the dynamics \eqref{eq. state x_s}. Lemma \ref{lem. DJ(v,L)} and Theorem 7.2.12 in \cite{DM07} imply that
		$$
		D_v \mathcal{J}_{t\xi}\pig(u_{t\xi\mathbb{L}}(s),\mathbb{L}(s)\pig) =p_{t\xi\mathbb{L}}(s) + \nabla_v g_1\pig(y_{t\xi\mathbb{L}}(s), u_{t\xi\mathbb{L}}(s)\pig) = 0,
		$$
		where the process $\pig( y_{t\xi\mathbb{L}}(s), p_{t\xi\mathbb{L}}(s), q_{t\xi\mathbb{L}}(s)\pig)$ satisfies (\ref{eq. FBSDE, fix L}).

		Suppose that $v^*(s)$ solves the first order condition \eqref{eq. 1st order condition, fix L} such that $p^*(s) + \nabla_vg_1\pig(y^*(s),v^*(s)\pig)=0$, where $\pig( y^*(s), p^*(s), q^*(s)\pig)$ solves \eqref{eq. FBSDE, fix L} correspondingly. If $\mathcal{J}_{t\xi}(v,\mathbb{L})$ does not attain the minimum value at $v=v^*$, then there is $v^\dagger$ such that $\mathcal{J}_{t\xi}(v^\dagger,\mathbb{L}) < \mathcal{J}_{t\xi}(v^*,\mathbb{L})$. The convexity of $\mathcal{J}_{t\xi}(v,\mathbb{L})$ in $v$ implies that $\mathcal{J}_{t\xi}(\theta v^\dagger + (1-\theta) v^*,\mathbb{L}) \leq \theta\mathcal{J}_{t\xi}(v^\dagger,\mathbb{L})+(1-\theta)\mathcal{J}_{t\xi}(v^*,\mathbb{L})$ for any $\theta \in [0,1]$, that is,
		$$ \dfrac{\mathcal{J}_{t\xi}( v^* + \theta(v^\dagger- v^*),\mathbb{L}) - \mathcal{J}_{t\xi}( v^* ,\mathbb{L})}{\theta} \leq \mathcal{J}_{t\xi}(v^\dagger,\mathbb{L}) - \mathcal{J}_{t\xi}(v^*,\mathbb{L})<0;$$
		this contradicts that $\left.\dfrac{d}{d\theta} \mathcal{J}_{t\xi}(v^*+\theta(v^\dagger- v^*),\mathbb{L}) \right|_{\theta=0}=0$.
	\end{proof}
	
	\subsection{Proofs in Section \ref{sec. Well-posedness of FBSDE}}\label{app. Proofs in Well-posedness of FBSDE}
	\begin{proof}[\bf Proof of Lemma \ref{lem. local forward estimate}]
		For ease of notation, we omit the subscript $\tau_i\xi$ of the processes. For $s \in [\tau_i, \tau_{i+1}]$, we have
		$$ d\pig[ \overline{y}^1 (s)-\overline{y}^2 (s) \pig]
		=\pig[ u\pig(\,\overline{y}^1(s),\underline{p}^1(s)\pig) - u\pig(\,\overline{y}^2(s),\underline{p}^2(s)\pig)\pig]ds$$
		with $ \overline{y}^1 (\tau_{i})-\overline{y}^2 (\tau_{i}) =0 $. For $s \in [\tau_i, \tau_{i+1}]$, we apply It\^o's formula to get
		\begin{equation}
			\begin{aligned}
				&e^{-\theta s} \pigl| \overline{y}^1 (s)-\overline{y}^2 (s) \pigr|^2 
				+ \int^s_{\tau_i} \theta e^{-\theta \tau} 
				\pigl| \overline{y}^1 (\tau)-\overline{y}^2 (\tau) \pigr|^2 d \tau\\
				&= 2\int^s_{\tau_i}  e^{-\theta \tau} 
				\pigl[\, \overline{y}^1 (\tau)-\overline{y}^2 (\tau) \pigr] 
				\cdot \pig[ u\pig(\,\overline{y}^1(\tau),\underline{p}^1(\tau)\pig) - u\pig(\,\overline{y}^2(\tau),\underline{p}^2(\tau)\pig)\pig]   d \tau.
			\end{aligned}
			\label{eq. |Y1-Y2|^2 + int |Y1-Y2|^2}
		\end{equation}
		For any $x^\nu$, $p^\nu \in \mathbb{R}^d$, the first order condition $p^\nu + \nabla_v g_1\pig(x^\nu,v\pig)\Big|_{v=u(x^\nu,p^\nu)}=0$, with $\nu=1,2$, yields that
		\begin{equation}
			p^1 -p^2 
			+ \nabla_v g_1\pig( x^1 ,v\pig)\Big|_{v=u(x^1,p^1)}
			- \nabla_v g_1\pig( x^1 ,u(x^2 ,v\pig)\Big|_{v=u(x^2,p^2)}
			+ \nabla_v g_1\pig( x^1 ,v\pig)\Big|_{v=u(x^2,p^2)}
			-\nabla_v g_1\pig( x^2 ,v\pig)\Big|_{v=u(x^2,p^2)}=0.
			\label{eq. diff of 1st order condition, to prove lip of u}
		\end{equation}
		Writing $\nabla_v g_1\pig( x ,v\pig)\Big|_{v=u(x^\nu,p^\nu)}=\nabla_v g_1\pig( x ,u(x^\nu,p^\nu)\pig)$ for $\nu=1,2$, together with \eqref{ass. convexity of g1} of Assumption \textbf{(Ax)}, it implies that
		\begin{align*}
			&\Big[\nabla_v g_1\pig( x^1 ,u\pig(x^2 ,p^2 \pig)\pig)
			-\nabla_v g_1\pig( x^1 ,u\pig(x^1 ,p^1 \pig)\pig)\Big]
			\cdot\pig[u(x^2 ,p^2 ) - u(x^1,p^1)\pig]\\
			&=\int^1_0 \Big\{\nabla_{vv} g_1\pig( x^1 ,u(x^1,p^1)+\theta \pig[u(x^2 ,p^2 ) - u(x^1,p^1)\pig]\pig)\pig[u(x^2 ,p^2 ) - u(x^1,p^1)\pig]\Big\}\cdot \pig[u(x^2 ,p^2 ) - u(x^1,p^1)\pig]d \theta\\
			&\geq \Lambda_{g_1} \pig|u(x^2 ,p^2 ) - u(x^1,p^1)\pigr|^2.
		\end{align*}
		Applying similar arguments to $ \nabla_v g_1\pig( x^1 ,u(x^2 ,p^2 )\pig)
		-\nabla_v g_1\pig( x^2 ,u(x^2 ,p^2 )\pig)$ and using \eqref{ass. bdd of D^2g1} of Assumption \textbf{(Avii)}, we multiply the relation \eqref{eq. diff of 1st order condition, to prove lip of u} by $u\pig(x^1 ,p^1 \pig) - u\pig(x^2 ,p^2 \pig)$ to give
		\begin{align*}
			\pig|u(x^2 ,p^2 ) - u(x^1,p^1)\pigr|^2 
			&\leq \dfrac{1}{\Lambda_{g_1}}\int^1_0 
			\Big|\Big\{\nabla_{yv} g_1\pig( x^2 +\theta (x^1-x^2),u(x^2 ,p^2 )\pig)
			\pig[x^1 -x^2\pig]\Big\}\cdot
			\pig[u(x^2 ,p^2 ) - u(x^1,p^1)\pig]\Big|d \theta\\
			&\h{10pt} +\dfrac{1}{\Lambda_{g_1}}\pig|u\pig(x^1 ,p^1 \pig) - u\pig(x^2 ,p^2 \pig) \pig|\cdot
			\big|p^1-p^2\big|\\
			&\leq \dfrac{C_{g_1}}{\Lambda_{g_1}}
			\big|x^1-x^2\big|\cdot
			\pig|u\pig(x^1 ,p^1 \pig) - u\pig(x^2 ,p^2 \pig) \pig|
			+\dfrac{1}{\Lambda_{g_1}}\pig|u\pig(x^1 ,p^1 \pig) - u\pig(x^2 ,p^2 \pig) \pig|\cdot
			\big|p^1-p^2\big|.
		\end{align*}
		An application of Young's inequality yields that
		\begin{align}
			\pig|u\pig(x^1 ,p^1 \pig) - u\pig(x^2 ,p^2 \pig)\pigr|^2
			\leq \dfrac{2}{\Lambda_{g_1}^2}\big|p^1-p^2\big|^2
			+\dfrac{2C_{g_1}^2}{\Lambda_{g_1}^2}\big|x^1-x^2\big|^2
			\h{15pt}
			\text{, for any $x^\nu$, $p^\nu \in \mathbb{R}^d$, $\nu=1,2$.}
			\label{ineq. |U1 - U2| leq |Z1-Z2| + |Y1-Y2|}
		\end{align}
		By (\ref{ineq. |U1 - U2| leq |Z1-Z2| + |Y1-Y2|}) and estimating the right hand side of \eqref{eq. |Y1-Y2|^2 + int |Y1-Y2|^2}, we obtain a bound,
		\begingroup
		\allowdisplaybreaks
		\begin{align}
			&\h{-10pt}e^{-\theta s} \pigl| \overline{y}^1 (s)-\overline{y}^2 (s) \pigr|^2 
			+ \int^s_{\tau_i} \theta e^{-\theta \tau} 
			\pigl| \overline{y}^1 (\tau)-\overline{y}^2 (\tau) \pigr|^2 d \tau\nonumber\\
			\leq\,& \dfrac{\sqrt{2}}{\Lambda_{g_1}}\int^s_{\tau_i}  e^{-\theta \tau} 
			\pigl|\, \overline{y}^1 (\tau)-\overline{y}^2 (\tau) \pigr|
			\cdot \Big[ 
			C_{g_1}\pig|\overline{y}^1(\tau)
			-\overline{y}^2(\tau)\pigr|
			+
			\pig|\underline{p}^1(\tau)  
			-\underline{p}^2(\tau)\pigr|
			\Big]   d \tau.
			\label{ineq. local forward est 1}
		\end{align}
		\endgroup
		By Young's inequality, we see that
		\begingroup
		\allowdisplaybreaks
		\begin{align}
			e^{-\theta s} \pigl| \overline{y}^1 (s)-\overline{y}^2 (s) \pigr|^2 
			+ \left(\theta - \dfrac{2\sqrt{2}C_{g_1}}{\Lambda_{g_1}}\right)\int^s_{\tau_i}  e^{-\theta \tau} 
			\pigl| \overline{y}^1 (\tau)-\overline{y}^2 (\tau) \pigr|^2 d \tau
			\leq \dfrac{\sqrt{2}}{\Lambda_{g_1}C_{g_1}}\int^s_{\tau_i}  e^{-\theta \tau} 
			\pig|\underline{p}^1(\tau)  
			-\underline{p}^2(\tau)\pigr|^2  d \tau.
			\label{ineq. local forward est 1}
		\end{align}
		\endgroup
	\end{proof}
	
	\begin{proof}[\bf Proof of Lemma \ref{lem. local backward estimate}]
		For ease of notation, we omit the subscript $\tau_i\xi$ of the random variables in this proof. Recall the backward equation in (\ref{eq. decoupled backward, short time}), for $s\in [\tau_i,\tau_{i+1}]$, we have
		\begin{align*} 
			d\left(e^{ \vartheta  s} \big| \overline{p}^1(s)-\overline{p}^2(s) \big|^2 \right)
			=\:& \vartheta  e^{ \vartheta  s} \Big| \overline{p}^1(s)-\overline{p}^2(s) \Big|^2 ds
			+2e^{ \vartheta  s} \Big[ \overline{p}^1(s)-\overline{p}^2(s) \Big]\cdot\Big[ d \h{1pt} \overline{p}^1(s)-d \h{1pt} \overline{p}^2(s) \Big] \\
			&+e^{ \vartheta  s} \textup{tr}\Big[ \pig(\,\overline{q}^1(s) - \overline{q}^2(s)\pig) \pig(\,\overline{q}^1(s) - \overline{q}^2(s)\pigr)^\top \Big] ds.
		\end{align*}
		Using (\ref{eq. decoupled backward, short time}), we rewrite it as
		\begin{align}
			&\h{-10pt}d\Big(e^{ \vartheta  s} \big| \overline{p}^1(s)-\overline{p}^2(s) \big|^2 \Big)\nonumber\\
			=\:& \vartheta  e^{ \vartheta  s} \big| \overline{p}^1(s)-\overline{p}^2(s) \big|^2 ds\nonumber\\
			&-2e^{ \vartheta  s} \Big[ \overline{p}^1(s)-\overline{p}^2(s) \Big]\cdot
			\Big[\nabla_y g_1 \pig(\,\underline{y}^1(s),u\big(\,\underline{y}^1(s),\overline{p}^1(s)\big)\pig)
			-\nabla_y g_1 \pig(\,\underline{y}^2(s),u\big(\,\underline{y}^2(s),\overline{p}^2(s)\big)\pig)\Big]ds\nonumber\\
			&-2e^{ \vartheta  s} \Big[ \overline{p}^1(s)-\overline{p}^2(s) \Big]\cdot
			\Big[
			\nabla_y g_2 \pig(\,\underline{y}^1(s),\mathcal{L}\big(\underline{y}^1(s)\big)\pig)
			-\nabla_y g_2 \pig(\,\underline{y}^2(s),\mathcal{L}\big(\underline{y}^2(s)\big)\pig)\Big]ds\nonumber\\
			&+2e^{ \vartheta  s} 
			\Big[ \overline{p}^1(s)-\overline{p}^2(s) \Big]\cdot \Big[\overline{q}^{1}(s)d W_s - \overline{q}^{2}(s)d W_s\Big]
			+e^{ \vartheta  s} \textup{tr}\Big[ \pig(\,\overline{q}^1(s) - \overline{q}^2(s)\pig) \pig(\,\overline{q}^1(s) - \overline{q}^2(s)\pigr)^\top \Big] ds.
			\label{6531}
		\end{align}
		Integrating the last equation and taking expectation, we have
		\begin{equation}
			\begin{aligned}
				&e^{ \vartheta  s} \big\|\h{.7pt} \overline{p}^1(s)-\overline{p}^2(s) \big\|_{\mathcal{H}}^2
				+\int^{\tau_{i+1}}_{s}
				\vartheta  e^{ \vartheta  \tau} \big\|\h{.7pt} \overline{p}^1(\tau)-\overline{p}^2(\tau) \big\|^2_{\mathcal{H}} d\tau
				+\int^{\tau_{i+1}}_{s}
				e^{ \vartheta  \tau} 
				\big\|\h{.7pt}\overline{q}^{1}(\tau) - \overline{q}^{2}(\tau)\big\|^2_{\mathcal{H}} d\tau\\
				&\leq  e^{ \vartheta  \tau_{i+1}} \big\|\h{.7pt} \overline{p}^1(\tau_{i+1})-\overline{p}^2(\tau_{i+1}) \big\|^2_{\mathcal{H}}
				-\int^{\tau_{i+1}}_{s}2e^{ \vartheta  \tau}
				\Big\langle \overline{p}^1(\tau)-\overline{p}^2(\tau),\overline{q}^{1}(\tau)d W_\tau - \overline{q}^{2}(\tau)d W_\tau\Big\rangle_{\mathcal{H}}\\
				&\h{10pt}+\int^{\tau_{i+1}}_{s}2e^{ \vartheta  \tau} 
				\Big\langle \overline{p}^1(\tau)
				-\overline{p}^2(\tau),
				\nabla_y g_1 \pig(\,\underline{y}^1(\tau),u\big(\,\underline{y}^1(\tau),\overline{p}^1(\tau)\big)\pig)
				-\nabla_y g_1 \pig(\,\underline{y}^2(\tau),u\big(\,\underline{y}^2(\tau),\overline{p}^2(\tau)\big)\pig)
				\Big\rangle_{\mathcal{H}}d\tau\\
				&\h{10pt}+\int^{\tau_{i+1}}_{s}2e^{ \vartheta  \tau} 
				\Big\langle \overline{p}^1(\tau)
				-\overline{p}^2(\tau) ,
				\nabla_y g_2 \pig(\,\underline{y}^1(\tau),\mathcal{L}\big(\underline{y}^1(\tau)\big)\pig)
				-\nabla_y g_2 \pig(\,\underline{y}^2(\tau),\mathcal{L}\big(\underline{y}^2(\tau)\big)\pig)\Big\rangle_{\mathcal{H}}d\tau.
			\end{aligned}
			\label{Z1-Z2, local backward}
		\end{equation}
		From the terminal condition of (\ref{eq. decoupled backward, short time}) and the assumption (\ref{ineq. lip of Q_i}), we see that
		\begin{equation}
			\pig\|\overline{p}^1(\tau_{i+1})-\overline{p}^2(\tau_{i+1})\pigr\|_{\mathcal{H}}
			\leq C_{Q_i} \pig\|\underline{y}^1(\tau_{i+1})-\underline{y}^2(\tau_{i+1})\pigr\|_{\mathcal{H}}.
			\label{Z1-Z2 < k5(Y1-Y2), interior}
		\end{equation} 
		The third and fourth lines of (\ref{Z1-Z2, local backward}) can be estimated as below, by using (\ref{ass. bdd of D^2g1}), (\ref{ass. bdd of D^2g2}), \eqref{ass. bdd of D dnu D g2} of Assumptions {\bf (Avii)}, {\bf (Aviii)}, {\bf (Aix)}
		\begin{align*}
			&\big\| \overline{p}^1(s)-\overline{p}^2(s) \big\|_{\mathcal{H}} \cdot
			\pig\|\nabla_y g_1 \pig(\,\underline{y}^1(s),u\big(\,\underline{y}^1(s),\overline{p}^1(s)\big)\pig)
			-\nabla_y g_1 \pig(\,\underline{y}^2(s),u\big(\,\underline{y}^2(s),\overline{p}^2(s)\big)\pig)
			\pigr\|_{\mathcal{H}}\\
			&+\big\| \overline{p}^1(s)-\overline{p}^2(s) \big\|_{\mathcal{H}}\cdot
			\pig\|\nabla_y g_2 \pig(\,\underline{y}^1(s),\mathcal{L}\big(\underline{y}^1(s)\big)\pig)
			-\nabla_y g_2 \pig(\,\underline{y}^2(s),\mathcal{L}\big(\underline{y}^2(s)\big)\pig)\pigr\|_{\mathcal{H}}\\
			&\leq C_{g_1}\big\| \overline{p}^1(s)-\overline{p}^2(s) \big\|_{\mathcal{H}}\cdot
			\Big(\pig\|  \underline{y}^1(s)-\underline{y}^2(s) \pigr\|_{\mathcal{H}}
			+\pig\|  u\big(\,\underline{y}^1(s),\overline{p}^1(s)\big)
			-u\big(\,\underline{y}^2(s),\overline{p}^2(s)\big) \pigr\|_{\mathcal{H}}\Big)\\
			&\h{10pt}+ (c_{g_2} + C_{g_2})\big\| \overline{p}^1(s)-\overline{p}^2(s) \big\|_{\mathcal{H}}\cdot
			\pig\|  \underline{y}^1(s)-\underline{y}^2(s) \pigr\|_{\mathcal{H}}.
		\end{align*}
		Young's inequality and (\ref{ineq. |U1 - U2| leq |Z1-Z2| + |Y1-Y2|}), we then have
		\begin{align}
			&\big\| \overline{p}^1(s)-\overline{p}^2(s) \big\|_{\mathcal{H}}\cdot
			\pig\|\nabla_y g_1 \pig(\,\underline{y}^1(s),u\big(\,\underline{y}^1(s),\overline{p}^1(s)\big)\pig)
			-\nabla_y g_1 \pig(\,\underline{y}^2(s),u\big(\,\underline{y}^2(s),\overline{p}^2(s)\big)\pig)
			\pigr\|_{\mathcal{H}}\nonumber\\
			&+\big\| \overline{p}^1(s)-\overline{p}^2(s) \big\|_{\mathcal{H}}\cdot
			\pig\|\nabla_y g_2 \pig(\,\underline{y}^1(s),\mathcal{L}\big(\underline{y}^1(s)\big)\pig)
			-\nabla_y g_2 \pig(\,\underline{y}^2(s),\mathcal{L}\big(\underline{y}^2(s)\big)\pig)\pigr\|_{\mathcal{H}}\nonumber\\
			&\leq \left(C_{g_1}+c_{g_2}+C_{g_2}+\dfrac{\sqrt{2}C_{g_1}^2}{\Lambda_{g_1}}\right)\big\| \overline{p}^1(s)-\overline{p}^2(s) \big\|_{\mathcal{H}}\cdot
			\pig\|  \underline{y}^1(s)-\underline{y}^2(s) \pigr\|_{\mathcal{H}}
			+\dfrac{\sqrt{2}C_{g_1}}{\Lambda_{g_1}}\big\| \overline{p}^1(s)-\overline{p}^2(s) \big\|_{\mathcal{H}}^2\nonumber\\
			&\leq \mbox{\fontsize{10}{10}\selectfont\(\left[\dfrac{\sqrt{2}C_{g_1}}{\Lambda_{g_1}}
				+\gamma_1\left(C_{g_1}+c_{g_2}+C_{g_2}+\dfrac{\sqrt{2}C_{g_1}^2}{\Lambda_{g_1}}\right)\right]
				\big\| \overline{p}^1(s)-\overline{p}^2(s) \big\|_{\mathcal{H}}^2
				+\dfrac{1}{4\gamma_1}\left(C_{g_1}+c_{g_2}+C_{g_2}+\dfrac{\sqrt{2}C_{g_1}^2}{\Lambda_{g_1}}\right)\pig\|  \underline{y}^1(s)-\underline{y}^2(s) \pigr\|_{\mathcal{H}}^2,\)}
			\label{ineq. backward local est 1}
		\end{align}
		for some positive $\gamma_1 > 0$. Substituting \eqref{Z1-Z2 < k5(Y1-Y2), interior} and (\ref{ineq. backward local est 1}) into \eqref{Z1-Z2, local backward}, we have
		\begin{align}
			&e^{ \vartheta  s} \big\|\h{.7pt} \overline{p}^1(s)-\overline{p}^2(s) \big\|_{\mathcal{H}}^2
			+\int^{\tau_{i+1}}_{s}
			\vartheta  e^{ \vartheta  \tau} \big\|\h{.7pt} \overline{p}^1(\tau)-\overline{p}^2(\tau) \big\|^2_{\mathcal{H}} d\tau
			+\int^{\tau_{i+1}}_{s}
			e^{ \vartheta  \tau} 
			\big\|\h{.7pt}\overline{q}^{1}(\tau) - \overline{q}^{2}(\tau)\big\|^2_{\mathcal{H}} d\tau\nonumber\\
			&\leq \mbox{\fontsize{10.2}{10}\selectfont\( 
				e^{ \vartheta  \tau_{i+1}}C_{Q_i} \big\|\h{.7pt} \underline{y}^1(\tau_{i+1})-\underline{y}^2(\tau_{i+1}) \big\|^2_{\mathcal{H}}
				+2\left[\dfrac{\sqrt{2}C_{g_1}}{\Lambda_{g_1}}
				+\gamma_1\left(C_{g_1}+c_{g_2}+C_{g_2}+\dfrac{\sqrt{2}C_{g_1}^2}{\Lambda_{g_1}}\right)\right]
				\displaystyle\int^{\tau_{i+1}}_{\tau_i}e^{ \vartheta  s} 
				\big\| \overline{p}^1(s)-\overline{p}^2(s) \big\|_{\mathcal{H}}^2 ds\)}\nonumber\\
			&\h{10pt}+\dfrac{1}{2\gamma_1}\left(C_{g_1}+c_{g_2}+C_{g_2}+\dfrac{\sqrt{2}C_{g_1}^2}{\Lambda_{g_1}}\right)
			\int^{\tau_{i+1}}_{\tau_i}e^{ \vartheta  s} 
			\pig\|  \underline{y}^1(s)-\underline{y}^2(s) \pigr\|_{\mathcal{H}}^2ds.
			\label{1566}
		\end{align}
		It implies that
		\begin{align}
			&
			\int^{\tau_{i+1}}_{\tau_i}
			e^{ \vartheta  \tau} 
			\big\|\h{.7pt}\overline{q}^{1}(\tau) - \overline{q}^{2}(\tau)\big\|^2_{\mathcal{H}} d\tau\nonumber\\
			&\leq  
			e^{ \vartheta  \tau_{i+1}}C_{Q_i} \big\|\h{.7pt} \underline{y}^1(\tau_{i+1})-\underline{y}^2(\tau_{i+1}) \big\|^2_{\mathcal{H}}
			+\dfrac{1}{2\gamma_1}\left(C_{g_1}+c_{g_2}+C_{g_2}+\dfrac{\sqrt{2}C_{g_1}^2}{\Lambda_{g_1}}\right)
			\int^{\tau_{i+1}}_{\tau_i}e^{ \vartheta  s} 
			\pig\|  \underline{y}^1(s)-\underline{y}^2(s) \pigr\|_{\mathcal{H}}^2ds.
			\label{1579}
		\end{align}
		Besides, relation \eqref{6531} along with \eqref{Z1-Z2 < k5(Y1-Y2), interior} and (\ref{ineq. backward local est 1}) also reads
		\begin{align*}
			&\mathbb{E}\left[\sup_{s \in [\tau_i,\tau_{i+1}]}
			e^{ \vartheta  s} \big|\h{.7pt} \overline{p}^1(s)-\overline{p}^2(s) \big|^2 \right]
			+\int^{\tau_{i+1}}_{\tau_i}
			\vartheta  e^{ \vartheta  \tau} \big\|\h{.7pt} \overline{p}^1(\tau)-\overline{p}^2(\tau) \big\|^2_{\mathcal{H}} d\tau
			+\int^{\tau_{i+1}}_{\tau_i}
			e^{ \vartheta  \tau} 
			\big\|\h{.7pt}\overline{q}^{1}(\tau) - \overline{q}^{2}(\tau)\big\|^2_{\mathcal{H}} d\tau\nonumber\\
			&\leq 
			e^{ \vartheta  \tau_{i+1}}C_{Q_i} \big\|\h{.7pt} \underline{y}^1(\tau_{i+1})-\underline{y}^2(\tau_{i+1}) \big\|^2_{\mathcal{H}}
			+2\mathbb{E}\left[\sup_{s \in [\tau_i,\tau_{i+1}]}\left|\int^{\tau_{i+1}}_{s}e^{ \vartheta  \tau}
			\Big\langle \overline{p}^1(\tau)-\overline{p}^2(\tau),\overline{q}^{1}(\tau)d W_\tau - \overline{q}^{2}(\tau)d W_\tau\Big\rangle_{\mathbb{R}^d}\right|\right]\nonumber\\
			&+2\left[\dfrac{\sqrt{2}C_{g_1}}{\Lambda_{g_1}}
			+\gamma_1\left(C_{g_1}+c_{g_2}+C_{g_2}+\dfrac{\sqrt{2}C_{g_1}^2}{\Lambda_{g_1}}\right)\right]
			\displaystyle\int^{\tau_{i+1}}_{\tau_i}e^{ \vartheta  s} 
			\big\| \overline{p}^1(s)-\overline{p}^2(s) \big\|_{\mathcal{H}}^2 ds\nonumber\\
			&\h{10pt}+\dfrac{1}{2\gamma_1}\left(C_{g_1}+c_{g_2}+C_{g_2}+\dfrac{\sqrt{2}C_{g_1}^2}{\Lambda_{g_1}}\right)
			\int^{\tau_{i+1}}_{\tau_i}e^{ \vartheta  s} 
			\pig\|  \underline{y}^1(s)-\underline{y}^2(s) \pigr\|_{\mathcal{H}}^2ds.
		\end{align*}
		It is equivalent to 
		\begin{align}
			&\mathbb{E}\left[\sup_{s \in [\tau_i,\tau_{i+1}]}
			e^{ \vartheta  s} \big|\h{.7pt} \overline{p}^1(s)-\overline{p}^2(s) \big|^2 \right]
			+\int^{\tau_{i+1}}_{\tau_i}
			e^{ \vartheta  \tau} 
			\big\|\h{.7pt}\overline{q}^{1}(\tau) - \overline{q}^{2}(\tau)\big\|^2_{\mathcal{H}} d\tau\nonumber\\
			&+\left\{ \vartheta -2\left[\dfrac{\sqrt{2}C_{g_1}}{\Lambda_{g_1}}
			+\gamma_1\left(C_{g_1}+c_{g_2}+C_{g_2}+\dfrac{\sqrt{2}C_{g_1}^2}{\Lambda_{g_1}}\right)\right]\right\}\int^{\tau_{i+1}}_{\tau_i}
			e^{ \vartheta  \tau} \big\|\h{.7pt} \overline{p}^1(\tau)-\overline{p}^2(\tau) \big\|^2_{\mathcal{H}} d\tau\nonumber\\
			&\leq 
			e^{ \vartheta  \tau_{i+1}}C_{Q_i} \big\|\h{.7pt} \underline{y}^1(\tau_{i+1})-\underline{y}^2(\tau_{i+1}) \big\|^2_{\mathcal{H}}
			+2\cdot 2\cdot\mathbb{E}\left[\sup_{s \in [\tau_i,\tau_{i+1}]}
			\left|\int_{\tau_{i}}^{s}e^{ \vartheta  \tau}
			\Big\langle \overline{p}^1(\tau)-\overline{p}^2(\tau),\overline{q}^{1}(\tau)d W_\tau - \overline{q}^{2}(\tau)d W_\tau\Big\rangle_{\mathbb{R}^d}\right|\right]\nonumber\\
			&\h{10pt}+\dfrac{1}{2\gamma_1}\left(C_{g_1}+c_{g_2}+C_{g_2}+\dfrac{\sqrt{2}C_{g_1}^2}{\Lambda_{g_1}}\right)
			\int^{\tau_{i+1}}_{\tau_i}e^{ \vartheta  s} 
			\pig\|  \underline{y}^1(s)-\underline{y}^2(s) \pigr\|_{\mathcal{H}}^2ds.\label{1637}
		\end{align}
		Using Burkholder-Davis-Gundy and Young's inequalities, we estimate the second term in the third line of \eqref{1637} by
		\begin{align}
			&\mathbb{E}\left[\sup_{s \in [\tau_i,\tau_{i+1}]}
			\left|\int_{\tau_{i}}^{s}e^{ \vartheta  \tau}
			\Big\langle \overline{p}^1(\tau)-\overline{p}^2(\tau),\overline{q}^{1}(\tau)d W_\tau - \overline{q}^{2}(\tau)d W_\tau\Big\rangle_{\mathbb{R}^d}\right|\right]\nonumber\\
			&\leq 4\mathbb{E}\left[
			\sup_{s \in [\tau_i,\tau_{i+1}]}\left(e^{ \vartheta  \tau/2 }\Big| \overline{p}^1(s)-\overline{p}^2(s)\Big|\right)
			\left(\int_{\tau_{i}}^{\tau_{i+1}}e^{ \vartheta  \tau}
			\Big|\overline{q}^{1}(\tau)  - \overline{q}^{2}(\tau)\Big|^2d\tau\right)^{1/2}\right]\nonumber\\
			&\leq \dfrac{1}{8}\mathbb{E}\left[
			\sup_{s \in [\tau_i,\tau_{i+1}]}\left(e^{ \vartheta  \tau}\Big| \overline{p}^1(s)-\overline{p}^2(s)\Big|^2\right)\right]
			+2\cdot4^2\cdot\mathbb{E}\left[\int_{\tau_{i}}^{\tau_{i+1}}e^{ \vartheta  \tau}
			\Big|\overline{q}^{1}(\tau)  - \overline{q}^{2}(\tau)\Big|^2d\tau\right].
			\label{1632}
		\end{align}
       Plugging \eqref{1579} and \eqref{1632} into \eqref{1637}, we obtain
		\begin{align*}
			&\h{-10pt}\dfrac{1}{2}\mathbb{E}\left[\sup_{s \in [\tau_i,\tau_{i+1}]}
			e^{ \vartheta  s} \big|\h{.7pt} \overline{p}^1(s)-\overline{p}^2(s) \big|^2 \right]
			\nonumber\\
			&+\left\{ \vartheta -2\left[\dfrac{\sqrt{2}C_{g_1}}{\Lambda_{g_1}}
			+\gamma_1\left(C_{g_1}+c_{g_2}+C_{g_2}+\dfrac{\sqrt{2}C_{g_1}^2}{\Lambda_{g_1}}\right)\right]\right\}\int^{\tau_{i+1}}_{\tau_i}
			e^{ \vartheta  \tau} \big\|\h{.7pt} \overline{p}^1(\tau)-\overline{p}^2(\tau) \big\|^2_{\mathcal{H}} d\tau\nonumber\\
			\leq \,&
			e^{ \vartheta  \tau_{i+1}}C_{Q_i} \big\|\h{.7pt} \underline{y}^1(\tau_{i+1})-\underline{y}^2(\tau_{i+1}) \big\|^2_{\mathcal{H}}
			+127e^{ \vartheta  \tau_{i+1}}C_{Q_i} \big\|\h{.7pt} \underline{y}^1(\tau_{i+1})-\underline{y}^2(\tau_{i+1}) \big\|^2_{\mathcal{H}}\\
			&+\dfrac{127}{2\gamma_1}\left(C_{g_1}+c_{g_2}+C_{g_2}+\dfrac{\sqrt{2}C_{g_1}^2}{\Lambda_{g_1}}\right)
			\int^{\tau_{i+1}}_{\tau_i}e^{ \vartheta  s} 
			\pig\|  \underline{y}^1(s)-\underline{y}^2(s) \pigr\|_{\mathcal{H}}^2ds\nonumber\\
			&+\dfrac{1}{2\gamma_1}\left(C_{g_1}+c_{g_2}+C_{g_2}+\dfrac{\sqrt{2}C_{g_1}^2}{\Lambda_{g_1}}\right)
			\int^{\tau_{i+1}}_{\tau_i}e^{ \vartheta  s} 
			\pig\|  \underline{y}^1(s)-\underline{y}^2(s) \pigr\|_{\mathcal{H}}^2ds.
		\end{align*}
	\end{proof}
	
	\begin{proof}[\bf Proof of Lemma \ref{lem. Existence of Frechet derivatives}]
		\noindent\textbf{Part 1. Linearity in $\Psi$:}\\
		Given $\xi$, $\Psi_1$, $\Psi_2 \in L^2(\Omega,\mathcal{W}^{\tau_{i}}_0,\mathbb{P};\mathbb{R}^d)$, by summing up the equations in (\ref{eq. J flow of FBSDE}) with $\Psi=\Psi_1$ and $\Psi=\Psi_2$, we have
		
		\begin{equation}
			\scalemath{0.93}{
				\left\{
				\begin{aligned}
					D^{\Psi_1}_\xi y_{\tau_i \xi}  (s)
					+D^{\Psi_2}_\xi y_{\tau_i \xi}  (s)
					=\:& \Psi_1+\Psi_2
					+\displaystyle\int_{\tau_i}^{s}
					\Big[ \nabla_y u\pig(y_{\tau_i \xi}(\tau),p_{\tau_i \xi}(\tau)\pig)\Big] 
					\Big[D^{\Psi_1}_\xi y_{\tau_i \xi}  (\tau) 
					+D^{\Psi_2}_\xi y_{\tau_i \xi}  (\tau)\Big]d\tau\\
					&\h{40pt}+\int_{\tau_i}^{s} 
					\Big[ \nabla_p u\pig(y_{\tau_i \xi}(\tau),p_{\tau_i \xi}(\tau)\pig)\Big]  
					\Big[D^{\Psi_1}_\xi p_{\tau_i \xi}  (\tau)
					+D^{\Psi_2}_\xi p_{\tau_i \xi}  (\tau)\Big]  d\tau;\\
					D^{\Psi_1}_\xi p_{\tau_i \xi} (s)
					+D^{\Psi_2}_\xi p_{\tau_i \xi} (s)
					=\:&\nabla_{yy}h_{1}\pig(y_{\tau_i \xi}(T)\pig) \pig[D^{\Psi_1}_\xi y_{\tau_i \xi} (T)
					+D^{\Psi_2}_\xi y_{\tau_i \xi} (T)\pig]\\
					&+\displaystyle\int^T_s\bigg\{\nabla_{yy}g_1\pig(y_{\tau_i \xi}(\tau),u_{\tau_i \xi}(\tau)\pig)
					\pig[D^{\Psi_1}_\xi y_{\tau_i \xi} (\tau)
					+D^{\Psi_2}_\xi y_{\tau_i \xi} (\tau)\pig]\\
					&\h{40pt}+\nabla_{vy}g_1\pig(y_{\tau_i \xi}(\tau),u_{\tau_i \xi}(\tau)\pig)\Big[ \nabla_y u\pig(y_{\tau_i \xi}(\tau),p_{\tau_i \xi}(\tau)\pig)\Big] 
					\pig[D^{\Psi_1}_\xi y_{\tau_i \xi} (\tau)
					+D^{\Psi_2}_\xi y_{\tau_i \xi} (\tau)\pig]\\
					&\h{40pt}+\nabla_{vy}g_1\pig(y_{\tau_i \xi}(\tau),u_{\tau_i \xi}(\tau)\pig)
					\Big[ \nabla_p   u\pig(y_{\tau_i \xi}(\tau),p_{\tau_i \xi}(\tau)\pig)\Big] 
					\pig[D^{\Psi_1}_\xi p_{\tau_i \xi} (\tau)
					+D^{\Psi_2}_\xi p_{\tau_i \xi} (\tau)\pig]\bigg\}d\tau\\
					&+\int^T_s\nabla_{yy}g_2\pig(y_{\tau_i \xi}(\tau),\mathcal{L}\big(y_{\tau_i \xi}(\tau)\big)\pig)
					\pig[D^{\Psi_1}_\xi y_{\tau_i \xi} (\tau)
					+D^{\Psi_2}_\xi y_{\tau_i \xi} (\tau)\pig]d\tau\\
					&+\displaystyle\int^T_s\widetilde{\mathbb{E}}
						\left\{\nabla_{y'}\dfrac{d}{d\nu}\nabla_{y}g_2\pig(y_{\tau_i \xi}(\tau),\mathcal{L}\big(y_{\tau_i \xi}(\tau)\big)\pig)  (y')\bigg|_{y'= \widetilde{y_{\tau_i \xi}} (\tau)}
						\left[\widetilde{D^{\Psi_1}_\xi y_{\tau_i \xi}} (\tau)
						\right]\right\}
						d\tau\h{-50pt}\\
					&+\displaystyle\int^T_s\widetilde{\mathbb{E}}
							\left\{\nabla_{y'}\dfrac{d}{d\nu}\nabla_{y}g_2\pig(y_{\tau_i \xi}(\tau),\mathcal{L}\big(y_{\tau_i \xi}(\tau)\big)\pig)  (y')\bigg|_{y'= \widetilde{y_{\tau_i \xi}} (\tau)}
							\left[\widetilde{D^{\Psi_2}_\xi y_{\tau_i \xi}} (\tau)
							\right]\right\}
							d\tau\h{-50pt}\\
					&-\int^T_s D^{\Psi_1}_\xi q_{\tau_i\xi}(\tau)
					+D^{\Psi_2}_\xi q_{\tau_i\xi}(\tau)dW_\tau.
				\end{aligned}\right.}
			\label{J flow 1+2}
		\end{equation}
		We can choose $\Big(\widetilde{D^{\Psi_1}_\xi y_{\tau_i \xi}} (\tau),\widetilde{D^{\Psi_2}_\xi y_{\tau_i \xi}} (\tau)\Big)$ such that this pair is an independent copy of $\Big(D^{\Psi_1}_\xi y_{\tau_i \xi} (\tau),D^{\Psi_2}_\xi y_{\tau_i \xi} (\tau)\Big)$ with their joint distributions being the same. By comparing the equations in (\ref{eq. J flow of FBSDE}) under $\Psi=\Psi_1+\Psi_2$ with the equations in (\ref{J flow 1+2}), the uniqueness of solution stated in Lemma \ref{lem. Existence of J flow, weak conv.} shows that 
		\begin{align*}
			&\Big(D^{\Psi_1+\Psi_2}_\xi y_{\tau_i \xi}(s),
			D^{\Psi_1+\Psi_2}_\xi p_{\tau_i \xi} (s),
			D^{\Psi_1+\Psi_2}_\xi u_{\tau_i \xi} (s)\Big)\\
			&=\Big(D^{\Psi_1}_\xi y_{\tau_i \xi}(s)
			+D^{\Psi_2}_\xi y_{\tau_i \xi}(s),
			D^{\Psi_1}_\xi p_{\tau_i \xi}(s)
			+D^{\Psi_2}_\xi p_{\tau_i \xi}(s),
			D^{\Psi_1}_\xi u_{\tau_i \xi}(s)
			+D^{\Psi_2}_\xi u_{\tau_i \xi}(s)\Big)
		\end{align*}
		for any $s \in [\tau_i,T]$ and $D^{\Psi_1+\Psi_2}_{\xi} q_{\tau_i \xi}(s)=D^{\Psi_1}_\xi q_{\tau_i \xi}(s)+D^{\Psi_1}_{\xi} q_{\tau_i \xi}(s)$ for a.e. $s\in [\tau_i,T]$. The homogeneity property is given directly by the definition of G\^ateaux derivative, for instance,
		\begin{align*}
			0&=\lim_{\epsilon \to 0} 
			\left\|
			\dfrac{y_{\tau_i,\xi+\epsilon c\Psi}(s)-y_{\tau_i \xi}(s)}{\epsilon}
			-D^{c\Psi}_\xi y_{\tau_i \xi}(s) \right\|_{\mathcal{H}}
			=|c|\lim_{\epsilon' \to 0} 
			\left\|
			\dfrac{y_{\tau_i,\xi+\epsilon '\Psi}(s)-y_{\tau_i \xi}(s)}{\epsilon'}
			-\dfrac{1}{c} D^{c\Psi}_\xi y_{\tau_i \xi} (s) \right\|_{\mathcal{H}}\\
			&=|c|\lim_{\epsilon' \to 0} 
			\left\|
			\left(\dfrac{y_{\tau_i,\xi+\epsilon '\Psi}(s)-y_{\tau_i \xi}(s)}{\epsilon'}
			-D^{\Psi}_\xi y_{\tau_i \xi}(s)\right)
			+\left(D^{\Psi}_\xi y_{\tau_i \xi}(s)
			-\dfrac{1}{c} D^{c\Psi}_\xi y_{\tau_i \xi}(s) \right)\right\|_{\mathcal{H}}\\
			&\geq
			|c|\lim_{\epsilon' \to 0} \left\|D^{\Psi}_\xi y_{\tau_i \xi}(s)
			-\dfrac{1}{c} D^{c\Psi}_\xi y_{\tau_i \xi}(s) \right\|_{\mathcal{H}}
			-|c|\lim_{\epsilon' \to 0} 
			\left\|
			\dfrac{y_{\tau_i,\xi+\epsilon '\Psi}(s)-y_{\tau_i \xi}(s)}{\epsilon'}
			-D^{\Psi}_\xi y_{\tau_i \xi}(s)\right\|_{\mathcal{H}},
		\end{align*}
		for any non-zero constant $c \in \mathbb{R}$ and $s\in [\tau_i,T]$. Thus $D^{c\Psi}_\xi y_{\tau_i \xi}(s)=cD^{\Psi}_\xi y_{\tau_i \xi}(s)$.

		\noindent\textbf{Part 2. Continuity in $\xi$:}\\
		For $\xi$, $\Psi \in L^2(\Omega,\mathcal{W}^{\tau_{i}}_0,\mathbb{P};\mathbb{R}^d)$, we consider a sequence $\{\xi_k\}_{k \in \mathbb{N}} \subset  L^2(\Omega,\mathcal{W}^{\tau_{i}}_0,\mathbb{P};\mathbb{R}^d)$ such that $\xi_k \longrightarrow \xi$ in $\mathcal{H}$. Applying It\^o's lemma to the inner product $\pig\langle D^\Psi_\xi p_{\tau_i\xi_k}(s)
		-D^\Psi_\xi p_{\tau_i \xi} (s),
		D^\Psi_\xi y_{\tau_i\xi_k} (s)
		-D^\Psi_\xi y_{\tau_i \xi} (s)
		\pigr\rangle_{\mathbb{R}^d}$, together with the first order condition in (\ref{eq. 1st order J flow}), we have
		\small
		\begin{align}
			&\h{-11pt}\pig\langle \nabla_{yy} h_{1}\pig(y_{\tau_i \xi_k}(T)\pig)D^\Psi_\xi y_{\tau_i \xi_k} (T)
			-\nabla_{yy} h_{1}\pig(y_{\tau_i \xi}(T)\pig)D^\Psi_\xi y_{\tau_i \xi} (T),
			D^\Psi_\xi y_{\tau_i \xi_k}(T)
			-D^\Psi_\xi y_{\tau_i \xi}(T)
			\pigr\rangle_{\mathcal{H}}\nonumber\\
			=\:&-\int_{\tau_i}^{T}\h{-3pt}
			\Big\langle \nabla_{yy}g_1\pig(y_{\tau_i \xi_k}(\tau),u_{\tau_i \xi_k}(\tau)\pig)
			D^\Psi_\xi y_{\tau_i\xi_k}(\tau)
			-\nabla_{yy}g_1\pig(y_{\tau_i\xi}(\tau),u_{\tau_i\xi}(\tau)\pig)
			D^\Psi_\xi y_{\tau_i\xi}(\tau),
			D^\Psi_\xi y_{\tau_i \xi_k}(\tau)
			-D^\Psi_\xi y_{\tau_i \xi}(\tau)\Big\rangle_{\mathcal{H}} d\tau\nonumber\\
			&-\int_{\tau_i}^{T}
			\Big\langle 
			\nabla_{vy}g_1\pig(y_{\tau_i \xi_k}(\tau),u_{\tau_i \xi_k}(\tau)\pig)
			D^\Psi_\xi u_{\tau_i\xi_k} (\tau)
			-\nabla_{vy}g_1\pig(y_{\tau_i \xi}(\tau),u_{\tau_i \xi}(\tau)\pig)
			D^\Psi_\xi u_{\tau_i\xi} (\tau),
			D^\Psi_\xi y_{\tau_i\xi_k}(\tau)
			-D^\Psi_\xi y_{\tau_i \xi}(\tau)\Big\rangle_{\mathcal{H}} d\tau\nonumber\\
			&-\int_{\tau_i}^{T}
			\Big\langle 
			\nabla_{yy}g_2\pig(y_{\tau_i \xi_k}(\tau),\mathcal{L}(y_{\tau_i \xi_k}(\tau))\pig)
			D^\Psi_\xi y_{\tau_i\xi_k} (\tau)
			-\nabla_{yy}g_2\pig(y_{\tau_i \xi}(\tau),\mathcal{L}(y_{\tau_i \xi}(\tau))\pig)
			D^\Psi_\xi y_{\tau_i\xi} (\tau),
			D^\Psi_\xi y_{\tau_i\xi_k}(\tau)
			-D^\Psi_\xi y_{\tau_i \xi}(\tau)\Big\rangle_{\mathcal{H}} d\tau\nonumber\\
			&-\int^T_{\tau_i} \Bigg\langle  \widetilde{\mathbb{E}}
			\left[\nabla_{y'}\dfrac{d}{d\nu}\nabla_{y}g_2\pig(y_{\tau_i \xi_k}(\tau),\mathcal{L}(y_{\tau_i \xi_k}(\tau))\pig) (y')\bigg|_{y'=\widetilde{y_{\tau_i \xi_k}}(\tau)}
			\widetilde{D^\Psi_\xi y_{\tau_i\xi_k}} (\tau)\right]\nonumber\\
			&\h{43pt}-\widetilde{\mathbb{E}}
			\left[\nabla_{y'}\dfrac{d}{d\nu}\nabla_{y}g_2\pig(y_{\tau_i \xi}(\tau),\mathcal{L}(y_{\tau_i \xi}(\tau))\pig) (y')\bigg|_{y'=\widetilde{y_{\tau_i \xi}}(\tau)}
			\widetilde{D^\Psi_\xi y_{\tau_i\xi}} (\tau)\right]
			,D^\Psi_\xi y_{\tau_i\xi_k}(\tau)
			-D^\Psi_\xi y_{\tau_i \xi}(\tau)
			\Bigg\rangle_{\mathcal{H}} d\tau\nonumber\\
			&-\int_{\tau_i}^{T}\h{-3pt}
			\bigg\langle \nabla_{yv}g_1\pig(y_{\tau_i \xi_k}(\tau),u_{\tau_i \xi_k}(\tau)\pig)
			D^\Psi_\xi y_{\tau_i\xi_k}(\tau)
			- \nabla_{yv}g_1\pig(y_{\tau_i \xi}(\tau),u_{\tau_i \xi}(\tau)\pig)
			D^\Psi_\xi y_{\tau_i\xi}(\tau),
			D^\Psi_\xi u_{\tau_i \xi_k}(\tau)
			-D^\Psi_\xi u_{\tau_i \xi}(\tau)\Big\rangle_{\mathcal{H}} d\tau\nonumber\\
			&-\int_{\tau_i}^{T}\h{-3pt}
			\bigg\langle \nabla_{vv}g_1\pig(y_{\tau_i \xi_k}(\tau),u_{\tau_i \xi_k}(\tau)\pig)
			D^\Psi_\xi u_{\tau_i\xi_k}(\tau)
			- \nabla_{vv}g_1\pig(y_{\tau_i \xi}(\tau),u_{\tau_i \xi}(\tau)\pig)
			D^\Psi_\xi u_{\tau_i\xi}(\tau),
			D^\Psi_\xi u_{\tau_i \xi_k}(\tau)
			-D^\Psi_\xi u_{\tau_i \xi}(\tau)\Big\rangle_{\mathcal{H}} d\tau.\label{5185}
		\end{align}\normalsize
		The left hand side of (\ref{5185}) can be estimated by using Young's inequality and (\ref{ass. convexity of h}) of  Assumption {\bf (Bvi)}  such that we have
		\begin{align}
			&\h{-10pt}\pig\langle \nabla_{yy} h_{1}\pig(y_{\tau_i \xi_k}(T)\pig)D^\Psi_\xi y_{\tau_i \xi_k} (T)
			-\nabla_{yy} h_{1}\pig(y_{\tau_i \xi}(T)\pig)D^\Psi_\xi y_{\tau_i \xi} (T),
			D^\Psi_\xi y_{\tau_i \xi_k}(T)
			-D^\Psi_\xi y_{\tau_i \xi}(T)
			\pigr\rangle_{\mathcal{H}}\nonumber\\
			=\:&\pig\langle \nabla_{yy}h_1\pig(y_{\tau_i \xi_k}(T)\pig)
			\pig[D^\Psi_\xi y_{\tau_i \xi_k} (T)
			-D^\Psi_\xi y_{\tau_i \xi} (T)
			\pig]
			+\pig[\nabla_{yy}h_1\pig(y_{\tau_i \xi_k}(T)\pig)
			-\nabla_{yy}h_1(y_{\tau_i \xi}(T))\pig]
			D^\Psi_\xi y_{\tau_i \xi} (T),\nonumber\\
			&\h{350pt}D^\Psi_\xi y_{\tau_i \xi_k}(T)
			-D^\Psi_\xi y_{\tau_i \xi}(T)
			\pigr\rangle_{\mathcal{H}}\nonumber\\
			\geq\:& -\lambda_{h_1}\pig\|D^\Psi_\xi y_{\tau_i \xi_k} (T)
			-D^\Psi_\xi y_{\tau_i \xi} (T)\pigr\|_{\mathcal{H}}^2
			-\kappa_9\pig\|\pig[\nabla_{yy}h_1\pig(y_{\tau_i \xi_k}(T)\pig)
			-\nabla_{yy}h_1(y_{\tau_i \xi}(T))\pig]
			D^\Psi_\xi y_{\tau_i \xi} (T)\pigr\|_{\mathcal{H}}^2\nonumber\\
			&-\dfrac{1}{4\kappa_9}
			\pig\|D^\Psi_\xi y_{\tau_i \xi_k}(T)
			-D^\Psi_\xi y_{\tau_i \xi}(T)\pigr\|_{\mathcal{H}}^2,
			\label{6900}
		\end{align}
		for some $\kappa_9>0$ to be determined later. All the other terms in (\ref{5185}) can be decomposed and estimated in the similar manner as in \eqref{6900}, then together with the respective (\ref{ass. convexity of g1}) and (\ref{ass. convexity of g2}) of Assumptions {\bf (Ax)} and {\bf (Axi)}, we have
		\fontsize{9pt}{11pt}\begin{align}
			&\h{-10pt}\int^T_{\tau_i}
			\Lambda_{g_1}\pig\|D^\Psi_\xi u_{\tau_i \xi_k}(\tau)
			-D^\Psi_\xi u_{\tau_i \xi}(\tau)\pigr\|_{\mathcal{H}}^2
			-(\lambda_{g_1}+\lambda_{g_2}+c_{g_2})
			\pig\|D^\Psi_\xi y_{\tau_i \xi_k}(\tau)
			-D^\Psi_\xi y_{\tau_i \xi}(\tau)\pigr\|_{\mathcal{H}}^2 d\tau
			-\lambda_{h_1}\pig\|D^\Psi_\xi y_{\tau_i \xi_k} (T)
			-D^\Psi_\xi y_{\tau_i \xi} (T)\pigr\|_{\mathcal{H}}^2\nonumber\\
			\leq\:&\dfrac{1}{4\kappa_9}
			\pig\|D^\Psi_\xi y_{\tau_i \xi_k}(T)
			-D^\Psi_\xi y_{\tau_i \xi}(T)\pigr\|_{\mathcal{H}}^2
			+\kappa_9\pig\|\pig[\nabla_{yy}h_1\pig(y_{\tau_i \xi_k}(T)\pig)
			-\nabla_{yy}h_1(y_{\tau_i \xi}(T))\pig]
			D^\Psi_\xi y_{\tau_i \xi} (T)\pigr\|_{\mathcal{H}}^2\nonumber\\
			&+
			\int^T_{\tau_i}\dfrac{1}{2\kappa_{10}}
			\pig\|D^\Psi_\xi u_{\tau_i \xi_k}(\tau)
			-D^\Psi_\xi u_{\tau_i \xi}(\tau)\pigr\|_{\mathcal{H}}^2
			+2\kappa_{10}\pig\|\pig[\nabla_{yv}g_1\pig(y_{\tau_i \xi_k}(\tau),u_{\tau_i \xi_k}(\tau)\pig)
			-\nabla_{yv}g_1\pig(y_{\tau_i \xi}(\tau),u_{\tau_i \xi}(\tau)\pig)\pig]
			D^\Psi_\xi y_{\tau_i \xi} (\tau)\pigr\|_{\mathcal{H}}^2d\tau\nonumber\\
			&+
			\int^T_{\tau_i}\dfrac{1}{4\kappa_{11}}
			\pig\|
			D^\Psi_\xi u_{\tau_i \xi_k}(\tau)
			-D^\Psi_\xi u_{\tau_i \xi}(\tau)\pigr\|_{\mathcal{H}}^2
			+\kappa_{11}\pig\|\pig[\nabla_{vv}g_1\pig(y_{\tau_i \xi_k}(\tau),u_{\tau_i \xi_k}(\tau)\pig)
			-\nabla_{vv}g_1\pig(y_{\tau_i \xi}(\tau),u_{\tau_i \xi}(\tau)\pig)\pig]
			D^\Psi_\xi u_{\tau_i \xi} (\tau)\pigr\|_{\mathcal{H}}^2d\tau\nonumber\\
			&+
			\int^T_{\tau_i}\dfrac{1}{4\kappa_{12}}
			\pig\|
			D^\Psi_\xi y_{\tau_i \xi_k}(\tau)
			-D^\Psi_\xi y_{\tau_i \xi}(\tau)\pigr\|_{\mathcal{H}}^2
			+\kappa_{12}\pig\|\pig[\nabla_{yy}g_1\pig(y_{\tau_i \xi_k}(\tau),u_{\tau_i \xi_k}(\tau)\pig)
			-\nabla_{yy}g_1\pig(y_{\tau_i \xi}(\tau),u_{\tau_i \xi}(\tau)\pig)\pig]
			D^\Psi_\xi y_{\tau_i \xi} (\tau)\pigr\|_{\mathcal{H}}^2d\tau\nonumber\\
			&+
			\int^T_{\tau_i}\dfrac{1}{4\kappa_{13}}
			\pig\|
			D^\Psi_\xi y_{\tau_i \xi_k}(\tau)
			-D^\Psi_\xi y_{\tau_i \xi}(\tau)\pigr\|_{\mathcal{H}}^2
			+\kappa_{13}\pig\|\pig[\nabla_{yy}g_2\pig(y_{\tau_i \xi_k}(\tau),\mathcal{L}(y_{\tau_i \xi_k}(\tau))\pig)
			-\nabla_{yy}g_2\pig(y_{\tau_i \xi}(\tau),\mathcal{L}(y_{\tau_i \xi}(\tau))\pig)\pig]
			D^\Psi_\xi y_{\tau_i \xi} (\tau)\pigr\|_{\mathcal{H}}^2d\tau\nonumber\\
			&+
			\int^T_{\tau_i}\dfrac{1}{4\kappa_{14}}
			\pig\|
			D^\Psi_\xi y_{\tau_i \xi_k}(\tau)
			-D^\Psi_\xi y_{\tau_i \xi}(\tau)\pigr\|_{\mathcal{H}}^2\nonumber\\
			&\h{20pt}+\kappa_{14}\Bigg\|\widetilde{\mathbb{E}}
			\left[\nabla_{y'}\dfrac{d}{d\nu}\nabla_{y}g_2\pig(y_{\tau_i \xi_k}(\tau),\mathcal{L}(y_{\tau_i \xi_k}(\tau))\pig) (y')\bigg|_{y'=\widetilde{y_{\tau_i \xi_k}}(\tau)}
			\widetilde{D^\Psi_\xi y_{\tau_i\xi}} (\tau)\right]\nonumber\\
			&\h{200pt}-\widetilde{\mathbb{E}}
			\left[\nabla_{y'}\dfrac{d}{d\nu}\nabla_{y}g_2\pig(y_{\tau_i \xi}(\tau),\mathcal{L}(y_{\tau_i \xi}(\tau))\pig) (y')\bigg|_{y'=\widetilde{y_{\tau_i \xi}}(\tau)}
			\widetilde{D^\Psi_\xi y_{\tau_i\xi}} (\tau)
			\right]\Bigg\|_{\mathcal{H}}^2d\tau,
			\label{5282}
		\end{align}\normalsize
		for some positive constants $\kappa_9,\ldots,\kappa_{14}$ to be determined. With an application of the Cauchy-Schwarz inequality, the equation of $D^\Psi_\xi y_{\tau_i \xi_k}(\tau)
		-D^\Psi_\xi y_{\tau_i \xi}(\tau)$ implies that
		\begin{equation}
			\mathbb{E}\left[\sup_{s\in [\tau_i,T]}\pig|
			D^\Psi_\xi y_{\tau_i \xi_k}(s)
			-D^\Psi_\xi y_{\tau_i \xi}(s)\pigr|^2\right]
			\leq (T-\tau_i) \int^T_{\tau_i}\pig\|
			D^\Psi_\xi u_{\tau_i \xi_k}(\tau)
			-D^\Psi_\xi u_{\tau_i \xi}(\tau)\pigr\|_{\mathcal{H}}^2 d\tau, \h{20pt} 
			\label{DY-DY X-X_k}
		\end{equation}
		\begin{flalign}
			&&\text{and then } \int^T_{\tau_i}\pig\|
			D^\Psi_\xi y_{\tau_i \xi_k}(\tau)
			-D^\Psi_\xi y_{\tau_i \xi}(\tau)\pigr\|_{\mathcal{H}}^2d\tau
			\leq \dfrac{(T-\tau_i)^2}{2} \int^T_{\tau_i}\pig\|
			D^\Psi_\xi u_{\tau_i \xi_k}(\tau)
			-D^\Psi_\xi u_{\tau_i \xi}(\tau)\pigr\|_{\mathcal{H}}^2 d\tau.&&
			\label{int DY-DY X-X_k}
		\end{flalign}
		Substituting (\ref{DY-DY X-X_k}) and (\ref{int DY-DY X-X_k}) into (\ref{5282}), we have
		\begin{align}
			&\h{-10pt}
			\left\{\Lambda_{g_1}-\dfrac{1}{2\kappa_{10}}-\dfrac{1}{4\kappa_{11}}
			- \left[(\lambda_{h_1})_++\dfrac{1}{4\kappa_9}
			\right](T-\tau_i)
			- \left[(\lambda_{g_1}+\lambda_{g_2}+c_{g_2})_++\dfrac{1}{4\kappa_{12}}
			+\dfrac{1}{4\kappa_{13}}
			+\dfrac{1}{4\kappa_{14}}\right]\dfrac{(T-\tau_i)^2}{2}
			\right\}\cdot\h{-30pt}\nonumber\\
			&\h{-10pt}\int^T_{\tau_i}\pig\|D^\Psi_\xi u_{\tau_i \xi_k}(\tau)
			-D^\Psi_\xi u_{\tau_i \xi}(\tau)\pigr\|_{\mathcal{H}}^2d\tau\h{-30pt}\nonumber\\
			\leq\:&
			\kappa_9\pig\|\pig[\nabla_{yy}h_1\pig(y_{\tau_i \xi_k}(T)\pig)
			-\nabla_{yy}h_1(y_{\tau_i \xi}(T))\pig]
			D^\Psi_\xi y_{\tau_i \xi} (T)\pigr\|_{\mathcal{H}}^2\nonumber\\
			&+
			\int^T_{\tau_i}2\kappa_{10}\pig\|\pig[\nabla_{yv}g_1\pig(y_{\tau_i \xi_k}(\tau),u_{\tau_i \xi_k}(\tau)\pig)
			-\nabla_{yv}g_1\pig(y_{\tau_i \xi}(\tau),u_{\tau_i \xi}(\tau)\pig)\pig]
			D^\Psi_\xi y_{\tau_i \xi} (\tau)\pigr\|_{\mathcal{H}}^2d\tau\nonumber\\
			&+
			\int^T_{\tau_i}\kappa_{11}\pig\|\pig[\nabla_{vv}g_1\pig(y_{\tau_i \xi_k}(\tau),u_{\tau_i \xi_k}(\tau)\pig)
			-\nabla_{vv}g_1\pig(y_{\tau_i \xi}(\tau),u_{\tau_i \xi}(\tau)\pig)\pig]
			D^\Psi_\xi u_{\tau_i \xi} (\tau)\pigr\|_{\mathcal{H}}^2d\tau\nonumber\\
			&+
			\int^T_{\tau_i}\kappa_{12}\pig\|\pig[\nabla_{yy}g_1\pig(y_{\tau_i \xi_k}(\tau),u_{\tau_i \xi_k}(\tau)\pig)
			-\nabla_{yy}g_1\pig(y_{\tau_i \xi}(\tau),u_{\tau_i \xi}(\tau)\pig)\pig]
			D^\Psi_\xi y_{\tau_i \xi} (\tau)\pigr\|_{\mathcal{H}}^2d\tau\nonumber\\
			&+
			\int^T_{\tau_i}
			\kappa_{13}\pig\|\pig[\nabla_{yy}g_2\pig(y_{\tau_i \xi_k}(\tau),\mathcal{L}(y_{\tau_i \xi_k}(\tau))\pig)
			-\nabla_{yy}g_2\pig(y_{\tau_i \xi}(\tau),\mathcal{L}(y_{\tau_i \xi}(\tau))\pig)\pig]
			D^\Psi_\xi y_{\tau_i \xi} (\tau)\pigr\|_{\mathcal{H}}^2d\tau\nonumber\\
			&+\int^T_{\tau_i}\kappa_{14}\Bigg\|\widetilde{\mathbb{E}}
			\left[\nabla_{y'}\dfrac{d}{d\nu}\nabla_{y}g_2\pig(y_{\tau_i \xi_k}(\tau),\mathcal{L}(y_{\tau_i \xi_k}(\tau))\pig) (y')\bigg|_{y'=\widetilde{y_{\tau_i \xi_k}}(\tau)}
			\widetilde{D^\Psi_\xi y_{\tau_i\xi}} (\tau)\right]\nonumber\\
			&\h{120pt}-\widetilde{\mathbb{E}}
			\left[\nabla_{y'}\dfrac{d}{d\nu}\nabla_{y}g_2\pig(y_{\tau_i \xi}(\tau),\mathcal{L}(y_{\tau_i \xi}(\tau))\pig) (y')\bigg|_{y'=\widetilde{y_{\tau_i \xi}}(\tau)}
			\widetilde{D^\Psi_\xi y_{\tau_i\xi}} (\tau)
			\right]\Bigg\|_{\mathcal{H}}^2d\tau.
			\label{5429}
		\end{align}
		
	\noindent	We next prove that the sequences of processes $D^\Psi_\xi y_{\tau_i \xi_k} (s)$,
		$D^\Psi_\xi p_{\tau_i \xi_k} (s)$,
		$D^\Psi_\xi u_{\tau_i \xi_k} (s)$ respectively converge strongly to $ D^\Psi_\xi y_{\tau_i\xi} (s)$,
		$D^\Psi_\xi p_{\tau_i \xi} (s)$,
		$D^\Psi_\xi u_{\tau_i \xi} (s)$ in $L^\infty_{\mathcal{W}_{\tau_i \xi \Psi}}(\tau_i,T;\mathcal{H})$,  and $D^\Psi_\xi q_{\tau_i \xi_k} (s)$ converges strongly to $D^\Psi_\xi q_{\tau_i \xi} (s)$ in $\mathbb{H}_{\mathcal{W}_{\tau_i \xi \Psi}}[\tau_i,T]$ as $k \to \infty$. For if not the case, there is a subsequence, without relabeling for simplicity,  such that, for instance, 
		\begin{equation}
			\lim_{k \to \infty} \mathbb{E}\left[\sup_{s\in [\tau_i,T]}\pig| 
			D^\Psi_\xi y_{\tau_i \xi_k} (s)
			- D^\Psi_\xi y_{\tau_i \xi} (s) \pigr|^2\right]>0.
			\label{not conv DY, X_k}
		\end{equation}
		By setting $\epsilon=1$ and taking $\Psi=\xi_k-\xi$ in the bound in (\ref{bdd. diff quotient of y, p, q, u}), we have
		\begin{align} \pig\| y_{\tau_i \xi_k}(s)
			-y_{\tau_i \xi}(s)\pigr\|_{\mathcal{H}}
			\leq C_4'\|\xi_k-\xi\|_{\mathcal{H}},\h{15pt} \text{ for any $s\in [\tau_i,T]$},
			\label{bdd DY, X_k-X}
		\end{align}
		\begin{flalign} 
			&&
			\text{and then }\int^T_{\tau_i}\pig\| y_{\tau_i \xi_k}(\tau)
			-y_{\tau_i \xi}(\tau)\pigr\|_{\mathcal{H}} d\tau,\:
			\int^T_{\tau_i}\pig\|u_{\tau_i \xi_k}(\tau)
			-u_{\tau_i \xi}(\tau)\pigr\|_{\mathcal{H}}d\tau
			\leq C_4'\h{1pt}T\|\xi_k-\xi\|_{\mathcal{H}},&&
			\label{bdd DY, Du, X_k-X}
		\end{flalign}
		where $C'_4$ mentioned in Lemma \ref{lem. bdd of diff quotient} is independent of the sequence $\{\xi_k\}_{k\in \mathbb{N}}$ and $\xi$. By assumption, $\xi_k$ converges strongly to $\xi$, the strong convergences in (\ref{bdd DY, X_k-X}) and (\ref{bdd DY, Du, X_k-X}), with a simple application of Borel-Cantelli lemma, imply that there is a subsequence $\{\ell_k\}=:\{k\}$ such that $y_{\tau_i \xi_k}(T)$ converges to $y_{\tau_i \xi}(T)$, $\mathbb{P}$-a.s., 
		$y_{\tau_i \xi_k}(\tau)$ converges to $y_{\tau_i \xi}(\tau)$ for $\mathcal{L}^1 \otimes \mathbb{P}$-a.e. $(\tau,\omega) \in [\tau_i,T]\times\Omega$, and $u_{\tau_i\xi_k}(\tau)$ converges to $u_{\tau_i \xi}(\tau)$ for $\mathcal{L}^1 \otimes \mathbb{P}$-a.e. $(\tau,\omega) \in [\tau_i,T]\times\Omega$, where $\mathcal{L}^1$ is the 1-dimensional Lebesgue measure. Similar to Step 3 in the proof of Lemma \ref{lem. Existence of J flow, weak conv.}, by the continuities and boundedness of the second-order derivatives of $g_1$, $g_2$, $h_1$ in 
		\eqref{ass. cts and diff of g1}, 
		\eqref{ass. cts and diff of g2}, 
		\eqref{ass. bdd of D^2g1},  
		\eqref{ass. bdd of D^2g2}, 
		\eqref{ass. bdd of D dnu D g2}, 
		\eqref{ass. cts and diff of h1},
		\eqref{ass. bdd of D^2h1} of Assumptions {\bf (Ai)}, {\bf (Aii)}, {\bf (Avii)}, {\bf (Aviii)}, {\bf (Aix)}, {\bf (Bi)}, {\bf (Bv)}, respectively, we apply the dominated convergence theorem to (\ref{5429}) to deduce the subsequential convergence of the right hand side of (\ref{5429}) to zero and therefore
		\begin{equation*}
			\scalemath{0.97}{
				\begin{aligned}
					&\h{-10pt}\left\{\Lambda_{g_1}-\dfrac{1}{2\kappa_{10}}-\dfrac{1}{4\kappa_{11}}
					- \left[(\lambda_{h_1})_++\dfrac{1}{4\kappa_9}
					\right](T-\tau_i)
					- \left[(\lambda_{g_1}+\lambda_{g_2}+c_{g_2})_++\dfrac{1}{4\kappa_{12}}
					+\dfrac{1}{4\kappa_{13}}
					+\dfrac{1}{4\kappa_{14}}\right]\dfrac{(T-\tau_i)^2}{2}
					\right\}\cdot\\
					&\h{-10pt}\int^T_{\tau_i}\pig\|D^\Psi_\xi u_{\tau_i \xi_k}(\tau)
					-D^\Psi_\xi u_{\tau_i \xi}(\tau)\pigr\|_{\mathcal{H}}^2d\tau\longrightarrow 0,\h{20pt}
					\text{as $k \to \infty$ up to the subsequence $\{\ell_k\}$.}
			\end{aligned}}
		\end{equation*}
		Choosing $\kappa_9,\ldots,\kappa_{14}$ large enough, \eqref{ass. Cii} of Assumption \textup{\bf (Cii)}  yields that 
		\begin{equation}
			\int^T_{\tau_i}
			\pig\|D^\Psi_\xi u_{\tau_i \xi_k}(\tau)
			-D^\Psi_\xi u_{\tau_i \xi}(\tau)\pigr\|_{\mathcal{H}}^2 d\tau\longrightarrow 0,
			\h{20pt}
			\text{as $k \to \infty$ up to the subsequence $\{\ell_k\}$.}
			\label{5397}
		\end{equation}
		Together with (\ref{DY-DY X-X_k}), we have 
		$\mathbb{E}\left[\sup_{s\in [\tau_i,T]}\pig| 
		D^\Psi_\xi y_{\tau_i \xi_k} (s)
		- D^\Psi_\xi y_{\tau_i \xi} (s) \pigr|^2\right]$ converges to zero as $k \to \infty$ up to the subsequence $\{\ell_k\}$. This contradicts (\ref{not conv DY, X_k}), therefore the strong convergence of $D^\Psi_\xi y_{\tau_i \xi_k} (s)$ and the continuity of $D^\Psi_\xi  y_{\tau_i \xi} (s)$ with respect to $\xi$ in $\mathbb{S}_{\mathcal{W}_{\tau_i \xi \Psi}}[\tau_i,T]$ should follow. 
		
		The strong convergences of 
		$D^\Psi_\xi  p_{\tau_i \xi_k} (s)$, $D^\Psi_\xi q_{\tau_i \xi_k} (s)$ are concluded by subtracting their equations with the equations of $D^\Psi_\xi  p_{\tau_i \xi} (s)$, $D^\Psi_\xi q_{\tau_i \xi} (s)$ and then using It\^o's lemma, together with the convergences in (\ref{5397}), the convergence of $D^\Psi_\xi  y_{\tau_i \xi_k} (s)$, continuities and boundedness of the second-order derivatives of $g_1$, $g_2$, $h_1$ in  the respective \eqref{ass. cts and diff of g1}, 
		\eqref{ass. cts and diff of g2},  
		\eqref{ass. bdd of D^2g1}, 
		\eqref{ass. bdd of D^2g2}, 
		\eqref{ass. bdd of D dnu D g2}, 
		\eqref{ass. cts and diff of h1},
		\eqref{ass. bdd of D^2h1} of Assumptions {\bf (Ai)}, {\bf (Aii)}, {\bf (Avii)}, {\bf (Aviii)}, {\bf (Aix)}, {\bf (Bi)}, {\bf (Bv)}. Finally, the strong convergence of $D^\Psi_\xi u_{\tau_i \xi_k} (s)$ is deduced by the first order condition in (\ref{eq. 1st order J flow}), continuities and boundedness of the second-order derivatives of $g_1$ in \eqref{ass. cts and diff of g1}, \eqref{ass. bdd of D^2g1} of Assumptions {\bf (Ai)}, {\bf (Avii)} and the strong convergences of $D^\Psi_\xi y_{\tau_i \xi_k} (s)$, $D^\Psi_\xi p_{\tau_i \xi_k} (s)$ just obtained.
		
		\noindent\textbf{Part 3. Existence of Fr\'echet derivative:}\\
		To conclude, since $D^\Psi_\xi y_{\tau_i \xi}(s),
		D^\Psi_\xi p_{\tau_i \xi}(s),D^\Psi_\xi q_{\tau_i \xi}(s),D^\Psi_\xi u_{\tau_i \xi}(s)$ are linear in $\Psi$ for a given $\xi \in L^2(\Omega,\mathcal{W}^{\tau_{i}}_0,\mathbb{P};\mathbb{R}^d)$ and continuous in $\xi$ for a given $\Psi \in L^2(\Omega,\mathcal{W}^{\tau_{i}}_0,\mathbb{P};\mathbb{R}^d)$, therefore, by Definition 3.2.1 and Proposition 3.2.15 in \cite{DM07}, we obtain the existence of the Fr\'echet derivatives.
	\end{proof}
	
	\subsection{Proofs in Section \ref{sec. Regularity of Value Function}}\label{app. Proofs in Regularity of Value Function}
	\begin{proof}[\bf Proof of Lemma \ref{lem diff V w.r.t. x}]
		Let $x^\nu \in \mathbb{R}^d$ for $\nu=1,2$ and $\mathbb{L}(\bigcdot) \in C\pig(t,T;\mathcal{P}_2(\mathbb{R}^d)\pig)$. For simplicity, the corresponding solutions to the FBSDE (\ref{eq. 1st order condition, fix L})-(\ref{eq. FBSDE, fix L}) are written as $y^\nu(s)=y_{tx^\nu\mathbb{L}}(s)$, $p^\nu(s)=p_{tx^\nu\mathbb{L}}(s)$, $q^\nu(s)=q_{tx^\nu\mathbb{L}}(s)$ and $u^\nu(s)=u_{tx^\nu\mathbb{L}}(s)$, for $\nu=1,2$. 
		
		\noindent {\bf Part 1. Differentiability of $V(x,t)$ in $x$:}\\
		Consider the dynamics (\ref{eq. 1st order condition, fix L})-(\ref{eq. FBSDE, fix L}) using the control $u^{2}(s)=u\pig(y^2(s),p^2(s)\pig)$ with the initial condition $x^{1}$, the corresponding trajectory is $x^1+\int^s_t u\pig(y^2(\tau),p^2(\tau)\pig)d\tau + \sigma_0(W_s-W_t)=y^2(s)+x^{1}-x^{2}$, and then by definition
		\begin{align}
			&\h{-10pt}V(x^{1},t)-V(x^{2},t)\nonumber\\
			\leq&\:\int_{t}^{T}
			\mathbb{E}\Big[g_1\pig(y^2(s)+x^{1}-x^{2},u^{2}(s)\pig)
			-g_1\pig(y^2(s),u^{2}(s)\pig)
			+g_2\pig(y^2(s)+x^{1}-x^{2},\mathbb{L}(s)\pig)-g_2\pig(y^2(s),\mathbb{L}(s)\pig)\Big]ds\nonumber\\
			&+\mathbb{E}\left[h_1\pig(y^2(T)+x^{1}-x^{2}\pig)-h_1\pig(y^2(T)\pig)\right]\nonumber\\
			=&\:\int_{0}^{1}\int_{t}^{T}
			\Big\langle
			\nabla_y g_1\pig(y^2(s)+\theta(x^{1}-x^{2}),u^{2}(s)\pig)
			+\nabla_y g_2\pig(y^2(s)+\theta(x^{1}-x^{2}),
			\mathbb{L}(s)\pig),x^{1}-x^{2}\Big\rangle_{\mathcal{H}} dsd\theta\nonumber\\
			&+\int_{0}^{1}\left\langle
			\nabla_y h_1\pig(y^2(T)+\theta(x^{1}-x^{2})\pig)
			,x^{1}-x^{2}
			\right\rangle_{\mathcal{H}}d\theta.
			\label{eq:ApB14}
		\end{align}
		From the respective \eqref{ass. bdd of D^2g1}, \eqref{ass. bdd of D^2g2}, \eqref{ass. bdd of D^2h1} of Assumptions {\bf (Avii)}, {\bf (Aviii)}, {\bf (Bv)}, we further obtain
		\begin{equation}
			\begin{aligned}
				V(x^{1},t)-V(x^{2},t)
				&\leq\left\langle\int_{t}^{T}
				\nabla_y g_1\pig(y^2(s),u^{2}(s)\pig)
				+\nabla_y g_2\pig(y^2(s),\mathbb{L}(s)\pig)ds
				+\nabla_y h_1\pig(y^2(T)\pig),x^{1}-x^{2}
				\right\rangle_{\mathcal{H}}\\
				&\h{10pt}+ \pig[C_{h_1}+(C_{g_1}+C_{g_2})T\pig]|x^{1}-x^{2}|^{2}.
			\end{aligned}
			\label{4289}
		\end{equation}
		Substituting (\ref{eq. FBSDE, fix L}) into (\ref{4289}), we see that
		\begin{equation}
			V(x^{1} ,t)-V(x^{2} ,t)
			\leq  p^2(t) \cdot (x^{1}-x^{2})
			+ \pig[C_{h_1}+(C_{g_1}+C_{g_2})T\pig]|x^{1}-x^{2}|^{2}.
			\label{eq:ApB15}
		\end{equation}
		By interchanging the role of $x^{1}$ and $x^{2}$, we can also obtain the reverse inequality
		\begin{equation}
			V(x^{1},t)-V(x^{2},t)
			\geq p^2(t) \cdot (x^{1}-x^{2})
			+\pig[p^1(t) - p^2(t)\pig] \cdot (x^{1}-x^{2})
			- \pig[C_{h_1}+(C_{g_1}+C_{g_2})T\pig]|x^{1}-x^{2}|^{2}.
			\label{eq:ApB16}
		\end{equation}
		Next, using the first order condition (\ref{eq. 1st order condition, fix L}) and the fact that the difference process $y^1(s)-y^2(s)$ is of finite variation, we have
		\begin{align*}
			\dfrac{d}{ds}\pig\langle p^1(s)- p^2(s),y^1(s)-y^2(s)\pigr\rangle_{\mathcal{H}}
			=\:&-\pig\langle \nabla_v g_{1}\pig(y^1(s),u^{1}(s)\pig)
			-\nabla_v g_{1}\pig(y^2(s),u^{2}(s)\pig),u^{1}(s)-u^{2}(s)\pigr\rangle_{\mathcal{H}}\\
			&-\pig\langle \nabla_y g_{1}\pig(y^1(s),u^{1}(s)\pig)
			-\nabla_y g_{1}\pig(y^2(s),u^{2}(s)\pig),
			y^1(s)-y^2(s)\pigr\rangle_{\mathcal{H}}\\
			&-\pig\langle \nabla_y g_{2}\pig(y^1(s),\mathbb{L}(s)\pig)
			-\nabla_y g_{2}\pig(y^2(s),\mathbb{L}(s)\pig),
			y^1(s)-y^2(s)\pigr\rangle_{\mathcal{H}}.
		\end{align*}
		Integrating both sides of this equation from $t$ to $T$, we obtain from the respective \eqref{ass. convexity of g1}, \eqref{ass. convexity of g2}, \eqref{ass. convexity of h} of Assumptions {\bf (Ax)}, {\bf (Axi)}, {\bf (Bvi)}  that
		\fontsize{10pt}{11pt}
		\begin{equation}
			\begin{aligned}
				&\h{-10pt}\pig[p^1(t) - p^2(t)\pig] \cdot (x^{1}-x^{2})\\
				=\,&\pig\langle \nabla_x h_{1}\pig(y^1(T)\pig)
				-\nabla_x h_{1}\pig(y^2(T)\pig),y^1(T)-y^2(T)\pigr\rangle_{\mathcal{H}}
				+\int^T_t \pig\langle \nabla_v g_{1}\pig(y^1(s),u^{1}(s)\pig)
				-\nabla_v g_{1}\pig(y^2(s),u^{2}(s)\pig),u^{1}(s)-u^{2}(s)\pigr\rangle_{\mathcal{H}}ds\\
				&+\int^T_t\Big\langle \nabla_y g_{1}\pig(y^1(s),u^{1}(s)\pig)
				-\nabla_y g_{1}\pig(y^2(s),u^{2}(s)\pig)
				+\nabla_y g_{2}\pig(y^1(s),\mathbb{L}(s)\pig)
				-\nabla_y g_{2}\pig(y^2(s),\mathbb{L}(s)\pig),
				y^1(s)-y^2(s)\Bigr\rangle_{\mathcal{H}}ds\\
				\geq&\:
				-\lambda_{h_1} \pig\|y^1(T)-y^2(T)\pigr\|^{2}_{\mathcal{H}}
				+\Lambda_{g_1}\displaystyle\int_{t}^{T}\pig\|u^{1}(s)-u^{2}(s)\pigr\|^{2}_{\mathcal{H}}ds
				-(\lambda_{g_1}+\lambda_{g_2})\int_{t}^{T}\pig\|y^1(s)-y^2(s)\pigr\|^{2}_{\mathcal{H}}ds.
			\end{aligned}
			\label{eq:ApB17}
		\end{equation}\normalsize
		From the fact that $y^1(s)-y^2(s)=x^{1}-x^{2}+\displaystyle\int_{t}^{s}u^{1}(\tau)-u^{2}(\tau)d\tau$, we get

		\begin{equation}
			\sup_{s\in [t,T]}
			\pig\|y^1(s)-y^2(s)\pigr\|^{2}_{\mathcal{H}}
			\leq(1+\kappa_{15})(T-t)\int_{t}^{T}\pig\|u^{1}(s)-u^{2}(s)\pigr\|^{2}_{\mathcal{H}}ds
			+\left(1+\dfrac{1}{\kappa_{15}}\right)|x^{1}-x^{2}|^{2}\h{10pt} \text{and}
			\label{est |YT1-YT2|}
		\end{equation}
		\begin{equation}
			\int_{t}^{T}\pig\|y^1(s)-y^2(s)\pigr\|^{2}_{\mathcal{H}}ds
			\leq(1+\kappa_{15})\dfrac{(T-t)^{2}}{2}\int_{t}^{T}\pig\|u^{1}(s)-u^{2}(s)\pigr\|^{2}_{\mathcal{H}}ds+(T-t)\left(1+\dfrac{1}{\kappa_{15}}\right)
			|x^{1}-x^{2}|^{2},
			\label{est int|Y1-Y2|}
		\end{equation}
		for some $\kappa_{15} > 0$ to be determined later, therefore we plug \eqref{est |YT1-YT2|} and \eqref{est int|Y1-Y2|} into \eqref{eq:ApB17} to obtain 
		\begin{equation}
			\begin{aligned}
				&\h{-10pt}
				\pig[p^1(t) - p^2(t)\pig] \cdot (x^{1}-x^{2})\\
				\geq&\:\left[\Lambda_{g_1} 
				-\left(\lambda_{g_1}+\lambda_{g_2}\right)_+(1+\kappa_{15})\dfrac{(T-t)^{2}}{2}
				-(\lambda_{h_1})_+(1+\kappa_{15})(T-t)\right]
				\int_{t}^{T}\pig\|u^{1}(s)-u^{2}(s)\pigr\|^{2}_{\mathcal{H}}ds\\
				&-\left[\left(\lambda_{g_1}+\lambda_{g_2}\right)_+(T-t)
				\left(1+\dfrac{1}{\kappa_{15}}\right)
				+\left(\lambda_{h_1}\right)_+\left(1+\dfrac{1}{\kappa_{15}}\right)\right]
				|x^{1}-x^{2}|^{2}.
				\label{eq:ApB18}
			\end{aligned}
		\end{equation}
		Employing \eqref{def. c_0 > 0 convex, ass. Ci} of Assumption \textup{\bf (Ci)} and choosing suitable $\kappa_{15}$, the coefficient of the first integral on the right hand side of (\ref{eq:ApB18}) is positive. Combining the inequalities of (\ref{eq:ApB16}) and (\ref{eq:ApB15}), together with (\ref{eq:ApB18}), it
		follows that 
		\begin{equation}
			\Big|V(x^{1} ,t)-V(x^{2} ,t)
			-p^2(t) \cdot (x^{1}-x^{2})
			\Big|\leq A|x^{1}-x^{2}|^{2},
			\label{eq:ApB19}
		\end{equation}
		where $A$ is a positive constant and depending on $\lambda_{g_1}$, $\lambda_{g_2}$, $\lambda_{h_1}$, $\Lambda_{g_1}$, $C_{g_1}$, $C_{g_2}$, $C_{h_1}$, $T$. Therefore, the result in Lemma \ref{lem diff V w.r.t. x} is established.
		
		\noindent {\bf Part 2. Continuity of $\nabla_x V(x,t)$ in $t$:}\\
		By Lemma \ref{lem diff V w.r.t. x} and the flow property in  \eqref{flow property}, we have
		\begin{align}
			\big\|\nabla_x V(x,t+\epsilon)-\nabla_x V(x,t)\big\|_{\mathcal{H}}
			=\,&\big\|p_{t+\epsilon,x,\mathbb{L}}(t+\epsilon)-p_{tx\mathbb{L}}(t+\epsilon)
			+p_{tx\mathbb{L}}(t+\epsilon)-p_{tx\mathbb{L}}(t)\big\|_{\mathcal{H}}\nonumber\\
			\leq\,& \big\|p_{t+\epsilon,x,\mathbb{L}}(t+\epsilon)-p_{t+\epsilon,y_{tx\mathbb{L}}(t+\epsilon),\mathbb{L}}(t+\epsilon)\big\|_{\mathcal{H}}
			+\big\|p_{tx\mathbb{L}}(t+\epsilon)-p_{tx\mathbb{L}}(t)\big\|_{\mathcal{H}}.
			\label{5003}
		\end{align}
		For the first term of the right hand side of \eqref{5003}, we use the gradient bound in \eqref{bdd. Dx y p q u, fix L} and \eqref{est. |Ys-X|^2} to obtain that
		\begin{align}
			\big\|p_{t+\epsilon,x,\mathbb{L}}(t+\epsilon)-p_{t+\epsilon,y_{tx\mathbb{L}}(t+\epsilon),\mathbb{L}}(t+\epsilon)\big\|_{\mathcal{H}}
			\leq C_4'\big\|x-y_{tx\mathbb{L}}(t+\epsilon)\big\|_{\mathcal{H}}
			\leq A' \epsilon^{1/2},
			\label{5012}
		\end{align}
		where $A'$ is a positive constant and depending only on $d$, $\eta$, $\lambda_{g_1}$, $\lambda_{g_2}$, $\lambda_{h_1}$, $\Lambda_{g_1}$, $C_{g_1}$, $C_{g_2}$, $C_{h_1}$, $T$.
		For the second term on the right hand side of \eqref{5003}, by a simple application of Cauchy-Schwarz inequality, we use the backward dynamics in \eqref{eq. FBSDE, fix L} to yield that
		\begin{align*}
			\big\|p_{tx\mathbb{L}}(t+\epsilon)-p_{tx\mathbb{L}}(t)\big\|_{\mathcal{H}}^2
			\leq\,& 
			4\epsilon\int_t^{t+\epsilon}
			\big\|\nabla_y g_1 \pig(y_{tx\mathbb{L}}(\tau),u_{tx\mathbb{L}}(\tau)\pig)\big\|_{\mathcal{H}}^2
			+\big\|\nabla_y g_2 \pig(y_{tx\mathbb{L}}(\tau),\mathbb{L}(\tau)\pig)\big\|_{\mathcal{H}}^2\;d \tau\\
			&+2\left\|\int_t^{t+\epsilon}q_{tx\mathbb{L}}(\tau) dW_\tau\right\|_{\mathcal{H}}^2.
		\end{align*}
		 Using the respective \eqref{ass. bdd of Dg1}, \eqref{ass. bdd of Dg2} of Assumptions {\bf (Av)}, {\bf (Avi)}, the bounds in \eqref{bdd. y p q u fix L} implies that
		\begin{align}
			\big\|p_{tx\mathbb{L}}(t+\epsilon)-p_{tx\mathbb{L}}(t)\big\|_{\mathcal{H}}^2
			\leq\,& 
			4\epsilon^2\left[C_{g_1}^2(1+2C_4'|x|^2)+C_{g_2}^2\left(1+C_4'|x|^2+\max_{\tau \in [t,t+\epsilon]}\int_{\mathbb{R}^d}|y|^2 d\mathbb{L}(\tau)\right)\right]\nonumber\\
			&+2\int_t^{t+\epsilon}\left\|q_{tx\mathbb{L}}(\tau) \right\|_{\mathcal{H}}^2d\tau,
			\label{5028}
		\end{align}
		which tends to zero as $\epsilon \to 0$. Plugging \eqref{5012} and \eqref{5028} into \eqref{5003}, we see that $\nabla_x V(x,t+\epsilon)-\nabla_x V(x,t) = p_{t+\epsilon,x,\mathbb{L}}(t+\epsilon)-p_{tx\mathbb{L}}(t)\longrightarrow 0$  as $\epsilon \to 0$ which implies the continuity of $\nabla_x V(x,t)$ in $t$.
	\end{proof}

	\subsection{Proofs in Section \ref{sec. Linear Functional Differentiability of Solution to FBSDE and its Regularity}}\label{app. Proofs in Linear Functional Differentiability of Solution to FBSDE and its Regularity}
	
	\begin{proof}[\bf Proof of Lemma \ref{lem. lip in x and xi}]
		\mycomment{
			Consider the difference
			\begin{align*}
				&\nabla_y g_2 \pig(y_{tx}^{m_0+\epsilon\rho}(\tau),\mathcal{L}\big(y_{t\bigcdot}^{m_0+\epsilon\rho}(\tau)\big)\pig)-
				\nabla_y g_2 \pig(y_{tx}^{m_0}(\tau),\mathcal{L}\big(y_{tx}^{m_0}(\tau)\big)\pig)\\
				=\,&\nabla_y g_2 \pig(y_{tx}^{m_0+\epsilon\rho}(\tau),y_{t\bigcdot}^{m_0+\epsilon\rho}(\tau)\otimes[m_0+\epsilon\rho]\pig)
				-\nabla_y g_2 \pig(y_{tx}^{m_0}(\tau),y_{t\bigcdot}^{m_0}(\tau)\otimes [m_0+\epsilon\rho]\pig)\\
				&+\nabla_y g_2 \pig(y_{tx}^{m_0}(\tau),y_{t\bigcdot}^{m_0}(\tau)\otimes [m_0+\epsilon\rho]\pig)
				-\nabla_y g_2 \pig(y_{tx}^{m_0}(\tau),y_{t\bigcdot}^{m_0}(\tau)\otimes m_0\pig)\\
				=\,&\epsilon\int^1_0 \nabla_{yy} g_2\pig(y_{tx}^{m_0}(\tau) + \theta\epsilon\Delta^\epsilon_\rho y^{m_0}_{tx}(\tau),
				\big[y_{t\bigcdot}^{m_0}(\tau) + \theta\epsilon\Delta^\epsilon_\rho y^{m_0}_{t\bigcdot}(\tau)\big]\otimes[m_0+\epsilon\rho]\pig)\Delta^\epsilon_\rho y^{m_0}_{tx}(\tau)d\theta\\
				&+\epsilon\int^1_0\int\widetilde{\mathbb{E}}\Bigg[ \nabla_{y^*}\dfrac{d}{d\nu}\nabla_{x} g_2\pig(y_{tx}^{m_0}(\tau) + \theta\epsilon\Delta^\epsilon_\rho y^{m_0}_{tx}(\tau),
				\big[y_{t\bigcdot}^{m_0}(\tau) + \theta\epsilon\Delta^\epsilon_\rho y^{m_0}_{t\bigcdot}(\tau)\big]\otimes[m_0+\epsilon\rho]\pig)(y^*)\bigg|_{y^*=\widetilde{y_{t\widetilde{x}}^{m_0}}(\tau)+\theta\epsilon\widetilde{\Delta^\epsilon_\rho y^{m_0}_{t\widetilde{x}}}(\tau)}\\
				&\h{370pt}\cdot\widetilde{\Delta^\epsilon_\rho y^{m_0}_{t\widetilde{x}}}(\tau)\Bigg](m_0+\epsilon\rho)(\widetilde{y})d\widetilde{x}d\theta\\
				&+\epsilon\int^1_0\int\widetilde{\mathbb{E}}\Bigg[ \dfrac{d}{d\nu}\nabla_{x} g_2\Big(y_{tx}^{m_0}(\tau),
				y_{t\bigcdot}^{m_0}(\tau)\otimes m_0+\theta\pig[y_{t\bigcdot}^{m_0}(\tau)\otimes(m_0+\epsilon\rho)-y_{t\bigcdot}^{m_0}(\tau)\otimes m_0\pig]\Big)\pig(\widetilde{y_{t\widetilde{x}}^{m_0}}(\tau)\pig)\Bigg]d|\rho(\widetilde{x})| d\theta
			\end{align*}
			
			=====================================================================
			
			On the other hand, suppose $\xi_\epsilon$ has the law $(m_0+\epsilon\rho)(x)dx$, then
			
			\begin{align*}
				&\nabla_y g_2 \pig(y_{tx}^{m_0+\epsilon\rho}(\tau),\mathcal{L}\big(y_{t\xi_\epsilon}(\tau)\big)\pig)
				-\nabla_y g_2 \pig(y_{tx}^{m_0}(\tau),\mathcal{L}\big(y_{t\xi}(\tau)\big)\pig)\\
				=\,&\nabla_y g_2 \pig(y_{tx}^{m_0+\epsilon\rho}(\tau),\mathcal{L}\big(y_{t\xi_\epsilon}(\tau)\big)\pig)
				-\nabla_y g_2 \pig(y_{tx}^{m_0}(\tau),\mathcal{L}\big(y_{t\xi_\epsilon}(\tau)\big)\pig)\\
				&+\nabla_y g_2 \pig(y_{tx}^{m_0}(\tau),\mathcal{L}\big(y_{t\xi_\epsilon}(\tau)\big)\pig)
				-\nabla_y g_2 \pig(y_{tx}^{m_0}(\tau),\mathcal{L}\big(y_{t\xi}(\tau)\big)\pig)\\
				=\,&\epsilon\int^1_0 \nabla_{yy} g_2\pig(y_{tx}^{m_0}(\tau) + \theta\epsilon\Delta^\epsilon_\rho y^{m_0}_{tx}(\tau),
				\mathcal{L}(y_{t\xi_\epsilon}(\tau))\pig)\Delta^\epsilon_\rho y^{m_0}_{tx}(\tau)d\theta\\
				&+\int^1_0\int\dfrac{d}{d\nu}\nabla_{x} g_2\pig(y_{tx}^{m_0}(\tau),
				\mathcal{L}\big(y_{t\xi}(\tau)\big)
				+\theta\pig[\mathcal{L}\big(y_{t\xi_\epsilon}(\tau)\big)
				-\mathcal{L}\big(y_{t\xi}(\tau)\big)\pig]
				\pig)(x^*)d\pig[ \mathcal{L}\big(y_{t\xi_\epsilon}(\tau)\big)
				-\mathcal{L}\big(y_{t\xi}(\tau)\big) \pig](x^*)d\theta
			\end{align*}
			Since $\dfrac{d}{d\nu}\nabla_{x} g_2(y,\mathbb{L})(x')$ is Lip in $x'$, then by the definition of 1-Wasserstein distance
			\begin{align*}
				&\pig|\nabla_y g_2 \pig(y_{tx}^{m_0+\epsilon\rho}(\tau),\mathcal{L}\big(y_{t\xi_\epsilon}(\tau)\big)\pig)
				-\nabla_y g_2 \pig(y_{tx}^{m_0}(\tau),\mathcal{L}\big(y_{t\xi}(\tau)\big)\pig)\pig|\\
				\leq\,&C_{g_2}|\Delta^\epsilon_\rho y^{m_0}_{tx}(\tau)|
				+c_{g_2}\mathcal{W}_1\pig( \mathcal{L}\big(y_{t\xi_\epsilon}(\tau)\big)
				,\mathcal{L}\big(y_{t\xi}(\tau)\big) \pig)\\
				\leq\,&C_{g_2}|\Delta^\epsilon_\rho y^{m_0}_{tx}(\tau)|
				+c_{g_2}\mathcal{W}_2\pig( \mathcal{L}\big(y_{t\xi_\epsilon}(\tau)\big)
				,\mathcal{L}\big(y_{t\xi}(\tau)\big) \pig)
			\end{align*}
		}
		We consider the difference of $\pig(y_{tx_1}^{\xi_1}(s), p_{tx_1}^{\xi_1}(s), q_{tx_1}^{\xi_1}(s), u_{tx_1}^{\xi_1}(s)\pig)$ and $\pig(y_{tx_2}^{\xi_2}(s), p_{tx_2}^{\xi_2}(s),$$\\ q_{tx_2}^{\xi_2}(s),$$ u_{tx_2}^{\xi_2}(s)\pig)$, denoted by $\pig(\delta y(s),\delta p(s),\delta q(s),\delta u(s)\pig)$. For simplicity, also denote  $\pig(y_{tx_i}^{\xi_i}(s), p_{tx_i}^{\xi_i}(s), q_{tx_i}^{\xi_i}(s), u_{tx_i}^{\xi_i}(s)\pig)$ by $\pig(y^i(s), p^i(s), q^i(s), u^i(s)\pig)$ for $i=1,2$, we recall the FBSDE of \eqref{eq. FBSDE, with law xi and start at x} and write the equation in terms of these abbreviation:
		\fontsize{9.9pt}{11pt}\begin{empheq}[left=\h{-10pt}\empheqbiglbrace]{align}
			\delta y (s)
			=\:& x_1-x_2 + \int^s_{t} \delta u(\tau)d\tau;
			\label{eq. diff quotient forward, linear functional}\\
			\delta p (s)=\:&
			\int_{0}^{1}\nabla_{yy}h_1\pig(y^0(T)
			+\theta \delta y (T)\pig)\delta y (T)d\theta\nonumber\\
			&+\int^T_s\int_{0}^{1}\nabla_{yy}g_1\pig(y^0(\tau)
			+\theta \delta y (\tau), u^0(\tau)
			+\theta \delta u (\tau)\pig)\delta y (\tau)
			+\nabla_{vy}g_1\pig(y^0(\tau)
			+\theta \delta y (\tau), u^0(\tau)
			+\theta \delta u (\tau)\pig)
			\delta u(\tau)d\theta d\tau\h{-50pt}\nonumber\\
			&+\int^T_s\int^1_0 \nabla_{yy}g_2\pig(y^0(\tau)+\theta \delta y (\tau),\mathcal{L}\big(y_{t\xi_2}(\tau)\big)\pig)
			\delta y (\tau) d \theta d\tau\nonumber\\
			&+\int^T_s\int^1_0 \int\dfrac{d}{d\nu}\nabla_{y}g_2\pig(y^1(\tau),\mathcal{L}\big(y_{t\xi_2}(\tau)\big)+\theta\pig[\mathcal{L}\big(y_{t\xi_1}(\tau)\big)-\mathcal{L}\big(y_{t\xi_2}(\tau)\big)\pig]\pig) (\widetilde{y})
			d\pig[\mathcal{L}\big(y_{t\xi_1}(\tau)\big)-\mathcal{L}\big(y_{t\xi_2}(\tau)\big)\pig](\widetilde{y})
			d\theta d\tau\h{-50pt}\nonumber\\
			&-\int^T_s \delta q(\tau)dW_\tau,
			\label{eq. diff quotient backward, linear functional}
		\end{empheq}\normalsize
		Meanwhile, the first order condition of \eqref{eq. 1st order condition, equilibrium} gives that
		\begin{equation}
			\delta p (\tau)
			+\int_{0}^{1}\nabla_{yv}g_1\pig(y^0(\tau)
			+\theta \delta y (\tau), u^0(\tau)
			+\theta \delta u (\tau)\pig)
			\delta y (\tau)
			+\nabla_{vv}g_1\pig(y^0(\tau)
			+\theta \delta y (\tau), u^0(\tau)
			+\theta \delta u (\tau)\pig)
			\delta u(\tau)d\theta=0.
			\label{eq. 1st order diff quotient, linear functional}
		\end{equation}
		Applying It\^o's formula to the inner product $\Big\langle \delta p (s),\delta y (s) \Big\rangle_{\mathbb{R}^d}$, together with the equations of \eqref{eq. diff quotient forward, linear functional}-\eqref{eq. 1st order diff quotient, linear functional}, we have
		\fontsize{10pt}{11pt}
		\begin{align}
			\Big\langle \delta p (s),\delta y (s) \Big\rangle_{\mathcal{H}}
			=\,&\left\langle\int_{0}^{1}\nabla_{yy}h_1\pig(y^0(T)
			+\theta \delta y (T)\pig)\delta y (T)d\theta,
			\delta  y (T)\right\rangle_{\mathcal{H}}\nonumber\\
			&+\int^T_s\Bigg\langle\int_{0}^{1}
			\nabla_{yy}g_1\pig(y^0(\tau)
			+\theta \delta y (\tau), u^0(\tau)
			+\theta \delta u (\tau)\pig)
			\delta  y (\tau)\nonumber\\
			&\h{60pt}+\nabla_{vy}g_1\pig(y^0(\tau)
			+\theta \delta y (\tau), u^0(\tau)
			+\theta \delta u (\tau)\pig)
			\delta u(\tau)d\theta ,\delta y (\tau) \Bigg\rangle_{\mathcal{H}} d\tau\h{-1pt}\nonumber\\
			&+\int^T_s \Bigg\langle \int^1_0 \nabla_{yy}g_2\pig(y^0(\tau)+\theta \delta y (\tau),\mathcal{L}\big(y_{t\xi_2}(\tau)\big)\pig)
			\delta y (\tau)  d \theta,\delta y (\tau)
			\Bigg\rangle_{\mathcal{H}} d\tau\nonumber\\
			&
			+\displaystyle\int^T_s \Bigg\langle \int^1_0 \int
			\dfrac{d}{d\nu}\nabla_{y}g_2\pig(y^1(\tau),\mathcal{L}\big(y_{t\xi_2}(\tau)\big)+\theta\pig[\mathcal{L}\big(y_{t\xi_1}(\tau)\big)-\mathcal{L}\big(y_{t\xi_2}(\tau)\big)\pig]\pig) (\widetilde{y})\cdot\nonumber\\
			&\h{200pt}d\pig[\mathcal{L}\big(y_{t\xi_1}(\tau)\big)-\mathcal{L}\big(y_{t\xi_2}(\tau)\big)\pig](\widetilde{y})
			d\theta ,
			\delta y (\tau)
			\Bigg\rangle_{\mathcal{H}} d\tau\nonumber\\
			&+\int^T_s \Bigg\langle \int^1_0 \nabla_{yv}g_1\pig(y^0(\tau)
			+\theta \delta y (\tau), u^0(\tau)
			+\theta \delta u (\tau)\pig)
			\delta y (\tau)\nonumber\\
			&\h{60pt}+\nabla_{vv}g_1\pig(y^0(\tau)
			+\theta \delta y (\tau), u^0(\tau)
			+\theta \delta u (\tau)\pig)
			\delta u(\tau)d\theta,
			\delta  u (\tau)
			\Bigg\rangle_{\mathcal{H}} d\tau.
			\label{eq. ito of <Delta p,Delta y>, finite diff, linear functional}
		\end{align}\normalsize
		The backward dynamics \eqref{eq. diff quotient backward, linear functional} tells us that
		\begin{align}
			\Big\langle \delta p (s),
			\delta y (s)  \Big\rangle_{\mathcal{H}}
			=\,&\left\langle\int_{0}^{1}\nabla_{yy}h_1\pig(y^0(T)
			+\theta \delta y (T)\pig)\delta y (T)d\theta,
			\delta y (s)  \right\rangle_{\mathcal{H}}\nonumber\\
			&+\int^T_s\Bigg\langle\int_{0}^{1}
			\nabla_{yy}g_1\pig(y^0(\tau)
			+\theta \delta y (\tau), u^0(\tau)
			+\theta \delta u (\tau)\pig)
			\delta  y (\tau)\nonumber\\
			&\h{60pt}+\nabla_{vy}g_1\pig(y^0(\tau)
			+\theta \delta y (\tau), u^0(\tau)
			+\theta \delta u (\tau)\pig)
			\delta u(\tau)d\theta ,\delta y (s)   \Bigg\rangle_{\mathcal{H}} d\tau\nonumber\\
			&+\int^T_s \Bigg\langle \int^1_0 \nabla_{yy}g_2\pig(y^0(\tau)+\theta \delta y (\tau),\mathcal{L}\big(y_{t\xi_2}(\tau)\big)\pig)
			\delta y (\tau) d \theta,\delta y (s)  
			\Bigg\rangle_{\mathcal{H}} d\tau\nonumber\\
			&+\displaystyle\int^T_s \Bigg\langle \int^1_0 \int
			\dfrac{d}{d\nu}\nabla_{y}g_2\pig(y^1(\tau),\mathcal{L}\big(y_{t\xi_2}(\tau)\big)+\theta\pig[\mathcal{L}\big(y_{t\xi_1}(\tau)\big)-\mathcal{L}\big(y_{t\xi_2}(\tau)\big)\pig]\pig) (\widetilde{y})\nonumber\\
			&\h{150pt}d\pig[\mathcal{L}\big(y_{t\xi_1}(\tau)\big)-\mathcal{L}\big(y_{t\xi_2}(\tau)\big)\pig](\widetilde{y})
			d\theta ,
			\delta y (s)
			\Bigg\rangle_{\mathcal{H}} d\tau.
			\label{eq. direct of <Delta p,Delta y>, finite diff, linear functional}
		\end{align}
		\normalsize
		Therefore, subtracting \eqref{eq. direct of <Delta p,Delta y>, finite diff, linear functional} from \eqref{eq. ito of <Delta p,Delta y>, finite diff, linear functional}, we have
		\begin{align*}
			0=\,&\left\langle\int_{0}^{1}\nabla_{yy}h_1\pig(y^0(T)
			+\theta \delta y (T)\pig)\delta y (T)d\theta,
			\delta y (T) -\delta y (s)\right\rangle_{\mathcal{H}}\\
			&+\int^T_s\Bigg\langle\int_{0}^{1}
			\nabla_{yy}g_1\pig(y^0(\tau)
			+\theta \delta y (\tau), u^0(\tau)
			+\theta \delta u (\tau)\pig)
			\delta y (\tau)\\
			&\h{60pt}+\nabla_{vy}g_1\pig(y^0(\tau)
			+\theta \delta y (\tau), u^0(\tau)
			+\theta \delta u (\tau)\pig)
			\delta u(\tau)d\theta,
			\delta y (\tau)-\delta y (s) \Bigg\rangle_{\mathcal{H}} d\tau\h{-1pt}\\
			&+\int^T_s \Bigg\langle \int^1_0 \nabla_{yy}g_2\pig(y^0(\tau)+\theta \delta y (\tau),\mathcal{L}\big(y_{t\xi_2}(\tau)\big)\pig)
			\delta y (\tau) d \theta,
			\delta y (\tau)-\delta y (s)
			\Bigg\rangle_{\mathcal{H}} d\tau\\
			&
			+\displaystyle\int^T_s \Bigg\langle \int^1_0 \int
			\dfrac{d}{d\nu}\nabla_{y}g_2\pig(y^1(\tau),\mathcal{L}\big(y_{t\xi_2}(\tau)\big)+\theta\pig[\mathcal{L}\big(y_{t\xi_1}(\tau)\big)-\mathcal{L}\big(y_{t\xi_2}(\tau)\big)\pig]\pig) (\widetilde{y})\cdot\\
			&\h{230pt}d\pig[\mathcal{L}\big(y_{t\xi_1}(\tau)\big)-\mathcal{L}\big(y_{t\xi_2}(\tau)\big)\pig](\widetilde{y})
			d\theta ,
			\delta y (\tau)-\delta y (s)
			\Bigg\rangle_{\mathcal{H}} d\tau\\
			&+\int^T_s \Bigg\langle \int^1_0 \nabla_{yv}g_1\pig(y^0(\tau)
			+\theta \delta y (\tau), u^0(\tau)
			+\theta \delta u (\tau)\pig)
			\delta y (\tau)\\
			&\h{60pt}+\nabla_{vv}g_1\pig(y^0(\tau)
			+\theta \delta y (\tau), u^0(\tau)
			+\theta \delta u (\tau)\pig)
			\delta u(\tau)d\theta,
			\delta  u (\tau)
			\Bigg\rangle_{\mathcal{H}} d\tau.
		\end{align*}
		We use \eqref{ass. bdd of D^2g2}, \eqref{ass. bdd of D dnu D g2}, \eqref{ass. convexity of g1}, \eqref{ass. convexity of g2},  \eqref{ass. convexity of h} of Assumptions {\bf (Aviii)}, {\bf (Aix)}, {\bf (Ax)}, {\bf (Axi)}, {\bf (Bvi)} to yield that
		\begin{align*}
			&\h{-10pt}\Lambda_{g_1}\int^T_s
			\big\|\delta u (\tau)\big\|_{\mathcal{H}}^2d\tau\\
			\leq\,&\lambda_{g_1}\int^T_s
			\big\|\delta y (\tau)-\delta y (s)\big\|_{\mathcal{H}}^2d\tau
			+\lambda_{h_1}
			\big\|\delta y (T)-\delta y (s)\big\|_{\mathcal{H}}^2\\
			&+\left|\left\langle\int_{0}^{1}\nabla_{yy}h_1\pig(y^0(T)
			+\theta \delta y (T)\pig)\delta y(s) d\theta,
			\delta y (T) -\delta y(s)\right\rangle_{\mathcal{H}}\right|\\
			&+\left|\int^T_s\Bigg\langle\int_{0}^{1}
			\nabla_{yy}g_1\pig(y^0(\tau)
			+\theta \delta y (\tau), u^0(\tau)
			+\theta \delta u (\tau)\pig)\delta y(s),
			\delta y (\tau)-\delta y (s) \Bigg\rangle_{\mathcal{H}} d\tau\right|\\
			&+\lambda_{g_2}\int^T_s
			\big\|\delta y (\tau)-\delta y (s)\big\|_{\mathcal{H}}^2d\tau
			+\left|\int^T_s\Bigg\langle \int^1_0 \nabla_{yy}g_2\pig(y^0(\tau)+\theta \delta y (\tau),\mathcal{L}\big(y_{t\xi_2}(\tau)\big)\pig) \delta y (s) d \theta,
			\delta y (\tau)-\delta y (s)
			\Bigg\rangle_{\mathcal{H}} d\tau\right|\\
			&
			+\Bigg|\displaystyle\int^T_s \Bigg\langle \int^1_0 \int
			\dfrac{d}{d\nu}\nabla_{y}g_2\pig(y^1(\tau),\mathcal{L}\big(y_{t\xi_2}(\tau)\big)+\theta\pig[\mathcal{L}\big(y_{t\xi_1}(\tau)\big)-\mathcal{L}\big(y_{t\xi_2}(\tau)\big)\pig]\pig) (\widetilde{y})\\
			&\h{260pt}d\pig[\mathcal{L}\big(y_{t\xi_1}(\tau)\big)-\mathcal{L}\big(y_{t\xi_2}(\tau)\big)\pig](\widetilde{y})
			d\theta ,
			\delta y (\tau)-\delta y (s)
			\Bigg\rangle_{\mathcal{H}} d\tau\Bigg|\h{-50pt}\\
			&+\left|\int^T_s\Bigg\langle \int^1_0 \nabla_{yv}g_1\pig(y^0(\tau)
			+\theta \delta y (\tau), u^0(\tau)
			+\theta \delta u (\tau)\pig)\delta y (s),
			\delta u (\tau) 
			\Bigg\rangle_{\mathcal{H}} d\tau\right|.
		\end{align*}\normalsize
		 Using \eqref{ass. bdd of D^2g1}, \eqref{ass. bdd of D^2g2}, \eqref{ass. bdd of D dnu D g2}, \eqref{ass. bdd of D^2h1} of Assumptions {\bf (Avii)}, {\bf (Aviii)}, {\bf (Aix)}, {\bf (Bv)}, we deduce
		\begin{align*}
			&\h{-10pt}\Lambda_{g_1}\int^T_s 
			\big\|\delta u (\tau)\big\|_{\mathcal{H}}^2d\tau\\
			\leq\,&\lambda_{g_1}\int^T_s
			\big\|\delta y (\tau)-\delta y (s)\big\|_{\mathcal{H}}^2d\tau
			+\lambda_{h_1}
			\big\|\delta y (T)-\delta y (s)\big\|_{\mathcal{H}}^2
			+C_{h_1}\big\|\delta y (s)\big\|_{\mathcal{H}}\cdot
			\big\|\delta  y (T) - \delta y (s)\big\|_{\mathcal{H}}\nonumber\\
			&+C_{g_1}\big\|\delta y(s)\big\|_{\mathcal{H}}\cdot
			\int^T_s \big\|\delta  y(\tau) -\delta y(s) \big\|_{\mathcal{H}}d\tau
			+\lambda_{g_2}\int^T_s 
			\big\|\delta  y(\tau) - \delta y(s)\big\|_{\mathcal{H}}^2d\tau\nonumber\\
			&+C_{g_2}\big\|\delta y(s)\big\|_{\mathcal{H}}\cdot
			\int^T_s \big\|\delta  y(\tau)-\delta y(s)\big\|_{\mathcal{H}}d\tau\\
			&+\displaystyle\int^T_s  \int^1_0 \Bigg\|\int
			\dfrac{d}{d\nu}\nabla_{y}g_2\pig(y^1(\tau),\mathcal{L}\big(y_{t\xi_2}(\tau)\big)+\theta\pig[\mathcal{L}\big(y_{t\xi_1}(\tau)\big)-\mathcal{L}\big(y_{t\xi_2}(\tau)\big)\pig]\pig) (\widetilde{y})\cdot\\
			&\h{210pt}d\pig[\mathcal{L}\big(y_{t\xi_1}(\tau)\big)-\mathcal{L}\big(y_{t\xi_2}(\tau)\big)\pig](\widetilde{y})
			\Bigg\|_\mathcal{H} \cdot\left\|\delta y (\tau)-\delta y (s)
			\right\|_\mathcal{H}d\theta d\tau\\
			&+C_{g_1}\big\|\delta  y(s)\big\|_{\mathcal{H}}\cdot
			\int^T_s \big\|\delta  u(\tau)
			\big\|_{\mathcal{H}}d\tau.\nonumber
		\end{align*}
		We use the Cauchy-Schwarz, Young's inequalities, Corollary 5.4 of Vol. I of \cite{CD18} and Assumption \eqref{ass. bdd of D dnu D g2} to obtain that
		\begin{align}
			\Lambda_{g_1}\int^T_s 
			\big\|\delta u (\tau)\big\|_{\mathcal{H}}^2d\tau
			\leq\,&\pig[\lambda_{g_1}+\lambda_{g_2}+c_{g_2}\kappa_{19} + C_{g_1}\kappa_{18}(T-t) +C_{g_2}\kappa_{19}(T-t)\pig] 
			\int^T_s \big\|\delta  y (\tau)-\delta y(s)\big\|_{\mathcal{H}}^2d\tau\nonumber\\
			&+(\lambda_{h_1} + \kappa_{21} C_{h_1})
			\big\|\delta y(T) - \delta y(s)\big\|_{\mathcal{H}}^2
			+\dfrac{C_{h_1}}{4\kappa_{21}}\big\|\delta y(s)\big\|_{\mathcal{H}}^2
			+\dfrac{C_{g_1}}{4\kappa_{18}}\big\|\delta y(s)\big\|_{\mathcal{H}}^2\nonumber\\
			&+\dfrac{c_{g_2}}{4\kappa_{19}}\int^T_s\mathcal{W}_1^2\pig(\mathcal{L}\big(y_{t\xi_1}(\tau)\big),\mathcal{L}\big(y_{t\xi_2}(\tau)\big)\pig) d\tau \nonumber\\
			&
			+\dfrac{C_{g_2}}{4\kappa_{19}}\big\|\delta y(s)\big\|_{\mathcal{H}}^2+\dfrac{C_{g_1}}{4\kappa_{20}}\big\|\delta y(s)\big\|_{\mathcal{H}}^2
			+\kappa_{20}C_{g_1}(T-t)\int^T_s \big\|\delta u (\tau)
			\big\|_{\mathcal{H}}^2d\tau.\label{ineq. int Delta u <.. finite diff, linear functional}
		\end{align}
		From the dynamics of $\delta y (s)$ of \eqref{eq. diff quotient forward, linear functional}, it yields that
		\begin{equation}
			\|\delta y (\tau)-\delta y (s)\|^{2}_{\mathcal{H}}
			\leq(\tau-s)\int^\tau_s
			\left\|\delta u(r)\right\|^{2}_{\mathcal{H}}
			dr.
			\label{ineq. |Delta ys - Psi|^2, linear functional}
		\end{equation}
		Substituting \eqref{ineq. |Delta ys - Psi|^2, linear functional} into \eqref{ineq. int Delta u <.. finite diff, linear functional}, 
		\begin{align*}
			&\Bigg\{\Lambda_{g_1}
			-\pig[(\lambda_{h_1})_++\kappa_{20}C_{g_1}+\kappa_{21} C_{h_1}\pigr](T-t)
			\\
			&\h{25pt}-\pig[(\lambda_{g_1}+\lambda_{g_2}+c_{g_2}\kappa_{19})_+ + C_{g_1}\kappa_{18}(T-t) +C_{g_2}\kappa_{19}(T-t)\pigr]\dfrac{(T-t)^{2}}{2}
			\Bigg\}\cdot
			\displaystyle\int^T_s 
			\big\|\delta u (\tau)\big\|_{\mathcal{H}}^2d\tau\\
			&\leq
			\left(\dfrac{C_{h_1}}{4\kappa_{21}}
			+\dfrac{C_{g_1}}{4\kappa_{18}}
			+\dfrac{ C_{g_2}}{4\kappa_{19}}
			+\dfrac{C_{g_1}}{4\kappa_{20}}\right)\big\|\delta y(s)\big\|_{\mathcal{H}}^2
			+\dfrac{c_{g_2}}{4\kappa_{19}}\int^T_s\mathcal{W}_1^2\pig(\mathcal{L}\big(y_{t\xi_1}(\tau)\big),\mathcal{L}\big(y_{t\xi_2}(\tau)\big)\pig) d\tau.
		\end{align*}
		Using \eqref{ass. Cii} of Assumption \textup{\bf (Cii)}  and taking suitable constants $\kappa_{18},\ldots,\kappa_{21}$ to ensure that
		\begin{equation}
			\int^T_{s} 
			\big\|\delta u (\tau)\big\|_{\mathcal{H}}^2d\tau
			\leq A \left(\big\|\delta y(s)\big\|_{\mathcal{H}}^2
			+\int^T_s\mathcal{W}_1^2\pig(\mathcal{L}\big(y_{t\xi_1}(\tau)\big),\mathcal{L}\big(y_{t\xi_2}(\tau)\big)\pig) d\tau\right),
			\label{bdd int |Delta u|^2, linear functional}
		\end{equation}
		for some $A>0$ depending on $\lambda_{g_1}$, $\lambda_{g_2}$, $\lambda_{h_1}$, $\Lambda_{g_1}$, $C_{g_1}$, $c_{g_2}$, $C_{g_2}$, $C_{h_1}$, $T$. We note that the constant $A$ changes value line by line in this proof, but still depends on the same set of parameters. From (\ref{bdd int |Delta u|^2, linear functional}) and  (\ref{ineq. |Delta ys - Psi|^2, linear functional}), we deduce that for any $\tau\in [s,T]$,
		$$
		\| \delta y (\tau) \|_{\mathcal{H}}^2
		\leq 2 \|\delta y (\tau)-\delta y (s)\|_{\mathcal{H}}^2
		+2\| \delta y (s)  \|_{\mathcal{H}}^2
		\leq A \left(\big\|\delta y(s)\big\|_{\mathcal{H}}^2
		+\int^T_s \mathcal{W}_1^2\pig(\mathcal{L}\big(y_{t\xi_1}(r)\big),\mathcal{L}\big(y_{t\xi_2}(r)\big)\pig) dr\right).
		$$
		Thus, for any $\tau\in [s,T]$, we have
		\begin{equation}
			\pig\| y^{\xi_1}_{tx_1} (\tau) - y^{\xi_2}_{tx_2} (\tau)  \pigr\|_{\mathcal{H}}^2
			\leq A \left(\| \delta y(s) \|_{\mathcal{H}}^2
			+\int^T_s \mathcal{W}_1^2\pig(\mathcal{L}\big(y_{t\xi_1}(r)\big),\mathcal{L}\big(y_{t\xi_2}(r)\big)\pig) dr\right).
			\label{bdd |Delta y|^2, linear functional}
		\end{equation}
		Let $\alpha_1$, $\alpha_2 \in L^2(\Omega,\mathcal{W}^{t}_0,\mathbb{P};\mathbb{R}^d)$ such that their laws are the same as $\xi_1$, $\xi_2$ respectively, we see that $\mathcal{L}\pig(y_{t\alpha_2}(\tau)\pig) = \mathcal{L}\pig(y_{t\xi_2}(\tau)\pig)$ and $\mathcal{L}\pig(y_{t\alpha_1}(\tau)\pig) = \mathcal{L}\pig(y_{t\xi_1}(\tau)\pig)$, as the strong uniqueness in Theorem \ref{thm. global existence} implies the uniqueness in law by the proof of Lemma 5.6 of \cite{CD15}. Thus, by the uniqueness of the FBSDE of Theorem \ref{thm. global existence}, we have
		$$\mathcal{L}\pig(y_{tx_1}^{\xi_1}
		(\tau)\pig)\Big|_{x_1=\alpha_1} 
		=\mathcal{L}\pig(y_{t\alpha_1}
		(\tau)\pig)
		=\mathcal{L}\pig(y_{t\xi_1}(\tau)\pig)
		\h{10pt}\text{and}\h{10pt}
		\mathcal{L}\pig(y_{tx_2}^{\xi_2}
		(\tau)\pig)\Big|_{x_2=\alpha_2}
		=\mathcal{L}\pig(y_{t\alpha_2}
		(\tau)\pig)
		=\mathcal{L}\pig(y_{t\xi_2}(\tau)\pig).$$
		Hence, the inequality \eqref{bdd |Delta y|^2, linear functional} and Lemma \ref{lem. bdd of diff quotient} imply that for any $\tau \in [t,T]$,
		\begin{align*}
			\pig\| y^{\xi_1}_{tx_1} (\tau) - y^{\xi_2}_{tx_2} (\tau)  \pigr\|_{\mathcal{H}}^2
			&\leq A \left(| x_1-x_2 |^2
			+\int^T_t \mathcal{W}_1^2\pig(\mathcal{L}\big(y_{t\alpha_1}(r)\big),\mathcal{L}\big(y_{t\alpha_2}(r)\big)\pig) dr\right)\\
			&\leq A \left(| x_1-x_2 |^2
			+\int^T_t \left\{\mathbb{E}\pig|y_{t\alpha_1}(r)-y_{t\alpha_2}(r)\pig|\right\}^2 dr\right)\\
			&\leq A \left(| x_1-x_2 |^2
			+\big\|\alpha_1-\alpha_2\big\|_\mathcal{H}^2\right).
		\end{align*}
		Since $\alpha_1$, $\alpha_2 \in L^2(\Omega,\mathcal{W}^{t}_0,\mathbb{P};\mathbb{R}^d)$ are arbitrary provided that their laws are the same as $\xi_1$, $\xi_2$ do, then
		\begin{align*}
			\pig\| y^{\xi_1}_{tx_1} (\tau) - y^{\xi_2}_{tx_2} (\tau)  \pigr\|_{\mathcal{H}}^2
			&\leq A \left[| x_1-x_2 |^2
			+\mathcal{W}_2^2\pig(\mathcal{L}(\xi_1),\mathcal{L}(\xi_2)\pig)\right]. 
		\end{align*}
		By using the backward equation of \eqref{eq. diff quotient backward, linear functional} and the first order condition of \eqref{eq. 1st order diff quotient, linear functional}, we conclude the claim; see also similar calculations and estimates in Lemma \ref{lem. bdd of y p q u} and Lemma \ref{lem. bdd of diff quotient}. 
	\end{proof}
	
	\begin{proof}[\bf Proof of Lemma \ref{lem cts of linear functional d of process}] For $\mu_0 \in \mathcal{P}_2(\mathbb{R}^d)$ and $\{\mu_k\}_{k=1}^\infty\subset\mathcal{P}_2(\mathbb{R}^d)$, we let $\pig(y_{tx}^{\mu_k}(s), p_{tx}^{\mu_k}(s), q_{tx}^{\mu_k}(s), u_{tx}^{\mu_k}(s)\pig)$ be the solution of \eqref{eq. FBSDE, with m_0 and start at x}-\eqref{eq. 1st order condition, with m_0 and start at x}, and $\bigg(\dfrac{dy_{tx}^{\mu_k}}{d\nu}(x',s), \dfrac{dp_{tx}^{\mu_k}}{d\nu}(x',s) , \dfrac{dq_{tx}^{\mu_k}}{d\nu}(x',s)\bigg)$ be the solution of \eqref{eq. linear functional derivatives of FBSDE} with $\dfrac{du_{tx}^{\mu_k}}{d\nu}(x',s)$ defined in \eqref{def. linear functional d of u}, for $i=0,1,2,\ldots$. For simplicity, we denote these processes by\\ $(y^k_x(s),p^k_x(s),q^k_x(s),u^k_x(s))$, $\big(\mathcal{Y}^k_{xx'}(s), \mathcal{P}^k_{xx'}(s) , \mathcal{Q}^k_{xx'}(s)\big)$, and $\mathcal{U}^k_{xx'}(s)$, respectively. Applying It\^o's formula to the inner product $\Big\langle  \mathcal{P}^k_{xx'}(s)-\mathcal{P}^0_{xx'}(s),\mathcal{Y}^k_{xx'}(s)-\mathcal{Y}^0_{xx'}(s) \Big\rangle_{\mathbb{R}^d}$, together with the equations of \eqref{eq. linear functional derivatives of FBSDE} and \eqref{eq. 1st order condition, linear functional d}, we have
		\begin{align}
			&\h{-11pt}\pig\langle \nabla_{yy} h_{1}\pig(y^k_x(T)\pig)\mathcal{Y}^k_{xx'} (T)
			-\nabla_{yy} h_{1}\pig(y^0_x(T)\pig)\mathcal{Y}^0_{xx'} (T),
			\mathcal{Y}^k_{xx'}(T)
			-\mathcal{Y}^0_{xx'}(T)
			\pigr\rangle_{\mathcal{H}}\nonumber\\
			=\:&-\int_t^{T}\h{-3pt}
			\Big\langle \nabla_{yy}g_1\pig(y^k_x(\tau),u^k_x(\tau)\pig)
			\mathcal{Y}^k_{xx'}(\tau)
			-\nabla_{yy}g_1\pig(y^0_x(\tau),u^0_x(\tau)\pig)
			\mathcal{Y}^0_{xx'}(\tau),
			\mathcal{Y}^k_{xx'}(\tau)
			-\mathcal{Y}^0_{xx'}(\tau)\Big\rangle_{\mathcal{H}} d\tau\nonumber\\
			&-\int_t^{T}
			\Big\langle 
			\nabla_{vy}g_1\pig(y^k_x(\tau),u^k_x(\tau)\pig)
			\mathcal{U}^k_{xx'} (\tau)
			-\nabla_{vy}g_1\pig(y^0_x(\tau),u^0_x(\tau)\pig)
			\mathcal{U}^0_{xx'} (\tau),
			\mathcal{Y}^k_{xx'}(\tau)
			-\mathcal{Y}^0_{xx'}(\tau)\Big\rangle_{\mathcal{H}} d\tau\nonumber\\
			&-\int_t^{T}
			\Big\langle 
			\nabla_{yy}g_2\pig(y^k_x(\tau),y^k_{\bigcdot}(\tau)\otimes\mu_k\pig)
			\mathcal{Y}^k_{xx'} (\tau)
			-\nabla_{yy}g_2\pig(y^0_x(\tau),y^0_{\bigcdot}(\tau)\otimes\mu_0\pig)
			\mathcal{Y}^0_{xx'} (\tau),
			\mathcal{Y}^k_{xx'}(\tau)
			-\mathcal{Y}^0_{xx'}(\tau)\Big\rangle_{\mathcal{H}} d\tau\nonumber\\
			&-\int^T_t \Bigg\langle  \widetilde{\mathbb{E}}
			\left[\int\nabla_{y'}\dfrac{d}{d\nu}\nabla_{y}g_2\pig(y^k_x(\tau),y^k_{\bigcdot}(\tau)\otimes\mu_k\pig) (y')\bigg|_{y'=\widetilde{y^k_{\widetilde{x}}}(\tau)}
			\widetilde{\mathcal{Y}^k_{\widetilde{x}x'}} (\tau)d\mu_k(\widetilde{x})\right]\nonumber\\
			&\h{45pt}-\widetilde{\mathbb{E}}
			\left[\int\nabla_{y'}\dfrac{d}{d\nu}\nabla_{y}g_2\pig(y^0_x(\tau),y^0_{\bigcdot}(\tau)\otimes\mu_0\pig) (y')\bigg|_{y'=\widetilde{y^0_{\widetilde{x}}}(\tau)}
			\widetilde{\mathcal{Y}^0_{\widetilde{x}x'}} (\tau)d\mu_0(\widetilde{x})\right]
			,\mathcal{Y}^k_{xx'}(\tau)
			-\mathcal{Y}^0_{xx'}(\tau)
			\Bigg\rangle_{\mathcal{H}} d\tau\nonumber\\
			&-\int^T_t \Bigg\langle  \widetilde{\mathbb{E}}
			\left[ \dfrac{d}{d\nu}\nabla_{y}g_2\pig(y^k_x(\tau),y^k_{\bigcdot}(\tau)\otimes\mu_k\pig) \pig(\widetilde{y^k_{x'}}(\tau)\pig)\right]\nonumber\\
			&\h{45pt}-\widetilde{\mathbb{E}}
			\left[ \dfrac{d}{d\nu}\nabla_{y}g_2\pig(y^0_x(\tau),y^0_{\bigcdot}(\tau)\otimes\mu_0\pig) \pig(\widetilde{y^0_{x'}}(\tau)\pig)\right]
			,\mathcal{Y}^k_{xx'}(\tau)
			-\mathcal{Y}^0_{xx'}(\tau)
			\Bigg\rangle_{\mathcal{H}} d\tau\nonumber\\
			&-\int_t^{T}\h{-3pt}
			\bigg\langle \nabla_{yv}g_1\pig(y^k_x(\tau),u^k_x(\tau)\pig)
			\mathcal{Y}^k_{xx'}(\tau)
			- \nabla_{yv}g_1\pig(y^0_x(\tau),u^0_x(\tau)\pig)
			\mathcal{Y}^0_{xx'}(\tau),
			\mathcal{U}^k_{xx'}(\tau)
			-\mathcal{U}^0_{xx'}(\tau)\Big\rangle_{\mathcal{H}} d\tau\nonumber\\
			&-\int_t^{T}\h{-3pt}
			\bigg\langle \nabla_{vv}g_1\pig(y^k_x(\tau),u^k_x(\tau)\pig)
			\mathcal{U}^k_{xx'}(\tau)
			- \nabla_{vv}g_1\pig(y^0_x(\tau),u^0_x(\tau)\pig)
			\mathcal{U}^0_{xx'}(\tau),
			\mathcal{U}^k_{xx'}(\tau)
			-\mathcal{U}^0_{xx'}(\tau)\Big\rangle_{\mathcal{H}} d\tau.
			\label{6443}
		\end{align}
		The first line of (\ref{6443}) can be estimated by using Young's inequality, and (\ref{ass. convexity of h}) of Assumption {\bf (Bvi)} such that we have
		\fontsize{10pt}{11pt}\begin{align}
			&\h{-10pt}\Big\langle \nabla_{yy} h_{1}\pig(y^k_x(T)\pig)\mathcal{Y}^k_{xx'} (T)
			-\nabla_{yy} h_{1}\pig(y^0_x(T)\pig)\mathcal{Y}^0_{xx'} (T),
			\mathcal{Y}^k_{xx'}(T)
			-\mathcal{Y}^0_{xx'}(T)
			\Bigr\rangle_{\mathcal{H}}\nonumber\\
			=\:&\Big\langle \nabla_{yy}h_1\pig(y^k_x(T)\pig)
			\pig[\mathcal{Y}^k_{xx'} (T)
			-\mathcal{Y}^0_{xx'} (T)
			\pig]
			+\pig[\nabla_{yy}h_1\pig(y^k_x(T)\pig)
			-\nabla_{yy}h_1\pig(y^0_x(T)\pig)\pig]
			\mathcal{Y}^0_{xx'} (T),
			\mathcal{Y}^k_{xx'}(T)
			-\mathcal{Y}^0_{xx'}(T)
			\Bigr\rangle_{\mathcal{H}}\nonumber\\
			\geq\:& -\lambda_{h_1}\pig\|\mathcal{Y}^k_{xx'} (T)
			-\mathcal{Y}^0_{xx'} (T)\pigr\|_{\mathcal{H}}^2
			-\kappa_{23}\pig\|\pig[\nabla_{yy}h_1\pig(y^k_x(T)\pig)
			-\nabla_{yy}h_1(y^0_x(T))\pig]
			\mathcal{Y}^0_{xx'} (T)\pigr\|_{\mathcal{H}}^2
			-\dfrac{1}{4\kappa_{23}}
			\pig\|\mathcal{Y}^k_{xx'}(T)
			-\mathcal{Y}^0_{xx'}(T)\pigr\|_{\mathcal{H}}^2,
			\label{7798}
		\end{align}\normalsize
		for some $\kappa_{23}>0$ to be determined later. All the other terms in (\ref{6443}) can be decomposed into and estimated in the similar manner as \eqref{7798}, then together with (\ref{ass. convexity of g1}), (\ref{ass. convexity of g2}) of Assumptions {\bf (Ax)}, {\bf (Axi)}, we have
			\begin{align}
				&\h{-10pt}\int^T_t
				\Lambda_{g_1}\pig\|\mathcal{U}^k_{xx'}(\tau)
				-\mathcal{U}^0_{xx'}(\tau)\pigr\|_{\mathcal{H}}^2
				-(\lambda_{g_1}+\lambda_{g_2})\pig\|\mathcal{Y}^k_{xx'}(\tau)
				-\mathcal{Y}^0_{xx'}(\tau)\pigr\|_{\mathcal{H}}^2 d\tau
				-\lambda_{h_1}\pig\|\mathcal{Y}^k_{xx'} (T)
				-\mathcal{Y}^0_{xx'} (T)\pigr\|_{\mathcal{H}}^2\nonumber\\
				&\h{-10pt} -c_{g_2}\int^T_t\pig\|\mathcal{Y}^k_{xx'}(\tau)
				-\mathcal{Y}^0_{xx'}(\tau)\pigr\|_{\mathcal{H}}\int
				\pig\|\mathcal{Y}^k_{\widetilde{x}x'}(\tau)
				-\mathcal{Y}^0_{\widetilde{x}x'}(\tau)\pigr\|_{\mathcal{H}} d\mu_k(\widetilde{x}) d\tau\nonumber\\
				\leq\:&\dfrac{1}{4\kappa_{23}}
				\pig\|\mathcal{Y}^k_{xx'}(T)
				-\mathcal{Y}^0_{xx'}(T)\pigr\|_{\mathcal{H}}^2
				+\kappa_{23}\pig\|\pig[\nabla_{yy}h_1\pig(y^k_x(T)\pig)
				-\nabla_{yy}h_1(y^0_x(T))\pig]
				\mathcal{Y}^0_{xx'} (T)\pigr\|_{\mathcal{H}}^2\nonumber\\
				&+
				\int^T_t\dfrac{1}{2\kappa_{24}}
				\pig\|\mathcal{U}^k_{xx'}(\tau)
				-\mathcal{U}^0_{xx'}(\tau)\pigr\|_{\mathcal{H}}^2
				+2\kappa_{24}\pig\|\pig[\nabla_{yv}g_1\pig(y^k_x(\tau),u^k_x(\tau)\pig)
				-\nabla_{yv}g_1\pig(y^0_x(\tau),u^0_x(\tau)\pig)\pig]
				\mathcal{Y}^0_{xx'} (\tau)\pigr\|_{\mathcal{H}}^2d\tau\nonumber\\
				&+
				\int^T_t\dfrac{1}{4\kappa_{25}}
				\pig\|
				\mathcal{U}^k_{xx'}(\tau)
				-\mathcal{U}^0_{xx'}(\tau)\pigr\|_{\mathcal{H}}^2
				+\kappa_{25}\pig\|\pig[\nabla_{vv}g_1\pig(y^k_x(\tau),u^k_x(\tau)\pig)
				-\nabla_{vv}g_1\pig(y^0_x(\tau),u^0_x(\tau)\pig)\pig]
				\mathcal{U}^0_{xx'} (\tau)\pigr\|_{\mathcal{H}}^2d\tau\nonumber\\
				&+
				\int^T_t\dfrac{1}{4\kappa_{26}}
				\pig\|
				\mathcal{Y}^k_{xx'}(\tau)
				-\mathcal{Y}^0_{xx'}(\tau)\pigr\|_{\mathcal{H}}^2
				+\kappa_{26}\pig\|\pig[\nabla_{yy}g_1\pig(y^k_x(\tau),u^k_x(\tau)\pig)
				-\nabla_{yy}g_1\pig(y^0_x(\tau),u^0_x(\tau)\pig)\pig]
				\mathcal{Y}^0_{xx'} (\tau)\pigr\|_{\mathcal{H}}^2d\tau\nonumber\\
				&+
				\int^T_t\dfrac{1}{4\kappa_{27}}
				\pig\|
				\mathcal{Y}^k_{xx'}(\tau)
				-\mathcal{Y}^0_{xx'}(\tau)\pigr\|_{\mathcal{H}}^2
				+\kappa_{27}\pig\|\pig[\nabla_{yy}g_2\pig(y^k_x(\tau),y^k_{\bigcdot}(\tau)\otimes\mu_k\pig)
				-\nabla_{yy}g_2\pig(y^0_x(\tau),y^0_{\bigcdot}(\tau)\otimes\mu_0\pig)\pig]
				\mathcal{Y}^0_{xx'} (\tau)\pigr\|_{\mathcal{H}}^2d\tau\nonumber\\
				&+
				\int^T_t\dfrac{1}{4\kappa_{28}}
				\pig\|
				\mathcal{Y}^k_{xx'}(\tau)
				-\mathcal{Y}^0_{xx'}(\tau)\pigr\|_{\mathcal{H}}^2d\tau\nonumber\\
				&+2\int^T_t\kappa_{28}\Bigg\|\widetilde{\mathbb{E}}
				\left[\int\nabla_{y'}\dfrac{d}{d\nu}\nabla_{y}g_2\pig(y^k_x(\tau),\mathcal{L}(y^k_x(\tau))\pig) (y')\bigg|_{y'=\widetilde{y^k_{\widetilde{x}}}(\tau)}
				\widetilde{\mathcal{Y}^0_{\widetilde{x}x'}} (\tau)d\pig[\mu_k(\widetilde{x})-\mu_0(\widetilde{x})\pig]\right]\Bigg\|_{\mathcal{H}}^2d\tau\nonumber\\
				&+2\int^T_t\kappa_{28}\Bigg\|\widetilde{\mathbb{E}}
				\left[\int\nabla_{y'}\dfrac{d}{d\nu}\nabla_{y}g_2\pig(y^k_x(\tau),y^k_{\bigcdot}(\tau)\otimes\mu_k\pig) (y')\bigg|_{y'=\widetilde{y^k_{\widetilde{x}}}(\tau)}
				\widetilde{\mathcal{Y}^0_{\widetilde{x}x'}} (\tau)d\mu_0(\widetilde{x})\right]\nonumber\\
				&\h{100pt}-\widetilde{\mathbb{E}}
				\left[\int\nabla_{y'}\dfrac{d}{d\nu}\nabla_{y}g_2\pig(y^0_x(\tau),y^0_{\bigcdot}(\tau)\otimes\mu_0\pig) (y')\bigg|_{y'=\widetilde{y^0_{\widetilde{x}}}(\tau)}
				\widetilde{\mathcal{Y}^0_{\widetilde{x}x'}} (\tau)d\mu_0(\widetilde{x})
				\right]\Bigg\|_{\mathcal{H}}^2d\tau\nonumber\\
				&+
				\int^T_t\dfrac{1}{4\kappa_{29}}
				\pig\|
				\mathcal{Y}^k_{xx'}(\tau)
				-\mathcal{Y}^0_{xx'}(\tau)\pigr\|_{\mathcal{H}}^2\nonumber\\
				&\h{20pt}+\kappa_{29}\Bigg\|\widetilde{\mathbb{E}}
				\left[ \dfrac{d}{d\nu}\nabla_{y}g_2\pig(y^k_x(\tau),y^k_{\bigcdot}(\tau)\otimes\mu_k\pig) (\widetilde{y^{k}_{x'}}(\tau)) \right]
				-\widetilde{\mathbb{E}}
				\left[ \dfrac{d}{d\nu}\nabla_{y}g_2\pig(y^0_x(\tau),y^0_{\bigcdot}(\tau)\otimes\mu_0\pig) (\widetilde{y^{0}_{x'}}(\tau))
				\right]\Bigg\|_{\mathcal{H}}^2d\tau
				\label{6557}
			\end{align}
		for some positive constants $\kappa_{23},\ldots,\kappa_{29}$ to be determined. With an application of the Cauchy-Schwarz inequality, the equation of $\mathcal{Y}^k_{xx'}(\tau)
		-\mathcal{Y}^0_{xx'}(\tau)$ implies that
		\begin{equation}
			\pig|
			\mathcal{Y}^k_{xx'}(s)
			-\mathcal{Y}^0_{xx'}(s)\pigr|^2
			\leq (s-t) \int^s_t\pig|
			\mathcal{U}^k_{xx'}(\tau)
			-\mathcal{U}^0_{xx'}(\tau)\pigr|^2 d\tau 
			\label{DY-DY X-X_k linear functional d}
		\end{equation}
		\begin{flalign}
			&&\text{and} \h{20pt} \int^T_{t}\pig\|
			\mathcal{Y}^k_{xx'}(\tau)
			-\mathcal{Y}^0_{xx'}(\tau)\pigr\|_{\mathcal{H}}^2d\tau
			\leq \dfrac{(T-t)^2}{2} \int^T_{t}\pig\|
			\mathcal{U}^k_{xx'}(\tau)
			-\mathcal{U}^0_{xx'}(\tau)\pigr\|_{\mathcal{H}}^2 d\tau.&&
			\label{int DY-DY X-X_k linear functional d}
		\end{flalign}
		Using (b) of Lemma \ref{lem. existence of linear functional derivative of processes} and Corollary 5.4 of Vol. I of \cite{CD18}, we see that
		\begin{align}
			&\int^T_t\int
			\pig\|\mathcal{Y}^k_{xx'}(\tau)
			-\mathcal{Y}^0_{xx'}(\tau)\pigr\|_{\mathcal{H}}^2d\mu_k(x)d\tau\nonumber\\
			&\leq\int^T_t\int
			\pig\|\mathcal{Y}^k_{xx'}(\tau)
			-\mathcal{Y}^0_{xx'}(\tau)\pigr\|_{\mathcal{H}}^2d\mu_0(x)d\tau
			+\int^T_t\int
			\pig\|\mathcal{Y}^k_{xx'}(\tau)
			-\mathcal{Y}^0_{xx'}(\tau)\pigr\|_{\mathcal{H}}^2d\pig(\mu_k(x)-\mu_0(x)\pig)d\tau\nonumber\\
			&\leq\int^T_t\int
			\pig\|\mathcal{Y}^k_{xx'}(\tau)
			-\mathcal{Y}^0_{xx'}(\tau)\pigr\|_{\mathcal{H}}^2d\mu_0(x)d\tau
			+2T \cdot C_6 \cdot (1+|x'|^2)\mathcal{W}_2\pig(\mu_k,\mu_0\pig).
			\label{6596}
		\end{align}
		Substituting (\ref{DY-DY X-X_k linear functional d}), (\ref{int DY-DY X-X_k linear functional d}) \eqref{6596} into (\ref{6557}), due to \eqref{ass. Cii} of Assumption {\bf (Cii)}, the parameters $\kappa_{23},\ldots,\kappa_{29}$ can be carefully chosen such that there is a constant $A>0$ (depending only on $\Lambda_{g_1}$, $\lambda_{g_1}$, $\lambda_{g_2}$, $c_{g_2}$, $\lambda_{h_1}$) such that
				\begin{align}
					&\int^T_t \int
					\pig\|\mathcal{U}^k_{xx'}(\tau)
					-\mathcal{U}^0_{xx'}(\tau)\pigr\|_{\mathcal{H}}^2 d\mu_0(x) d\tau\nonumber\\
					\leq\:&
					A\int\Bigg\{\pig\|\pig[\nabla_{yy}h_1\pig(y^k_x(T)\pig)
					-\nabla_{yy}h_1(y^0_x(T))\pig]
					\mathcal{Y}^0_{xx'} (T)\pigr\|_{\mathcal{H}}^2\nonumber\\
					&\h{30pt}+\int^T_t\pig\|\pig[\nabla_{yv}g_1\pig(y^k_x(\tau),u^k_x(\tau)\pig)
					-\nabla_{yv}g_1\pig(y^0_x(\tau),u^0_x(\tau)\pig)\pig]
					\mathcal{Y}^0_{xx'} (\tau)\pigr\|_{\mathcal{H}}^2d\tau\nonumber\\
					&\h{30pt}+
					\int^T_t\pig\|\pig[\nabla_{vv}g_1\pig(y^k_x(\tau),u^k_x(\tau)\pig)
					-\nabla_{vv}g_1\pig(y^0_x(\tau),u^0_x(\tau)\pig)\pig]
					\mathcal{U}^0_{xx'} (\tau)\pigr\|_{\mathcal{H}}^2d\tau\nonumber\\
					&\h{30pt}+
					\int^T_t\pig\|\pig[\nabla_{yy}g_1\pig(y^k_x(\tau),u^k_x(\tau)\pig)
					-\nabla_{yy}g_1\pig(y^0_x(\tau),u^0_x(\tau)\pig)\pig]
					\mathcal{Y}^0_{xx'} (\tau)\pigr\|_{\mathcal{H}}^2d\tau\nonumber\\
					&\h{30pt}+
					\int^T_t
					\pig\|\pig[\nabla_{yy}g_2\pig(y^k_x(\tau),y^k_{\bigcdot}(\tau)\otimes\mu_k\pig)
					-\nabla_{yy}g_2\pig(y^0_x(\tau),y^0_{\bigcdot}(\tau)\otimes\mu_0\pig)\pig]
					\mathcal{Y}^0_{xx'} (\tau)\pigr\|_{\mathcal{H}}^2d\tau\nonumber\\
					&\h{30pt}+\int^T_t\Bigg\|\widetilde{\mathbb{E}}
					\left[\int\nabla_{y'}\dfrac{d}{d\nu}\nabla_{y}g_2\pig(y^k_x(\tau),\mathcal{L}(y^k_x(\tau))\pig) (y')\bigg|_{y'=\widetilde{y^k_{\widetilde{x}}}(\tau)}
					\widetilde{\mathcal{Y}^0_{\widetilde{x}x'}} (\tau)d\pig[\mu_k(\widetilde{x})-\mu_0(\widetilde{x})\pig]\right]\Bigg\|_{\mathcal{H}}^2d\tau\nonumber\\
					&\h{30pt}+\int^T_t\Bigg\|\widetilde{\mathbb{E}}
					\left[\int\nabla_{y'}\dfrac{d}{d\nu}\nabla_{y}g_2\pig(y^k_x(\tau),y^k_{\bigcdot}(\tau)\otimes\mu_k\pig) (y')\bigg|_{y'=\widetilde{y^k_{\widetilde{x}}}(\tau)}
					\widetilde{\mathcal{Y}^0_{\widetilde{x}x'}} (\tau)d\mu_0(\widetilde{x})\right]\nonumber\\
					&\h{100pt}-\widetilde{\mathbb{E}}
					\left[\int\nabla_{y'}\dfrac{d}{d\nu}\nabla_{y}g_2\pig(y^0_x(\tau),y^0_{\bigcdot}(\tau)\otimes\mu_0\pig) (y')\bigg|_{y'=\widetilde{y^0_{\widetilde{x}}}(\tau)}
					\widetilde{\mathcal{Y}^0_{\widetilde{x}x'}} (\tau)d\mu_0(\widetilde{x})
					\right]\Bigg\|_{\mathcal{H}}^2d\tau\nonumber\\
					&\h{30pt}+
					\int^T_t\Bigg\|\widetilde{\mathbb{E}}
					\left[ \dfrac{d}{d\nu}\nabla_{y}g_2\pig(y^k_x(\tau),y^k_{\bigcdot}(\tau)\otimes\mu_k\pig) (\widetilde{y^{k}_{x'}}(\tau)) \right]
					-\widetilde{\mathbb{E}}
					\left[ \dfrac{d}{d\nu}\nabla_{y}g_2\pig(y^0_x(\tau),y^0_{\bigcdot}(\tau)\otimes\mu_0\pig) (\widetilde{y^{0}_{x'}}(\tau))
					\right]\Bigg\|_{\mathcal{H}}^2d\tau \Bigg\} d\mu_0(x)\nonumber\\
					&+2A\cdot T\cdot C_6(1+|x'|^2)\mathcal{W}_2\pig(\mu_k,\mu_0\pig).
				\label{6630}	
		\end{align}
		Lemma \ref{lem. lip in x and xi} shows that
		\begin{equation}
			\big\|y^k_x(\tau) - y^0_x(\tau)\big\|_{\mathcal{H}} \h{1pt},\h{5pt}
			\big\|u^k_x(\tau) - u^0_x(\tau)\big\|_{\mathcal{H}}
			\leq C_8 \mathcal{W}_2\pig(\mu_k,\mu_0\pig).
		\end{equation}
		Also, since $\nabla_{yy}h_1$, $\nabla_{vv}g_1$, $\nabla_{yy}g_1$, $\nabla_{yv}g_1$, $\nabla_{yy}g_2$, $\nabla_{y'}\dfrac{d}{d\nu}\nabla_{y}g_2$, $\dfrac{d}{d\nu}\nabla_{y}g_2$ are jointly continuous in their respective arguments, we can show by contradiction and use the Lebesgue dominated convergence theorem (see similar arguments in Step 3 of the proof of Lemma \ref{lem. Existence of J flow, weak conv.}, Part 1 in the proof of  Lemma \ref{lem. Existence of J flow, strong conv.}) to show that the right hand side of \eqref{6630} converges to zero as $k \to \infty$. It also implies that
		\begin{align}
			&\int^T_t\int
			\pig\|\mathcal{U}^k_{xx'}(\tau)
			-\mathcal{U}^0_{xx'}(\tau)\pigr\|_{\mathcal{H}}^2d\mu_0(x)d\tau\longrightarrow 0,
			\label{conv. U1-U-2 linear functional d}
		\end{align}
		as $\mathcal{W}_2\pig(\mu_k,\mu_0\pig) \to 0$. Putting \eqref{conv. U1-U-2 linear functional d}, \eqref{DY-DY X-X_k linear functional d}, \eqref{int DY-DY X-X_k linear functional d} into \eqref{6557}, and rearranging the terms, we further see that
		\begin{equation}
			\int^T_t 
			\pig\|\mathcal{U}^k_{xx'}(\tau)
			-\mathcal{U}^0_{xx'}(\tau)\pigr\|_{\mathcal{H}}^2 d\tau\longrightarrow 0,
			\label{6644}
		\end{equation}
		as $\mathcal{W}_2\pig(\mu_k,\mu_0\pig) \to 0$. Inequality \eqref{DY-DY X-X_k linear functional d} and convergence \eqref{6644} show that
		\begin{equation}
			\mathbb{E}\left[\sup_{\tau \in [t,T]}\pig|\mathcal{Y}^k_{xx'}(\tau)
			-\mathcal{Y}^0_{xx'}(\tau)\pigr|^2 \right]\longrightarrow 0
			\label{6651}
		\end{equation}
		as $\mathcal{W}_2\pig(\mu_k,\mu_0\pig) \to 0$. Finally, we substitute \eqref{6644} and \eqref{6651} into the backward dynamics \eqref{eq. linear functional derivatives of FBSDE} to obtain the results. The continuity in $x$ follows by the same arguments.
	\end{proof}

	\subsection{Proofs in Section \ref{sec. Master Equation}}\label{app. Proofs in  Master Equation}
	\begin{proof}[\bf Proof of Lemma \ref{lem linear functional derivative of U}]
		Let $\mu,\mu' \in \mathcal{P}_2(\mathbb{R}^d)$ and  $\rho = \mu'-\mu$, we recall the notations in \eqref{def diff process, linear functional} and use the mean value theorem to express
		\small\begin{align}
			&\h{-25pt}\dfrac{\mathcal{J}_{tx}\pig(u_{tx}^{\mu+\epsilon \rho},y_{t\bigcdot}^{\mu+\epsilon \rho}\otimes (\mu+\epsilon\rho)\pig)
				-\mathcal{J}_{tx}\pig(u_{tx}^{\mu},y_{t\bigcdot}^{\mu}\otimes \mu\pig)}{\epsilon}\nonumber\\
			=
			\mathbb{E}\Bigg\{
			&\int^T_t\int_{0}^{1}\nabla_{y}g_1\pig(y_{tx}^{\mu}(\tau)
			+\theta\epsilon \Delta^\epsilon_{\rho} y_{tx}^{\mu} (\tau),u_{tx}^{\mu}(\tau) +\theta\epsilon \Delta^\epsilon_{\rho} u_{tx}^{\mu} (\tau) \pig)
			\Delta^\epsilon_{\rho} y_{tx}^{\mu} (\tau)
			\label{line 1 in linear functional d}\\
			&\h{30pt}+\nabla_{v}g_1\pig(y_{tx}^{\mu}(\tau)
			+\theta\epsilon \Delta^\epsilon_{\rho } y_{tx}^{\mu} (\tau),u_{tx}^{\mu}(\tau) +\theta\epsilon \Delta^\epsilon_{\rho } u_{tx}^{\mu} (\tau) \pig)
			\Delta^\epsilon_{\rho } u_{tx}^{\mu} (\tau)
			\label{line 2 in linear functional d}\\
			&\h{30pt}+\nabla_{y}g_2\pig(y_{tx}^{\mu}(\tau)
			+\theta\epsilon \Delta^\epsilon_{\rho } y_{tx}^{\mu} (\tau),y_{t\bigcdot}^{\mu+\epsilon \rho}(\tau)\otimes (\mu+\epsilon\rho)\pig) 
			\Delta^\epsilon_{\rho } y_{tx}^{\mu} (\tau)
			\label{line 3 in linear functional d}\\
			&\h{30pt}+
			\widetilde{\mathbb{E}}
			\bigg[\int\nabla_{y^*}\dfrac{d}{d\nu}g_2\pig(y_{tx}^{\mu}(\tau),
			\pig[y_{t\bigcdot}^{\mu}(\tau) +\theta\epsilon \Delta^\epsilon_{\rho } y_{t\bigcdot}^{\mu} (\tau) \pig]\otimes (\mu+\epsilon\rho)\pig) (y^*)\bigg|_{y^*=\widetilde{y_{t\widetilde{x}}^{\mu}}(\tau)+\theta\epsilon \widetilde{\Delta^\epsilon_{\rho } y_{t\widetilde{x}}^{\mu}} (\tau)}\nonumber\\
			&\h{280pt}\cdot
			\widetilde{\Delta^\epsilon_{\rho } y_{t\widetilde{x}}^{\mu}} (\tau)\pig[d\mu(\widetilde{x})+\epsilon d\rho(\widetilde{x})\pig] \bigg]
			\label{line 4 in linear functional d}\\
			&\h{30pt}+\widetilde{\mathbb{E}}
			\left[\int\dfrac{d}{d\nu}g_2\pig(y_{tx}^{\mu}(\tau)
			,y_{t\bigcdot}^{\mu}(\tau) \otimes \mu
			+\theta\pig[y_{t\bigcdot}^{\mu}(\tau) \otimes (\mu+\epsilon\rho)
			-y_{t\bigcdot}^{\mu}(\tau) \otimes \mu\pig]\pig) \pig(\widetilde{y_{t\widetilde{x}}^{\mu}}(\tau)\pig) d\rho(\widetilde{x}) \right] 
			d \theta d\tau
			\label{line 5 in linear functional d}\\
			&+\int_{0}^{1}\nabla_{y}h_1\pig(y_{tx}^{\mu}(T)
			+\theta\epsilon \Delta^\epsilon_{\rho } y_{tx}^{\mu} (T) \pig)
			\Delta^\epsilon_{\rho } y_{tx}^{\mu} (T)
			\label{line 6 in linear functional d}\\
			&\h{15pt}+\widetilde{\mathbb{E}}
			\left[\int\nabla_{y^*}\dfrac{d}{d\nu}h_2\pig(\pig[y_{t\bigcdot}^{\mu}(T) +\theta\epsilon \Delta^\epsilon_{\rho } y_{t\bigcdot}^{\mu} (T) \pig]\otimes (\mu+\epsilon\rho)\pig) (y^*)\bigg|_{y^*=\widetilde{y_{t\widetilde{x}}^{\mu}}(T)+\theta\epsilon \widetilde{\Delta^\epsilon_{\rho } y_{t\widetilde{x}}^{\mu}} (T)} 
			\widetilde{\Delta^\epsilon_{\rho } y_{t\widetilde{x}}^{\mu}} (T)\pig[d\mu(\widetilde{x})+\epsilon d\rho(\widetilde{x})\pig]\right]
			\label{line 7 in linear functional d}\\
			&\h{15pt}+\widetilde{\mathbb{E}}
			\left[\int\dfrac{d}{d\nu}h_2\pig(y_{t\bigcdot}^{\mu}(T) \otimes \mu
			+\theta\pig[y_{t\bigcdot}^{\mu}(T) \otimes (\mu+\epsilon\rho)
			-y_{t\bigcdot}^{\mu}(T) \otimes \mu\pig]\pig) \pig(\widetilde{y_{t\widetilde{x}}^{\mu}}(T)\pig) d\rho(\widetilde{x}) \right] 
			d \theta \Bigg\}.
			\label{line 8 in linear functional d}
		\end{align}\normalsize
		We simplify the notation by setting $\widetilde{\Delta}_\tau^\epsilon:=\widetilde{\Delta^\epsilon_{\rho } y_{t\widetilde{x}}^{\mu}} (\tau)$, $Y_{x}^{\theta\epsilon}(\tau):=y_{tx}^{\mu}(\tau)
		+\theta\epsilon \Delta^\epsilon_{\rho} y_{tx}^{\mu} (\tau)$, $U_{x}^{\theta\epsilon}(\tau):=u_{tx}^{\mu}(\tau) +\theta\epsilon \Delta^\epsilon_{\rho} u_{tx}^{\mu} (\tau)$.
		
		\noindent {\bf Part 1. Convergence of the terms of \eqref{line 1 in linear functional d}, \eqref{line 2 in linear functional d}, \eqref{line 3 in linear functional d}, \eqref{line 6 in linear functional d}:}
		
		\noindent First, we deal with \eqref{line 1 in linear functional d} by considering
		\begin{align*}
			I_1:&=\int^T_t\int_{0}^{1}\mathbb{E}\bigg|\nabla_{y}g_1\pig(Y_{x}^{\theta\epsilon}(\tau),U_{x}^{\theta\epsilon}(\tau) \pig)
			\Delta^\epsilon_{\rho} y_{tx}^{\mu} (\tau)
			-\nabla_{y}g_1\pig(y_{tx}^{\mu}(\tau)
			,u_{tx}^{\mu}(\tau)  \pig)
			\int \dfrac{d y_{tx}^{\mu}}{d\nu}(x',\tau) d\rho(x') 
			\bigg|d\theta d\tau\\
			&\leq\int^T_t\int_{0}^{1}
			\Big\|\nabla_{y}g_1\pig(Y_{x}^{\theta\epsilon}(\tau),U_{x}^{\theta\epsilon}(\tau) \pig)
			-\nabla_{y}g_1\pig(y_{tx}^{\mu}(\tau)
			,u_{tx}^{\mu}(\tau)  \pig)
			\Big\|_\mathcal{H}\cdot
			\Big\|\Delta^\epsilon_{\rho} y_{tx}^{\mu} (\tau)\Big\|_\mathcal{H}d\theta d\tau\\
			&\h{10pt}+\int^T_t\int_{0}^{1}
			\Big\|\nabla_{y}g_1\pig(y_{tx}^{\mu}(\tau)
			,u_{tx}^{\mu}(\tau)  \pig)
			\Big\|_\mathcal{H}\cdot
			\left\|\Delta^\epsilon_{\rho} y_{tx}^{\mu} (\tau)
			-\int \dfrac{d y_{tx}^{\mu}}{d\nu}(x',\tau) d\rho(x')\right\|_\mathcal{H}
			d\theta d\tau.
		\end{align*}
		By \eqref{ass. bdd of Dg1}, \eqref{ass. bdd of D^2g1} of Assumptions {\bf (Av)}, {\bf (Avii)} and the mean value theorem, we have 
		\begin{align}
			I_1&\leq\epsilon\int^T_t\int_{0}^{1}
			\sqrt{2}
			\theta 
			C_{g_1}\bigg[
			\Big\|\Delta^\epsilon_{\rho} y_{tx}^{\mu} (\tau)
			\Big\|_\mathcal{H}
			+\Big\|\Delta^\epsilon_{\rho} u_{tx}^{\mu} (\tau)
			\Big\|_\mathcal{H}\bigg]\cdot
			\Big\|\Delta^\epsilon_{\rho} y_{tx}^{\mu} (\tau)\Big\|_\mathcal{H}d\theta d\tau\nonumber\\
			&\h{10pt}+C_{g_1}\int^T_t\int_{0}^{1}\bigg[ 1
			+\Big\|y_{tx}^{\mu}(\tau)
			\Big\|_\mathcal{H}^2
			+\Big\|u_{tx}^{\mu}(\tau) 
			\Big\|_\mathcal{H}^2\bigg]^{1/2}\cdot
			\left\|\Delta^\epsilon_{\rho} y_{tx}^{\mu} (\tau)
			-\int \dfrac{d y_{tx}^{\mu}}{d\nu}(x',\tau) d\rho(x') \right\|_\mathcal{H}
			d\theta d\tau.
			\label{conv. line 1 in linear functional d}
		\end{align}
		Hence, by \eqref{bdd. diff quotient of y, p, q, u linear functional d}, Lemma \ref{lem. existence of linear functional derivative of processes} and Lemma \ref{lem. prop of y p q u fix L}, we see that \eqref{conv. line 1 in linear functional d} converges to zero as $\epsilon \to 0$. Similarly, we see that \eqref{line 2 in linear functional d},
		\eqref{line 3 in linear functional d},
		\eqref{line 6 in linear functional d} also have following the convergences: as $\epsilon \to 0$,
		\begin{align}
			&\mathbb{E}\Bigg[
			\int^T_t\int_{0}^{1}
			\nabla_{v}g_1\pig(y_{tx}^{\mu}(\tau)
			+\theta\epsilon \Delta^\epsilon_{\rho } y_{tx}^{\mu} (\tau),u_{tx}^{\mu}(\tau) +\theta\epsilon \Delta^\epsilon_{\rho } u_{tx}^{\mu} (\tau) \pig)
			\Delta^\epsilon_{\rho } u_{tx}^{\mu} (\tau) d\theta d\tau\Bigg]\nonumber\\
			&\h{100pt}\longrightarrow
			\mathbb{E}\Bigg[
			\int^T_t
			\nabla_{v}g_1\pig(y_{tx}^{\mu}(\tau),
			u_{tx}^{\mu}(\tau)\pig)
			\left(\int\dfrac{du_{tx}^{\mu}}{d\nu}(x',\tau)d\rho(x') \right)
			d\tau\Bigg];
			\label{conv. line 2 in linear functional d}\\
			&\mathbb{E}\Bigg[
			\int^T_t\int_{0}^{1}
			\nabla_{y}g_2\pig(y_{tx}^{\mu}(\tau)
			+\theta\epsilon \Delta^\epsilon_{\rho } y_{tx}^{\mu} (\tau),y_{t\bigcdot}^{\mu+\epsilon \rho}(\tau)\otimes (\mu+\epsilon\rho)\pig) 
			\Delta^\epsilon_{\rho } y_{tx}^{\mu} (\tau)
			d\theta d\tau\Bigg]\nonumber\\
			&\h{100pt}\longrightarrow
			\mathbb{E}\Bigg[
			\int^T_t 
			\nabla_{y}g_2\pig(y_{tx}^{\mu}(\tau),y_{t\bigcdot}^{\mu}(\tau)\otimes \mu\pig) 
			\left(\int\dfrac{dy_{tx}^{\mu}}{d\nu}(x',\tau)d\rho(x') \right)
			d\tau\Bigg];
			\label{conv. line 3 in linear functional d}\\
			&\mathbb{E}\Bigg[
			\int_{0}^{1}
			\nabla_{y}h_1\pig(y_{tx}^{\mu}(T)
			+\theta\epsilon \Delta^\epsilon_{\rho } y_{tx}^{\mu} (T)\pig)
			\Delta^\epsilon_{\rho } y_{tx}^{\mu} (T)
			d\theta \Bigg]\nonumber\\
			&\h{100pt}\longrightarrow
			\mathbb{E}\Bigg[
			\nabla_{y}h_1\pig(y_{tx}^{\mu}(T)\pig)
			\left(\int\dfrac{dy_{tx}^{\mu}}{d\nu}(x',T)\rho(x')dx'\right)\Bigg].
			\label{conv. line 6 in linear functional d}
		\end{align}

		\noindent {\bf Part 2. Convergences of 
			\eqref{line 4 in linear functional d}, 
			\eqref{line 7 in linear functional d}:}
		
		\noindent For \eqref{line 4 in linear functional d}, we recall $\widetilde{\Delta}_\tau^\epsilon:=\widetilde{\Delta^\epsilon_{\rho } y_{t\widetilde{x}}^{\mu}} (\tau)$ and consider
		\small\begin{align*}
			I_2:=\,&\mathbb{E}\Bigg\{\int^T_t \int^1_0  \widetilde{\mathbb{E}}
			\Bigg|\int\nabla_{y^*}\dfrac{d}{d\nu}g_2\pig(y_{tx}^{\mu}(\tau),Y_{\bigcdot}^{\theta\epsilon}(\tau) \otimes (\mu+\epsilon\rho)\pig) (y^*)\bigg|_{y^*=\widetilde{Y_{\widetilde{x}}^{\theta\epsilon}}(\tau)}
			\widetilde{\Delta}_\tau^\epsilon \cdot \pig[ d\mu(\widetilde{x})+\epsilon d\rho(\widetilde{x}) \pig]
			\\
			&\h{60pt}
			-\int\nabla_{y^*}\dfrac{d}{d\nu}g_2\pig(y_{tx}^{\mu}(\tau)
			,y_{t\bigcdot}^{\mu}(\tau)\otimes \mu\pig) (y^*)\bigg|_{y^*=\widetilde{y_{t\widetilde{x}}^{\mu}}(\tau)}
			\left[\int
			\widetilde{\dfrac{dy_{t\widetilde{x}}^{\mu}}{d\nu}}
			(x',\tau)
			d\rho(x')\right]
			d\mu(\widetilde{x})\Bigg|
			d\theta d\tau \Bigg\}\\
			\leq\,& \mathbb{E}\Bigg\{\int^T_t \int^1_0\widetilde{\mathbb{E}}
			\int
			\Bigg|\bigg[\nabla_{y^*}\dfrac{d}{d\nu}g_2\pig(y_{tx}^{\mu}(\tau),Y_{\bigcdot}^{\theta\epsilon}(\tau) \otimes (\mu+\epsilon\rho)\pig)(y^*)\bigg|_{y^*=\widetilde{Y_{\widetilde{x}}^{\theta\epsilon}}(\tau)}\\
			&\h{130pt}-\nabla_{y^*}\dfrac{d}{d\nu}g_2\pig(y_{tx}^{\mu}(\tau)
			,y_{t\bigcdot}^{\mu}(\tau)\otimes \mu\pig) (y^*)\bigg|_{y^*=\widetilde{y_{t\widetilde{x}}^{\mu}}(\tau)}\bigg]
			\widetilde{\Delta}_\tau^\epsilon\Bigg|
			d\mu(\widetilde{x})
			d\theta d\tau
			\Bigg\}
			\\
			&+\mathbb{E}\Bigg\{\int^T_t \int^1_0\widetilde{\mathbb{E}}
			\int\Bigg|\nabla_{y^*}\dfrac{d}{d\nu}g_2\pig(y_{tx}^{\mu}(\tau)
			,y_{t\bigcdot}^{\mu}(\tau)\otimes \mu\pig) (y^*)\bigg|_{y^*=\widetilde{y_{t\widetilde{x}}^{\mu}}(\tau)}
			\left[
			\widetilde{\Delta}_\tau^\epsilon
			-\int
			\widetilde{\dfrac{dy_{t\widetilde{x}}^{\mu}}{d\nu}}
			(x',\tau)
			d\rho(x')\right]\Bigg|
			d\mu(\widetilde{x})
			d\theta d\tau \Bigg\}\\
			&+\epsilon\mathbb{E}\Bigg\{\int^T_t \int^1_0\widetilde{\mathbb{E}}
			\int\Bigg|\nabla_{y^*}\dfrac{d}{d\nu}g_2\pig(y_{tx}^{\mu}(\tau),Y_{\bigcdot}^{\theta\epsilon}(\tau) \otimes (\mu+\epsilon \rho)\pig)(y^*)\bigg|_{y^*=\widetilde{Y_{\widetilde{x}}^{\theta\epsilon}}(\tau)}
			\widetilde{\Delta}_\tau^\epsilon
			d\rho(\widetilde{y})\Bigg|
			d\theta d\tau \Bigg\}.
		\end{align*}\normalsize
		Using \eqref{ass. bdd of Dg2} of Assumption {\bf (Avi)}, we have
		\small\begin{align*}
			I_2&\leq \mathbb{E}\Bigg\{\int^T_t \int^1_0
			\Bigg[\widetilde{\mathbb{E}}
			\int
			\Bigg|\nabla_{y^*}\dfrac{d}{d\nu}g_2\pig(y_{tx}^{\mu}(\tau),Y_{\bigcdot}^{\theta\epsilon}(\tau) \otimes (\mu+\epsilon \rho)\pig)(y^*)\bigg|_{y^*=\widetilde{Y_{\widetilde{x}}^{\theta\epsilon}}(\tau)}\\
			&\h{100pt}-\nabla_{y^*}\dfrac{d}{d\nu}g_2\pig(y_{tx}^{\mu}(\tau)
			,y_{t\bigcdot}^{\mu}(\tau)\otimes \mu\pig) (y^*)\bigg|_{y^*=\widetilde{y_{t\widetilde{x}}^{\mu}}(\tau)}
			\Bigg|^2
			d\mu(\widetilde{x})\Bigg]^{1/2}
			\Big[
			\widetilde{\mathbb{E}}
			\int\pig|\widetilde{\Delta}_\tau^\epsilon\pigr|^2
			d\mu(\widetilde{x})\Big]^{1/2}
			d\theta d\tau
			\Bigg\}
			\\
			&\h{10pt}+C_{g_2}\int^T_t 
			\bigg[
			1
			+\mathbb{E}\Big|y_{tx}^{\mu} (\tau)
			\Big|^2
			+2\int\widetilde{\mathbb{E}}\Big|\widetilde{y_{t\widetilde{x}}^{\mu}}(\tau)
			\Big|^2d\mu(\widetilde{x})
			\bigg]^{1/2}
			\left[\widetilde{\mathbb{E}}
			\int\Bigg|
			\widetilde{\Delta}_\tau^\epsilon
			-\int
			\widetilde{\dfrac{dy_{t\widetilde{x}}^{\mu}}{d\nu}}
			(x',\tau)
			d\rho(x')\Bigg|^2
			d\mu(\widetilde{x})\right]^{1/2} d\tau\\
			&\h{10pt}+\epsilon C_{g_2}
			\int^T_t \int^1_0
			\Bigg(
			\int 1+
			\widetilde{\mathbb{E}}\Big|\widetilde{Y_{\widetilde{x}}^{\theta\epsilon}}(\tau)\Big|^2
			+\mathbb{E}\left[\int
			\Big|Y^{\theta\epsilon}_{x}(\tau)\Big|^2
			\pig[d\mu(x)+\epsilon d\rho(x)\pig]
			\right]
			d|\rho(\widetilde{x})|
			\Bigg)^{1/2}
			\left[\widetilde{\mathbb{E}}
			\int
			\Big|\widetilde{\Delta}_\tau^\epsilon\Big|^2
			d|\rho(\widetilde{x})|\right]^{1/2}
			d\theta d\tau \\
			&=: I_2^1(\varepsilon)+I_2^2(\varepsilon)+I_2^3(\varepsilon).
		\end{align*}\normalsize
		\noindent {\bf Part 2A. Convergence of $I^1_2(\varepsilon)$:}\\
		For if  
		\small\begin{align*}
			&I_2^1(\epsilon)\\
			&:=    \mathbb{E}\Bigg\{\int^T_t \int^1_0
			\Bigg[\widetilde{\mathbb{E}}
			\int
			\Bigg|\nabla_{y^*}\dfrac{d}{d\nu}g_2\pig(y_{tx}^{\mu}(\tau),Y_{\bigcdot}^{\theta\epsilon}(\tau) \otimes (\mu+\epsilon \rho)\pig)(y^*)\bigg|_{y^*=\widetilde{Y_{\widetilde{x}}^{\theta\epsilon}}(\tau)}\\
			&\h{120pt}-\nabla_{y^*}\dfrac{d}{d\nu}g_2\pig(y_{tx}^{\mu}(\tau)
			,y_{t\bigcdot}^{\mu}(\tau)\otimes \mu\pig) (y^*)\bigg|_{y^*=\widetilde{y_{t\widetilde{x}}^{\mu}}(\tau)}
			\Bigg|^2
			d\mu(\widetilde{x})\Bigg]^{1/2}
			\Big[
			\widetilde{\mathbb{E}}
			\int\pig|\widetilde{\Delta}_\tau^\epsilon\pigr|^2
			d\mu(\widetilde{x})\Big]^{1/2}
			d\theta d\tau
			\Bigg\}
		\end{align*}\normalsize
		does not converge to zero as $\epsilon \to 0$, there is a subsequence $\{\epsilon_n\}_{n\in \mathbb{N}}$ such that $\displaystyle\lim_{n\to \infty}I_2^1(\epsilon_n)>0$. For this subsequence, we check that
		\begin{align*}
			\int^T_t\int^1_0\Big\|Y^{\theta\epsilon_n}_{x}(\tau)
			-y_{tx}^{\mu}(\tau)\Big\|^2_\mathcal{H} d\theta d\tau
			=\dfrac{\epsilon_n}{2}\int^T_t\Big\|
			\Delta^{\epsilon_n}_{\rho } y_{tx}^{\mu} (\tau)
			\Big\|^2_\mathcal{H} d\tau \longrightarrow 0 \h{10pt}
			\text{ as $\epsilon_n \to 0$.}
		\end{align*}
		due to \eqref{bdd. diff quotient of y, p, q, u linear functional d}. By Borel-Cantelli lemma, we can pick a subsequence of $\epsilon_n$, still denoted by $\epsilon_n$, such that $Y^{\theta\epsilon_n}_{x}(\tau)$ converges to $y_{tx}^{\mu}(\tau)$ for a.e. $\theta \in [0,1]$ and $\mathcal{L}^1\otimes\mathbb{P}$-a.e. $(\tau,\omega) \in [t,T]\times\Omega$, as $\epsilon_n \to 0$. By the continuity in \eqref{ass. cts and diff of g2} of Assumption {\bf (Aii)}, we see that
		\small\begin{align*}
			\widehat{I}^{\,1}_2(\epsilon_n):=\Bigg|\nabla_{y^*}\dfrac{d}{d\nu}g_2\pig(y_{tx}^{\mu}(\tau),Y^{\theta\epsilon_n}_{\bigcdot}(\tau) \otimes (\mu+\epsilon_n\rho)\pig) (y^*)\bigg|_{y^*=\widetilde{Y_{\widetilde{x}}^{\theta\epsilon_n}}(\tau)}
			-\nabla_{y^*}\dfrac{d}{d\nu}g_2\pig(y_{tx}^{\mu}(\tau)
			,y_{t\bigcdot}^{\mu}(\tau)\otimes \mu\pig) (y^*)\bigg|_{y^*=\widetilde{y_{t\widetilde{x}}^{\mu}}(\tau)}
			\Bigg|^2 \longrightarrow 0
		\end{align*}\normalsize
		for a.e. $\theta \in [0,1]$, $\tau \in [t,T]$, $\mathbb{P} \otimes\mathbb{P}$-a.s., as $n \to \infty$. By \eqref{ass. bdd of Dg2} of Assumption {\bf (Avi)}, we see that
		\small\begin{align}
			\widehat{I}^{\,1}_2(\epsilon_n)&\leq 2C_{g_2}^2\Bigg[2 + 2\Big| y_{tx}^{\mu}(\tau)
			\Big|^2+\Big|\widetilde{Y_{\widetilde{x}}^{\theta\epsilon_n}}(\tau) \Big|^2\nonumber\\
			&\h{45pt}+\int\mathbb{E}\Big|y_{tx}^{\mu}(\tau) +\theta\epsilon_n \Delta^{\epsilon_n}_{\rho } y_{tx}^{\mu} (\tau) \Big|^2 \pig[d\mu(x)+\epsilon_nd\rho(x)\pig]
			+\Big|\widetilde{y_{t\widetilde{x}}^{\mu}}(\tau) \Big|^2
			+\int\mathbb{E}\Big|y_{tx}^{\mu}(\tau) \Big|^2 d\mu(x) \Bigg]\nonumber\\
			&\leq 2C_{g_2}^2\Bigg\{2 + 2\Big| y_{tx}^{\mu}(\tau)
			\Big|^2
			+\Big|\widetilde{y_{t\widetilde{x}}^{\mu}}(\tau) \Big|^2
			+\Big|\widetilde{Y_{\widetilde{x}}^{\theta\epsilon_n}}(\tau) \Big|^2
			+2\Big| y_{tx}^{\mu+\epsilon_n\rho}(\tau)\Big|^2
			\nonumber\\
			&\h{45pt}+\int\mathbb{E}\left[3\Big| y_{tx}^{\mu}(\tau)
			\Big|^2
			+2\Big| y_{tx}^{\mu+\epsilon_n\rho}(\tau)\Big|^2\right] d\mu(x)
			+2\epsilon_n\int\mathbb{E}\left[\Big| y_{tx}^{\mu}(\tau)
			\Big|^2
			+\Big| y_{tx}^{\mu+\epsilon_n\rho}(\tau)\Big|^2\right] d\rho(x)\Bigg\}.
			\label{ineq. dominator for I^1_2}
		\end{align}\normalsize
		By Lemma \ref{lem. bdd of y p q u} and (a) in Lemma \ref{lem. prop of y p q u fix L}, we compute that
		\begin{align}
			\int\mathbb{E}\left[\sup_{\tau \in [t,T]}\pig| y_{tx}^{\mu}(\tau)
			\pigr|^2\right] d\mu(x)
			&\leq(C^*_4)^2\int\mathbb{E}\left[1+|x|^2+\int^T_t\int|y|^2d(y_{t\bigcdot}^{\mu}(\tau)\otimes \mu)d\tau\right] d\mu(x)\nonumber\\
			&=(C^*_4)^2\int \left[ 1+|x|^2+\int^T_t\big\|y_{t\xi}(\tau)\big\|^2_{\mathcal{H}}d\tau\right] d\mu(x)\nonumber\\
			&\leq\pig[(C^*_4)^2+2C_4^2(C^*_4)^2T\pig]\int \left(1+|x|^2\right) d\mu(x)
			\label{bdd. int E y^m mdx}
		\end{align}
		which is finite since $\mu\in \mathcal{P}_2(\mathbb{R}^d)$. Similarly, we have 
		\begin{align}
			\int\mathbb{E}\left[\sup_{\tau \in [t,T]}\pig| y_{tx}^{\mu+\epsilon_n \rho}(\tau)
			\pigr|^2\right] d\mu(x)
			&\leq(C^*_4)^2\int \left[ 1+|x|^2 \right] d\mu(x)
			+2C_4^2T\left(1+\int|x|^2 \pig[d\mu(x)+\epsilon_n d\rho(x)\pig]\right)
			\label{bdd. int E y^m+e rho mdx}
		\end{align}
		which is also finite and independent of $\epsilon_n$ since $\mu+\epsilon_n \rho\in \mathcal{P}_2(\mathbb{R}^d)$ and $\rho=\mu'-\mu$ with $\mu\in \mathcal{P}_2(\mathbb{R}^d)$. Furthermore, we also have
		\begin{align}
			\int\mathbb{E}\left[\sup_{\tau \in [t,T]}\pig| y_{tx}^{\mu}(\tau)
			\pigr|^2\right] d|\rho(x)|
			&\leq(C^*_4)^2\int \left[ 1+|x|^2\right] d|\rho(x)|
			+2C_4^2(C^*_4)^2T\left(1+\int|x|^2 d\mu(x)\right),
			\label{bdd. int E y^m rhodx}
		\end{align}
		and
		\begin{align}
			\int\mathbb{E}\left[\sup_{\tau \in [t,T]}\pig| y_{tx}^{\mu+\epsilon_n \rho}(\tau)
			\pigr|^2\right] d|\rho(x)|
			&\leq(C^*_4)^2\int \left[ 1+|x|^2 \right] d|\rho(x)|
			+2C_4^2(C^*_4)^2T\left(1+\int|x|^2 \pig[d\mu(x)+\epsilon_n d\rho(x)\pig]\right).
			\label{bdd. int E y^m+e rho rho dx}
		\end{align}
		Therefore, the integrand $\widehat{I}^{\,1}_2(\epsilon_n)$ has a dominating function due to \eqref{ineq. dominator for I^1_2}-\eqref{bdd. int E y^m+e rho mdx}. Since $\widehat{I}^{\,1}_2(\epsilon_n) \to 0$ as $n \to \infty$, the Lebesgue dominated convergence theorem shows that $I_2^1(\epsilon_n)$ $\longrightarrow 0$ as $n \to \infty$, which contradicts the fact that $\displaystyle\lim_{n\to \infty}I_2^1(\epsilon_n)>0$ and hence $\displaystyle\lim_{\epsilon \to 0}I_2^1(\epsilon)=0$.
		
		\noindent {\bf Part 2B. Convergence of $I^2_2(\varepsilon)$:}\\
		Using \eqref{bdd. y p q u fix L}, \eqref{bdd. int E y^m mdx}, (b) and (c) in Lemma \ref{lem. existence of linear functional derivative of processes}, we see that as $\epsilon \to 0$,
		\begin{align*}
			I_2^2(\varepsilon)&:=C_{g_2}\int^T_t 
			\bigg[
			1
			+\mathbb{E}\Big|y_{tx}^{\mu} (\tau)
			\Big|^2
			+2\int\widetilde{\mathbb{E}}\Big|\widetilde{y_{t\widetilde{x}}^{\mu}}(\tau)
			\Big|^2d\mu(\widetilde{x})
			\bigg]^{1/2}\cdot
			\left[\widetilde{\mathbb{E}}
			\int\Bigg|
			\widetilde{\Delta}_\tau^\epsilon
			-\int
			\widetilde{\dfrac{dy_{t\widetilde{x}}^{\mu}}{d\nu}}
			(x',\tau)
			d\rho(x')\Bigg|^2
			d\mu(\widetilde{x})\right]^{1/2} d\tau\\
			&\longrightarrow 0.
		\end{align*}
		
		\noindent {\bf Part 2C. Convergence of $I^3_2(\varepsilon)$:}\\
		\fontsize{9.5pt}{11pt}\begin{align*}
			&I^3_2(\varepsilon)\\:&=\epsilon C_{g_2}
			\int^T_t \int^1_0
			\Bigg(
			\int 1+\widetilde{\mathbb{E}}\Big|\widetilde{Y_{\widetilde{x}}^{\theta\epsilon}}(\tau)\Big|^2
			+\mathbb{E}\left\{\int
			\Big|Y^{\theta\epsilon}_{x}(\tau)\Big|^2
			\pig[d\mu(x)+\epsilon d\rho(x)\pig]
			\right\}
			d|\rho(\widetilde{x})|\Bigg)^{1/2}
			\left[\widetilde{\mathbb{E}}
			\int
			\Big|\widetilde{\Delta}_\tau^\epsilon\Big|^2
			d|\rho(\widetilde{x})|\right]^{1/2}
			d\theta d\tau\\
			&\leq \sqrt{2}C_{g_2}
			\int^T_t \int^1_0
			\Bigg\{
			\int \left[1+
			\widetilde{\mathbb{E}}\pig|\widetilde{y_{t\widetilde{x}}^{\mu}}(\tau)\pigr|^2+\widetilde{\mathbb{E}}\pig|\widetilde{y_{t\widetilde{x}}^{\mu+\epsilon\rho}}(\tau)\pigr|^2\right]
			d|\rho(\widetilde{x})|\\
			&\h{90pt}+\int
			\left[
			\mathbb{E}\pig|y_{tx}^{\mu}(\tau)\pigr|^2
			+\mathbb{E}\pig|y_{tx}^{\mu+\epsilon\rho}(\tau)\pigr|^2\right]
			\pig[d\mu(x)+\epsilon d\rho(x)\pig]
			\Bigg\}^{1/2}\cdot
			\left[\widetilde{\mathbb{E}}
			\int
			\Big|\widetilde{y_{t\widetilde{x}}^{\mu+\epsilon\rho}}(\tau)
			-\widetilde{y_{t\widetilde{x}}^{\mu}}(\tau)\Big|^2
			d|\rho(\widetilde{x})|\right]^{1/2}
			d\theta d\tau\\
			&\leq 2 C_{g_2}
			\int^T_t \int^1_0
			\Bigg\{
			(1+\epsilon)\int\left[1+
			\widetilde{\mathbb{E}}\pig|\widetilde{y_{t\widetilde{x}}^{\mu}}(\tau)\pigr|^2+\widetilde{\mathbb{E}}\pig|\widetilde{y_{t\widetilde{x}}^{\mu+\epsilon\rho}}(\tau)\pigr|^2\right]
			d|\rho(\widetilde{x})|\\
			&\h{130pt}+\int
			\left[
			\mathbb{E}\pig|y_{tx}^{\mu}(\tau)\pigr|^2
			+\mathbb{E}\pig|y_{tx}^{\mu+\epsilon\rho}(\tau)\pigr|^2\right]
			d\mu(x)
			\Bigg\}^{1/2}\cdot\left[\widetilde{\mathbb{E}}
			\int
			\Big|\widetilde{y_{t\widetilde{x}}^{\mu+\epsilon\rho}}(\tau)
			-\widetilde{y_{t\widetilde{x}}^{\mu}}(\tau)\Big|^2
			d|\rho(\widetilde{x})|\right]^{1/2}
			d\theta d\tau\\
			&=\widehat{I}^{\,3}_2(\varepsilon)\cdot\int^T_t
			\left[\widetilde{\mathbb{E}}
			\int
			\Big|\widetilde{y_{t\widetilde{x}}^{\mu+\epsilon\rho}}(\tau)
			-\widetilde{y_{t\widetilde{x}}^{\mu}}(\tau)\Big|^2
			d|\rho(\widetilde{x})|\right]^{1/2}
			d\tau
		\end{align*}\normalsize
		where $|\widehat{I}^{\,3}_2(\varepsilon)|$ is finite and its upper bound does not depend on $\epsilon$ due to \eqref{bdd. int E y^m mdx}-\eqref{bdd. int E y^m+e rho rho dx}. Therefore, $I^{3}_2(\varepsilon) \longrightarrow 0$ as $\epsilon \to 0$ by Lemma \ref{lem. lip in x and xi} Recalling the convergences of $I_2^1(\varepsilon)$, $I_2^2(\varepsilon)$, $I_2^3(\varepsilon)$ to zero, we see from the definition of $I_2$ that the term in \eqref{line 4 in linear functional d} has the convergence
		\small\begin{align*}
			&\mathbb{E}\Bigg\{\int^T_t \int^1_0 \widetilde{\mathbb{E}}
			\left[\int\nabla_{y^*}\dfrac{d}{d\nu}g_2\pig(y_{tx}^{\mu}(\tau),Y_{\bigcdot}^{\theta\epsilon}(\tau) \otimes (\mu+\epsilon \rho)\pig) (y^*)\bigg|_{y^*=\widetilde{Y_{\widetilde{x}}^{\theta\epsilon}}(\tau)}
			\widetilde{\Delta}_\tau^\epsilon  \pig[d\mu(x)+\epsilon d\rho(x)\pig]\right]
			d\theta d\tau \Bigg\}\\
			&\longrightarrow \mathbb{E}\Bigg\{\int^T_t  \widetilde{\mathbb{E}}
			\left[\int\nabla_{y^*}\dfrac{d}{d\nu}g_2\pig(y_{tx}^{\mu}(\tau)
			,y_{t\bigcdot}^{\mu}(\tau)\otimes \mu\pig) (y^*)\bigg|_{y^*=\widetilde{y_{t\widetilde{x}}^{\mu}}(\tau)}
			\left(\int
			\widetilde{\dfrac{dy_{t\widetilde{x}}^{\mu}}{d\nu}}
			(x',\tau)
			d\rho(x')\right)
			d\mu(\widetilde{x})\right]
			d\tau \Bigg\}
		\end{align*}\normalsize
		as $\epsilon \to 0$. Similarly, we see that \eqref{line 7 in linear functional d} also has the convergence
		\small\begin{align*}
			&\mathbb{E}\Bigg\{ \int^1_0  \widetilde{\mathbb{E}}
			\left[\int\nabla_{y^*}\dfrac{d}{d\nu}h_2\pig(Y_{\bigcdot T}^{\theta\epsilon} \otimes (\mu+\epsilon \rho)\pig) (y^*)\bigg|_{y^*=\widetilde{Y_{\widetilde{x}}^{\theta\epsilon}}(T)}
			\widetilde{\Delta}_T^\epsilon \pig[d\mu(x)+\epsilon d\rho(x)\pig]\right]
			d\theta \Bigg\}\\
			&\longrightarrow \mathbb{E}\Bigg\{ \widetilde{\mathbb{E}}
			\left[\int\nabla_{y^*}\dfrac{d}{d\nu}h_2\pig(y_{t\bigcdot}^{\mu}(T)\otimes \mu\pig) (y^*)\bigg|_{y^*=\widetilde{y_{t\widetilde{x}}^{\mu}}(T)}
			\left(\int
			\widetilde{\dfrac{dy_{t\widetilde{x}}^{\mu}}{d\nu}}
			(x',T)
			d\rho(x')\right)
			d\mu(\widetilde{x}) \right]\Bigg\}
		\end{align*}\normalsize
		as $\epsilon \to 0$.

		\noindent {\bf Part 3. Convergences of 
			\eqref{line 5 in linear functional d}, 
			\eqref{line 8 in linear functional d}:}\\
		For \eqref{line 5 in linear functional d}, \eqref{ass. cts, bdd of dnu g2} of Assumption {\bf (Dvi)} tells us that 
		\begin{align*}
			&\left|\dfrac{d}{d\nu}g_2\pig(y_{tx}^{\mu}(\tau)
			,y_{t\bigcdot}^{\mu}(\tau) \otimes \mu
			+\theta\pig[y_{t\bigcdot}^{\mu}(\tau) \otimes (\mu+\epsilon\rho)
			-y_{t\bigcdot}^{\mu}(\tau) \otimes \mu\pig]\pig) \pig(\widetilde{y_{t\widetilde{x}}^{\mu}}(\tau)\pig)\right|\\
			&\leq C_{g_2}\left\{1+\pig|\widetilde{y_{t\widetilde{x}}^{\mu}}(\tau)\pigr|^2
			+\pig|y_{tx}^{\mu}(\tau)\pigr|^2
			+(1-\theta)\int\mathbb{E}\pig|y_{tx}^{\mu}(\tau)\pigr|^2\pig[d\mu(x)+\epsilon d\rho(x)\pig]
			+\theta\int\mathbb{E}\pig|y_{tx}^{\mu}(\tau)\pigr|^2d\mu(x)
			\right\}.
		\end{align*}
		Lemma \ref{lem. prop of y p q u fix L} shows that it is integrable since
		\small\begin{align*}
			&I_3(\varepsilon)\\
			:&=\mathbb{E}\Bigg\{\int^T_t\int^1_0\widetilde{\mathbb{E}}
			\int\Bigg|\dfrac{d}{d\nu}g_2\pig(y_{tx}^{\mu}(\tau)
			,y_{t\bigcdot}^{\mu}(\tau) \otimes \mu
			+\theta\pig[y_{t\bigcdot}^{\mu}(\tau) \otimes (\mu+\epsilon\rho)
			-y_{t\bigcdot}^{\mu}(\tau) \otimes \mu\pig]\pig) \pig(\widetilde{y_{t\widetilde{x}}^{\mu}}(\tau)\pig) \Bigg|d|\rho(\widetilde{x})| 
			d \theta d\tau\Bigg\}\\
			&\leq C_{g_2}\int^T_t\int
			\left[1+\widetilde{\mathbb{E}}\pig|\widetilde{y_{t\widetilde{x}}^{\mu}}(\tau)\pigr|^2
			+\int\mathbb{E}\pig|y_{tx}^{\mu}(\tau)\pigr|^2 \pig[d\mu(x)+\epsilon d\rho(x)\pig]  
			+\int\mathbb{E}\pig|y_{tx}^{\mu}(\tau)\pigr|^2d\mu(x)
			\right]d|\rho(\widetilde{x})|
			d\tau\\
			&\leq C_{g_2}(1+C_\rho)\int^T_t
			\left[1+\int\widetilde{\mathbb{E}}\pig|\widetilde{y_{t\widetilde{x}}^{\mu}}(\tau)\pigr|^2d|\rho(\widetilde{x})|
			+\epsilon^{1/2}\int\mathbb{E}\pig|y_{tx}^{\mu}(\tau)\pigr|^2d|\rho(x)|
			+2\int\mathbb{E}\pig|y_{tx}^{\mu}(\tau)\pigr|^2d\mu(x)
			\right]
			d\tau,
		\end{align*}\normalsize
		where $C_\rho:=\int d|\rho(\widetilde{x})|<\infty$. By \eqref{bdd. int E y^m rhodx}, \eqref{bdd. int E y^m mdx}, $I_3(\varepsilon)$ is integrable since $|\rho(\widetilde{x})|$ is of finite second moment. By the continuity in \eqref{ass. cts, bdd of dnu g2} of Assumption {\bf (Dvi)}, we see that for each $x, \widetilde{x} \in \mathbb{R}^d$, $\tau \in [t,T]$, $\mathbb{P} \otimes \mathbb{P}$-a.s.,
		\begin{align*}
			&\Bigg|\dfrac{d}{d\nu}g_2\pig(y_{tx}^{\mu}(\tau)
			,y_{t\bigcdot}^{\mu}(\tau) \otimes \mu
			+\theta\pig[y_{t\bigcdot}^{\mu}(\tau) \otimes (\mu+\epsilon\rho)
			-y_{t\bigcdot}^{\mu}(\tau) \otimes \mu\pig]\pig) \pig(\widetilde{y_{t\widetilde{x}}^{\mu}}(\tau)\pig)\\
			&\h{200pt}-\dfrac{d}{d\nu}g_2\pig(y_{tx}^{\mu}(\tau)
			,y_{t\bigcdot}^{\mu}(\tau) \otimes \mu\pig) \pig(\widetilde{y_{t\widetilde{x}}^{\mu}}(\tau)\pig)\Bigg|
			\longrightarrow 0
		\end{align*}
		as $\epsilon \to 0$ since 
		\begin{align*}
			&\mathcal{W}_2\Big( y_{t\bigcdot}^{\mu}(\tau) \otimes \mu
			+\theta\pig[y_{t\bigcdot}^{\mu}(\tau) \otimes (\mu+\epsilon\rho)
			-y_{t\bigcdot}^{\mu}(\tau) \otimes \mu\pig],y_{t\bigcdot}^{\mu}(\tau) \otimes \mu \Big)
			\longrightarrow 0 \h{10pt} \text{as $\epsilon \to 0$.}
		\end{align*}
		Therefore, Lebesgue dominated convergence theorem implies that
		\begin{align*}
			&\mathbb{E}\Bigg\{\int^T_t\int^1_0\widetilde{\mathbb{E}}
			\int\Bigg[\dfrac{d}{d\nu}g_2\pig(y_{tx}^{\mu}(\tau)
			,y_{t\bigcdot}^{\mu}(\tau) \otimes \mu
			+\theta\pig[y_{t\bigcdot}^{\mu}(\tau) \otimes (\mu+\epsilon\rho)
			-y_{t\bigcdot}^{\mu}(\tau) \otimes \mu\pig]\pig) \pig(\widetilde{y_{t\widetilde{x}}^{\mu}}(\tau)\pig) \Bigg]d\rho(\widetilde{x})
			d \theta d\tau\Bigg\}\\
			&\longrightarrow 
			\mathbb{E}\Bigg\{\int^T_t\int^1_0\widetilde{\mathbb{E}}
			\int\Bigg[\dfrac{d}{d\nu}g_2\pig(y_{tx}^{\mu}(\tau)
			,y_{t\bigcdot}^{\mu}(\tau) \otimes \mu\pig) \pig(\widetilde{y_{t\widetilde{x}}^{\mu}}(\tau)\pig) \Bigg]d\rho(\widetilde{x})
			d \theta d\tau\Bigg\}
		\end{align*}\normalsize
		as $\epsilon\to0$. Similarly, we also have the convergence of the term in \eqref{line 8 in linear functional d}
		\begin{align*}
			&\mathbb{E}\Bigg\{\int^1_0\widetilde{\mathbb{E}}
			\int\Bigg[\dfrac{d}{d\nu}h_2\pig(y_{t\bigcdot}^{\mu}(T) \otimes \mu
			+\theta\pig[y_{t\bigcdot}^{\mu}(T) \otimes (\mu+\epsilon\rho)
			-y_{t\bigcdot}^{\mu}(T) \otimes \mu\pig]\pig) \pig(\widetilde{y_{t\widetilde{x}}^{\mu}}(T)\pig) \Bigg]d\rho(\widetilde{x})
			d \theta \Bigg\}\\
			&\longrightarrow 
			\mathbb{E}\Bigg\{\int^1_0\widetilde{\mathbb{E}}
			\int\Bigg[\dfrac{d}{d\nu}h_2\pig(y_{t\bigcdot}^{\mu}(T) \otimes \mu\pig) \pig(\widetilde{y_{t\widetilde{x}}^{\mu}}(T)\pig) \Bigg]d\rho(\widetilde{x})
			d \theta \Bigg\}
		\end{align*}\normalsize
		as $\epsilon\to0$.
	\end{proof}
	
	\begin{filecontents}{bio.bib}

		@book{BFY13,
			title={Mean field games and mean field type control theory},
			author={Bensoussan, Alain and Frehse, Jens and Yam, Phillip},
			volume={101},
			year={2013},
			publisher={Springer}
		}
		
		@article{BFY17,
			title={On the interpretation of the master equation},
			author={Bensoussan, Alain and Frehse, Jens and Yam, Sheung Chi Phillip},
			journal={Stochastic Processes and their Applications},
			volume={127},
			number={7},
			pages={2093--2137},
			year={2017},
			publisher={Elsevier}
		}
		
		@article{BY19,
			title={Control problem on space of random variables and master equation},
			author={Bensoussan, Alain and Yam, Sheung Chi Phillip},
			journal={ESAIM: Control, Optimisation and Calculus of Variations},
			volume={25},
			year={2019},
			publisher={EDP Sciences}
		}

		@article{BGY19,
			title={Stochastic control on space of random variables},
			author={Bensoussan, Alain and Graber, P Jameson and Yam, Sheung Chi Phillip},
			journal={arXiv preprint arXiv:1903.12602},
			year={2019}
		}
		
		@article{BLPR17,
			title={{Mean-field stochastic differential equations and associated PDEs}},
			author={Buckdahn, Rainer and Li, Juan and Peng, Shige and Rainer, Catherine},
			journal={The Annals of Probability},
			volume={45},
			number={2},
			pages={824--878},
			year={2017},
			publisher={Institute of Mathematical Statistics}
		}
		
		@book{CDLL19,
			title={The master equation and the convergence problem in mean field games:(ams-201)},
			author={Cardaliaguet, Pierre and Delarue, Fran{\c{c}}ois and Lasry, Jean-Michel and Lions, Pierre-Louis},
			year={2019},
			publisher={Princeton University Press}
		}

		@article{CD15,
			title={Forward--backward stochastic differential equations and controlled {M}cKean--{V}lasov dynamics},
			author={Carmona, Ren{\'e} and Delarue, Fran{\c{c}}ois},
			journal={The Annals of Probability},
			volume={43},
			number={5},
			pages={2647--2700},
			year={2015},
			publisher={Institute of Mathematical Statistics}
		}
		
		@book{CD18,
			title={Probabilistic theory of mean field games with applications I-II},
			author={Carmona, Ren{\'e} and Delarue, Fran{\c{c}}ois},
			year={2018},
			publisher={Springer}
		}

		@article{GS15,
			title={Existence of a solution to an equation arising from the theory of mean field games},
			author={Gangbo, Wilfrid and {\'S}wi{\k{e}}ch, Andrzej},
			journal={Journal of Differential equations},
			volume={259},
			number={11},
			pages={6573--6643},
			year={2015},
			publisher={Elsevier}
		}
		
		@article{L14,
			title={{Cours au Coll\`ege de France}},
			author={Lions, Pierre-Louis},
			year={\url{www.college-de-france.fr}},
		}
		
		@article{BHTY24,
			title={On Mean Field Monotonicity Conditions from Control Theoretical Perspective},
			author={Bensoussan, Alain and Huang, Ziyu and Tang, Shanjian and Yam, Sheung Chi Phillip},
			journal={arXiv preprint arXiv:2412.05189},
			year={2024}
		}

		@article{BGY20,
			title={Control on Hilbert spaces and application to mean field type control theory},
			author={Bensoussan, Alain and Graber, P Jameson and Yam, Sheung Chi Phillip},
			journal={arXiv preprint arXiv:2005.10770},
			year={2020}
		}
		
		@article{C,
			title={private commiuncation},
			author={Caines, Peter E},
			year={2014},
		}

		@article{GM22,
			title={Global Well-Posedness of Master Equations for Deterministic Displacement Convex Potential Mean Field Games},
			author={Gangbo, Wilfrid and M{\'e}sz{\'a}ros, Alp{\'a}r R},
			journal={Communications on Pure and Applied Mathematics},
			volume={75},
			number={12},
			pages={2685--2801},
			year={2022},
			publisher={Wiley Online Library}
		}

		@article{BFY15,
			title={The master equation in mean field theory},
			author={Bensoussan, Alain and Frehse, Jens and Yam, Sheung Chi Phillip},
			journal={Journal de Math{\'e}matiques Pures et Appliqu{\'e}es},
			volume={103},
			number={6},
			pages={1441--1474},
			year={2015},
			publisher={Elsevier}
		}

		@article{BWYY23,
			title={A Theory of First Order Mean Field Type Control Problems and their Equations},
			author={Alain Bensoussan and Tak Kwong Wong and Sheung Chi Phillip Yam and Hongwei Yuan},
			journal={arXiv preprint arXiv:2305.11848},
			year={2023}
		}

		@article{P15,
			title={Weak solutions to {F}okker--{P}lanck equations and mean field games},
			author={Porretta, Alessio},
			journal={Archive for Rational Mechanics and Analysis},
			volume={216},
			number={1},
			pages={1--62},
			year={2015},
			publisher={Springer}
		}

		@book{DM07,
			title={Methods of nonlinear analysis: applications to differential equations},
			author={Dr{\'a}bek, Pavel and Milota, Jaroslav},
			year={2007},
			publisher={Springer Science \& Business Media}
		}

		@book{YZ99,
			title={Stochastic controls: Hamiltonian systems and HJB equations},
			author={Yong, Jiongmin and Zhou, Xun Yu},
			volume={43},
			year={1999},
			publisher={Springer Science \& Business Media}
		}

		@article{Y06,
			title={Linear forward--backward stochastic differential equations with random coefficients},
			author={Yong, Jiongmin},
			journal={Probability theory and related fields},
			volume={135},
			number={1},
			pages={53--83},
			year={2006},
			publisher={Springer}
		}
		
		@article{BSYY16,
			title={Linear-quadratic mean field games},
			author={Bensoussan, Alain and Sung, Ka Chun Joseph and Yam, Sheung Chi Phillip and Yung, Siu Pang},
			journal={Journal of Optimization Theory and Applications},
			volume={169},
			pages={496--529},
			year={2016},
			publisher={Springer}
		}

		@article{GMMZ22,
			title={{Mean field games master equations with nonseparable Hamiltonians and displacement monotonicity}},
			author={Gangbo, Wilfrid and M{\'e}sz{\'a}ros, Alp{\'a}r R and Mou, Chenchen and Zhang, Jianfeng},
			journal={The Annals of Probability},
			volume={50},
			number={6},
			pages={2178--2217},
			year={2022},
			publisher={Institute of Mathematical Statistics}
		}
		
		@article{HMC06,
			author = {Huang, Minyi and Malham\'e, Roland P. and Caines, Peter E.},
			title = {{Large population stochastic dynamic games: closed-loop McKean-Vlasov systems and the Nash certainty equivalence principle}},
			journal = {Commun. Inf. Syst.},
			volume = {6},
			number = {1},
			year = {2006},
			pages = {221-252},
		}
		
		@article{LL07,
			title={Mean field games},
			author={Lasry, Jean-Michel and Lions, Pierre-Louis},
			journal={Japanese journal of mathematics},
			volume={2},
			number={1},
			pages={229--260},
			year={2007},
			publisher={Springer}
		}

		@article{BTY23,
			title={Mean Field Type Control Problems, Some {Hilbert-space-valued FBSDEs}, and Related Equations},
			author={Bensoussan, Alain and Tai, Ho Man and Yam, Sheung Chi Phillip},
			journal={arXiv preprint arXiv:2305.04019},
			year={2023}
		}
		
		@article{BF84,
			title={Nonlinear elliptic systems in stochastic game theory},
			author={Bensoussan, Alain and Frehse, Jens},
			year={1984},
			journal={Journal fur die Reine und Angewandte Mathematik},
			volume={350},
			pages={23--67}
		}
		
		@article{BF00,
			title={{Stochastic games for $N$ players}},
			author={Bensoussan, Alain and Frehse, Jens},
			journal={Journal of optimization theory and applications},
			volume={105},
			pages={543--565},
			year={2000},
			publisher={Springer}
		}
		
		@article{M20,
			title={Short time solution to the master equation of a first order mean field game},
			author={Mayorga, Sergio},
			journal={Journal of Differential Equations},
			volume={268},
			number={10},
			pages={6251--6318},
			year={2020},
			publisher={Elsevier}
		}
		
		@article{CG15,
			title={Mean field games systems of first order},
			author={Cardaliaguet, Pierre and Graber, P Jameson},
			journal={ESAIM: Control, Optimisation and Calculus of Variations},
			volume={21},
			number={3},
			pages={690--722},
			year={2015},
			publisher={EDP Sciences}
		}
		
		@misc{C13,
			title={{Notes on mean-field games (from P.-L. Lions' lectures at Coll{\`e}ge de France), 2013}},
			author={Cardaliaguet, Pierre}
		}

		@book{GPV16,
			title={Regularity theory for mean-field game systems},
			author={Gomes, Diogo A and Pimentel, Edgard A and Voskanyan, Vardan},
			year={2016},
			publisher={Springer}
		}

		@article{BDLP09,
			author = {Rainer Buckdahn and Boualem Djehiche and Juan Li and Shige Peng},
			title = {{Mean-field backward stochastic differential equations: A limit approach}},
			volume = {37},
			journal = {The Annals of Probability},
			number = {4},
			publisher = {Institute of Mathematical Statistics},
			pages = {1524 -- 1565},
			year = {2009}
		}
		
		@article{BLP09,
			title={Mean-field backward stochastic differential equations and related partial differential equations},
			author={Buckdahn, Rainer and Li, Juan and Peng, Shige},
			journal={Stochastic processes and their Applications},
			volume={119},
			number={10},
			pages={3133--3154},
			year={2009},
			publisher={Elsevier}
		}
		
		@article{CD13,
			author = {Ren{\'e} Carmona and Fran{\c{c}}ois Delarue},
			title = {{Mean field forward-backward stochastic differential equations}},
			volume = {18},
			journal = {Electronic Communications in Probability},
			publisher = {Institute of Mathematical Statistics and Bernoulli Society},
			pages = {1--15},
			year = {2013},
		}
		
		@article{BYZ15,
			title={Well-posedness of mean-field type forward--backward stochastic differential equations},
			author={Bensoussan, Alain and Yam, Sheung Chi Phillip and Zhang, Zheng},
			journal={Stochastic Processes and their Applications},
			volume={125},
			number={9},
			pages={3327--3354},
			year={2015},
			publisher={Elsevier}
		}
		
		@article{CCD22,
			title={A Probabilistic Approach to Classical Solutions of the Master Equation for Large Population Equilibria},
			author={Chassagneux, Jean-Fran{\c{c}}ois and Crisan, Dan and Delarue, Fran{\c{c}}ois},
			journal={Memoirs of the American Mathematical Society},
			volume={280},
			year={2022},
			number={1379},
			publisher={American Mathematical Society}
		}

		@article{MZ19,
			title={Wellposedness of second order master equations for mean field games with nonsmooth data},
			author={Mou, Chenchen and Zhang, Jianfeng},
			journal={arXiv preprint arXiv:1903.09907},
			year={2019}
		}
		
		@article{BWYY23game,
			title={Advances in the Global Well-posedness of First-order Mean Field Games and Master Equations with Nonlinear Dynamics},
			author={Alain Bensoussan and Tak Kwong Wong and Sheung Chi Phillip Yam and Hongwei Yuan},
			journal={preprint},
			year={2023}
		}
		
		@article{CPFP19,
			title={On the convergence problem in mean field games: a two state model without uniqueness},
			author={Cecchin, Alekos and Pra, Paolo Dai and Fischer, Markus and Pelino, Guglielmo},
			journal={SIAM Journal on Control and Optimization},
			volume={57},
			number={4},
			pages={2443--2466},
			year={2019},
			publisher={SIAM}
		}
		
		@article{DT20,
			title={Selection of equilibria in a linear quadratic mean-field game},
			author={Delarue, Fran{\c{c}}ois and Tchuendom, Rinel Foguen},
			journal={Stochastic Processes and their Applications},
			volume={130},
			number={2},
			pages={1000--1040},
			year={2020},
			publisher={Elsevier}
		}
		
		@article{CCP22,
			title={Splitting methods and short time existence for the master equations in mean field games},
			author={Cardaliaguet, Pierre and Cirant, Marco and Porretta, Alessio},
			journal={Journal of the European Mathematical Society},
			year={2022}
		}
		
		@article{AM23,
			title={{Well-posedness of mean field games master equations involving non-separable local Hamiltonians}},
			author={Ambrose, David and M{\'e}sz{\'a}ros, Alp{\'a}r},
			journal={Transactions of the American Mathematical Society},
			volume={376},
			number={04},
			pages={2481--2523},
			year={2023}
		}
		
		@article{B21,
			title={Monotone solutions for mean field games master equations: continuous state space and common noise},
			author={Bertucci, Charles},
			journal={arXiv preprint arXiv:2107.09531},
			year={2021}
		}
		
		@article{CS21,
			title={Weak solutions of the master equation for Mean Field Games with no idiosyncratic noise},
			author={Cardaliaguet, Pierre and Souganidis, Panagiotis},
			journal={arXiv preprint arXiv:2109.14911},
			year={2021}
		}

		@article{CL15,
			author={Ren{\'e} Carmona and Daniel Lacker},
			journal={The Annals of Applied Probability},
			number={3},
			pages={1189--1231},
			publisher={Institute of Mathematical Statistics},
			title={A probabilistic weak formulation of mean field games and applications},
			volume={25},
			year={2015}
		}
		
		@article{CD13mfg,
			title={Probabilistic analysis of mean-field games},
			author={Carmona, Ren{\'e} and Delarue, Fran{\c{c}}ois},
			journal={SIAM Journal on Control and Optimization},
			volume={51},
			number={4},
			pages={2705--2734},
			year={2013},
			publisher={SIAM}
		}

		@article{CDL16,
			title={Mean field games with common noise},
			author={Carmona, Ren{\'e} and Delarue, Fran{\c{c}}ois and Lacker, Daniel},
			journal={The Annals of Probability},
			volume={44},
			number={6},
			pages={3740--3803},
			year={2016}
		}
		
		@article{C15,
			title={Weak solutions for first order mean field games with local coupling},
			author={Cardaliaguet, Pierre},
			journal={Analysis and geometry in control theory and its applications},
			pages={111--158},
			year={2015},
			publisher={Springer}
		}
		
		@article{G14,
			title={{Optimal control of first-order Hamilton--Jacobi equations with linearly bounded Hamiltonian}},
			author={Graber, P Jameson},
			journal={Applied Mathematics \& Optimization},
			volume={70},
			number={2},
			pages={185--224},
			year={2014},
			publisher={Springer}
		}

		@article{A18,
			title={{Strong solutions for time-dependent mean field games with non-separable Hamiltonians}},
			author={Ambrose, David M},
			journal={Journal de Math{\'e}matiques Pures et Appliqu{\'e}es},
			volume={113},
			pages={141--154},
			year={2018},
			publisher={Elsevier}
		}

		@article{MZ22,
			title={Mean field game master equations with anti-monotonicity conditions},
			author={Mou, Chenchen and Zhang, Jianfeng},
			journal={arXiv preprint arXiv:2201.10762},
			year={2022}
		}

		@article{CD22weak,
			title={Weak solutions to the master equation of potential mean field games},
			author={Cecchin, Alekos and Delarue, Fran{\c{c}}ois},
			journal={arXiv preprint arXiv:2204.04315},
			year={2022}
		}
		
		@article{A16,
			title={Wellposedness of mean field games with common noise under a weak monotonicity condition},
			author={Ahuja, Saran},
			journal={SIAM Journal on Control and Optimization},
			volume={54},
			number={1},
			pages={30--48},
			year={2016},
			publisher={SIAM}
		}
		
		@article{Y99,
			title={Linear forward-backward stochastic differential equations},
			author={Yong, Jiongmin},
			journal={Applied Mathematics and Optimization},
			volume={39},
			pages={93--119},
			year={1999},
			publisher={Springer}
		}
		
		@article{K60,
			title={The Cauchy problem in the large for certain non-linear first order differential equations},
			author={Kruzhkov, Stanislav Nikolaevich},
			volume={132},
			number={1},
			pages={36--39},
			year={1960},
			journal={Russian Academy of Sciences}

		}
		
				@article{aurell2018mean,
			title={Mean-field type modeling of nonlocal crowd aversion in pedestrian crowd dynamics},
			author={Aurell, Alexander and Djehiche, Boualem},
			journal={SIAM Journal on Control and Optimization},
			volume={56},
			number={1},
			pages={434--455},
			year={2018},
			publisher={SIAM}
		}
		
		@article{lachapelle2011mean,
			title={On a mean field game approach modeling congestion and aversion in pedestrian crowds},
			author={Lachapelle, Aim{\'e} and Wolfram, Marie-Therese},
			journal={Transportation research part B: methodological},
			volume={45},
			number={10},
			pages={1572--1589},
			year={2011},
			publisher={Elsevier}
		}
		
	\end{filecontents}
	\addcontentsline{toc}{section}{References}
	\bibliographystyle{abbrv} 
	\bibliography{bio}
	
	\nocite{*} 

\end{document}